\DeclareRobustCommand{\gobblefive}[5]{}
\newcommand*{\SkipTocEntry}{\addtocontents{toc}{\gobblefive}}
\definecolor{dark-gray}{gray}{0.3}
\definecolor{dkgray}{rgb}{.4,.4,.4}
\definecolor{dkblue}{rgb}{0,0,.5}
\definecolor{medblue}{rgb}{0,0,.75}
\definecolor{rust}{rgb}{0.5,0.1,0.1}
\DeclareMathAlphabet{\mathsf}{T1}{Cabin-TLF}{m}{n}
\SetMathAlphabet\mathsf{normal}{T1}{Cabin-TLF}{m}{n}
\SetMathAlphabet\mathsf{bold}{T1}{Cabin-TLF}{b}{n}
\SetMathAlphabet\mathsf{semibold}{T1}{Cabin-TLF}{sb}{n}
\newtheorem{bigthm}{Theorem}
\newtheorem{theorem}{Theorem}[section]
\newtheorem{lemma}[theorem]{Lemma}
\newtheorem{sublemma}[theorem]{Sublemma}
\newtheorem{proposition}[theorem]{Proposition}
\newtheorem{fact}[theorem]{Fact}
\newtheorem{corollary}[theorem]{Corollary}
\theoremstyle{definition}
\newtheorem{definition}[theorem]{Definition}
\newtheorem{example}[theorem]{Example}
\newtheorem{remark}[theorem]{Remark}
\newtheorem{model}[theorem]{Model}
\newcommand{\lang}{\textit}
\newcommand{\term}{\emph}
\numberwithin{equation}{section} 
\numberwithin{figure}{section}
\numberwithin{table}{section}
\numberwithin{recipe}{section}
\providecommand{\mathbold}[1]{\bm{#1}}                                                                  
\newcommand{\sametprev}[1]{#1}
\newcommand{\joelprev}[1]{#1}
\newcommand{\joel}[1]{#1}
\renewcommand{\phi}{\varphi}
\newcommand{\eps}{\varepsilon}
\newcommand{\half}{\tfrac{1}{2}}
\newcommand{\cnst}[1]{\mathrm{#1}} 
\newcommand{\econst}{\mathrm{e}}
\newcommand{\Id}{\mathbf{I}}
\newcommand{\coll}[1]{\mathscr{#1}}
\providecommand{\mathbbm}{\mathbb} 
\newcommand{\R}{\mathbbm{R}}
\newcommand{\polar}{\circ}
\newcommand{\abs}[1]{\left\vert {#1} \right\vert}
\newcommand{\abssq}[1]{{\abs{#1}}^2}
\newcommand{\diff}[1]{\mathrm{d}{#1}}
\newcommand{\idiff}[1]{\, \diff{#1}}
\newcommand{\argmin}{\operatorname*{arg\; min}}
\newcommand{\argmax}{\operatorname*{arg\; max}}
\newcommand{\Prob}[1]{\mathbbm{P}\left\{{#1}\right\}}
\newcommand{\Expect}{\operatorname{\mathbb{E}}}
\newcommand{\normal}{\textsc{normal}}
\DeclareMathOperator{\Var}{Var}
\newcommand{\vct}[1]{\mathbold{#1}}
\newcommand{\mtx}[1]{\mathbold{#1}}
\newcommand{\adj}{*}
\newcommand{\range}{\operatorname{range}}
\newcommand{\nullsp}{\operatorname{null}}
\newcommand{\supp}[1]{\operatorname{supp}(#1)}
\newcommand{\smax}{\sigma_{\max}}
\newcommand{\smin}{\sigma_{\min}}
\newcommand{\norm}[1]{\left\Vert {#1} \right\Vert}
\newcommand{\normsq}[1]{\norm{#1}^2}
\newcommand{\fnorm}[1]{\norm{#1}_{\mathrm{F}}}
\newcommand{\fnormsq}[1]{\fnorm{#1}^2}
\newcommand{\pnorm}[2]{\norm{#2}_{#1}}
\newcommand{\cone}{\operatorname{cone}}
\newcommand{\conv}{\operatorname{conv}}
\newcommand{\minimize}{\text{minimize}\quad}
\newcommand{\subjto}{\quad\text{subject to}\quad}
\newcommand{\muavg}[1]{\left\langle {#1} \right\rangle}
\title[Universality of Randomized Dimension Reduction]{Universality Laws for Randomized Dimension Reduction, \\ with Applications}
\author[S.~Oymak and J.~A.~Tropp]{Samet Oymak and Joel~A.~Tropp}
\date{30 November 2015.  Revised 7 August 2017 and 5 September 2017 and 14 September 2017.}
\subjclass[2010]{Primary: 60D05, 60F17. Secondary: 60B20.}
\keywords{Conic geometry, convex geometry, dimension reduction, invariance principle, limit theorem, random code, random matrix, randomized numerical linear algebra, signal reconstruction, statistical estimation, stochastic geometry, universality.}
\begin{document}

\pagestyle{headings}
\setcounter{page}{1}
\pagenumbering{roman}

\begin{abstract}
Dimension reduction is the process of embedding high-dimensional data
into a lower dimensional space to facilitate its analysis.
In the Euclidean setting,
one fundamental technique for dimension reduction is to apply a random linear map
to the data.  This dimension reduction procedure succeeds when it preserves
certain geometric features of the set.
 The question is how large the embedding dimension must be
to ensure that randomized dimension reduction succeeds with high probability.

This paper studies a natural family of randomized dimension reduction maps
and a large class of data sets.  It proves that there is a phase
transition in the success probability of the dimension reduction map as the embedding
dimension increases.  For a given data set, the location of the phase transition
is the same for all maps in this family.
Furthermore, each map has the same stability properties, as quantified
through the restricted minimum singular value.  These results
can be viewed as new universality laws in high-dimensional stochastic geometry.

Universality laws for randomized dimension reduction have many
applications in applied mathematics, signal processing, and statistics.
They yield design principles for numerical linear algebra algorithms,
for compressed sensing measurement ensembles, and for random linear
codes.  Furthermore, these results have implications for the performance
of statistical estimation methods under a large class of random experimental designs.
\end{abstract}

\maketitle
\thispagestyle{empty}

\newpage 

\tableofcontents
\SkipTocEntry\listoffigures

\newpage

\setcounter{page}{1}
\pagenumbering{arabic}

\section{Overview of the Universality Phenomenon}

\noindent
This paper concerns a fundamental question in high-dimensional stochastic geometry:

\begin{enumerate}
\item[(Q1)]	Is it likely that a random subspace of fixed dimension does \emph{not} intersect a given set?

\end{enumerate}

\noindent
This problem has its roots in the earliest research on spherical integral
geometry~\cite{San52:Integral-Geometry,San76:Integral-Geometry},
and it also arises in asymptotic convex geometry~\cite{Gor88:Milmans-Inequality}.
In recent years, this question has attracted fresh
attention~\cite{Don06:High-Dimensional-Centrally,RV08:Sparse-Reconstruction,
Sto09:Various-Thresholds,DT09:Counting-Faces,CRPW12:Convex-Geometry,
ALMT14:Living-Edge,Sto13:Regularly-Random,MT13:Achievable-Performance,
OH13:Asymptotically-Exact,OTH13:Squared-Error,TOH15:Gaussian-Min-Max,
TAH15:High-Dimensional-Error}
because it is central to the analysis of
randomized dimension reduction.

This paper establishes that a striking universality phenomenon
takes place in the stochastic geometry problem (Q1).
For a given set, the answer to this question is essentially the same
for every distribution on random subspaces that is induced
by a natural model for random \joelprev{linear map}s.  Universality also manifests itself in metric variants of (Q1),
where we ask how far the random subspace lies from the set.
We discuss the implications of these results in
high-dimensional geometry, random matrix theory, numerical analysis,
optimization, statistics, signal processing, and beyond.

\subsection{Randomized Dimension Reduction}

\term{Dimension reduction} is the operation of mapping a set from a
large space into a smaller space.  Ideally,
this action distills the ``information'' in the set, and it allows
us to develop more efficient algorithms for processing that information.
In the setting of Euclidean spaces, a fundamental method for dimension
reduction is to apply a random linear map to each point in the set.
It is important that the random \joelprev{linear map}
preserve geometric features of the set.
In particular, we do \emph{not} want the
\joelprev{linear map} to map a point in the set to the origin.
Equivalently, the null space of the random \joelprev{linear map} should \emph{not}
intersect the set.  We see that (Q1) emerges naturally in the context
of randomized dimension reduction. 
\subsection{Technical Setting}

Let us introduce a framework in which to study this problem.
It is natural to treat (Q1) as a question in spherical geometry because
it is scale invariant.  Fix the \term{ambient dimension} $D$,
and consider a closed subset $\Omega$ of the Euclidean unit sphere in $\R^D$.
For the moment, we also assume that $\Omega$
is \term{spherically convex}; that is, $\Omega$ is the intersection
of a convex cone\footnote{A \term{convex cone} is a convex set $K$ that satisfies
$\alpha K = K$ for all $\alpha > 0$.}
with the unit sphere.
Construct a random linear map $\mtx{\Pi} : \R^D \to \R^d$,
where the \term{embedding dimension} $d$ does not
exceed the ambient dimension $D$.
As we vary the distribution of the random \joelprev{linear map} $\mtx{\Pi}$, the map $\mtx{\Pi} \mapsto \nullsp(\mtx{\Pi})$
induces different distributions on the subspaces in $\R^D$
with codimension at most $d$.
We may now reformulate (Q1) in this language:

\begin{enumerate}
\item[(Q2)]	For a given embedding dimension $d$, what is the probability that $\Omega \cap \nullsp(\mtx{\Pi}) = \emptyset$?  Equivalently, what is the probability that $\vct{0} \notin \mtx{\Pi}(\Omega)$?

\end{enumerate}

\noindent
We say that the random projection \term{succeeds} when $\vct{0} \notin \mtx{\Pi}(\Omega)$.
Conversely, when $\vct{0} \in \mtx{\Pi}(\Omega)$, we say that the random projection
\term{fails}.  See Figure~\ref{fig:intro-geometry} for an illustration.  

We have the intuition that, for a fixed choice of $\Omega$,
the projection is more likely to succeed as the embedding dimension $d$
increases.  Furthermore, a random \joelprev{linear map} $\mtx{\Pi}$
with fixed embedding dimension $d$ is less likely to succeed
as the size of the set increases.
We will justify these heuristics in complete detail.

\begin{figure}
\vspace{0.25in}
\begin{tikzpicture}[auto,scale=1.5]

\draw[gray, ultra thin] (0,0) circle (4pc);

\draw[red!80!black, very thick] (0,0) +(75:5pc) -- +(-105:5pc);
\draw[red!80!black] (0,0) +(-105:4.5pc) node[below right]{$\nullsp(\mtx{P})$};

\draw[blue!80!black, very thick] (0,0) +(-15:5pc) -- +(165:5pc);
\draw[blue!80!black] (0,0) +(165:4.5pc) node[below left]{$\range(\mtx{P})$};

\draw[black, fill] (0,0) circle (1pt);

\fill[gray, nearly transparent] (0,0) +(-15:1pc) -- ++(-15:3.46pc) -- ++(75:2pc) arc (15:60:4pc) -- cycle;
\draw[black, very thin, dashed] (0,0) +(-15:1pc) -- ++(-15:3.46pc) -- ++(75:2pc) arc (15:60:4pc) -- cycle;

\draw[black, ultra thick] (0,0) +(15:4pc) arc (15:60:4pc);
\draw[black] (0,0) +(30:4.25pc) node[right]{$\Omega$};

\draw[black, ultra thick] (0,0) + (-15:1pc) -- (-15:3.46pc);
\draw[black] (0,-0.25pc) + (-15:2.25pc) node[below]{$\mtx{P}(\Omega)$};

\draw[gray, ultra thin] (5.5,0) circle (4pc);

\draw[red!80!black, very thick] (5.5,0) +(45:5pc) -- +(-135:5pc);
\draw[red!80!black] (5.5,0) +(-135:4.5pc) node[above left]{$\nullsp(\mtx{P})$};

\draw[blue!80!black, very thick] (5.5,0) +(-45:5pc) -- +(135:5pc);
\draw[blue!80!black] (5.5,0) +(135:4.5pc) node[below left]{$\range(\mtx{P})$};

\draw[black, fill] (5.5,0) circle (1pt);

\fill[gray, nearly transparent] (5.5,0) +(135:1pc) -- +(-45:2pc) -- ++(15:4pc) arc (15:60:4pc) -- cycle;
\draw[black, very thin, dashed] (5.5,0) +(135:1pc) -- +(-45:2pc) -- ++(15:4pc) arc (15:60:4pc) -- cycle;

\draw[black, ultra thick] (5.5,0) +(15:4pc) arc (15:60:4pc);
\draw[black] (5.5,0) +(30:4.25pc) node[right]{$\Omega$};

\draw[black, ultra thick] (5.5,0) + (135:1pc) -- +(-45:2pc);
\draw[black] (5.5,0.2pc) + (-45:1.75pc) node[below left]{$\mtx{P}(\Omega)$};
\end{tikzpicture}

\caption[Geometry of a Random \joelprev{Linear Map}]{
\textsl{Geometry of a Random \joelprev{Linear Map}.} \label{fig:intro-geometry}
We can identify a \joelprev{linear map} $\mtx{\Pi} : \R^D \to \R^d$ with
\joelprev{an orthogonal projector} $\mtx{P} : \R^D \to \R^D$ whose range
is the orthogonal complement of $\nullsp(\mtx{\Pi})$.  In this diagram,
$\mtx{P} : \R^2 \to \R^2$ is a random orthogonal \joelprev{projector} applied to
a closed, spherically convex set $\Omega$.  The likelihood of a given configuration
depends on the statistical dimension $\delta(\Omega)$ of the set.
\textsc{Left:} \textsl{Success Regime.}
The null space, $\nullsp(\mtx{P})$, does not intersect $\Omega$,
and the image $\mtx{P}(\Omega)$ does not contain the origin.
\textsc{Right:} \textsl{Failure Regime.}
The null space, $\nullsp(\mtx{P})$, intersects the set $\Omega$,
and the image $\mtx{P}(\Omega)$ contains the origin.}
\end{figure}

\subsection{A Phase Transition for Uniformly Random \joelprev{Partial Isometries}}
\label{sec:intro-phase-transition}

We begin with a case where the literature already contains
a comprehensive answer to the question (Q2).

The most natural type of random embedding
is a \term{uniformly random \joelprev{partial isometry}}.
That is, $\mtx{\Pi} : \R^D \to \R^d$ is a \joelprev{partial isometry}\footnote{A \term{\joelprev{partial isometry}} $\mtx{\Pi}$ satisfies the condition
$\mtx{\Pi} \mtx{\Pi}^\adj = \Id$, where ${}^\adj$ is the transpose operation and $\Id$ is the identity map.}
whose null space, $\nullsp(\mtx{\Pi})$, is drawn uniformly
at random from the Haar measure on the Grassmann manifold of subspaces in $\R^D$ with codimension $d$.
The invariance properties of the distribution of $\mtx{\Pi}$ allow for a
complete analysis of its action on $\Omega$, the spherically
convex set~\cite[Chap.~6.5]{SW08:Stochastic-Integral}.
Recent research~\cite{ALMT14:Living-Edge,MT14:Steiner-Formulas,GNP14:Gaussian-Phase}
has shown how to convert the complicated exact
formulas into interpretable results.

The modern theory is expressed in terms of a geometric functional
$\delta(\Omega)$, called the \term{statistical dimension}:
$$
\delta(\Omega) := \Expect\left[ \left( \max_{\vct{t} \in \Omega} \vct{g} \cdot \vct{t} \right)_+^2 \right],
\quad\text{where $\vct{g}$ is $\normal(\vct{0}, \Id)$.}
$$
The statistical dimension is increasing with respect to set inclusion,
and its values range from zero (for the empty set) up to $D$ (for the whole sphere).
Furthermore, the functional can be computed accurately in many cases of interest.
See Section~\ref{sec:stat-dim} for more details.

The statistical dimension demarcates a phase transition
in the behavior of a uniformly random \joelprev{partial isometry} $\mtx{\Pi}$
as the embedding dimension $d$ varies.
For a closed, spherically convex set $\Omega$,
the results~\cite[Thm.~I and Prop.~10.2]{ALMT14:Living-Edge}
demonstrate that
\begin{equation} \label{eqn:intro-phase-transition}
\begin{aligned}
d &\leq \delta(\Omega) - \cnst{C} \sqrt{\delta(\Omega)}
\quad\text{implies}\quad
\vct{0} \in \mtx{\Pi}(\Omega)
\quad\text{with high probability}; \\
d &\geq \delta(\Omega) + \cnst{C} \sqrt{\delta(\Omega)}
\quad\text{implies}\quad
\vct{0} \notin \mtx{\Pi}(\Omega)
\quad\text{with high probability}.
\end{aligned}
\end{equation}
The number $\cnst{C}$ is a positive universal constant.  In other terms,
a uniformly random projection $\mtx{\Pi}(\Omega)$ of a spherically
convex set $\Omega$ is likely to succeed precisely
when the embedding dimension $d$
is larger than the statistical dimension $\delta(\Omega)$.
See Figure~\ref{fig:intro-orthant-universal} for a plot of
the exact probability that a uniformly random \joelprev{partial isometry}
annihilates a point in a specific set $\Omega$.

\begin{remark}[Related Work] \label{rem:intro-phase-details}
The results~\cite[Thm.~7.1]{ALMT14:Living-Edge}
and~\cite[Thm.~A]{MT13:Achievable-Performance}
contain good bounds for the probabilities in~\eqref{eqn:intro-phase-transition}.
The probabilities can be approximated more precisely by introducing a second geometric
functional~\cite{GNP14:Gaussian-Phase}.  These estimates depend on the
spherical Crofton formula~\cite[Eqns.~(6.62), (6.63)]{SW08:Stochastic-Integral},
which gives the \emph{exact} probabilities in a less interpretable form.
Related phase transition results can also be obtained via the Gaussian
Minimax Theorem; see~\cite[Cor.~3.4]{Gor88:Milmans-Inequality},
~\cite[Rem.~2.9]{ALMT14:Living-Edge}, \cite{Sto13:Regularly-Random},
or~\cite[Thm.~II.1]{TOH15:Gaussian-Min-Max}.
See~\cite{TH15:Isotropically-Random} for other
results on uniformly random \joelprev{partial isometries}.
\end{remark}

\subsection{Other Types of Random \joelprev{Linear Map}s?}
\label{sec:intro-other-projs}

The research outlined in Section~\ref{sec:intro-phase-transition}
delivers a complete account of how a uniformly random
\joelprev{partial isometry behaves} in the presence of some convexity.  In contrast,
the literature contains almost no precise information about the
behavior of \joelprev{other} random \joelprev{linear map}s.

Nevertheless, in applications,
we may prefer---or be forced---to work with \joelprev{other types of} random \joelprev{linear maps}.
Here is a motivating example.  Many algorithms for numerical linear algebra
now depend on randomized dimension reduction.
In this context, uniformly random \joelprev{partial isometries}
are expensive to construct, to store, and to perform arithmetic with.
It is more appealing to implement a random \joelprev{linear map}
that is discrete, or sparse, or structured.  The lack of
detailed theoretical information about how these \joelprev{linear map}s behave
makes it difficult to design numerical methods with
guaranteed performance.

\begin{figure}
\includegraphics[width=\textwidth]{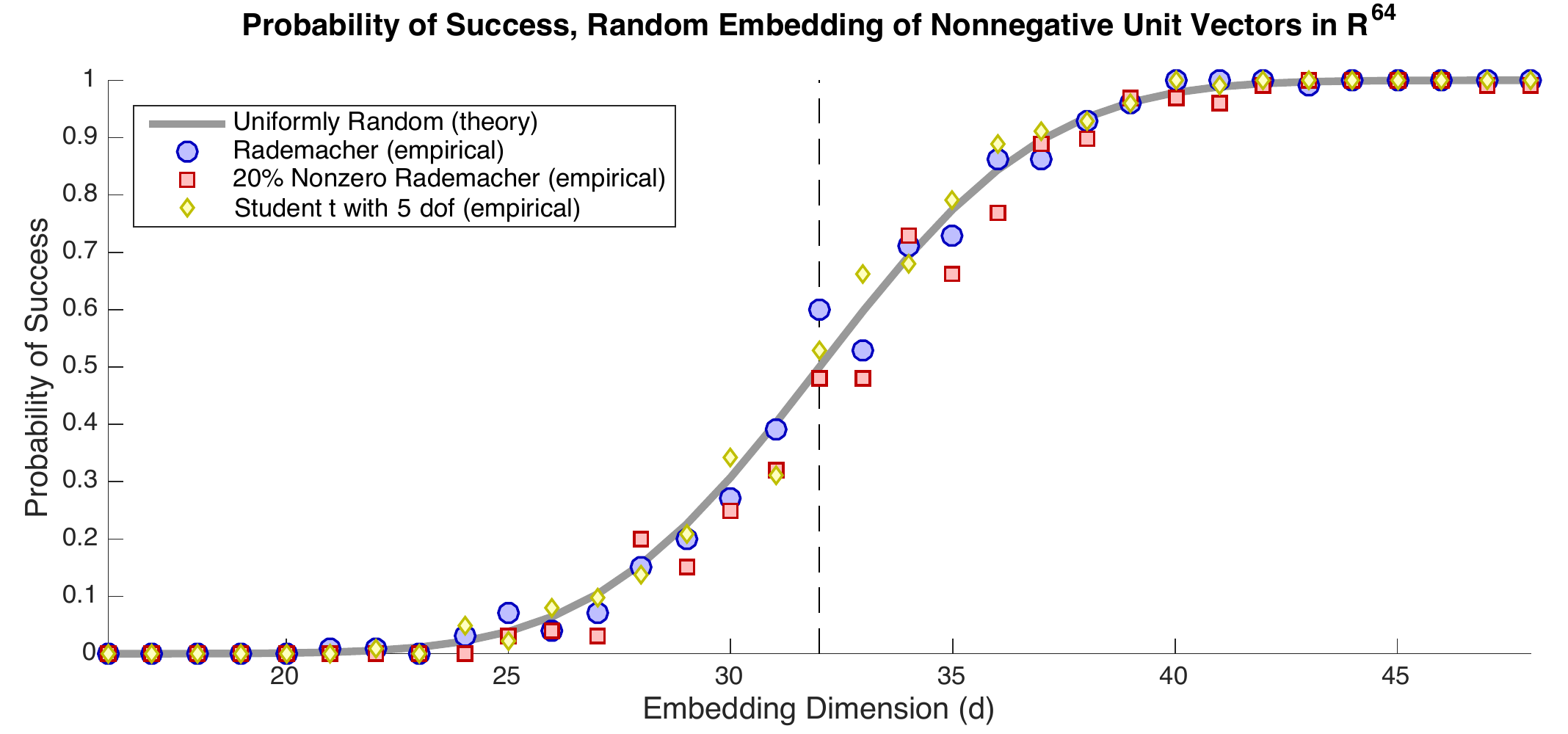}
\caption[Universality of the Embedding Dimension]{\textsl{Universality of the Embedding Dimension.} \label{fig:intro-orthant-universal}
This plot describes the behavior of four types of random \joelprev{linear map}s applied to the
set \joelprev{$\Omega$} of unit vectors with nonnegative coordinates:
$\Omega := \{ \vct{t} \in \R^{64} : \norm{\vct{t}} = 1, t_i \geq 0 \}$.
The \textbf{dashed line} marks the statistical dimension of the set:
$\delta(\Omega) = 32$.
The \textbf{gray curve} interpolates the exact probability that a uniformly
random \joelprev{partial isometry} $\mtx{\Pi} : \R^{64} \to \R^d$ succeeds
(i.e., $\vct{0} \notin \mtx{\Pi}(\Omega)$)
as a function of the embedding dimension $d$.
The \textbf{markers} indicate the empirical probability (over 100 trials) that dimension
reduction succeeds for a random \joelprev{linear map} with the specified distribution.
See Sections~\ref{sec:intro-other-projs} and~\ref{sec:repro-research} for further details.}
\end{figure}

We can, however, use computation to investigate
the behavior of \joelprev{other types of} random \joelprev{linear map}s.
Figure~\ref{fig:intro-orthant-universal}
presents the results of the following experiment.
Consider the set $\Omega$ of unit vectors in $\R^{64}$ with nonnegative coordinates:
$$
\Omega := \big\{ \vct{t} \in \R^{64} : \norm{\vct{t}} = 1
	\text{ and } t_i \geq 0 \text{ for $i = 1, \dots, 64$} \big\}.
$$
According to~\eqref{eqn:sdim-orthant}, below, the statistical dimension
$\delta(\Omega) = 32$, so the formula~\eqref{eqn:intro-phase-transition}
tells us to expect a phase transition in the behavior of a uniformly
random \joelprev{partial isometry} when the embedding dimension $d = 32$.
Using~\cite[Ex.~5.3 and Eqn.~(5.10)]{ALMT14:Living-Edge},
we can compute the exact probability that a uniformly
random \joelprev{partial isometry} $\mtx{\Pi} : \R^{64} \to \R^d$
succeeds as a function of $d$. Against this baseline, we compare
the empirical probability (over 100 trials) that a random
\joelprev{linear map} with independent Rademacher\footnote{A \term{Rademacher} random variable takes the two values $\pm 1$
with equal probability.}
entries succeeds.  We also display experiments
for a 20\% nonzero Rademacher \joelprev{linear map} and for
a \joelprev{linear map} with Student $t_5$ entries.
See Section~\ref{sec:repro-research} for
more \joelprev{details}.

From this experiment, we discover that all three \joelprev{linear map}s act
almost exactly the same way as a uniformly random \joelprev{partial isometry}!  This universality
phenomenon is remarkable because the four \joelprev{linear map}s have rather different
distributions.  At present, the literature contains no information about
when---or why---this phenomenon occurs.

\subsection{A Universality Law for the Embedding Dimension}
\label{sec:intro-univ-law1}

The central goal of this paper is to show that there is a substantial class
of random \joelprev{linear map}s for which the phase transition in the embedding dimension
is universal.  Here is a rough statement of the main result.

Let $\Omega$ be a closed, spherically convex set in $\R^D$.
Suppose that the entries of the matrix of the random \joelprev{linear map}
$\mtx{\Pi} : \R^D \to \R^d$ are independent, standardized,\footnote{A \term{standardized} random variable has mean zero and variance one.}
and symmetric,\footnote{A \term{symmetric} random variable $X$ has the same distribution as its negation $-X$.}
with a modest amount of regularity.\footnote{For concreteness, we may assume that the entries of $\mtx{\Pi}$
have five uniformly bounded moments.}
In particular, we may consider random \joelprev{linear map}s that have an arbitrarily small,
but constant, proportion of nonzero entries.
For this class of random \joelprev{linear map}s, we will demonstrate that
\begin{equation} \label{eqn:intro-phase-universal}
\begin{aligned}
d &\leq \delta(\Omega) - o(D)
\quad\text{implies}\quad
\vct{0} \in \mtx{\Pi}(\Omega)
\quad\text{with high probability}; \\
d &\geq \delta(\Omega) + o(D)
\quad\text{implies}\quad
\vct{0} \notin \mtx{\Pi}(\Omega)
\quad\text{with high probability}.
\end{aligned}
\end{equation}
The little-o notation suppresses constants that depend only on
the regularity of the random variables that populate $\mtx{\Pi}$.
See Theorem~\ref{thm:univ-embed} in Section~\ref{sec:thm-univ-embed}
for a more complete statement.

The result~\eqref{eqn:intro-phase-universal} states that a
random \joelprev{linear map} $\mtx{\Pi}$ is likely to succeed
for a spherically convex set $\Omega$ precisely
when the embedding dimension $d$ exceeds the
statistical dimension $\delta(\Omega)$ of the set.
We learn that the phase transition in the embedding dimension is universal
over our class of random \joelprev{linear map}s,
provided that $\Omega$ is not too small
as compared with the ambient dimension $D$.
Note that a random \joelprev{linear map} $\mtx{\Gamma} : \R^D \to \R^d$ with
standard normal entries has the same behavior as
a $d \times D$ uniformly random \joelprev{partial isometry} because, almost surely,
the null space of $\mtx{\Gamma}$ is a uniformly
random subspace of $\R^D$ with codimension $d$.
This analysis explains the dominant features
of the experiment in Figure~\ref{fig:intro-orthant-universal}!

\begin{figure}[t!]
\vspace{0.25in}
\begin{tikzpicture}[auto,scale=1.5]

\draw[red!80!black, very thick] (0,-0.5) -- (0,2.75);
\draw[red!80!black] (0,2.75) node[below left]{$\nullsp(\mtx{P})$};

\draw[blue!80!black, very thick] (-1.5,0) -- (6,0);
\draw[blue!80!black] (-1,0) node[above]{$\range(\mtx{P})$};

\draw[black, fill] (0,0) circle (1pt);

\fill[gray, nearly transparent] (3,0) -- (3,2) -- (5,2) -- (5,0) -- cycle;
\draw[black, very thin, dashed] (3,0) -- (3,2) -- (5,2) -- (5,0) -- cycle;

\draw[fill, blue!20!white] (3,2) to[out=90,in=180] (4,2.5) to[out=0,in=90] (5,2) to[out=-90,in=0] (3.5,1) to[out=180,in=-90] (3,2);
\draw[black, ultra thick] (3,2) to[out=90,in=180] (4,2.5) to[out=0,in=90] (5,2) to[out=-90,in=0] (3.5,1) to[out=180,in=-90] (3,2);
\draw[black] (4,2) node{$E$};

\draw[black, ultra thick] (3,0) -- (5,0);
\draw[black] (4,-0.1) node[below]{$\mtx{P}(E)$};

\draw[black,thin,>=latex,|<->|] (0,.1) -- (3,.1);
\draw[black] (1.5,0.1) node[above] {$\smin(\mtx{P}; E)$};

\end{tikzpicture}

\caption[Geometry of the Restricted Minimum Singular Value]{
\textsl{Geometry of the Restricted Minimum Singular Value.} \label{fig:intro-rsv}
We can identify a \joelprev{partial unitary linear map} $\mtx{\Pi} : \R^D \to \R^d$ with
\joelprev{an orthogonal projector} $\mtx{P} : \R^D \to \R^D$ whose range
is the orthogonal complement of $\nullsp(\mtx{\Pi})$.  In this diagram,
$\mtx{P} : \R^2 \to \R^2$ is \joelprev{an orthogonal projector} applied to
a compact convex set $E$.  The restricted minimum singular value
$\smin(\mtx{P}; E)$ is the distance from the origin to the image
$\mtx{P}(E)$.}
\end{figure}

\subsection{A Universality Law for the Restricted Minimum Singular Value}
\label{sec:intro-univ-law2}

It is also a matter of significant interest to understand the
\emph{stability} of randomized dimension reduction.  We quantify the
stability of the random \joelprev{linear map} $\mtx{\Pi} : \R^D \to \R^d$
on a compact, convex set $E$ in $\R^D$ using the
\term{restricted minimum singular value}:
$$
\smin(\mtx{\Pi}; E) := \min_{\vct{t} \in E} \norm{\mtx{\Pi}\vct{t}}.
$$
When the restricted minimum singular value $\smin(\mtx{\Pi}; E)$
is large, the random \joelprev{image} $\mtx{\Pi}(E)$ is far from the origin,
so the embedding is very stable.  That is, we can deform
either the \joelprev{linear map} $\mtx{\Pi}$ or the set $E$
and still be sure that the embedding succeeds.
When the restricted minimum singular value is small, the random \joelprev{dimension reduction map}
is unstable.  When it is zero, the random \joelprev{dimension reduction map} fails.
See Figure~\ref{fig:intro-rsv} for a diagram.

Our second major theorem is a universality law for
the restricted minimum singular values of a random \joelprev{linear map}.
This result is expressed \joelprev{using} a geometric functional
$\coll{E}_d(E)$, called the \term{$d$-excess width} of $E$:
$$
\coll{E}_d(E) := \Expect \min_{\vct{t} \in E} \big( \sqrt{d} \norm{\vct{t}} + \vct{g} \cdot \vct{t} \big),
\quad\text{where $\vct{g}$ is $\normal(\vct{0}, \Id)$.}
$$
The $d$-excess width increases with the parameter $d$, and it
decreases with respect to set inclusion.
The typical scale of $\coll{E}_d(E)$ is $O(\sqrt{d})$.
In addition, the excess width can be evaluated precisely
in many situations of interest.
See Section~\ref{sec:excess-width} for more details.

Now, suppose that the entries of the matrix of $\mtx{\Pi} : \R^D \to \R^d$
are independent, standardized, and symmetric, with a modest amount
of regularity.  For a compact, convex subset $E$ of the unit ball in $\R^D$,
we will establish that
\begin{equation} \label{eqn:intro-rsv-univ}
\smin(\mtx{\Pi}; E) = \big( \coll{E}_d(E) \big)_+ + o(\sqrt{d})
\quad\text{with high probability.}
\end{equation}
The little-o notation suppresses constants that depend
only on the regularity of the random variables.
Theorem~\ref{thm:univ-rsv} in Section~\ref{sec:thm-univ-rsv}
contains a more detailed statement of~\eqref{eqn:intro-rsv-univ}.

In summary, provided that the set $E$ is not too small,
the restricted minimum singular value $\smin(\mtx{\Pi};E)$
depends primarily on the geometry of the set $E$ and the embedding dimension
$d$, rather than on the distribution of the random \joelprev{linear map}
$\mtx{\Pi}$.  See Figure~\ref{fig:intro-orthant-rsv} for a numerical illustration of this fact.

\begin{figure}
\includegraphics[width=\textwidth]{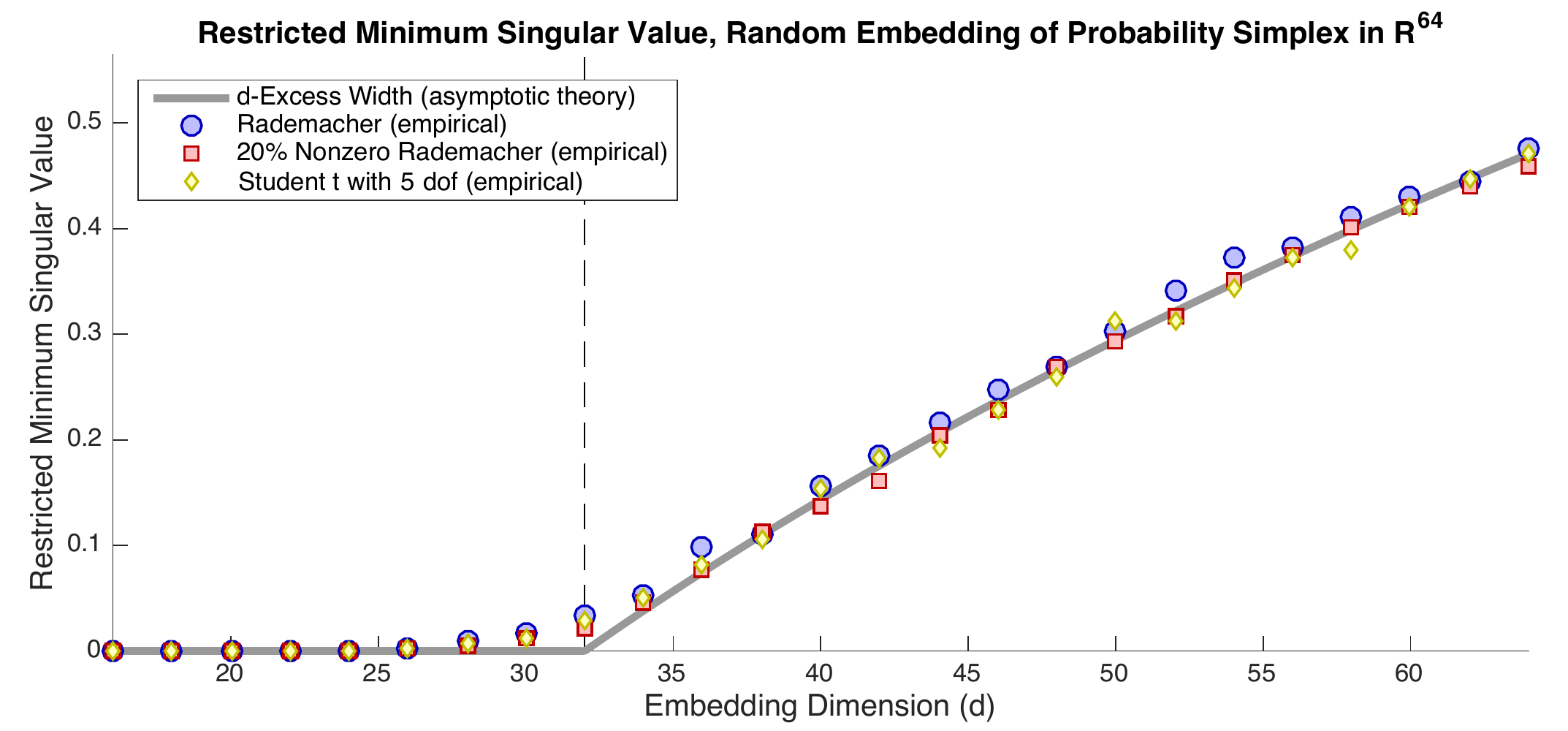}
\caption[Universality of the Restricted Minimum Singular Value]
{\textsl{Universality of the Restricted Minimum Singular Value.} \label{fig:intro-orthant-rsv}
This plot describes the behavior of three types of random \joelprev{linear map}s applied to the
probability simplex
$\Delta_{64} := \{ \vct{t} \in \R^{64} : \sum_i t_i = 1, t_i \geq 0 \}$.
The \textbf{dashed line} marks the minimum dimension $d = 32$ where
uniformly random embedding of the set $\Delta_{64}$ is likely to succeed.
The \textbf{gray curve} interpolates the value
of the positive part $\big( \coll{E}_d(\Delta_{64}) \big)_+$ of the $d$-excess width
of $\Delta_{64}$, obtained from the asymptotic calculation~\eqref{eqn:excess-width-orthant}.
The \textbf{markers} give an empirical estimate (over 100 trials)
for the restricted minimum singular value $\smin(\mtx{\Pi}; \Delta_{64})$
of a random \joelprev{linear map} $\mtx{\Pi} : \R^{64} \to \R^d$ drawn from the specified distribution.
See Section~\ref{sec:repro-research} for more information.}
\end{figure}

\subsection{Applications of Universality}

Randomized dimension reduction and, more generally,
random matrices have become ubiquitous in the information sciences.
As a consequence, the universality laws that we outlined
in Section~\ref{sec:intro-univ-law1} and~\ref{sec:intro-univ-law2}
have a wide range of implications.

\begin{description} \setlength{\itemsep}{0.25pc}
\item[Signal Processing]
The main idea in the field of compressed sensing is that
we can acquire information about a structured signal by
taking random linear measurements.
The literature contains extensive empirical evidence that
many types of random measurements behave in an
indistinguishable fashion.
Our work gives the first explanation of this phenomenon.
(Section~\ref{sec:signal-recovery})

\item[Stochastic Geometry]
Our results also indicate that the facial structure
of the convex hull of independent random vectors,
drawn from an appropriate class, does not depend
heavily on the distribution. (Section~\ref{sec:faces})

\item[Coding Theory]
Random linear codes provide an efficient way to protect
transmissions against error.  We demonstrate that
a class of random codes is resilient against sparse
corruptions.  The number of errors that can be
corrected does not depend on the specific choice
of codebook.  (Section~\ref{sec:coding})

\item[Numerical Analysis]
Our research provides an engineering design principle for
numerical algorithms based on randomized dimension reduction.
We can select the random \joelprev{linear map} that is most favorable for
implementation and still be confident about the detailed behavior
of the algorithm.
This approach allows us to develop efficient numerical methods
that also have rigorous performance guarantees.
(Section~\ref{sec:rand-nla})

\item[Random Matrix Theory]
Our work leads to a new proof of the Bai--Yin law for
the minimum singular value of a random matrix
with independent entries.  (Section~\ref{sec:subspace-embed})

\item[High-Dimensional Statistics]
The LASSO is a widely used method for performing
regression and variable selection.
We demonstrate that the prediction error associated
with a LASSO estimator is universal across a large class
of random designs and statistical error models.
We also show that least-absolute-deviation (LAD)
regression can correct a small number of arbitrary
statistical errors for a wide class of random designs.
(Section~\ref{sec:lasso-err} and Remark~\ref{rem:lad})

\item[Neuroscience]
Our universality laws may even have broader scientific significance.
It has been conjectured, with some experimental evidence, that the
brain may use dimension reduction to compress information~\cite{GG15:Simplicity-Complexity}.
Our universality laws suggest that many types of
uncoordinated (i.e., random) activity
lead to dimension reduction methods with the same behavior.
This result indicates that the hypothesis of neural dimensionality
reduction may be biologically plausible.
\end{description}

\subsection{The Scope of the Universality Phenomenon}

The universality phenomenon developed in this paper extends beyond
the results that we establish, but there are some (apparently) related
problems where universality does not hold.  Let us say a few words
about these examples and non-examples.

First, it does not appear important that the random \joelprev{linear map}
$\mtx{\Pi}$ has independent entries.
There is extensive evidence that structured
random \joelprev{linear map}s also have some universality properties;
for example, see~\cite{DT09:Observed-Universality}.

Second, the restricted minimum singular value is not the only
type of functional where universality is visible.  For instance,
suppose that $f$ is a convex, Lipschitz function.  Consider the
quantity
$$
\min_{\vct{t} \in E}\big( \normsq{ \mtx{\Pi} \vct{t} } + f(\vct{t}) \big).
$$
Optimization problems of this form play a central role in contemporary
statistics and machine learning.  It is likely that the value of this
optimization problem is universal over a wide class of random \joelprev{linear map}s.
Furthermore, we believe that our techniques can be adapted to address
this question. 
On the other hand, geometric functionals involving non-Euclidean norms
need not exhibit universality.  Consider the \term{$\ell_1$ restricted
minimum singular value}
\begin{equation} \label{eqn:l1-min-rsv}
\min_{\vct{t} \in E} \pnorm{\ell_1}{\mtx{\Pi} \vct{t}}.
\end{equation}
There are nontrivial sets $E$ where the value of the
optimization problem~\eqref{eqn:l1-min-rsv}
varies a lot with the choice of the random \joelprev{linear map} $\mtx{\Pi}$.
For instance, let $\mathbf{e}_1 \in \R^D$ be the first standard basis vector,
and define the shifted Euclidean ball
$$
E_{\alpha} := \big\{ \vct{t} \in \R^{D} : \norm{ \vct{t} - \mathbf{e}_1 } \leq \alpha \big\}
\quad\text{for $0 \leq \alpha \leq 1$.}
$$
Using Theorem~\ref{thm:univ-embed}
and the calculation~\cite[Sec.~3.4]{ALMT14:Living-Edge}
of the statistical dimension of a circular cone, we can verify
that there is a universal phase transition for successful embedding
of the set $E_{\alpha}$ when the embedding dimension $d = \alpha^2 D + O(1)$.
The result~\eqref{eqn:intro-rsv-univ}
implies that the minimum restricted singular value
of $E_{\alpha}$ also takes a universal value.
At the same time, Figure~\ref{fig:l1-rsv-nonuniversal} illustrates that the
functional~\eqref{eqn:l1-min-rsv} is not universal for the set $E_{\alpha}$.

Finally, functionals involving maximization do not necessarily exhibit
universality.  The \term{restricted maximum singular value}
is defined as
$$
\smax(\mtx{\Pi}; E) := \max_{\vct{t} \in E} \norm{\mtx{\Pi} \vct{t}}.
$$
It is not hard to produce examples where the restricted maximum
singular value depends on the choice of the random matrix $\mtx{\Pi}$.
For instance, Figure~\ref{fig:max-rsv-nonuniversal} demonstrates that
the random \joelprev{linear map} $\mtx{\Pi}$ has a substantial impact
on the maximum singular value $\smax(\mtx{\Pi}; \Delta_{D})$
restricted to the probability simplex $\Delta_D$ in $\R^D$.
This observation may surprise researchers in random matrix theory
because the ordinary maximum singular value is universal over the
class of random matrices with independent entries~\cite[Thm.~3.10]{BS10:Spectral-Analysis}.

\begin{figure}
\includegraphics[width=\textwidth]{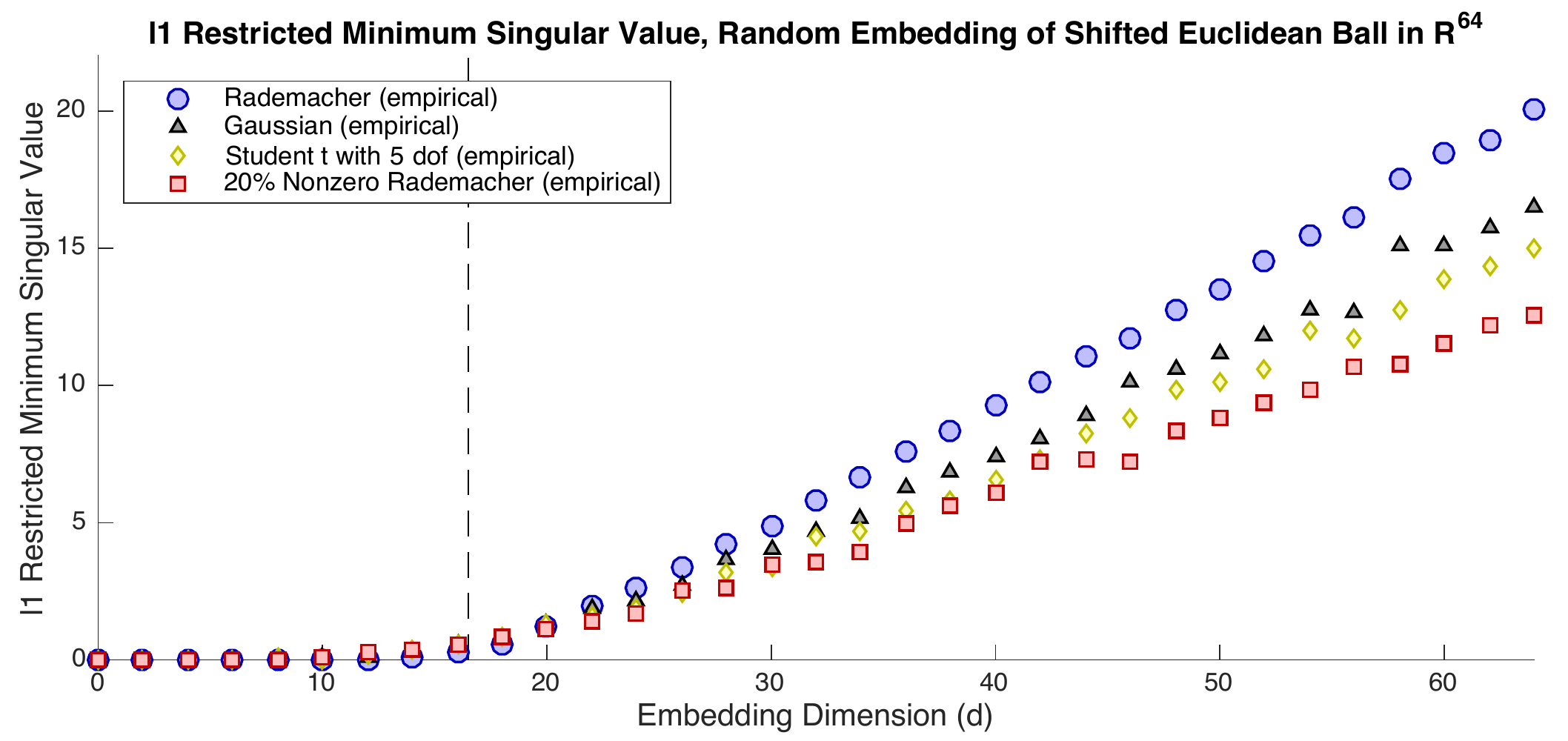}
\caption[Non-Universality of the $\ell_1$ Restricted Minimum Singular Value]
{\textsl{Non-Universality of the $\ell_1$ Restricted Minimum Singular Value.} \label{fig:l1-rsv-nonuniversal}
This plot describes the behavior of four types of random \joelprev{linear map}s applied to the
set $E_{1/2} := \{ \vct{t} \in \R^{64} : \norm{ \vct{t} - \mathbf{e}_1 } \leq 1/2 \}$,
where $\mathbf{e}_1$ is the first standard basis vector.
The \textbf{dashed line} stands at the phase transition
$d \approx 16$ for the embedding dimension of the set $E_{1/2}$.
The \textbf{markers} give an empirical estimate (over 100 trials)
for the $\ell_1$ restricted minimum singular value
$\min_{\vct{t} \in E_{1/2}} \pnorm{\ell_1}{\mtx{\Pi}\vct{t}}$
of a random \joelprev{linear map} $\mtx{\Pi} : \R^{64} \to \R^{d}$ with the specified
distribution as a function of the embedding dimension $d$.
See Section~\ref{sec:repro-research} for more details.}
\end{figure}

\begin{figure}
\includegraphics[width=\textwidth]{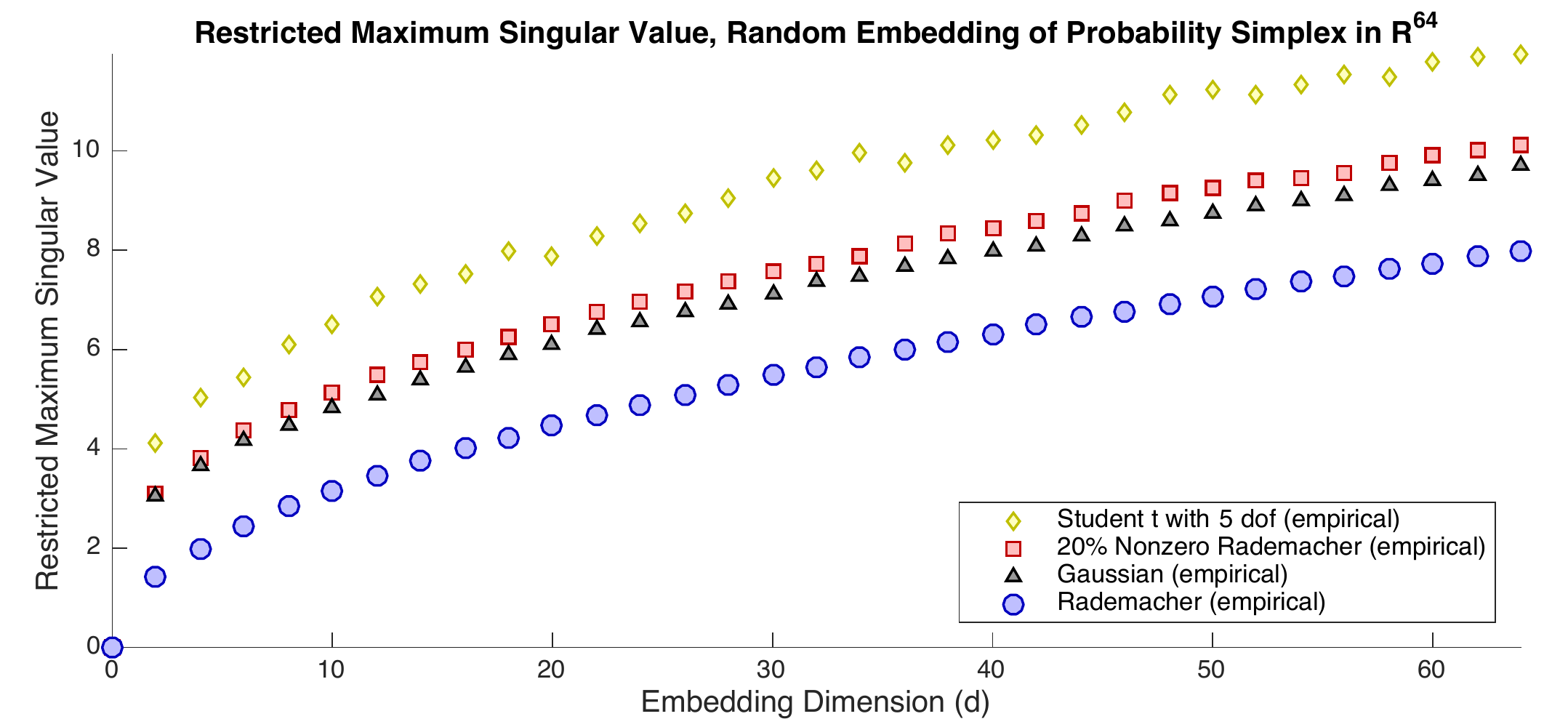}
\caption[Non-Universality of the Restricted Maximum Singular Value]
{\textsl{Non-Universality of the Restricted Maximum Singular Value.} \label{fig:max-rsv-nonuniversal}
This plot describes the behavior of four types of random \joelprev{linear map}s applied to the
probability simplex
$\Delta_{64} := \{ \vct{t} \in \R^{64} : \sum_i t_i = 1, t_i \geq 0 \}$.
The \textbf{markers} give an empirical estimate (over 100 trials)
for the restricted maximum singular value $\smax(\mtx{\Pi}; \Delta_{64})$
of a random \joelprev{linear map} $\mtx{\Pi} : \R^{64} \to \R^d$ with the specified
distribution.  See Section~\ref{sec:repro-research} for more details.}
\end{figure}

\subsection{Reproducible Research}
\label{sec:repro-research}

This paper is accompanied by \texttt{MATLAB} code~\cite{Tro15:Universality-Code}
that reproduces each figure from stored data.  This software can also repeat
the numerical experiments to obtain new instances of each figure.
By modifying the parameters in the code, the reader may explore how changes
affect the universality phenomenon.  We omit meticulous descriptions of
the numerical experiments from the text because these recitations are
tiresome for the reader and the code offers superior documentation.

\subsection{Roadmap}

This paper is divided into five parts.  Part~\ref{part:main}
offers a complete presentation of our universality laws, some
comments about the proofs, and some prospects for further research.
Part~\ref{part:applications} outlines the applications of universality
in several disciplines, and it contains more empirical confirmation
of our analysis.  Part~\ref{part:rsv} presents the proof that the restricted minimum singular value exhibits universal behavior;
this argument also yields the condition in which randomized embedding
is likely to succeed.
Part~\ref{part:rap} contains the proof of the condition in which
randomized embedding is likely to fail.
Finally, Part~\ref{part:back-matter} includes \joelprev{background
results}, the acknowledgments,
and the list of works cited.

\subsection{Notation}

Let us summarize our notation.
We use italic lowercase letters (for example, $a$) for scalars,
boldface lowercase letters ($\vct{a}$) for vectors,
and boldface uppercase letters ($\mtx{A}$) for matrices.
Uppercase italic letters ($A$) may denote scalars,
sets, or random variables, depending on the context.
Roman letters ($\cnst{c}$, $\cnst{C}$) denote
universal constants that may change from appearance
to appearance.  We sometimes delineate specific
constant values with subscripts ($\cnst{C}_{\rm rad}$).

Given a vector $\vct{a}$ and a set $J$ of indices,
we write $\vct{a}_J$ for the vector
restricted to those indices.  In particular,
$a_j$ is the $j$th coordinate of the vector.
Given a matrix $\mtx{A}$ and sets $I$ and $J$
of row and column indices, we write $\mtx{A}_{IJ}$
for the submatrix indexed by $I$ and $J$.
In particular, $a_{ij}$ is the component
in the $(i, j)$ position of $\mtx{A}$.  If there
is a single index $\mtx{A}_J$, it always refers to
the \emph{column} submatrix indexed by $J$.  

We always work in a real Euclidean space.
The symbol $\mathsf{B}^n$ is the unit ball
in $\R^n$, and $\mathsf{S}^{n-1}$ is the
unit sphere in $\R^n$.
The unadorned norm $\norm{\cdot}$
refers to the $\ell_2$ norm of a vector or the
$\ell_2$ operator norm of a matrix.
We use the notation $\vct{s} \cdot \vct{t}$
for the standard inner product of vectors
$\vct{s}$ and $\vct{t}$ with the same length.
We write ${}^\adj$ for the transpose of a
vector or a matrix.

For a real number $a$, we define the positive-part
and negative-part functions:
$$
(a)_+ := \max\{a, 0\}
\quad\text{and}\quad
(a)_- := \max\{-a, 0\}.
$$
These functions bind before powers,
so $(a)_+^2$ is the square of the positive part of $a$.

The symbols $\Expect$ and $\Var$ refer to the expectation and variance
of a random variable, and $\Prob{\cdot}$ returns the probability of an event.
We use the convention that powers bind before the expectation,
so $\Expect X^2$ returns the expectation of the square.  We write
$\mathbb{1}A$ for the 0--1 indicator random variable of the event $A$.

A \term{standardized} random variable has mean zero and variance one.
A \term{symmetric} random variable $X$ has the same distribution as
its negation $-X$.  We reserve the letter $\gamma$ for a standard normal random variable;
the boldface letters $\vct{\gamma}, \vct{g}, \vct{h}$ are always standard normal vectors;
and $\mtx{\Gamma}$ is a standard normal matrix.
The dimensions are determined by context.

\newpage

\part{Main Results}
\label{part:main}

This part of the paper introduces two new universality laws,
one for the phase transition in the embedding dimension
and a second one for the restricted minimum singular values
of a random \joelprev{linear map}.
We also include some high-level remarks about the proofs, but
we \joelprev{postpone the details} to Parts~\ref{part:rsv}
and~\ref{part:rap}.

In Section~\ref{sec:rdm-mtx}, we introduce two models for
random \joelprev{linear map}s that we use throughout the paper.
Section~\ref{sec:univ-embed} presents the universality
result for the embedding dimension,
and Section~\ref{sec:univ-rsv}
presents the result for restricted singular values.

\section{Random Matrix Models}
\label{sec:rdm-mtx}

To begin, we present two models for random \joelprev{linear map}s that
arise in our study of universality.  One model includes
bounded random matrices with independent entries,
while the second allows random matrices
with heavy-tailed entries.

\subsection{Bounded Random Matrix Model}

Our first model contains matrices whose entries are uniformly bounded.
This model is useful for some applications, and it plays a central
role in the proofs of our universality results.

\begin{model}[Bounded Matrix Model] \label{mod:bdd-mtx}
Fix a parameter $B \geq 1$.
A random matrix in this model has the following properties:

\begin{itemize}
\item	\textbf{Independence.}  The entries are stochastically independent random variables.

\item	\textbf{Standardization.}  Each entry has mean zero and variance one.

\item	\textbf{Symmetry.}  Each entry has a symmetric distribution.

\item	\textbf{Boundedness.}  Each entry $X$ of the matrix is uniformly bounded: $\abs{X} \leq B$.
\end{itemize}

\noindent
Identical distribution of entries is not required.
In some cases, which we will note, the symmetry requirement can be dropped.
\end{model}

This model includes several types of random matrices that appear frequently in practice.

\begin{example}[Rademacher Matrices]
Consider a random matrix whose entries are independent, Rademacher random variables.
This type of random matrix meets the requirements of Model~\ref{mod:bdd-mtx} with $B = 1$.
Rademacher matrices provide the simplest example of a random \joelprev{linear map}.
They are appealing for many applications because they are discrete.
\end{example}

\begin{example}[Sparse Rademacher Matrices]
Let $\alpha \in (0, 1]$ be a thinning parameter.  Consider a random variable $X$ with distribution
$$
X = \begin{cases}
	+ \alpha^{-1/2}, & \text{with probability $\alpha/2$}; \\
	- \alpha^{-1/2}, & \text{with probability $\alpha/2$}; \\
	0, & \text{otherwise.}
\end{cases}
$$
A random matrix whose entries are independent copies of $X$ satisfies Model~\ref{mod:bdd-mtx}
with $B = \alpha^{-1/2}$.  These random matrices are useful because we can control the sparsity.
\end{example}

\subsection{Heavy-Tailed Random Matrix Model}

Next, we introduce a \joelprev{more} general class of random matrices
that includes heavy-tailed examples.  Our main results concern
random \joelprev{linear map}s from this model.

\begin{model}[$p$-Moment Model] \label{mod:p-mom-mtx}
Fix parameters $p > 4$ and $\nu \geq 1$.
A random matrix in this model has the following properties:

\begin{itemize}
\item	\textbf{Independence.}  The entries are stochastically independent random variables.

\item	\textbf{Standardization.}  Each entry has mean zero and variance one.

\item	\textbf{Symmetry.}  Each entry has a symmetric distribution.

\item	\textbf{Bounded Moments.}  Each entry $X$ has a uniformly bounded $p$th moment:
$\Expect \abs{X}^{p} \leq \nu^{p}$.
\end{itemize}

\noindent
Identical distribution of entries is not required.
\end{model}

Model~\ref{mod:p-mom-mtx} subsumes Model~\ref{mod:bdd-mtx},
but it also encompasses many other types of random matrices.

\begin{example}[Gaussian Matrices]
Consider an $m \times n$ random matrix $\mtx{\Gamma}$ whose entries are independent,
standard normal random variables.
The matrix $\mtx{\Gamma}$ satisfies the requirements of Model~\ref{mod:p-mom-mtx}
for each $p > 4$ with $\nu \leq \sqrt{p}$.

In some contexts, we can use a Gaussian random matrix to study the behavior
of a uniformly random \joelprev{partial isometry}.  Indeed, the null space, $\nullsp(\mtx{\Gamma})$,
of the standard normal matrix is a uniformly random subspace
of $\R^n$ with codimension $\min\{m, n\}$, almost surely.
\end{example}

Model~\ref{mod:p-mom-mtx} contains several well-studied
classes of random matrices.

\begin{example}[Subgaussian Matrices]
Suppose that the entries of the random matrix are independent, and
each entry $X$ is symmetric, standardized,
and uniformly subgaussian.  That is, there is a parameter $\alpha > 0$ where
$$
\left( \Expect \abs{X}^p \right)^{1/p} \leq \alpha \sqrt{p}
\quad\text{for all $p \geq 1$.}
$$
These matrices are included in Model~\ref{mod:p-mom-mtx} 
for each $p > 4$ with $\nu \leq \alpha \sqrt{p}$.
Rademacher, sparse Rademacher, and Gaussian matrices fall in this category.
\end{example}

\begin{example}[Log-Concave Entries]
Suppose that the entries of the random matrix are independent,
and each entry $X$ is a symmetric, standardized, log-concave random variable.
Recall that a real log-concave random variable $X$ has a density $f$ of the form
$$
f(x) := \frac{1}{Z} \econst^{-h(x)}
$$
where $h : \R \to \R$ is convex and $Z$ is a normalizing constant.
It can be shown~\cite[Thm.~2.4.6]{BGVV14:Geometry-Isotropic}
that these matrices are included in Model~\ref{mod:p-mom-mtx}
for any $p > 4$.
\end{example}

In contrast with most research on randomized dimension reduction,
we allow the random \joelprev{linear map} to have entries with heavy tails.
Here is one such example.

\begin{example}[Student $t$ Matrices]
Suppose that each entry of the random matrix is an independent
Student $t$ random variable with $\alpha$ degrees of freedom, for $\alpha > 4$. 
This matrix also follows Model~\ref{mod:p-mom-mtx} for each $p < \alpha$.
\end{example}

Finally, we present a general construction that takes a matrix from
Model~\ref{mod:p-mom-mtx} and produces a sparse matrix that
also satisfies the model, albeit with a larger value of the parameter $\nu$.

\begin{example}[Sparsified Random Matrix]
Let $\mtx{\Phi}$ be a random matrix that satisfies Model~\ref{mod:p-mom-mtx}
for some value $p > 4$ and $\nu \geq 1$.  Let $\alpha \in (0, 1]$ be a
thinning parameter, and construct a new random matrix $\mtx{\Phi}^{(\alpha)}$
whose entries $\phi^{(\alpha)}_{ij}$ are independent random variables with the distribution
$$
\phi^{(\alpha)}_{ij} = \begin{cases}
	\alpha^{-1/2} \phi_{ij}, & \text{with probability $\alpha$} \\
	0, & \text{with probability $1 - \alpha$.}
\end{cases}
$$
Then the sparsified random matrix $\mtx{\Phi}^{(\alpha)}$
still follows Model~\ref{mod:p-mom-mtx}
with the same value of $p$ and with a modified value $\nu'$ of the other parameter:
$\nu' = \alpha^{-1/p} \nu$. 
\end{example}

\section{A Universality Law for the Embedding Dimension}
\label{sec:univ-embed}

In this section, we present detailed results which show that,
for a large class of sets, the embedding dimension is universal
over a large class of \joelprev{linear map}s.

\subsection{Embedding Dimension: Problem Formulation}
\label{sec:univ-embed-problem}

Let us begin with a more rigorous statement of the problem.

\begin{itemize}
\item	Fix the ambient dimension $D$. 
\item	Let $E$ be a nonempty, compact subset of $\R^D$ that does not contain the origin.

\item	Let $\mtx{\Pi} : \R^D \to \R^d$ be a random \joelprev{linear map} with embedding dimension $d$.
\end{itemize}

\noindent
We are interested in understanding the probability that the random projection
$\mtx{\Pi}(E)$ does not contain the origin.
That is, we want to study the following predicate:
\begin{equation} \label{eqn:univ-embed-predicate}
\vct{0} \notin \mtx{\Pi}(E)
\quad\text{or, equivalently,}\quad
E \cap \nullsp(\mtx{\Pi}) = \emptyset.
\end{equation}
We say that the random projection \term{succeeds} when the property~\eqref{eqn:univ-embed-predicate}
holds; otherwise, the random projection \term{fails}.

Our goal is to argue that there is a large class of sets
and a large class of random \joelprev{linear map}s where the probability
that~\eqref{eqn:univ-embed-predicate} holds depends primarily
on the choice of the embedding dimension $d$ and an appropriate
measure of the size of the set $E$.  In particular, the probability
does \emph{not} depend significantly on the distribution
of the random \joelprev{linear map} $\mtx{\Pi}$.

\subsection{Some Concepts from Spherical Geometry}

The property~\eqref{eqn:univ-embed-predicate} does not reflect
the scale of the points in the set $E$.  As a consequence,
it is appropriate to translate the problem into a question
about spherical geometry.  We begin with a definition.

\begin{definition}[Spherical Retraction]
The \term{spherical retraction} map $\vct{\theta} : \R^D \to \R^D$ is defined as
$$
\vct{\theta}(\vct{t}) :=
\begin{cases}
	\vct{t}/\norm{\vct{t}}, & \vct{t} \neq \vct{0} \\
	\vct{0}, & \vct{t} = \vct{0}.
\end{cases}
$$
\end{definition}

\noindent
For every set $E$ in $\R^D$, we have the equivalence
\begin{equation} \label{eqn:univ-embed-spherical}
\vct{0} \notin \mtx{\Pi}(E)
\quad\text{if and only if}\quad
\vct{0} \notin \mtx{\Pi}(\vct{\theta}(E)).
\end{equation}
Therefore, we may pass to the spherical retraction
$\vct{\theta}(E)$ of the set $E$ without loss of generality.

To obtain a complete analysis of when random projection succeeds or fails,
we must restrict our attention to sets that have a convexity property.

\begin{definition}[Spherical Convexity]
A nonempty subset $\Omega$ of the unit sphere in $\R^D$ is \term{spherically convex}
when the pre-image $\vct{\theta}^{-1}(\Omega) \cup \{\vct{0}\}$ is a convex cone.
By convention, the empty set is also spherically convex.
\end{definition}

\noindent
In particular, suppose that $T$ is a compact, convex set that does not contain
the origin.  Then the retraction $\vct{\theta}(T)$ is compact and spherically convex.

Next, we introduce a polarity operation for spherical sets
that supports some crucial duality arguments.

\begin{definition}[Spherical Polarity]
Let $\Omega$ be a subset of the unit sphere $\mathsf{S}^{D-1}$ in $\R^D$.
The \term{polar} of $\Omega$ is the set
$$
\Omega^\circ := \big\{ \vct{x} \in \mathsf{S}^{D-1} : \vct{x} \cdot \vct{t} \leq 0
\text{ for all $\vct{t} \in \Omega$} \big\}.
$$
By convention, the polar of the empty set is the whole sphere: $\emptyset^\polar := \mathsf{S}^{D-1}$.
\end{definition}

\noindent
This definition is simply the spherical analog of polarity for cones.
It can be verified that $\Omega^\circ$ is always closed and spherically convex.
Furthermore, the polarity operation is an involution on the class of closed,
spherically convex sets in $\mathsf{S}^{D-1}$.

\subsection{The Statistical Dimension Functional}
\label{sec:stat-dim}

We have the intuition that, for a given compact subset $E$ of $\R^D$,
the probability that a random \joelprev{linear map} $\mtx{\Pi} : \R^D \to \R^d$
succeeds decreases with the ``content'' of the set $E$.
In the present context, the correct notion of content involves
a geometric functional called the statistical dimension.

\begin{definition}[Statistical Dimension] \label{def:stat-dim}
Let $\Omega$ be a nonempty subset of the unit sphere in $\R^D$.  The
\term{statistical dimension} $\delta(\Omega)$ is defined as
\begin{equation} \label{eqn:stat-dim-sphere}
\delta(\Omega) := \Expect \bigg[
	\bigg( \sup_{\vct{t} \in \Omega} \vct{g} \cdot \vct{t} \bigg)_+^2 \bigg]
\end{equation}
In addition, define $\delta(\emptyset) := 0$.
We extend the statistical dimension to a general subset $T$ in $\R^D$
using the spherical retraction:
\begin{equation} \label{eqn:stat-dim-unbdd}
\delta(T) := \delta( \vct{\theta}(T) ).
\end{equation}
Recall that the random vector $\vct{g} \in \R^D$ has the standard normal distribution.
\end{definition}

The statistical dimension has a number of striking properties.
We include a short summary;
see~\cite[Prop.~3.1]{ALMT14:Living-Edge} and the citations there
for further information.

\begin{itemize}
\item	For a set $T$ in $\R^D$, the statistical dimension $\delta(T)$
takes values in the range $[0, D]$.

\item	The statistical dimension is increasing with respect to set inclusion:
$S \subset T$ implies that $\delta(S) \leq \delta(T)$.

\item	The statistical dimension agrees with the linear dimension on subspaces:
\begin{equation*} \label{eqn:sdim-subspace}
\delta(L) = \dim(L)
\quad\text{for each subspace $L$ in $\R^D$.}
\end{equation*}

\item	The statistical dimension interacts nicely with polarity:
\begin{equation} \label{eqn:sdim-polar}
\delta(\Omega^\polar) = D - \delta(\Omega)
\quad\text{for each spherically convex set $\Omega \in \R^D$.}
\end{equation}
The same relation holds if we replace $\Omega$ by a convex cone $K$ in $\R^D$
and use conic polarity.
\end{itemize}

As a specific example of~\eqref{eqn:sdim-polar},
we can evaluate the statistical dimension of the nonnegative orthant $\R_+^D$.
Indeed, the orthant is a self-dual cone, so
\begin{equation} \label{eqn:sdim-orthant}
\delta(\R_+^D) = D/2.
\end{equation}
There is also powerful machinery,
developed in~\cite{Sto09:Various-Thresholds,OH10:New-Null-Space,CRPW12:Convex-Geometry,ALMT14:Living-Edge,FM14:Corrupted-Sensing},
for computing the statistical dimension of a descent cone
of a convex function.
Finally, we mention that the statistical dimension can often be evaluated
by sampling Gaussian vectors and approximating the expectation
in~\eqref{eqn:stat-dim-sphere} with an empirical average.

\begin{remark}[Gaussian Width]
The statistical dimension is related to the \term{Gaussian width} functional.
For a bounded set $T$ in $\R^D$, the Gaussian width $\coll{W}(T)$ is defined as
\begin{equation} \label{eqn:gauss-width}
\coll{W}(T) := \Expect \sup_{\vct{t} \in T} \vct{g} \cdot \vct{t}.
\end{equation}
For a subset $\Omega$ of the unit sphere in $\R^D$, we have the inequalities
\begin{equation} \label{eqn:sdim-width}
\coll{W}^2(\Omega) \leq \delta(\Omega) \leq \coll{W}^2(\Omega) + 1.
\end{equation}
See~\cite[Prop.~10.3]{ALMT14:Living-Edge} for a proof.
The relation~\eqref{eqn:sdim-width} allows us to pass between
the squared width and the statistical dimension.

The Gaussian width of a set is closely related to its
mean width~\cite[Sec.~1.3.5]{Ver15:Estimation-High}.
The mean width is a canonical functional in the Euclidean
setting~\cite[Sec.~7.3]{Gru07:Convex-Discrete},
but it is not quite the right choice for spherical geometry.
We have chosen to work with the statistical dimension
because it has many geometric properties that the width
lacks in the spherical setting.
\end{remark}

\begin{remark}[Convex Cones]
The papers~\cite{ALMT14:Living-Edge,MT14:Steiner-Formulas}
define the statistical dimension of a convex cone
using intrinsic volumes.  It can be shown that
the statistical dimension is the canonical
(additive, continuous) extension of the
linear dimension to the class of
closed convex cones~\cite[Sec.~5.6]{ALMT14:Living-Edge}.
Our general definition~\eqref{eqn:stat-dim-unbdd}
agrees with the original definition on this class.
\end{remark}

\subsection{Theorem~\ref{thm:univ-embed}: Universality of the Embedding Dimension}
\label{sec:thm-univ-embed}

We are now prepared to state our main result on the probability that
a random \joelprev{linear map} succeeds or fails for a given set.

\begin{bigthm}[Universality of the Embedding Dimension] \label{thm:univ-embed}
Fix the parameters $p > 4$ and $\nu \geq 1$ for Model~\ref{mod:p-mom-mtx}.
Choose parameters $\varrho \in (0,1)$ and $\eps \in (0, 1)$.
There is a number $N := N(p, \nu, \varrho, \eps)$ for which the
following statement holds.
Suppose that

\begin{itemize}
\item	The ambient dimension $D \geq N$.

\item	$E$ is a nonempty, compact subset of $\R^D$ that does not contain the origin.

\item	The statistical dimension of $E$ is proportional to the ambient dimension: $\delta(E) \geq \varrho D$.

\item	The $d \times D$ random \joelprev{linear map} $\mtx{\Pi}$ obeys Model~\ref{mod:p-mom-mtx}
with parameters $p$ and $\nu$.
\end{itemize}
Then
\begin{align} \label{eqn:univ-embed-succ}
d &\geq (1 + \eps) \, \delta(E)
\quad\text{implies}\quad
\Prob{ \vct{0} \notin \mtx{\Pi}(E) } \geq 1 - \cnst{C}_p D^{1 - p/4}. \tag{a}
\intertext{Furthermore, if $\vct{\theta}(E)$ is spherically convex, then}
\label{eqn:univ-embed-fail}
d &\leq (1 - \eps) \, \delta(E)
\quad\text{implies}\quad
\Prob{ \vct{0} \in \mtx{\Pi}(E) } \geq 1 - \cnst{C}_p D^{1-p/4}.	\tag{b}
\end{align}
The constant $\cnst{C}_p$ depends only on
the parameter $p$ in the random matrix model.
\end{bigthm}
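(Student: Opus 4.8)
The plan is to reduce everything to a statement about Gaussian matrices via a universality/invariance argument, and then invoke the known Gaussian phase transition \eqref{eqn:intro-phase-transition}. The key geometric quantity to track is the minimum of $\norm{\mtx{\Pi}\vct{t}}$ over $\vct{t}$ in the spherical retraction $\Omega := \vct{\theta}(E)$, since by \eqref{eqn:univ-embed-spherical} the predicate $\vct{0}\notin\mtx{\Pi}(E)$ is equivalent to $\min_{\vct{t}\in\Omega}\norm{\mtx{\Pi}\vct{t}}>0$. Thus both \eqref{eqn:univ-embed-succ} and \eqref{eqn:univ-embed-fail} follow once we know that this restricted minimum singular value concentrates around the value it takes for a Gaussian matrix $\mtx{\Gamma}$, up to an error that is $o(\sqrt d)$ but, crucially, small \emph{relative to} the gap created by the hypothesis $d \gtrless (1\pm\eps)\delta(E)$. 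The $\delta(E)\geq\varrho D$ hypothesis is exactly what makes $\sqrt{\delta(E)}$ comparable to $\sqrt D$, so an $o(\sqrt D)$ error is negligible against the $\eps\,\delta(E)$-sized margin; this is why the conclusion degrades from the sharp $\cnst C\sqrt{\delta}$ window of \eqref{eqn:intro-phase-transition} to a constant-fraction window.

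First I would set up the comparison for \eqref{eqn:univ-embed-succ}. Reduce to the Bounded Matrix Model (Model~\ref{mod:bdd-mtx}) by a truncation argument: split each heavy-tailed entry into a bounded part plus a small remainder, the latter controlled in operator norm by the $p$th-moment bound, which is where the $D^{1-p/4}$ failure probability and the restriction $p>4$ enter. For the bounded model, run a Lindeberg-style swapping argument, replacing the entries of $\mtx{\Pi}$ one at a time (or one row/column block at a time) by Gaussians, and control the change in $\min_{\vct{t}\in\Omega}\norm{\mtx{\Pi}\vct{t}}$ at each swap. The functional $\vct{\Pi}\mapsto\min_{\vct{t}\in\Omega}\norm{\mtx{\Pi}\vct{t}}$ is not smooth, so I would either smooth the min (soft-min / log-sum-exp over a net of $\Omega$) and the Euclidean norm near zero, apply the third-order Taylor/Lindeberg estimate, and then remove the smoothing; or invoke a Gaussian min–max comparison (Gordon-type) structure already visible in the definition of $\delta$. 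Either way the per-swap error is $O(B^3 D^{-3/2})$ times a derivative bound, summing over $D^2$ entries to an $O(B^3/\sqrt D)$-type error after accounting for the $\sqrt D$ scale — i.e. $o(\sqrt D)$ once $D$ is large depending on $B=B(p,\nu,\varrho,\eps)$ (itself chosen via the truncation level). Combined with the Gaussian fact that $\min_{\vct{t}\in\Omega}\norm{\mtx{\Gamma}\vct{t}}$ is bounded below by a positive constant times $\sqrt D$ with high probability when $d\geq(1+\eps)\delta(\Omega)$ (this is \eqref{eqn:intro-phase-transition} plus concentration of the empirical min singular value, e.g. via the excess-width functional $\coll E_d$), we get \eqref{eqn:univ-embed-succ}.

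For \eqref{eqn:univ-embed-fail} I would pass to the polar. When $\Omega=\vct{\theta}(E)$ is spherically convex and closed, the event $\vct{0}\in\mtx{\Pi}(\Omega)$, equivalently $\Omega\cap\nullsp(\mtx{\Pi})\neq\emptyset$, admits a dual certificate: there should be a vector in $\range(\mtx{\Pi}^\adj)$, i.e. of the form $\mtx{\Pi}^\adj\vct{y}$, lying in the polar cone of $\cone(\Omega)$, together with a complementary feasibility condition — concretely, one reformulates ``$\nullsp(\mtx{\Pi})$ misses $\Omega$'' as a statement about whether the random subspace $\range(\mtx{\Pi}^\adj)$ (a $d$-dimensional random subspace) hits the polar $\Omega^\polar$, and $\delta(\Omega^\polar)=D-\delta(\Omega)$ by \eqref{eqn:sdim-polar}. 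Then $d\leq(1-\eps)\delta(\Omega)$ translates into $D-d\geq D-(1-\eps)\delta(\Omega)\geq\delta(\Omega^\polar)+\eps\delta(\Omega)\geq(1+\eps')\delta(\Omega^\polar)$, so the ``success'' side of the argument, applied to $\Omega^\polar$ and the transposed map, yields that the polar \emph{is} missed by the appropriate random subspace, which is exactly $\vct{0}\in\mtx{\Pi}(\Omega)$. The main obstacle, I expect, is the non-smoothness in the Lindeberg step: making the smoothing-of-the-min quantitatively compatible with both the $\sqrt D$ scaling and the $D^{1-p/4}$ tail (choosing net fineness, temperature of the soft-min, and truncation level as functions of $p,\nu,\varrho,\eps$ so that all error terms are simultaneously $o(\eps\delta(E))$) is the delicate bookkeeping, and handling the boundary behavior of $\vct{w}\mapsto\norm{\vct{w}}$ near $\vct{w}=\vct 0$ (where it is not differentiable) requires care — one typically replaces $\norm{\mtx{\Pi}\vct{t}}$ by $\sqrt{\normsq{\mtx{\Pi}\vct{t}}+\eta}$ and later sends $\eta\to0$, tracking how the threshold shifts.
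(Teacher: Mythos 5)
Your outline for \eqref{eqn:univ-embed-succ} is on the right track (truncate to the bounded model, smooth the minimum, exchange entries \`a la Lindeberg, apply the Gaussian Minimax Theorem to identify the excess width), and it is indeed the paper's approach. But there are two concrete gaps.

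The first is in the Lindeberg step for \eqref{eqn:univ-embed-succ}. Your claimed per-swap error $O(B^3 D^{-3/2})$ implicitly requires every $\vct{t}\in\Omega$ to have $\linf{\vct{t}}\lesssim D^{-1/2}$. That is false in general: $\Omega$ may contain vectors concentrated on a single coordinate, and for such a $\vct{t}$ the Lindeberg third-derivative bound for the columns where $\vct{t}$ is large is $\Theta(1)$, not $\Theta(D^{-3/2})$, so the total exchange error is not small. Naively, for any fixed small set of columns you could try to exclude, some other $\vct{t}\in\Omega$ is spiky there; so you cannot choose the columns to swap independently of $\vct{t}$. The paper's fix is the \emph{dissection of the index set}: write $\Omega=\bigcup_{\#J=k}\Omega_J$ where $\Omega_J$ collects vectors that are small (at most $k^{-1/2}$) on $J^c$; keep the columns $\mtx{\Pi}_J$ unchanged, replace only $\mtx{\Pi}_{J^c}$ by Gaussians, and then show (via concentration and a union bound over the $\binom{D}{k}$ sets $J$, plus a Gaussian minimax analysis for the hybrid matrix) that the RSV of the whole set is controlled by the minimum over the pieces. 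This is the essential idea your proposal is missing, and it is precisely what makes the bounded-model argument work.

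The second gap is more serious and concerns \eqref{eqn:univ-embed-fail}. You propose ``applying the success side to $\Omega^\polar$ and the transposed map.'' That works in the Gaussian or Haar setting because $\nullsp(\mtx{\Pi})$ and $\range(\mtx{\Pi}^\adj)$ are both rotationally invariant, so $\range(\mtx{\Pi}^\adj)$ can be realized as $\nullsp(\mtx{Q})$ for another map $\mtx{Q}$ of the same type. Universality is exactly the setting where this invariance is lost: for a generic $\mtx{\Pi}$ from Model~\ref{mod:p-mom-mtx}, $\range(\mtx{\Pi}^\adj)$ is not the null space of any matrix in the model, and the RSV bound \eqref{eqn:univ-embed-succ} does not apply to it. The paper instead introduces a genuinely different functional --- the RAP functional $\tau_{\min}(\mtx{A};K)=\min_{\norm{\vct{t}}=1}\min_{\vct{s}\in K^\polar}\norm{\vct{s}-\mtx{A}\vct{t}}$ of Proposition~\ref{prop:annihilate} --- and proves a separate universality law for it (Theorem~\ref{thm:car-bdd} and Corollary~\ref{cor:car-four}). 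Because $\tau_{\min}$ is a min--max over two nontrivial sets (after duality, $\tau_{\min}=\min_{\norm{\vct{t}}=1}\max_{\vct{u}\in K\cap\mathsf{B}^m}\vct{u}\cdot\mtx{A}\vct{t}$, whereas the RSV has the isotropic ball $\mathsf{B}^m$ in place of $K\cap\mathsf{B}^m$), the analysis is substantially harder: one must truncate the unbounded polar cone, dissect \emph{both} index sets, and extract the Gaussian width $\coll{W}(K\cap\mathsf{B}^m)$ via an empirical-width argument (Sublemma~\ref{slem:empirical-width}) that costs a $\sqrt{\log D}$ factor and genuinely needs the symmetry assumption in Model~\ref{mod:p-mom-mtx}. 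Your one-line polar reduction cannot be patched without essentially rebuilding this machinery.
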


\noindent
Section~\ref{sec:thm1-strategy} summarizes our strategy for establishing
Theorem~\ref{thm:univ-embed}.  The detailed proof of
Theorem~\ref{thm:univ-embed}\eqref{eqn:univ-embed-succ} appears in
Section~\ref{sec:rsv-four-to-embed-univ};
the detailed proof of Theorem~\ref{thm:univ-embed}\eqref{eqn:univ-embed-fail}
appears in Section~\ref{sec:car-to-embed}.
Let us mention that stronger probability bounds hold when the random
\joelprev{linear map} is drawn from Model~\ref{mod:bdd-mtx}.

For any random \joelprev{linear map} $\mtx{\Pi}$ drawn from Model~\ref{mod:p-mom-mtx},
Theorem~\ref{thm:univ-embed}\eqref{eqn:univ-embed-succ} ensures that
the random \joelprev{dimension reduction} $\mtx{\Pi}(E)$
is likely to succeed when the embedding dimension $d$
exceeds the statistical dimension $\delta(E)$.
Similarly, Theorem~\ref{thm:univ-embed}\eqref{eqn:univ-embed-fail}
shows that $\mtx{\Pi}(E)$ is likely to fail
when the embedding dimension $d$ is smaller
than the statistical dimension $\delta(E)$,
provided that $\vct{\theta}(E)$ is spherically convex.
Note that both of these interpretations require that
the statistical dimension $\delta(E)$ is not too
small as compared with the ambient dimension $D$.

We have already seen a concrete example of Theorem~\ref{thm:univ-embed} at work.
In view of the calculation~\eqref{eqn:sdim-orthant}
of the statistical dimension of the orthant,
the theorem provides a satisfying explanation
of Figure~\ref{fig:intro-orthant-universal}!

\begin{remark}[Prior Work]
When the random \joelprev{linear map} $\mtx{\Pi}$ is Gaussian,
the result Theorem~\ref{thm:univ-embed}\eqref{eqn:univ-embed-succ}
follows from Gordon~\cite[Cor.~3.4]{Gor88:Milmans-Inequality},
while the conclusion~\eqref{eqn:univ-embed-fail}
seems to be more recent~\cite[Thm.~I]{ALMT14:Living-Edge}.
To our knowledge, the literature contains no precedent for
Theorem~\ref{thm:univ-embed} for general sets and for non-Gaussian \joelprev{linear map}s.
We can identify only a few sporadic special cases. 
Donoho \& Tanner~\cite{DT10:Counting-Faces} studied the problem
of recovering a ``saturated vector'' from random measurements
via $\ell_{\infty}$ minimization.
Their work can be interpreted as a statement about random embeddings of the set
of unit vectors with nonnegative coordinates.
They demonstrate that the phase transition
in the embedding dimension is universal when the rows of the \joelprev{linear map} matrix
are independent, symmetric random vectors with a density;
this result actually follows from classical work of Sch{\"afli}~\cite{Sch50:Gesammelte}
and Wendel~\cite{Wen62:Problem-Geometric}.
Let us emphasize that this result does not apply to discrete random \joelprev{linear map}s.

Bayati et al.~\cite{BLM15:Universality-Polytope} have studied
the problem of recovering a sparse vector from random measurements
via $\ell_1$ minimization.
Their work can be interpreted as a result on the embedding dimension
of the set of descent directions of the $\ell_1$ norm at a sparse vector.
They showed that, asymptotically, the phase transition in the embedding dimension
is universal.
Their result requires the \joelprev{linear map} matrix to contain independent,
subgaussian entries that are absolutely continuous with respect
to the Gaussian density.  See Section~\ref{sec:l1-min} for more
discussion of this result.

There are also many papers in asymptotic convex geometry
and mathematical signal processing that contain theory about
the order of the embedding dimension for \joelprev{linear map}s from Model~\ref{mod:p-mom-mtx}; see~\cite{MPT07:Reconstruction-Subgaussian,Men10:Empirical-Processes,Men13:Remark-Diameter,Tro15:Convex-Recovery,Ver15:Estimation-High}.  These
results do not allow us to reach any conclusions about the
existence of a phase transition or its location.  There is also
an extensive amount of empirical work,
such as~\cite{DT09:Observed-Universality,Sto09:Various-Thresholds,OH10:New-Null-Space, CSPW11:Rank-Sparsity-Incoherence, DGM13:Phase-Transition, MT14:Sharp-Recovery},
that suggests that phase transitions are ubiquitous in high-dimensional
signal processing problems,
but there has been no theoretical explanation
of the universality phenomenon until now.  
\end{remark}

\subsection{Theorem~\ref{thm:univ-embed}: Proof Strategy}
\label{sec:thm1-strategy}

The proof of Theorem~\ref{thm:univ-embed} depends on converting
the geometric question to an analytic problem.  First, recall
the equivalence~\eqref{eqn:univ-embed-spherical},
which allows us to pass from the compact set $E$
to its spherical retraction $\Omega := \vct{\theta}(E)$.
Next, we identify two analytic quantities that determine
whether a linear map annihilates a point in the set $\Omega$.

\begin{proposition}[Analytic Conditions for Embedding] \label{prop:annihilate}
Let $\Omega$ be a \joelprev{nonempty,} closed subset of the unit sphere $\mathsf{S}^{D-1}$ in $\R^D$,
and let $\mtx{A} : \R^D \to \R^d$ be a linear map.  Then 
\begin{align} \label{eqn:annihilate-primal}
\min_{\vct{t} \in \Omega} \norm{\mtx{A}\vct{t}} &> 0
\quad\text{implies}\quad
\vct{0} \notin \mtx{A}(\Omega).
\intertext{Furthermore, if $\Omega$ is spherically convex \sametprev{and $\cone(\Omega)$ is not a subspace},}
\label{eqn:annihilate-dual}
\min_{\norm{\vct{t}} = 1} \min_{\vct{s} \in \cone(\Omega^{\polar})} \norm{\smash{\vct{s} - \mtx{A}^\adj \vct{t}} } &> 0
\quad\text{implies}\quad
\vct{0} \in \mtx{A}(\Omega).
\intertext{\joelprev{Finally, if $\cone(\Omega)$ is a subspace, select an arbitrary subset $\Omega_{0}\subset \Omega$
with the property that $\cone(\Omega_{0})$ is a $(\dim(\Omega)-1)$-dimensional subspace.  Then}}
\label{eqn:annihilate-dual_sub}
\min_{\norm{\vct{t}} = 1} \min_{\vct{s} \in \cone(\Omega_{0}^{\polar})} \norm{\smash{\vct{s} - \mtx{A}^\adj \vct{t}} } &> 0
\quad\text{implies}\quad
\vct{0} \in \mtx{A}(\Omega).
\end{align}

\noindent
Recall that $\cone(S)$ is the smallest convex cone containing the set $S$.
\end{proposition}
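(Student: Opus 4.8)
\medskip
\noindent\textit{Sketch of the intended proof.} The three implications have rather different flavors, and the plan is to treat them in turn.

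Implication~\eqref{eqn:annihilate-primal} is immediate: if $\min_{\vct t\in\Omega}\norm{\mtx A\vct t}>0$, then $\mtx A\vct t\neq\vct 0$ for every $\vct t\in\Omega$, which is exactly the statement $\vct 0\notin\mtx A(\Omega)$. For~\eqref{eqn:annihilate-dual} the plan is to pass to the associated convex cone and reduce everything to one geometric fact. Put $K:=\cone(\Omega)$; since $\Omega$ is closed and spherically convex, $K$ is a closed convex cone, $\Omega=K\cap\mathsf{S}^{D-1}$, and directly from the definitions the spherical polar corresponds to the polar cone, $\cone(\Omega^{\polar})=K^{\polar}$. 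Write $N:=\nullsp(\mtx A)$, so $N^{\perp}=\range(\mtx A^{\adj})$. A short homogeneity argument (rescale $\vct t$ to the sphere; use that $K^{\polar}$ is a closed cone) shows that the hypothesis $\min_{\norm{\vct t}=1}\dist\bigl(\mtx A^{\adj}\vct t,\,K^{\polar}\bigr)>0$ forces $N^{\perp}\cap K^{\polar}=\{\vct 0\}$. On the other hand, the conclusion $\vct 0\in\mtx A(\Omega)$ is equivalent to $N\cap K\neq\{\vct 0\}$: a nonzero vector of $N\cap K$ rescales to a point of $\Omega=K\cap\mathsf{S}^{D-1}$ that $\mtx A$ annihilates. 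So~\eqref{eqn:annihilate-dual} reduces to the following, which I regard as the crux:

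\emph{if $K$ is a closed convex cone that is not a subspace and $N$ is a subspace with $N^{\perp}\cap K^{\polar}=\{\vct 0\}$, then $N\cap K\neq\{\vct 0\}$.}

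\noindent I would prove this by contraposition, and I expect it to be the main obstacle. Suppose $N\cap K=\{\vct 0\}$. Since $K$ is a closed convex cone that is not a subspace, $\vct 0\notin\relint K$ (a convex cone contains the origin in its relative interior exactly when it is a subspace), so $\relint K\cap N\subseteq(K\cap N)\setminus\{\vct 0\}=\emptyset$. Now $\relint K$ and $N$ are disjoint, nonempty, convex sets, so the separation theorem for convex sets gives a nonzero $\vct u$ and a scalar $\alpha$ with $\ip{\vct u}{\vct x}\le\alpha\le\ip{\vct u}{\vct y}$ for all $\vct x\in\relint K$ and $\vct y\in N$. Since $N$ is a subspace, a linear functional bounded below on $N$ must vanish identically on $N$; hence $\vct u\in N^{\perp}$ and $\alpha\le0$, so $\ip{\vct u}{\vct x}\le0$ on $\relint K$ and therefore on $K=\closure{\relint K}$, i.e.\ $\vct u\in K^{\polar}$. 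Thus $\vct u$ is a nonzero element of $N^{\perp}\cap K^{\polar}$, the contradiction sought. The hypothesis that $\cone(\Omega)$ is not a subspace is used exactly once --- to place $\vct 0$ outside $\relint K$ --- and this is precisely the step that would fail otherwise, since for a subspace $K$ one can have $N\cap K=\{\vct 0\}$ and $N^{\perp}\cap K^{\polar}=\{\vct 0\}$ simultaneously (take complementary subspaces).

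Finally, implication~\eqref{eqn:annihilate-dual_sub} is a dimension count. When $K=\cone(\Omega)$ is a subspace $L$, spherical convexity forces $\Omega=L\cap\mathsf{S}^{D-1}$; set $k:=\dim L$. For the chosen $\Omega_{0}\subset\Omega$ with $L_{0}:=\cone(\Omega_{0})$ a $(k-1)$-dimensional subspace, the same computation gives $\cone(\Omega_{0}^{\polar})=L_{0}^{\perp}$, and the homogeneity argument again shows that the hypothesis of~\eqref{eqn:annihilate-dual_sub} forces $\range(\mtx A^{\adj})\cap L_{0}^{\perp}=\{\vct 0\}$. Passing to orthogonal complements, $\nullsp(\mtx A)+L_{0}=\R^{D}$, so $\dim\nullsp(\mtx A)\ge D-k+1$; combined with $\dim L=k$, this gives $\dim\bigl(\nullsp(\mtx A)\cap L\bigr)\ge\dim\nullsp(\mtx A)+\dim L-D\ge1$. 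A nonzero vector of $\nullsp(\mtx A)\cap L$, rescaled to the sphere, is then a point of $\Omega=L\cap\mathsf{S}^{D-1}$ annihilated by $\mtx A$, so $\vct 0\in\mtx A(\Omega)$. (The degenerate case $k=1$, in which $\Omega_{0}=\emptyset$ and the hypothesis cannot hold, makes the implication vacuous.)
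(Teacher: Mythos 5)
Your proof is correct, and it follows essentially the same route as the paper: part~(a) is the same one-line argument, part~(c) is the same dimension count (yours phrased through orthogonal complements and $\dim(N\cap L)\ge\dim N+\dim L-D$, the paper's through spherical dimensions, but the computation is identical). For part~(b), where the paper invokes a black-box spherical duality theorem of Klee~\cite{Kle55:Separation-Properties}, you instead pass to the conic picture and prove the needed fact directly: you observe that $\vct 0\notin\relint K$ precisely because $K$ is not a subspace, separate $\relint K$ from $N=\nullsp(\mtx A)$ by a hyperplane, and note that a linear functional bounded below on a subspace must vanish on it. This is exactly the mechanism underlying Klee's theorem, so it is not a genuinely different argument, but your version is self-contained and makes visible precisely where the ``not a subspace'' hypothesis enters --- something the paper's citation obscures. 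The reduction steps (identifying $\cone(\Omega^\polar)=K^\polar$, translating the positive-distance hypothesis on the sphere into $N^\perp\cap K^\polar=\{\vct 0\}$, and translating $\vct 0\in\mtx A(\Omega)$ into $N\cap K\neq\{\vct 0\}$) are carried out correctly, and your remark that the degenerate case $\dim\cone(\Omega)=1$ in part~(c) is vacuous is a welcome piece of care that the paper elides.
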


\begin{proof}
The implication~\eqref{eqn:annihilate-primal} is quite easy to check.
At each point $\vct{t} \in \Omega$, we have $\norm{\mtx{A}\vct{t}} > 0$,
which implies that $\mtx{A}\vct{t} \neq \vct{0}$.  In other words,
$\vct{0} \notin \mtx{A}(\Omega)$.

The second implication~\eqref{eqn:annihilate-dual} follows from a spherical
duality principle.  Suppose that $\Omega$ and $\Upsilon$ are \joelprev{closed and spherically convex,
and assume $\cone(\Omega)$ is not a subspace.}
If $\Omega^\polar$ and \sametprev{$-\Upsilon^\polar$} do not intersect, then
$\Omega$ and $\Upsilon$ must intersect;
see~\cite[Thm~(2.7)]{Kle55:Separation-Properties}.
The analytic condition
$$
\min_{\norm{\vct{t}} = 1} \min_{\vct{s} \in \cone(\Omega^\polar)}
	\norm{ \smash{\vct{s} - \mtx{A}^\adj \vct{t}} } > 0
$$	
ensures that $\cone(\Omega^\polar)$ lies at a positive distance from $\vct{\theta}(\range(\mtx{A}^\adj))$.  It follows that $\Omega^\polar$
does not intersect \joelprev{$\vct{\theta}(\range(\mtx{A}^\adj))$}.
By duality, we conclude that $\Omega \cap \vct{\theta}(\nullsp(\mtx{A})) \neq \emptyset$,
which yields~\eqref{eqn:annihilate-dual}.

\joelprev{Finally, if $\cone(\Omega)$ is a subspace, we use a dimension counting argument.
As before, the analytic condition~\eqref{eqn:annihilate-dual_sub} implies that $\Omega_0^\circ \cap \vct{\theta}(\range(\mtx{A}^\adj)) = \emptyset$.  Since these sets do not intersect, the sum of their dimensions
cannot exceed the ambient dimension:
$$
\begin{aligned}
D \geq \dim( \Omega_0^\circ ) + \dim(\range(\mtx{A}^\adj))
	&= (D - \dim(\Omega_0)) + (D - \dim(\nullsp(\mtx{A}))) \\
	&= 2D + 1 - \dim(\Omega) - \dim(\nullsp(\mtx{A})).
\end{aligned}
$$
We see that $\dim(\Omega) + \dim(\nullsp(\mtx{A})) \geq D + 1$.
As a consequence, the subspace $\cone(\Omega)$ and $\null(\mtx{A})$ must have a nontrivial intersection.}
\end{proof}

In view of Proposition~\ref{prop:annihilate},
we can establish Theorem~\ref{thm:univ-embed}\eqref{eqn:univ-embed-succ}
by showing that
$$
d \geq (1 - \eps)\, \delta(\Omega)
\quad\text{implies}\quad
\min_{\vct{t} \in \Omega} \norm{\mtx{\Pi} \vct{t}} > 0
\quad\text{with high probability.}
$$
This condition follows from a universality result,
Corollary~\ref{cor:rsv-four}, for the restricted minimum singular value.
The proof appears in Section~\ref{sec:rsv-four-to-embed-univ}.
Similarly, we can establish Theorem~\ref{thm:univ-embed}\eqref{eqn:univ-embed-fail}
by showing that
$$
d \leq (1 + \eps)\, \delta(\Omega)
\quad\text{implies}\quad
\min_{\norm{\vct{t}}=1} \min_{\vct{s} \in \cone(\Omega^\polar)}
	\norm{\smash{\vct{s} - \mtx{\Pi}^\adj \vct{t}} } > 0
\quad\text{with high probability.}
$$
This condition follows from a specialized argument that
culminates in Corollary~\ref{cor:car-four}.
The proof appears in Section~\ref{sec:car-to-embed}.
\joelprev{We need not lavish extra attention on the case where $\cone(\Omega)$ is a subspace.
Indeed, the dimension of $\Omega$ and $\Omega_0$ only differ by one, which is invisible in the final results.}

\subsection{Theorem~\ref{thm:univ-embed}: Extensions}

It is an interesting challenge to delineate the scope of the
universality phenomenon described in Theorem~\ref{thm:univ-embed}.
We believe that there remain many opportunities for improving
on this result.

\begin{itemize}
\item	Theorem~\ref{thm:univ-embed} only shows that
the width of the phase transition is $o(D)$.
It is known~\cite[Thm.~7.1]{ALMT14:Living-Edge}
that the width of the phase transition has order
$\min\big\{ \sqrt{\delta(E)}, \sqrt{D - \delta(E)} \big\}$
for a \joelprev{linear map} with standard normal entries.
How wide is the phase transition for more general random \joelprev{linear map}s?

\item	A related question is whether Theorem~\ref{thm:univ-embed}
holds for those sets $E$ whose statistical dimension $\delta(E)$
is much smaller than the ambient dimension.

\item	There is empirical evidence that the location
of the phase transition is universal over a class wider than Model~\ref{mod:p-mom-mtx}.
In particular, results like Theorem~\ref{thm:univ-embed} may be valid
for structured random \joelprev{linear map}s.

\item	Figure~\ref{fig:intro-orthant-universal} suggests
that the \emph{probability} of successful embedding is universal.
Under what conditions can this observation be formalized?
\end{itemize}

\noindent
In summary, Theorem~\ref{thm:univ-embed} is just the
first step toward a broader theory of universality in high-dimensional
stochastic geometry.

\section{A Universality Law for the Restricted Minimum Singular Value}
\label{sec:univ-rsv}

This section describes a quantitative universality law for random \joelprev{linear map}s.
We show that the restricted minimum singular value of a random \joelprev{linear map} takes
the same value for every \joelprev{linear map} in a substantial class.  This type of result
provides information about the stability of randomized dimension reduction.

\subsection{Restricted Minimum Singular Value: Problem Formulation}

Let us frame our assumptions:

\begin{itemize}
\item	Fix the ambient dimension $D$.

\item	Let $E$ be a nonempty, compact subset of the unit ball $\mathsf{B}^D$. 
\item	Let $\mtx{\Pi} : \R^D \to \R^d$ be a random \joelprev{linear map} with embedding dimension $d$.
\end{itemize}

In this section, our goal is to understand the distance from the random projection
$\mtx{\Pi}(E)$ to the origin.  The following definition captures this property.

\begin{definition}[Restricted Minimum Singular Value]
Let $\mtx{A} : \R^D \to \R^d$ be a linear map, and let $T$ be a nonempty subset of $\R^D$.
The \term{restricted minimum singular value} of $\mtx{A}$ with respect to
the set $T$ is the quantity
$$
\smin(\mtx{A}; T) := \inf_{\vct{t} \in T} \norm{ \mtx{A} \vct{t} }.
$$
More briefly, we write \term{restricted singular value} or \term{RSV}.
\end{definition}

\noindent
Proposition~\ref{prop:annihilate} shows that
the restricted minimum singular value is a quantity
of interest when studying the embedding dimension
of a \joelprev{linear map}.
It is also productive to think about $\smin(\mtx{A}; T)$ as
a measure of the stability of inverting the map $\mtx{A}$
on the image $\mtx{A}(T)$.  In particular,
note that the restricted singular value $\smin(\mtx{A}; T)$ is a generalization of the ordinary
minimum singular value $\smin(\mtx{A})$, which we obtain from the selection $T = \mathsf{S}^{D-1}$.

\subsection{The Excess Width Functional}
\label{sec:excess-width}

It is clear that the restricted singular value $\smin(\mtx{A}; \cdot)$
decreases with respect to set inclusion.  In other words, the
restricted singular value depends on the ``content'' of the set $T$.
In the case of a random \joelprev{linear map}, the following geometric functional
provides the correct notion of content.

\begin{definition}[Excess Width] \label{def:excess-width}
Let $m$ be a positive number,
and let $T$ be a nonempty, bounded subset of $\R^D$.
The \term{$m$-excess width} of $T$ is the quantity
\begin{equation} \label{eqn:excess-width}
\coll{E}_m(T) :=
\Expect \inf_{\vct{t} \in T} \big( \sqrt{m} \norm{\vct{t}} + \vct{g} \cdot \vct{t} \big).
\end{equation}
\end{definition}

\noindent
Versions of the excess width appear as early as the work of
Gordon~\cite[Cor.~1.1]{Gor88:Milmans-Inequality}.  It has
also come up in recent papers~\cite{Sto13:Regularly-Random,OTH13:Squared-Error,TOH15:Gaussian-Min-Max,TAH15:High-Dimensional-Error}
on the analysis of Gaussian random \joelprev{linear map}s.

The excess width has a number of useful properties.  These results
are immediate consequences of the definition.

\begin{itemize}
\item	For a subset $T$ of the unit ball in $\R^D$, the $m$-excess width satisfies the bounds
$$
\sqrt{m} - \sqrt{D} \leq \coll{E}_m(T) \leq \sqrt{m}.
$$
In particular, the excess width can be positive or negative.

\item	The $m$-excess width is weakly increasing in $m$.  That is,
$m \leq n$ implies $\coll{E}_m(T) \leq \coll{E}_n(T)$.

\item	The $m$-excess width is decreasing with respect to set inclusion: $S \subset T$
implies $\coll{E}_m(S) \geq \coll{E}_m(T)$.

\item	The $m$-excess width is absolutely homogeneous: $\coll{E}_m(\alpha T) = \abs{\alpha} \, \coll{E}_m(T)$ for $\alpha \in \R$.

\item	The excess width~\eqref{eqn:excess-width} is related to the Gaussian width~\eqref{eqn:gauss-width}:
\begin{equation} \label{eqn:excess-vs-width}
\coll{E}_m(\Omega) = \sqrt{m} - \coll{W}(\Omega)
\quad\text{for $\Omega \subset \mathsf{S}^{D-1}$.}
\end{equation}
Using~\eqref{eqn:sdim-width}, we can also relate the excess \joelprev{width} to the statistical dimension:
\begin{equation} \label{eqn:excess-vs-sdim}
\sqrt{m} - \sqrt{1 + \delta(\Omega)} \leq \coll{E}_m(\Omega) \leq \sqrt{m} - \sqrt{\delta(\Omega)}
\quad\text{for $\Omega \subset \mathsf{S}^{D-1}$.}
\end{equation}
\end{itemize}

The term ``excess width'' is not standard, but the formula~\eqref{eqn:excess-vs-width}
suggests that this moniker is appropriate.  According to Theorem~\ref{thm:univ-embed},
the sign $(\pm)$ of the excess width $\coll{E}_d(\Omega)$ indicates
that \joelprev{random dimension reduction} $\mtx{\Pi}(\Omega)$ with embedding dimension $d$
succeeds $(+)$ or fails $(-)$ with high probability.

The papers~\cite{Sto13:Regularly-Random,OTH13:Squared-Error,TOH15:Gaussian-Min-Max} develop methods
for computing the excess width in a variety of situations.  For instance, if $L$ is a $k$-dimensional
subspace of $\R^D$, then
$$
\coll{E}_m(L \cap \mathsf{S}^{D-1}) \approx \sqrt{m} - \sqrt{k}.
$$
For a more sophisticated example, consider the probability simplex
$$
\Delta_D := \left\{ \vct{t} \in \R^D : \sum_{i=1}^D t_i = 1, t_i \geq 0 \text{ for $i = 1, \dots, D$} \right\}.
$$
We can develop an asymptotic \joelprev{expression} for its excess width:
\begin{equation} \label{eqn:excess-width-orthant}
\lim_{\substack{D,d \to \infty \\ D/d = \varrho}} \coll{E}_d(\Delta_D)
	= \inf_{\alpha \geq 0} \bigg( \alpha - \inf_{s \in \R} \big(s + \alpha \sqrt{\varrho q(s)}\big) \bigg)
	\quad\text{where}\quad
	q(s) := \Expect\big[ (\gamma - s)_+^2 \big].
\end{equation}
The proof of the \joelprev{formula}~\eqref{eqn:excess-width-orthant} is involved,
so we must omit the details; see~\cite{TAH15:High-Dimensional-Error} for
a framework for making such computations.
See Part~\ref{part:applications} for some other examples.
It is also possible to estimate the excess width numerically by approximating
the expectation in~\eqref{eqn:excess-width} with an empirical average.

\subsection{Theorem~\ref{thm:univ-rsv}: Universality for the Restricted Minimum Singular Value}
\label{sec:thm-univ-rsv}

With this preparation, we can present our main result about the universality
properties of the restricted minimum singular value of a random \joelprev{linear map}.

\begin{bigthm}[Universality for the Restricted Minimum Singular Value] \label{thm:univ-rsv}
Fix the parameters $p$ and $\nu$ for Model~\ref{mod:p-mom-mtx}.
Choose parameters $\lambda \in (0,1)$ and $\varrho \in (0, 1)$
and $\eps \in (0,1)$.  There is a number $N := N(p,\nu,\lambda,\varrho,\eps)$
for which the following statement holds.  Suppose that

\begin{itemize}
\item	The ambient dimension $D \geq N$.

\item	$E$ is a nonempty, compact subset of the unit ball $\mathsf{B}^D$ in $\R^D$.

\item	The embedding dimension $d$ is in the range $\lambda D \leq d \leq D^{6/5}$.

\item	The $d$-excess width of $E$ is not too small: $\coll{E}_d(E) \geq \varrho \sqrt{d}$.

\item	The random \joelprev{linear map} $\mtx{\Pi} : \R^D \to \R^d$ obeys Model~\ref{mod:p-mom-mtx}
with parameters $p$ and $\nu$.
\end{itemize}

\noindent
Then
\begin{align} \label{eqn:univ-rsv-succ}
\Prob{ \smin(\mtx{\Pi}; E) \geq (1 - \eps) \big( \coll{E}_d(E) \big)_+ }
	&\geq 1 - \cnst{C}_p D^{1 - p/4}. \tag{a}
\intertext{Furthermore, if $E$ is convex, then} \label{eqn:univ-rsv-fail}
\Prob{ \smin(\mtx{\Pi}; E) \leq (1 + \eps) \big( \coll{E}_d(E) \big)_+ }
	&\geq 1 - \cnst{C}_p D^{1 - p/4}. \tag{b}
\end{align}
The constant $\cnst{C}_p$ depends only on the parameter $p$ in the random matrix model.
\end{bigthm}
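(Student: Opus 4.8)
The plan is to establish Theorem~\ref{thm:univ-rsv} through a two-sided comparison argument, matching the behavior of a general $p$-moment matrix $\mtx{\Pi}$ against that of a Gaussian matrix $\mtx{\Gamma}$, for which the restricted minimum singular value is already controlled by Gaussian comparison inequalities (Gordon's theorem). The starting point is the representation
\begin{equation*}
\smin(\mtx{\Pi}; E) = \inf_{\vct{t} \in E}\ \sup_{\norm{\vct{u}}=1}\ \vct{u}^{\adj} \mtx{\Pi} \vct{t},
\end{equation*}
which exhibits the RSV as the value of a bilinear minimax problem in $\mtx{\Pi}$. The Gaussian baseline follows from the Gaussian minimax theorem: for standard normal $\mtx{\Gamma}$, one has $\smin(\mtx{\Gamma}; E) \approx \inf_{\vct{t}\in E}(\sqrt{d}\norm{\vct{t}} + \vct{g}\cdot\vct{t}) = \coll{E}_d(E)$ up to fluctuations, with the positive part entering because the infimum over the unit ball of directions $\vct{u}$ saturates at zero when the linear term would push the value negative. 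I would first record this Gaussian estimate as a lemma (citing \cite{Gor88:Milmans-Inequality}, \cite{TOH15:Gaussian-Min-Max}), together with concentration of $\smin(\mtx{\Gamma};E)$ around its mean, so that on a high-probability event the Gaussian RSV lies within $\tfrac{\eps}{3}\sqrt{d}$ of $\big(\coll{E}_d(E)\big)_+$; the hypothesis $\coll{E}_d(E) \geq \varrho\sqrt{d}$ ensures the target is of order $\sqrt{d}$ so that relative and additive errors are comparable.

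The heart of the proof is a universality (invariance) principle transferring the Gaussian estimate to $\mtx{\Pi}$. I would proceed via the Lindeberg exchange method: replace the entries of $\mtx{\Pi}$ one at a time by Gaussians, and bound the change in a smooth proxy for $\smin(\cdot; E)$ at each step using a third-order Taylor expansion, where the leading two terms vanish by the matching of means and variances (Model~\ref{mod:p-mom-mtx}), and the remainder is controlled by the $p$th moment bound $\Expect\abs{X}^p \le \nu^p$. Because $\smin(\cdot;E)$ is only Lipschitz and not smooth, and because the minimax structure is not a fixed polynomial in the entries, the technical scaffolding here is substantial: one must discretize both the set $E$ (via a net, paying a Gaussian-width price that is absorbed into the $o(\sqrt{d})$ slack) and the sphere of directions $\vct{u}$, then smooth the resulting finite minimax by a soft-max / soft-min with an appropriately tuned temperature, and finally control the operator-norm and higher moments of $\mtx{\Pi}$ (Bai--Yin-type bounds, valid since $p>4$) to keep the Taylor remainders summable over the $dD$ exchange steps. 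The restriction $d \le D^{6/5}$ and the proportionality $d \ge \lambda D$ presumably enter precisely in this bookkeeping, to balance net cardinalities against moment tails.

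With the smoothed minimax functional within $\tfrac{\eps}{3}\sqrt{d}$ of the true RSV (by choosing the smoothing and net parameters as functions of $\eps,\varrho$), and its expectation stable under the Gaussian-to-$\mtx{\Pi}$ exchange up to $o(\sqrt{d})$, I would close the argument by a concentration step for $\mtx{\Pi}$ itself: $\vct{\Pi}\mapsto\smin(\mtx{\Pi};E)$ is $1$-Lipschitz in the Frobenius norm when $E\subset\mathsf{B}^D$, so a dimension-free concentration inequality for functions of independent bounded-moment variables (obtained after a standard truncation of the heavy tails, with the truncation error controlled by the $p$th moment) gives the stated failure probability $\cnst{C}_p D^{1-p/4}$. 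For part~\eqref{eqn:univ-rsv-fail} one uses convexity of $E$: then the inner infimum over $\vct{t}$ is a convex program, and the same minimax admits a dual (Lagrangian) form, so the \emph{upper} bound on $\smin(\mtx{\Pi};E)$ follows by evaluating the minimax at a near-optimal Gaussian configuration and again invoking the invariance principle in the reverse direction. The main obstacle I anticipate is the Lindeberg exchange for the non-smooth minimax: making the smoothing quantitative enough that the error is $o(\sqrt{d})$ uniformly, while simultaneously keeping the third-derivative bounds (which blow up as the smoothing temperature shrinks) compatible with the $p$th-moment tails and the net cardinality — this tension is the technical core and will require the careful choice of all auxiliary parameters as functions of $p,\nu,\lambda,\varrho,\eps$ encoded in the threshold $N$.
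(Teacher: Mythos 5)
Your high-level template — Gaussian baseline via the Gaussian minimax theorem, Lindeberg exchange against a Gaussian, discretization and smoothing, concentration, and truncation for the heavy tails — is the right framework, and parts of it (the use of the squared norm, the soft-min, the duality for the convex upper bound) match the paper. But there is a genuine gap in the Lindeberg step that the paper has to work hard to repair, and which your proposal glosses over with the phrase ``keep the Taylor remainders summable over the $dD$ exchange steps.'' For a general compact $E \subset \mathsf{B}^D$, that summability \emph{fails} if you exchange every entry of $\mtx{\Pi}$.

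Here is the obstruction concretely. The third-order Lindeberg error from replacing the $(i,j)$ entry is governed by $\max_{\vct{t}\in E}\abs{t_j}^3$ (times a factor depending on the smoothing temperature $\beta$). If $E$ contains, say, the standard basis vector $\vct{e}_j$, then $\max_{\vct{t}\in E}\abs{t_j}=1$, and the error from that column is of constant order; summing over the $d$ rows and then over the columns where $E$ has a large coordinate gives a total error of order $d$, which is not $o(\sqrt{d})$ and in fact not even $o(d)$. More carefully, even balancing the smoothing error $\beta^{-1}\log(\#E^\eps)\asymp \beta^{-1}D$ against the Lindeberg error $\beta d\sum_j \max_{\vct{t}\in E}\abs{t_j}^3$ yields a total of order $\sqrt{dD}$, which for $d\asymp D$ is $\asymp D$, and the target is $o(d)\asymp o(D)$ for the squared RSV. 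So the plain exchange argument does not close. This is exactly the difficulty that the paper points out in its proof sketch: for any coordinate $j$, the set $E$ might contain a vector that is large in coordinate $j$, which ``seems to foreclose the possibility of replacing any part of the matrix.''

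The paper's fix — and the piece missing from your proposal — is a \emph{dissection} of the index set. For each small subset $J\subset\{1,\dots,D\}$ of cardinality $k$, one forms $E_J := \{\vct{t}\in E : \abs{t_j}\le k^{-1/2}\text{ for } j\in J^c\}$; since $E\subset\mathsf{B}^D$, every $\vct{t}\in E$ lies in some $E_J$, so $E=\bigcup_{\#J=k}E_J$. One then relates $\smin(\mtx{\Pi};E)$ to $\min_J \smin(\mtx{\Pi};E_J)$ (using concentration of the RSV to control the union-bound cost over the $\binom{D}{k}$ subsets), and performs only a \emph{partial} Lindeberg exchange: the columns of $\mtx{\Pi}$ indexed by $J^c$ are swapped for Gaussians, while the columns indexed by $J$ are left untouched. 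The smallness constraint $\abs{t_j}\le k^{-1/2}$ for $j\in J^c$ is precisely what makes the Taylor remainders small. The price is that the ``Gaussian'' comparison matrix is now a hybrid $\mtx{\Psi}$ with a non-Gaussian block $\mtx{\Phi}_J$, so the paper must then prove a Gaussian-minimax-type bound showing that this leftover $k$-column block contributes only an error of order $B^2\sqrt{k}$ to the excess width (this uses nonasymptotic bounds on $\smax(\mtx{\Phi}_J)$ and $\smin(\mtx{\Phi}_J)$). Without some version of this dissection-and-partial-replacement idea, your exchange argument would not survive sets like a line segment from $\vct{0}$ to $\vct{e}_1$, and the theorem, which is stated for arbitrary compact $E\subset\mathsf{B}^D$, would not follow.

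Two smaller divergences worth noting: the paper discretizes only $E$ and smooths $\normsq{\mtx{A}\vct{t}}$ directly (the dual variable $\vct{u}$ is never discretized — it is handled analytically inside the Gaussian minimax argument), whereas you propose to net both $E$ and the sphere of directions; the latter is workable in principle but doubles the bookkeeping. And the constraints $\lambda D\le d\le D^{6/5}$ are not there for operator-norm reasons as you suggest, but rather to leave enough headroom for the choice of cardinality parameter $k$ in the dissection (roughly $k\asymp D^{4/5}$) to make the two competing error terms simultaneously $o(\sqrt{d})$.
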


\noindent
Section~\ref{sec:thm2-strategy} contains an overview of the argument.
The detailed proof of Theorem~\ref{thm:univ-rsv} appears
in Section~\ref{sec:rsv-four-to-rsv-univ}.
Stronger probability bounds hold when the random \joelprev{linear map} is drawn
from Model~\ref{mod:bdd-mtx}.

For any random \joelprev{linear map} $\mtx{\Pi}$ drawn from Model~\ref{mod:p-mom-mtx},
Theorem~\ref{thm:univ-rsv} asserts that the distance of the random projection $\mtx{\Pi}(E)$
from the origin is typically not much smaller than the $d$-excess width $\coll{E}_d(E)$.
Similarly, when $E$ is convex, the distance of the
random \joelprev{image} from the origin is typically not much larger than the $d$-excess width.
These conclusions require that
the excess width $\coll{E}_d(E)$ is not too small as compared with the root $\sqrt{d}$
of the embedding dimension.

Theorem~\ref{thm:univ-rsv} is vacuous when $\coll{E}_d(E) \leq 0$.
Nevertheless, in case $\vct{\theta}(E)$ is spherically convex,
the condition $\coll{E}_d(\vct{\theta}(E)) + o(\sqrt{D}) \leq 0$
implies that $\smin(\mtx{\Pi}; E) = 0$ with high probability because
of~\eqref{eqn:excess-vs-sdim}
and Theorem~\ref{thm:univ-embed}.

We have already seen an illustration of Theorem~\ref{thm:univ-rsv}
in action.  In view of the expression~\eqref{eqn:excess-width-orthant} for the
excess width of the probability simplex, the theorem explains the experiment
documented in Figure~\ref{fig:intro-orthant-rsv}!

\begin{remark}[Prior Work]
When the random \joelprev{linear map} $\mtx{\Pi}$ is Gaussian,
Gordon's work~\cite[Cor.~1.1]{Gor88:Milmans-Inequality} yields
the conclusion Theorem~\ref{thm:univ-rsv}\eqref{eqn:univ-rsv-succ},
while the second conclusion~\eqref{eqn:univ-rsv-fail}
appears to be more recent;
see~\cite{Sto13:Regularly-Random,OTH13:Squared-Error,TOH15:Gaussian-Min-Max,TAH15:High-Dimensional-Error}.

\joelprev{Korada \& Montanari~\cite{KM11:Applications-Lindeberg} have established
universality laws for some optimization problems that arise in communications.
The mathematical challenge in their work is similar to showing that the RSV
of a cube is universal for Model~\ref{mod:p-mom-mtx} with $p \geq 6$.
We have adopted some of their arguments in our work.  On the other hand,
the cube does not present any of the technical difficulties that arise for general sets.}

For other types of random \joelprev{linear maps and other types of sets},
we are not aware of any research that yields the precise value of the
restricted singular value, which is required to assert universality.
The literature on
asymptotic convex geometry
and mathematical signal processing,
e.g.,~\cite{MPT07:Reconstruction-Subgaussian,Men10:Empirical-Processes,Men13:Remark-Diameter,Tro15:Convex-Recovery,Ver15:Estimation-High},
contains many results on the restricted singular value
that provide bounds of the correct order with unspecified
multiplicative constants.
\end{remark}

\subsection{Theorem~\ref{thm:univ-rsv}: Proof Strategy}
\label{sec:thm2-strategy}

One standard template for proving a universality law
factors the problem into two parts.  First, we compare
a general random structure with a canonical structure
that has additional properties.  Then we use
these extra properties to obtain a complete
analysis of the canonical structure.  Because of the
comparison, we learn that the general structure inherits
some of the behavior of the canonical structure.
\joelprev{This recipe describes} Lindeberg's
approach~\cite{Lin22:Eine-Neue}
to the central limit theorem;
see~\cite[Sec.~2.2.4]{Tao12:Topics-Random}.
More recently, similar methods
have been directed at harder problems,
such as the universality of
local spectral statistics of a non-symmetric
random matrix~\cite{TV15:Random-Matrices}.
\joelprev{Our research is closest in spirit to the
papers~\cite{MOO10:Noise-Stability,Cha06:Generalization-Lindeberg,KM11:Applications-Lindeberg}.}

Let us imagine how we could apply this technique
to prove Theorem~\ref{thm:univ-rsv}.
Suppose that $E$ is a compact, convex set.
After performing standard discretization and smoothing steps,
we might invoke the Lindeberg exchange principle
to compare the restricted singular value of
the $d \times D$ random \joelprev{linear map} $\mtx{\Pi}$
with that of a $d \times D$ standard normal matrix
$\mtx{\Gamma}$:
$$
\Expect \smin(\mtx{\Pi}; E) \approx \Expect \smin(\mtx{\Gamma}; E).
$$
To evaluate the restricted minimum singular value of
a standard normal matrix, we can use the
Gaussian Minimax Theorem, as in~\cite[Cor.~1.1]{Gor88:Milmans-Inequality}:
$$
\Expect \smin(\mtx{\Gamma}; E) \gtrapprox \big( \coll{E}_d(E) \big)_+.
$$
Last, we can incorporate a convex duality argument,
as in~\cite{Sto13:Regularly-Random,OTH13:Squared-Error,TOH15:Gaussian-Min-Max,TAH15:High-Dimensional-Error},
to obtain the reverse inequality:
$$
\Expect \smin(\mtx{\Gamma}; E) \lessapprox \big( \coll{E}_d(E)  \big)_+.
$$
Unfortunately, this simple approach is not adequate.
Our argument ultimately follows a related pattern,
but we have to overcome a number of \joelprev{technical
obstacles}.

Let us explain the most serious issue and our mechanism
for handling it.  We would like to apply the Lindeberg
exchange principle to $\smin(\mtx{\Pi}; E)$ to replace
each entry of $\mtx{\Pi}$ with a standard normal variable.
The problem is that $E$ may contain a vector $\vct{t}_0$ with large
entries.  If we try to replace the columns of $\mtx{\Pi}$
associated with the large components of $\vct{t}_0$,
we incur an intolerably large error.
Moreover, for any given coordinate $j$, the set $E$ may
contain a vector $\vct{t}_j$ that takes a large value
in the distinguished coordinate $j$.  This fact seems to
foreclose the possibility of replacing any part of the matrix $\mtx{\Pi}$.

We address this challenge by dissecting the index set $E$. For each (small) subset $J$ of $\{1, \dots, D\}$, we define the set
$E_J$ of vectors in $E$ whose components in $J^c$ are small.  First,
we argue that the restricted singular value of a subset does not
differ much from the restricted singular value of the entire set:
$$
\Expect \smin(\mtx{\Pi}; E) \approx \min\nolimits_{J} \Expect \smin(\mtx{\Pi}; E_J).
$$
We may now use the Lindeberg method to make the comparison
$$
\Expect \smin(\mtx{\Pi}; E_J) \approx \Expect \smin(\mtx{\Psi}; E_J),
$$
where the matrix $\mtx{\Psi}$ is a hybrid of the form $\mtx{\Psi}_J = \mtx{\Pi}_J$
and $\mtx{\Psi}_{J^c} = \mtx{\Gamma}_{J^c}$.  That is,
we only replace the columns of $\mtx{\Pi}$ listed in the set $J^c$
with independent standard normal variables.
At this point, we need to compare the minimum singular value
of $\mtx{\Psi}$ restricted to the subset $E_J$
against the excess width of the full set:
$$
\Expect \smin(\mtx{\Psi}; E_J) \approx \big( \coll{E}_d(E) \big)_+.
$$
In other words, the remaining part \sametprev{$\mtx{\Psi}_J$} of the original
matrix plays a negligible role in determining the restricted
singular value.  We perform this estimate using the
Gaussian Minimax Theorem~\cite{Gor85:Some-Inequalities},
some convex duality arguments~\cite{TOH15:Gaussian-Min-Max},
and some coarse results from nonasymptotic
random matrix theory~\cite{Ver12:Introduction-Nonasymptotic,Tro15:Expected-Norm}.

See Part~\ref{part:rsv} for detailed statements
of the main technical results that support
Theorem~\ref{thm:univ-rsv} and the proofs
of these results.
Most of the ingredients are standard
tools from modern applied probability,
but we have combined them in a subtle way.
To make the long argument clearer,
we have attempted to present the proof
in a multi-resolution fashion where pieces
from each level are combined explicitly
at the level above.

\subsection{Theorem~\ref{thm:univ-rsv}: Extensions}

We expect that there are a number of avenues for
extending Theorem~\ref{thm:univ-rsv}.

\begin{itemize}
\item	Our analysis shows that the error in estimating
$\smin(\mtx{\Pi}; E)$ by the excess width $\coll{E}_d(E)$
is at most $o(\sqrt{D})$.  In case $\mtx{\Pi}$ is a standard
normal matrix, the error actually has constant order~\cite[Thm.~II.1]{TOH15:Gaussian-Min-Max}.
Can we improve the error bound for more general random \joelprev{linear map}s?

\item	A related question is whether the universality of $\smin(\mtx{\Pi}; E)$
persists when the excess width $\coll{E}_d(E)$ is very small in comparison
with $\sqrt{d}$.

\item	There is some empirical evidence that the restricted minimum singular
value may have universality properties for a class of random \joelprev{linear map}s
wider than Model~\ref{mod:p-mom-mtx}.

\item	Our proof can probably be adapted to show that other
types of functionals exhibit universal behavior.
For example, we can study
$$
\min_{\vct{t} \in E}\big(  \normsq{\mtx{\Pi}\vct{t}} + f(\vct{t}) \big)
$$
where $f$ is a convex, Lipschitz function.  This type of optimization problem plays
an important role in statistics and machine learning.
\end{itemize}

At the same time, we know that many natural functionals do \emph{not}
exhibit universal behavior.
In particular, consider the restricted \emph{maximum} singular value:
$$
\smax(\mtx{A}; T) := \sup_{\vct{t} \in T} \norm{\mtx{A}\vct{t}}.
$$
There are many sets $T$ where the maximum singular value $\smax(\cdot; T)$
does not take a universal value for \joelprev{linear map}s in Model~\ref{mod:p-mom-mtx};
for example, see Figure~\ref{fig:max-rsv-nonuniversal}.
It is an interesting challenge to determine the full scope of the universality
principle uncovered in Theorem~\ref{thm:univ-rsv}.

\newpage

\part{Applications}
\label{part:applications}

In this part of the paper, we outline some of the implications
of Theorems~\ref{thm:univ-embed} and~\ref{thm:univ-rsv} in the
information sciences.  We focus on problems involving least squares
and $\ell_1$ minimization to reduce the number of distinct calculations
that we need to perform; nevertheless the techniques apply broadly.
In an effort to make the presentation shorter and more intuitive,
we have also chosen to sacrifice some precision.

Section~\ref{sec:signal-recovery} discusses structured signal recovery
and, in particular, the compressed sensing problem.
Section~\ref{sec:coding} presents an application in coding theory.
Section~\ref{sec:rand-nla} describes how the results allow us to
analyze randomized algorithms for numerical linear algebra.
Finally, Section~\ref{sec:lasso-err} gives a universal formula for
the prediction error in a sparse regression problem.

\section{Recovery of Structured Signals from Random Measurements}
\label{sec:signal-recovery}

In signal processing, dimension reduction arises
as a mechanism for signal acquisition.  The idea is that the number of measurements
we need to recover a structured signal
is comparable with the number of degrees
of freedom in the signal, rather than the ambient dimension.
Given the measurements, we can reconstruct the signal using
nonlinear algorithms that take into account our prior
knowledge about the structure.
The field of compressed sensing is based on the idea
that \emph{random} linear measurements offer an efficient
way to acquire structured signals.  The practical
challenge in implementing this proposal
is to find technologies that can perform
random sampling.

Our universality laws have a significant implication for
compressed sensing.  We prove that the number of
random measurements required to recover a structured
signal does not have a strong dependence
on the distribution of the random measurements.
This result
is important because most applications offer
limited flexibility in the type of measurement
that we can take.  Our theory justifies the use
of a broad class of measurement ensembles.

\subsection{The Phase Transition for Sparse Signal Recovery}
\label{sec:l1-min}

In this section, we study the phase transition that appears
when we reconstruct a sparse signal from random measurements
via $\ell_1$ minimization.  For a large class of random
measurements, we prove that the distribution does not affect
the location of the phase transition.  This result resolves
a major open question~\cite{DT09:Observed-Universality}
in the theory of compressed sensing.

Let us give a more precise description of the problem.
Suppose that $\vct{x}_\star \in \R^n$ is a fixed vector with precisely
$s$ nonzero entries.  Let $\mtx{\Phi} : \R^n \to \R^m$ be a random linear map,
and suppose that we have access to the image $\vct{y} = \mtx{\Phi} \vct{x}_\star$.
We interpret this data as a list of $m$ samples of the unknown signal.

A standard
approach~\cite{CDS98:Atomic-Decomposition,DH01:Uncertainty-Principles,Tro06:Just-Relax,CRT06:Robust-Uncertainty,Don06:Compressed-Sensing}
for reconstructing the sparse signal $\vct{x}_{\star}$
from the data is to solve a convex optimization problem:
\begin{equation} \label{eqn:l1-min}
\underset{\vct{x} \in \R^n}{\quad\minimize} \pnorm{\ell_1}{\vct{x}}
\subjto \mtx{\Phi}\vct{x} = \vct{y}.
\end{equation}
Minimizing the $\ell_1$ norm promotes sparsity in the optimization variable
$\vct{x}$, and the constraint ensures that the virtual measurements
$\mtx{\Phi} \vct{x}$ are consistent with the observed data $\vct{y}$.
We say that the optimization problem~\eqref{eqn:l1-min} \term{succeeds}
if it has a unique solution $\widehat{\vct{x}}$ that coincides
with $\vct{x}_{\star}$.  Otherwise, it \term{fails}.
In this setting, the following challenge arises.

\begin{quotation}
\textbf{The Compressed Sensing Problem:}
Is the optimization problem~\eqref{eqn:l1-min} likely to succeed or
to fail to recover $\vct{x}_{\star}$
as a function of the sparsity $s$, the number $m$ of measurements,
the ambient dimension $n$, and the distribution of the random measurement matrix $\mtx{\Phi}$?
\end{quotation}

\noindent
This question has been a subject of inquiry in thousands of papers over
the last 10 years; see the books~\cite{EK12:Compressed-Sensing,FR13:Mathematical-Introduction}
for more background and references.

In a series of recent papers~\cite{DT09:Counting-Faces,Sto09:Various-Thresholds,CRPW12:Convex-Geometry,BLM15:Universality-Polytope,ALMT14:Living-Edge,Sto13:Regularly-Random,OTH13:Squared-Error,FM14:Corrupted-Sensing,GNP14:Gaussian-Phase},
the compressed sensing problem has been solved completely
in the case where the random measurement matrix $\mtx{\Phi}$ follows the standard
normal distribution. See Remark~\ref{rem:l1-prior} for a narrative of who did what when.
In brief, there exists a phase transition
function $\psi_{\ell_1} : [0, 1] \to [0,1]$ defined by
\begin{equation} \label{eqn:psi-l1}
\psi_{\ell_1}(\varrho) := \inf_{\tau \geq 0} \left( \varrho (1+\tau^2)
	+ (1-\varrho) 	\sqrt{\frac{2}{\pi}} \int_{\tau}^\infty (\zeta - \tau)^2 \econst^{-\zeta^2/2} \idiff{\zeta} \right).
\end{equation}
The phase transition function is increasing and convex,
and it satisfies $\psi_{\ell_1}(0) = 0$ and $\psi_{\ell_1}(1) = 1$.
When the measurement matrix $\mtx{\Phi}$ is standard normal,
\begin{equation} \label{eqn:l1-gauss}
\begin{aligned}
m/n &< \psi_{\ell_1}(s/n) - o(1)
\quad\text{implies}\quad
\text{\eqref{eqn:l1-min} fails with probability $1 - o(1)$;} \\
m/n &> \psi_{\ell_1}(s/n) + o(1)
\quad\text{implies}\quad
\text{\eqref{eqn:l1-min} succeeds with probability $1 - o(1)$}.
\end{aligned}
\end{equation}
In other words, as the number $m$ of measurements increases,
the probability of success jumps from zero to one at the
point $n \psi_{\ell_1}(s/n)$ over a range of $o(n)$ measurements.
The error terms in~\eqref{eqn:l1-gauss} can be improved substantially,
but this presentation suffices for our purposes.

Donoho \& Tanner~\cite{DT09:Observed-Universality} have performed an
extensive empirical investigation of the phase transition in the
$\ell_1$ minimization problem~\eqref{eqn:l1-min}.
Their work suggests that the Gaussian phase transition~\eqref{eqn:l1-gauss}
persists for many other types of random measurements.  See Figure~\ref{fig:l1-univ}
for a small illustration.  Our universality results provide
the first rigorous explanation of this phenomenon for measurement matrices
drawn from Model~\ref{mod:p-mom-mtx}.

\begin{figure}
\includegraphics[width=\textwidth]{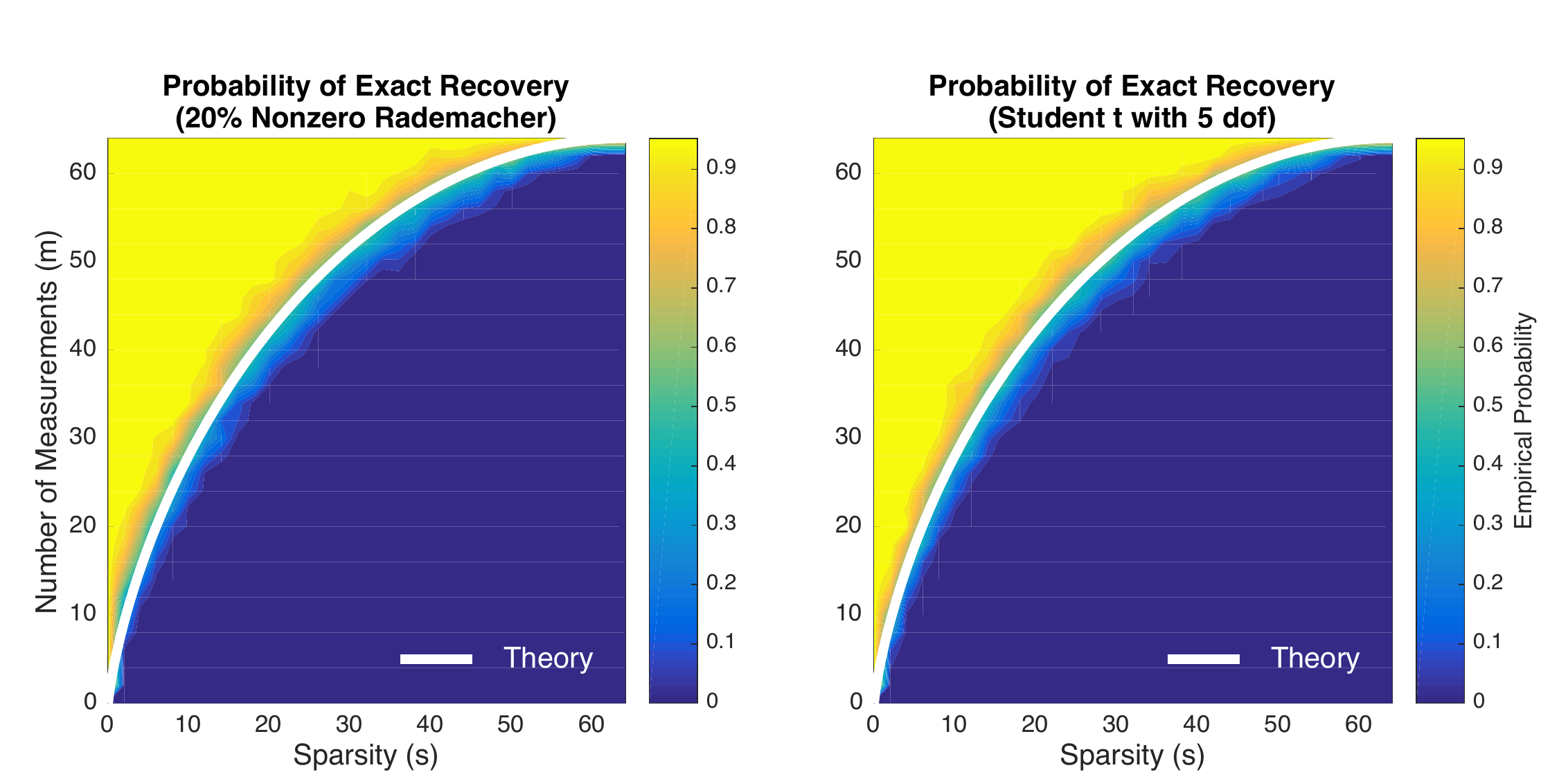}
\caption[Universality of the $\ell_1$ Recovery Phase Transition]
{\textsl{Universality of the $\ell_1$ Recovery Phase Transition.} \label{fig:l1-univ}
These plots depict the empirical probability that the $\ell_1$ minimization
problem~\eqref{eqn:l1-min} recovers a vector $\vct{x}_\star \in \R^{64}$ with $s$ nonzero entries
from a vector of random measurements $\vct{y} = \mtx{\Phi} \vct{x}_\star \in \R^m$.
The \textbf{heatmap} indicates the empirical probability, computed over 100 trials.
The \textbf{white curve} is the phase transition $n \psi_{\ell_1}(s/n)$
promised by Proposition~\ref{prop:l1-univ}.
\textsc{Left:} The random measurement matrix $\mtx{\Phi}$
is a sparse Rademacher matrix with an average of 20\% nonzero entries.
\textsc{Right:} The random measurement matrix $\mtx{\Phi}$ has independent Student $t_5$
entries.  See Section~\ref{sec:repro-research} for more details.}
\end{figure}

\begin{proposition}[Universality of $\ell_1$ Phase Transition] \label{prop:l1-univ}
Assume that

\begin{itemize}
\item	The ambient dimension $n$ and the number $m$ of measurements satisfy $m \leq n$.

\item	The vector $\vct{x}_\star \in \R^n$ has exactly $s$ nonzero entries.

\item	The random measurement matrix $\mtx{\Phi} : \R^n \to \R^m$ follows Model~\ref{mod:p-mom-mtx} with parameters $p$ and $\nu$.

\item	We observe the vector $\vct{y} = \mtx{\Phi} \vct{x}_\star$.

\end{itemize}

\noindent
Then, as the ambient dimension $n \to \infty$,
$$
\begin{aligned}
m/n &< \psi_{\ell_1}(s/n) - o(1)
\quad\text{implies}\quad
\text{\eqref{eqn:l1-min} fails with probability $1 - o(1)$;} \\
m/n &> \psi_{\ell_1}(s/n) + o(1)
\quad\text{implies}\quad
\text{\eqref{eqn:l1-min} succeeds with probability $1 - o(1)$.}
\end{aligned}
$$
The little-o suppresses constants that depend only on $p$ and $\nu$.
\end{proposition}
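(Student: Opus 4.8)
The plan is to recast the success of the $\ell_1$ program~\eqref{eqn:l1-min} as a conic intersection event of precisely the kind governed by Theorem~\ref{thm:univ-embed}, and then to import the location of the phase transition from the already-solved Gaussian case. Write $\mathcal{D} := \mathcal{D}(\pnorm{\ell_1}{\cdot},\vct{x}_\star)$ for the \term{descent cone} of the $\ell_1$ norm at $\vct{x}_\star$, the conic hull of all directions $\vct{h}$ with $\pnorm{\ell_1}{\vct{x}_\star + \tau\vct{h}} \le \pnorm{\ell_1}{\vct{x}_\star}$ for some $\tau>0$; since the $\ell_1$ norm is polyhedral, $\mathcal{D}$ is a closed convex cone. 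The standard optimality characterization (e.g.~\cite{CRPW12:Convex-Geometry,ALMT14:Living-Edge}) states that $\vct{x}_\star$ is the \emph{unique} solution of~\eqref{eqn:l1-min} if and only if $\mathcal{D}\cap\nullsp(\mtx{\Phi})=\{\vct{0}\}$. Setting $E := \mathcal{D}\cap\mathsf{S}^{n-1}$, a nonempty, compact, spherically convex subset of the unit sphere in $\R^n$, this condition is the same as $\vct{0}\notin\mtx{\Phi}(E)$. Hence~\eqref{eqn:l1-min} succeeds exactly when randomized dimension reduction of $E$ succeeds, and fails exactly when it fails.

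The next step is to identify $\delta(E)=\delta(\mathcal{D})$. The null space of a standard normal matrix is a uniformly random subspace of the prescribed codimension, so the Gaussian instance of the recovery problem is exactly~\eqref{eqn:intro-phase-transition} applied to $E$ with embedding dimension $m$; matching this against the established Gaussian transition~\eqref{eqn:l1-gauss} forces $\delta(E)=n\,\psi_{\ell_1}(s/n)+o(n)$, with $\psi_{\ell_1}$ the function in~\eqref{eqn:psi-l1}. Equivalently, one invokes the closed-form expression for the statistical dimension of the $\ell_1$ descent cone from~\cite{ALMT14:Living-Edge,Sto09:Various-Thresholds}, which equals $n\,\psi_{\ell_1}(s/n)$ up to an error of order $\sqrt{n}$.

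Now apply Theorem~\ref{thm:univ-embed} to $E$ with ambient dimension $D=n$ and embedding dimension $d=m$, and fix $\eps\in(0,1)$. For the failure direction, suppose $m/n<\psi_{\ell_1}(s/n)-\eps$. Then $\psi_{\ell_1}(s/n)>\eps$, so $s/n$ is automatically bounded below and $\delta(E)\ge\varrho n$ for a constant $\varrho=\varrho(\eps)>0$; using $\psi_{\ell_1}\le1$ to convert the additive gap into a multiplicative one, $m\le(1-\eps')\,\delta(E)$ for $n$ large, with $\eps'=\eps'(\eps)>0$. Since $E$ is spherically convex, Theorem~\ref{thm:univ-embed}\eqref{eqn:univ-embed-fail} gives $\Prob{\vct{0}\in\mtx{\Phi}(E)}\ge1-\cnst{C}_p n^{1-p/4}$, i.e.~\eqref{eqn:l1-min} fails with probability $1-o(1)$. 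For the success direction, suppose $m/n>\psi_{\ell_1}(s/n)+\eps$; when $s/n$ stays bounded away from $0$ this yields $m\ge(1+\eps')\,\delta(E)$, and Theorem~\ref{thm:univ-embed}\eqref{eqn:univ-embed-succ} gives $\Prob{\vct{0}\notin\mtx{\Phi}(E)}\ge1-\cnst{C}_p n^{1-p/4}$. A routine diagonal extraction over $\eps\downarrow0$—legitimate because the threshold dimension $N(p,\nu,\varrho,\eps)$ and the probability bound in Theorem~\ref{thm:univ-embed} are explicit functions of their arguments—then upgrades these per-$\eps$ statements to the stated $\pm o(1)$ conclusions.

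The conceptual work is entirely done by Theorem~\ref{thm:univ-embed}; what remains is bookkeeping, and the one place that requires genuine care is the very sparse regime $s/n\to0$ in the success direction, where $\delta(\mathcal{D})=o(n)$ and the hypothesis $\delta(E)\ge\varrho D$ is unavailable. I would dispatch it by a dilation argument resting on the fact that the $\ell_1$ descent cone is monotone under enlargement of the sign pattern: extending the support of $\vct{x}_\star$ to an index set of size $\lceil\varrho_0 n\rceil$ and keeping the existing signs enlarges the descent cone, as a one-line comparison of directional derivatives shows. Given $\eps$, choose $\varrho_0=\varrho_0(\eps)$ with $\psi_{\ell_1}(\varrho_0)<\eps/4$ (possible since $\psi_{\ell_1}$ is increasing with $\psi_{\ell_1}(0)=0$); the enlarged cone then has statistical dimension $\approx n\,\psi_{\ell_1}(\varrho_0)$, which is proportional to $n$ yet below $\eps n/2\le m$, so Theorem~\ref{thm:univ-embed}\eqref{eqn:univ-embed-succ} applies to it, and its success implies success for the smaller set $E$. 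This keeps the $o(1)$ rates dependent only on $p$ and $\nu$ across all sparsity levels. In the band where $s/n$ stays bounded away from $0$ and $1$—matching the regime displayed in Figure~\ref{fig:l1-univ}—no such device is needed, and the proof is a direct application of Theorem~\ref{thm:univ-embed}.
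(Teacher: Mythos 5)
Your core argument is the same as the paper's: recast success of~\eqref{eqn:l1-min} as the conic event $\mathcal{D}\cap\nullsp(\mtx{\Phi})=\{\vct{0}\}$ via the primal optimality condition, observe that $E := \mathcal{D}\cap\mathsf{S}^{n-1}$ is compact and spherically convex, apply Theorem~\ref{thm:univ-embed} with $D=n$ and $d=m$, and plug in the known value $\delta(E) = n\psi_{\ell_1}(s/n) + O(\sqrt{n})$ (which the paper takes directly from~\cite[Prop.~4.5]{ALMT14:Living-Edge} rather than your alternative route of ``matching against the Gaussian transition,'' though both are legitimate). The bookkeeping translating the additive gap $\eps$ into a multiplicative one and checking $\delta(E)\ge\varrho n$ is also sound.

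Where you go beyond the paper is the dilation argument for the sparse regime of the success direction. The paper explicitly acknowledges this limitation just after the proof (``Note that Proposition~\ref{prop:l1-univ} requires the sparsity level $s$ to be proportional to the ambient dimension $n$ before it provides any information'') but does not resolve it. Your observation that enlarging the support of $\vct{x}_\star$ only enlarges the $\ell_1$ descent cone is correct --- $\mathcal{D} = \{\vct{h} : \langle\sigma_S,\vct{h}_S\rangle + \|\vct{h}_{S^c}\|_1 \le 0\}$, and splitting $\|\vct{h}_{S^c}\|_1 = \|\vct{h}_{S'\setminus S}\|_1 + \|\vct{h}_{(S')^c}\|_1$ together with $\langle\sigma'_{S'\setminus S},\vct{h}_{S'\setminus S}\rangle \le \|\vct{h}_{S'\setminus S}\|_1$ gives the inclusion --- and the dimension count is sound, since $m > \eps n$ while $\delta$ of the enlarged cone is $\approx n\psi_{\ell_1}(\varrho_0) < \eps n/4$, leaving comfortable room for Theorem~\ref{thm:univ-embed}\eqref{eqn:univ-embed-succ}. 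This genuinely tightens the proposition by removing the implicit proportionality hypothesis; the paper states it as an informal caveat, whereas you supply a short rigorous fix. One small presentation point: the dilation only delivers a success condition of the form ``$m \ge \eps n$ suffices when $s/n$ is tiny,'' which is far from the scaling $m \asymp s\log(n/s)$ one expects in that regime, so your remedy closes the stated proposition but does not touch the genuinely open sub-linear problem the paper mentions in the same paragraph; it is worth being explicit about this so the reader does not over-read the contribution.
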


Proposition~\ref{prop:l1-univ} follows directly from our universality result,
Theorem~\ref{thm:univ-embed}, and the established calculation~\eqref{eqn:l1-gauss}
of the phase transition in the standard normal setting.

\begin{proof} The approach is quite standard.
Let $\Omega$ be the set of unit-norm descent directions of the $\ell_1$ norm
at the point $\vct{x}_\star$.  That is,
$$
\Omega := \big\{ \vct{u} \in \mathsf{S}^{n-1} :
	\pnorm{1}{\vct{x}_\star + \lambda \vct{u}} \leq \pnorm{1}{\vct{x}_\star}
	\text{ for some $\lambda > 0$} \big\}.
$$
The primal optimality condition for~\eqref{eqn:l1-min} demonstrates that
the reconstruction succeeds if and only if
$\Omega \cap \nullsp(\joelprev{\mtx{\Phi}}) = \emptyset$.
Since the $\ell_1$ norm is a closed convex function,
the set $\vct{\theta}^{-1}(\Omega) \cup \{ \vct{0} \}$
of all descent directions forms a closed, convex cone.
Therefore, the set $\Omega$ is closed and spherically convex.
It follows from Theorem~\ref{thm:univ-embed} that
the behavior of~\eqref{eqn:l1-min} undergoes a
phase transition at the statistical dimension $\delta(\Omega)$
for any random \joelprev{linear map} drawn from Model~\ref{mod:p-mom-mtx}.

The result~\cite[Prop.~4.5]{ALMT14:Living-Edge} contains the
first complete and accurate computation of the statistical
dimension of a descent cone of the $\ell_1$ norm:
\begin{equation} \label{eqn:sdim-l1}
n \psi_{\ell_1}(s/n) - O(\sqrt{n}) \leq \delta(\Omega) \leq n \psi_{\ell_1}(s/n).
\end{equation}
See also~\cite[Prop.~1]{FM14:Corrupted-Sensing}.  This fact completes the proof.
\end{proof}

Note that Proposition~\ref{prop:l1-univ} requires the
sparsity level $s$ to be proportional to the ambient
dimension $n$ before it provides any information.
By refining our argument, we can address the case when $s$ is
proportional to $n^{1-\eps}$ for a small number $\eps$ that
depends on the regularity of the random \joelprev{linear map}.
The empirical work~\cite{DT09:Observed-Universality}
of Donoho \& Tanner is unable to provide
statistical evidence for the universality hypothesis in the
regime where $s$ is very small.  It remains an open problem
to understand how rapidly $s/n$ can vanish before the
universality phenomenon fails.

The paper~\cite{DT09:Observed-Universality} also contains
numerical evidence that the $\ell_1$ phase transition persists for
random measurement systems that have more structure than Model~\ref{mod:p-mom-mtx}.
It remains an intriguing open question to understand these experiments.

\begin{remark}[Prior Work] \label{rem:l1-prior}
In early 2005, Donoho~\cite{Don06:High-Dimensional-Centrally}
and Donoho \& Tanner~\cite{DT06:Thresholds-Recovery}
observed that there is a phase transition
in the number of standard normal measurements needed to
reconstruct a sparse signal via $\ell_1$ minimization.
Using methods from integral geometry, they were able to
perform a heuristic computation of the location of the
phase transition function~\eqref{eqn:psi-l1}.
In subsequent work~\cite{DT09:Counting-Faces},
they proved that the transition~\eqref{eqn:l1-gauss}
is valid in some parameter regimes.  They later reported
extensive empirical evidence~\cite{DT09:Observed-Universality}
that the distribution of the random measurement map has little effect on the location of the phase transition.

In early 2005, Rudelson \& Vershynin~\cite{RV08:Sparse-Reconstruction}
proposed a different approach to studying $\ell_1$ minimization by adapting
results of Gordon~\cite{Gor88:Milmans-Inequality} that depend
on Gaussian process theory.  Stojnic~\cite{Sto09:Various-Thresholds}
refined this argument to obtain an empirically sharp success condition
for standard normal \joelprev{linear map}s, but his work did not establish
a matching failure condition.  Stojnic's calculations were
clarified and extended to other signal recovery problems in the
papers~\cite{OH10:New-Null-Space,CRPW12:Convex-Geometry,ALMT14:Living-Edge,FM14:Corrupted-Sensing}.

Bayati et al.~\cite{BLM15:Universality-Polytope} is the first paper
to rigorously demonstrate that the phase transition~\eqref{eqn:l1-gauss}
is valid for standard normal measurements.  The argument is based on
a state evolution framework for an iterative algorithm inspired by statistical physics.
This work also gives the striking conclusion that the
$\ell_1$ phase transition is universal over a class of random measurement
maps.  This result requires the measurement matrix to have independent, standardized,
subgaussian entries that are absolutely continuous with respect
to the Gaussian distribution.  As a consequence,
the paper~\cite{BLM15:Universality-Polytope} excludes
discrete and heavy-tailed models.  Furthermore, it only applies to $\ell_1$
minimization.  In contrast, Proposition~\ref{prop:l1-univ} and its
proof have a much wider compass.

The paper~\cite{ALMT14:Living-Edge} of
Amelunxen et al.~contains the first complete integral-geometric proof
of~\eqref{eqn:l1-gauss} for standard normal measurements.
This work is significant because it
established for the first time that phase transitions are
ubiquitous in signal reconstruction problems.  It also
forged the first links between the approach to phase
transitions based on integral geometry and those based on
Gaussian process theory.
The papers~\cite{MT14:Steiner-Formulas,GNP14:Gaussian-Phase}
build on these ideas to obtain precise estimates
of the success probability in~\eqref{eqn:l1-gauss}.

Subsequently, Stojnic~\cite{Sto13:Regularly-Random} refined
the Gaussian process methods to obtain more detailed
information about the behavior of the errors in noisy
variants of the $\ell_1$ minimization problem.  His work
has been \joelprev{extended} in a series~\cite{OTH13:Squared-Error,TOH15:Gaussian-Min-Max,TAH15:High-Dimensional-Error}
of papers by Abbasi, Oymak, Thrampoulidis, Hassibi,
and their collaborators.
Because of this research, we now have a very detailed
understanding of the behavior of convex signal recovery
methods with Gaussian measurements.

To our knowledge, the current paper is the first work
that extends the type of general analysis
in~\cite{ALMT14:Living-Edge,
OTH13:Squared-Error,TAH15:High-Dimensional-Error}
beyond the confines of the standard normal model.
\end{remark}

\subsection{Other Signal Recovery Problems}
\label{sec:other-signal-recovery}

The compressed sensing problem is the most prominent example
from a large class of related questions.  Our universality
results have implications for this entire class of problems.
We include a brief explanation.

Let $f : \R^n \to \R$ be a proper\footnote{A proper convex function takes at least one finite value
and never takes the value $-\infty$.}
convex function whose value increases with the ``complexity''
of its argument.  The $\ell_1$ norm is an example of
a complexity measure that is appropriate for sparse signals~\cite{CDS98:Atomic-Decomposition}.
Similarly, the Schatten 1-norm is a good complexity
measure for low-rank matrices~\cite{Faz02:Matrix-Rank}.

Let $\vct{x}_\star \in \R^n$ be a vector with ``low complexity.''
Draw a random linear map $\mtx{\Phi} : \R^n \to \R^m$,
and suppose we have access to $\vct{x}_\star$ only through
the measurements $\vct{y} = \mtx{\Phi} \vct{x}_\star$.  We
can attempt to reconstruct $\vct{x}_\star$ by solving the
convex optimization problem
\begin{equation*} \label{eqn:f-min}
\underset{\vct{x} \in \R^n}{\quad\minimize} f(\vct{x})
\subjto \mtx{\Phi}\vct{x} = \vct{y}.
\end{equation*}
In other words, we find the vector with minimum complexity that is consistent with the observed data.
We say that~\eqref{eqn:f-min} \term{succeeds} when it
has a unique optimal point that coincides with $\vct{x}_\star$;
otherwise, it \term{fails}.

The paper~\cite{ALMT14:Living-Edge} proves that there is a
phase transition in the behavior of~\eqref{eqn:f-min}
when $\mtx{\Phi}$ is standard normal.  Our universality
law, Theorem~\ref{thm:univ-embed}, allows us to extend
this result to include every random \joelprev{linear map} from
Model~\ref{mod:p-mom-mtx}.
Define the set $\Omega$ of unit-norm
descent directions of $f$ at the point $\vct{x}_\star$:
$$
\Omega := \big\{ \vct{u} \in \mathsf{S}^{n-1} :
f(\vct{x}_\star + \lambda\vct{u}) \leq f(\vct{x}_\star)
\text{ for some $\lambda > 0$} \big\}.
$$
Then, as the ambient dimension $n \to \infty$,
$$
\begin{aligned}
m &< \delta(\Omega) - o(n)
\quad\text{implies}\quad
\text{\eqref{eqn:f-min} fails with probability $1 - o(1)$}; \\
m &> \delta(\Omega) + o(n)
\quad\text{implies}\quad
\text{\eqref{eqn:f-min} succeeds with probability $1 - o(1)$}. \\
\end{aligned}
$$
In other words, there is a phase transition in the
behavior of~\eqref{eqn:f-min} when the number $m$ of measurements
equals the statistical dimension $\delta(\Omega)$ of the set of
descent directions of $f$ at the point $\vct{x}_\star$.
See the papers~\cite{CRPW12:Convex-Geometry,ALMT14:Living-Edge,FM14:Corrupted-Sensing} for
some general methods for computing the statistical
dimension of a descent cone.

\subsection{Complements}

We conclude this section with a few additional remarks about the scope
of our results on signal recovery.  First, we discuss some
geometric applications.  Second, we mention some other
signal processing problems that can be studied using the
same methods.

\subsubsection{Geometric Implications}
\label{sec:faces}

Proposition~\ref{prop:l1-univ} can be understood as
a statement about the facial structure of a random
projection of the $\ell_1$ ball.  Equivalently, it
provides information about the facial structure of
the convex hull of random points.

Suppose that $\mathsf{B}_1^n$ is the $n$-dimensional
$\ell_1$ ball, and fix an
$(s-1)$-dimensional face $F$ of $\mathsf{B}_1^n$.
Let $\mtx{\Phi} : \R^n \to \R^m$ be
a random \joelprev{linear map} from Model~\ref{mod:p-mom-mtx}.
We say that $\mtx{\Phi}$ \term{preserves}
the face $F$ when $\mtx{\Phi}(F)$ is
an $(s-1)$-dimensional face of
the projection $\mtx{\Phi}(\mathsf{B}_1^n)$.
We can reinterpret Proposition~\ref{prop:l1-univ}
as saying that
$$
\begin{aligned}
m/n &< \psi_{\ell_1}(s/n) - o(1)
\quad\text{implies}\quad
\text{$\mtx{\Phi}$ is unlikely to preserve $F$;} \\
m/n &> \psi_{\ell_1}(s/n) + o(1)
\quad\text{implies}\quad
\text{$\mtx{\Phi}$ is likely to preserve $F$.}
\end{aligned}
$$
The connection between $\ell_1$ minimization and the
facial structure of the $\ell_1$ ball was identified
in~\cite{Don06:Most-Large-II}; see also~\cite[Sec.~10.1.1]{ALMT14:Living-Edge}.

Here is another way of framing the same result.
Fix an index set $J \subset \{1, \dots, n\}$ with
cardinality $\#J = s$ and a vector $\vct{\eta} \in \R^n$
with $\pm 1$ entries.  Let $\vct{\phi}_1, \dots, \vct{\phi}_n \in \R^m$
be independent random vectors, drawn from Model~\ref{mod:p-mom-mtx},
and consider the absolute convex hull
$E := \conv\{ \pm \vct{\phi}_1, \dots, \pm \vct{\phi}_n \}$.
The question is whether the set $F := \conv\{ \eta_j \vct{\phi}_j : j \in J \}$
is an $(s-1)$-dimensional face of $E$.
We have the statements
$$
\begin{aligned}
s/n &< \psi_{\ell_1}^{-1}(m/n) - o(1)
\quad\text{implies}\quad
\text{$F$ is likely to be an $(s-1)$-dimensional face of $E$;} \\
s/n &> \psi_{\ell_1}^{-1}(m/n) + o(1)
\quad\text{implies}\quad
\text{$F$ is unlikely to be an $(s-1)$-dimensional face of $E$.}
\end{aligned}
$$
See the paper~\cite{DT09:Counting-Faces} for more
discussion of the connection between the facial
structure of polytopes and signal recovery.
Some universality results of this type
also appear in Bayati et al.~\cite{BLM15:Universality-Polytope}.

\subsubsection{Other Signal Processing Applications}

We often want to perform signal processing tasks on data after
reducing its dimension.  In this section, we have focused
on the problem of reconstructing a sparse signal from
random measurements.  Here are some related problems:

\begin{itemize}
\item	\textbf{Detection.} Does an observed signal consist of a template
corrupted with noise? Or is it just noise?

\item	\textbf{Classification.}  Does an observed signal belong to
class \textsf{A} or to class \textsf{B}?

\end{itemize}

\noindent
The literature contains many papers that propose methods for solving these problems
after dimension reduction; for example, see~\cite{DDW+07:Smashed-Filter}.
The existing analysis is either qualitative or it assumes that the
dimension reduction map is Gaussian.  Our universality laws can be used to
study the precise behavior of compressed detection and classification
with more general types of random \joelprev{linear map}s.  For brevity,
we omit the details.

\section{Decoding with Structured Errors}
\label{sec:coding}

One of the goals of coding theory is to design codes and decoding
algorithms that can correct gross errors in transmission.  In particular,
it is common that some proportion of the received symbols are corrupted.
In this section, we show that a large family of random codes can be
decoded in the presence of structured errors.  The number of errors
that we can correct is universal over this family.  This
result is valuable because it applies to random codebooks that are
closer to realistic coding schemes.

The result on random decoding can also be interpreted as a statement
about the behavior of the least-absolute-deviation (LAD) method for
regression.  We also discuss how our universality results apply
to a class of demixing problems.

\subsection{Decoding with Sparse Errors}

We work with a random linear code over the real field.
Consider a fixed message $\vct{x}_\star \in \R^m$.
Let $\mtx{\Phi} \in \R^{n \times m}$ be a random matrix,
which is called a \term{codebook} in this context.
Instead of transmitting the original message $\vct{x}_\star$,
we transmit the coded message $\mtx{\Phi} \vct{x}_\star$.
Suppose that we receive a version $\vct{y}$ of the message where
some number $s$ of the entries are corrupted.  That is,
$\vct{y} = \mtx{\Phi} \vct{x}_\star + \vct{z}_\star$
where the error vector $\vct{z}_\star$
has at most $s$ nonzero entries.
For simplicity, we assume that $\vct{z}_\star$
does not depend on the codebook $\mtx{\Phi}$.

In this setting, one can attempt to decode the message using
an $\ell_1$ minimization method~\cite{DH01:Uncertainty-Principles,CRTV05:Error-Correction,DT06:Thresholds-Recovery,MT14:Sharp-Recovery}.
We solve the optimization problem
\begin{equation} \label{eqn:l1-decode}
\underset{\vct{x} \in \R^m, \vct{z} \in \R^n}{\quad\minimize}
	\pnorm{\ell_1}{\vct{z}}
	\subjto \vct{y} = \mtx{\Phi} \vct{x} + \vct{z}.
\end{equation}
In other words, we search for a message $\vct{x}$ and a sparse corruption $\vct{z}$
that match the received data.  We say that the optimization~\eqref{eqn:l1-decode}
\term{succeeds} if it has a unique optimal point $(\widehat{\vct{x}}, \widehat{\vct{z}})$
that coincides with $(\vct{x}_\star, \vct{z}_\star)$; otherwise it \term{fails}.

\begin{figure}
\includegraphics[width=\textwidth]{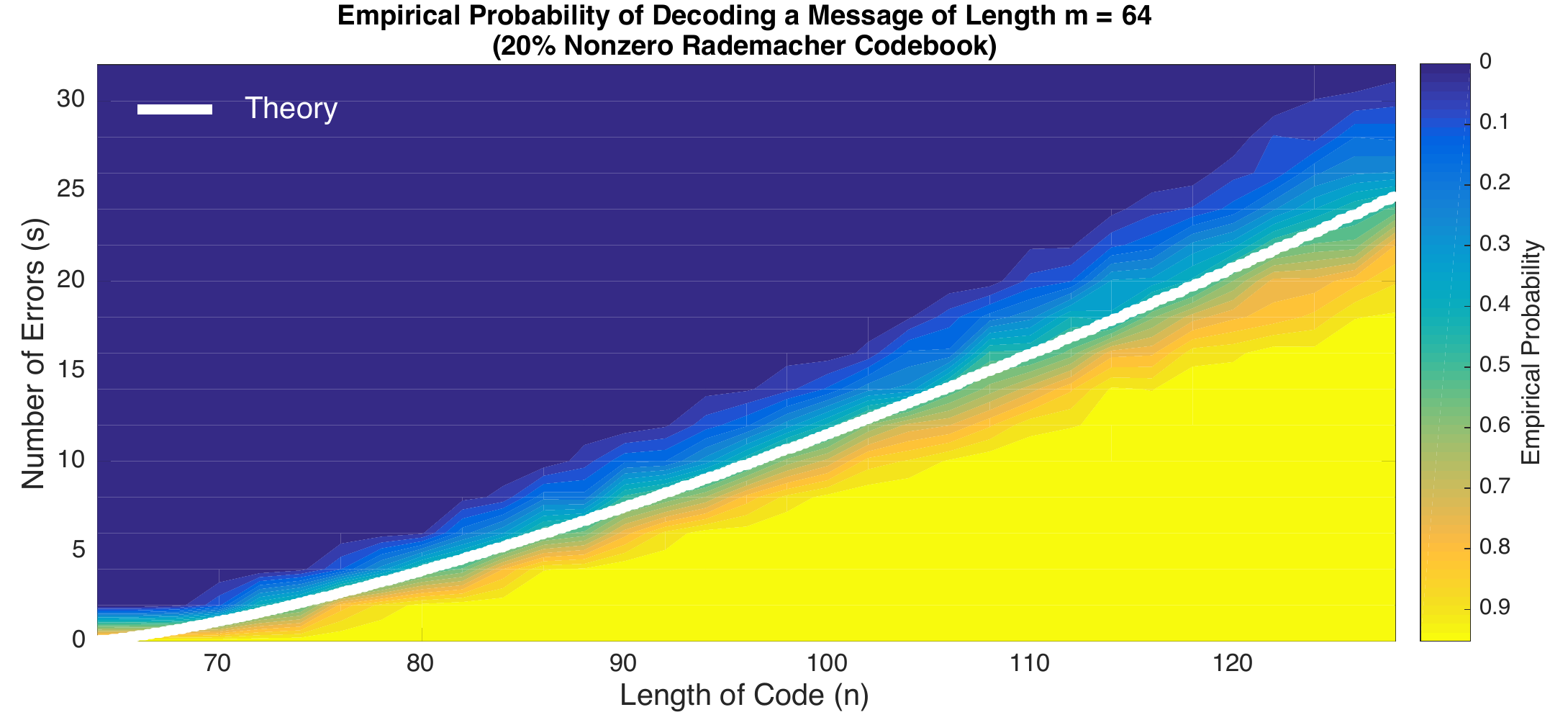}
\caption[Universality of the $\ell_1$ Decoding Phase Transition]
{\textsl{Universality of the $\ell_1$ Decoding Phase Transition.} \label{fig:l1-decode}
This plot shows the empirical probability that the $\ell_1$ method~\eqref{eqn:l1-decode}
decodes the message $\vct{x}_\star \in \R^{64}$
from the received message $\vct{y} = \mtx{\Phi} \vct{x}_\star + \vct{z}_\star \in \R^n$
where the corruption $\vct{z}_\star \in \R^n$ has exactly $s$ nonzero entries.
The codebook $\mtx{\Phi} \in \R^{n \times 64}$ is a sparse Rademacher matrix
with an average of 20\% nonzero entries.
The \textbf{heatmap} gives the empirical probability of correct decoding,
computed over 100 trials.
The \textbf{white curve} is the exact phase transition $n \psi_{\ell_1}^{-1}(1 - 64/n)$
promised by Proposition~\ref{prop:univ-decode}.
See Section~\ref{sec:repro-research} for more details.}
\end{figure}

The question is when the optimization problem~\eqref{eqn:l1-decode}
is effective at decoding the received transmission.  That is,
how many errors $s$ can we correct as a function of
the message length $m$ and the code length $n$?  The following result
gives a solution to this problem for any codebook drawn
from Model~\ref{mod:p-mom-mtx}.

\begin{proposition}[Universality of Sparse Error Correction] \label{prop:univ-decode}
Assume that

\begin{itemize}
\item	The message length $m$ and the code length $n$ satisfy $m \leq n$.

\item	The message  $\vct{x}_\star \in \R^m$ is arbitrary.

\item	\joelprev{The error vector $\vct{z}_\star \in \R^n$ has exactly $s$ nonzero entries,
where $s \leq (1 - \xi) n$ for some $\xi > 0$.}

\item	The random codebook $\mtx{\Phi} \in \R^{n \times m}$ follows Model~\ref{mod:p-mom-mtx}
with parameters $p$ and $\nu$.

\item	We observe the vector $\vct{y} = \mtx{\Phi} \vct{x}_\star + \vct{z}_\star$.

\end{itemize}

\noindent
Then, as the message length $m \to \infty$,
$$
\begin{aligned}
s/n &< \psi_{\ell_1}^{-1}\joelprev{(1 - m/n - o(1))}
\quad\text{implies}\quad
\quad\text{\eqref{eqn:l1-decode} succeeds with probability $1 - o(1)$;} \\
s/n &> \psi_{\ell_1}^{-1}\joelprev{(1 - m/n + o(1))}
\quad\text{implies}\quad
\quad\text{\eqref{eqn:l1-decode} fails with probability $1 - o(1)$.}
\end{aligned}
$$
The function $\psi_{\ell_1}$ is defined in~\eqref{eqn:psi-l1}.
The little-o suppresses constants that depend only on \joelprev{$\xi$} and $p$ and $\nu$.
\end{proposition}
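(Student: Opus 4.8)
The plan is to recast exact recovery by~\eqref{eqn:l1-decode} as an embedding event for the \emph{transposed} codebook $\mtx{\Phi}^\adj$ and then to invoke Theorem~\ref{thm:univ-embed}. Write $S := \supp{\vct{z}_\star}$, so that $\#S = s$, and let $\mathcal{N}$ be the normal cone of the $\ell_1$ norm at $\vct{z}_\star$, that is, the conic hull of the subdifferential
$$
\partial \pnorm{\ell_1}{\vct{z}_\star} = \big\{ \vct{\lambda} \in \R^n : \vct{\lambda}_S = \sgn{(\vct{z}_{\star,S})}, \ \pnorm{\ell_\infty}{\vct{\lambda}_{S^c}} \leq 1 \big\}.
$$
I will put $\Omega' := \mathcal{N} \cap \mathsf{S}^{n-1}$; this is a nonempty, compact, spherically convex subset of the sphere that misses the origin, and its statistical dimension depends on $\vct{z}_\star$ only through $s$. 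The first step is to show that, off an event of probability $o(1)$, the program~\eqref{eqn:l1-decode} succeeds if and only if $\vct{0} \in \mtx{\Phi}^\adj(\Omega')$, equivalently $\nullsp(\mtx{\Phi}^\adj) \cap \mathcal{N} \neq \{\vct{0}\}$.

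To establish this equivalence I would change variables to $\vct{w} := \vct{x} - \vct{x}_\star$, so that $\vct{z} = \vct{z}_\star - \mtx{\Phi}\vct{w}$; then~\eqref{eqn:l1-decode} recovers $(\vct{x}_\star, \vct{z}_\star)$ as its unique optimum exactly when $\pnorm{\ell_1}{\vct{z}_\star - \mtx{\Phi}\vct{w}} > \pnorm{\ell_1}{\vct{z}_\star}$ for every $\vct{w} \neq \vct{0}$. Since the $\ell_1$ descent cone $\mathcal{D}$ at $\vct{z}_\star$ is a closed polyhedral cone (so a strict first-order increase of the objective propagates to all step sizes by convexity) and $\mtx{\Phi}$ is injective off an $o(1)$ event, this is equivalent to $\mathcal{D} \cap \range(\mtx{\Phi}) = \{\vct{0}\}$. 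Now $\range(\mtx{\Phi})^{\polar} = \nullsp(\mtx{\Phi}^\adj)$ and $\mathcal{D}^{\polar} = \mathcal{N}$, so the spherical separation principle of Klee used in the proof of Proposition~\ref{prop:annihilate} gives the implication $\mathcal{D} \cap \range(\mtx{\Phi}) = \{\vct{0}\} \Rightarrow \nullsp(\mtx{\Phi}^\adj) \cap \mathcal{N} \neq \{\vct{0}\}$. For the converse, a nonzero vector of $\nullsp(\mtx{\Phi}^\adj) \cap \relint(\mathcal{N})$ is, after rescaling, a dual certificate forcing $(\vct{x}_\star, \vct{z}_\star)$ to be the \emph{unique} optimum of~\eqref{eqn:l1-decode}, except on a further $o(1)$ event such as rank deficiency of $\mtx{\Phi}$ or an intersection of $\range(\mtx{\Phi})$ with the lineality space $\lin(\mathcal{D})$ --- an $(s-1)$-dimensional subspace that a random $m$-plane avoids in the regime $m + s - 1 < n$ relevant to success. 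Since the statistical dimension is unchanged when $\mathcal{N}$ is replaced by $\relint(\mathcal{N})$, these standard genericity reductions leave the thresholds untouched, so I do not expect to belabor them; this deterministic first step is nonetheless where essentially all the fiddliness lives.

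With the reduction in hand I would apply Theorem~\ref{thm:univ-embed} to the $m \times n$ matrix $\mtx{\Phi}^\adj$, which follows Model~\ref{mod:p-mom-mtx} with the same parameters $p$ and $\nu$ because transposing preserves the model, taking ambient dimension $n$, embedding dimension $m$, and $E = \Omega'$. The polarity relation~\eqref{eqn:sdim-polar} gives $\delta(\Omega') = \delta(\mathcal{N}) = n - \delta(\mathcal{D})$, and the computation~\eqref{eqn:sdim-l1} of the statistical dimension of an $\ell_1$ descent cone gives
$$
n\big(1 - \psi_{\ell_1}(s/n)\big) \leq \delta(\Omega') \leq n\big(1 - \psi_{\ell_1}(s/n)\big) + O(\sqrt{n}).
$$
Because $\psi_{\ell_1}$ is strictly increasing with $\psi_{\ell_1}(1) = 1$ and $s \leq (1-\xi)n$, the constant $\varrho := 1 - \psi_{\ell_1}(1-\xi)$ is strictly positive and $\delta(\Omega') \geq \varrho n$; this is precisely why the hypothesis $s \leq (1-\xi)n$ is needed, since Theorem~\ref{thm:univ-embed} requires $\delta(E)$ to be proportional to the ambient dimension. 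Then Theorem~\ref{thm:univ-embed}\eqref{eqn:univ-embed-fail} --- valid because $\Omega'$ is spherically convex --- gives $\vct{0} \in \mtx{\Phi}^\adj(\Omega')$ with probability $1 - \cnst{C}_p\, n^{1-p/4}$ whenever $m \leq (1-\eps)\delta(\Omega')$, and Theorem~\ref{thm:univ-embed}\eqref{eqn:univ-embed-succ} gives $\vct{0} \notin \mtx{\Phi}^\adj(\Omega')$ with the same probability whenever $m \geq (1+\eps)\delta(\Omega')$. Translating through $\delta(\Omega') = n\big(1 - \psi_{\ell_1}(s/n)\big) + o(n)$: the first holds once $m/n < 1 - \psi_{\ell_1}(s/n) - \tau$, i.e.\ $s/n < \psi_{\ell_1}^{-1}(1 - m/n - \tau)$, and the second once $s/n > \psi_{\ell_1}^{-1}(1 - m/n + \tau)$, for any fixed slack $\tau > 0$ and all large $n$; one simply takes $\eps$ to be a suitable fraction of $\tau$, and the bound $1 - \psi_{\ell_1}(s/n) \geq \varrho$ keeps both the multiplicative error $\eps\,\delta(\Omega')$ and the additive error $O(\sqrt{n})$ below $\tau n$. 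Together with the reduction of the first step, this yields Proposition~\ref{prop:univ-decode}, the $o(1)$ there absorbing the slack $\tau$.

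The main obstacle is therefore not probabilistic: it is the clean deterministic translation of ``exact recovery by~\eqref{eqn:l1-decode}'' into the geometric event $\vct{0} \in \mtx{\Phi}^\adj(\Omega')$, which needs Klee's spherical separation theorem in one direction, a uniqueness-grade dual certificate (via the relative interior of $\mathcal{N}$) in the other, and the dismissal of a few lower-order events. Once that is in place, the whole passage from Gaussian codebooks to arbitrary Model~\ref{mod:p-mom-mtx} codebooks is delivered by Theorem~\ref{thm:univ-embed}, and no fresh concentration or Lindeberg-type comparison is required here.
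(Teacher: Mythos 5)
Your failure direction is fine and matches the paper: apply Theorem~\ref{thm:univ-embed}\eqref{eqn:univ-embed-succ} to $E = \Omega'$ with $\mtx{\Pi} = \mtx{\Phi}^\adj$ to get $\vct{0} \notin \mtx{\Phi}^\adj(\Omega')$, then Klee's separation theorem gives $\mathcal{D}\cap\range(\mtx{\Phi})\neq\{\vct{0}\}$, so decoding fails. The computation $\delta(\Omega') = n(1-\psi_{\ell_1}(s/n)) + o(n)$ and the role of the hypothesis $s \leq (1-\xi)n$ in securing $\delta(\Omega') \geq \varrho n$ are also exactly as in the paper.

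The gap is in the success direction. You want to conclude decoding success from $\vct{0} \in \mtx{\Phi}^\adj(\Omega')$, i.e.\ from $\nullsp(\mtx{\Phi}^\adj)\cap\mathcal{N}\neq\{\vct{0}\}$, by appealing to Theorem~\ref{thm:univ-embed}\eqref{eqn:univ-embed-fail}. But that statement is \emph{not} the converse of Klee: it is perfectly possible for both $\mathcal{N}\cap\nullsp(\mtx{\Phi}^\adj)$ and $\mathcal{D}\cap\range(\mtx{\Phi})$ to be nontrivial simultaneously (they then meet along a shared face), so a vector in $\mathcal{N}\cap\nullsp(\mtx{\Phi}^\adj)$ is only a \emph{weak} dual certificate --- it certifies that $(\vct{x}_\star,\vct{z}_\star)$ is \emph{an} optimum, not the unique one. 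You recognize this and ask for a certificate in $\relint(\mathcal{N})$, but the conclusion $\vct{0}\in\mtx{\Phi}^\adj(\Omega')$ that Theorem~\ref{thm:univ-embed}\eqref{eqn:univ-embed-fail} hands you places the certificate only in the closed set $\Omega'$, with no information about interiority, and the ``standard genericity reductions'' you invoke to upgrade it are exactly what is not available here: Model~\ref{mod:p-mom-mtx} includes Rademacher and sparse Rademacher matrices, for which boundary-touching events are not automatically negligible. This is precisely why the paper flags, in the proof of Proposition~\ref{prop:univ-decode}, that the success direction ``follows from ideas closely related to the proof of Theorem~\ref{thm:univ-embed}, but it is not a direct consequence'' and relegates it to Section~\ref{sec:decoding-success}. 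The fix is to bypass the theorem's conclusion and descend to the quantity it is actually proved with: Corollary~\ref{cor:car-four} delivers, with high probability, the strict bound $\tau_{\min}(\mtx{\Phi};\cone(\Omega^\circ)) > 0$, which says that $\range(\mtx{\Phi})$ stays at \emph{positive distance} from $\cone(\mathcal{D})$ on the unit sphere; this immediately forces $\mathcal{D}\cap\range(\mtx{\Phi})=\{\vct{0}\}$ without any genericity hypothesis on $\mtx{\Phi}$, and gives $\vct{0}\in\mtx{\Phi}^\adj(\Omega')$ as a byproduct rather than as the input. Replacing the appeal to Theorem~\ref{thm:univ-embed}\eqref{eqn:univ-embed-fail} by a direct appeal to the RAP functional in Corollary~\ref{cor:car-four} closes the gap and brings your argument into line with the paper's.
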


This result is significant because it allows us to understand the
behavior of this method for a sparse, discrete codebook.
This type of code is somewhat closer to a practical coding mechanism
than the ultra-random codebooks that have been studied in the past;
see Remark~\ref{rem:decode-prior}.
Figure~\ref{fig:l1-decode} contains an illustration of how the theory
compares with the actual performance of this coding scheme.

\begin{proof} To analyze the decoding problem~\eqref{eqn:l1-decode},
we change variables:  $\vct{u} := \vct{x} - \vct{x}_\star$
and $\vct{v} := \vct{z} - \vct{z}_\star$.  We obtain the
equivalent optimization problem
\begin{equation} \label{eqn:l1-decode-uv}
\underset{\vct{u} \in \R^m, \vct{v} \in \R^n}{\quad\minimize}
	\pnorm{\ell_1}{\vct{z}_\star + \vct{v}}
	\subjto	\mtx{\Phi}\vct{u} = \vct{v}.
\end{equation}
The decoding procedure~\eqref{eqn:l1-decode}
succeeds if and only if the unique
optimal point $(\widehat{\vct{u}}, \widehat{\vct{v}})$
of the problem~\eqref{eqn:l1-decode-uv} is the pair $(\vct{0}, \vct{0})$.

Introduce the set $\Omega$ of unit-norm descent directions
of the $\ell_1$ norm at $\vct{z}_\star$:
$$
\Omega := \big\{ \vct{v} \in \mathsf{S}^{n-1} : \pnorm{\ell_1}{\vct{z}_\star + \lambda \vct{v}} \leq \pnorm{\ell_1}{\vct{z}_\star} \text{ for some $\lambda > 0$} \big\}.
$$
The primal optimality condition for~\eqref{eqn:l1-decode-uv}
shows that decoding succeeds if and only if
$\Omega \cap \range(\mtx{\Phi}) = \emptyset$.

\joelprev{
Let us compute the statistical dimension of $\Omega^{\polar}$:
\begin{equation} \label{eqn:sdim-l1-polar}
\delta( \Omega^\polar )
	= n - \delta( \Omega )
	= n - (n \psi_{\ell_1}(s/n) + o(n))
	= n( 1 - \psi_{\ell_1}(s/n) + o(1)).
\end{equation}
The first relation is the polar identity~\eqref{eqn:sdim-polar} for the statistical dimension,
and the value of the statistical dimension appears in~\eqref{eqn:sdim-l1}.
Since $s \leq (1 - \xi) n$, the properties of the function $\psi_{\ell_1}$
ensure that $\delta(\Omega^{\polar}) \geq \varrho n$ for some $\varrho > 0$.}

\joelprev{
First, we demonstrate that decoding fails when the number $s$ of errors is too large.
To do so, we must show that $\Omega \cap \range(\mtx{\Phi}) \neq \emptyset$.
By polarity~\cite[Thm.~(2.7)]{Kle55:Separation-Properties}, it suffices to check
that
$$
\Omega^\polar \cap \nullsp(\mtx{\Phi}^\adj) = \emptyset.
$$
With probability at least $1 - o(1)$, this relation follows from Theorem~\ref{thm:univ-embed}\eqref{eqn:univ-embed-succ},
provided that
$$
m > \delta(\Omega^{\polar}) + o(n)
	= n(1 - \psi_{\ell_1}(s/n)) + o(n).
$$
Finally, revert the inequality so that it is expressed in terms of $s$.
}

\joelprev{Last, we must check that decoding succeeds when the number $s$ of errors
is sufficiently small.  To do so, we must verify that $\Omega \cap \range(\mtx{\Phi}) = \emptyset$
with high probability.  This relation follows from ideas closely related to the proof of
Theorem~\ref{thm:univ-embed}, but it is not a direct consequence.
See Section~\ref{sec:decoding-success} for the details.}
\end{proof}

\begin{remark}[Least-Absolute-Deviation Regression] \label{rem:lad}
Proposition~\ref{prop:univ-decode} can also be interpreted as a statement
about the performance of the least-absolute deviation method for fitting
models with outliers. Suppose that we observe the data
$\vct{y} = \mtx{X} \vct{\beta}_\star + \vct{z}$.
Each of the $n$ rows of $\mtx{X}$ is interpreted
as a vector of $m$ measured variables for an independent subject
in an experiment.  The vector $\vct{\beta}_{\star}$ lists the
coefficients in the true linear model,
and the sparse vector $\vct{z}$ contains a small number $s$ of
arbitrary statistical errors.
The least-absolute deviation method fits a model by solving
$$
\underset{\vct{\beta} \in \R^m}{\quad\minimize}
	\pnorm{\ell_1}{\smash{\mtx{X} \vct{\beta} - \vct{y}}}.
$$
Proposition~\ref{prop:univ-decode} shows that the procedure identifies
the true model $\vct{\beta}_\star$ exactly, provided that the number $s$
of contaminated data points satisfies $s/n < \psi_{\ell_1}^{-1}(1 - m/n) - o(1)$.
\end{remark}

\begin{remark}[Prior Work] \label{rem:decode-prior}
The idea of using $\ell_1$ minimization for decoding in the presence of sparse errors
dates at least as far back as the paper~\cite{DH01:Uncertainty-Principles}.
This scheme received further attention in the work~\cite{CRTV05:Error-Correction}.
Later, Donoho \& Tanner~\cite{DT06:Thresholds-Recovery} applied phase transition
calculations to assess the precise performance of this coding scheme
for a standard normal codebook; the least-absolute-deviation interpretation
of this result appears in~\cite[Sec.~1.3]{DT09:Observed-Universality}.
The paper~\cite{MT14:Sharp-Recovery}
revisits the coding problem and develops a sharp analysis in the case where
the codebook is a random orthogonal matrix.  The current paper contains the
first precise result that extends to codebooks with more general
distributions.
\end{remark}

\subsection{Other Demixing Problems}

The decoding problem~\eqref{eqn:l1-decode} is an example of a convex
demixing problem~\cite{MT14:Sharp-Recovery,ALMT14:Living-Edge}.
Our universality results can be used to study other questions
of this species.

Let $f_0 : \R^n \to \R$ and $f_1 : \R^n \to \R$ be proper convex functions
that measure the complexity of a signal.  Suppose that $\vct{x}_{\star}^{0} \in \R^n$
and $\vct{x}_{\star}^{1} \in \R^n$ are signals with ``low complexity.''
Draw random matrices $\mtx{\Phi}_0 : \R^n \to \R^m$ and $\mtx{\Phi}_1 : \R^n \to \R^m$
from Model~\ref{mod:p-mom-mtx}.  Suppose that we observe the vector
$\vct{y} = \mtx{\Phi}_0 \vct{x}_{\star}^{1} + \mtx{\Phi}_1 \vct{x}_{\star}^{1}$.
We interpret the random matrices as known transformations of
the unknown signals.  For example, the matrices $\mtx{\Phi}_i$
might denote dictionaries in which the two components of $\vct{y}$ are sparse.

We can attempt to reconstruct the original signal pair
by solving
\begin{equation} \label{eqn:demix}
\underset{\vct{z}^0 \in \R^n, \vct{z}^1 \in \R^n}{\quad\minimize}
	\max\big\{ f_0(\vct{z}^0), f_1(\vct{z}^1) \big\}
	\subjto \mtx{\Phi}_0 \vct{z}^{0} + \mtx{\Phi}_1 \vct{z}^{1} = \vct{y}.
\end{equation}
In other words, we witness a superposition of two structured signals, and
we attempt to find the lowest complexity pair $(\vct{z}^0, \vct{z}^1)$
that reproduces the observed data.  The demixing problem \term{succeeds}
if it has a unique optimal point that equals $(\vct{x}_{\star}^{0}, \vct{x}_{\star}^{1})$.

To analyze this problem, we introduce two descent sets:
$$
\Omega_i := \big\{ \vct{u} \in \mathsf{S}^{n-1} :
	f_i(\vct{x}_\star^i + \lambda \vct{u}) \leq f_i(\vct{x}_\star^i)
	\text{ for some $\lambda > 0$} \big\}
	\quad\text{for $i = 1, 2$.}
$$
Up to scaling,
the descent directions of $\max\{f_0(\cdot), f_1(\cdot\cdot)\}$
at the pair $(\vct{x}_\star^0, \vct{x}_\star^1)$
coincide with the direct product $\Omega_0 \times \Omega_1$.
The statistical dimension of a direct product of two spherical sets satisfies
$\delta(\Omega_0 \times \Omega_1) = \delta(\Omega_0) + \delta(\Omega_1)$.
Therefore, Theorem~\ref{thm:univ-embed} demonstrates that
$$
\begin{aligned}
m &< \delta(\Omega_0) + \delta(\Omega_1) - o(n)
\quad\text{implies}\quad
\text{\eqref{eqn:demix} fails with probability $1 - o(1)$;} \\
m &> \delta(\Omega_0) + \delta(\Omega_1) + o(n)
\quad\text{implies}\quad
\text{\eqref{eqn:demix} succeeds with probability $1 - o(1)$.} \\
\end{aligned}
$$
In other words, the amount of information needed to extract
a pair of signals from the superposition equals the
total complexity of the two signals.  This result holds true
for a wide class of distributions on $\mtx{\Phi}_0$ and $\mtx{\Phi}_1$.

\section{Randomized Numerical Linear Algebra}
\label{sec:rand-nla}

Numerical linear algebra (NLA) is the study of computational
methods for problems in linear algebra, including the solution of linear systems,
spectral calculations, and matrix approximations.
Over the last 15 years, researchers have developed many new algorithms for
NLA that exploit randomness to perform these computations more efficiently.
See the surveys~\cite{Mah11:Randomized-Algorithms,HMT11:Finding-Structure,Woo14:Sketching-Tool} for an overview of this field.

In this section, we apply our universality techniques to obtain new
results on dimension reduction in randomized NLA.  This discussion shows
that a broad class of dimension reduction methods share the
same quantitative behavior.  Therefore, within some limits,
we can choose the random \joelprev{linear map} that is most computationally
appealing when we design numerical algorithms based on dimension reduction.

As an added bonus, the arguments here lead to a new proof of the Bai--Yin
limit for the minimum singular value of a random matrix drawn from
Model~\ref{mod:p-mom-mtx}.

\subsection{Subspace Embeddings}
\label{sec:subspace-embed}

In randomized NLA, one of the key primitives is a \term{subspace embedding}.
A subspace embedding is nothing more than a randomized \joelprev{linear map} that does
not annihilate any point in a fixed subspace.

\begin{definition}[Subspace Embedding]
Fix a natural number $k$, and let $L$ be an arbitrary $k$-dimensional subspace.
We say that a randomized \joelprev{linear map} $\mtx{\Pi} : \R^D \to \R^d$
is an \term{oblivious subspace embedding of order $k$} if
$$
\vct{0} \notin \mtx{\Pi}\big(L \cap \mathsf{S}^{D-1} \big)
\quad\text{with high probability.}
$$
The term ``oblivious'' indicates that the \joelprev{linear map} $\mtx{\Pi}$ is chosen without knowledge of the subspace $L$.
\end{definition}

\noindent
In the definition of a subspace embedding,
some authors include quantitative bounds on
the stability of the embedding.
These estimates are useful for analyzing certain algorithms,
but we have left them out because they are not essential.

A standard normal matrix provides an important theoretical and practical example
of a subspace embedding.

\begin{example}[Gaussian Subspace Embedding]
For any natural number $k$,
a standard normal matrix $\mtx{\Gamma} \in \R^{d \times D}$ is a subspace embedding
with probability one when the embedding dimension $d \geq k$.
In practice, it is preferable to select
the embedding dimension $d \geq k + 10$
to ensure that the restricted singular value
$\smin(\mtx{\Gamma}; L \cap \mathsf{S}^{D-1})$ is sufficiently positive, which makes
the embedding more stable.  See~\cite{HMT11:Finding-Structure} for more details.
\end{example}

A Gaussian subspace embedding has superb dimension reduction properties.
On the other hand, standard normal matrices are expensive to generate,
to store, and to perform arithmetic with.  Therefore, in most
randomized NLA algorithms, it is better to use subspace embeddings
that are discrete or sparse.

Our universality results demonstrate that, in a certain range of
parameters, every matrix that follows Model~\ref{mod:p-mom-mtx}
enjoys the same subspace embedding properties as a Gaussian matrix.

\begin{proposition}[Universality for Subspace Embedding] \label{prop:subspace-embed}
Suppose that

\begin{itemize}
\item	The ambient dimension $D$ is sufficiently large.

\item	The embedding dimension satisfies $d \leq D^{6/5}$.

\item	The random \joelprev{linear map} $\mtx{\Pi} : \R^D \to \R^d$ follows Model~\ref{mod:p-mom-mtx}
with parameters $p$ and $\nu$.
\end{itemize}

\noindent
Then, for each $k$-dimensional subspace $L$ of $\R^D$,
$$
\smin(\mtx{\Pi}; L \cap \mathsf{S}^{D-1})
	\geq \sqrt{d} - \sqrt{k} - o(\sqrt{D})
	\quad\text{with probability $1 - o(1)$.}
$$
In particular, $\mtx{\Pi}$ is a subspace embedding of order $k$ whenever $d \geq k + o(D)$.
In these expressions, the little-o suppresses constants that depend only on $p$ and $\nu$.
\end{proposition}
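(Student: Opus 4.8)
The plan is to read the bound directly off Theorem~\ref{thm:univ-rsv}\eqref{eqn:univ-rsv-succ} with the compact set $E$ taken to be $L \cap \mathsf{S}^{D-1}$. This set is nonempty (for $k \geq 1$; the case $k = 0$ is vacuous) and lies in the unit ball $\mathsf{B}^D$, so it is an admissible input, and since we only invoke conclusion~\eqref{eqn:univ-rsv-succ} it does not matter that $L \cap \mathsf{S}^{D-1}$ fails to be convex. The only geometric quantity we must evaluate is its excess width. Since $\vct{\theta}^{-1}(L \cap \mathsf{S}^{D-1}) \cup \{\vct{0}\} = L$, the set $L \cap \mathsf{S}^{D-1}$ is spherically convex with statistical dimension $\delta(L \cap \mathsf{S}^{D-1}) = \dim L = k$ (equivalently, $\sup_{\vct{t} \in L \cap \mathsf{S}^{D-1}} \vct{g} \cdot \vct{t} = \norm{\mtx{P}_L \vct{g}}$, whose mean square is $k$). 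Substituting into the estimate~\eqref{eqn:excess-vs-sdim} relating excess width to statistical dimension gives
$$
\sqrt{d} - \sqrt{1+k} \;\leq\; \coll{E}_d\big(L \cap \mathsf{S}^{D-1}\big) \;\leq\; \sqrt{d} - \sqrt{k} ,
$$
so $\sqrt{d} - \sqrt{k} - 1 \leq \big(\coll{E}_d(L \cap \mathsf{S}^{D-1})\big)_+ \leq \sqrt{d}$.

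Next I would verify the hypotheses of Theorem~\ref{thm:univ-rsv} by a short case split, writing the target error as $\eps'\sqrt{D}$ and choosing $\lambda, \varrho, \eps \in (0,1)$ small in terms of $\eps'$. If $d < \lambda D$ with $\lambda \leq \eps'^2$, then $\sqrt{d} - \sqrt{k} \leq \sqrt{d} < \eps'\sqrt{D}$, so the claimed inequality holds trivially because $\smin(\mtx{\Pi}; E) \geq 0$; likewise, if $\coll{E}_d(E) < \varrho \sqrt{d}$, the lower bound above forces $\sqrt{d} - \sqrt{k} < \varrho \sqrt{d} + 1 < \eps'\sqrt{D}$ once $D$ is large, and again the bound is trivial. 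In the remaining regime, $d \in [\lambda D, D^{6/5}]$ and $\coll{E}_d(E) \geq \varrho \sqrt{d}$, so Theorem~\ref{thm:univ-rsv}\eqref{eqn:univ-rsv-succ} applies and delivers, with probability at least $1 - \cnst{C}_p D^{1-p/4}$,
$$
\smin(\mtx{\Pi}; E) \;\geq\; (1-\eps)\big(\coll{E}_d(E)\big)_+ \;\geq\; \sqrt{d} - \sqrt{k} - 1 - \eps\sqrt{d} .
$$
Since $\eps$ is arbitrary, $d \leq D^{6/5}$, and $\cnst{C}_p D^{1-p/4} \to 0$ for $p > 4$, this is exactly $\smin(\mtx{\Pi}; L \cap \mathsf{S}^{D-1}) \geq \sqrt{d} - \sqrt{k} - o(\sqrt{D})$ with probability $1 - o(1)$.

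The final sentence of the proposition then follows at once: when $d \geq k + o(D)$, the quantity $\sqrt{d} - \sqrt{k} = (d-k)/(\sqrt{d} + \sqrt{k})$ dominates the $o(\sqrt{D})$ error, so $\smin(\mtx{\Pi}; L \cap \mathsf{S}^{D-1}) > 0$, i.e., $\vct{0} \notin \mtx{\Pi}(L \cap \mathsf{S}^{D-1})$, with high probability. I do not anticipate a genuine difficulty here: the entire substance is contained in Theorem~\ref{thm:univ-rsv}, and what remains is the elementary identity $\delta(L) = k$ together with the bookkeeping above confirming that the theorem's hypotheses can be arranged (or else the conclusion is vacuous). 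The one mildly delicate point is the super-linear range $D < d \leq D^{6/5}$, where the natural scale of both $\smin$ and the error is $\sqrt{d}$ rather than $\sqrt{D}$; there the statement should be read with $o(\sqrt{D})$ interpreted as $o(\sqrt{d})$, which is precisely what Theorem~\ref{thm:univ-rsv}\eqref{eqn:univ-rsv-succ} provides.
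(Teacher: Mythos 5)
Your proposal is correct and follows essentially the same route as the paper: take $E = L \cap \mathsf{S}^{D-1}$, note $\delta(L \cap \mathsf{S}^{D-1}) = k$, estimate $\coll{E}_d(E) \geq \sqrt{d} - \sqrt{k} - O(1)$, and feed this into Theorem~\ref{thm:univ-rsv}\eqref{eqn:univ-rsv-succ}. The paper's proof compresses your entire case split into the single sentence ``We introduce the error term $o(\sqrt{D})$ to make sure that the stated result is only valid when the hypotheses of Theorem~\ref{thm:univ-rsv} are in force''; your explicit bookkeeping (choosing $\lambda, \varrho, \eps$ small, and arguing triviality outside the theorem's domain) makes precise what the paper glosses over, and your observation about the $o(\sqrt{D})$ versus $o(\sqrt{d})$ scale when $D < d \leq D^{6/5}$ is a legitimate reading of the intended statement.
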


\begin{proof} Proposition~\ref{prop:subspace-embed} is a consequence of
Theorem~\ref{thm:univ-rsv}\eqref{eqn:univ-rsv-succ}
and~\eqref{eqn:excess-vs-sdim} because
$$
\coll{E}_d( L \cap \mathsf{S}^{D-1} )
	\geq \sqrt{d} - \sqrt{\delta(L \cap \mathsf{S}^{D-1})}
	= \sqrt{d} - \sqrt{\delta(L)}
	= \sqrt{d} - \sqrt{k}.
$$
The last identity holds because the $k$-dimensional
subspace $L$ has statistical dimension $k$.
We introduce the error term $o\big(\sqrt{D}\big)$ to make sure that
the stated result is only valid when the hypotheses of
Theorem~\ref{thm:univ-rsv} are in force.
\end{proof}

Note that Proposition~\ref{prop:subspace-embed}
applies to a sparse Rademacher \joelprev{linear map} with a fixed,
but arbitrarily small, proportion of nonzero entries.  This particular example has
received extensive attention in recent years~\cite{CW13:Low-Rank-Approximation,NN13:OSNAP-Faster,KN14:Sparser-Johnson-Lindenstrauss,BDN15:Toward-Unified},
although these works typically focus on the regime where
the subspace dimension $k$ is small and the sparsity
level of the random \joelprev{linear map} is a vanishing proportion
of the embedding dimension $d$.

\begin{remark}[Prior Work]
For the simple problem considered in Proposition~\ref{prop:subspace-embed},
much sharper results are available in the random matrix literature.
See the paper~\cite{KY14:Anisotropic-Local} for a recent analysis,
as well as additional references.
\end{remark}

\begin{remark}[The Bai--Yin Limit for the Minimum Singular Value]
One of the most important problems in random matrix theory is to
obtain bounds for the extreme singular values of a random matrix.
The Bai--Yin law~\cite{BY93:Limit-Smallest} gives a near-optimal
result in case the entries of the random matrix are independent
and standardized.  We can reproduce a slightly weaker version of the
Bai--Yin law for the minimum singular value by modifying
the proof of Proposition~\ref{prop:subspace-embed}.

Fix an aspect ratio $\varrho \in (0, 1)$.
For each natural number $d$, define $k := k(d) := \lfloor \varrho d \rfloor$.
Draw a $d \times k$ random matrix $\mtx{\Phi}^{(d)}$
from Model~\ref{mod:p-mom-mtx} with fixed parameters $p$ and $\nu$.
For each $\eps > 0$, we can apply Theorem~\ref{thm:univ-rsv}\eqref{eqn:univ-rsv-succ}
with $E = \mathsf{S}^{k - 1}$ to see that
$$
\Prob{ d^{-1/2} \smin\big(\mtx{\Phi}^{(d)}\big) \geq 1 - \sqrt{\varrho} - \eps }
	\to 1
	\quad\text{as $d \to \infty$}.
$$
Here, $\smin$ denotes the $k$th largest singular value of $\mtx{\Phi}^{(d)}$.

Under these assumptions, it is known~\cite{Yin86:Limiting-Spectral}
that the empirical distribution of the singular values
of $\mtx{\Phi}^{(d)}$ converges in probability to the Mar{\v c}enko--Pastur
density, whose support is the interval $1 \pm \sqrt{\varrho}$.
It follows that
$$
\Prob{ d^{-1/2} \smin\big( \mtx{\Phi}^{(d)} \big) \leq 1 - \sqrt{\varrho} + \eps }
	\to 1
\quad\text{as $d \to \infty$}.
$$
Therefore, we may conclude that
$$
d^{-1/2} \smin\big(\mtx{\Phi}^{(d)} \big) \to 1 - \sqrt{\varrho}
\quad\text{in probability.}
$$
In comparison, the Bai--Yin law~\cite[Thm.~2]{BY93:Limit-Smallest}
gives the same conclusion almost surely when the entries
of $\mtx{\Phi}^{(d)}$ have four finite moments.
See the recent paper~\cite{Tik15:Limit-Smallest}
for an optimal result.
\end{remark}

\subsection{Sketching and Least Squares}

In randomized NLA, one of the core applications of dimension
reduction is to solve over-determined least-squares problems,
perhaps with additional constraints.  This idea is attributed
to Sarl{\'o}s~\cite{Sar06:Improved-Approximation},
and it has been studied intensively over the last
decade;
see the surveys~\cite{Mah11:Randomized-Algorithms,Woo14:Sketching-Tool}
for more information.
In this section, we develop sharp bounds for the simplest
version of this approach.

Suppose that $\mtx{A}$ is a fixed $D \times n$ matrix
with full column rank.  Let $\vct{y} \in \R^D$ be a vector,
and consider the over-determined least-squares problem
\begin{equation} \label{eqn:ls-over}
\underset{\vct{x} \in \R^n}{\quad\minimize}
	\normsq{\smash{\mtx{A} \vct{x} - \vct{y}}}.
\end{equation}
This problem can be expensive to solve when $D \gg n$.
One remedy is to apply dimension reduction.
Draw a random \joelprev{linear map} $\mtx{\Pi} : \R^D \to \R^d$
from Model~\ref{mod:p-mom-mtx},
and consider the compressed problem
\begin{equation} \label{eqn:ls-redux}
\underset{\vct{x} \in \R^n}{\quad\minimize}
	\normsq{\smash{\mtx{\Pi}(\mtx{A} \vct{x} - \vct{y})}}.
\end{equation}
The question is how the quality of the solution of~\eqref{eqn:ls-redux}
depends on the embedding dimension $d$.  The following result provides
an optimal estimate.

\begin{proposition}[Randomized Least Squares: Error Bound] \label{prop:rand-ls}
Instate the prevailing notation.
Fix parameters $\lambda \in (0,1)$ and $\varrho \in (0,1)$
and $\iota \in (0, 1)$.
Assume that

\begin{itemize}
\item	The number $D$ of constraints is sufficiently large as a function of the parameters.
\item	The embedding dimension $d$ is comparable with the number $D$ of constraints: $\lambda D \leq d \leq D$.
\item	The embedding dimension $d$ is somewhat larger than the number $n$ of variables:
$d \geq (1 + \varrho) n$.
\end{itemize}

\noindent
With high probability, the solution $\widehat{\vct{x}}$ to the reduced least-squares
problem~\eqref{eqn:ls-redux} satisfies
\begin{equation} \label{eqn:ls-redux-err}
\frac{\normsq{ \mtx{A}(\widehat{\vct{x}} - \vct{x}_\star)}}
	{\normsq{\smash{\mtx{A} \vct{x}_{\star} - \vct{y}}}}
	\leq \frac{n + \iota d}{d - n}, 
\end{equation}
where $\vct{x}_{\star}$ is the solution to the original
least-squares problem~\eqref{eqn:ls-over}.  In particular,
$$
\frac{\normsq{ \smash{\mtx{A} \widehat{\vct{x}} - \vct{y}}}}
	{\normsq{\smash{\mtx{A} \vct{x}_{\star} - \vct{y}}}}
	\leq (1 + \iota) \frac{d}{d - n}
$$
\end{proposition}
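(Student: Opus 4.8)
The plan is to reduce~\eqref{eqn:ls-redux-err} to a sharp estimate on a single scalar functional of $\mtx{\Pi}$, verify that estimate for Gaussian matrices by an explicit computation, and then transfer it to Model~\ref{mod:p-mom-mtx} using the universality machinery behind Theorem~\ref{thm:univ-rsv}. First I would dispose of the trivial case: if $\vct{r}_\star := \mtx{A}\vct{x}_\star - \vct{y} = \vct{0}$ then $\widehat{\vct{x}} = \vct{x}_\star$ and both bounds hold automatically, so assume $\vct{r}_\star \neq \vct{0}$. Fix a matrix $\mtx{U} \in \R^{D \times n}$ whose columns form an orthonormal basis of $\range(\mtx{A})$, put $\vct{u}_0 := \vct{r}_\star/\norm{\vct{r}_\star}$ (a unit vector orthogonal to $\range(\mtx{A})$), and abbreviate $\mtx{M} := \mtx{\Pi}\mtx{U}$. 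Writing $\mtx{A} = \mtx{U}\mtx{R}$ with $\mtx{R}$ invertible and solving the normal equations for~\eqref{eqn:ls-redux} and~\eqref{eqn:ls-over}, one finds $\mtx{A}(\widehat{\vct{x}} - \vct{x}_\star) = \mtx{U}\mtx{M}^{\psinv}\mtx{\Pi}\vct{r}_\star$, so that
\[
\frac{\normsq{\mtx{A}(\widehat{\vct{x}} - \vct{x}_\star)}}{\normsq{\mtx{A}\vct{x}_\star - \vct{y}}} = \normsq{\mtx{M}^{\psinv}\mtx{\Pi}\vct{u}_0} \eqdef h(\mtx{\Pi}).
\]
Since $\mtx{A}\widehat{\vct{x}} - \vct{y} = \mtx{A}(\widehat{\vct{x}} - \vct{x}_\star) + \vct{r}_\star$ is an orthogonal decomposition, the ``in particular'' inequality is just $\normsq{\mtx{A}\widehat{\vct{x}} - \vct{y}} = \normsq{\vct{r}_\star}\,(1 + h(\mtx{\Pi}))$, so everything reduces to proving $h(\mtx{\Pi}) \le (n + \iota d)/(d-n)$ with high probability.

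For $\mtx{\Pi} = \mtx{\Gamma}$ standard normal, rotational invariance makes $\mtx{M} = \mtx{\Gamma}\mtx{U}$ a $d \times n$ standard normal matrix and $\mtx{\Gamma}\vct{u}_0$ an independent $\normal(\vct{0}, \Id)$ vector in $\R^d$. Conditioning on $\mtx{M}$, the quantity $h(\mtx{\Gamma})$ is a Gaussian quadratic form with conditional mean $\trace((\mtx{M}^{\adj}\mtx{M})^{-1})$, and since $\mtx{M}^{\adj}\mtx{M}$ is Wishart we get $\Expect h(\mtx{\Gamma}) = n/(d-n-1)$. The condition $d \ge (1+\varrho)n$ keeps $\smin(\mtx{M})$ of order $\sqrt{d}$, so a $\chi^2$-type concentration bound for the conditional law together with a concentration bound for the linear spectral statistic $\trace((\mtx{M}^{\adj}\mtx{M})^{-1})$ gives $h(\mtx{\Gamma}) \le n/(d-n) + \iota \le (n+\iota d)/(d-n)$ with high probability.

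The crux is the transfer to Model~\ref{mod:p-mom-mtx}, and this is where I expect the main work. On the event $\{\smin(\mtx{\Pi}\mtx{U}) \ge \cnst{c}_{\varrho}\sqrt{d}\}$ --- which holds with high probability by Proposition~\ref{prop:subspace-embed} applied to $L = \range(\mtx{A})$, using $d \le D$ and $d \ge \lambda D$ to turn the $o(\sqrt{D})$ error into a negligible $o(\sqrt{d})$ and $d \ge (1+\varrho)n$ to keep the right-hand side positive --- the map $\mtx{\Pi} \mapsto h(\mtx{\Pi})$ is smooth with derivatives of controlled size after rescaling. I would then run the Lindeberg exchange argument column by column (truncating heavy-tailed entries at a slowly growing threshold and using the finite $p$th moments of the entries, exactly as in the proof of Theorem~\ref{thm:univ-rsv}) to show $\Expect h(\mtx{\Pi}) \le \Expect h(\mtx{\Gamma}) + o(1)$, and then add a moment or bounded-difference concentration inequality to pin $h(\mtx{\Pi})$ near its mean; the $\iota d$ slack in the numerator of~\eqref{eqn:ls-redux-err} comfortably absorbs these $o(\cdot)$ errors. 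The obstacle is that one cannot shortcut this step using only Theorem~\ref{thm:univ-rsv} or Proposition~\ref{prop:subspace-embed}: those results control the \emph{worst-case} conditioning $\smin(\mtx{M})^{-2} \asymp 1/d$, which would only yield the weaker bound $h \lesssim \sqrt{n/d}$, whereas the sharp scaling $h \approx n/(d-n)$ reflects the \emph{averaged} resolvent behavior $\trace((\mtx{M}^{\adj}\mtx{M})^{-1}) \asymp n/d$ together with the near-orthogonality of $\mtx{\Pi}\vct{u}_0$ to $\range(\mtx{\Pi}\mtx{U})$ --- precisely the fine structure that the Lindeberg comparison, and not the soft geometric bounds, delivers for non-Gaussian entries.
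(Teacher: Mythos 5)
Your reduction to the scalar functional $h(\mtx{\Pi}) := \normsq{\mtx{M}^\psinv \mtx{\Pi}\vct{u}_0}$ (with $\mtx{M} := \mtx{\Pi}\mtx{U}$) and the Wishart computation $\Expect h(\mtx{\Gamma}) = n/(d-n-1)$ are both correct, but the route you sketch is not the one the paper takes, and the ``obstacle'' you raise at the end rests on a misreading of the paper's own machinery. The paper never touches pseudo-inverses and never runs a Lindeberg argument on $h$. Instead, after the change of variables $\vct{w} := \mtx{A}(\vct{x} - \vct{x}_\star)$, it studies $\min_{\vct{w} \in \range(\mtx{A})} \norm{\mtx{\Pi}(\vct{w} - \vct{z}_\star)}$ with $\vct{z}_\star$ the unit optimal residual, and applies Theorem~\ref{thm:univ-rsv} \emph{twice}: via part~\eqref{eqn:univ-rsv-fail} to a ball $E$ in $\range(\mtx{A})$ shifted by $-\vct{z}_\star$, giving the upper bound $(1+\eps)\sqrt{d-n}$ by an explicit excess-width computation; and via part~\eqref{eqn:univ-rsv-succ} to the annulus $E_+ := \{\vct{w} \in \range(\mtx{A}) : R_+ \leq \norm{\vct{w}} \leq R_\infty\}$ with $R_+^2 := (n + \iota d)/(d-n)$, giving a lower bound that strictly exceeds the first. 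Convexity of the objective then forces the minimizer out of the annulus, so $h = \normsq{\widehat{\vct{w}}} < R_+^2$ with high probability. You are right that bounding $\smin(\mtx{M}) \gtrsim \sqrt{d}$ alone cannot produce the sharp rate; but the paper never bounds $\smin(\mtx{M})$ --- Theorem~\ref{thm:univ-rsv} applies to an arbitrary compact set, and the shift by $\vct{z}_\star$ combined with annulus localization is precisely what captures the $n/(d-n)$ scaling. Your alternative would require carrying out a fresh Lindeberg exchange for the rational, non-convex functional $\mtx{\Pi} \mapsto \normsq{(\mtx{\Pi}\mtx{U})^\psinv\mtx{\Pi}\vct{u}_0}$ together with a concentration bound for $\trace((\mtx{M}^\adj\mtx{M})^{-1})$ under Model~\ref{mod:p-mom-mtx}; neither is supplied by the paper's toolbox, and making that step rigorous would be substantially more work than the geometric argument the paper actually runs.
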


In other words, the excess error
$\norm{\mtx{A}(\widehat{\vct{x}} - \vct{x}_{\star})}$
incurred in solving the compressed
least-squares problem~\eqref{eqn:ls-redux} is negligible as compared with
the optimal value of the least-squares problem~\eqref{eqn:ls-over}
if we choose the embedding dimension $d$ sufficiently large.
Proposition~\ref{prop:rand-ls} improves substantially on the
most recent work~\cite[Cor.~2(a)]{PW15:Randomized-Sketches},
both in terms of the error bound and in terms of the assumptions
on the randomized \joelprev{linear map}.

Let us remark that there is nothing special about ordinary least squares.
We can also solve least-squares problems
with a convex constraint set by dimension reduction.
For this class of problems, we can also obtain optimal
bounds by adapting the argument below.
For example, see the results in Section~\ref{sec:lasso-err}.

\begin{proof} Let $\vct{x}_{\star} \in \R^n$ be the solution to the original least-squares
problem~\eqref{eqn:ls-over}.  Define the optimal residual
$\vct{z}_{\star} := \vct{y} - \mtx{A}\vct{x}_{\star} \in \R^D$,
and recall that $\vct{z}_{\star}$ is orthogonal to $\range(\mtx{A})$.
Moreover,
$$
\min_{\vct{x} \in \R^n} \normsq{ \smash{\mtx{A} \vct{x} - \vct{y}} }
	= \normsq{\smash{\mtx{A}\vct{x}_{\star} - \vct{y} }}
	= \normsq{ \vct{z}_{\star} }.
$$
Without loss of generality, we may scale the problem so that $\normsq{\vct{z}_{\star}}=1$.

Next, change variables.  Define $\vct{w} := \mtx{A}( \vct{x} - \vct{x}_{\star})$,
and note that $\vct{w}$ is orthogonal to $\vct{z}_{\star}$.
We can write the reduced least-squares problem~\eqref{eqn:ls-redux} as
\begin{equation} \label{eqn:ls-redux-w}
\underset{\vct{w} \in \range(\mtx{A})}{\quad\minimize}
	\norm{ \mtx{\Pi}(\vct{w} - \vct{z}_{\star}) }.
\end{equation}
When dimension reduction is effective,
we expect the solution $\widehat{\vct{w}}$ to~\eqref{eqn:ls-redux-w}
to be close to zero.

Since $d \geq (1 + \varrho) n$, we can use the fact
(Proposition~\ref{prop:subspace-embed})
that $\mtx{\Pi}$ is a subspace embedding to obtain
an a priori bound $\norm{\widehat{\vct{w}}} \leq R_{\infty}$
that holds with high probability.  The number $R_{\infty}$
is a constant that depends on nothing but $\eps$.  We only need
this observation to ensure that we are optimizing
over a compact set with constant radius, so we omit the details.

Next, we invoke Theorem~\ref{thm:univ-rsv}\eqref{eqn:univ-rsv-fail}
to bound the optimal value of the reduced least-squares problem~\eqref{eqn:ls-redux-w}.
Define the compact, convex set
$$
E := \{ \vct{w} \in \range(\mtx{A}) : \norm{\vct{w}} \leq R_{\infty} \}.
$$
Let $\eps > 0$ be a parameter that will depend on the parameter $\iota$.
With high probability,
$$
\min_{\vct{w} \in \range(\mtx{A})} \norm{\mtx{\Pi}(\vct{w} - \vct{z}_{\star})}
	= \min_{\vct{w} \in E} \norm{\mtx{\Pi}(\vct{w} - \vct{z}_{\star})}
	\leq (1 + \eps)\, \coll{E}_d(E - \vct{z}_{\star}).
$$
By direct calculation, we can bound the excess width above.
Let $\mtx{P}$ be the orthogonal projector onto the range of $\mtx{A}$,
and observe that
$$
\begin{aligned}
\coll{E}_d(E - \vct{z}_{\star})
	&= \Expect \inf_{\vct{w} \in E} \big( \sqrt{d} \norm{\vct{w} - \vct{z}_{\star}}
		+ \vct{g} \cdot (\vct{w} - \vct{z}_{\star}) \big) \\
	&= \Expect \inf_{\vct{w} \in E} \big( \sqrt{d} (\normsq{\vct{w}} + 1)^{1/2}
		+\vct{g} \cdot \vct{w} \big) \\
	&= \Expect \inf_{\vct{w} \in E} \big( \sqrt{d} (\normsq{\vct{w}} + 1)^{1/2}
		- \norm{\smash{\mtx{P}\vct{g}}} \norm{\vct{w}} \big) \\
	&\leq \inf_{0 \leq \alpha \leq R_{\infty}} \big(\sqrt{d} (\alpha^2 + 1)^{1/2}
		- \sqrt{n} \alpha \big) \\
	&= \sqrt{ d - n }. \end{aligned}
$$
The first line is Definition~\ref{def:excess-width}, of the excess width.
Next, simplify via the orthogonality of $\vct{w}$ and $\vct{z}_\star$
and the scaling $\normsq{\vct{z}_{\star}} = 1$.  Use translation invariance of
the infimum to remove $\vct{z}_\star$ from the Gaussian term.  Apply
Jensen's inequality to draw the expectation inside the infimum,
and note that $\Expect \norm{\smash{\mtx{P}\vct{g}}} \leq \sqrt{n}$ because
$\mathrm{rank}(\mtx{P}) = n$.  Then make the change of variables
$\alpha = \norm{\vct{w}}$, and solve the scalar convex optimization problem.
The infimum occurs at the value
$$
\alpha_{\rm opt}^2 := \frac{n}{d - n}.
$$
Since $d \geq (1 + \varrho) n$, we may be confident that $\alpha_{\rm opt} < R_{\infty}$.

We have shown that, with high probability,
\begin{equation} \label{eqn:ls-redux-w-val}
\min_{\vct{w} \in \range(\mtx{A})} \norm{\mtx{\Pi}(\vct{w} - \vct{z}_{\star})}
	\leq (1 + \eps) \sqrt{d - n}.
\end{equation}
Furthermore, we have evidence that the norm of the minimizer
$\norm{\widehat{\vct{w}}} \approx \alpha_{\rm opt}$.
To prove the main result, we compute the value of the
optimization problem~\eqref{eqn:ls-redux-w}
restricted to points with $\norm{\vct{w}} \geq \alpha_{\rm opt} \sqrt{1 + \iota_0}$,
where $\iota_0$ is a small positive number to be chosen later.
Then we verify that the optimal value of the restricted problem
is usually larger than the bound~\eqref{eqn:ls-redux-w-val} for
the optimal value of~\eqref{eqn:ls-redux}. This argument implies that
$\norm{\widehat{\vct{w}}} < \alpha_{\rm opt} \sqrt{1 + \iota_0}$
with high probability.

To that end, define $R_+ := \alpha_{\rm opt} \sqrt{1 + \iota_0}$,
and introduce the compact set
$$
E_+ := \{\vct{w} \in \range(\mtx{A}) : R_+ \leq \norm{\vct{w}} \leq R_{\infty} \}.
$$
Theorem~\ref{thm:univ-rsv}\eqref{eqn:univ-rsv-succ} shows that,
with high probability,
\begin{equation} \label{eqn:ls-redux-w-E+}
\min_{\vct{w} \in E_+} \norm{\mtx{\Pi}(\vct{w} - \vct{z}_{\star})}
	\geq (1-\eps) \, \coll{E}_d(E_+ - \vct{z}_{\star}).
\end{equation}
Calculating the excess width as before,
\begin{equation} \label{eqn:ls-redux-w-restrict}
\begin{aligned}
\coll{E}_d(E_+ - \vct{z}_{\star})
	&= \Expect \inf_{R_+ \leq \alpha \leq R_{\infty}}
	\big( \sqrt{d} (\alpha^2 + 1)^{1/2} - \norm{\smash{\mtx{P}\vct{g}}} \alpha \big) \\
	&\geq \inf_{R_+ \leq \alpha \leq R_{\infty}}
	\big( \sqrt{d} (\alpha^2 + 1)^{1/2} - \sqrt{n} \alpha \big)
		- R_{\infty} \Expect \big(\norm{\smash{\mtx{P}\vct{g}}} - \sqrt{n} \big)_+ \\
	&\geq \big( \sqrt{d} (R_+^2 + 1)^{1/2} - \sqrt{n} R_+ \big)
		- R_{\infty}. \end{aligned}	
\end{equation}
Add and subtract $\sqrt{n} \alpha$ to reach the second line,
and use the fact that $\Expect \norm{\smash{\mtx{P}\vct{g}}} \leq \sqrt{n}$.
Then apply the Gaussian variance inequality, Fact~\ref{fact:gauss-variance},
to bound the expectation by one.  The infimum occurs at $R_+$ because the objective
is convex and $R_+$ exceeds the unconstrained minimizer $\alpha_{\rm opt}$.

Next, we simplify the expression involving $R_+$.  Setting $\iota_0 := \iota d/n$,
we find that
\begin{equation} \label{eqn:ls-redux-R+-simp}
\begin{aligned}
\sqrt{d} (R_+^2 + 1)^{1/2} - \sqrt{n} R_+
	&= \frac{d\sqrt{1 + \iota_0 n/d} - n\sqrt{1 + \iota_0}}{\sqrt{d-n}} \\
	&= \sqrt{\frac{1 + \iota}{d-n}} \left( d - n \sqrt{1 + \frac{\iota(d/n-1)}{1 + \iota}} \right) \\
	&\geq \sqrt{\frac{1 + \iota}{d-n}} \left( d - n - \frac{\iota(d-n)}{2(1 + \iota)} \right)
	= \frac{1 + \iota/2}{\sqrt{1 + \iota}} \sqrt{d - n}.
\end{aligned}
\end{equation}
The inequality follows from the linear upper bound for the square root at one.

Combine~\eqref{eqn:ls-redux-w-E+},~\eqref{eqn:ls-redux-w-restrict}, and~\eqref{eqn:ls-redux-R+-simp}
to arrive at
$$
\min_{\vct{w} \in E_+} \norm{\mtx{\Pi}(\vct{w} - \vct{z}_{\star})}
	\geq \frac{(1- \eps)(1 + \iota/2)}{\sqrt{1 + \iota}} \sqrt{d - n}.  
$$
Comparing~\eqref{eqn:ls-redux-w-val} with the last display,
we discover that the choice $\eps := \cnst{c} \iota^2$
is sufficient to ensure that, with high probability,
$$
\begin{aligned}
\min_{\vct{w} \in E_+} \norm{\mtx{\Pi}(\vct{w} - \vct{z}_{\star})}
	> \min_{\vct{w} \in E} \norm{\mtx{\Pi}(\vct{w} - \vct{z}_{\star})}.
\end{aligned}
$$
It follows that the minimum of~\eqref{eqn:ls-redux-w}
usually occurs on the set $E \setminus E_+$.
We determine that
$$
\normsq{\widehat{\vct{w}}} \leq (1 + \iota d/n) \, \alpha_{\rm opt}^2.
$$
Reinterpret this inequality to obtain the stated result~\eqref{eqn:ls-redux-err}.

We can obtain a matching lower bound for $\norm{\widehat{\vct{w}}}$ by considering
the set of vectors
$E_- := \{ \vct{w} \in \range(\mtx{A}) : \norm{\vct{w}} \leq R_- \}$
where $R_- := \alpha_{\rm opt} \sqrt{ 1 - \iota_0 }$.  We omit the details.
\end{proof}

\begin{remark}[Prior Work]
The idea of using random \joelprev{linear map}s to accelerate the solution
of least-squares problems appears in the work of
Sarl{\'o}s~\cite{Sar06:Improved-Approximation}.
This approach has been extended and refined in
the literature on randomized NLA;
see the surveys~\cite{Mah11:Randomized-Algorithms,Woo14:Sketching-Tool}
for an overview.  Most of this research is concerned
with randomized \joelprev{linear map}s that have favorable
computational properties, but the results are much
less precise than Proposition~\ref{prop:rand-ls}.
Recently, Pilanci \& Wainright~\cite{PW15:Randomized-Sketches}
have offered a more refined analysis of randomized
dimension reduction for constrained least-squares problems,
but it still falls short of describing the actual
behavior of these methods.  Parts of the argument
here is adapted from the work
of Oymak, Thrampoulidis, and Hassibi~\cite{OTH13:Squared-Error,TOH15:Gaussian-Min-Max}.
\end{remark}

\section{The Prediction Error for LASSO}
\label{sec:lasso-err}

Universality results have always played an important role
in statistics.  The most fundamental example is the law
of large numbers, which justifies the use of the sample average
to estimate the mean of a general distribution.  Similarly,
the central limit theorem permits us to build a confidence
interval for the mean of a distribution.

High-dimensional statistics relies on more sophisticated methods,
often based on optimization, to estimate population parameters.
In particular, applied statisticians frequently employ the
LASSO estimator~\cite{Tib96:Regression-Shrinkage}
to perform regression and variable selection in linear models. It is only recently that researchers have developed
theory~\joelprev{\cite{KM11:Applications-Lindeberg,BM12:LASSO-Risk,OTH13:Squared-Error,DJM13:Accurate-Prediction,JM14:Hypothesis-Testing,TAH15:High-Dimensional-Error}}
that can predict the precise behavior of the LASSO when the data are assumed to be
Gaussian.
It is a critical methodological challenge to
develop universality results that expand
the range of models in which we can make
confident assertions about the performance
of the LASSO.

In this section, we prove the first \joelprev{general} universality result
for the prediction error using a LASSO model estimate.
This theory offers a justification for using
a LASSO model to make predictions when the data
derives from a sparse model.  We expect that
further developments in this direction will play
an important role in applied statistics.

\subsection{The Sparse Linear Model and the LASSO}

The LASSO is designed to perform  simultaneous regression and variable
selection in a linear model.  Let us present a simple
statistical model in which to study the behavior
of the LASSO estimator.  Suppose that
the random variable $Y$ takes the form
\begin{equation} \label{eqn:lasso-model}
Y = \vct{x} \cdot \vct{\beta}_{\star} + \sigma Z
\end{equation}
where

\begin{itemize}

\item	The deterministic vector $\vct{\beta}_{\star} \in \R^p$ of model parameters has at most $s$ nonzero entries.

\item	The random vector $\vct{x} \in \R^p$ of predictor variables is drawn from Model~\ref{mod:p-mom-mtx}. 
\item	The noise variance $\sigma^2 > 0$ is known.

\item	The statistical error $Z$ is drawn from Model~\ref{mod:p-mom-mtx}, independent of $\vct{x}$.
\end{itemize}

\noindent
We interpret $\vct{x} = (X_1, \dots, X_p)$ as a family of predictor variables that we want to use to predict
the value of the response variable $Y$.  Only the variables $X_j$ where $(\vct{\beta}_\star)_j \neq 0$
are relevant to the prediction, while the others are confounding.  The observed value
$Y$ of the response is contaminated with a statistical error $\sigma Z$.
These assumptions are idealized,
but let us emphasize that our analysis holds
even when the predictors and the noise are heavy-tailed.

Suppose that we observe independent pairs $(\vct{x}_1, Y_1), \dots, (\vct{x}_n, Y_n)$
drawn from the model above, and let $\vct{z} \in \R^n$ be the unknown vector
of statistical errors.  One of the goals of sparse regression
is to use this data to construct an estimate
$\widehat{\vct{\beta}} \in \R^p$ of the model coefficients
so that we can predict future responses.
That is, given a fresh random vector $\vct{x}_0$ of predictor
variables, we can predict the (unknown) response
$Y_0 = \vct{x}_0 \cdot \vct{\beta}_{\star} + \sigma Z_0$
using the linear estimate
$$
\widehat{Y}_0 := \vct{x}_0 \cdot \widehat{\vct{\beta}}.
$$
We want to control the \term{mean squared error in prediction},
which is defined as
\begin{equation} \label{eqn:lasso-msep}
\mathsf{MSEP} := \Expect\big[ \abssq{ \smash{\widehat{Y}_0 - Y_0} }
	\, \big\vert \, \mtx{X}, \vct{z} \big].
\end{equation}
Using the statistical model~\eqref{eqn:lasso-model}, it is easy to verify that
\begin{equation} \label{eqn:msep-calc}
\mathsf{MSEP}
	= \Expect\big[ \abssq{ \smash{\vct{x}_0 \cdot(\widehat{\vct{\beta}} - \vct{\beta}_{\star})
		+ \sigma Z_0 }} \,\vert\, \mtx{X}, \vct{z} \big]
	= \normsq{\smash{\widehat{\vct{\beta}} - \vct{\beta}_{\star}}} + \sigma^2.
\end{equation}
In other words, the prediction error is controlled by the squared error
in estimating the model coefficients.

The LASSO uses convex optimization to produce an estimate $\widehat{\vct{\beta}}$
of the model coefficients.  The estimator is chosen arbitrarily from the
set of solutions to the problem
\begin{equation} \label{eqn:lasso}
\underset{\vct{\beta} \in \R^p}{\quad\minimize}
	\normsq{\smash{\mtx{X}\vct{\beta} - \vct{y}}}
	\subjto \pnorm{\ell_1}{\smash{\vct{\beta}}} \leq \pnorm{\ell_1}{\smash{\vct{\beta}_{\star}}}.
\end{equation}
In this formula, the rows of the $n \times p$ matrix $\mtx{X}$ are the
observed predictor vectors $\vct{x}_i$.
The entries of the vector $\vct{y} \in \R^n$
are the measured responses.  For simplicity,
we also assume that we have the exact side information
$\pnorm{\ell_1}{\smash{\vct{\beta}_{\star}}}$.

We can prove the following result on the squared error
in the LASSO estimate of the sparse coefficient model.
This is the first universal statement that offers a
precise analysis for this class of statistical models.

\begin{proposition}[Universality of LASSO Prediction Error] \label{prop:lasso-err}
Instate the prevailing notation. Choose parameters $\lambda \in (0,1)$ and $\varrho \in (0,1)$
and $\iota \in (0,1)$.
Assume that

\begin{itemize}
\item	The number $n$ of subjects is sufficiently large as a function of the parameters.
\item	The number $p$ of predictors satisfies $\lambda p \leq n \leq p^{6/5}$.

\item	The number $n$ of subjects satisfies $n \geq (1 + \varrho) \, p \psi_{\ell_1}(s/p)$. 

\end{itemize}

\noindent
With high probability over the observed data,
the mean squared error in prediction~\eqref{eqn:lasso-msep} satisfies
\begin{equation} \label{eqn:msep-theory}
\mathsf{MSEP} \leq (1 + \iota) \frac{ \sigma^2 n }{n - p \psi_{\ell_1}(s/p)}.
\end{equation}
The function $\psi_{\ell_1}$ is defined in~\eqref{eqn:psi-l1}.
Furthermore, the bound~\eqref{eqn:msep-theory} is sharp when $n \gg p \psi_{\ell_1}(s/p)$
or when $\sigma^2 \to 0$. 
\end{proposition}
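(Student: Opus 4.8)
The plan is to reduce the statement to a restricted‑minimum‑singular‑value computation and then to follow the template of Proposition~\ref{prop:rand-ls}, with the $\ell_{1}$ descent cone at $\vct{\beta}_{\star}$ playing the role that $\range(\mtx{A})$ played there. By~\eqref{eqn:msep-calc} it suffices to estimate $\norm{\widehat{\vct{\beta}} - \vct{\beta}_{\star}}^{2}$.

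First I would recast the LASSO. Writing $\vct{u} := \vct{\beta} - \vct{\beta}_{\star}$, the constraint in~\eqref{eqn:lasso} becomes $\pnorm{\ell_1}{\vct{\beta}_{\star}+\vct{u}} \leq \pnorm{\ell_1}{\vct{\beta}_{\star}}$, so~\eqref{eqn:lasso} is equivalent to minimizing $\norm{\mtx{X}\vct{u} - \sigma\vct{z}}$ over a compact convex truncation of the descent cone $\mD$ of the $\ell_{1}$ norm at $\vct{\beta}_{\star}$. The key structural point is that, since $\vct{z}$ follows Model~\ref{mod:p-mom-mtx} independently of $\mtx{X}$, the augmented matrix $[\mtx{X}\ \vct{z}] \in \R^{n\times(p+1)}$ is again a random linear map from Model~\ref{mod:p-mom-mtx}, and $\mtx{X}\vct{u} - \sigma\vct{z} = [\mtx{X}\ \vct{z}]\,(\vct{u},-\sigma)$. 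Because $\vct{u}=\vct{0}$ is feasible, the optimal residual is at most $\sigma\norm{\vct{z}} = (1+o(1))\sigma\sqrt{n}$; combining this with the bound $\smin(\mtx{X};\mD\cap\mathsf{S}^{p-1}) \geq (1-\eps)\bigl(\sqrt{n}-\sqrt{\delta(\mD)}\bigr)$, which follows from Theorem~\ref{thm:univ-rsv}\eqref{eqn:univ-rsv-succ} together with~\eqref{eqn:excess-vs-width} and~\eqref{eqn:sdim-width} and is valid because $n \geq (1+\varrho)\,p\psi_{\ell_1}(s/p) \geq (1+\varrho)\,\delta(\mD)$ by~\eqref{eqn:sdim-l1}, so that $\sqrt{n}-\sqrt{\delta(\mD)}$ is a constant fraction of $\sqrt{n}$, I would obtain an a priori bound $\norm{\widehat{\vct{u}}} \leq R_{\infty}$ with high probability, where $R_{\infty}=O(\sigma)$. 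After rescaling into the unit ball, the relevant set becomes $E := \bigl\{(\vct{u},-\sigma) : \pnorm{\ell_1}{\vct{\beta}_{\star}+\vct{u}} \leq \pnorm{\ell_1}{\vct{\beta}_{\star}},\ \norm{\vct{u}} \leq R_{\infty}\bigr\}$, a compact convex set not containing $\vct{0}$.

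Next I would run the sandwich argument. Theorem~\ref{thm:univ-rsv}\eqref{eqn:univ-rsv-fail} bounds the optimal residual above by $(1+\eps)\,\coll{E}_{n}(E)$. To evaluate the excess width, the Gaussian coordinate dual to the fixed last entry $-\sigma$ averages to zero, while on the relevant scale $\vct{g}\cdot\vct{u}$ is controlled by $-\norm{\vct{u}}\bigl(\sqrt{\delta(\mD)}+o(\sqrt{n})\bigr)$ via~\eqref{eqn:sdim-width} and Gaussian concentration of the width; exactly as after~\eqref{eqn:ls-redux-w}, the problem collapses to the scalar program $\inf_{0\leq\alpha\leq R_{\infty}}\bigl(\sqrt{n}\sqrt{\alpha^{2}+\sigma^{2}} - \alpha\sqrt{\delta(\mD)}\bigr)$, whose minimizer satisfies
\[
\alpha_{\mathrm{opt}}^{2} = \frac{\sigma^{2}\,\delta(\mD)}{n-\delta(\mD)}
\qquad\text{with optimal value}\qquad
\sigma\sqrt{n-\delta(\mD)} .
\]
(One checks, as for~\eqref{eqn:ls-redux-w}, that on the scale $R_{\infty}=O(\sigma)$ the constraint set agrees with a truncation of $\mD$, so only the cone geometry enters.) Then, exactly as in the passage from~\eqref{eqn:ls-redux-w-E+} to~\eqref{eqn:ls-redux-R+-simp}, applying Theorem~\ref{thm:univ-rsv}\eqref{eqn:univ-rsv-succ} to the sets $\{(\vct{u},-\sigma)\in E : \norm{\vct{u}} \geq \alpha_{\mathrm{opt}}\sqrt{1+\iota_{0}}\}$ and $\{(\vct{u},-\sigma)\in E : \norm{\vct{u}} \leq \alpha_{\mathrm{opt}}\sqrt{1-\iota_{0}}\}$ shows that their restricted singular values strictly exceed, respectively fall below, the upper bound on the residual, so that $\norm{\widehat{\vct{u}}}^{2} = \bigl(1+O(\iota_{0})\bigr)\,\alpha_{\mathrm{opt}}^{2}$ with high probability. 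Since $\delta(\mD)\leq p\psi_{\ell_1}(s/p)$ by~\eqref{eqn:sdim-l1} and $\mathsf{MSEP} = \norm{\widehat{\vct{u}}}^{2}+\sigma^{2}$, the identity $\frac{\sigma^{2} p\psi_{\ell_1}(s/p)}{n-p\psi_{\ell_1}(s/p)}+\sigma^{2} = \frac{\sigma^{2} n}{n-p\psi_{\ell_1}(s/p)}$ yields~\eqref{eqn:msep-theory}; the matching lower bound on $\norm{\widehat{\vct{u}}}$, together with the lower estimate $\delta(\mD)\geq p\psi_{\ell_1}(s/p)-O(\sqrt{p})$ from~\eqref{eqn:sdim-l1}, shows the bound is tight up to that $O(\sqrt{p})$ slack, which is negligible relative to the main term when $\sigma^{2}\to 0$ or $n\gg p\psi_{\ell_1}(s/p)$.

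I expect the main obstacle to be the bookkeeping around the random error vector and the truncated descent cone rather than any single deep step: one must verify that $[\mtx{X}\ \vct{z}]$ genuinely satisfies Model~\ref{mod:p-mom-mtx}; propagate the $O(\sigma)$ a priori bound so that the excess‑width hypothesis $\coll{E}_{d}(E)\geq\varrho\sqrt{d}$ of Theorem~\ref{thm:univ-rsv} survives the rescaling into the unit ball (the factors of $\sigma$ cancel, provided the excess‑width parameter handed to the theorem is chosen in terms of $\varrho$ and the model constants $p,\nu$); and confirm that truncating the $\ell_{1}$ descent cone at radius $R_{\infty}$ does not perturb the scalar optimization. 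Each of these is routine in the Gaussian min‑max literature, but together they are what distinguishes this proof from a direct application of Proposition~\ref{prop:rand-ls}.
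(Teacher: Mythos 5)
Your proposal follows essentially the same route as the paper: form the augmented matrix $[\mtx{X}\ \vct{z}]$ (which again satisfies Model~\ref{mod:p-mom-mtx}), change variables to $\vct{u}=\vct{\beta}-\vct{\beta}_{\star}$, sandwich the optimal residual between Theorem~\ref{thm:univ-rsv}\eqref{eqn:univ-rsv-succ} and~\eqref{eqn:univ-rsv-fail} applied to radius‑truncated sublevel sets, and solve the resulting scalar program to localize $\norm{\widehat{\vct{u}}}$ near $\alpha_{\rm opt}$. The paper normalizes $\sigma=1$ from the outset, leaves the a priori bound $R_{\infty}$ implicit, and works with the sublevel set $E$ directly rather than $\mD$, but the monotonicity observation you use (that at the relevant scale the truncated constraint set is essentially a truncation of the descent cone, and its rescaled Gaussian width is bounded by $\sqrt{p\psi_{\ell_1}(s/p)}$) is precisely the step the paper carries out via the inclusion $(1/\alpha_+)E_{\alpha_+}\subset(1/\alpha)E_{\alpha}$ and the chain $\coll{W}^2((1/R)E_R)\leq\delta(\cone(E))\leq p\psi_{\ell_1}(s/p)$, so the arguments match.
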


Proposition~\ref{prop:lasso-err} gives an upper bound for the $\mathsf{MSEP}$, which matches
the low-noise limit ($\sigma \to 0$) obtained in the Gaussian case~\cite{OTH13:Squared-Error}.
See Figure~\ref{fig:lasso-msep} for a numerical experiment that confirms our
theoretical predictions.  The proof of the result appears below in Section~\ref{prop:lasso-err-pf}.

The assumptions in Proposition~\ref{prop:lasso-err} are somewhat restrictive,
in that the number $n$ of subjects must be roughly comparable with the number
$p$ of predictors.  This condition can probably be relaxed, but the error bound
in Theorem~\ref{thm:univ-rsv} does not allow
for a stronger conclusion.  The argument can also
be extended to give even more precise formulas for the $\mathsf{MSEP}$
under the same assumptions.  We also note that there
is nothing special about the $\ell_1$ constraint in~\eqref{eqn:lasso};
similar results are valid for many other convex constraints.

\begin{figure}
\includegraphics[width=\textwidth]{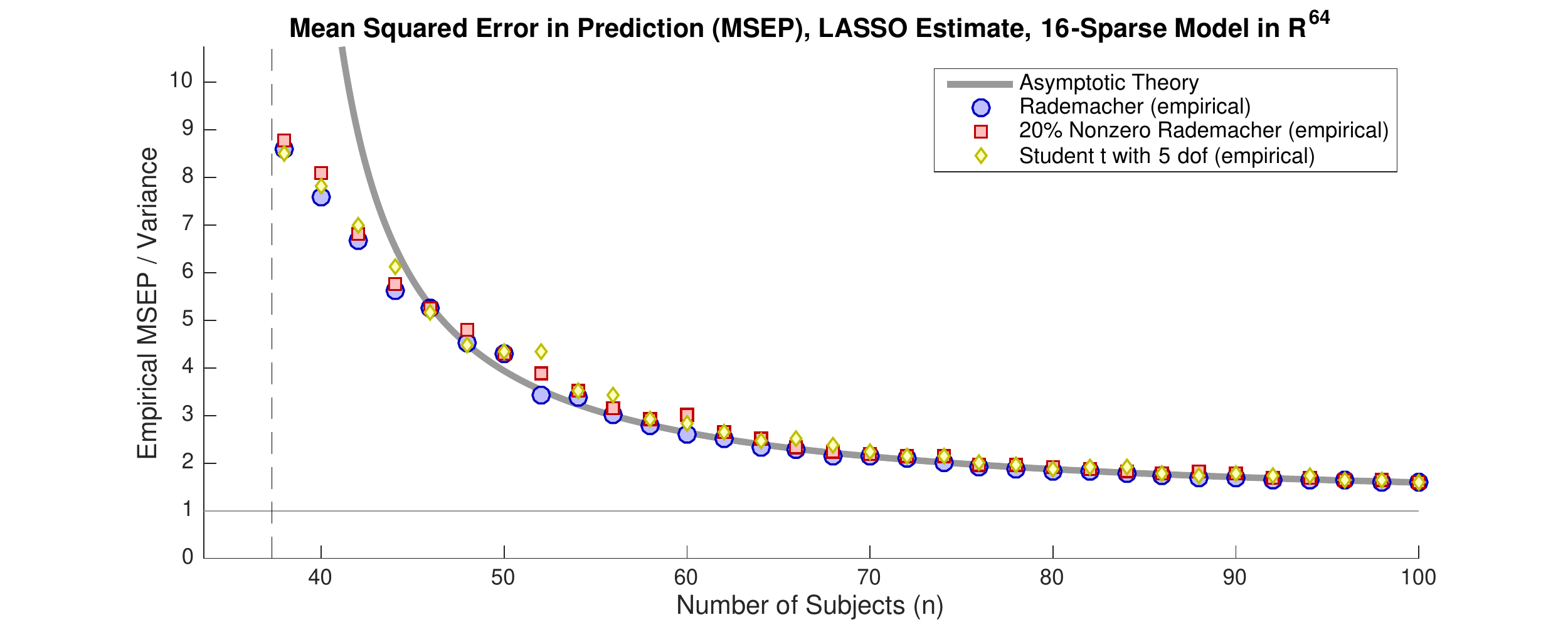}
\caption[Universality of the LASSO Prediction Error]
{\textsl{Universality of the LASSO Prediction Error.} \label{fig:lasso-msep}
This plot shows the $\mathsf{MSEP}$~\eqref{eqn:lasso-msep}
obtained with the LASSO estimator~\eqref{eqn:lasso},
averaged over design matrices $\mtx{X}$ and statistical errors $\vct{z}$.
In the linear model~\eqref{eqn:lasso-model},
the number of predictors $p = 64$; the number of active predictors $s = 16$;
each nonzero coefficient $(\vct{\beta}_{\star})_j$ in the model has unit
magnitude; the statistical error $Z$ is Gaussian;
the variance $\sigma^2 = 1$;
and the number $n$ of subjects varies.
The \textbf{dashed line} marks the location of the phase transition for
the number of subjects required to identify the model exactly
when the noise variance is zero.
The \textbf{gray curve} delineates the asymptotic upper bound $n/(n - p \psi_{\ell_1}(s/p))$
for the normalized $\mathsf{MSEP}$ from Proposition~\ref{prop:lasso-err}.
The \textbf{markers} give an empirical estimate (over 100 trials)
for the $\mathsf{MSEP}$ when the design matrix $\mtx{X}$ has the
specified distribution.}
\end{figure}

\subsection{Proof of Proposition~\ref{prop:lasso-err}}
\label{prop:lasso-err-pf}

Without loss of generality, we may assume that the statistical model is scaled
so the noise level $\sigma = 1$.  Define the sublevel set $E$ of the
$\ell_1$ norm at the true parameter vector $\vct{\beta}_{\star}$:
$$
E := \big\{ \vct{u} \in \R^p : \pnorm{\ell_1}{\smash{\vct{\beta}_{\star} + \vct{u}}} \leq \pnorm{\ell_1}{\smash{\vct{\beta}_{\star}}} \big\}.
$$
Note that $E$ is a compact, convex set that contains the origin.
Define the $n \times (p+1)$ random matrix
$\mtx{\Phi} := \begin{bmatrix} \mtx{X} & \vct{z} \end{bmatrix}$,
and note that $\mtx{\Phi}$ also follows Model~\ref{mod:p-mom-mtx}.
Making the change of variables $\vct{u} := \vct{\beta} - \vct{\beta}_{\star}$,
we can rewrite the LASSO problem~\eqref{eqn:lasso} in the form
\begin{equation} \label{eqn:lasso-pf}
\underset{\vct{u} \in E}{\quad\minimize}
	\norm{ \mtx{\Phi} \begin{bmatrix} \vct{u} \\ - 1 \end{bmatrix} }.
\end{equation}
This expression depends on the assumption that $\sigma = 1$.
Let $\widehat{\vct{u}}$ be an optimal point of the problem~\eqref{eqn:lasso-pf}.
Referring to~\eqref{eqn:msep-calc}, we see that a formula for
$\normsq{\widehat{\vct{u}}}$ leads to a formula for the $\mathsf{MSEP}$.

It will be helpful to introduce some additional sets.  For a parameter $\alpha > 0$, define the compact (but typically nonconvex) set
$$
E_{\alpha} := \big\{ \vct{u} \in E : \norm{\vct{u}} = \alpha \big\}.
$$
We also define the compact and convex set
$$
E_{\leq \alpha} := \big\{ \vct{u} \in E : \norm{\vct{u}} \leq \alpha \big\}.
$$
Observe that $E_{\alpha} \subset E_{\leq \alpha}$.  Furthermore,
\begin{equation} \label{eqn:Ea-vs-Eb}
\alpha \leq \alpha_+
\quad\text{implies}\quad
(1/\alpha_+) E_{\alpha_+} \subset (1 / \alpha) E_{\alpha}.
\end{equation}
The inclusion~\eqref{eqn:Ea-vs-Eb} holds because $E$ is convex and contains the origin.

Let $R_{\infty}$ be a constant that depends
only on the parameters $\varrho$ and $\iota$.
Suppose that $0 \leq R \leq R_+ \leq R_{\infty}$.
To prove that $\norm{\widehat{\vct{u}}} < R_+$,
it suffices to establish the inequality
\begin{equation} \label{eqn:lasso-claim-1}
\min_{\vct{u} \in E_{\leq R}}
	\norm{ \mtx{\Phi} \begin{bmatrix} \vct{u} \\ - 1 \end{bmatrix} }
	< \min_{\vct{u} \in E_{R_+}}
	\norm{ \mtx{\Phi} \begin{bmatrix} \vct{u} \\ - 1 \end{bmatrix} }.
\end{equation}
Indeed, recall that $\widehat{\vct{u}}$ is the minimizer of the objective
over $E$, and let $\vct{u}_0$ be the point in $E_{\leq R}$ where the left-hand
minimum in~\eqref{eqn:lasso-claim-1} is attained.  The objective is a convex
function of $\vct{u}$, so it does not decrease as
$\vct{u}$ traverses the line segment from $\widehat{\vct{u}}$ to $\vct{u}_0$.
If $\norm{\widehat{\vct{u}}} \geq R_+$, this line
segment muss pass through $E_{R_+}$, which is
impossible because the ordering~\eqref{eqn:lasso-claim-1}
forces the objective to decrease on the way from $E_{R_+}$
to $\vct{u}_0$.

Fix the parameter $R$ in the range $0 \leq R \leq R_{\infty}$;
we will select a suitable value later.
Theorem~\ref{thm:univ-rsv}\eqref{eqn:univ-rsv-fail}
demonstrates that, with high probability,
$$
\min_{\vct{u} \in E_{\leq R}}
	\norm{ \mtx{\Phi} \begin{bmatrix} \vct{u} \\ - 1 \end{bmatrix} }
	\leq \coll{E}_n\big(E_{\leq R} \times \{-1\} \big) + o(\sqrt{n})
	\leq \coll{E}_n\big(E_{R} \times \{-1\} \big) + o(\sqrt{n}).
$$
The second relation holds because the excess width decreases
with respect to set inclusion.  Observe that
$$
\begin{aligned}
\coll{E}_n\big(E_{R} \times \{-1\} \big)
	&= \Expect \inf_{\vct{t} \in E_R} \big( \sqrt{n} (\normsq{\vct{t}} + 1 )^{1/2}
	+ \vct{g} \cdot \vct{t} \big) \\
	&= \sqrt{n} ( R^2 + 1 )^{1/2}
	- \Expect \sup_{\vct{t} \in E_R} \vct{g} \cdot \vct{t} \\
	&= \sqrt{n} ( R^2 + 1 )^{1/2}
	- R \, \coll{W}\big((1/R) E_R \big).
\end{aligned}
$$
The last identity holds when we factor out $\norm{\vct{t}}$
and identify the Gaussian width~\eqref{eqn:gauss-width}.

Fix the second parameter $R_+$, such that $R \leq R_+ \leq R_{\infty}$.
Theorem~\ref{thm:univ-rsv}\eqref{eqn:univ-rsv-succ}
demonstrates that, with high probability,
$$
\min_{\vct{u} \in E_{R_+}}
	\norm{ \mtx{\Phi} \begin{bmatrix} \vct{u} \\ - 1 \end{bmatrix} }
	\geq \coll{E}_n\big(E_{R_+} \times \{-1\} \big) - o(\sqrt{n}).
$$
Much as before, we calculate the excess width:
$$
\begin{aligned}
\coll{E}_n\big(E_{R_+} \times \{-1\} \big)
	&= \sqrt{n} ( R_+^2 + 1 )^{1/2}
	- R_+ \coll{W}\big((1/R_+) E_{R_+} \big) \\
	&\geq \sqrt{n} ( R_+^2 + 1 )^{1/2}
	- R_+ \coll{W}\big((1/R) E_{R} \big).
\end{aligned}
$$
The last inequality holds because of~\eqref{eqn:Ea-vs-Eb}
and the fact that the Gaussian width is increasing with
respect to set inclusion.

Combine the last four displays to reach
\begin{align*}
\min_{\vct{u} \in E_{\leq R}}
	\norm{ \mtx{\Phi} \begin{bmatrix} \vct{u} \\ - 1 \end{bmatrix} }
	&\leq \sqrt{n} ( R^2 + 1 )^{1/2}
	- R \, \coll{W}\big((1/R) E_R \big) + o(\sqrt{n}) \\
	&\leq \sqrt{n} ( R_+^2 + 1 )^{1/2}
	- R_+ \coll{W}\big((1/R) E_{R} \big) - o(\sqrt{n})
	\leq \min_{\vct{u} \in E_{R_+}}
	\norm{ \mtx{\Phi} \begin{bmatrix} \vct{u} \\ - 1 \end{bmatrix} }.
\end{align*}
It follows that we can establish~\eqref{eqn:lasso-claim-1}
by finding parameters for which $0 \leq R \leq R_+ \leq R_\infty$ and
$$
\left[ \sqrt{n} ( R_+^2 + 1 )^{1/2}
	- R_+ \coll{W}\big((1/R) E_{R} \big) \right]
	- \left[ \sqrt{n} ( R^2 + 1 )^{1/2}
	- R \, \coll{W}\big((1/R) E_R \big) \right]
\geq o(\sqrt{n}).
$$
To that end, we replace the Gaussian width by a number that
does not depend on the parameter $R$:
$$
\coll{W}^2\big((1/R) E_R \big)
	\leq \delta\big((1/R) E_R \big)
	\leq \delta\big(\cone(E)\big)
	\leq p \psi_{\ell_1}(s/p)
	=: d.
$$
The first relation is~\eqref{eqn:sdim-width};
the second follows from Definition~\ref{def:stat-dim};
the last is the estimate~\eqref{eqn:sdim-l1}.
Moreover, these bounds are sharp when
$R$ is sufficiently close to zero.
Therefore, we just need to verify that
\begin{equation} \label{eqn:lasso-claim-2}
\left[ \sqrt{n} ( R_+^2 + 1 )^{1/2}
	- R_+ \sqrt{d} \right]
	- \left[ \sqrt{n} ( R^2 + 1 )^{1/2}
	- R \, \sqrt{d} \right]
	\geq o(\sqrt{n}).
\end{equation}
Once we choose $R$ and $R_+$ appropriately, we can
adapt the analysis in the proof of Proposition~\ref{prop:rand-ls}.

For $\alpha \geq 0$, introduce the function
$$
f(\alpha) := \sqrt{n} (\alpha^2 + 1 )^{1/2} - \alpha \sqrt{d}.
$$
As in the proof of Proposition~\ref{prop:rand-ls},
by direct calculation, $f$ is minimized at the value
$$
R := \frac{d}{n - d}.
$$
Note that $R$ is very close to zero when $n \gg d$,
in which case $d$ is an accurate bound for $\coll{W}^2\big((1/R)E_R\big)$.
Furthermore,
\begin{equation} \label{eqn:lasso-fR}
f(R) = \sqrt{n - d}.
\end{equation}
Now, make the selection
\begin{equation} \label{eqn:lasso-R+}
R_+^2 := (1 + \iota n / d) R^2 = \frac{d + \iota n}{n - d}.
\end{equation}
Since $n \geq (1 + \varrho)\, d$, we see that $R_+$ is bounded by a
constant $R_{\infty}$ that depends only on $\iota$ and $\varrho$.
Repeating the calculation in~\eqref{eqn:ls-redux-R+-simp}, \lang{mutatis mutandis}, we have
\begin{equation} \label{eqn:lasso-fR+}
f(R_+) \geq \frac{1 + \iota/2}{\sqrt{1 + \iota}} \sqrt{n - d}.
\end{equation}
Combining~\eqref{eqn:lasso-fR} and~\eqref{eqn:lasso-fR+},
we determine that
$$
f(R_+) - f(R)
	\geq \frac{1 + \iota/2 - \sqrt{1 + \iota}}{\sqrt{1 + \iota}} \sqrt{n - d}
	\geq o(\sqrt{n}).
$$
The last inequality holds because $\iota$ is a fixed positive constant
and we have assumed that $n \geq (1 + \varrho) d$.

In conclusion, we have confirmed the claim~\eqref{eqn:lasso-claim-2}
for $R$ and the value $R_+$ designated in~\eqref{eqn:lasso-R+}.
It follows that~\eqref{eqn:lasso-claim-1} holds with high probability,
and so the optimizer of~\eqref{eqn:lasso-pf} satisfies
$\norm{\widehat{\vct{u}}} \leq R_+$ with high probability.
Therefore, the formula~\eqref{eqn:lasso-msep} for the mean squared error
yields the bound
$$
\mathsf{MSEP} = \normsq{\widehat{\vct{u}}} + 1
	\leq R_+^2 + 1
	= (1 + \iota)  \frac{n}{n - d}.
$$
Once again, we have used the assumption that $\sigma = 1$.
By homogeneity, the $\mathsf{MSEP}$ must be proportional
to $\sigma^2$, which leads to the stated result~\eqref{eqn:msep-theory}.
Finally, if we allow $\sigma \to 0$ with the other parameters fixed,
the analysis here can be adapted to show that the error
bound~\eqref{eqn:msep-theory} is sharp.

\begin{remark}[Prior Work]
In the special case of a standard normal design
$\mtx{X}$ and a standard normal error $\vct{z}$,
the result of Proposition~\ref{prop:lasso-err}
appeared in the paper~\cite{OTH13:Squared-Error}.
Our extension to more general random models is new.
Nevertheless, our proof has a lot in common with
the analysis in~\cite{OTH13:Squared-Error,TAH15:High-Dimensional-Error}.
\end{remark}

\newpage

\part{Universality of the Restricted Minimum Singular Value:
Proofs of Theorem~\ref{thm:univ-rsv} and Theorem~\ref{thm:univ-embed}(\ref{eqn:univ-embed-succ})}
\label{part:rsv}

In this part of the paper, we present a detailed proof of
the universality law for the restricted singular value of
a random matrix, Theorem~\ref{thm:univ-rsv}, and the first
part of the universality law for the embedding dimension,
Theorem~\ref{thm:univ-embed}\eqref{eqn:univ-embed-succ}.

Section~\ref{sec:rsv-bdd} contains our main technical result,
which establishes universality for the bounded random matrix
model, Model~\ref{mod:bdd-mtx}.  Section~\ref{sec:rsv-four}
extends the result for bounded random matrices to
the heavy-tailed random matrix model, Model~\ref{mod:p-mom-mtx}.
In Section~\ref{sec:rsv-four-to-rsv-univ}, we obtain
Theorem~\ref{thm:univ-rsv} as an immediate consequence
of the result for heavy-tailed matrices.
Section~\ref{sec:rsv-four-to-embed-univ}
shows how to derive Theorem~\ref{thm:univ-embed}\eqref{eqn:univ-embed-succ}
as an additional consequence.
The remaining sections in this part lay out \joelprev{the} calculations
that undergird the result for bounded random matrices.

\section{The Restricted Singular Values of a Bounded Random Matrix}
\label{sec:rsv-bdd}

The key challenges in establishing universality for the restricted
minimum singular value are already present in the case where the random
matrix is drawn from the bounded model, Model~\ref{mod:bdd-mtx}.
This section presents a universality law for bounded random matrices,
and it gives an overview of the calculations that are required to
establish this result.

\subsection{Theorem~\ref{thm:rsv-bdd}: Main Result for the Bounded Random Matrix Model}

The main technical result in this paper is a theorem
on the behavior of the restricted minimum singular value
of a bounded random matrix.

\begin{theorem}[RSV: Bounded Random Matrix Model] \label{thm:rsv-bdd}
Place the following assumptions:

\begin{itemize}
\item	Let $m$ and $n$ be natural numbers with $m \leq n^{6/5}$.

\item	Let $T$ be a closed subset of the unit ball $\mathsf{B}^n$ in $\R^n$.

\item	Draw an $m \times n$ random matrix $\mtx{\Phi}$ from Model~\ref{mod:bdd-mtx}
with bound $B$.
\end{itemize}

\noindent
Then the squared restricted singular value $\smin^2(\mtx{\Phi}; T)$ has the following properties:

\begin{enumerate}
\item \label{it:rsv-bdd-conc}
The squared restricted singular value concentrates about its mean on a scale of \joel{$B^2 \sqrt{m + n}$}.
For each $\zeta \geq 0$,
$$
\begin{aligned}
\Prob{ \smin^2( \mtx{\Phi}; T ) \leq \Expect \smin^2(\mtx{\Phi}; T) - \cnst{C}B^2 \zeta }
	&\leq \econst^{-\zeta^2/m}, \quad\text{and} \\
\Prob{ \smin^2(\mtx{\Phi}; T) \geq \Expect \smin^2(\mtx{\Phi}; T) + \cnst{C}B^2 \zeta }
	&\leq \joel{\cnst{C} \econst^{-\zeta^2/(m + \zeta \sqrt{n})}}.
\end{aligned}
$$

\item \label{it:rsv-bdd-expect-lower}
The expectation of the squared restricted singular value is bounded below in terms of the excess width:
$$
\begin{aligned}
\Expect \smin^2(\mtx{\Phi}; T)
	\geq \big( \coll{E}_m(T) \big)_+^2 - \cnst{C} B^2 (m+n)^{0.92}.
\end{aligned}
$$

\item \label{it:rsv-bdd-expect-cvx}
If $T$ is a convex set, the squared restricted singular value is bounded above in terms of the excess width:
$$
\begin{aligned}
\Expect \smin^2(\mtx{\Phi}; T) 
	\leq \big( \coll{E}_m(T) \big)_+^2 + \cnst{C} B^4 (m+n)^{\sametprev{0.94}}.
\end{aligned}
$$
\end{enumerate}

\noindent
Furthermore, the entries of $\mtx{\Phi}$ need not be symmetric for this
result to hold.
\end{theorem}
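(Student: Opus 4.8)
The plan is to establish the three conclusions separately: the concentration statement by concentration of measure, and the two expectation bounds by transporting the corresponding Gaussian estimates to bounded matrices via the Lindeberg exchange principle. For the concentration statement, write $\smin^2(\mtx{\Phi};T) = \inf_{\vct{t}\in T}\sum_{i=1}^m \langle\vct{\phi}_i,\vct{t}\rangle^2$, which displays the quantity as an infimum of a sum over the independent rows $\vct{\phi}_1,\dots,\vct{\phi}_m$, and note that $\mtx{\Phi}\mapsto\smin(\mtx{\Phi};T)$ is $1$-Lipschitz for the Frobenius norm since each $\mtx{\Phi}\mapsto\norm{\mtx{\Phi}\vct{t}}$ is $\norm{\vct{t}}\le 1$-Lipschitz and an infimum of $1$-Lipschitz functions is $1$-Lipschitz. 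I would run a Doob martingale that reveals the rows one at a time. Conditioning on a high-probability event on which every row obeys $\norm{\vct{\phi}_i}^2 \lesssim n$, the increment from revealing row $i$ has conditional range $O(n)$ and conditional variance $O(B^4)$ — the latter because $\langle\vct{\phi}_i,\vct{t}\rangle$ is sub-Gaussian with parameter $O(B)$ and second moment at most one, so $\langle\vct{\phi}_i,\vct{t}\rangle^2$ has variance $O(B^4)$. A Bernstein-type martingale inequality then produces the asymmetric tails: the $\exp(-\zeta^2/m)$ regime comes from the accumulated variance $O(mB^4)$, and the extra $\zeta\sqrt{n}$ term in the upper tail from the row-norm range. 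The upper tail can alternatively be read off from Talagrand's concentration inequality for the empirical process $\sup_{\vct{t}\in T}\bigl|\sum_i(\langle\vct{\phi}_i,\vct{t}\rangle^2 - \Expect\langle\vct{\phi}_i,\vct{t}\rangle^2)\bigr|$ after truncating the rows; the one genuinely nontrivial input is a chaining bound against the excess width of $T$ for the expectation of that supremum, which I would isolate as a separate lemma.

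For the lower bound on the expectation, the target Gaussian fact is $\Expect\smin(\mtx{\Gamma};T)\ge\coll{E}_m(T)$ — essentially Gordon's minimax inequality~\cite{Gor88:Milmans-Inequality} — and the task is to transfer it to a general bounded $\mtx{\Phi}$. The obstruction, exactly as flagged in Section~\ref{sec:thm2-strategy}, is that $T$ may contain vectors with a few large coordinates, so a naive entry-by-entry swap of $\mtx{\Phi}$ for $\mtx{\Gamma}$ incurs an uncontrollable error on the corresponding columns. I would therefore dissect $T$ into the subsets $T_J$ of vectors whose coordinates off a small index set $J$ are uniformly small, show that $\Expect\smin(\mtx{\Phi};T)$ differs by a lower-order amount from $\min_J\Expect\smin(\mtx{\Phi};T_J)$, and then apply the Lindeberg exchange only to the columns in $J^{\mathrm{c}}$, replacing $\mtx{\Phi}$ by a hybrid matrix $\mtx{\Psi}$ that is Gaussian off $J$ and equal to $\mtx{\Phi}$ on $J$. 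A coarse nonasymptotic random matrix bound shows that the small block $\mtx{\Psi}_J$ contributes only lower-order terms, so Gordon's inequality applied to $\mtx{\Psi}$, together with the elementary $\coll{E}_m(T_J)\ge\coll{E}_m(T)$, closes the argument. The exchange itself must be run after smoothing $\min$ and $\norm{\cdot}$ by surrogates with controlled third derivatives, and the error of order $(m+n)^{0.92}$ falls out of optimizing the smoothing parameter, the size of $J$, and the coordinate-size threshold subject to the standing restriction $m\le n^{6/5}$.

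The upper bound for convex $T$ follows the same dissection-plus-exchange scaffold; the only change is that the Gaussian input is now the converse of Gordon's inequality, $\Expect\smin(\mtx{\Gamma};T)\le(\coll{E}_m(T))_+ + o(\sqrt{m})$, which is available for convex $T$ through the convex-duality form of the Gaussian minimax theorem~\cite{TOH15:Gaussian-Min-Max,Sto13:Regularly-Random}. Convexity enters only there, and in keeping the dissected pieces $T_J$ manageable, which is why the two-sided estimate is stated under the convexity hypothesis. I expect the main obstacle to be precisely the bookkeeping in these last two parts: one must simultaneously keep the number and support-size of the pieces $T_J$ small enough that the random-matrix estimates for the $\mtx{\Psi}_J$ blocks and the Gaussian-minimax errors stay $o(m)$, keep the coordinate threshold large enough that passing from $T$ to the $T_J$ costs little, and keep the smoothing fine enough that the Lindeberg remainder is negligible — and the tension among these three constraints is what forces the fractional exponents $0.92$ and $0.94$ and the range restriction $m\le n^{6/5}$. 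Finally, since neither the concentration argument, nor the Lindeberg exchange at matched first two moments, nor Gordon's inequality and its converse use symmetry of the entries, the closing assertion of the theorem is immediate.
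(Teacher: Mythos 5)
Your outline for parts~\eqref{it:rsv-bdd-expect-lower} and~\eqref{it:rsv-bdd-expect-cvx} matches the paper's strategy essentially step for step: dissect $T$ into the sets $T_J$, pass from $\smin(\mtx{\Phi};T)$ to $\min_J\smin(\mtx{\Phi};T_J)$, exchange only the columns in $J^{c}$ to form a hybrid matrix $\mtx{\Psi}$, apply the Gaussian Minimax Theorem (in its ordinary form for the lower bound and via convex duality for the upper bound) conditionally on $\mtx{\Phi}_J$, and absorb $\mtx{\Phi}_J$ with coarse nonasymptotic random-matrix estimates. Two cosmetic differences: the paper works directly with $\normsq{\mtx{A}\vct{t}}$, which is already smooth, so it need only soften the $\min$ (via a soft-min at inverse temperature $\beta$), not the norm; and the coordinate threshold is tied to $\#J$ (it is $(\#J)^{-1/2}$), so there are really only two free parameters, $k=\#J$ and $\beta$, to optimize.

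Part~\eqref{it:rsv-bdd-conc} is where your proposal diverges, and I do not think the argument as stated closes. You propose a Doob martingale that reveals the \emph{rows} one at a time, then a Bernstein-type bound. Two problems.

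First, the range. Replacing row $i$ can change $\smin^2$ by $(\vct{\phi}_i'\cdot\vct{t})^2 - (\vct{\phi}_i\cdot\vct{t})^2$, and since $\abs{\vct{\phi}_i\cdot\vct{t}}\leq\norm{\vct{\phi}_i}\norm{\vct{t}}\leq B\sqrt{n}$, this increment can be as large as $\cnst{C}B^2 n$, not $\cnst{C}B^2\sqrt{n}$. A Bernstein bound with $m$ increments, variance proxy $\cnst{C}B^4 m$ and range $\cnst{C}B^2 n$ gives a tail $\exp\bigl(-\cnst{c}\zeta^2/(m + n\zeta)\bigr)$, which is strictly weaker than the stated $\exp\bigl(-\cnst{c}\zeta^2/(m+\sqrt{n}\zeta)\bigr)$. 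The paper avoids this by using \emph{per-entry} increments in a modified-log-Sobolev/self-bounding inequality (Facts~\ref{fact:self-bdd-lower} and~\ref{fact:self-bdd-upper}). When a single entry $\phi_{ij}$ is replaced, the cross term in $\bigl((\alpha-\phi_{ij})t_j + \vct{\phi}_i\cdot\vct{t}\bigr)^2 - (\vct{\phi}_i\cdot\vct{t})^2$ carries only the single coordinate $t_j$, with $\abs{t_j}\leq 1$, so each of the $mn$ increments has range $\cnst{C}B^2\sqrt{n}$; this is what produces the $\sqrt{n}$ in the denominator of the upper tail. The factorization into $mn$ small increments rather than $m$ large ones is not a reorganization of the same bound, it is strictly sharper.

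Second, the variance. You justify the per-increment conditional variance $O(B^4)$ by computing $\Var\bigl(\langle\vct{\phi}_i,\vct{t}\rangle^2\bigr)$ for a \emph{fixed} $\vct{t}$, but the martingale increment involves the minimizer $\widehat{\vct{t}}$, which depends on all $m$ rows, including those not yet revealed; establishing the claimed conditional variance for this coupled quantity is exactly the hard part and is not argued. The paper sidesteps the issue entirely: it shows pointwise that $\sum_{ij}\bigl(\sup_{ij}Y - Y\bigr)^2 \leq 32B^2 Y + 32B^4 m$, using a single minimizer $\vct{t}$ for the unreplaced matrix, and then the self-bounding inequalities of Massart/Maurer convert that deterministic bound into the two tails. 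Your Talagrand alternative has a similar issue: Talagrand for $\sup_{\vct{t}}\bigl|\sum_i(\langle\vct{\phi}_i,\vct{t}\rangle^2 - 1)\bigr|$ compares $\smin^2$ with $m$, not with $\Expect\smin^2$, and in any case the summands still have range $\cnst{C}B^2 n$. To reproduce part~\eqref{it:rsv-bdd-conc} as stated, you need the entry-wise self-bounding route (or some other device that inherits the per-entry scale), not a row martingale.

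Your closing observation about symmetry is correct: neither the self-bounding concentration argument, nor the Lindeberg exchange matched to first and second moments, nor the Gaussian Minimax Theorem requires symmetry of the entries, so that hypothesis can be dropped for Theorem~\ref{thm:rsv-bdd}.
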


\joelprev{The proof of this result will occupy us} for the
rest of this part of the paper.  This section summarizes the required
calculations, with cross-references to the detailed arguments.

\subsection{Proof of Theorem~\ref{thm:rsv-bdd}(\ref{it:rsv-bdd-conc}): Concentration}
\label{sec:rsv-pf-conc}

Theorem~\ref{thm:rsv-bdd}\eqref{it:rsv-bdd-conc} states that the squared restricted
singular value $\smin^2(\mtx{\Phi}; T)$ concentrates around its mean.
We prove this claim in Proposition~\ref{prop:rsv-concentration},
which appears below. The argument depends on some concentration inequalities that are
derived using the entropy method.  This approach is more or less
standard, so we move on to the more interesting part of the proof.

\subsection{Setup for Proof of Theorem~\ref{thm:rsv-bdd}(\ref{it:rsv-bdd-expect-lower})
and~(\ref{it:rsv-bdd-expect-cvx}): Dissection of the Index Set}
\label{sec:rsv-pf-strategy}

Let us continue with the proof of Theorem~\ref{thm:rsv-bdd}, conclusions~\eqref{it:rsv-bdd-expect-lower}
and~\eqref{it:rsv-bdd-expect-cvx}.  The overall approach is the same in both cases,
but some of the details differ.

The first step is to dissect the index set $T$ into appropriate subsets.
For each set $J \subset \{1, \dots, n\}$, we introduce a closed subset $T_J$ of $T$ via the rule
\begin{equation} \label{eqn:TK-defn}
T_J := \big\{ \vct{t} \in T : \abs{\smash{t_j}} \leq (\#J)^{-1/2} \text{ for all $j \in J^c$} \big\},
\end{equation}
where $J^c := \{1, \dots, n\} \setminus J$.  In other words, $T_J$ contains
all the vectors in $T$ where the coordinates listed in $J^c$ are sufficiently small.
Note that convexity of the set $T$ implies convexity of each subset $T_J$.

Fix a number $k \in \{1, \dots, n\}$.  Since $T$ is a subset of the unit ball $\mathsf{B}^n$,
every vector $\vct{t} \in T$ satisfies
$$
\# \big\{ j : \abs{\smash{t_j}} > k^{-1/2} \big\} \leq k.
$$
Therefore, $\vct{t}$ belongs to some subset $T_J$ where the cardinality $\#J = k$,
and we have the decomposition 
\begin{equation} \label{eqn:T-dissection}
T = \bigcup_{\#J = k} T_J.
\end{equation}
It is clear that the number of subsets $T_J$ in this decomposition satisfies
\begin{equation} \label{eqn:TK-count}
\# \{ T_J : \# J = k \} = {n \choose k} \leq \left( \frac{\econst n}{k} \right)^{k}.
\end{equation}
We must limit the cardinality $k$ of the subsets, so we can control the number
of subsets we need to examine.

\subsection{Proof of Theorem~\ref{thm:rsv-bdd}(\ref{it:rsv-bdd-expect-lower}): Lower Bound for the RSV}
\label{sec:rsv-pf-lower}

Let us proceed with the proof of Theorem~\ref{thm:rsv-bdd}\eqref{it:rsv-bdd-expect-lower},
the lower bound for the RSV.
For the time being, we fix the parameter $k$ that designates the cardinality of the
sets $J$.  We require that $k \geq m^{2/3}$.

First, we pass from the restricted singular value $\smin(\mtx{\Phi}; T)$ over the whole set $T$
to a bound that depends on $\smin(\mtx{\Phi}; T_J)$ for the subsets $T_J$.
Proposition~\ref{prop:rsv-dissection} gives the comparison
\begin{equation} \label{eqn:rsv-bdd-pf1}
\Expect \smin^2(\mtx{\Phi}; T)
	\geq \min_{\# J = k} \Expect \smin^2(\mtx{\Phi}; T_J)
	- \cnst{C} B^2 \sqrt{k m \log(n/k)}.
\end{equation}
We have used the decomposition~\eqref{eqn:T-dissection} and the bound~\eqref{eqn:TK-count}
on the number of subsets in the decomposition to invoke the proposition.  The main ingredient
in the proof of this estimate is the concentration inequality for restricted singular values,
Proposition~\ref{prop:rsv-concentration}.

Let us focus on a specific subset $T_J$.  To study the restricted singular value
$\smin(\mtx{\Phi}; T_J)$, we want to replace the entries of the random matrix $\mtx{\Phi}$
with standard normal random variables.  Proposition~\ref{prop:partial-replacement} allows
us to make partial progress toward this goal.
Let $\mtx{\Gamma}$ be an $m \times n$ standard normal matrix, independent from $\mtx{\Phi}$.
Define an $m \times n$ random matrix $\mtx{\Psi} := \mtx{\Psi}(J)$ where $\mtx{\Psi}_J = \mtx{\Phi}_J$ and $\mtx{\Psi}_{J^c} = \mtx{\Gamma}_{J^c}$.  Then
 \begin{equation} \label{eqn:rsv-bdd-pf2}
\Expect \smin^2(\mtx{\Phi}; T_J)
	\geq \Expect \smin^2(\mtx{\Psi}; T_J)
	- \frac{\cnst{C} B^2 m^{1/3} n \log(mn)}{k^{1/2}}.
\end{equation}
This bound requires the assumption that $k \geq m^{2/3}$.
The argument is based on the Lindeberg exchange principle,
but we have used this method in an unusual way.
For a vector $\vct{t} \in T_J$, the definition~\eqref{eqn:TK-defn} gives us control on the magnitude of the entries
listed in $J^c$, which we can exploit to replace the column submatrix $\mtx{\Phi}_{J^c}$
with $\mtx{\Gamma}_{J^c}$.  The coordinates of $\vct{t}$ listed in $J$ may be large,
so we cannot replace the column submatrix $\mtx{\Phi}_J$ without incurring
a significant penalty.  Instead, we just leave it in place.

Next, we want to compare the expected \joelprev{RSV} on the right-hand side of~\eqref{eqn:rsv-bdd-pf2}
with the excess width of the set $T_J$.  Proposition~\ref{prop:excess-width} provides the bound
\begin{equation} \label{eqn:rsv-bdd-pf3}
\Expect \smin^2(\mtx{\Psi}; T_J)
	\geq \left( \coll{E}_m(T_J) - \cnst{C}B^2\sqrt{k} \right)_+^2
	\geq \left( \coll{E}_m(T) - \cnst{C}B^2 \sqrt{k} \right)_+^2.
\end{equation}
The second inequality is a consequence of the facts that $T_J \subset T$
and that the excess width is decreasing with respect to set inclusion.
The calculation in Proposition~\ref{prop:excess-width} uses the Gaussian Minimax Theorem
(see Fact~\ref{fact:gauss-minmax}) to simplify the average with respect to the
standard normal matrix $\mtx{\Gamma}$.  We also invoke standard results
from nonasymptotic random matrix theory to control the expectation over $\mtx{\Phi}_J$.

We can obtain a simple lower bound on the last term in~\eqref{eqn:rsv-bdd-pf3}
by linearizing the convex function $(\cdot)_+^2$ at the point $\coll{E}_m(T)$:
\begin{equation} \label{eqn:rsv-bdd-pf3.5}
\begin{aligned}
\left( \coll{E}_m(T) - \cnst{C}B^2\sqrt{k} \right)_+^2
	\geq \big( \coll{E}_m(T) \big)_+^2 - \cnst{C} B^2 \sqrt{k} \, \big(\coll{E}_m(T)\big)_+
	\geq \big( \coll{E}_m(T) \big)_+^2 - \cnst{C} B^2 \sqrt{km}.
\end{aligned}
\end{equation}
The last estimate follows from the observation that
\begin{equation} \label{eqn:excess-width-trivial-bd}
\coll{E}_m(T)
	= \Expect \inf_{\vct{t} \in T} \left( \sqrt{m} \norm{\vct{t}} + \vct{g} \cdot \vct{t} \right)
	\leq \Expect \left( \sqrt{m} \norm{\vct{t}_0} + \vct{g} \cdot \vct{t}_0 \right)
	\leq \sqrt{m}.
\end{equation}
In this calculation, $\vct{t}_0$ is an arbitrary point in $T$.

Finally, we sequence the four displays~\eqref{eqn:rsv-bdd-pf1},
\eqref{eqn:rsv-bdd-pf2}, \eqref{eqn:rsv-bdd-pf3}, and~\eqref{eqn:rsv-bdd-pf3.5} and combine error terms to obtain
\begin{equation} \label{eqn:rsv-bdd-pf4}
\Expect \smin^2(\mtx{\Phi}; T)
	\geq \big(\coll{E}_m(T)\big)_+^2
	- \frac{\cnst{C} B^2 m^{1/3} n \log(mn)}{k^{1/2}} - \cnst{C}B^2 \sqrt{km\log(n/k)}.
\end{equation}
Up to logarithms, an optimal choice of the cardinality parameter is $k = \lceil m^{-1/6} n \rceil$.
Since $m \leq n^{6/5}$, this choice ensures that $k \geq m^{2/3}$.  We conclude that
$$
\begin{aligned}
\Expect \smin^2(\mtx{\Phi}; T)
	&\geq \big( \coll{E}_m(T) \big)_+^2 - \cnst{C} B^2 m^{5/12} n^{1/2} \log(mn) \\
	&\geq \big( \coll{E}_m(T) \big)_+^2 - \cnst{C} B^2 (m+n)^{11/12} \log(m+n).
\end{aligned}
$$
Combine the logarithm with the power, and adjust the constants to complete
the proof of Theorem~\ref{thm:rsv-bdd}\eqref{it:rsv-bdd-expect-lower}.

\subsection{Proof of Theorem~\ref{thm:rsv-bdd}(\ref{it:rsv-bdd-expect-cvx}): Upper Bound for the RSV of a Convex Set}
\label{sec:rsv-pf-upper}

At a high level, the steps in the proof of Theorem~\ref{thm:rsv-bdd}\eqref{it:rsv-bdd-expect-cvx}
are similar with the argument in the last section.  Many of the technical details, however,
depend on convex duality arguments.

As before, we fix the cardinality parameter $k$,
with the requirement $k \geq m^{2/3}$.
The first step is to pass from $\smin(\mtx{\Phi}; T)$
to $\smin(\mtx{\Phi}; T_J)$.  We have
\begin{equation} \label{eqn:rsv-bdd-pf1b}
\Expect \smin^2(\mtx{\Phi}; T)
	\leq \min_{\#J = k} \Expect \smin^2(\mtx{\Phi}; T_J).
\end{equation}
This bound is a trivial consequence of the facts that $T_J \subset T$
for each subset $J$ of $\{1, \dots, n\}$ and that $\smin(\mtx{\Phi}; \cdot)$
is decreasing with respect to set inclusion.

Select any subset $T_J$.  We invoke Proposition~\ref{prop:partial-replacement}
to exchange most of the entries of the random matrix $\mtx{\Phi}$ for standard
normal variables.  Using the same random matrix $\mtx{\Psi}$ from the last section,
we have
\begin{equation} \label{eqn:rsv-bdd-pf2b}
\Expect \smin^2(\mtx{\Phi}; T_J)
	\leq \Expect \smin^2(\mtx{\Psi}; T_J)
	+ \frac{\cnst{C} B^2 m^{1/3} n \log(mn)}{k^{1/2}}.
\end{equation}
The bound~\eqref{eqn:rsv-bdd-pf2b} requires the assumption $k \geq m^{2/3}$,
and the proof is identical with the proof of the lower bound~\eqref{eqn:rsv-bdd-pf2}.

Next, we compare the expected \joelprev{RSV} on the right-hand side of~\eqref{eqn:rsv-bdd-pf2b}
with the excess width.  Proposition~\ref{prop:excess-width} delivers
\begin{equation} \label{eqn:rsv-bdd-pf3b}
\Expect \smin^2(\mtx{\Psi}; T_J)
	\leq \left( \coll{E}_m(T_J) + \cnst{C} B^2 \sqrt{k} \right)_+^2.
\end{equation}
\joelprev{This argument is more complicated than the analogous lower bound~\eqref{eqn:rsv-bdd-pf3},
and it depends on the convexity of $T_J$.} We also apply the assumption that $k \geq m^{2/3}$ here.

Proposition~\ref{prop:excess-width-dissection} allows us to replace the excess width of $T_J$
in~\eqref{eqn:rsv-bdd-pf3b} with the excess width of $T$:
\begin{equation} \label{eqn:rsv-bdd-pf4b}
\coll{E}_m(T_J)
	\leq \coll{E}_m(T) + \cnst{C} \sqrt{k \log(n/k)}.
\end{equation}
We use the decomposition~\eqref{eqn:T-dissection} and the bound~\eqref{eqn:TK-count} on the number of sets $T_J$ to invoke the proposition.  This proof depends on the Gaussian concentration inequality.  

Sequence the bounds~\eqref{eqn:rsv-bdd-pf1b}, \eqref{eqn:rsv-bdd-pf2b},
\eqref{eqn:rsv-bdd-pf3b}, and~\eqref{eqn:rsv-bdd-pf4b}, and combine the error terms
to arrive at
$$
\Expect \smin^2(\mtx{\Phi}; T)
	\leq \left( \coll{E}_m(T) + \cnst{C} B^2 \sqrt{k \log(n/k)} \right)_+^2
	+ \frac{\cnst{C}B^2 m^{1/3} n \log(mn)}{k^{1/2}}.
$$
Expand the square using~\eqref{eqn:excess-width-trivial-bd} to reach
$$
\Expect \smin^2(\mtx{\Phi}; T)
	\leq \big( \coll{E}_m(T) \big)_+^2 + \cnst{C}B^2 \left( \frac{m^{1/3} n \log(mn)}{k^{1/2}}
	+ \sqrt{km \log(n/k)} + B^2 k \log(n/k) \right).
$$
\sametprev{Select $k = \lceil n^{4/5} \rceil$ to arrive at
$$
\begin{aligned}
\Expect \smin^2(\mtx{\Phi}; T)
	&\leq \big( \coll{E}_m(T) \big)_+^2 + \cnst{C} B^2 (m^{1/3} n^{3/5}+n^{2/5}m^{1/2}) \log(n)
	+ \cnst{C} B^4 n^{4/5} \log(mn) \\
	&\leq \big( \coll{E}_m(T) \big)_+^2 + \cnst{C} B^4 (m+n)^{14/15} \log(m+n).
\end{aligned}
$$
Combine the power with the logarithm, and adjust constants to finish the
proof of Theorem~\ref{thm:rsv-bdd}\eqref{it:rsv-bdd-expect-cvx}.}

\section{The Restricted Singular Values of a Heavy-Tailed Random Matrix}
\label{sec:rsv-four}

In this section, we present an extension of Theorem~\ref{thm:rsv-bdd}
to the heavy-tailed matrix model, Model~\ref{mod:p-mom-mtx}.
In Section~\ref{sec:rsv-four-to-rsv-univ}, we explain how the
universality result for the restricted singular value,
Theorem~\ref{thm:univ-rsv}, is an immediate consequence.
In Section~\ref{sec:rsv-four-to-embed-univ}, we show how to
derive the first half of the universality result for the embedding
dimension, Theorem~\ref{thm:univ-embed}\eqref{eqn:univ-embed-succ}.

\subsection{Corollary~\ref{cor:rsv-four}: Main Result for the $p$-Moment Random Matrix Model}

We can extend Theorem~\ref{thm:rsv-bdd} to the heavy-tailed random
matrix model, Model~\ref{mod:p-mom-mtx} using a truncation argument.
The following corollary contains a detailed statement of the result.

\begin{corollary}[RSV: $p$-Moment Random Matrix Model] \label{cor:rsv-four}
Fix parameters $p > 4$ and $\nu \geq 1$.  Place the following assumptions:

\begin{itemize}
\item	Let $m$ and $n$ be natural numbers with $m \leq n^{6/5}$. 
\item	Let $T$ be a closed subset of the unit ball $\mathsf{B}^n$ in $\R^n$.

\item	Draw an $m \times n$ random matrix $\mtx{\Phi}$ that satisfies Model~\ref{mod:p-mom-mtx}
with given $p$ and $\nu$.
\end{itemize}

\noindent
Then the restricted singular value $\smin(\mtx{\Phi}; T)$ has the following properties:

\begin{enumerate}

\item	With high probability, the restricted singular value is bounded below by the excess width:
\label{it:rsv-four-lower}
\begin{equation} \label{eqn:smin-four-lower}
\Prob{ \smin(\mtx{\Phi}; T)
	\leq \big( \coll{E}_m(T) \big)_+ - \cnst{C}_p \nu (m+n)^{1/2 - \kappa(p)}  }
	\leq \cnst{C}_p (m+n)^{1 - p/4}.
\end{equation}

\item	If $T$ is a convex set, with high probability, the restricted singular value is bounded above by the excess width:
\label{it:rsv-four-upper}
\begin{equation} \label{eqn:smin-four-upper}
\Prob{ \smin(\mtx{\Phi}; T) \geq \big( \coll{E}_m(T) \big)_+ + \cnst{C}_p \nu^2 (m+n)^{1/2 - \kappa(p)} }
	\leq \cnst{C}_p (m+n)^{1 - p/4}.
\end{equation}
\end{enumerate}
\noindent
The function $\kappa(p)$ is strictly positive for $p > 4$,
and the constant $\cnst{C}_{p}$ depends only on $p$.
\end{corollary}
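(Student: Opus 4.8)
The plan is to deduce Corollary~\ref{cor:rsv-four} from Theorem~\ref{thm:rsv-bdd} by a truncation argument. Given a random matrix $\mtx{\Phi}$ satisfying Model~\ref{mod:p-mom-mtx} with parameters $p$ and $\nu$, I would split each entry $\phi_{ij} = \phi_{ij}^{\sharp} + \phi_{ij}^{\flat}$, where $\phi_{ij}^{\sharp}$ is (a symmetrized version of) the truncation of $\phi_{ij}$ at level $B := \nu (m+n)^{\beta}$ for a small exponent $\beta > 0$ to be optimized, and $\phi_{ij}^{\flat}$ is the tail part. The $p$th moment bound $\Expect\abs{\phi_{ij}}^p \leq \nu^p$ gives two things via Markov/Chebyshev: first, with probability at least $1 - \cnst{C}_p (m+n)^{1-p/4}$ the matrix $\mtx{\Phi}$ agrees with its truncation $\mtx{\Phi}^{\sharp}$ on all $mn$ entries, provided $\beta$ is chosen so that $mn \cdot (B/\nu)^{-p} \lesssim (m+n)^{1-p/4}$, i.e. $p\beta \geq 3 + p/4$ after using $mn \leq (m+n)^2$; second, $\Expect\normsq{\mtx{\Phi} - \mtx{\Phi}^{\sharp}}$ is small in operator norm, of order $(m+n)\cdot \nu^2 (B/\nu)^{2-p}$, which feeds a crude perturbation bound $\abs{\smin(\mtx{\Phi};T) - \smin(\mtx{\Phi}^{\sharp};T)} \leq \norm{\mtx{\Phi}-\mtx{\Phi}^{\sharp}}$.

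Next I would check that the truncated matrix $\mtx{\Phi}^{\sharp}$, after rescaling each entry to have unit variance (the rescaling factors are $1 + O((B/\nu)^{-p+2})$, hence negligible), falls into Model~\ref{mod:bdd-mtx} with bound $\cnst{C}B = \cnst{C}\nu(m+n)^{\beta}$; symmetry of the original entries is inherited, though the remark at the end of Theorem~\ref{thm:rsv-bdd} means symmetry is not even needed. Applying Theorem~\ref{thm:rsv-bdd}\eqref{it:rsv-bdd-expect-lower} and the concentration bound~\eqref{it:rsv-bdd-conc} to $\mtx{\Phi}^{\sharp}$ yields, with high probability,
$$
\smin^2(\mtx{\Phi}^{\sharp};T) \geq \big(\coll{E}_m(T)\big)_+^2 - \cnst{C}B^2 (m+n)^{0.92} - \cnst{C}B^2\sqrt{m+n}\,\sqrt{\log(m+n)},
$$
and similarly the convex upper bound from~\eqref{it:rsv-bdd-expect-cvx}. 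Taking square roots (using $\sqrt{a^2 - x} \geq a - \sqrt{x}$ and $\sqrt{a^2+x}\leq a + \sqrt{x}$) converts this into a bound of the form $\smin(\mtx{\Phi}^{\sharp};T) \geq \big(\coll{E}_m(T)\big)_+ - \cnst{C}B(m+n)^{0.46}\cdot\mathrm{polylog}$. Combining with the perturbation estimate and the event that $\mtx{\Phi} = \mtx{\Phi}^{\sharp}$ (on which no perturbation term appears at all — this is the cleanest route), I get the claimed inequalities with an error exponent governed by $\beta$: the error is $\nu(m+n)^{\beta + 0.46 + o(1)}$, so I need $\kappa(p) := \tfrac{1}{2} - \beta - 0.46 - \varepsilon > 0$, which forces $\beta < 0.04 - \varepsilon$. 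Reconciling this with the truncation requirement $p\beta \geq 3 + p/4$ gives $\beta \in [(3 + p/4)/p,\, 0.04)$, a nonempty interval precisely when $p$ is large; for $p$ just above $4$ the interval may be empty, so I would instead keep $\beta$ as small as the truncation requirement allows, $\beta = (3+p/4)/p = 3/p + 1/4$, accept a correspondingly small (but still positive for all $p>4$ — here one uses that the $0.92$ and $0.94$ exponents in Theorem~\ref{thm:rsv-bdd} are strictly below $1$, leaving slack) value of $\kappa(p)$, and track constants to see they depend only on $p$ (with $\nu$ appearing as the stated prefactor $\nu$ or $\nu^2$, the latter because the convex upper bound in Theorem~\ref{thm:rsv-bdd}\eqref{it:rsv-bdd-expect-cvx} carries $B^4$).

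The main obstacle I anticipate is the bookkeeping of exponents: making sure that a single choice of truncation level $\beta$ simultaneously (i) kills the probability that truncation changes the matrix, at the rate $(m+n)^{1-p/4}$ demanded by the statement, (ii) keeps $B^2(m+n)^{0.92}$ and $B^4(m+n)^{0.94}$ below $(m+n)^{1}$ so that taking square roots leaves a positive power of savings, and (iii) produces the \emph{same} function $\kappa(p)$ for both the lower and upper bounds. A secondary technical point is handling the rescaling of variances after truncation and confirming that the minor perturbation of the excess width and of the matrix it induces is absorbed into the error term; this is routine but must be done carefully since $\coll{E}_m$ is only $1$-Lipschitz in an appropriate sense. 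Once the exponent arithmetic is pinned down, the rest is a direct assembly of Theorem~\ref{thm:rsv-bdd}, a union bound over the two failure events, and elementary inequalities for square roots and positive parts.
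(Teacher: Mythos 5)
Your overall strategy — truncate each entry, apply Theorem~\ref{thm:rsv-bdd} to the bounded part, and handle the tail — is the paper's strategy too, and the perturbation bound $\abs{\smin(\mtx{\Phi};T) - \smin(\mtx{\Phi}_{\rm trunc};T)} \leq \norm{\mtx{\Phi}_{\rm tail}}$ that you state in passing is exactly what the paper uses. But you then discard it in favor of the ``cleanest route,'' conditioning on the event that no entry exceeds the truncation level, and that route never closes for any $p > 4$. The union bound over $mn$ entries requires $B = \nu(m+n)^\beta$ with $\beta \geq 1/4 + 1/p$ (your $\beta \geq 3/p + 1/4$ has a small arithmetic slip, but the issue is the same), whereas to extract a positive $\kappa(p)$ from the error term $B^2(m+n)^{0.92}$ in Theorem~\ref{thm:rsv-bdd}\eqref{it:rsv-bdd-expect-lower} — let alone from $B^4(m+n)^{0.94}$ in~\eqref{it:rsv-bdd-expect-cvx} — you need $\beta < 0.04$. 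Since $1/4 + 1/p > 1/4 > 0.04$, the interval you write as $[(3+p/4)/p,\, 0.04)$ is empty for every $p$, not only for $p$ near $4$, contrary to your claim. The exponent slack of $0.08$ (or $0.06$) in Theorem~\ref{thm:rsv-bdd} cannot rescue this: your large $B$ injects $2\beta \geq 1/2$ (or $4\beta \geq 1$) into the exponent and overwhelms the slack.

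The ingredient you are missing is a sharp, high-probability operator-norm bound for $\mtx{\Phi}_{\rm tail}$ that exploits the \emph{tiny variance} of its entries after truncation — of order $\nu^p R^{2-p}$ — rather than merely their bounded $p$th moment. You assert in passing that $\Expect\normsq{\mtx{\Phi}_{\rm tail}}$ is of order $(m+n)\nu^2(B/\nu)^{2-p}$; that is the right scaling, but it is not a Frobenius-type estimate (which would lose a factor of $\min\{m,n\}$), it must be upgraded to a tail bound, and both steps require real random-matrix machinery. The paper supplies exactly this in Fact~\ref{fact:heavy-tail-norm}, derived from the matrix Rosenthal inequality:
$$
\Prob{ \norm{\mtx{\Phi}_{\rm tail}} \geq \cnst{C}\sqrt{\sigma^2(m+n)\log(m+n)} + \cnst{C}\nu(m+n)^{2/p}\log(m+n)\,\zeta } \leq \zeta^{-p},
$$
where $\sigma^2 \leq \cnst{C}\nu^p R^{2-p}$ is the per-entry variance bound from Lemma~\ref{lem:trunc-rv} (which also handles the variance renormalization you worry about). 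With this tool the truncation level can be taken as small as $R = \nu(m+n)^{0.02/(1+\eps)}$ with $p = 4(1+\eps)$, so that $B \approx 2R$ lies comfortably within the budget $\beta < 0.04$, the tail norm is $o(\sqrt{m+n})$ with polynomially small failure probability, and the choice $\zeta = (m+n)^{\eps/p}$ yields the advertised $\cnst{C}_p(m+n)^{1-p/4}$.
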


\noindent
The proof of Corollary~\ref{cor:rsv-four} appears below in Section~\ref{sec:rsv-truncation}.
The main idea is to truncate each entry of the heavy-tailed random matrix $\mtx{\Phi}$
individually.  We can treat the bounded part of the random matrix
using Theorem~\ref{thm:rsv-bdd}.  We show that the tails are negligible
by means of a relatively simple norm bound for random matrices,
Fact~\ref{fact:heavy-tail-norm}.

\subsection{Proof of Theorem~\ref{thm:univ-rsv} from Corollary~\ref{cor:rsv-four}}
\label{sec:rsv-four-to-rsv-univ}

Theorem~\ref{thm:univ-rsv} is an easy consequence of Corollary~\ref{cor:rsv-four}.
Recall the assumptions of the theorem:

\begin{itemize}
\item	The embedding dimension satisfies $\lambda D \leq d \leq D^{6/5}$.

\item	$E$ is a closed subset of the unit ball in $\R^D$.

\item	The $d$-excess width of $E$ satisfies $\coll{E}_d(E) \geq \varrho \sqrt{d}$.

\item	The $d \times D$ random \joelprev{linear map} $\mtx{\Pi}$ follows Model~\ref{mod:p-mom-mtx}
with parameters $p > 4$ and $\nu \geq 1$.
\end{itemize}

\noindent
Therefore, we may apply Corollary~\ref{cor:rsv-four} with $\mtx{\Phi} = \mtx{\Pi}$
and $T = E$ to see that
$$
\begin{aligned}
\Prob{ \smin(\mtx{\Pi}; E)
	\leq \big( \coll{E}_d(E) \big)_+ - \cnst{C}_p \nu (d+D)^{1/2 - \kappa(p)} }
	&\leq \cnst{C}_p D^{1 - p/4},
	\quad\text{and} \\
\Prob{ \smin(\mtx{\Pi}; E)
	\geq \big( \coll{E}_d(E) \big)_+ + \cnst{C}_p \nu^2 (d+D)^{1/2 - \kappa(p)} }
	&\leq \cnst{C}_p D^{1 - p/4}\phantom{,}
	\quad\text{if $E$ is convex.}
\end{aligned}
$$
For simplicity, we have dropped the embedding dimension $d$ from the right-hand
side of the bounds in the last display.

To prove Theorem~\ref{thm:univ-rsv}, it suffices to check that we can make the error term
$\cnst{C}_p \nu^2 (d + D)^{1/2 - \kappa(p)}$
smaller than $\eps \coll{E}_d(E)$ if we select the ambient dimension $D$ large enough.
Indeed, the conditions $D \leq \lambda^{-1} d$ and $d^{1/2} \leq \varrho^{-1} \coll{E}_d(E)$ ensure that
$$
\begin{aligned}
(d + D)^{1/2 - \kappa(p)}
	&\leq (1 + \lambda^{-1} )^{1/2} d^{1/2 - \kappa(p)} \\
	&\leq (1 + \lambda^{-1} )^{1/2} \varrho^{-1} \coll{E}_d(E) d^{-\kappa(p)} \\
	&\leq \frac{(1 + \lambda^{-1} )^{1/2} \varrho^{-1} \lambda^{-\kappa(p)}}{D^{\kappa(p)}}
	\coll{E}_d(E)
\end{aligned}
$$
Since $\kappa(p)$ is positive,
there is a number $N := N(p, \nu, \lambda, \varrho, \eps)$
for which $D \geq N$ implies
$$
\cnst{C}_p \nu^2 (d + D)^{1/2 - \kappa(p)} \leq \eps \coll{E}_d(E).
$$
This is what we needed to show.

\subsection{Proof of Theorem~\ref{thm:univ-embed}(\ref{eqn:univ-embed-succ}) from Corollary~\ref{cor:rsv-four}}
\label{sec:rsv-four-to-embed-univ}

Theorem~\ref{thm:univ-embed}\eqref{eqn:univ-embed-succ} is also an easy consequence of Corollary~\ref{cor:rsv-four}.  Recall the assumptions of the theorem:

\begin{itemize}
\item	$E$ is a compact subset of $\R^D$ that does not contain the origin.

\item	The statistical dimension of $E$ satisfies $\delta(E) \geq \varrho D$.

\item	The $d \times D$ random \joelprev{linear map} $\mtx{\Pi}$ follows Model~\ref{mod:p-mom-mtx}
with parameters $p > 4$ and $\nu \geq 1$.
\end{itemize}

\noindent
In this section, we consider the regime where the embedding dimension
$d \geq (1 + \eps) \, \delta(E)$.  We need to demonstrate that
\begin{equation} \label{eqn:embed-a-pf1}
\Prob{ \vct{0} \notin \mtx{\Pi}(E) }
	= \Prob{ E \cap \nullsp(\mtx{\Pi}) = \emptyset }
	\geq 1 - \cnst{C}_p D^{1 - p/4}.
\end{equation}

We begin with a reduction to a specific choice of the embedding dimension $d$.
Let $\mtx{\Pi}_m$ be the $m \times D$ matrix formed from the first $m$ rows
of the random \joelprev{linear map} $\mtx{\Pi}$.  The function
$m \mapsto \Prob{ E \cap \nullsp(\mtx{\Pi}_m) = \emptyset }$
is weakly increasing because $m \mapsto \nullsp(\mtx{\Pi}_m)$ is a decreasing sequence of sets.
Therefore, it suffices to verify~\eqref{eqn:embed-a-pf1} in the case
where $d = \lceil (1 + \eps) \, \delta(E) \rceil$.  Note that
$d \leq 2D + 1$ because the statistical dimension $\delta(E) \leq D$
and $\eps  < 1$.

Introduce the spherical retraction $\Omega := \vct{\theta}(E)$.
Proposition~\ref{prop:annihilate} and~\eqref{eqn:univ-embed-spherical}
demonstrate \joelprev{that} $$
\smin(\mtx{\Pi}; \Omega) > 0
\quad\text{implies}\quad
\vct{0} \notin \mtx{\Pi}(\Omega)
\quad\text{implies}\quad
\vct{0} \notin \mtx{\Pi}(E).
$$
Therefore, to check~\eqref{eqn:embed-a-pf1},
it suffices to produce a high-probability lower bound
on the restricted singular value $\smin(\mtx{\Pi}; \Omega)$.
With the choices $\mtx{\Phi} = \mtx{\Pi}$ and $T = \Omega$,
Corollary~\ref{cor:rsv-four} yields
$$
\Prob{ \smin(\mtx{\Pi}; \Omega) \geq \big( \coll{E}_d(\Omega) \big)_+
	- \cnst{C}_p \nu (d + D)^{1/2 - \kappa(p)} }
	\geq 1-\cnst{C}_p D^{1 - p/4}.
$$
To complete the proof, we  need to verify that our hypotheses imply
\begin{equation} \label{eqn:embed-a-pf1.5}
\coll{E}_d(\Omega)
	> \cnst{C}_p \nu (d + D)^{1/2 - \kappa(p)}.
\end{equation}
This point follows from two relatively short calculations.

Since $\Omega$ is a subset of the unit sphere,
we quickly compute ite excess width:
\begin{equation} \label{eqn:embed-a-pf2}
\begin{aligned}
\coll{E}_d(\Omega) = \Expect \inf_{\vct{x} \in \Omega}
	\big( \sqrt{d} \norm{\vct{x}} + \vct{g} \cdot \vct{x} \big)
	&\geq \sqrt{d} - \sqrt{\delta(\Omega)} \\
	&= \sqrt{d} - \sqrt{\delta(E)}
	\geq \big(\sqrt{1 + \eps} - 1\big) \sqrt{\delta(E)}.
\end{aligned}
\end{equation}
The second identity holds because $\Omega$ is a subset of the unit sphere,
and we have used the relation~\eqref{eqn:excess-vs-sdim}.
Recall that the statistical
dimension of a general set $E$ is defined as
$\delta(E) = \delta(\vct{\theta}(E))$, and then introduce
the value $d = \lceil (1 + \eps)\, \delta(E) \rceil$ of the embedding dimension.

Meanwhile, we can bound the dimensional term in~\eqref{eqn:embed-a-pf1.5} above:
$$
(d + D)^{1/2 - \kappa(p)} \leq \cnst{C} D^{1/2 - \kappa(p)}
	\leq \cnst{C} \varrho^{-1/2} D^{-\kappa(p)} \sqrt{\delta(E)}.
$$
The first inequality holds because $d \leq 2D + 1$,
and the second inequality uses the assumption $D \leq \varrho^{-1} \delta(E)$.
Since $\kappa(p)$ is positive,
we can find a number $N := N(p, \nu, \varrho, \eps)$ for which
$D \geq N$ implies that
$$
\cnst{C}_p \nu (d + D)^{1/2 -\kappa(p)}
	< \big( \sqrt{1 + \eps} - 1 \big) \sqrt{\delta(E)}.
$$
Combine the last display with~\eqref{eqn:embed-a-pf2} to determine
that the claim~\eqref{eqn:embed-a-pf1.5} is valid.

\section{Theorem~\ref{thm:rsv-bdd}: Concentration for Restricted Singular Values}
\label{sec:rsv-concentration}

In this section, we demonstrate that the restricted minimum singular value
of a bounded random matrix concentrates about its mean value.  This result
yields Theorem~\ref{thm:rsv-bdd}\eqref{it:rsv-bdd-conc}.

\begin{proposition}[Theorem~\ref{thm:rsv-bdd}: Concentration] \label{prop:rsv-concentration}
Let $S$ be a closed subset of $\mathsf{B}^n$.  Let $\mtx{\Phi}$ be an $m \times n$
random matrix drawn from Model~\ref{mod:bdd-mtx} with
uniform bound $B$.  For all $\zeta \geq 0$,
\begin{align}
\Prob{ \smin^2(\mtx{\Phi}; S) \leq \Expect \smin^2(\mtx{\Phi}; S) - \cnst{C} B^2 \zeta }
	&\leq \econst^{-\zeta^2 / m} \label{eqn:rsv-lower-tail} \\
\Prob{ \smin^2(\mtx{\Phi}; S) \geq \Expect \smin^2(\mtx{\Phi}; S) + \cnst{C} B^2 \zeta }
	&\leq \econst^{-\zeta^2/(m + \zeta \sqrt{n})}. \label{eqn:rsv-upper-tail}
\end{align}
\end{proposition}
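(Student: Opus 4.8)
The plan is to fix a nonempty closed set $S \subseteq \mathsf{B}^n$, to regard $Z := \smin^2(\mtx{\Phi}; S) = \inf_{\vct{t} \in S} \sum_{i=1}^{m} (\vct{a}_i \cdot \vct{t})^2$ as a function of the $m$ independent rows $\vct{a}_1, \dots, \vct{a}_m \in \R^n$ of $\mtx{\Phi}$, and to prove both tail bounds by the entropy method --- subadditivity of entropy together with a modified logarithmic Sobolev inequality for functions of independent variables, in the style of Boucheron, Lugosi, and Massart. Three elementary facts feed the machine. (i) \textbf{Decoupling.} If $\vct{t}_i^{\star}$ minimizes the row-deleted problem $\inf_{\vct{t} \in S}\sum_{j \neq i}(\vct{a}_j \cdot \vct{t})^2$, then $\vct{t}_i^{\star}$ is independent of $\vct{a}_i$; comparing the full problem against $\vct{t}_i^{\star}$, and against the full minimizer $\vct{t}^{\star}$ under an independent resampling $\vct{a}_i'$ of row $i$, bounds the one-row fluctuation of $Z$ by $(\vct{a}_i \cdot \vct{t}_i^{\star})^2$ from one side and by $(\vct{a}_i' \cdot \vct{t}^{\star})^2$ from the other. (ii) \textbf{Sub-Gaussian projections.} For any fixed $\vct{t}$ in the unit ball, $\vct{a}_i \cdot \vct{t} = \sum_k a_{ik} t_k$ is a sum of independent terms bounded by $B\abs{t_k}$, hence sub-Gaussian with variance proxy $B^2\norm{\vct{t}}^2 \le B^2$; in particular $\Expect[(\vct{a}_i \cdot \vct{t})^4] \le \cnst{C} B^2$ for every such $\vct{t}$. (iii) \textbf{A coarse spectral bound.} Since $\mtx{\Phi} \mapsto \norm{\mtx{\Phi}}$ is convex and $1$-Lipschitz in the Frobenius norm, Talagrand's convex concentration inequality gives $\norm{\mtx{\Phi}} \le \cnst{C} B\sqrt{m+n}$ off an event of probability $\exp(-\cnst{c}(m+n))$, so $(\vct{a}_i \cdot \vct{t})^2 \le \norm{\mtx{\Phi}}^2 \le \cnst{C} B^2(m+n)$ for every $\vct{t} \in \mathsf{B}^n$. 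Finally, $\Expect Z \le \Expect\norm{\mtx{\Phi}\vct{t}_0}^2 = m\norm{\vct{t}_0}^2 \le m$ for any fixed $\vct{t}_0 \in S$.

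\textbf{The two tails.} For the lower tail the decoupling is perfectly clean: with $\vct{a}_i'$ a fresh copy of row $i$ and $Z_i'$ the resampled value, $(Z_i' - Z)_+ \le (\vct{a}_i' \cdot \vct{t}^{\star})^2$, and conditioning on $\mtx{\Phi}$ freezes $\vct{t}^{\star}$ while leaving $\vct{a}_i'$ fresh, so (ii) gives $\Expect[(Z_i' - Z)_+^2 \,\vert\, \mtx{\Phi}] \le \cnst{C} B^2$ and hence $V^{-} := \Expect[\sum_i (Z_i' - Z)_+^2 \,\vert\, \mtx{\Phi}] \le \cnst{C} B^2 m$ almost surely. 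The sub-Gaussian branch of the modified log-Sobolev inequality then yields $\Prob{Z \le \Expect Z - t} \le \exp(-t^2/(\cnst{C} B^2 m))$, which is~\eqref{eqn:rsv-lower-tail} once $t = \cnst{C} B^2\zeta$; the same bounds fed into Efron--Stein already give $\Var Z \le \cnst{C} B^2 m$, confirming the scale $B^2\sqrt{m+n}$ advertised in Theorem~\ref{thm:rsv-bdd}. The upper tail is the Bernstein regime and is more delicate: here $(Z - Z_i')_+ \le (\vct{a}_i \cdot \vct{t}_i')^2$ with $\vct{t}_i'$ the minimizer of the \emph{resampled} matrix, which is independent of $\vct{a}_i$ but not of $\vct{a}_i'$, so it cannot be integrated away while conditioning on $\mtx{\Phi}$. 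Splitting $(\vct{a}_i \cdot \vct{t}_i')^2$ into a bulk part controlled by (ii) and a rare wild part controlled by the spectral estimate (iii), and combining this with a self-bounding-type control of $\sum_i \Expect[(Z - Z_i')_+ \,\vert\, \mtx{\Phi}]$ obtained from the deleted minimizer, yields on the high-probability spectral event a Bernstein inequality $\Prob{Z \ge \Expect Z + t} \le \exp(-t^2/(\cnst{C}(B^4 m + B^2\sqrt{n}\, t)))$; discarding the rare event (whose probability is far below the stated threshold) gives~\eqref{eqn:rsv-upper-tail}. The asymmetry between the two tails reflects that upward deviations of $Z$ are driven by occasional rows aligning strongly with a minimizing direction --- a source with heavier, exponential-type tails --- whereas downward deviations see only the sub-Gaussian scale $B^2\sqrt{m}$.

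\textbf{Main obstacle.} I expect the main difficulty to be that $S$ is an \emph{arbitrary} closed subset of the ball, with no covering, entropy, or convexity hypothesis, so that a union bound over $S$ is out of the question and, worse, the minimizer $\vct{t}^{\star}(\mtx{\Phi})$ depends on the entire matrix --- precisely what blocks a direct estimate of the row-wise fluctuations of $Z$. Fact (i) is the device that removes this dependence: comparing against the minimizer of the problem with row $i$ deleted replaces $\vct{t}^{\star}$ by an $\vct{a}_i$-independent direction, to which the elementary sub-Gaussian estimate (ii) applies conditionally. The remaining work --- propagating constants through the two branches of the entropy inequality so as to land exactly on the scales $B^2\sqrt{m}$ (lower tail) and $B^2\sqrt{n}$ (far upper tail) --- is routine, but the $V^{+}$ accounting for the upper tail is where the argument requires the most care. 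Finally, note that nothing here ever flips the sign of a row, so the symmetry hypothesis of Model~\ref{mod:bdd-mtx} plays no role, consistent with the closing sentence of Theorem~\ref{thm:rsv-bdd}.
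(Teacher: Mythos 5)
Your lower tail works and takes a genuinely different path from the paper's. The paper decouples \emph{entry by entry}, checking the self-bounding relation $\sum_{i,j}\big(\sup\nolimits_{ij}Y - Y\big)^2 \le 32B^2 Y + 32B^4 m$ and then invoking a Massart-style modified log-Sobolev inequality (Fact~\ref{fact:self-bdd-lower}); you decouple \emph{row by row} via resampling and an Efron--Stein-type variance proxy. Your moment estimate $\Expect\big[(\vct{a}_i'\cdot\vct{t}^\star)^4 \,\big\vert\,\mtx{\Phi}\big] \le \cnst{C}B^2$ is correct (the diagonal term contributes $\le B^2\sum_k t_k^4 \le B^2$ and the three pairings contribute $\le 3$), and the resulting $V^- \le \cnst{C}B^2 m$ almost surely gives~\eqref{eqn:rsv-lower-tail}. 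Both routes land on the same tail; the row-wise version is arguably cleaner for this direction.

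The upper tail, however, has a genuine gap, and it is exactly where entry-by-entry decoupling does the real work. Bernstein-type entropy inequalities (such as Fact~\ref{fact:self-bdd-upper}) require a deterministic, almost-sure bound $L$ on the upward increment, and this $L$ is precisely what multiplies $\zeta$ in the denominator of the exponent. Perturbing a single entry $\phi_{ij}\mapsto\alpha$ changes $\normsq{\mtx{\Phi}\vct{t}}$ only through component $i$, by $\Delta^2 t_j^2 + 2\Delta\, t_j(\vct{\phi}_i\cdot\vct{t})$ with $\abs{\Delta}\le 2B$: the cross term is \emph{linear} in $\Delta$, and the factor $\abs{\smash{t_j}}\le 1$ kills one power of the row norm, so $\sup_{ij}Y - Y \le 8B^2\sqrt{n}$. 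Changing an entire row gives only $(Z - Z_i')_+ \le (\vct{a}_i\cdot\vct{t}_i')^2 \le \normsq{\vct{a}_i} \le B^2 n$ --- the increment is \emph{quadratic} in the row and there is no small coordinate to cancel against; your spectral estimate (iii) cannot improve this, since $\norm{\vct{a}_i}\le B\sqrt{n}$ is already deterministic and independent of any spectral event. Feeding $L\approx B^2 n$ rather than $B^2\sqrt{n}$ into the Bernstein inequality gives only $\econst^{-\zeta^2/(m + \zeta n)}$, a factor $\sqrt{n}$ short of~\eqref{eqn:rsv-upper-tail}. The proposed ``bulk vs.\ rare'' splitting does not repair this: $\vct{t}_i'$ depends on the fresh row $\vct{a}_i'$, so the fixed-$\vct{t}$ estimate (ii) is not available after conditioning on $\mtx{\Phi}$, and the entropy-method hypotheses are almost-sure increment bounds, not bounds holding off a small event. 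To reach the stated upper tail you should decouple at the entry level, as the paper does.
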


\noindent
The proof relies on some modern concentration inequalities
that are derived using the entropy method.  We establish the
bound~\eqref{eqn:rsv-lower-tail} on the lower tail in Section~\ref{sec:rsv-lower}; the
bound~\eqref{eqn:rsv-upper-tail} on the upper tail appears in Section~\ref{sec:rsv-upper}.

In several other parts of the paper, we rely on variants of Proposition~\ref{prop:rsv-concentration} that follow from essentially the same arguments.  We omit the details
of these proofs to avoid repetition.

\subsection{Proposition~\ref{prop:rsv-concentration}:
The Lower Tail of the RSV} \label{sec:rsv-lower}

First, we establish that the restricted singular
value is unlikely to be much smaller than its mean.
The proof depends on an exponential moment inequality
derived using Massart's modified logarithmic Sobolev inequality~\cite{Mas00:About-Constants}.
For instance, see~\cite[Thm.~6.27]{BLM13:Concentration-Inequalities}
or \cite[Thm.~19]{Mau12:Thermodynamics-Concentration}.

\begin{fact}[Self-Bounded Random Variable: Lower Tail] \label{fact:self-bdd-lower}
Let $(X_1, \dots, X_p)$ be an independent sequence of real random variables.
For a nonnegative function $f : \R^p \to \R_+$, define
$$
\begin{aligned}
Y &:= &&f(X_1, \dots, X_{i-1}, X_i, X_{i+1}, \dots, X_p), \quad\text{and} \\
\sup\nolimits_i Y &:= \sup_{\alpha \in \supp{X_i}} &&f(X_1, \dots, X_{i-1}, \alpha, X_{i+1}, \dots, X_p)
\quad\text{for $i = 1, \dots, p$.}
\end{aligned}
$$
Suppose that
$$
\Delta_{-} := \sum_{i=1}^p \big(\sup\nolimits_i Y - Y \big)^2
	\leq aY + b
	\quad\text{where $a, b \geq 0$.}
$$
Then, for $\zeta \geq 0$,
$$
\Prob{ Y \leq \Expect Y - \zeta } \leq \exp\left( \frac{-\zeta^2/2}{a \Expect Y + b} \right).
$$
The function $\supp{\cdot}$ returns the support of a random variable.
\end{fact}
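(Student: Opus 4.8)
The plan is to run the entropy method, in the form of Massart's modified logarithmic Sobolev inequality, essentially as in the cited references; the only genuine subtlety is to arrange signs so that $\Delta_-$ appears. Write $Y := f(X_1,\dots,X_p)$. Since we want the \emph{lower} tail, we work with $F(\lambda) := \Expect e^{\lambda Y}$ for $\lambda < 0$; here $F(\lambda) \le 1$ because $Y \ge 0$, so no integrability issue arises, and we may assume $\Expect Y < \infty$ since otherwise the claimed bound equals one.

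The crux is the estimate
$$\operatorname{Ent}\big[e^{\lambda Y}\big] \;\le\; \frac{\lambda^2}{2}\,\Expect\!\big[ e^{\lambda Y}\,\Delta_- \big] \qquad (\lambda < 0).$$
To prove it, combine sub-additivity of entropy, $\operatorname{Ent}[e^{\lambda Y}] \le \sum_i \Expect\big[\operatorname{Ent}^{(i)}[e^{\lambda Y}]\big]$ (with $\operatorname{Ent}^{(i)}$ the entropy in the single variable $X_i$), with the elementary inequality $\operatorname{Ent}[e^{W}] \le \Expect[(W-w)e^{W}] - \Expect[e^{W}] + e^{w}$, valid for any random variable $W$ and any constant $w$ (it reduces to $M\log M - wM - M + e^w \ge 0$ at $M = \Expect e^{W}$, which holds since that expression is convex in $M$ and vanishes at its minimizer $M=e^w$). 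Apply the latter conditionally on $X^{(i)}$ with $W = \lambda Y$ and $w = \lambda\,(\sup\nolimits_i Y)$---legitimate because $\sup\nolimits_i Y$ does not depend on $X_i$---and rearrange to $\operatorname{Ent}^{(i)}[e^{\lambda Y}] \le \Expect^{(i)}\big[ e^{\lambda Y}\,\phi\big(\lambda(\sup\nolimits_i Y - Y)\big)\big]$ with $\phi(v) := e^{v} - v - 1$. Since $\lambda < 0$ and $\sup\nolimits_i Y - Y \ge 0$, the argument of $\phi$ is nonpositive, so $\phi(v) \le v^2/2$ there; summing over $i$ gives the display. This is the one step that really uses the modified-log-Sobolev machinery, and the sign-matching it demands---choosing $\sup\nolimits_i Y$ rather than $\inf\nolimits_i Y$ paired with $\lambda<0$---is the main obstacle; the remainder is a routine ODE-and-Chernoff computation.

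Now insert $\Delta_- \le aY + b$ to obtain $\operatorname{Ent}[e^{\lambda Y}] \le \tfrac{a\lambda^2}{2}\Expect[Y e^{\lambda Y}] + \tfrac{b\lambda^2}{2}\Expect[e^{\lambda Y}]$, and convert this into a differential inequality for $G := \log F$. Using $\operatorname{Ent}[e^{\lambda Y}] = \lambda F'(\lambda) - F(\lambda)\log F(\lambda)$ and $\Expect[Y e^{\lambda Y}] = F'(\lambda)$ and dividing by $F(\lambda)$,
$$\lambda G'(\lambda) - G(\lambda) \;\le\; \frac{a\lambda^2}{2}G'(\lambda) + \frac{b\lambda^2}{2}, \qquad \lambda < 0.$$
Put $\lambda = -s$ ($s>0$) and pass to $K(s) := G(-s) + s\Expect Y = \log\Expect e^{-s(Y-\Expect Y)}$, which is convex with $K(0)=K'(0)=0$, hence $K'\ge 0$ on $[0,\infty)$. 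The inequality becomes $sK'(s)\big(1+\tfrac{as}{2}\big) - K(s) \le \tfrac{s^2}{2}(a\Expect Y + b)$; since $K'(s)\ge 0$ and $\tfrac{as}{2}\ge 0$ this yields $sK'(s) - K(s) \le \tfrac{s^2}{2}(a\Expect Y + b)$, i.e.\ $\big(K(s)/s\big)' \le \tfrac12(a\Expect Y + b)$. Integrating from $0$, where $K(s)/s \to K'(0)=0$, gives the sub-Gaussian bound $K(s) \le \tfrac{s^2}{2}(a\Expect Y + b)$ for all $s\ge 0$. Finally, a Chernoff bound on $-Y$ finishes: for $\zeta \ge 0$,
$$\Prob{Y \le \Expect Y - \zeta} \;\le\; \inf_{s>0} e^{-s\zeta + K(s)} \;\le\; \inf_{s>0}\exp\!\Big(-s\zeta + \tfrac{s^2}{2}(a\Expect Y + b)\Big) \;=\; \exp\!\Big(\frac{-\zeta^2/2}{a\Expect Y + b}\Big),$$
the minimum being attained at $s = \zeta/(a\Expect Y + b)$. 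When $\Expect Y - \zeta < 0$ the left side is $0$, consistent with $Y\ge 0$. Nothing beyond independence of the $X_i$ is used, and it enters only through the sub-additivity of entropy.
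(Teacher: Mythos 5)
Your proof is correct and follows exactly the route the paper cites (Massart's modified log-Sobolev inequality / the entropy method, as in BLM Thm.~6.27 and Maurer Thm.~19): you establish $\operatorname{Ent}[e^{\lambda Y}] \le \tfrac{\lambda^2}{2}\Expect[e^{\lambda Y}\Delta_-]$ for $\lambda<0$ via tensorization plus the variational bound on entropy, insert the self-bounding hypothesis, solve the resulting Herbst-type differential inequality, and Chernoff-bound. The sign bookkeeping — pairing $\lambda<0$ with $\sup_i Y$ so that $\phi$ is evaluated on the nonpositive half-line where $\phi(v)\le v^2/2$, and then discarding the benign factor $1+as/2$ because $K'\ge 0$ — is handled correctly, so there is no gap.
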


\noindent
With this fact at hand, we may derive the lower tail bound.

\begin{proof}[Proof of Proposition~\ref{prop:rsv-concentration}, Eqn.~\eqref{eqn:rsv-lower-tail}]
Introduce the random variable
$$
Y := \smin^2(\mtx{\Phi}; S)
	= \min_{\vct{s} \in S} \normsq{\mtx{\Phi}\vct{s}}
	= \min_{\vct{s} \in S} \sum_{i=1}^m \left(\sum_{j = 1}^n \phi_{ij} s_j \right)^2.
$$
For each index pair $(i, j)$ and real number $\alpha$, define a random matrix $\mtx{\Phi}_{\alpha}^{(ij)}$
by replacing the $(i, j)$ entry $\phi_{ij}$ of $\mtx{\Phi}$ with the number $\alpha$.
Now, \joel{the random variable
$$
\sup\nolimits_{ij} Y = \max_{\abs{\alpha} \leq B} \smin^2\big(\mtx{\Phi}^{(ij)}_{\alpha}; S \big)
	= \max_{\abs{\alpha} \leq B} \min_{\vct{s} \in S} \normsq{\smash{\mtx{\Phi}^{(ij)}_{\alpha} \vct{s}}}.
$$}
We must evaluate the lower variance proxy
\joel{$$
\Delta_- := \sum_{i=1}^m \sum_{j = 1}^n \big(\sup\nolimits_{ij} Y - Y \big)^2.
$$}
To that end, select a point $\vct{t} \in \argmin_{\vct{s} \in S} \normsq{\mtx{\Phi}\vct{s}}$.
For each index pair $(i, j)$,
\joel{$$
\sup\nolimits_{ij} Y - Y
	= \max_{\abs{\alpha} \leq B} \min_{\vct{s} \in S} \normsq{\smash{\mtx{\Phi}^{(ij)}_{\alpha}} \vct{s}} -
	\min_{\vct{s} \in S} \normsq{\mtx{\Phi} \vct{s}}
	\leq \max_{\abs{\alpha} \leq B} \normsq{\smash{\mtx{\Phi}^{(ij)}_{\alpha} \vct{t}}} - \normsq{\mtx{\Phi} \vct{t}}.
$$}
The matrix \joel{$\mtx{\Phi}^{(ij)}_{\alpha}$} differs from $\mtx{\Phi}$ only in the $i$th row.
Therefore, when we expand the squared norms into components, all of the components
cancel except for the $i$th one.  We discover that
\joel{\begin{equation} \label{eqn:Yij-Y}
\begin{aligned}
\sup\nolimits_{ij} Y - Y
	&\leq \max_{\abs{\alpha} \leq B} \left( \big( (\alpha - \phi_{ij}) t_j + \sum_{\ell=1}^n \phi_{i\ell} t_\ell \big)^2
		- \big(\sum_{\ell=1}^n \phi_{i \ell} t_{\ell} \big)^2 \right) \\
	&= \max_{\abs{\alpha} \leq B} \left( (\alpha - \phi_{ij})^2 t_j^2 + 2 (\alpha - \phi_{ij}) t_j \sum_{\ell=1}^n \phi_{i\ell} t_\ell \right) \\
	&\leq \max_{\abs{\alpha} \leq B} \left( (\alpha - \phi_{ij})^2 t_j^2 + 2 \abs{\smash{\alpha - \phi_{ij}} } \abs{\smash{t_j}} \abs{\sum_{\ell=1}^n \phi_{i \ell} t_\ell} \right) \\
	&\leq 4B^2 t_j^2 + 4B \abs{\smash{t_j}} \abs{ \sum_{\ell=1}^n \phi_{i \ell} t_{\ell} }.
\end{aligned}
\end{equation}}
The last inequality holds because \joel{$\abs{\alpha} \leq B$} and $\abs{\smash{\phi_{ij}}} \leq B$.
\joel{As a consequence,}
$$
\begin{aligned}
\joel{\big( \sup\nolimits_{ij} Y - Y \big)^2}
	\leq \left( 4B^2 t_j^2 + 4B \abs{\smash{t_j}} \abs{ \sum_{\ell=1}^n \phi_{i \ell} t_{\ell} } \right)^2
	\leq 32 B^4 t_j^4 + 32 B^2 t_j^2 \left( \sum_{\ell=1}^n \phi_{i\ell} t_{\ell} \right)^2.
\end{aligned}
$$
Sum over pairs $(i, j)$ to arrive at
$$
\begin{aligned}
\joel{\sum_{i=1}^m \sum_{j=1}^n \big( \sup\nolimits_{ij} Y - Y \big)^2}
	&\leq \sum_{i=1}^m \sum_{j=1}^n \left( 32 B^4 t_j^4 + 32 B^2 t_j^2 \left( \sum_{\ell=1}^n \phi_{i\ell} t_{\ell} \right)^2 \right) \\
	&\leq 32 B^4 m + 32 B^2 \sum_{i=1}^m \left( \sum_{\ell=1}^n \phi_{i\ell} t_{\ell} \right)^2 \\
	&= 32 B^4 m + 32 B^2 \normsq{ \mtx{\Phi} \vct{t} } \\
	&= 32 B^4 m + 32 B^2 \min_{\vct{s} \in S} \normsq{\mtx{\Phi} \vct{s}}.
\end{aligned}
$$
To reach the \joelprev{second} line, we used the fact that $\pnorm{\ell_4}{\vct{t}} \leq \norm{\vct{t}} \leq 1$.  The last line depends on the definition of $\vct{t}$.

In summary, we have demonstrated that
$$
\Delta_- \leq 32 B^2 Y + 32 B^4 m.
$$
To apply Fact~\ref{fact:self-bdd-lower}, we need a bound for the expectation of $Y$.
Designate a point $\vct{s}_0 \in S$, and calculate that
$$
\Expect Y \leq \Expect \normsq{\mtx{\Phi} \vct{s}_0} = m \normsq{\vct{s}_0} \leq  m.
$$
The identity holds because $\mtx{\Phi}$ has independent, standardized entries.
Fact~\ref{fact:self-bdd-lower} ensures that
$$
\Prob{ Y \leq \Expect Y - \zeta } \leq \exp\left( \frac{-\zeta^2/2}{32B^2 \Expect Y + 32 B^4 m} \right)
	\leq \exp\left( \frac{-\zeta^2}{128 B^4 m} \right).
$$
Rewrite this formula to complete the proof of~\eqref{eqn:rsv-lower-tail}.
\end{proof}

\subsection{Proposition~\ref{prop:rsv-concentration}:
The Upper Tail of the RSV} \label{sec:rsv-upper}

Next, we establish that the restricted singular value of a bounded random matrix
is unlikely to be much larger than its mean.
The proof depends on another \joel{tail inequality} for self-bounded random variables.
This result also follows from Massart's modified log-Sobolev inequality~\cite{Mas00:About-Constants};
the argument is similar to the proofs of~\cite[Thms.~18, 19]{Mau12:Thermodynamics-Concentration}

\begin{fact}[Self-Bounded Random Variable: Upper Tail] \label{fact:self-bdd-upper}
Let $(X_1, \dots, X_p)$ be an independent sequence of real random variables.
For a function $f : \R^p \to \R$, define
$$
\begin{aligned}
Y &:= &&f(X_1, \dots, X_{i-1}, X_i, X_{i+1}, \dots, X_p), \quad\text{and} \\
\sup\nolimits_i Y &:= \sup_{\alpha \in \supp{X_i}} &&f(X_1, \dots, X_{i-1}, \alpha, X_{i+1}, \dots, X_p)
\quad\text{for $i = 1, \dots, p$.}
\end{aligned}
$$
Suppose that $\sup_i Y - Y \leq L$ uniformly for a fixed value $L$ and that
$$
\Delta_{-} := \sum_{i=1}^p \big(\sup\nolimits_i Y - Y \big)^2
	\leq aY + b
	\quad\text{where $a, b \geq 0$.}
$$
Then, for $\zeta \geq 0$,
$$
\Prob{ Y \geq \Expect Y + \zeta } \leq \exp\left( \frac{-\zeta^2/2}{(a \Expect Y + b) + (a + L) \zeta} \right).
$$
The function $\supp{\cdot}$ returns the support of a random variable.
\end{fact}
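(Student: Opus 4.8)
The plan is to run the entropy method, in close parallel with the proof of the lower-tail bound in Fact~\ref{fact:self-bdd-lower}; the one genuinely new ingredient is the uniform bound $\sup\nolimits_i Y - Y \le L$, which supplies the Bernstein correction that is absent in the lower tail because here $f$ need not be nonnegative. Write $F(\lambda) := \Expect \econst^{\lambda Y}$ and let $G(\lambda) := \log F(\lambda) - \lambda\, \Expect Y$ be the centered cumulant generating function, defined on $[0, \lambda_\ast)$ with $\lambda_\ast$ of order $(a+L)^{-1}$; by convexity one has $G \ge 0$ and $G' \ge 0$ there, and $G(0) = G'(0) = 0$.

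First I would apply Massart's modified logarithmic Sobolev inequality~\cite{Mas00:About-Constants} together with the tensorization of entropy. Since each companion variable $\sup\nolimits_i Y$ is a function of the coordinates other than $X_i$, this yields the Maurer-type estimate~\cite{Mau12:Thermodynamics-Concentration}
\[
\operatorname{Ent}\bigl( \econst^{\lambda Y} \bigr) \;\le\; \sum_{i} \Expect\Bigl[ \econst^{\lambda Y}\, \tau\bigl( \lambda (\sup\nolimits_i Y - Y) \bigr) \Bigr], \qquad \tau(t) := \econst^{t} - t - 1 .
\]
Because $0 \le \sup\nolimits_i Y - Y \le L$ and $t \mapsto \tau(t)/t^{2}$ is increasing on $(0,\infty)$, each summand is at most $\tfrac12 \lambda^{2} \econst^{\lambda L} (\sup\nolimits_i Y - Y)^{2}$; invoking the hypothesis $\sum_i (\sup\nolimits_i Y - Y)^{2} \le aY + b$ and the inequality $\econst^{\lambda L} \le (1-\lambda L)^{-1}$ collapses the right-hand side to $\tfrac12 \lambda^{2}(1-\lambda L)^{-1}\, \Expect\bigl[ \econst^{\lambda Y}(aY+b) \bigr]$.

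Next I would convert this into a differential inequality for $G$. Substituting $\operatorname{Ent}(\econst^{\lambda Y}) = \lambda F'(\lambda) - F(\lambda)\log F(\lambda)$ and $\Expect[\econst^{\lambda Y}(aY+b)] = a F'(\lambda) + b F(\lambda)$, dividing through by $F(\lambda)$, and re-expressing in terms of $G$ and $\Expect Y$, I obtain an inequality of the form $\bigl(\lambda - c_1(\lambda)\bigr) G'(\lambda) - G(\lambda) \le c_2(\lambda)$, with $c_1(\lambda) = \tfrac12 a \lambda^{2}(1-\lambda L)^{-1}$ and $c_2(\lambda) = \tfrac12 (a\Expect Y + b)\lambda^{2}(1-\lambda L)^{-1}$. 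For $\lambda < \lambda_\ast \asymp (a+L)^{-1}$ the coefficient $\lambda - c_1(\lambda)$ is positive, so this is a bona fide first-order linear inequality; integrating it by a Herbst-type argument with the boundary data $G(\lambda)/\lambda \to 0$ gives the sub-gamma bound
\[
\log \Expect \econst^{\lambda(Y - \Expect Y)} \;\le\; \frac{(a\Expect Y + b)\, \lambda^{2}/2}{1 - (a+L)\lambda} \qquad \text{for } 0 \le \lambda < \tfrac{1}{a+L}.
\]
The conclusion then follows from the Cram\'er--Chernoff inequality $\Prob{ Y \ge \Expect Y + \zeta } \le \inf_{\lambda \ge 0} \exp\bigl( -\lambda\zeta + \log \Expect \econst^{\lambda(Y - \Expect Y)} \bigr)$ and the routine optimization of $-\lambda\zeta + \tfrac12 v \lambda^{2}/(1 - c\lambda)$ over $\lambda \in (0, 1/c)$ with $v = a\Expect Y + b$ and $c = a+L$, which returns $\exp\bigl( -\zeta^{2}/(2(v + c\zeta)) \bigr)$.

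The main obstacle is the bookkeeping forced by the self-referential term: since the variance proxy $aY + b$ contains $Y$, the entropy $\operatorname{Ent}(\econst^{\lambda Y})$ is not simply controlled by a multiple of $F(\lambda)$, and one must carry the $a F'(\lambda)$ contribution, move it to the left-hand side, and verify that its coefficient stays positive exactly on $\{\lambda < 1/(a+L)\}$. This is also precisely where the ``$a$'' in the Bernstein denominator is generated, in addition to the ``$L$'' coming from the factor $\econst^{\lambda L}$; pinning the constant down to exactly $a + L$ rather than a larger multiple requires keeping the sharp form of the modified log-Sobolev inequality as in Maurer's treatment rather than crude surrogates for $\tau$.
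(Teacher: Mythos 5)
Your argument is correct and is precisely the route the paper has in mind: the paper does not prove this fact but instead defers to Massart's modified logarithmic Sobolev inequality and the thermodynamic formalism of Maurer, and your reconstruction (entropy tensorization, the variational choice $u = \econst^{\lambda \sup_i Y}$ yielding $\operatorname{Ent}_i(\econst^{\lambda Y}) \le \Expect_i[\econst^{\lambda Y}\tau(\lambda(\sup\nolimits_i Y - Y))]$, the bounds $\tau(t) \le \tfrac12 t^2 \econst^{t}$ and $\econst^{\lambda L} \le (1-\lambda L)^{-1}$, the resulting linear differential inequality for the centered cgf, and the sub-gamma Chernoff optimization) is exactly that argument. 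Your final step is also sound: the function $\tilde H(\lambda) := \tfrac12 v\lambda/(1-(a+L)\lambda)$ is a supersolution of the ODE inequality $H' \le (aH + v)/(2 - (a+2L)\lambda)$ with $H(0)=0$, which pins down the constant $a+L$ as claimed.
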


\joel{
\begin{proof}[Proof of Proposition~\ref{prop:rsv-concentration}, Eqn.~\eqref{eqn:rsv-upper-tail}]
We proceed as in the proof of~\eqref{eqn:rsv-lower-tail}.
It just remains to verify the uniform bound on $\sup_{ij} Y - Y$.
Starting from~\eqref{eqn:Yij-Y}, we calculate that
$$
\begin{aligned}
\sup\nolimits_{ij} Y - Y &\leq 4 B^2 t_j^2 + 4B \abs{\smash{t_j}} \abs{ \sum_{\ell=1}^n \phi_{il} t_{\ell} } \\
	&\leq 4B^2 + 4B \left( \sum_{\ell=1}^n \phi_{i\ell}^2 \right)^{1/2} \\
	&\leq 4B^2 + 4B^2 \sqrt{n} \\
	&\leq 8B^2 \sqrt{n}.
\end{aligned}
$$
This calculation relies on the fact that $\norm{\vct{t}} \leq 1$ and the entries of $\mtx{\Phi}$
have magnitude bounded by $B$.  Apply Fact~\ref{fact:self-bdd-upper} to complete the argument.
\end{proof}
}

\section{Theorem~\ref{thm:rsv-bdd}: Probability Bounds for Dissections}

In this section, we establish some results that compare the expectation of a minimum
of random variables indexed by a set with the expectations of the minima indexed by subsets.
These facts are easy consequences of concentration \joelprev{phenomena}.

\subsection{Dissection of the Excess Width}

First, we show that the excess width of a set can be related to the excess width
of a collection of subsets.

\begin{proposition}[Theorem~\ref{thm:rsv-bdd}: Dissection of Excess Width] \label{prop:excess-width-dissection}
Consider a closed subset $T$ of the unit ball $\mathsf{B}^n$ that has been decomposed into
a finite number of closed subsets $T_J$:
$$
T = \bigcup_{J \in \coll{J}} T_J.
$$
For each $m \geq 0$, it holds that
$$
\bigg( \Expect \min_{\vct{t} \in T}\big( \sqrt{m} \norm{\vct{t}} + \vct{g} \cdot \vct{t} \big) \bigg)_+
	\geq \min_{J \in \coll{J}} \Expect \min_{\vct{t} \in T_J}
	\big( \sqrt{m} \norm{\vct{t}} + \vct{g} \cdot \vct{t} \big)
	- \cnst{C} \sqrt{\log(\# \coll{J})}.
$$
In other words,
$$
\min_{J \in \coll{J}} \big( \coll{E}_m(T_J) \big)_+
	\leq \big( \coll{E}_m(T) \big)_+ 
	+ \cnst{C} \sqrt{\log(\# \coll{J})}.
$$
\end{proposition}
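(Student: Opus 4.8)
The plan is to recognize the quantities in play as expectations of $1$-Lipschitz functions of a Gaussian vector and then to combine Gaussian concentration with a union bound over the finitely many pieces $T_J$. For each $J \in \coll{J}$, introduce the random variables
$$
F_J := \min_{\vct{t} \in T_J} \big( \sqrt{m}\norm{\vct{t}} + \vct{g}\cdot\vct{t} \big)
\qquad\text{and}\qquad
F := \min_{\vct{t} \in T} \big( \sqrt{m}\norm{\vct{t}} + \vct{g}\cdot\vct{t} \big).
$$
Because $T$ is a closed subset of the compact ball $\mathsf{B}^n$, each minimum is attained, and the decomposition $T = \bigcup_{J \in \coll{J}} T_J$ gives $F = \min_{J \in \coll{J}} F_J$ pointwise. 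Write $\mu_J := \Expect F_J = \coll{E}_m(T_J)$ and $\mu_\star := \min_{J \in \coll{J}} \mu_J$ (we may discard any $T_J$ that is empty). Since $x \mapsto (x)_+$ is nondecreasing, $\min_{J}\big(\coll{E}_m(T_J)\big)_+ = (\mu_\star)_+$; and since it is also subadditive, the whole claim will follow once we show $\mu_\star \leq \Expect F + \cnst{C}\sqrt{\log(\#\coll{J})}$, for then $(\mu_\star)_+ \leq \big(\Expect F + \cnst{C}\sqrt{\log(\#\coll{J})}\big)_+ \leq \big(\coll{E}_m(T)\big)_+ + \cnst{C}\sqrt{\log(\#\coll{J})}$.

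To prove this single inequality, I would first observe that for each fixed $\vct{t} \in T_J$ the affine map $\vct{g} \mapsto \sqrt{m}\norm{\vct{t}} + \vct{g}\cdot\vct{t}$ is $\norm{\vct{t}}$-Lipschitz, hence $1$-Lipschitz since $T_J \subset \mathsf{B}^n$; therefore $F_J$, a pointwise minimum of $1$-Lipschitz functions, is itself $1$-Lipschitz in $\vct{g}$. The Gaussian concentration inequality then yields $\Prob{ F_J \leq \mu_J - t } \leq \econst^{-t^2/2}$ for every $t \geq 0$. A union bound over $\coll{J}$, together with $\mu_J \geq \mu_\star$, gives
$$
\Prob{ F \leq \mu_\star - t } \leq \sum_{J \in \coll{J}} \Prob{ F_J \leq \mu_\star - t }
	\leq \sum_{J \in \coll{J}} \Prob{ F_J \leq \mu_J - t }
	\leq (\#\coll{J})\,\econst^{-t^2/2}.
$$

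It remains to integrate this tail estimate. Using $\mu_\star - \Expect F \leq \Expect\big( \mu_\star - F \big)_+ = \int_0^\infty \Prob{ F \leq \mu_\star - t }\idiff{t}$ and splitting at $t_0 := \sqrt{2\log(\#\coll{J})}$ — bounding the probability by $1$ for $t \leq t_0$ and using the Gaussian tail bound $\int_{t_0}^\infty \econst^{-t^2/2}\idiff{t} \leq t_0^{-1}\econst^{-t_0^2/2}$ for $t > t_0$ — one finds $\mu_\star - \Expect F \leq \sqrt{2\log(\#\coll{J})} + \cnst{C}$, which is at most $\cnst{C}\sqrt{\log(\#\coll{J})}$ when $\#\coll{J} \geq 2$ (the case $\#\coll{J} = 1$ being trivial). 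This is exactly the inequality needed above. There is no serious obstacle here: the only points requiring a word of care are the attainment and measurability of the minima, which follow from compactness of $T$, and the passage through the positive-part function, which is purely order-theoretic; the substance of the argument is just the standard Gaussian maximal inequality once the problem has been reframed through the Lipschitz functions $F_J$.
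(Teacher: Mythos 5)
Your proof is correct and follows essentially the same route as the paper's: Gaussian concentration applied to each $1$-Lipschitz function $F_J$, a union bound over $\coll{J}$, and integration of the resulting tail estimate, with the positive-part operator handled by monotonicity and subadditivity. The only cosmetic difference is how the tail is integrated — you pass through $\mu_\star - \Expect F \leq \Expect(\mu_\star - F)_+$ and split at $t_0 = \sqrt{2\log(\#\coll{J})}$, whereas the paper works with $\int_0^\alpha \Prob{Y > \zeta}\,\mathrm{d}\zeta$ — but these are interchangeable bookkeeping choices, and if anything your formulation avoids any question about the sign of $F$ a bit more transparently.
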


\begin{proof}
Since $T = \bigcup_{J \in \coll{J}} T_J$, we can stratify the minimum over $T$:
$$
\min_{\vct{t} \in T} \big(\sqrt{m} \norm{\vct{t}} + \vct{g} \cdot \vct{t} \big)
	= \min_{J \in \coll{J}} \min_{\vct{t} \in T_J}
	\big(\sqrt{m} \norm{\vct{t}} + \vct{g} \cdot \vct{t} \big).
$$
We will obtain a lower tail bound for the minimum over $T$ by combining
lower tail bounds for each minimum over $T_J$.  Then we integrate the tail
probability to obtain the required expectation bound.

Each subset $T_J$ is contained in $\mathsf{B}^n$, so the map
$$
\vct{g} \mapsto \min_{\vct{t} \in T_J} \big( \sqrt{m} \norm{\vct{t}} + \vct{g} \cdot \vct{t} \big)
$$
is 1-Lipschitz.  The Gaussian concentration inequality, Fact~\ref{fact:gauss-concentration},
provides a tail bound.  For each $J \in \coll{J}$ and for each $\lambda \geq 0$,
$$
\Prob{ \min_{\vct{t} \in T_J} \big( \sqrt{m} \norm{\vct{t}} + \vct{g} \cdot \vct{t} \big)
	\leq \Expect \min_{\vct{t} \in T_J}  \big(\sqrt{m} \norm{\vct{t}} + \vct{g} \cdot \vct{t} \big) - \lambda }
	\leq \econst^{-\lambda^2/2}.
$$
Apply the union bound over $J \in \coll{J}$ to see that, for all $\zeta \geq 0$,
\begin{multline*}
\Prob{ \min_{\vct{t} \in T} \big( \sqrt{m} \norm{\vct{t}} + \vct{g} \cdot \vct{t} \big) \leq
	\min_{J \in \coll{J}} \Expect \min_{\vct{t} \in T_J} \big( \sqrt{m} \norm{\vct{t}} + \vct{g} \cdot \vct{t} \big)
	- \sqrt{2 \log( \# \coll{J})} - \zeta } \\
	\leq (\# \coll{J}) \econst^{-(\sqrt{2 \log( \# \coll{J})} + \zeta)^2/2}
	\leq \econst^{- \zeta^2 / 2}.
\end{multline*}

Using the latter estimate, we quickly compute the excess width of $T$.
Abbreviate
$$
Y := \min_{\vct{t} \in T} \big( \sqrt{m} \norm{\vct{t}} + \vct{g} \cdot \vct{t} \big)
\quad\text{and}\quad
\alpha := \min_{J \in \coll{J}} \Expect \min_{\vct{t} \in T_J}
	\big( \sqrt{m} \norm{\vct{t}} + \vct{g} \cdot \vct{t} \big)
	- \sqrt{2 \log(\#\coll{J})}.
$$
If $\alpha \leq 0$, the stated result is trivial, so we may assume that $\alpha > 0$.
The integration by parts representation of the expectation yields
$$ 
( \Expect Y )_+ \geq \int_0^\alpha \Prob{ Y > \zeta } \idiff{\zeta}
	= \int_0^\alpha \Prob{Y > \alpha - \zeta} \idiff{\zeta}
	= \alpha - \int_0^\alpha \Prob{ Y \leq \alpha - \zeta } \idiff{\zeta}
	\geq \alpha - \int_0^\alpha \econst^{-\zeta^2/2} \idiff{\zeta}
	> \alpha - \sqrt{\pi/2}.
$$
Reintroduce the values of $Y$ and $\alpha$, and combine constants to complete the argument.
\end{proof}

\subsection{Dissection of the Restricted Singular Value}

Next, we show that the minimum singular value of a random matrix, restricted to a set,
is controlled by the minimum singular value, restricted to subsets.

\begin{proposition}[Theorem~\ref{thm:rsv-bdd}: Dissection of RSV] \label{prop:rsv-dissection}
Consider a closed subset $T$ of the unit ball $\mathsf{B}^n$, and assume that it has been decomposed into a finite number of closed subsets $T_J$:
$$
T = \bigcup_{J \in \coll{J}} T_J.
$$
Let $\mtx{\Phi}$ be an $m \times n$ random matrix that satisfies Model~\ref{mod:bdd-mtx}
with uniform bound $B$.  Then
$$
\min_{J \in \coll{J}} \Expect \smin^2(\mtx{\Phi}; T_J)
	\leq \Expect \smin^2(\mtx{\Phi}; T)
	+ \cnst{C} B^2 \sqrt{m \log(\#\coll{J})}.
$$
\end{proposition}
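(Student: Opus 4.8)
The plan is to mirror the structure of the proof of the excess-width dissection (Proposition~\ref{prop:excess-width-dissection}): first establish a union-bound tail inequality for $\smin^2(\mtx{\Phi}; T)$ in terms of the tail behavior of the $\smin^2(\mtx{\Phi}; T_J)$, and then integrate this tail estimate to pass to expectations. The key observation is that $\smin^2(\mtx{\Phi}; \cdot)$ is monotone under set inclusion, so
$$
\smin^2(\mtx{\Phi}; T) = \min_{J \in \coll{J}} \smin^2(\mtx{\Phi}; T_J),
$$
exactly as the $\min$ over $T$ stratifies over the pieces $T_J$ in the Gaussian case.

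First I would invoke the lower-tail concentration bound from Proposition~\ref{prop:rsv-concentration}, Eqn.~\eqref{eqn:rsv-lower-tail}, applied to each subset $T_J \subset \mathsf{B}^n$: for each $J$ and each $\lambda \geq 0$,
$$
\Prob{ \smin^2(\mtx{\Phi}; T_J) \leq \Expect \smin^2(\mtx{\Phi}; T_J) - \cnst{C} B^2 \lambda }
	\leq \econst^{-\lambda^2/m}.
$$
Taking a union bound over the $\#\coll{J}$ subsets and choosing $\lambda$ of order $\sqrt{m \log(\#\coll{J})}$ absorbs the cardinality factor, yielding: for all $\zeta \geq 0$,
$$
\Prob{ \smin^2(\mtx{\Phi}; T) \leq \min_{J \in \coll{J}} \Expect \smin^2(\mtx{\Phi}; T_J)
	- \cnst{C} B^2 \sqrt{m \log(\#\coll{J})} - \cnst{C}B^2 \sqrt{m}\,\zeta }
	\leq \econst^{-\zeta^2}.
$$
Here I use that the per-set mean $\Expect\smin^2(\mtx{\Phi}; T_J)$ is at least $\min_{J}\Expect\smin^2(\mtx{\Phi}; T_J)$, which shifts the reference point down to a quantity independent of $J$.

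Second, I would convert this high-probability lower bound into a bound on $\Expect \smin^2(\mtx{\Phi}; T)$ using the integration-by-parts representation of the expectation of a nonnegative random variable, just as in the last lines of the proof of Proposition~\ref{prop:excess-width-dissection}. Setting $Y := \smin^2(\mtx{\Phi};T)$ and $\alpha := \min_{J}\Expect\smin^2(\mtx{\Phi};T_J) - \cnst{C}B^2\sqrt{m\log(\#\coll{J})}$, if $\alpha \leq 0$ the claim is immediate since $Y \geq 0$; otherwise $\Expect Y \geq \int_0^\alpha \Prob{Y > \zeta}\idiff{\zeta}$, and substituting the tail bound and evaluating the resulting Gaussian integral produces $\Expect Y \geq \alpha - \cnst{C}B^2\sqrt{m}$. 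Combining error terms and adjusting constants gives exactly the claimed inequality. The main obstacle — though a mild one — is bookkeeping the $B^2\sqrt{m}$ scale correctly through the change of variables in the tail bound (the concentration scale in Proposition~\ref{prop:rsv-concentration} is $B^2\sqrt{m+n}$ only through the upper tail; the lower tail has the cleaner scale $B^2\sqrt{m}$, which is what we need here), and making sure the constant absorbed into $\alpha$ dominates both the union-bound correction and the residual Gaussian-integral term so that they collapse into the single error $\cnst{C}B^2\sqrt{m\log(\#\coll{J})}$.
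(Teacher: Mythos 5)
Your proposal matches the paper's argument essentially step for step: the stratification identity $\smin^2(\mtx{\Phi};T) = \min_J \smin^2(\mtx{\Phi};T_J)$, the lower-tail concentration from Proposition~\ref{prop:rsv-concentration} applied to each $T_J$, the union bound over $\coll{J}$ with the $\sqrt{m\log(\#\coll{J})}$ shift, and the integration-by-parts conversion of the tail bound into the expectation inequality. Your side remark about the lower tail having the cleaner $B^2\sqrt{m}$ scale (rather than $B^2\sqrt{m+n}$) is correct and is implicitly what the paper uses as well, so the proof is complete and follows the intended route.
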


\begin{proof}
The argument is similar with the proof of Proposition~\ref{prop:excess-width-dissection}.
Since $T = \bigcup_{J \in \coll{J}} T_J$,
$$
\smin^2(\mtx{\Phi}; T)
	= \min_{\vct{t} \in T} \normsq{\mtx{\Phi} \vct{t}}
	= \min_{J \in \coll{J}} \min_{\vct{t} \in T_J} \normsq{\mtx{\Phi} \vct{t}}
	= \min_{J \in \coll{J}} \smin^2(\mtx{\Phi}; T_J).
$$
The lower tail bound~~\eqref{eqn:rsv-lower-tail} for restricted singular values implies that, for all $\lambda \geq 0$,
$$
\Prob{ \smin^2(\mtx{\Phi}; T_J)
	\leq \Expect \smin^{\joelprev{2}}(\mtx{\Phi}; T_J) - \lambda }
	\leq \econst^{-\lambda^2/ (\cnst{C} B^4 m) }.
$$
Next, we take a union bound.  For all $\zeta \geq 0$,
\begin{multline*}
\Prob{ \smin^2(\mtx{\Phi}; T)
	\leq \min_{J \in \coll{J}} \Expect \smin^2(\mtx{\Phi}; T_J)
	- \sqrt{\cnst{C} B^4 m \log(\#\coll{J})} - \zeta } \\
	\leq (\# \coll{J}) \econst^{-(\sqrt{ \cnst{C} B^4 m \log(\# \coll{J})} + \zeta)^2 / (\cnst{C} B^4 m)}
	\leq \econst^{-\zeta^2 / (\cnst{C} B^4 m)}.
\end{multline*}
Define a random variable $Y$ and a constant $\alpha$:
$$
Y := \smin^2(\mtx{\Phi}; T)
\quad\text{and}\quad
\alpha := \min_{J \in \coll{J}} \Expect \smin^2(\mtx{\Phi}; T_J) - \sqrt{\cnst{C} B^4 m \log(\#\coll{J})}.
$$
If $\alpha \leq 0$, the stated result is trivial, so we may assume that $\alpha > 0$.
Calculating as in Proposition~\ref{prop:excess-width-dissection},
\begin{multline*}
\Expect Y \geq \int_0^\alpha \Prob{ Y > \zeta } \idiff{\zeta}
	= \int_0^\alpha \Prob{Y > \alpha - \zeta} \idiff{\zeta} \\
	= \alpha - \int_0^\alpha \Prob{ Y \leq \alpha - \zeta } \idiff{\zeta}
	\geq \alpha - \int_0^\alpha \econst^{-\zeta^2/ (\cnst{C} B^4 m)} \idiff{\zeta}
	\geq \alpha - \cnst{C} B^2 \sqrt{m}.
\end{multline*}
Simplify this bound to complete the proof.
\end{proof}

\section{Theorem~\ref{thm:rsv-bdd}: Replacing Most Entries of the Random Matrix}
\label{sec:replacement}

In this section, we show that it is possible to replace most of the entries
of a random matrix $\mtx{\Phi}$ with standard normal random variables
without changing the restricted singular value
$\smin(\mtx{\Phi}; T_J)$ very much.

\begin{proposition}[Theorem~\ref{thm:rsv-bdd}: Partial Replacement] \label{prop:partial-replacement}
Let $\mtx{\Phi}$ be an $m \times n$ random matrix that satisfies Model~\ref{mod:bdd-mtx}
with magnitude bound $B$.
Let $J$ be a subset of $\{1, \dots, n\}$ with cardinality $k$,
and let $T_J$ be a closed subset of $\mathsf{B}^n$ for which
\begin{equation} \label{eqn:TK-hyp-replace}
\vct{t} \in T_J
\quad\text{implies}\quad
\abs{\smash{t_j}} \leq k^{-1/2}
\quad\text{for each index $j \in J^c$}.
\end{equation}
Define an $m \times n$ random matrix $\mtx{\Psi}$ where \begin{equation} \label{eqn:rsv-hybrid}
\mtx{\Psi}_J = \mtx{\Phi}_J
\quad\text{and}\quad
\mtx{\Psi}_{J^c} = \mtx{\Gamma}_{J^c}.
\end{equation}
Assuming that $k \geq m^{2/3}$, we have
\begin{equation} \label{eqn:replacement-error}
\abs{ \Expect \min_{\vct{t} \in T_J} \normsq{\mtx{\Phi} \vct{t}}
	- \Expect \min_{\vct{t} \in T_J} \normsq{\mtx{\Psi} \vct{t}} }
	\quad\leq\quad \frac{\cnst{C} B^2 m^{1/3}n \log(mn)}{k^{1/2}}.
\end{equation}
Equivalently,
\begin{equation} \abs{ \Expect \smin^2(\mtx{\Phi}; T_J)
	- \Expect \smin^2(\mtx{\Psi}; T_J) }
	\quad\leq\quad \frac{\cnst{C} B^2 m^{1/3}n \log(mn)}{k^{1/2}}.
\end{equation}
As usual, $\mtx{\Gamma}$ is an $m \times n$ standard normal matrix.
\end{proposition}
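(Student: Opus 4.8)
The plan is to use the Lindeberg exchange principle, swapping the columns in $J^c$ one entry at a time from $\mtx{\Phi}$ to $\mtx{\Gamma}$, and to track the cumulative error. Let me set up the telescoping sum. Enumerate the $mk' $ positions in the columns $J^c$ (where $k' = n - k$), and form a sequence of hybrid matrices $\mtx{\Phi} = \mtx{\Psi}^{(0)}, \mtx{\Psi}^{(1)}, \dots, \mtx{\Psi}^{(mk')} = \mtx{\Psi}$, each obtained from the previous by replacing a single entry $\phi_{ij}$ ($j \in J^c$) with the corresponding Gaussian $\gamma_{ij}$. The total error is bounded by the sum of $mk'$ single-swap errors of the form $\abs{\Expect f(\mtx{M}_{\phi}) - \Expect f(\mtx{M}_{\gamma})}$, where $f(\mtx{M}) := \min_{\vct{t} \in T_J} \normsq{\mtx{M}\vct{t}}$, and $\mtx{M}_\phi, \mtx{M}_\gamma$ agree except in the single position $(i,j)$. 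The standard Lindeberg trick is to Taylor-expand $f$ to second order in the $(i,j)$ entry around the configuration with that entry set to zero; since $\phi_{ij}$ and $\gamma_{ij}$ are both standardized and (here) symmetric, the zeroth, first, and second order terms match in expectation, leaving only a third-order remainder controlled by the third derivative of $f$ in that entry, times $\Expect\abs{\phi_{ij}}^3 \lesssim B$ and $\Expect\abs{\gamma_{ij}}^3 = O(1)$.

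The technical subtlety is that $f(\mtx{M}) = \min_{\vct t \in T_J}\normsq{\mtx M \vct t}$ is not smooth (it is a minimum of smooth functions). I would handle this by the now-routine smoothing argument: replace the hard minimum by a soft-min $f_\beta(\mtx{M}) := -\beta^{-1}\log \int_{T_J} \econst^{-\beta \normsq{\mtx M \vct t}}\,\diff{\mu(\vct t)}$ for a suitable reference measure $\mu$, or more simply invoke that the results of this type already appear in the literature cited (\cite{KM11:Applications-Lindeberg,MOO10:Noise-Stability,Cha06:Generalization-Lindeberg}); the key point is that the soft-min approximates the min to within $O(\beta^{-1}\log(\cdot))$ and has bounded derivatives that we can estimate. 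After smoothing, the third derivative of $f_\beta$ with respect to the entry $m_{ij}$ involves at worst three factors of the form $\partial_{m_{ij}} \normsq{\mtx M \vct t} = 2 t_j (\mtx M \vct t)_i$ together with variance/covariance factors from the Gibbs measure. The hypothesis~\eqref{eqn:TK-hyp-replace} is precisely what makes this work: for $j \in J^c$ and $\vct t \in T_J$ we have $\abs{t_j} \leq k^{-1/2}$, so each factor of $t_j$ contributes a gain of $k^{-1/2}$, and the third-order term carries $\abs{t_j}^3 \leq k^{-3/2}$. The remaining factors $(\mtx M \vct t)_i$ and the Gibbs-measure fluctuations need an a priori bound on $\norm{\mtx M}$ and on $\normsq{\mtx M \vct t}$ over $T_J$; since $T_J \subset \mathsf{B}^n$, these are controlled by $\norm{\mtx M}^2$, and a coarse operator-norm bound (using boundedness of $\mtx\Phi$'s entries and standard nonasymptotic estimates, e.g. from \cite{Ver12:Introduction-Nonasymptotic,Tro15:Expected-Norm}) gives $\Expect \norm{\mtx M}^2 \lesssim B^2(m+n)$, at the cost of logarithmic factors from truncating the Gaussian part or from a union bound. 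This is where the $\log(mn)$ enters.

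Assembling the estimate: each of the $m k' \leq mn$ single-swap errors is bounded by $\cnst{C} B \cdot k^{-3/2} \cdot (\text{a factor of size } \lesssim B^2 (m+n)\log(mn) \text{ or so})$ — with the precise bookkeeping I expect the per-swap bound to be of order $B^2 k^{-3/2} \cdot (\text{polylog}) \cdot (\text{a factor involving } m)$ that sums over the $mn$ swaps to the claimed $\cnst{C} B^2 m^{1/3} n \log(mn) / k^{1/2}$. Reconciling the exponents is what pins down why the hypothesis $k \geq m^{2/3}$ is needed: it is exactly the threshold that keeps the soft-min smoothing parameter $\beta$ (optimized against the accumulated third-order error) in a regime where the smoothing error $O(\beta^{-1}\log(\cdot))$ is dominated by the replacement error, rather than the other way around. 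I expect the main obstacle to be the careful choice of the smoothing scale $\beta$ and the matching bookkeeping of the three competing error sources (smoothing error, third-order Lindeberg remainder summed over $mn$ terms, and the operator-norm tail), so that they balance to give the stated bound with the stated constraint $k \geq m^{2/3}$; the symmetry hypothesis on the entries is what lets us kill the second-order term and thereby get a third-order (rather than merely second-order) remainder, which is essential for the $k^{-1/2}$ gain. Finally, the "equivalently" clause is immediate since $\smin^2(\mtx M; T_J) = \min_{\vct t \in T_J}\normsq{\mtx M \vct t}$ by definition.
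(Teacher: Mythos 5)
The overall architecture you describe---discretize, smooth via a soft-min, apply a Lindeberg exchange entry by entry over the columns in $J^c$, and exploit $\abs{t_j} \le k^{-1/2}$ for a cube gain of $k^{-3/2}$---is indeed the paper's strategy, and you correctly identify the hypothesis~\eqref{eqn:TK-hyp-replace} as the engine of the argument. However, there are several genuine errors or gaps in how you flesh this out.

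First, your claim that ``the symmetry hypothesis on the entries is what lets us kill the second-order term'' is wrong. Matching means (both zero) kills the first-order term, and matching variances (both one) kills the second-order term; these are consequences of \emph{standardization}, not symmetry. Symmetry would let you also cancel the \emph{third}-order term (since both third moments would vanish), but the paper does not need this and explicitly states at the end of Theorem~\ref{thm:rsv-bdd} that ``the entries of $\mtx{\Phi}$ need not be symmetric for this result to hold.'' The paper's Lindeberg statement (Fact~\ref{fact:lindeberg}) works for any two standardized variables, stopping at a third-order remainder.

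Second, and more seriously, the place where you propose to control the $(\mtx{M}\vct{t})_i$ factors and the Gibbs-measure fluctuations via an a priori bound $\Expect\normsq{\mtx{M}} \lesssim B^2(m+n)$ is both wrong about where the $\log(mn)$ comes from and too lossy to close the argument. If you pay a uniform factor of order $m+n$ on each of the $\sim mn$ single-entry swaps, you get an exchange error of order $mn(m+n)/k^{3/2}$, which far exceeds the target $m^{1/3}n\log(mn)/k^{1/2}$ even when $k \geq m^{2/3}$. The paper's per-entry error (Sublemma~\ref{slem:compare-one}) carries \emph{no} factor of $m$ or $n$: it is $\cnst{C}(\beta B^4 + \beta^2 B^6)k^{-3/2}$. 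This is achieved by two devices you do not mention. (i) The third derivative involves only the linear forms $\vct{\xi}(\alpha)\cdot\vct{v}$ of a single \emph{row} against a unit-norm vector, whose low-order moments are bounded by constants (via Jensen and a Khintchine-type inequality), not by the operator norm of the whole matrix. (ii) The Gibbs measure $\mu_\alpha$ depends on the entry you are swapping, so bounding $\muavg{\abs{\vct{\xi}\cdot\vct{t}}}$ directly is delicate; the paper decouples this via Chebyshev's association inequality (Sublemma~\ref{slem:derivatives}), replacing the $\mu_\alpha$-average by an average against a fixed reference measure $\mu_*$ that is independent of $\vct{\xi}(\alpha)$. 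This decoupling is the key technical step you have not supplied.

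Finally, the $\log(mn)$ factor does not come from an operator-norm truncation; it enters through the covering number of the discretization. The paper replaces $T_J$ by a finite $\eps$-net $T_J^\eps$ with $\log(\#T_J^\eps) \leq n\log(3/\eps)$, pays a smoothing error $\beta^{-1}\log(\#T_J^\eps)$, and chooses $\eps = (mn)^{-1}$, which is where $\log(mn)$ is born. The condition $k \geq m^{2/3}$ then arises from balancing the smoothing error $\beta^{-1}n\log(mn)$ against the Lindeberg exchange error $(\beta B^4 + \beta^2 B^6)mn/k^{3/2}$ after choosing $\beta^3 = k^{3/2}/(B^6 m)$---you have the right intuition about which terms are being balanced, but the bookkeeping should be explicit if the claim about $k \geq m^{2/3}$ is to be verified.
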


The hypothesis~\eqref{eqn:TK-hyp-replace} is an essential ingredient
in the proof of Proposition~\ref{prop:partial-replacement}.  Indeed,
we can only exchange the elements of the random matrix $\mtx{\Phi}$
in the columns indexed by $J^c$ because we depend on the uniform
bound $k^{-1/2}$ to control the replacement errors.

\subsection{Proof of Proposition~\ref{prop:partial-replacement}}

The proof of Proposition~\ref{prop:partial-replacement} involves \joelprev{several steps},
so it is helpful to give an overview before we begin in earnest.
Throughout this discussion, the index set $J$ is fixed.

Let $\eps \in (0, 1)$ be a discretization
parameter.  The first step in the argument is to replace
the index set $T_J$ by a finite subset $T_J^\eps$ with
cardinality $\log(\# T_J^\eps) \leq n \log (3/\eps)$.  We obtain
the bounds
\begin{equation} \label{eqn:pr-discretize}
\abs{ \Expect \min_{\vct{t} \in T_J} \normsq{\mtx{\Phi} \vct{t}}
	- \Expect \min_{\vct{t} \in T_J^\eps} \normsq{\mtx{\Phi} \vct{t}} } \leq 2 mn \eps
	\quad\text{and}\quad
\abs{ \Expect \min_{\vct{t} \in T_J} \normsq{\mtx{\Psi} \vct{t}}
	- \Expect \min_{\vct{t} \in T_J^\eps} \normsq{\mtx{\Psi} \vct{t}} } \leq 2 mn \eps.	
\end{equation}
See Lemma~\ref{lem:discretization} for details, which are quite standard.

Next, we introduce a smoothing parameter $\beta > 0$,
and we define the soft-min function:
\begin{equation} \label{eqn:soft-min}
F : \R^{m \times n} \to \R
\quad\text{where}\quad
F(\mtx{A}) := -\frac{1}{\beta} \joelprev{\log \sum_{\vct{t} \in T_{J}^\eps}
	\econst^{- \beta \normsq{\mtx{A}\vct{t}} }}.
\end{equation}
It is advantageous to work with the soft-min because
it is differentiable.  Lemma~\ref{lem:smoothing} demonstrates
that we pay only a small cost for passing to the soft-min:
\begin{equation} \label{eqn:pr-smooth}
\abs{ \Expect \min_{\vct{t} \in T_J^\eps} \normsq{\mtx{\Phi} \vct{t}}
	- \Expect F(\mtx{\Phi}) } \leq \beta^{-1} \log(\# T_J^\eps)
	\quad\text{and}\quad
\abs{ \Expect \min_{\vct{t} \in T_J^\eps} \normsq{\mtx{\Psi} \vct{t}}
	- \Expect F(\mtx{\Psi}) } \leq \beta^{-1} \log(\# T_J^\eps).
\end{equation}
This argument is also standard.

Finally, we compare the expectation of the soft-min, evaluated
at each of the random matrices:
\begin{equation} \label{eqn:pr-exchange}
\abs{ \Expect F(\mtx{\Phi}) - \Expect F(\mtx{\Psi}) }
	\leq \frac{\cnst{C} (\beta B^4 + \beta^2 B^6) mn}{k^{3/2}}.
\end{equation}
Lemma~\ref{lem:exchange} encapsulates the \joelprev{this argument},
which is based on the Lindeberg principle (Fact~\ref{fact:lindeberg}, below).
The idea is to replace one entry of $\mtx{\Phi}_{J^c}$
at a time with the corresponding entry of $\mtx{\Psi}_{J^c}$,
which is a Gaussian random variable.
We lose very little with each exchange because the function $F$
is smooth and the vectors in $T_{J}^\eps$ are bounded on the
coordinates in $J^c$.

Combine the relations~\eqref{eqn:pr-discretize},~\eqref{eqn:pr-smooth},
and~\eqref{eqn:pr-exchange} to obtain an estimate for the total error:
$$
\abs{ \Expect \min_{\vct{t} \in T_J} \normsq{\mtx{\Phi} \vct{t}}
	- \Expect \min_{\vct{t} \in T_J} \normsq{\mtx{\Psi} \vct{t}} }
	\quad\leq\quad \cnst{C} mn \eps
	\ +\ \cnst{C} \beta^{-1} n \log(1/\eps)
	\ +\ \frac{\cnst{C} (\beta B^4 + \beta^2 B^6) mn}{k^{3/2}}.
$$
We used the fact that $\log \#(T_J^\eps) \leq n \log(3/\eps)$
in the smoothing term.

It remains to identify appropriate values for the parameters.
Select $\eps = (mn)^{-1}$ so the discretization error is negligible.
We choose the smoothing parameter so that $\beta^3 = k^{3/2}/(B^6 m)$.  In
summary,
$$
\abs{ \Expect \min_{\vct{t} \in T_J} \normsq{\mtx{\Phi} \vct{t}}
	- \Expect \min_{\vct{t} \in T_J} \normsq{\mtx{\Psi} \vct{t}} }
	\quad\leq\quad \cnst{C}
	\ +\ \frac{\cnst{C} B^2 m^{2/3} n}{k}
	\ +\ \frac{\cnst{C} B^2 m^{1/3} n \log(mn)}{k^{1/2}}.
$$
Since $B \geq 1$ and $m^{2/3} \leq k \leq n$,
the third term dominates.
This is the bound stated in~\eqref{eqn:replacement-error}.

\subsection{Proposition~\ref{prop:partial-replacement}: Discretizing the Index Set}

The first step in the proof of Proposition~\ref{prop:partial-replacement}
is to replace the set $T_J$ with a finite subset $T_J^\eps$
without changing the restricted singular values of $\mtx{\Phi}$
and $\mtx{\Psi}$ substantially.

\begin{lemma}[Proposition~\ref{prop:partial-replacement}: Discretization] \label{lem:discretization}
Adopt the notation and hypotheses of Proposition~\ref{prop:partial-replacement}.
Fix a parameter $\eps \in (0,1]$.  We can construct a subset $T_J^\eps$
of $T_J$ with cardinality $\log(\# T_J^\eps) \leq n \log (3/\eps)$
that has the property
\begin{equation} \label{eqn:covering-error}
\abs{ \Expect \min_{\vct{t} \in T_J} \normsq{ \mtx{\Phi} \vct{t} }
	- \Expect \min_{\vct{t} \in T_J^\eps} \normsq{ \mtx{\Phi} \vct{t} } }
	\leq 2mn \eps.
\end{equation}
Furthermore, the bound~\eqref{eqn:covering-error}
holds if we replace $\mtx{\Phi}$ by $\mtx{\Psi}$.
\end{lemma}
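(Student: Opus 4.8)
The plan is to take $T_J^\eps$ to be an $\eps$-net of $T_J$ drawn from $T_J$ itself, and to show that replacing $T_J$ by this net moves each quantity $\normsq{\mtx{A}\vct{t}}$ by at most $2\eps\norm{\mtx{A}}^2$ uniformly, which is negligible once we pass to expectations. To build the net, let $T_J^\eps$ be a maximal $\eps$-separated subset of $T_J$ in the Euclidean metric; this exists because any $\eps$-separated subset of the bounded set $T_J$ is finite. By maximality, every point of $T_J$ lies within distance $\eps$ of some point of $T_J^\eps$, so $T_J^\eps$ is an $\eps$-net of $T_J$ contained in $T_J$. Since $T_J \subseteq \mathsf{B}^n$, the open balls of radius $\eps/2$ about the points of $T_J^\eps$ are pairwise disjoint and lie inside $(1+\eps/2)\mathsf{B}^n$; comparing Lebesgue volumes gives $\#T_J^\eps \le (1+2/\eps)^n \le (3/\eps)^n$, the last step using $\eps \le 1$. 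Hence $\log(\#T_J^\eps) \le n\log(3/\eps)$. I would note that this step uses neither the magnitude bound $B$ nor the small-coordinate hypothesis~\eqref{eqn:TK-hyp-replace}; those enter only at the exchange step that follows this lemma.

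Next I would record the elementary perturbation estimate: for any $\mtx{A} \in \R^{m\times n}$ and any $\vct{s},\vct{t}\in\mathsf{B}^n$, the difference-of-squares identity together with the reverse triangle inequality and $\norm{\vct{s}},\norm{\vct{t}}\le 1$ gives $\abs{\normsq{\mtx{A}\vct{s}}-\normsq{\mtx{A}\vct{t}}} = \abs{\norm{\mtx{A}\vct{s}}-\norm{\mtx{A}\vct{t}}}\,(\norm{\mtx{A}\vct{s}}+\norm{\mtx{A}\vct{t}}) \le 2\norm{\mtx{A}}^2\norm{\vct{s}-\vct{t}}$.

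Finally, to transfer this to the minima, observe that $T_J$ is closed and bounded, hence compact, so there is a minimizer $\vct{t}_\star \in \argmin_{\vct{t}\in T_J}\normsq{\mtx{A}\vct{t}}$, where $\mtx{A}$ is either $\mtx{\Phi}$ or $\mtx{\Psi}$ (if $T_J$ is empty there is nothing to prove). Choosing $\vct{t}'_\star \in T_J^\eps$ with $\norm{\vct{t}_\star-\vct{t}'_\star}\le\eps$ and using $T_J^\eps\subseteq T_J$, we obtain
$$
0 \;\le\; \min_{\vct{t}\in T_J^\eps}\normsq{\mtx{A}\vct{t}} - \min_{\vct{t}\in T_J}\normsq{\mtx{A}\vct{t}}
	\;\le\; \normsq{\mtx{A}\vct{t}'_\star} - \normsq{\mtx{A}\vct{t}_\star}
	\;\le\; 2\eps\,\norm{\mtx{A}}^2 .
$$
Taking expectations and using $\Expect\norm{\mtx{A}}^2 \le \Expect\fnormsq{\mtx{A}} = mn$ — valid for both $\mtx{\Phi}$ and $\mtx{\Psi}$, whose entries are all standardized — yields~\eqref{eqn:covering-error} and its $\mtx{\Psi}$-analogue. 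The argument is completely routine; the only places calling for a little care are keeping the net inside $T_J$ (the maximal-separated-set construction does this) and the constant in the covering bound, which is what forces the harmless restriction $\eps \le 1$ that is already part of the hypotheses. I do not expect any substantive obstacle.
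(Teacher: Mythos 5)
Your proof is correct and follows essentially the same route as the paper: take an $\eps$-net $T_J^\eps\subset T_J$ of cardinality at most $(3/\eps)^n$, use the elementary estimate $\abs{\normsq{\mtx{A}\vct{s}}-\normsq{\mtx{A}\vct{t}}}\le 2\normsq{\mtx{A}}\norm{\vct{s}-\vct{t}}$ for points of the unit ball, pass to the minimizer, and then take expectations using $\Expect\normsq{\mtx{A}}\le\Expect\fnormsq{\mtx{A}}=mn$. The only cosmetic difference is that you spell out the maximal $\eps$-separated-set construction and packing bound, where the paper just cites the standard volumetric covering lemma; both give the same net inside $T_J$ and the same cardinality bound.
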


\begin{proof}
We choose the discretization $T_J^\eps$ to be an $\eps$-covering of $T_J$
with minimal cardinality.  That is,
\begin{equation*} \label{eqn:S-covers-T}
T_J^\eps \subset T_J
\quad\text{and}\quad
\max_{\vct{t} \in T_J} \min_{\vct{t}_{\eps} \in T_J^\eps} \norm{ \vct{t}_{\eps} - \vct{t} } \leq \eps.
\end{equation*}
Since $\mathrm{Vol}(T_J) \leq \mathrm{Vol}(\mathsf{B}^n)$, we can use a classic volumetric argument
to ensure that the covering satisfies $\# T_J^\eps \leq (3/\eps)^n$.
For example, see~\cite[Lem.~5.2]{Ver12:Introduction-Nonasymptotic}.

When we replace the set $T_J$ with the covering $T_J^\eps$, we incur a discretization
error.
We establish the error bound for $\mtx{\Phi}$;
the argument for $\mtx{\Psi}$ is identical.
Since $T_J^\eps \subset T_J$, it is immediate that
$$
\Expect \min_{\vct{t} \in T_J} \normsq{\mtx{\Phi} \vct{t}}
	\leq \Expect \min_{\vct{t} \in T_J^\eps} \normsq{\mtx{\Phi} \vct{t}}.
$$
We claim that
\begin{equation} \label{eqn:covering-error-claim}
\Expect \min_{\vct{t} \in T_J^\eps} \normsq{\mtx{\Phi} \vct{t}}
	\leq \Expect \min_{\vct{t} \in T_J} \normsq{\mtx{\Phi} \vct{t}}
	+ 2 \eps \left( \Expect \normsq{\mtx{\Phi}} \right).
\end{equation}
Since $\mtx{\Phi}$ has standardized entries,
$$
\Expect \normsq{\mtx{\Phi}} \leq \Expect \fnormsq{\mtx{\Phi}} = mn,
$$
where $\fnorm{\cdot}$ denotes the Frobenius norm.
Combine the last three displays to verify~\eqref{eqn:covering-error}.

It is quite easy to establish the claim~\eqref{eqn:covering-error-claim}.
For all $\vct{s}, \vct{t} \in T_J$, we have
$$
\begin{aligned}
\normsq{\mtx{\Phi} \vct{s}} - \normsq{\mtx{\Phi} \vct{t}}
	&= \big( \norm{\mtx{\Phi} \vct{s}} + \norm{\mtx{\Phi} \vct{t}} \big)
		\cdot \big( \norm{\mtx{\Phi} \vct{s}} - \norm{\mtx{\Phi} \vct{t}} \big) \\
	&\leq \norm{\mtx{\Phi}} \big( \norm{\vct{s}} + \norm{\vct{t}} \big) \cdot
	\norm{\mtx{\Phi}(\vct{s} - \vct{t}) } \\
	&\leq 2 \normsq{\mtx{\Phi}} \norm{ \vct{s} - \vct{t} }.
\end{aligned}
$$
We have used standard norm inequalities and the fact that $T_J$ is a subset of the unit ball.
Now, let $\vct{t}_{\star} \in \argmin_{\vct{t} \in T_J} \normsq{\mtx{\Phi} \vct{t}}$, and let $\vct{t}_{\eps}$ be a point in $T_J^\eps$ for which $\norm{ \vct{t}_{\eps} - \vct{t}_{\star} } \leq \eps$.  Then
$$
\begin{aligned}
\min_{\vct{t} \in T} \normsq{\mtx{\Phi} \vct{t}}
	= \normsq{ \mtx{\Phi} \vct{t}_{\star} }
	&= \normsq{ \mtx{\Phi} \vct{t}_{\eps} }
	- \big( \normsq{ \mtx{\Phi} \vct{t}_\eps } - \normsq{\mtx{\Phi} \vct{t}_{\star}} \big) \\
	&\geq \min_{\vct{t} \in T_J^\eps} \normsq{\mtx{\Phi} \vct{t}}
	- 2 \normsq{\mtx{\Phi}} \norm{ \vct{t}_{\eps} - \vct{t}_{\star}} \\
	&\geq \min_{\vct{t} \in T_J^\eps} \normsq{\mtx{\Phi} \vct{t}}
	- 2 \eps \normsq{\mtx{\Phi}}.
\end{aligned}
$$
Taking expectations, we arrive at~\eqref{eqn:covering-error-claim}.
\end{proof}

\subsection{Proposition~\ref{prop:partial-replacement}: Smoothing the Minimum}

The second step in the proof of Proposition~\ref{prop:partial-replacement}
is to pass from the restricted minimum singular value over the discrete set $T_J^\eps$
to a smooth function.  We rely on an exponential smoothing technique
that is common in the mathematical literature on statistical physics.

\begin{lemma}[Proposition~\ref{prop:partial-replacement}: Smoothing the Minimum] \label{lem:smoothing}
Adopt the notation and hypotheses of Proposition~\ref{prop:partial-replacement},
and let $T_J^\eps$ be the set introduced in Lemma~\ref{lem:discretization}.
Fix a parameter $\beta > 0$, and define the soft-min function $F$ via~\eqref{eqn:soft-min}
Then
\begin{equation} \label{eqn:smoothing-error}
\abs{ \Expect \min_{\vct{t} \in T_J^\eps} \normsq{ \mtx{\Phi} \vct{t} } - \Expect F(\mtx{\Phi}) }
	\leq \frac{1}{\beta} \log( \# T_J^\eps ).
\end{equation}
The bound~\eqref{eqn:smoothing-error} also holds if we replace $\mtx{\Phi}$ by $\mtx{\Psi}$.
\end{lemma}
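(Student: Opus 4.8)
The plan is to establish the two-sided sandwich
$$
\Expect F(\mtx{A}) \leq \Expect \min_{\vct{t} \in T_J^\eps} \normsq{\mtx{A}\vct{t}}
\qtq{and}
\Expect \min_{\vct{t} \in T_J^\eps} \normsq{\mtx{A}\vct{t}} \leq \Expect F(\mtx{A}) + \frac{1}{\beta}\log(\# T_J^\eps)
$$
for an arbitrary fixed matrix $\mtx{A}$, and then take $\mtx{A} = \mtx{\Phi}$ or $\mtx{A} = \mtx{\Psi}$. These are the standard soft-min estimates, and the proof needs no probabilistic input at all: the bounds hold pointwise in $\mtx{A}$, so the expectation bound follows by monotonicity of $\Expect$. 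Since the soft-min $F$ is the negative-$\beta^{-1}$-scaled log-sum-exp of the finitely many values $\normsq{\mtx{A}\vct{t}}$ over $\vct{t} \in T_J^\eps$, everything reduces to the elementary inequalities relating $\max$ (equivalently $\min$, after a sign flip) to log-sum-exp.

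First I would recall the two defining features of the smooth minimum. Write $N := \# T_J^\eps$ and $a_{\vct{t}} := \normsq{\mtx{A}\vct{t}}$ for $\vct{t} \in T_J^\eps$, so that $F(\mtx{A}) = -\beta^{-1}\log\sum_{\vct{t}} \econst^{-\beta a_{\vct{t}}}$. On one hand, keeping only the largest summand $\econst^{-\beta \min_{\vct{t}} a_{\vct{t}}}$ shows
$$
\sum_{\vct{t} \in T_J^\eps} \econst^{-\beta a_{\vct{t}}} \geq \econst^{-\beta \min_{\vct{t}} a_{\vct{t}}},
$$
and applying $-\beta^{-1}\log(\cdot)$ (a decreasing function) gives $F(\mtx{A}) \leq \min_{\vct{t}} a_{\vct{t}}$. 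On the other hand, bounding every summand by the largest one gives $\sum_{\vct{t}} \econst^{-\beta a_{\vct{t}}} \leq N\, \econst^{-\beta \min_{\vct{t}} a_{\vct{t}}}$, hence $F(\mtx{A}) \geq \min_{\vct{t}} a_{\vct{t}} - \beta^{-1}\log N$. Combining, $0 \leq \min_{\vct{t} \in T_J^\eps} \normsq{\mtx{A}\vct{t}} - F(\mtx{A}) \leq \beta^{-1}\log N$ for every $\mtx{A}$.

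Then I would simply specialize: set $\mtx{A} = \mtx{\Phi}$, take expectations over $\mtx{\Phi}$, and use that the bound $0 \leq \min_{\vct{t} \in T_J^\eps}\normsq{\mtx{\Phi}\vct{t}} - F(\mtx{\Phi}) \leq \beta^{-1}\log(\# T_J^\eps)$ is deterministic, so it survives integration; this yields~\eqref{eqn:smoothing-error}. The argument for $\mtx{\Psi}$ is word for word the same, since $\mtx{\Psi}$ is just another fixed-size random matrix and the pointwise inequality used nothing about its distribution. There is no real obstacle here — the only thing to be careful about is the direction of the inequality when the decreasing map $x \mapsto -\beta^{-1}\log x$ is applied, and the observation that $F$ is well-defined because $T_J^\eps$ is finite and nonempty (it is a nonempty $\eps$-covering of the nonempty set $T_J$), so the sum-exp is a finite positive number. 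For completeness I would note that this smoothing device is classical in the statistical-physics literature, as indicated in the paragraph preceding the lemma.
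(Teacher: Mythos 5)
Your proof is correct and follows essentially the same route as the paper: both establish the deterministic sandwich $\min_{\vct{t}} \normsq{\mtx{A}\vct{t}} - \beta^{-1}\log(\#T_J^\eps) \leq F(\mtx{A}) \leq \min_{\vct{t}} \normsq{\mtx{A}\vct{t}}$ by bounding the sum in the log-sum-exp below by its largest summand and above by $\#T_J^\eps$ times that summand, then take expectations. No discrepancies to note.
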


\begin{proof}
This result follows from trivial bounds on the sum in the definition~\eqref{eqn:soft-min}
of the soft-min function $F$.  The summands are nonnegative, so the sum exceeds its maximum term.
Thus,
$$
\log \sum_{\vct{t} \in T_J^\eps} \econst^{-\beta \normsq{\mtx{A} \vct{t}}}
	\geq \log \max_{\vct{t} \in T_J^\eps} \econst^{-\beta \normsq{\mtx{A}\vct{t}}}
	= - \beta \min_{\vct{t} \in T_J^\eps} \normsq{\mtx{A} \vct{t}}.
$$
Similarly, the sum does not exceed the number of summands times the maximum term, so
$$
\log \sum_{\vct{t} \in T_J^\eps} \econst^{-\beta \normsq{\mtx{A} \vct{t}}}
	\leq \log \bigg( (\#T_J^\eps) \max_{\vct{t} \in T_J^\eps} \econst^{-\beta \normsq{\mtx{A} \vct{t}}} \bigg)
	= \log (\#T_J^\eps) - \beta \min_{\vct{t} \in T_J^\eps} \normsq{\mtx{A} \vct{t}}.
$$
Combine the last two displays and multiply through by the negative number $-1/\beta$ to reach
$$
\min_{\vct{t} \in T_J^\eps} \normsq{\mtx{A} \vct{t}} - \frac{1}{\beta} \log(\#T_J^\eps)
	\leq - \frac{1}{\beta}\log \sum_{\vct{t} \in T_J^\eps} \econst^{-\beta \normsq{\mtx{A} \vct{t}}}
	\leq \min_{\vct{t} \in T_J^\eps} \normsq{\mtx{A}\vct{t}}.
$$
Replace $\mtx{A}$ by the random matrix $\mtx{\Phi}$
and take the expectation to reach~\eqref{eqn:smoothing-error}.
Similarly, we can take $\mtx{A} = \mtx{\Psi}$ to obtain the result for $\mtx{\Psi}$.
\end{proof}

\subsection{Proposition~\ref{prop:partial-replacement}: Exchanging the Entries of the Random Matrix}

The last step in the proof of Proposition~\ref{prop:partial-replacement} is the hardest.
We must demonstrate that the expectation $\Expect F(\mtx{\Phi})$ of the soft-min function
does not change very much when we replace the submatrix $\mtx{\Phi}_{J^c}$ with the submatrix
$\mtx{\Psi}_{J^c}$. 
\begin{lemma}[Proposition~\ref{prop:partial-replacement}: Exchanging Entries] \label{lem:exchange}
Adopt the notation and hypotheses of Proposition~\ref{prop:partial-replacement};
let $T_J^\eps$ be the set described in Lemma~\ref{lem:discretization};
and let $F$ be the soft-min function~\eqref{eqn:soft-min} with parameter $\beta > 0$.
Then
\begin{equation} \label{eqn:total-exchange-error}
\abs{ \Expect F(\mtx{\Phi}) - \Expect F( \mtx{\Psi} ) }
	\leq \frac{\cnst{C}(\beta B^4 + \beta^2 B^6) mn}{k^{3/2}}.
\end{equation}
The random matrix $\mtx{\Psi}$ is defined in~\eqref{eqn:rsv-hybrid}.
\end{lemma}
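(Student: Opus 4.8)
The plan is to establish Lemma~\ref{lem:exchange} by a one-entry-at-a-time Lindeberg exchange. Enumerate the entries $(i,j)$ with $j \in J^{c}$; there are $m \cdot \#J^{c} \le mn$ of them. Build a chain of hybrid matrices interpolating from $\mtx{\Phi}$ to $\mtx{\Psi}$, each obtained from its predecessor by overwriting one entry $\phi_{ij}$ with the Gaussian $\gamma_{ij}$, and telescope: it suffices to bound the change in $\Expect F$ produced by a single such replacement and then sum. Because $\phi_{ij}$ and $\gamma_{ij}$ are both standardized, the Lindeberg principle (Fact~\ref{fact:lindeberg}) controls one replacement by $\tfrac{1}{6}(\Expect \abs{\phi_{ij}}^{3} + \Expect \abs{\gamma_{ij}}^{3})$ times a bound on the third derivative of $F$ in the $a_{ij}$-direction along the relevant segment; since $\Expect \abs{\phi_{ij}}^{3} \le B \, \Expect \phi_{ij}^{2} = B$ in the bounded model and $\Expect \abs{\gamma_{ij}}^{3} \le 2 \le 2B$, this moment factor is at most $\cnst{C} B$. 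Only the first two moments need to match, which is why symmetry of the entries plays no role here.

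The core of the proof is the derivative estimate $\abs{\partial_{a_{ij}}^{3} F(\mtx{A})} \le \cnst{C}(\beta B^{3} + \beta^{2} B^{5}) k^{-3/2}$, valid for the matrices $\mtx{A}$ arising on the interpolation segments. I would obtain this from the Gibbs-measure calculus for log-partition functions. Writing $\muavg{\cdot}$ for the average against the weights $p_{\vct{t}}(\mtx{A}) \propto \econst^{-\beta \normsq{\mtx{A}\vct{t}}}$ over $\vct{t} \in T_{J}^{\eps}$, one has $\partial_{a_{ij}} F = \muavg{\partial_{a_{ij}} \normsq{\mtx{A}\vct{t}}} = \muavg{2 (\mtx{A}\vct{t})_{i} \, t_{j}}$, and each further $a_{ij}$-derivative acts on such an average by $\partial_{a_{ij}} \muavg{X_{\vct{t}}} = \muavg{\partial_{a_{ij}} X_{\vct{t}}} - \beta \, \Cov_{\mu}(X_{\vct{t}}, \, 2(\mtx{A}\vct{t})_{i} t_{j})$. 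Since $\partial_{a_{ij}}(\mtx{A}\vct{t})_{i} = t_{j}$ and $\partial_{a_{ij}}^{2}\normsq{\mtx{A}\vct{t}} = 2 t_{j}^{2}$, every term that appears in $\partial_{a_{ij}}^{3} F$ carries at least three factors of $t_{j}$; the hypothesis~\eqref{eqn:TK-hyp-replace} supplies $\abs{t_{j}} \le k^{-1/2}$ for $j \in J^{c}$, producing the decisive $k^{-3/2}$. The remaining factors form a polynomial of degree at most two in $\beta$ (one $\beta$ per covariance step) whose coefficients are Gibbs moments of $(\mtx{A}\vct{t})_{i}$ up to third order; these must be bounded in terms of $B$ using that $\normsq{\mtx{A}\vct{t}} \ge (\mtx{A}\vct{t})_{i}^{2}$, so that the Gibbs weight itself suppresses vectors $\vct{t}$ with large $(\mtx{A}\vct{t})_{i}$, together with the elementary inequality $\beta x^{2} \econst^{-\beta x^{2}} \le \econst^{-1}$, which converts the $\beta$-weighted moments into quantities of constant order. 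Summing the $\le mn$ per-replacement errors, each at most $\cnst{C} B (\beta B^{3} + \beta^{2} B^{5}) k^{-3/2}$, gives exactly~\eqref{eqn:total-exchange-error}.

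I expect the derivative estimate to be the real obstacle, for two reasons. First, the factors $(\mtx{A}\vct{t})_{i}$ occurring in the derivative expressions are, a priori, as large as the $i$-th row norm of $\mtx{A}$ --- of order $B\sqrt{n}$ --- which is far too crude; the argument must genuinely exploit the exponential concentration of the Gibbs measure (and the smallness $\abs{t_{j}} \le k^{-1/2}$) to see that the Gibbs-weighted moments of $(\mtx{A}\vct{t})_{i}$ that actually enter, after pairing with their $\beta$-prefactors, are only of constant order. Second, along the Lindeberg interpolation the entry $a_{ij}$ ranges over the support of a Gaussian and so is unbounded; one must therefore check that $\partial_{a_{ij}}^{3} F$ is controlled for all real $a_{ij}$, which is handled either by the precise hypothesis of Fact~\ref{fact:lindeberg} (it asks only for control of $\Expect[\abs{\gamma_{ij}}^{3} \sup_{\abs{s} \le \abs{\gamma_{ij}}} \abs{\partial_{a_{ij}}^{3} F}]$) or by a preliminary truncation of the Gaussian entries whose discarded tail is negligible. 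With these points in hand, the bound~\eqref{eqn:total-exchange-error} follows by assembling the per-entry estimates.
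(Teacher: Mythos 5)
Your high-level structure (telescoping entry-by-entry Lindeberg exchange over the $J^c$-columns, with the $k^{-3/2}$ coming from $\abs{t_j}\le k^{-1/2}$ cubed) matches the paper, and you have correctly identified the issue that the expression $\partial_{a_{ij}}^3 F$ involves Gibbs-weighted moments of $(\mtx{A}\vct{t})_i$ that are a priori of size $B\sqrt{n}$. However, the mechanism you propose to control them is wrong, and this is the real gap.

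You claim a \emph{pointwise} derivative bound $\abs{\partial_{a_{ij}}^3 F(\mtx{A})}\le\cnst{C}(\beta B^3+\beta^2 B^5)k^{-3/2}$, justified by saying that the Gibbs weight $\econst^{-\beta\normsq{\mtx{A}\vct{t}}}$ suppresses vectors with large $(\mtx{A}\vct{t})_i$ and that $\beta x^2 \econst^{-\beta x^2}\le\econst^{-1}$ makes the $\beta$-weighted moments order one. This argument does not survive scrutiny. The Gibbs moment is a \emph{ratio}
$$
\muavg{\abs{(\mtx{A}\vct{t})_i}}
= \frac{\sum_{\vct{t}\in T_J^\eps}\abs{(\mtx{A}\vct{t})_i}\,\econst^{-\beta\normsq{\mtx{A}\vct{t}}}}
        {\sum_{\vct{t}\in T_J^\eps}\econst^{-\beta\normsq{\mtx{A}\vct{t}}}},
$$
and the weight appears in both numerator and denominator, so it cannot force the moment to be small. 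Concretely, if every $\vct{t}$ in the finite set $T_J^\eps$ happens to satisfy $(\mtx{A}\vct{t})_i\approx L$ for some large $L$, then $\muavg{\abs{(\mtx{A}\vct{t})_i}}\approx L$ regardless of $\beta$, and $\beta\muavg{\abs{(\mtx{A}\vct{t})_i}}\approx\beta L$ is unbounded. There is no pointwise bound of the kind you state, and the inequality $\beta x^2\econst^{-\beta x^2}\le\econst^{-1}$ is a red herring here because it applies to the unnormalized summands, not to the normalized average.

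The paper gets around this with two ideas, neither of which appears in your write-up. First, it applies Chebyshev's association inequality (a correlation inequality) to pass from the Gibbs measure $\mu_\alpha$ (whose weight contains $\econst^{-\beta(\vct{\xi}(\alpha)\cdot\vct{t})^2}$) to a reference measure $\mu_*$ (whose weight contains only the other rows); this makes the surviving moment $\Expect_{\vct{v}}\abs{\vct{\xi}(\alpha)\cdot\vct{v}}$ an average against a random vector $\vct{v}$ that is \emph{independent} of the row being exchanged. Second---and this is what you are missing most fundamentally---the resulting moment is controlled not pointwise but \emph{in expectation over the remaining random entries of the row}: writing $\vct{\xi}_j(\alpha)=\alpha\mathbf{e}_j+\vct{\xi}_j(0)$, one uses Jensen plus standardization to obtain $\Expect\abs{\vct{\xi}_j(0)\cdot\vct{v}}\le\norm{\vct{v}}\le 1$, where the expectation is over $(\phi_1,\dots,\phi_{j-1},\psi_{j+1},\dots,\psi_n)$. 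In other words, the $O(B)$-size of the ``Gibbs moments'' is a statement about their average over the un-replaced randomness in the current row, not a deterministic fact. This forces the telescoping to be organized row-by-row first (so that one can condition on the other rows and then average over the remaining entries of the current row), rather than directly entry-by-entry with a uniform derivative bound as you attempt. Your final arithmetic lands on the right exponent, but the step that makes the per-entry bound $O((\beta B^4+\beta^2 B^6)k^{-3/2})$ is genuinely an expectation bound, and the pointwise claim you rely on is false.
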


\begin{proof}
We establish the lemma by replacing the rows of the random matrix $\mtx{\Phi}$
by the rows of the random matrix $\mtx{\Psi}$ one at a time.
For each $i = 1, 2, \dots, m + 1$, let $\mtx{\Xi}(i)$ be the random matrix
whose first $i - 1$ rows are drawn from $\mtx{\Psi}$ and whose remaining rows
are drawn from $\mtx{\Phi}$.  By construction, $\mtx{\Xi}(1) = \mtx{\Phi}$
and $\mtx{\Xi}(m+1) = \mtx{\Psi}$.  It follows that
$$
\abs{ \Expect F(\mtx{\Phi}) - \Expect F( \mtx{\Psi} ) }
	\leq \sum_{i=1}^{m} \abs{ \Expect F\big(\mtx{\Xi}(i)\big) - \Expect F\big(\mtx{\Xi}(i+1)\big) }.
$$
We will demonstrate that
\begin{equation} \label{eqn:replacement-matrix}
\abs{ \Expect F\big(\mtx{\Xi}(i)\big) - \Expect F\big(\mtx{\Xi}(i+1)\big) }
	\leq \frac{\cnst{C} (\beta B^4 + \beta^2 B^6) n}{k^{3/2}}
	\quad\text{for $i = 1, \dots, m$.}
\end{equation}
Combining the last two displays, we arrive at the bound~\eqref{eqn:total-exchange-error}.

Fix an index $i$.  By construction, the random matrices $\mtx{\Xi}(i)$ and $\mtx{\Xi}(i+1)$
are identical, except in the $i$th row.  To perform the estimate~\eqref{eqn:replacement-matrix},
it will be convenient to suppress the dependence of the function $F$ on the remaining rows.
To that end, define the functions
$$
f_i : \R^n \to \R
\quad\text{via}\quad
f_i( \vct{a} ) := - \frac{1}{\beta} \log \sum_{\vct{t} \in T_J^\eps}
	\econst^{ - \beta (\vct{a} \cdot \vct{t})^2 + q_i(\vct{t}) }
$$
where
$$
q_i : \R^n \to \R
\quad\text{via}\quad
q_i(\vct{t}) := -\beta \left[ \sum_{j < i} (\vct{\psi}^j \cdot \vct{t})^2 + \sum_{j > i} (\vct{\phi}^j \cdot \vct{t})^2 \right].
$$
We have written $\vct{\phi}^{j}$ and $\vct{\psi}^{j}$ for the $j$th rows of $\mtx{\Phi}$ and $\mtx{\Psi}$.
With these definitions,
$$
F\big( \mtx{\Xi}(i) \big) = f_i\big( \vct{\phi}^{i} \big)
\quad\text{and}\quad
F\big( \mtx{\Xi}(i+1) \big) = f_i\big( \vct{\psi}^{i} \big).
$$
Therefore, the inequality~\eqref{eqn:replacement-matrix} is equivalent with
\begin{equation} \label{eqn:replacement-vector}
\abs{ \Expect f_i\big(\vct{\phi}^i\big) - \Expect f_i\big( \vct{\psi}^i \big) }
	\leq \frac{\cnst{C} (\beta B^4 + \beta^2 B^6) n}{k^{3/2}}
	\quad\text{for $i = 1, \dots, m$.}
\end{equation}
Since the matrix $\mtx{\Phi}$ is drawn from Model~\ref{mod:bdd-mtx},
the random vector $\vct{\phi}^{i}$
contains independent, standardized entries whose magnitudes are bounded by $B$.
Meanwhile, the form~\eqref{eqn:rsv-hybrid} of the matrix $\mtx{\Psi}$ shows that the random vector $\vct{\psi}^{i}$
coincides with $\vct{\phi}^{i}$ on the components indexed by $J$,
while the entries of $\vct{\psi}^{i}$ indexed by $J^c$ are independent standard normal variables.
Sublemma~\ref{slem:compare-one}, below,
contains the proof of the inequality~\eqref{eqn:replacement-vector}.
\end{proof}

\subsubsection{Lemma~\ref{lem:exchange}: Comparison Principle for One Row}

To complete the argument in Lemma~\ref{lem:exchange}, we need to control how much a certain
function changes when we replace some of the entries of its argument with standard normal variables.  The following result contains the required calculation.

\begin{sublemma}[Lemma~\ref{lem:exchange}: Comparison for One Row] \label{slem:compare-one}
Adopt the notation and hypotheses of Proposition~\ref{prop:partial-replacement},
and let $T_J^\eps$ be the set defined in Lemma~\ref{lem:smoothing}.
Introduce the function
$$
f : \R^n \to \R
\quad\text{given by}\quad
f(\vct{a}) := - \frac{1}{\beta} \log \sum_{\vct{t} \in T_J^\eps}
	\econst^{ - \beta (\vct{a} \cdot \vct{t})^2 + q(\vct{t}) },
$$
where $q : T_J^\eps \to \R$ is an arbitrary function.
Suppose that $\vct{\phi} \in \R^n$ is a random vector with independent, standardized entries
that are bounded in magnitude by $B$.
Suppose that $\vct{\psi} \in \R^n$ is a random vector with
$$
\vct{\psi}_J = \vct{\phi}_J
\quad\text{and}\quad
\vct{\psi}_{J^c} = \vct{\gamma}_{J^c},
$$
where $\vct{\gamma} \in \R^n$ is a standard normal vector.  Then
\begin{equation} \label{eqn:compare-one}
\abs{ \Expect f(\vct{\phi}) - \Expect f( \vct{\psi}) }
	\leq \frac{\cnst{C}(\beta B^4 + \beta^2 B^6) n}{k^{3/2}}.
\end{equation}
\end{sublemma}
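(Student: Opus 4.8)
The plan is to establish~\eqref{eqn:compare-one} by the Lindeberg exchange principle (Fact~\ref{fact:lindeberg}), swapping the coordinates of $\vct\phi$ indexed by $J^c$ to standard normal variables one coordinate at a time. First I would enumerate $J^c = \{j_1, \dots, j_{n-k}\}$ and, for $r = 0, 1, \dots, n-k$, let $\vct\phi^{(r)}$ be the random vector that agrees with $\vct\gamma$ in coordinates $j_1, \dots, j_r$ and with $\vct\phi$ everywhere else, so that $\vct\phi^{(0)} = \vct\phi$ and $\vct\phi^{(n-k)} = \vct\psi$. Then
\[
\abs{ \Expect f(\vct\phi) - \Expect f(\vct\psi) }
	\leq \sum_{r=1}^{n-k} \abs{ \Expect f\big(\vct\phi^{(r-1)}\big) - \Expect f\big(\vct\phi^{(r)}\big) },
\]
so, since $n - k \leq n$, it is enough to bound each summand by $\cnst{C}(\beta B^4 + \beta^2 B^6) k^{-3/2}$.

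Fixing $r$ and writing $j := j_r$, the vectors $\vct\phi^{(r-1)}$ and $\vct\phi^{(r)}$ differ only in the $j$th entry, which carries the bounded standardized variable $\phi_j$ in the first case and the standard normal $\gamma_j$ in the second, both independent of the remaining entries. I would freeze the other entries and view $f$ as a scalar function of its $j$th argument $a$; Taylor's theorem with integral remainder, together with $\Expect\phi_j = \Expect\gamma_j = 0$ and $\Expect\phi_j^2 = \Expect\gamma_j^2 = 1$, gives
\[
\abs{ \Expect f\big(\vct\phi^{(r-1)}\big) - \Expect f\big(\vct\phi^{(r)}\big) }
	\leq \tfrac16 \big( \Expect\abs{\phi_j}^3 + \Expect\abs{\gamma_j}^3 \big)
	\, \Expect\Big[ \sup_{a\in\R} \abs{\partial_a^3 f} \Big].
\]
Here $\Expect\abs{\phi_j}^3 \leq B\,\Expect\phi_j^2 = B$ because $\abs{\phi_j}\leq B$, and $\Expect\abs{\gamma_j}^3$ is an absolute constant, so the moment factor is at most $\cnst{C}B$ and the task reduces to the pointwise derivative bound $\sup_a\abs{\partial_a^3 f} \leq \cnst{C}(\beta B^3 + \beta^2 B^5) k^{-3/2}$.

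To get this bound I would exploit the log-sum-exp structure of $f$ together with the hypothesis~\eqref{eqn:TK-hyp-replace}. Write $f(\vct a) = -\beta^{-1}\log\sum_{\vct t\in T_J^\eps}\econst^{-\beta(\vct a\cdot\vct t)^2 + q(\vct t)}$ and let $\mu = \mu_{\vct a}$ be the Gibbs probability measure on $T_J^\eps$ carrying those weights; equivalently $\mu_{\vct a}(\vct t) \propto \econst^{-\beta\normsq{\mtx A\vct t}}$, where $\mtx A$ is the matrix whose $i$th row is $\vct a$. Differentiating the log-partition function three times in the $j$th coordinate of $\vct a$ produces the cumulant identity
\[
\partial_a^3 f
	= -\frac1\beta\Big( 3\,\Cov_\mu(v, v') + \Expect_\mu\big[ (v - \Expect_\mu v)^3 \big] \Big),
	\quad\text{with } v(\vct t) := -2\beta(\vct a\cdot\vct t) t_j,\ \ v'(\vct t) := -2\beta t_j^2 ,
\]
since $(\vct a\cdot\vct t)^2$ is quadratic in $a$ and so has vanishing third $a$-derivative. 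For $j\in J^c$ the hypothesis~\eqref{eqn:TK-hyp-replace} forces $\abs{t_j}\leq k^{-1/2}$ uniformly on $T_J^\eps$, hence $\abs{v'}\leq 2\beta k^{-1}$ and $\abs{v}\leq 2\beta k^{-1/2}\abs{\vct a\cdot\vct t}$; elementary estimates for the covariance and the third central moment then give
\[
\abs{\partial_a^3 f}
	\leq \cnst{C}k^{-3/2}\Big( \beta\,\Expect_\mu\abs{\vct a\cdot\vct t} + \beta^2\,\Expect_\mu\abs{\vct a\cdot\vct t}^3 \Big).
\]
The hard part will be controlling the Gibbs moments $\Expect_\mu\abs{\vct a\cdot\vct t}^p$ for $p = 1, 3$ by a power of $B$ alone (of order $B^3$ and $B^5$, so that no extra dimensional factor survives). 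Here I would use the self-averaging of the Gibbs measure: $\mu_{\vct a}$ concentrates on near-minimizers of $\normsq{\mtx A\vct t}$, the bound $(\vct a\cdot\vct t)^2 \leq \normsq{\mtx A\vct t}$ is damped by the weight $\econst^{-\beta(\vct a\cdot\vct t)^2}$, so large values of $\abs{\vct a\cdot\vct t}$ contribute little, and combining this with elementary moment bounds for the linear form $\vct a\cdot\vct t$ (recalling $\norm{\vct t}\leq 1$ and that the entries are bounded by $B$ or standard normal) yields the claimed polynomial estimate. Balancing the structural smallness $\abs{t_j}\leq k^{-1/2}$ against the possible largeness of $\abs{\vct a\cdot\vct t}$, and tracking the $B$-dependence carefully enough to separate the $\beta B^4$ and $\beta^2 B^6$ contributions, is the delicate step; once it is in hand, substituting the derivative bound into the per-coordinate estimate and summing over the $n-k\leq n$ coordinates of $J^c$ finishes the proof. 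The telescoping, the scalar Taylor expansion, and the moment computations are otherwise routine, and the smoothing and discretization performed in Lemmas~\ref{lem:smoothing} and~\ref{lem:discretization} guarantee that $f$ is genuinely smooth in each entry with quantitatively controlled derivatives, which is exactly what makes the exchange legitimate.
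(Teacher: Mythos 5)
Your skeleton is close to the paper's: telescope over $J^c$, invoke Lindeberg, exploit the cumulant structure of the log-sum-exp and the bound $\abs{t_j}\leq k^{-1/2}$ on $J^c$-coordinates. But the step you flag as "the hard part" is precisely where the proof lives, and your sketch of it does not close. You want a \emph{pointwise} bound $\Expect_{\mu_{\vct a}}\abs{\vct a\cdot\vct t}^p\leq \cnst{C}B^{\text{const}}$ uniformly over $\vct a$, justified by ``self-averaging'' and exponential damping. No such uniform bound holds: the damping gives you something like $\beta^{-p/2}\,(Z_*/Z_{\vct a})$, which involves a partition-function ratio that can be large and has no natural control in terms of $B$ alone. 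And there is a second problem with taking $\sup_{a\in\R}$: after you apply the correlation inequality (see below) to pass from $\mu_{\vct a}$ to an $\vct a$-independent reference measure, the resulting bound on $\abs{r'''(\alpha)}$ grows linearly in $\abs{\alpha}$ through the term $\abs{\alpha v_j}$, so $\sup_{\alpha\in\R}$ of that bound is infinite. Fact~\ref{fact:lindeberg} restricts to $\abs{\alpha}\leq\abs{\phi_j}\leq B$, and that restriction is essential.

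The paper's actual mechanism has two ingredients you are missing. First, Chebyshev's association inequality (the $x\mapsto x$ increasing, $x\mapsto\econst^{-\beta x^2}$ decreasing trick) is applied in Sublemma~\ref{slem:derivatives} to replace the $\vct a$-dependent Gibbs measure $\mu_{\vct a}$ by the reference measure $\mu_*(\vct t)\propto\econst^{q(\vct t)}$, which is fixed and depends only on the frozen rows. This makes your ``self-averaging'' heuristic rigorous while decoupling the measure from the row $\vct a$. Second, once $\vct v\sim\mu_*$ is independent of the entries of the row, the moments are controlled by \emph{randomness in $\vct a$}, not a pointwise bound: keeping $\abs{\phi_j}^3$ inside the expectation and splitting $\vct\xi_j(\alpha)=\alpha\mathbf{e}_j+\vct\xi_j(0)$, one uses $\Expect\abs{\vct\xi_j(0)\cdot\vct v}\leq\norm{\vct v}\leq 1$ (Jensen plus standardization of the independent entries) and a Khintchine inequality for the third moment, while the $\abs{\alpha v_j}$ piece is bounded using $\abs{\alpha}\leq B$ and $\abs{v_j}\leq k^{-1/2}$. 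These steps produce the clean $\beta B^4+\beta^2 B^6$ dependence; without the decoupling and the restriction on $\alpha$, the naive bounds are off by a factor of $\sqrt{n}$ and the argument does not recover~\eqref{eqn:compare-one}.
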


The proof of Sublemma~\ref{slem:compare-one} is based
on a modern interpretation~\cite{MOO10:Noise-Stability,Cha06:Generalization-Lindeberg,KM11:Applications-Lindeberg} of the Lindeberg
exchange principle~\cite{Lin22:Eine-Neue,Tro59:Elementary-Proof,Rot73:Certain-Limit}.
It is similar in spirit with examples
from the paper~\cite{KM11:Applications-Lindeberg}.  We apply the
following version of the Lindeberg principle,
which is adapted from these works.

\begin{fact}[Lindeberg Exchange Principle] \label{fact:lindeberg}
Let $r : \R \to \R$ be a function with three continuous derivatives.
Let $\phi$ and $\psi$ be standardized random variables, not necessarily independent,
with three finite moments.  Then
$$
\abs{\Expect r(\phi) - \Expect r(\psi)}
	\leq \frac{1}{6}  \Expect \left[\abs{\smash{\phi}}^3 \max_{\abs{\alpha} \leq \abs{\smash{\phi}}} \abs{ r'''(\alpha) }
	+ \abs{\smash{\psi}}^3 \max_{\abs{\alpha} \leq \abs{\smash{\psi}}} \abs{ r'''(\alpha) } \right].
$$
\end{fact}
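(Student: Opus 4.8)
The plan is to prove the bound by a second-order Taylor expansion of $r$ about the origin, exploiting the fact that $\phi$ and $\psi$ have matching first and second moments to cancel the polynomial part. First I would record the pointwise remainder estimate: since $r$ has three continuous derivatives, Taylor's theorem with the Lagrange form of the remainder gives, for every real $x$, a point $\xi$ lying between $0$ and $x$ with
$$
r(x) = r(0) + r'(0)\, x + \tfrac12 r''(0)\, x^2 + \tfrac16 r'''(\xi)\, x^3.
$$
Writing $R(x) := r(x) - r(0) - r'(0)x - \tfrac12 r''(0)x^2$ for the remainder and using $\abs{\xi} \leq \abs{x}$, this yields
$$
\abs{R(x)} \leq \frac{1}{6}\, \abs{x}^3 \max_{\abs{\alpha} \leq \abs{x}} \abs{r'''(\alpha)},
$$
where the maximum is attained because $r'''$ is continuous on the compact interval $[-\abs{x}, \abs{x}]$.

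Next I would take expectations. If the right-hand side of the claimed inequality is $+\infty$, there is nothing to prove, so I may assume that $\Expect\bigl[\abs{\phi}^3 \max_{\abs{\alpha}\leq\abs{\phi}}\abs{r'''(\alpha)}\bigr]$ and the analogous quantity for $\psi$ are both finite; combined with the finite first and second moments supplied by standardization (and the finite third moments), this makes $R(\phi)$, $R(\psi)$, $r(\phi)$, and $r(\psi)$ all integrable. Because $\phi$ and $\psi$ are standardized, $\Expect\phi = \Expect\psi = 0$ and $\Expect\phi^2 = \Expect\psi^2 = 1$, so
$$
\Expect\bigl[ r(0) + r'(0)\phi + \tfrac12 r''(0)\phi^2 \bigr]
	= r(0) + \tfrac12 r''(0)
	= \Expect\bigl[ r(0) + r'(0)\psi + \tfrac12 r''(0)\psi^2 \bigr].
$$
Subtracting this identity, the linear and quadratic contributions cancel and I am left with $\Expect r(\phi) - \Expect r(\psi) = \Expect R(\phi) - \Expect R(\psi)$.

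Finally I would apply the triangle inequality together with the pointwise remainder bound:
$$
\abs{\Expect r(\phi) - \Expect r(\psi)}
	\leq \Expect\abs{R(\phi)} + \Expect\abs{R(\psi)}
	\leq \frac{1}{6}\, \Expect\Bigl[ \abs{\phi}^3 \max_{\abs{\alpha}\leq\abs{\phi}} \abs{r'''(\alpha)}
		+ \abs{\psi}^3 \max_{\abs{\alpha}\leq\abs{\psi}} \abs{r'''(\alpha)} \Bigr],
$$
which is exactly the asserted inequality. There is no substantive obstacle here; the only point deserving a word of care is the integrability bookkeeping, handled by the dichotomy above (if the bound is vacuous we are done, and otherwise every expectation in sight is finite so the cancellation of moments is legitimate). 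It is also worth remarking, as the statement advertises, that joint independence of $\phi$ and $\psi$ is never used — only the agreement of their low-order moments.
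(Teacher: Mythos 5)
Your argument is correct and is exactly the route the paper takes: a third-order Taylor expansion about the origin, cancellation of the linear and quadratic terms via the matching mean and variance, and a Lagrange-remainder bound giving the stated estimate. The extra care about integrability is a reasonable addition but not a departure from the paper's (very terse) proof.
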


\noindent
The proof of Fact~\ref{fact:lindeberg} involves nothing more than a third-order
Taylor expansion of $r$ around the origin.  The first- and second-order terms
cancel because the random variables $\phi$ and $\psi$ have mean zero and variance one.
With this inequality at hand, we may proceed with the proof of the sublemma.

\begin{proof}[Proof of Sublemma~\ref{slem:compare-one}]
Without loss of generality, assume that the index set $J = \{1, \dots, k\}$.
Indeed, the entries of the random matrix $\mtx{\Phi}$ are independent,
standardized, and uniformly bounded so it does not matter which set $J$ of
columns we distinguish.  This choice allows more intuitive indexing.

For each fixed index $j \in \{ k+1, \dots, n\}$, define the interpolating vector
$$
\vct{\xi}_j : \R \to \R^n
\quad\text{where}\quad
\vct{\xi}_j(\alpha) := (\phi_1, \dots, \phi_{j-1}, \alpha, \psi_{j+1}, \dots, \psi_n).
$$
Introduce the function
$$
r_j : \R \to \R
\quad\text{where}\quad
r_j(\alpha) := f\big(\vct{\xi}_j(\alpha)\big).
$$
Observe that
$$
\begin{aligned}
\abs{\Expect f(\vct{\phi}) - \Expect f(\vct{\psi})}
	= \abs{ \sum_{j = k+1}^n \Expect f\big(\vct{\xi}_j(\phi_j)\big) - \Expect f\big(\vct{\xi}_j(\psi_j)\big)} 	\leq \sum_{j = k+1}^n \abs{ \Expect r_j(\phi_j) - \Expect r_j( \psi_j ) }.
\end{aligned}
$$
The first identity holds because the sum telescopes.  Fact~\ref{fact:lindeberg} shows that
$$
\abs{\Expect r_j(\phi_j) - \Expect r_j(\psi_j)}
	\leq \frac{1}{6} \Expect \left[\abs{\smash{\phi_j}}^3 \max_{\abs{\alpha}\leq\abs{\smash{\phi_j}}} \abs{ r_j'''(\alpha)  }
	+ \abs{\smash{\psi_j}}^3  \max_{\abs{\alpha} \leq \abs{\smash{\psi_j}}} \abs{ r_j'''(\alpha) } \right].
$$ 
We claim that, for each index $j \in \{ k+1, \dots, n\}$, 
\begin{equation} \label{eqn:compare-entry}
\begin{aligned}
\Expect \left[ \abs{\smash{\phi_j}}^3 \max_{\abs{\alpha}\leq\abs{\smash{\phi_j}}} \abs{ r_j'''(\alpha)  } \right]
	&\leq \cnst{C} (\beta B^4 + \beta^2 B^6) k^{-3/2}, \quad\text{and} \\
\Expect \left[ \abs{\smash{\psi_j}}^3 \max_{\abs{\alpha} \leq \abs{\smash{\psi_j}}} \abs{ r_j'''(\alpha) } \right]
	&\leq \cnst{C} (\beta B^4 + \beta^2 B^6) k^{-3/2}.
\end{aligned}
\end{equation}
Once we establish the bound~\eqref{eqn:compare-entry}, we can combine the last three displays to reach
$$
\abs{\Expect f(\vct{\phi}) - \Expect f(\vct{\psi})}
	\leq  \cnst{C} (\beta B^4 + \beta^2 B^6) k^{-3/2} (n - k).
$$
The main result~\eqref{eqn:compare-one} follows.

To establish~\eqref{eqn:compare-entry}, fix an index $j \in \{ k+1, \dots, n \}$.
The forthcoming Sublemma~\ref{slem:derivatives}
will demonstrate that
\begin{equation} \label{eqn:compare-bd1}
\abs{ r_j'''(\alpha) } \leq \cnst{C}
	\left( \max_{\vct{t} \in T_J^\eps} \abs{\vct{\xi}'(\alpha) \cdot \vct{t} }^3 \right)
	\left( \beta \Expect_{\vct{v}} \abs{ \vct{\xi}_j(\alpha) \cdot \vct{v} }
	+ \beta^2 \Expect_{\vct{v}} \abs{ \vct{\xi}_j(\alpha) \cdot \vct{v} }^3 \right)
\end{equation}
In this expression,
$\vct{v} \in T_J^\eps$ is a random vector that is independent from $\vct{\xi}_j$;
the precise distribution of $\vct{v}$ is immaterial.  Note that
\begin{equation} \label{eqn:compare-bd2}
\max_{\vct{t} \in T_J^\eps} \abs{\vct{\xi}'(\alpha) \cdot \vct{t} }^3
	= \max_{\vct{t} \in T_J^\eps} \abs{\smash{t_j}}^3
	\leq k^{-3/2}.
\end{equation}
Indeed, the derivative $\vct{\xi}'_j(\alpha) = \mathbf{e}_j$, where $\mathbf{e}_j$
is the $j$th standard basis vector.  The last inequality holds because the conditions
$T_J^\eps \subset T_J$ and $j \in J^c$ allow us to invoke the
assumption~\eqref{eqn:TK-hyp-replace}.

We arrive at the following bound on the quantity of interest from~\eqref{eqn:compare-entry}:
\begin{multline} \label{eqn:compare-bd2.5}
\Expect \left[ \abs{\smash{\phi_j}}^3 \max_{\abs{\alpha}\leq\abs{\smash{\phi_j}}} \abs{ r_j'''(\alpha)  } \right]
	\leq \cnst{C} k^{-3/2} \left( \beta \Expect \left[ \abs{\smash{\phi_j}}^3 \max_{\abs{\alpha} \leq \abs{\smash{\phi_j}}} \abs{\vct{\xi}_j(\alpha) \cdot \vct{v}} \right]
	+ \beta^2 \Expect \left[ \abs{\smash{\phi_j}}^3 \max_{\abs{\alpha} \leq \abs{\smash{\phi_j}}} \abs{ \vct{\xi}_j(\alpha) \cdot \vct{v} }^3 \right] \right).
\end{multline}
We have merged the bounds~\eqref{eqn:compare-bd1} and~\eqref{eqn:compare-bd2}
to control $\abs{r_j(\alpha)}$.  Next, we invoked Jensen's inequality to draw the
expectation over $\vct{v}$ out of the maximum over $\alpha$.  Last, we combined
the expectations to reach~\eqref{eqn:compare-bd2.5}.
It remains to bound the expectations on the right-hand side.

Let us begin with the term in~\eqref{eqn:compare-bd2.5} that is linear in $\beta$.
In view of the identity $\vct{\xi}_j(\alpha) = \alpha \mathbf{e}_j + \vct{\xi}_j(0)$,
\begin{equation} \label{eqn:compare-bd3}
\begin{aligned}
\Expect \left[ \abs{\smash{\phi_j}}^3 \max_{\abs{\alpha} \leq \abs{\smash{\phi_j}}} \abs{\vct{\xi}_j(\alpha) \cdot \vct{v}} \right]
	&\leq \Expect\left[ \abs{\smash{\phi_j}}^3 \max_{\abs{\alpha} \leq\abs{\smash{\phi_j}}}
	\left( \abs{\smash{\alpha v_j}} + \abs{ \vct{\xi}_j(0) \cdot \vct{v} } \right) \right] \\
	&= \Expect\left[ \abs{\smash{\phi_j}}^4 \abs{\smash{v_j}} \right]
	+ \left(\Expect \abs{\smash{\phi_j}}^3 \right)\left( \Expect \abs{\vct{\xi}_j(0)\cdot \vct{v}} \right) \\
	&\leq B^4 k^{-1/2} + B^3.
\end{aligned}
\end{equation}
We used the assumption that $\phi_j$ is independent from $\vct{\xi}_j(0)$
to factor the expectation in the second term in the second line.  The
bounds in the third line exploit the fact that $\abs{\smash{\phi_j}} \leq B$ and,
via the hypothesis~\eqref{eqn:TK-hyp-replace},
the fact that $\vct{v} \in T_J$.
We also relied on the estimate
$$
\Expect \abs{ \vct{\xi}_j(0) \cdot \vct{v} } = \Expect \abs{ \sum_{i < j} \phi_i v_i + \sum_{i > j} \psi_i v_i }
	\leq \norm{ \vct{v} } \leq 1
$$
Indeed, Jensen's inequality allows us to replace the expectation with the second moment,
which simplifies to $\norm{\vct{v}}$ because
$\{ \phi_1, \dots, \phi_{j-1}, \psi_{j+1}, \dots, \psi_n \}$ is an independent
family of standardized random variables.  Since $\vct{v} \in T_J$, the norm 
$\norm{\vct{v}}$ does not exceed one.

Continuing to the term in~\eqref{eqn:compare-bd2.5} that is quadratic in $\beta^2$,
we pursue the same approach to see that
\begin{equation} \label{eqn:compare-bd4}
\begin{aligned}
\Expect \left[ \abs{\smash{\phi_j}}^3 \max_{\abs{\alpha} \leq \abs{\smash{\phi_j}}} \abs{\vct{\xi}_j(\alpha) \cdot \vct{v}}^3 \right]
	&\leq \cnst{C} \Expect\left[ \abs{\smash{\phi_j}}^3 \max_{\abs{\alpha} \leq\abs{\smash{\phi_j}}}
	\left( \abs{\smash{\alpha v_j}}^3 + \abs{ \vct{\xi}_j(0) \cdot \vct{v} }^3 \right) \right] \\
	&\leq \cnst{C} \left( B^6 k^{-3/2} + B^6 \right).
\end{aligned}
\end{equation}
This bound relies on the Khintchine-type inequality
$$
\left( \Expect \abs{ \vct{\xi}_j(0) \cdot \vct{v} }^3 \right)^{1/3}
	\leq \cnst{C} B \norm{ \vct{v} }
	\leq \cnst{C} B.
$$
For example, see~\cite[Cor.~5.12]{Ver12:Introduction-Nonasymptotic}.
We remark that this estimate could be improved if we had additional
information about the distribution of the entries of $\mtx{\Phi}$.

Substituting the bounds~\eqref{eqn:compare-bd3} and~\eqref{eqn:compare-bd4}
into~\eqref{eqn:compare-bd2.5}, we obtain
$$
\begin{aligned}
\Expect \left[ \abs{\smash{\phi_j}}^3 \max_{\abs{\alpha} \leq \abs{\smash{\phi_j}}} \abs{ r_j'''(\alpha) } \right]
	&\leq \cnst{C} k^{-3/2} \left[ \beta \big(B^4 k^{-1/2} + B^3 \big)
	+ \beta^2 \big(B^6 k^{-3/2} + B^6 \big) \right] \\
	&\leq \cnst{C} (\beta B^4 + \beta^2 B^6) k^{-3/2}.
\end{aligned}
$$
This establishes the first branch of the claim~\eqref{eqn:compare-entry}.
The second branch follows from a similar argument, where we use explicit
values for the moments of a standard normal variable instead of the uniform
upper bound $B$.
\end{proof}

\subsubsection{Sublemma~\ref{slem:compare-one}: Derivative Calculations}

The final obstacle in the proof of Proposition~\ref{prop:partial-replacement}
is to bound the derivatives that are required in Sublemma~\ref{slem:compare-one}.
This argument uses some standard methods from statistical physics,
and it is similar with the approach in the paper~\cite{KM11:Applications-Lindeberg}.

\begin{sublemma}[Sublemma~\ref{slem:compare-one}: Derivatives] \label{slem:derivatives}
Adopt the notation and hypotheses of Proposition~\ref{prop:partial-replacement}
and Sublemma~\ref{slem:compare-one}.
Let $\vct{\xi} : \R \to \R^n$ be a linear function, so its derivative $\vct{\xi}' \in \R^n$ is a constant vector.
Define the function
$$
r(\alpha) := f\big(\vct{\xi}(\alpha)\big)
	= - \frac{1}{\beta} \log \sum_{\vct{t} \in T_J^\eps} \econst^{-\beta (\vct{\xi}(\alpha) \cdot \vct{t})^2 + q(\vct{t})}
$$
where $q : T_J^\eps \to \R$ is arbitrary.
The third derivative of this function satisfies
$$
\abs{r'''(\alpha)} \leq 48 \left( \max_{\vct{t} \in T_J^\eps} \abs{\vct{\xi}' \cdot \vct{t}}^3 \right)
	\left( \beta \Expect_{\vct{v}} \abs{ \vct{\xi}(\alpha) \cdot \vct{v} }
	+ \beta^2 \Expect_{\vct{v}} \abs{\vct{\xi}(\alpha) \cdot \vct{v}}^3 \right).
$$
In this expression, $\vct{v} \in T_J^\eps$ is a random vector that is independent from $\vct{\xi}(\alpha)$.
\end{sublemma}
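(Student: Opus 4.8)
The plan is to recognize $r(\alpha)$ as $-\beta^{-1}$ times the logarithm of a partition function and then to differentiate three times using the cumulant calculus for Gibbs measures. Abbreviate $w(\vct{t}) := \vct{\xi}(\alpha)\cdot\vct{t}$ and $u(\vct{t}) := \vct{\xi}'\cdot\vct{t}$. Since $\vct{\xi}$ is affine, its derivative $\vct{\xi}'$ is a constant vector, so $u$ does not depend on $\alpha$, while $\partial_\alpha w = u$. Write $U := \max_{\vct{t}\in T_J^\eps} \abs{u(\vct{t})}$, so that $U^3 = \max_{\vct{t}\in T_J^\eps}\abs{\vct{\xi}'\cdot\vct{t}}^3$ is the quantity in the statement. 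Set $Z(\alpha) := \sum_{\vct{t}\in T_J^\eps} \econst^{-\beta w(\vct{t})^2 + q(\vct{t})}$, so that $r(\alpha) = -\beta^{-1}\log Z(\alpha)$, and let $\langle\,\cdot\,\rangle$ denote expectation against the Gibbs probability measure $\mu_\alpha$ on $T_J^\eps$ whose weights are proportional to $\econst^{-\beta w^2 + q}$.

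First I would record the differentiation rule. With $H(\vct{t},\alpha) := \beta w(\vct{t},\alpha)^2 - q(\vct{t})$ one has $\partial_\alpha H = 2\beta wu$, $\partial_\alpha^2 H = 2\beta u^2$, and $\partial_\alpha^3 H = 0$. The elementary identity $\partial_\alpha \langle g\rangle = \langle \partial_\alpha g\rangle - \Cov(g, \partial_\alpha H)$, iterated, gives $r' = 2\langle wu\rangle$, $r'' = 2\langle u^2\rangle - 4\beta\,\Var(wu)$, and the cumulant expansion
\[
\partial_\alpha^3 \log Z = -\langle \partial_\alpha^3 H\rangle + 3\,\Cov(\partial_\alpha H, \partial_\alpha^2 H) - \kappa_3(\partial_\alpha H), \qquad \kappa_3(X) := \langle (X - \langle X\rangle)^3\rangle .
\]
Substituting the derivatives of $H$ and dividing by $-\beta$ yields
\[
r'''(\alpha) = -12\beta\,\Cov(wu, u^2) + 8\beta^2\,\kappa_3(wu).
\]
Next I would bound the two pieces using $\abs{u}\le U$ pointwise on $T_J^\eps$. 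Writing the covariance as $\langle w u^3\rangle - \langle wu\rangle\langle u^2\rangle$ and pulling out powers of $U$ gives $\abs{\Cov(wu, u^2)} \le 2U^3 \langle\abs{w}\rangle$. Expanding the third central moment into its three monomials and applying Jensen's and Hölder's inequalities to reduce each mixed moment of $\abs{w}$ to $\langle\abs{w}^3\rangle$ gives $\abs{\kappa_3(wu)} \le 6U^3\langle\abs{w}^3\rangle$. Hence
\[
\abs{r'''(\alpha)} \le 24\beta\, U^3 \langle\abs{w}\rangle + 48\beta^2\, U^3 \langle\abs{w}^3\rangle \le 48\, U^3\big(\beta\, \langle\abs{w}\rangle + \beta^2\, \langle\abs{w}^3\rangle\big).
\]
Identifying $\langle\abs{w}^k\rangle = \Expect_{\vct{v}} \abs{\vct{\xi}(\alpha)\cdot\vct{v}}^k$ for $\vct{v}$ drawn from $\mu_\alpha$ recovers exactly the claimed inequality; as the statement notes, the precise law of $\vct{v}$ is immaterial, since in Sublemma~\ref{slem:compare-one} this expectation is replaced by a cruder pointwise bound.

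The only real care needed is the homogeneity bookkeeping: one must check that every term surviving three $\alpha$-differentiations of $\log Z$ carries exactly three factors of $u$ — so that $U^3$ factors out cleanly — together with either one $\beta$ and one $w$, or two $\beta$'s and three $w$'s. This holds because each derivative either differentiates an explicit $w$, producing a factor $u$, or descends through $\partial_\alpha H = 2\beta wu$ or $\partial_\alpha^2 H = 2\beta u^2$ inside a covariance or cumulant; running this accounting through the three terms of $\partial_\alpha^3 \log Z$ confirms the claim. Everything else is routine: the covariance and cumulant estimates are one-line bounds, and the constant $48$ is not optimized — any absolute constant suffices for the application.
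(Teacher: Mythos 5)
Your cumulant computation of $r'''$ is correct and is a slicker organization of the same identity that the paper obtains by brute-force differentiation of $Z'_\alpha/Z_\alpha$, $Z''_\alpha/Z_\alpha$, $Z'''_\alpha/Z_\alpha$: indeed $-12\beta\,\Cov(wu,u^2) + 8\beta^2\kappa_3(wu)$ expands to exactly the paper's six-term formula in \eqref{eqn:d3h-exact}, and your covariance and third-cumulant bounds recover the intermediate estimate \eqref{eqn:d3h-bound} with the same constants $24$ and $48$.

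However, the proof is incomplete: you stop at Gibbs averages $\langle\,\cdot\,\rangle$ with respect to $\mu_\alpha$, which does \emph{not} establish the sublemma as stated, and the step you skip is not cosmetic. The statement requires $\vct{v}$ to be independent of $\vct{\xi}(\alpha)$, whereas $\mu_\alpha$ has density proportional to $\exp\bigl(-\beta(\vct{\xi}(\alpha)\cdot\vct{t})^2 + q(\vct{t})\bigr)$ and hence depends on both $\alpha$ and the random entries of $\vct{\xi}(\alpha)$. Your remark that ``the precise law of $\vct{v}$ is immaterial, since in Sublemma~\ref{slem:compare-one} this expectation is replaced by a cruder pointwise bound'' misreads the downstream argument: in \eqref{eqn:compare-bd2.5} one uses Jensen's inequality to pull $\Expect_{\vct{v}}$ outside $\max_{|\alpha|\le|\phi_j|}$, which is only legitimate if the law of $\vct{v}$ does not depend on $\alpha$; and in \eqref{eqn:compare-bd3} one factors $\Expect\bigl[|\phi_j|^3\,|\vct{\xi}_j(0)\cdot\vct{v}|\bigr] = \bigl(\Expect|\phi_j|^3\bigr)\bigl(\Expect|\vct{\xi}_j(0)\cdot\vct{v}|\bigr)$ and computes $\Expect|\vct{\xi}_j(0)\cdot\vct{v}|\le\norm{\vct{v}}\le 1$, both of which require $\vct{v}$ to be independent of the current row (a pointwise bound $|\vct{\xi}_j(0)\cdot\vct{v}|\le\norm{\vct{\xi}_j(0)}$ would instead cost a factor of order $\sqrt{n}$, destroying the estimate). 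The paper closes this gap with Chebyshev's association inequality: since $x\mapsto |x|^k$ is increasing in $|x|$ and $x\mapsto\exp(-\beta x^2)$ is decreasing in $|x|$, one has $\langle|w|^k\rangle_{\mu_\alpha} \le \langle|w|^k\rangle_{\mu_*}$, where $\mu_*$ has density proportional to $\econst^{q(\vct{t})}$ alone. It is $\mu_*$ — not $\mu_\alpha$ — that is independent of $\alpha$ and of the current row, and this transfer is precisely what the phrase ``independent from $\vct{\xi}(\alpha)$'' in the sublemma is encoding. You need to add this correlation-inequality step to complete the argument.
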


\begin{proof}
Introduce a (parameterized) probability measure $\mu_\alpha$ on the set $T_J^\eps$:
$$
\mu_\alpha(\vct{t}) := \frac{1}{Z_\alpha} \econst^{ - \beta (\vct{\xi}(\alpha) \cdot \vct{t})^2 + q(\vct{t}) }
\quad\text{where}\quad
Z_\alpha := \sum_{\vct{t} \in T_J^\eps} \econst^{ - \beta (\vct{\xi}(\alpha) \cdot \vct{t})^2 + q(\vct{t}) }.
$$
We treat the normalizing factor $Z_\alpha$ as a function of $\alpha$,
and we write its derivatives as $Z_{\alpha}', Z_{\alpha}'', Z_{\alpha}'''$.
It is convenient to use the statistical mechanics notation for expectation
with respect to this measure.  For any function $h : T_J^\eps \to \R$,
$$
\muavg{ h(\vct{t}) } := \sum_{\vct{t} \in T_J^\eps} h(\vct{t}) \mu_\alpha(\vct{t}).
$$
For brevity, we always suppress the dependence of $\muavg{\cdot}$
on the parameter $\alpha$.  We often suppress the dependence of
$\vct{\xi}$ on $\alpha$ as well.

The function $r$ is proportional to the logarithm of the normalizing factor $Z_\alpha$:
$$
r(\alpha) = - \frac{1}{\beta} \log Z_\alpha.
$$
Thus, it is straightforward to express the third derivative of $r$ in terms
of the derivatives of $Z_{\alpha}$:
$$
r'''(\alpha) = - \frac{1}{\beta} \left( \frac{Z_\alpha'}{Z_\alpha} \right)''
	= - \frac{1}{\beta} \left( \frac{Z_\alpha''}{Z_\alpha} - \frac{(Z_\alpha')^2}{Z_\alpha^2} \right)'
	= - \frac{1}{\beta} \left( \frac{Z_\alpha'''}{Z_\alpha} - 3 \frac{Z_\alpha'}{Z_\alpha} \frac{Z_\alpha''}{Z_\alpha} + 2 \left(\frac{Z_\alpha'}{Z_\alpha}\right)^3 \right).
$$
By direct calculation, the derivative $Z_\alpha'$ satisfies
$$
\begin{aligned}
\frac{Z_\alpha'}{Z_\alpha}
	&= \frac{1}{Z_\alpha} \sum_{\vct{t} \in T_J^\eps} (- 2\beta )(\vct{\xi}' \cdot \vct{t}) (\vct{\xi}(\alpha) \cdot \vct{t})
	\econst^{- \beta (\vct{\xi}(\alpha) \cdot \vct{t})^2 + q(\vct{t})} \\
	&= - 2 \beta \muavg{ (\vct{\xi}' \cdot \vct{t}) (\vct{\xi} \cdot \vct{t}) }.
\end{aligned}
$$
The second derivative is
$$
\begin{aligned}
\frac{Z_\alpha''}{Z_\alpha} &= \frac{1}{Z_\alpha} \sum_{\vct{t} \in T_J^\eps} \left[ (- 2\beta )(\vct{\xi}' \cdot \vct{t})^2
	+ (2\beta)^2 (\vct{\xi}' \cdot \vct{t})^2(\vct{\xi}(\alpha) \cdot \vct{t})^2 \right]
	\econst^{- \beta (\vct{\xi}(\alpha) \cdot \vct{t})^2 + q(\vct{t})} \\
	&= - 2\beta \muavg{ (\vct{\xi}' \cdot \vct{t})^2 }
	+ 4\beta^2 \muavg{ (\vct{\xi}' \cdot \vct{t})^2(\vct{\xi}\cdot \vct{t})^2 }.
\end{aligned}
$$
The third derivative is
$$
\begin{aligned}
\frac{Z_\alpha'''}{Z_\alpha} &= \frac{1}{Z_\alpha} \sum_{\vct{t} \in T_J^\eps}
	\left[ 12 \beta^2 (\vct{\xi}' \cdot \vct{t})^3 (\vct{\xi}(\alpha) \cdot \vct{t})
	- 8 \beta^3 (\vct{\xi}' \cdot \vct{t})^3 (\vct{\xi}(\alpha) \cdot \vct{t})^3 \right]
	\econst^{- \beta (\vct{\xi}(\alpha) \cdot \vct{t})^2 + q(\vct{t})} \\
	&= 12 \beta^2 \muavg{ (\vct{\xi}' \cdot \vct{t})^3 (\vct{\xi} \cdot \vct{t}) }
	- 8 \beta^3 \muavg{ (\vct{\xi}' \cdot \vct{t})^3 (\vct{\xi} \cdot \vct{t})^3 } 
\end{aligned}
$$
We ascertain that
\begin{equation} \label{eqn:d3h-exact}
\begin{aligned}
r'''(\alpha) ={} & \beta \big[ -12 \muavg{ (\vct{\xi}' \cdot \vct{t})^3 (\vct{\xi} \cdot \vct{t}) }
	+ 12 \muavg{ (\vct{\xi}' \cdot \vct{t})^2 }
	\muavg{ (\vct{\xi}' \cdot \vct{t}) (\vct{\xi} \cdot \vct{t}) }
	  \big] \\
	&+ \beta^2 \big[ 8 \muavg{ (\vct{\xi}' \cdot \vct{t})^3 (\vct{\xi} \cdot \vct{t})^3 }
	- 24 \muavg{ (\vct{\xi}' \cdot \vct{t}) (\vct{\xi} \cdot \vct{t}) }
	\muavg{ (\vct{\xi}' \cdot \vct{t})^2(\vct{\xi} \cdot \vct{t})^2 }
	+ 16 \muavg{ (\vct{\xi}' \cdot \vct{t}) (\vct{\xi} \cdot \vct{t}) }^3
	 \big].
\end{aligned}
\end{equation}
This completes the calculation of the exact form of the third derivative of $r$.

Next, we simplify the formula~\eqref{eqn:d3h-exact} using basic probability
inequalities for the expectation $\muavg{\cdot}$.
First, consider the terms that are linear in $\beta$.  Observe that
$$
\abs{\muavg{ (\vct{\xi}' \cdot \vct{t})^3 (\vct{\xi} \cdot \vct{t}) }}
	\leq \muavg{ \abs{\vct{\xi}' \cdot \vct{t}}^3 \abs{\vct{\xi} \cdot \vct{t}} }
	\leq \bigg( \max_{\vct{t} \in T_J^\eps} \abs{\vct{\xi}' \cdot \vct{t}}^3 \bigg) \muavg{ \abs{\vct{\xi} \cdot \vct{t}} }.
$$
Indeed, Jensen's inequality allows us to draw the absolute value inside
the average, and we can invoke H{\"o}lder's inequality to pull out the maximum
of the first term.
Similarly, since $\muavg{1} = 1$,
$$ 
\abs{\muavg{ (\vct{\xi}' \cdot \vct{t})^2 } \muavg{ (\vct{\xi}' \cdot \vct{t})( \vct{\xi} \cdot \vct{t}) }}
	\leq \bigg( \max_{\vct{t} \in T_J^\eps} \abs{\vct{\xi}' \cdot \vct{t}}^3 \bigg)
	\muavg{ \abs{ \vct{\xi} \cdot \vct{t} } }
$$
Next, consider the terms that are quadratic in $\beta$.  The simplest is
$$
\abs{ \muavg{ (\vct{\xi}' \cdot \vct{t})^3 (\vct{\xi} \cdot \vct{t})^3 } }
	\leq \bigg( \max_{\vct{t} \in T_J^\eps} \abs{\vct{\xi}' \cdot \vct{t}}^3 \bigg)
	\muavg{ \abs{\vct{\xi} \cdot \vct{t}}^3 }.
$$
Using Jensen's inequality, we find that
$$
\abs{ \muavg{ (\vct{\xi}' \cdot \vct{t})( \vct{\xi} \cdot \vct{t}) }^3 }
	\leq \bigg( \max_{\vct{t} \in T_J^\eps} \abs{\vct{\xi}' \cdot \vct{t}}^3 \bigg)
	\muavg{ \abs{\vct{\xi} \cdot \vct{t} }}^3
	\leq \bigg( \max_{\vct{t} \in T_J^\eps} \abs{\vct{\xi}' \cdot \vct{t}}^3 \bigg)
	\muavg{ \abs{\vct{\xi} \cdot \vct{t} }^3 }.
$$
Last, using Lyapunov's inequality twice,
$$
\abs{\muavg{ (\vct{\xi}' \cdot \vct{t}) (\vct{\xi} \cdot \vct{t})}
\muavg{ (\vct{\xi}' \cdot \vct{t})^2 (\vct{\xi} \cdot \vct{t})^2 }}
	\leq \bigg( \max_{\vct{t} \in T_J^\eps} \abs{\vct{\xi}' \cdot \vct{t}}^3 \bigg)
	\muavg{ \abs{\vct{\xi}\cdot \vct{t}} }
	\muavg{ \abs{\vct{\xi}\cdot \vct{t}}^2 }
	\leq \bigg( \max_{\vct{t} \in T_J^\eps} \abs{\vct{\xi}' \cdot \vct{t}}^3 \bigg)
	\muavg{ \abs{\vct{\xi} \cdot \vct{t}}^3 }.
$$
Introduce the last five displays into~\eqref{eqn:d3h-exact} to arrive at the bound
\begin{equation} \label{eqn:d3h-bound}
\abs{r'''(\alpha)} \leq \bigg( \max_{\vct{t} \in T_J^\eps} \abs{\vct{\xi}' \cdot \vct{t}}^3 \bigg)
	\left(24 \beta \muavg{ \abs{ \vct{\xi} \cdot \vct{t}} }
	+ 48 \beta^2 \muavg{ \abs{\vct{\xi} \cdot \vct{t}}^3 } \right).
\end{equation}

Last, we want to replace the averages with respect to $\mu_\alpha$
by averages with respect to a simpler probability measure that
does not depend on $\vct{\xi}(\alpha)$.  This argument relies on a
correlation inequality.  Define another probability
measure $\mu_{*}$ on the set $T_J^\eps$:
$$
\mu_{*}(\vct{t}) := \frac{1}{Z_{*}} \econst^{q(\vct{t})}
\quad\text{where}\quad
Z_{*} := \sum_{\vct{t} \in T_J^\eps} \econst^{q(\vct{t})}.
$$
Write $\muavg{\cdot}_{*}$ for averages with respect to this new measure.
First, consider the average
$$
\begin{aligned}
\muavg{ \abs{\vct{\xi} \cdot \vct{t}} }
	&= \frac{1}{Z_\alpha} \sum_{\vct{t} \in T_J^\eps} \abs{\vct{\xi} \cdot \vct{t}}
	\econst^{-\beta (\vct{\xi} \cdot \vct{t})^2 + q(\vct{t})} \\
	&= \frac{Z_{*}}{Z_\alpha} \frac{1}{Z_{*}} \sum_{\vct{t} \in T_J^\eps} \abs{\vct{\xi} \cdot \vct{t}}
	\econst^{- \beta(\vct{\xi} \cdot \vct{t})^2} \econst^{q(\vct{t})} \\
	&= \frac{Z_{*}}{Z_\alpha} \muavg{ \abs{\vct{\xi} \cdot \vct{t}} \econst^{- \beta(\vct{\xi} \cdot \vct{t})^2} }_{*}.
\end{aligned}
$$
Let $X := \abs{\vct{\xi}\cdot \vct{t}}$ be the random variable obtained by pushing forward the measure $\mu_{*}$
from $T_J^\eps$ to the nonnegative real line.
Since $x \mapsto x$ is increasing and $x \mapsto \exp(- \beta x^2)$ is decreasing,
Chebyshev's association inequality~\cite[Thm.~2.14]{BLM13:Concentration-Inequalities}
provides that
$$
\muavg{ \abs{\vct{\xi} \cdot \vct{t}} \econst^{-\beta(\vct{\xi} \cdot \vct{t})^2} }_{*}
	\leq \muavg{ \abs{\vct{\xi} \cdot \vct{t}} }_{*} \muavg{ \econst^{-\beta (\vct{\xi} \cdot \vct{t})^2} }_{*}
	= \muavg{ \abs{\vct{\xi} \cdot \vct{t}} }_{*} 
	\frac{1}{Z_{*}} \sum_{\vct{t} \in T_J^\eps} \econst^{- \beta(\vct{\xi} \cdot \vct{t})^2 + q(\vct{t})}
	= \muavg{ \abs{\vct{\xi} \cdot \vct{t}} }_{*} \frac{Z_\alpha}{Z_{*}}.
$$
In summary,
\begin{equation} \label{eqn:mustar-abs}
\muavg{ \abs{\vct{\xi} \cdot \vct{t}} }
	\leq \muavg{ \abs{\vct{\xi} \cdot \vct{t}} }_{*}.
\end{equation}
The same argument shows that
\begin{equation} \label{eqn:mustar-cube}
\muavg{ \abs{\vct{\xi} \cdot \vct{t}}^3 }
	\leq \muavg{ \abs{\vct{\xi} \cdot \vct{t}}^3 }_{*}.
\end{equation}

Introduce~\eqref{eqn:mustar-abs} and~\eqref{eqn:mustar-cube}
into the bound~\eqref{eqn:d3h-bound} to reach the inequality
$$
r'''(\alpha) \leq 48 \bigg( \max_{\vct{t} \in T_J^\eps} \abs{\vct{\xi}' \cdot \vct{t}}^3 \bigg)
	\left( \beta \muavg{ \abs{ \vct{\xi} \cdot  \vct{t}} }_{*}
	+ \beta^2 \muavg{ \abs{\vct{\xi}\cdot \vct{t}}^3 }_{*} \right).
$$
The statement of the result follows when we reinterpret the averages with respect
to $\mu_{*}$ as expectations with respect to a random vector $\vct{v} \in T_J^\eps$
with distribution $Z_{*}^{-1} \econst^{-q(\vct{t})}$.
\end{proof}

\section{Theorem~\ref{thm:rsv-bdd}: Bounding the Restricted Singular Value by the Excess Width}

In Section~\ref{sec:replacement}, we showed that the restricted singular
value $\smin(\mtx{\Phi}; T_J)$ of the random matrix $\mtx{\Phi}$
does not change very much if we replace $\mtx{\Phi}$ with a
hybrid matrix $\mtx{\Psi}$, defined in~\eqref{eqn:rsv-hybrid},
that contains many standard normal
random variables.  Our next goal is to relate the restricted
singular value $\smin(\mtx{\Psi}; T_J)$ of the hybrid matrix
to the excess width of the set $T_J$.

\begin{proposition}[Theorem~\ref{thm:rsv-bdd}: Excess Width Bound] \label{prop:excess-width}
Let $\mtx{\Phi}$ be an $m \times n$ random matrix from Model~\ref{mod:bdd-mtx}
with magnitude bound $B$, and let $\mtx{\Gamma}$ be an $m \times n$ random matrix with
independent, standard normal entries.
Let $J$ be a subset of $\{1, \dots, n\}$ with cardinality $k$,
and let $T_J$ be a closed subset of $\mathsf{B}^n$.
Introduce the $m \times n$ random matrix $\mtx{\Psi} := \mtx{\Psi}(J)$
from~\eqref{eqn:rsv-hybrid}.
Then \begin{equation} \label{eqn:excess-width-lower}
\Expect \smin^2(\mtx{\Psi}; T_J) =
	\Expect \min_{\vct{t} \in T_J} \normsq{ \mtx{\Phi}_J \vct{t}_J + \mtx{\Gamma}_{J^c} \vct{t}_{J^c} }
	\geq \left( \coll{E}_m(T_J) - \cnst{C} B^2 \sqrt{k} \right)_+^2.
\end{equation}
Furthermore, if $T_J$ is convex and $k \geq m^{1/2}$,
\begin{equation} \label{eqn:excess-width-upper}
\Expect \smin^2(\mtx{\Psi}; T_J) =
	\Expect \min_{\vct{t} \in T_J} \normsq{ \mtx{\Phi}_J \vct{t}_J + \mtx{\Gamma}_{J^c} \vct{t}_{J^c} }
	\leq \left( \coll{E}_m(T_J) + \cnst{C} B^2 \sqrt{k} \right)_+^2.
\end{equation}
\end{proposition}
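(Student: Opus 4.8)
Proof proposal. The plan is to condition on the ``bounded block'' $\mtx{\Phi}_J$ and apply the Gaussian Minimax Theorem (Fact~\ref{fact:gauss-minmax}) to the remaining Gaussian block $\mtx{\Gamma}_{J^c}$, reducing the problem to an auxiliary optimization, and then to show separately that the bounded block perturbs the auxiliary value by only $O(B^2\sqrt{k})$. First I would write $\smin(\mtx{\Psi};T_J) = \min_{\vct{t}\in T_J}\max_{\norm{\vct u}\le 1}\big(\vct u^{\adj}\mtx{\Phi}_J\vct t_J + \vct u^{\adj}\mtx{\Gamma}_{J^c}\vct t_{J^c}\big)$ and note that, conditioned on $\mtx{\Phi}_J$, the only Gaussian object is $\mtx{\Gamma}_{J^c}$, which enters bilinearly, while $\vct u^{\adj}\mtx{\Phi}_J\vct t_J$ is deterministic and linear (hence concave) in $\vct u$. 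Gordon's comparison then lower bounds $\Expect[\smin(\mtx{\Psi};T_J)\mid\mtx{\Phi}_J]$ by replacing $\vct u^{\adj}\mtx{\Gamma}_{J^c}\vct t_{J^c}$ with $\norm{\vct t_{J^c}}\,\vct g\cdot\vct u + \norm{\vct u}\,\vct h\cdot\vct t_{J^c}$ ($\vct g\in\R^m$, $\vct h\in\R^{n-k}$ independent standard normal); carrying out the inner maximum over the unit ball yields
$$
\Expect\big[\,\smin(\mtx{\Psi};T_J)\,\big|\,\mtx{\Phi}_J\,\big]\ \ge\ \Expect_{\vct g,\vct h}\min_{\vct t\in T_J}\Big(\norm{\mtx{\Phi}_J\vct t_J + \norm{\vct t_{J^c}}\vct g} + \vct h\cdot\vct t_{J^c}\Big)_{+}.
$$
When $T_J$ is convex the convex form of the theorem supplies a matching upper bound of the same shape; this is the only place convexity of $T_J$ is used, and it is why a lower bound $k\ge m^{1/2}$ on the block size is needed, to keep the probabilistic slack in the two-sided comparison under control.

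The core step is the estimate of the auxiliary expression. If $k=\#J$ is comparable to $m$ (say $k\ge c m/B^2$), then $\coll{E}_m(T_J)\le\sqrt m\le \cnst{C}B^2\sqrt k$ by~\eqref{eqn:excess-width-trivial-bd}, so~\eqref{eqn:excess-width-lower} is vacuous; otherwise $\mtx{\Phi}_J$ is a tall $m\times k$ matrix, and standard nonasymptotic random matrix bounds (bounded entries are subgaussian; see~\cite{Ver12:Introduction-Nonasymptotic,Tro15:Expected-Norm}) give $\sqrt m - \cnst{C}B\sqrt k\le\smin(\mtx{\Phi}_J)\le\smax(\mtx{\Phi}_J)\le\sqrt m + \cnst{C}B\sqrt k$ with probability $1-\econst^{-ck}$; on a like event $\norm{\vct g}$ is within $O(1)$ of $\sqrt m$ and $\norm{\mathbf{P}_{\range(\mtx{\Phi}_J)}\vct g}\le \cnst{C}\sqrt k$, since $\mathbf{P}_{\range(\mtx{\Phi}_J)}$ projects onto a $k$-dimensional subspace. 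Expanding $\norm{\mtx{\Phi}_J\vct t_J + \norm{\vct t_{J^c}}\vct g}^2$, bounding $\norm{\mtx{\Phi}_J\vct t_J}\ge\smin(\mtx{\Phi}_J)\norm{\vct t_J}$, writing the cross term as $\langle\mtx{\Phi}_J\vct t_J,\mathbf{P}_{\range(\mtx{\Phi}_J)}\vct g\rangle$ so that $|\langle\mtx{\Phi}_J\vct t_J,\vct g\rangle|\le\smax(\mtx{\Phi}_J)\norm{\vct t_J}\cdot\cnst{C}\sqrt k$, and using $\norm{\vct t_J}^2+\norm{\vct t_{J^c}}^2=\norm{\vct t}^2\le 1$ together with $\sqrt{A-\Delta}\ge\sqrt A - \Delta/\sqrt A$, I would obtain the uniform bound $\norm{\mtx{\Phi}_J\vct t_J + \norm{\vct t_{J^c}}\vct g}\ge\sqrt m\,\norm{\vct t} - \cnst{C}B\sqrt k$ for all $\vct t\in T_J$ on this event. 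Substituting into the auxiliary expression, using $\min_{\vct t}(\,\cdot\,)_{+}\ge(\min_{\vct t}(\,\cdot\,))_{+}$, and noting that discarding the $\vct g$-term only enlarges the width, $\Expect_{\vct h}\min_{\vct t\in T_J}(\sqrt m\norm{\vct t}+\vct h\cdot\vct t_{J^c})\ge\coll{E}_m(T_J)$, gives $\Expect[\smin(\mtx{\Psi};T_J)\mid\mtx{\Phi}_J]\ge(\coll{E}_m(T_J)-\cnst{C}B\sqrt k)_{+}$ on the event; the complementary event is absorbed using the crude deterministic bound $\smax(\mtx{\Phi}_J)\le\fnorm{\mtx{\Phi}_J}\le B\sqrt{mk}$ (and its Gaussian analogue), which is $\econst^{-ck}$-negligible. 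Averaging over $\mtx{\Phi}_J$ and then using $\Expect\smin^2\ge(\Expect\smin)^2$ gives~\eqref{eqn:excess-width-lower}, with $B$ absorbed into $B^2$ since $B\ge 1$.

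The upper bound~\eqref{eqn:excess-width-upper} is mirror-symmetric: the convex version of Fact~\ref{fact:gauss-minmax} turns the $\le$ direction into the same auxiliary problem, and there I would use $\smax(\mtx{\Phi}_J)\le\sqrt m+\cnst{C}B\sqrt k$ to get $\norm{\mtx{\Phi}_J\vct t_J+\norm{\vct t_{J^c}}\vct g}\le\sqrt m\norm{\vct t}+\cnst{C}B\sqrt k$ uniformly; the loss of an extra $B^2$ (hence $B^4$ in the stated bound, and the hypothesis $k\ge m^{1/2}$) comes from controlling a term quadratic in $\norm{\mtx{\Phi}_J}$ when passing back through the square root, and from the slack in the convex comparison. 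The hard part throughout is exactly this bounded-block estimate: the naive bound $\norm{(\mtx{\Phi}_J-\mtx{\Gamma}_J)\vct t_J}\le\norm{\mtx{\Phi}_J-\mtx{\Gamma}_J}\approx B\sqrt m$ is far too lossy, and one genuinely needs the sharp $\sqrt m\pm \cnst{C}B\sqrt k$ singular-value estimates, plus the observation that the interference $\langle\mtx{\Phi}_J\vct t_J,\vct g\rangle$ lives in the $k$-dimensional range of $\mtx{\Phi}_J$, to pull the error from $O(B\sqrt m)$ down to $O(B^2\sqrt k)$. A secondary point is verifying that Fact~\ref{fact:gauss-minmax} applies in the conditional, affine-$\psi$ configuration and is two-sided when $T_J$ is convex.
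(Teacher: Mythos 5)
Your overall strategy — condition on the bounded block $\mtx{\Phi}_J$, apply Gordon's comparison to the Gaussian block $\mtx{\Gamma}_{J^c}$ to replace it by two Gaussian vectors, and then argue that the residual $\mtx{\Phi}_J$ contribution costs only $O(B^2\sqrt{k})$ — matches the paper's blueprint. But there are two genuine gaps, and one place where the paper's execution is appreciably cleaner.

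\textbf{The moment-comparison step is missing.} Your upper-bound argument is asserted to be ``mirror-symmetric.'' It is not. The lower bound only needs $\Expect\smin^2 \geq (\Expect\smin)^2$, which is Jensen. The upper bound needs a reverse Jensen, $\Expect\smin^2 \leq (\Expect\smin + \mathrm{error})^2$, which is false in general. The paper handles this with a Poincar\'e-type concentration argument (Lemma~\ref{lem:hybrid-rsv-poincare}), which costs an additive $\cnst{C}Bm^{1/4}$, and \emph{this} is exactly why the hypothesis $k \geq m^{1/2}$ appears: it is needed so that $m^{1/4} \leq \sqrt{k}$ and the Poincar\'e error can be absorbed into $\cnst{C}B^2\sqrt{k}$. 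Your explanation of the $k\geq m^{1/2}$ hypothesis (quadratic terms in $\norm{\mtx{\Phi}_J}$) is not the actual reason, and there is in fact no $B^4$ in Proposition~\ref{prop:excess-width}'s statement (both error terms are $\cnst{C}B^2\sqrt{k}$ — the $B^4$ appears later, in Theorem~\ref{thm:rsv-bdd}\eqref{it:rsv-bdd-expect-cvx}, from the dissection and exchange steps).

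\textbf{The Gaussian minimax step as stated gives probability bounds, not expectation bounds.} Fact~\ref{fact:gauss-minmax} is a tail comparison: $\Prob{\min\max(\lambda+X)\geq\zeta}\geq\Prob{\min\max(\lambda+Y)\geq\zeta}$. Converting a two-sided tail comparison into an expectation comparison is a non-trivial step that the paper isolates in Lemmas~\ref{lem:my-comparison}, \ref{lem:apply-minmax-lower}, and \ref{lem:apply-minmax-upper}, incurring a $\pm 2$ error; the convex direction further requires a Sion minimax argument to reverse the $\min$ and $\max$, which is where the convexity of $T_J$ actually enters. Your proposal silently passes to the conditional expectation comparison.

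\textbf{On the residual-block estimate} your route does work for the lower bound but is more delicate than needed. You expand $\normsq{\mtx{\Phi}_J\vct{t}_J + \norm{\vct{t}_{J^c}}\vct{g}}$, bound the cross term through the $k$-dimensional projection of $\vct{g}$ onto $\range(\mtx{\Phi}_J)$, and invoke the extreme singular values of $\mtx{\Phi}_J$ and the concentration of $\norm{\vct{g}}$. The paper instead observes that
$$
\mtx{\Phi}_J\vct{t}_J + \vct{h}\,\norm{\smash{\vct{t}_{J^c}}} \;=\; \begin{bmatrix}\mtx{\Phi}_J & \vct{h}\end{bmatrix}\begin{bmatrix}\vct{t}_J\\ \norm{\smash{\vct{t}_{J^c}}}\end{bmatrix},
$$
treats $\begin{bmatrix}\mtx{\Phi}_J & \vct{h}\end{bmatrix}$ as a single $m\times(k+1)$ matrix with independent, standardized, subgaussian entries, and appeals directly to $\sqrt{m}\pm\cnst{C}B^2\sqrt{k}$ bounds for its extreme singular values. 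This one-shot singular-value estimate (Lemma~\ref{lem:remove-PhiK}) eliminates the explicit cross-term bookkeeping and the separate chi-squared bound for $\norm{\smash{\mathbf{P}_{\range(\mtx{\Phi}_J)}\vct{g}}}$; it also uniformly handles all $\vct{t}$ without needing to factor a $\norm{\vct{t}}$ through the square-root linearization.
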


To obtain this result, we will invoke the Gaussian Minimax Theorem~\cite{Gor85:Some-Inequalities,Gor88:Milmans-Inequality}
to reduce the random matrix bounds to simpler bounds involving random vectors.  This approach has been
used to study the ordinary singular values of a Gaussian matrix~\cite[Sec.~2.3]{DS01:Local-Operator}.  It also plays a role in the analysis of restricted singular values of Gaussian matrices~\cite{Sto13:Regularly-Random,OTH13:Squared-Error,TOH15:Gaussian-Min-Max}.
The application here is complicated significantly by the presence
of the non-Gaussian matrix $\mtx{\Phi}_J$.

\subsection{Proof of Proposition~\ref{prop:excess-width}}

The proof of Proposition~\ref{prop:excess-width} involves \joelprev{several steps},
so it is helpful once again to summarize the calculations that are required.

First, let us explain the process by which we obtain the lower bound for a general closed subset $T_J$ of the Euclidean unit ball $\mathsf{B}^m$.  We will argue that
\begin{align*}
\Expect \min_{\vct{t} \in T_J} \normsq{ \mtx{\Phi}_J \vct{t}_J + \mtx{\Gamma}_{J^c} \vct{t}_{J^c} }
	&\geq \left( \Expect \min_{\vct{t} \in T_J} \norm{ \mtx{\Phi}_J \vct{t}_J + \mtx{\Gamma}_{J^c} \vct{t}_{J^c} } \right)_+^2 		&& \text{(Jensen)} \\
	&\geq \left( \Expect \min_{\vct{t} \in T_J}\left( \norm{ \mtx{\Phi}_J \vct{t}_J + \vct{h}\norm{\smash{\vct{t}_{J^c}}}}
	+ \vct{g}_{J^c} \cdot \vct{t}_{J^c} \right) - 2 \right)_+^2
		\qquad\qquad && \text{(Lemma~\ref{lem:apply-minmax-lower})}\\
	&\geq \left( \Expect \min_{\vct{t} \in T_J}\left( \sqrt{m} \norm{\vct{t}} + \vct{g}_{J^c} \cdot \vct{t}_{J^c} \right)
		- \cnst{C}B^2\sqrt{k} \right)_+^2
		&& \text{(Lemma~\ref{lem:remove-PhiK})}\\
	&\geq \left( \Expect \min_{\vct{t} \in T_J}\left( \sqrt{m} \norm{\vct{t}} + \vct{g} \cdot \vct{t} \right)
		- \cnst{C} B^2 \sqrt{k} \right)_+^2
		&& \text{(Lemma~\ref{lem:missing-coords-lower})} \\
	&= \left( \coll{E}_m(T_J) - \cnst{C} B^2 \sqrt{k} \right)_+^2.
		&& \text{(Definition~\ref{def:excess-width})}
\end{align*}
In the first line, we pass from the expected square to the square of the expectation,
which reduces the technical complexity of the rest of the argument.
Next, we replace the $m \times n$ Gaussian matrix $\mtx{\Gamma}$ with an expectation
over two independent standard normal vectors $\vct{h} \in \R^m$ and $\vct{g} \in \R^n$.
Third, we remove the remaining piece $\mtx{\Phi}_J$ of the original random matrix
to arrive at a formula involving only the random vector $\vct{g}$.  Finally,
we replace the missing coordinates in the random vector $\vct{g}$ to obtain
a bound in terms of the excess width $\coll{E}_m(T_J)$.
We arrive at the inequality~\eqref{eqn:excess-width-lower}.

The upper bound follows from a \joelprev{similar calculation}. Assuming that $T_J$ is convex, we may calculate that
\begin{align*}
\Expect \min_{\vct{t} \in T_J} \normsq{ \mtx{\Phi}_J \vct{t}_J + \mtx{\Gamma}_{J^c} \vct{t}_{J^c} }
	&\leq \left( \Expect \min_{\vct{t} \in T_J} \norm{ \mtx{\Phi}_J \vct{t}_J + \mtx{\Gamma}_{J^c} \vct{t}_{J^c} } + \cnst{C} B m^{1/4} \right)_+^2
		&& \text{(Lemma~\ref{lem:hybrid-rsv-poincare})} \\
	&\leq \left( \Expect \min_{\vct{t} \in T_J}\left( \norm{ \mtx{\Phi}_J \vct{t}_J + \vct{h}\norm{\smash{\vct{t}_{J^c}}}}
	+ \vct{g}_{J^c} \cdot \vct{t}_{J^c} \right) + \cnst{C} B m^{1/4} \right)_+^2
	 	&& \text{(Lemma~\ref{lem:apply-minmax-upper})} \\
	&\leq \left( \Expect \min_{\vct{t} \in T_J}\left( \sqrt{m} \norm{\vct{t}} + \vct{g}_{J^c} \cdot \vct{t}_{J^c} \right)
		+ \cnst{C}B^2\sqrt{k} \right)_+^2
		\qquad\qquad && \text{(Lemma~\ref{lem:remove-PhiK})}\\
	&\leq \left( \Expect \min_{\vct{t} \in T_J}\left( \sqrt{m} \norm{\vct{t}} + \vct{g} \cdot \vct{t} \right)
		+ \cnst{C}B^2\sqrt{k} \right)_+^2
		&& \text{(Lemma~\ref{lem:missing-coords-upper})} \\
	&= \left( \coll{E}_m(T_J)
		+ \cnst{C}B^2\sqrt{k} \right)_+^2.
		&& \text{(Definition~\ref{def:excess-width})}		
\end{align*}
The first step is a type of Poincar{\'e} inequality, which requires some specialized concentration results for the restricted singular value of the hybrid matrix.
The \joelprev{second step of this chain involves a convex duality argument
that is not present} in the proof of the lower bound~\eqref{eqn:excess-width-lower}.
We have used the assumption that $k \geq m^{1/2}$ to simplify the error term when
we pass from the second line to the third.
\joelprev{Altogether, this yields} the bound~\eqref{eqn:excess-width-upper}.

\subsection{Proposition~\ref{prop:excess-width}: Moment Comparison Inequality for the Hybrid RSV}

We require a moment comparison inequality to pass from the expected square of the restricted singular value of the hybrid matrix to the expectation of the restricted singular value itself.

\begin{lemma}[Proposition~\ref{prop:excess-width}: Moment Comparison]
\label{lem:hybrid-rsv-poincare}
Adopt the notation and hypotheses of Proposition~\ref{prop:excess-width}.
Then
$$
\Expect \min_{\vct{t} \in T_J} \normsq{ \mtx{\Phi}_J \vct{t}_J + \mtx{\Gamma}_{J^c} \vct{t}_{J^c} }
	\leq \left( \Expect \min_{\vct{t} \in T_J} \norm{ \mtx{\Phi}_J \vct{t}_J + \mtx{\Gamma}_{J^c} \vct{t}_{J^c} } + \cnst{C} B m^{1/4} \right)^2.
$$	
\end{lemma}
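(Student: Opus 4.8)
The plan is to deduce the inequality from a one‑sided (lower‑tail) concentration estimate for $\smin^2(\mtx{\Psi};T_J)$ together with an elementary manipulation of the square root. Write $Z := \smin(\mtx{\Psi};T_J) \ge 0$ and $\mu := \Expect Z^2 = \Expect \min_{\vct{t}\in T_J}\normsq{\mtx{\Phi}_J\vct{t}_J + \mtx{\Gamma}_{J^c}\vct{t}_{J^c}}$; the target is $\mu \le \big(\Expect Z + \cnst{C}B m^{1/4}\big)^2$. If $\mu = 0$ then $Z = 0$ almost surely and the claim is trivial, so assume $\mu > 0$.

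First I would invoke a variant of Proposition~\ref{prop:rsv-concentration}, valid for the hybrid matrix $\mtx{\Psi}$: the bounded columns $\mtx{\Phi}_J$ are controlled by the entropy‑method argument of Section~\ref{sec:rsv-concentration}, while the standard normal columns $\mtx{\Gamma}_{J^c}$ are controlled (conditionally on $\mtx{\Phi}_J$) by the Gaussian concentration inequality; since $B \ge 1$ and $T_J \subset \mathsf{B}^n$, the normal columns only improve the bound. This yields, for every $\zeta \ge 0$,
\[
\Prob{ \smin^2(\mtx{\Psi};T_J) \le \mu - \cnst{C}_0 B^2 \zeta } \le \econst^{-\zeta^2/m}.
\]
Setting $V := (\mu - Z^2)_+ \ge 0$, this reads $\Prob{ V > s } \le \econst^{-s^2/(\cnst{C}_0^2 B^4 m)}$ for all $s \ge 0$, so that $\Expect V \le \int_0^\infty \econst^{-s^2/(\cnst{C}_0^2 B^4 m)} \idiff{s} \le \cnst{C}_0 B^2 \sqrt{m}$.

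Since $Z^2 \ge \mu - V$ and $Z^2 \ge 0$, we have $Z \ge \sqrt{(\mu - V)_+}$. The square root is concave, so for $\mu > 0$ and every $x \ge 0$ it dominates the chord of $t\mapsto\sqrt{t}$ joining $(0,\sqrt{\mu})$ to $(\mu,0)$, giving $\sqrt{(\mu - x)_+} \ge \sqrt{\mu} - x/\sqrt{\mu}$. Taking expectations,
\[
\Expect Z \ge \sqrt{\mu} - \frac{\Expect V}{\sqrt{\mu}} \ge \sqrt{\mu} - \frac{\cnst{C}_0 B^2 \sqrt{m}}{\sqrt{\mu}}.
\]
Writing $a := \sqrt{\mu}$ this is $a^2 \le a\,\Expect Z + \cnst{C}_0 B^2 \sqrt{m}$; solving the quadratic inequality in $a$ and using $\sqrt{(\Expect Z)^2 + 4\kappa} \le \Expect Z + 2\sqrt{\kappa}$ gives $a \le \Expect Z + \sqrt{\cnst{C}_0 B^2 \sqrt{m}} = \Expect Z + \sqrt{\cnst{C}_0}\,B\,m^{1/4}$. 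Squaring recovers $\mu \le \big(\Expect Z + \cnst{C} B m^{1/4}\big)^2$ with $\cnst{C} := \sqrt{\cnst{C}_0}$, which is the assertion.

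The only nonroutine ingredient is the concentration estimate for the hybrid matrix $\mtx{\Psi}$; I expect its proof to be a direct adaptation of the bounded‑model argument of Section~\ref{sec:rsv-concentration} (the self‑bounding bookkeeping for the columns in $J$ is unchanged, and the normal columns behave like the case $B=1$), so I would state it as such a variant rather than reprove it, as the paper does elsewhere. The remaining steps are elementary; the single point needing mild care is the degenerate regime where $\Expect \smin^2(\mtx{\Psi};T_J)$ is small, which is exactly why I pass to $V = (\mu - Z^2)_+$ instead of $\mu - Z^2$.
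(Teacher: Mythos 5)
Your algebra is fine, but the proof rests on a concentration inequality for the \emph{hybrid} matrix $\mtx{\Psi}$,
$$
\Prob{ \smin^2(\mtx{\Psi};T_J) \leq \Expect\smin^2(\mtx{\Psi};T_J) - \cnst{C}_0 B^2\zeta } \leq \econst^{-\zeta^2/m},
$$
and the justification you give for it does not hold up. The lower-tail bound of Proposition~\ref{prop:rsv-concentration} is obtained from Fact~\ref{fact:self-bdd-lower} via the estimate
$\sup\nolimits_{ij} Y - Y \leq 4B^2 t_j^2 + 4B\abs{t_j}\abs{\sum_\ell \phi_{i\ell}t_\ell}$,
which crucially uses $\abs{\alpha}\leq B$ \emph{and} $\abs{\phi_{ij}}\leq B$. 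For the columns of $\mtx{\Gamma}_{J^c}$ both quantities are unbounded, so $\sup\nolimits_{ij}Y - Y$ cannot be controlled this way; the entries do not ``behave like the case $B=1$''—they violate the boundedness that the argument needs. Nor is the estimate recoverable by conditioning on $\mtx{\Gamma}$ and then uniformizing: conditioning hands you a tail around $\Expect[Z^2\,|\,\mtx{\Gamma}]$, and closing the gap to $\Expect Z^2$ would require Gaussian concentration for $\mtx{\Gamma}\mapsto\Expect[Z^2\,|\,\mtx{\Gamma}]$, which is \emph{not} a Lipschitz function of $\mtx{\Gamma}$.

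The paper avoids proving any hybrid tail bound. It splits the argument: Sublemma~\ref{slem:hybrid-poincare}, applied conditionally on $\mtx{\Gamma}$ (so that only the bounded block $\mtx{\Phi}_J$ is random), gives $\Expect[X^2\,|\,\mtx{\Gamma}]\leq(\Expect[X\,|\,\mtx{\Gamma}]+\cnst{C}Bm^{1/4})^2$; then the Gaussian variance inequality (Fact~\ref{fact:gauss-variance}) is applied to the map $\mtx{\Gamma}\mapsto\Expect[X\,|\,\mtx{\Gamma}]$, which \emph{is} $1$-Lipschitz, to deconditionalize. The passage from ``concentration of $X^2$'' to ``$(\Expect X^2)^{1/2}-\Expect X\leq\cnst{C}Bm^{1/4}$'' in Sublemma~\ref{slem:hybrid-poincare} is algebraically close to your $V$-and-concavity manipulation (both are legitimate ways to turn a Gaussian lower tail for the square into a $m^{1/4}$ gap between first and second moments), so that part of your proposal is a fine alternative. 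What is missing is the two-step conditioning that replaces your assumed hybrid concentration lemma; as written, that lemma is a genuine gap rather than a routine variant.
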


\begin{proof}
Define the random variable
$$
X := \min_{\vct{t} \in T_J} \norm{ \mtx{\Phi}_J \vct{t}_J + \mtx{\Gamma}_{J^c} \vct{t}_{J^c} }.
$$
We need a bound for $\Expect X^2$.  We obtain this result by applying a moment comparison inequality for $\mtx{\Phi}$, conditional on $\mtx{\Gamma}$, and then we apply a moment comparison inequality for $\mtx{\Gamma}$.

Sublemma~\ref{slem:hybrid-poincare}, applied conditionally, with the choice $\mtx{A} = \mtx{\Gamma}$, gives
$$
\Expect X^2 = \Expect \left[ \Expect\big[ X^2 \, \big\vert \, \mtx{\Gamma} \big] \right]
	\leq \Expect \left[ \big( \Expect[ X \,\vert\, \mtx{\Gamma} ] + \cnst{C}B m^{1/4} \big)^2 \right].
$$
The function $\mtx{\Gamma} \mapsto \Expect[ X \,\vert\, \mtx{\Gamma} ] + \cnst{C}B m^{1/4}$
is 1-Lipschitz, so the Gaussian variance inequality, Fact~\ref{fact:gauss-variance}, implies that
$$
\Expect \left[ \big( \Expect[ X \,\vert\, \mtx{\Gamma} ] + \cnst{C}B m^{1/4} \big)^2 \right]
	\leq \big( \Expect X + \cnst{C} B m^{1/4} \big)^2 + 1.
$$
Combine the last two displays, and adjust the constant to complete the proof.
\end{proof}

The proof of Lemma~\ref{lem:hybrid-rsv-poincare} requires a separate moment comparison result for a random variable that depends only on the original matrix $\mtx{\Phi}_J$.

\begin{sublemma}[Lemma~\ref{lem:hybrid-rsv-poincare}: Moment Comparison for Original Matrix]
	\label{slem:hybrid-poincare}
Adopt the notation and hypotheses of Lemma~\ref{lem:hybrid-rsv-poincare}.
For a fixed $m \times n$ matrix $\mtx{A}$,
\begin{equation} \label{eqn:hybrid-poincare}
\Expect \min_{\vct{t} \in T_J} \normsq{ \mtx{\Phi}_J \vct{t}_J + \mtx{A}_{J^c} \vct{t}_{J^c} }
	\leq \left( \Expect \min_{\vct{t} \in T_J} \norm{ \mtx{\Phi}_J \vct{t}_J + \mtx{A}_{J^c} \vct{t}_{J^c} } + \cnst{C} B m^{1/4} \right)^{2}.
\end{equation}
\end{sublemma}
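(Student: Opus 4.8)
The plan is to prove Sublemma~\ref{slem:hybrid-poincare} by a variance (Poincar\'e-type) argument applied to the random variable
$$
X := \min_{\vct{t} \in T_J} \norm{ \mtx{\Phi}_J \vct{t}_J + \mtx{A}_{J^c} \vct{t}_{J^c} },
$$
which is a function of the $m \times k$ submatrix $\mtx{\Phi}_J$ alone. The inequality~\eqref{eqn:hybrid-poincare} is equivalent to the bound $\Var(X) \leq \cnst{C} B m^{1/4} \Expect X + \cnst{C} B^2 \sqrt{m}$, since $\Expect X^2 = (\Expect X)^2 + \Var(X)$ and completing the square yields the stated form after a constant adjustment. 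So the whole task reduces to a good variance bound for $X$ as a function of the independent, standardized, bounded entries of $\mtx{\Phi}_J$.

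First I would set up the bounded-difference machinery. Fix a minimizer $\vct{t}_\star \in T_J$ for the realized matrix, and for each row index $i \le m$ and column index $j \in J$, compare $X$ to the value $X^{(ij)}$ obtained by resampling the single entry $\phi_{ij}$. As in the proof of Proposition~\ref{prop:rsv-concentration} (see~\eqref{eqn:Yij-Y} and the surrounding estimates), using $\vct{t}_\star$ as a feasible point on one side and a bound on the magnitude of the entries, one gets a difference bounded by something of the order $B(\abs{t_{\star,j}} + \abs{t_{\star,j}} \cdot \abs{\text{(}i\text{th row contribution)}})/\norm{\mtx{\Phi}_J \vct{t}_{\star,J} + \mtx{A}_{J^c}\vct{t}_{\star,J^c}}$, i.e.\ essentially a $1$-Lipschitz-type increment in $\ell_2$ on the columns of $J$ but with an extra factor coming from the fact that we work with $\norm{\cdot}$ rather than $\norm{\cdot}^2$. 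Summing the squares of these increments over $(i,j)$ and using $\pnorm{\ell_4}{\vct{t}_\star} \le \norm{\vct{t}_\star} \le 1$ and $\sum_i (\vct{\phi}^i_J \cdot \vct{t}_{\star,J})^2 \le X^2 \le$ (trivial bound) should yield a self-bounded-type estimate $\Delta \le a X + b$ with $a \sim B^2$ and $b \sim B^2 m$ — after accounting for the square-root normalization, the clean way to phrase this is that $X$ is $\cnst{C}B$-subgaussian-increment in each coordinate with total variance proxy $\cnst{C}B^2 m$, but one must be careful near $X = 0$.

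The cleanest route avoiding the degeneracy at $X=0$ is to bound $\Var(X)$ directly via the Efron--Stein inequality: $\Var(X) \le \frac12 \sum_{ij} \Expect (X - X^{(ij)})^2$ where $X^{(ij)}$ resamples $\phi_{ij}$. Using the triangle inequality $\abs{X - X^{(ij)}} \le \abs{\phi_{ij} - \phi_{ij}'} \cdot \abs{t_{\star,j}}$ (the Lipschitz bound for $\min_{\vct t}\norm{\mtx A \vct t}$ under perturbing one matrix entry, valid because changing $\phi_{ij}$ shifts the image by a vector of norm at most $\abs{\phi_{ij}-\phi_{ij}'}\abs{t_{\star,j}}$ in the $i$th coordinate), one gets $\Var(X) \le \frac12 \sum_{ij} \Expect[(\phi_{ij}-\phi_{ij}')^2 t_{\star,j}^2]$; but $\vct t_\star$ depends on the randomness, so I would instead bound uniformly $\abs{t_j} \le \norm{\vct t} \le 1$ and use $\sum_{j\in J} t_{\star,j}^2 \le 1$, giving $\Var(X) \le \cnst{C} B^2 \sum_{i=1}^m 1 = \cnst{C} B^2 m$, which is already enough: $\Expect X^2 \le (\Expect X)^2 + \cnst{C}B^2 m \le (\Expect X + \cnst{C} B m^{1/2})^2$, and since $B \ge 1$ this is stronger than (hence implies) the stated bound with $m^{1/4}$ — wait, that gives $m^{1/2}$, not $m^{1/4}$. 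If the weaker error term $\cnst{C}Bm^{1/2}$ suffices for the downstream application it is fine, but to genuinely reach $m^{1/4}$ one needs to exploit that $\sum_i (\vct\phi^i_J \cdot \vct t_{\star,J})^2$ is itself controlled by $X^2$, which feeds back into a self-bounding bound $\Delta \le \cnst{C}B^2 X + \cnst{C}B^2 \sqrt m$; plugging into Efron--Stein and solving the resulting quadratic in $\sqrt{\Expect X^2}$ yields $\Expect X^2 \le (\Expect X + \cnst{C}Bm^{1/4})^2$ after absorbing constants.

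The main obstacle is precisely this last refinement: getting the $m^{1/4}$ scaling (rather than $m^{1/2}$) requires the self-bounded structure $\Delta_- \le a X + b$ with $b$ of order $B^2 \sqrt{m}$ rather than $B^2 m$, which in turn requires carefully tracking that the ``large'' part of the variance proxy, namely $\sum_i B^2 t_{\star,j}^2 (\vct\phi^i \cdot \vct t_\star)^2 / \normsq{\mtx\Phi_J \vct t_{\star,J} + \mtx A_{J^c}\vct t_{\star,J^c}}$, telescopes against $X^2$ and so contributes $\cnst{C}B^2$ times $X$ (coefficient $a$) rather than a free $m$, while the genuinely free term $\sum_{ij} B^4 t_{\star,j}^4$ is only of order $B^4 m$ — which when combined through the quadratic self-bounding recursion $\Expect X^2 \le (\Expect X)^2 + \cnst{C}B^2\Expect X + \cnst{C}B^4\sqrt m$ does give $m^{1/4}$. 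I would carry out the increment estimate exactly paralleling~\eqref{eqn:Yij-Y}, invoke a standard Efron--Stein / self-bounding variance inequality (the $\ell_2$-analogue of Fact~\ref{fact:self-bdd-lower}, see~\cite[Ch.~3]{BLM13:Concentration-Inequalities}), and solve the quadratic to conclude.
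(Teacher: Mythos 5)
Your reduction to a variance bound is fine, and the Efron--Stein route correctly gives $\Var(X)\leq\cnst{C}B^2 m$, hence $\Expect X^2\leq(\Expect X+\cnst{C}Bm^{1/2})^2$. But you are right to worry that this is $m^{1/2}$ rather than $m^{1/4}$ — and the gap is not cosmetic. Proposition~\ref{prop:excess-width} needs this error to be dominated by $\cnst{C}B^2\sqrt{k}$ under the hypothesis $k\geq m^{1/2}$; with an $m^{1/2}$ error term you would instead need $k\geq m$, which breaks the parameter choice $k=\lceil n^{4/5}\rceil$ in Section~\ref{sec:rsv-pf-upper}. So the weaker bound genuinely does not suffice downstream.

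The refinement you then sketch does not go through. Working directly with increments of $X=\norm{\cdot}$ rather than $X^2=\normsq{\cdot}$, the one-entry change in $\mtx\Phi_J$ shifts the image vector by $(\alpha-\phi_{ij})t_{\star,j}\mathbf{e}_i$, and the exact identity $\norm{\vct v+\delta\mathbf{e}_i}-\norm{\vct v}=\dfrac{2\delta v_i+\delta^2}{\norm{\vct v+\delta\mathbf{e}_i}+\norm{\vct v}}$ produces a variance proxy of the shape $\Delta_-\leq 32B^2+32B^4m/X^2$, \emph{not} $aX+b$. The $1/X^2$ factor is decreasing in $X$ and singular at $X=0$, so there is no self-bounded structure to feed into a Fact~\ref{fact:self-bdd-lower}-type bound, and $\Expect\Delta_-$ is not controllable without a priori lower bounds on $X$. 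The display you write, $\Expect X^2\leq(\Expect X)^2+\cnst{C}B^2\Expect X+\cnst{C}B^4\sqrt m$, is also unsupported: your own accounting of the "free" term gives $B^4m$, not $B^4\sqrt m$, and the former takes you back to $m^{1/2}$.

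The missing idea is to stay with $X^2$, where the self-bounded structure is clean, and only then pass to $X$. The paper's proof does exactly this: first repeat the proof of~\eqref{eqn:rsv-lower-tail} with $T=T_J$ to get $\Prob{X^2\leq\Expect X^2-\cnst{C}B^2\zeta}\leq\econst^{-\zeta^2/m}$; then use the subadditivity $(\Expect X^2)^{1/2}-X\leq\sqrt{\Expect X^2-X^2}$ to convert this into $\Prob{X\leq(\Expect X^2)^{1/2}-\cnst{C}B\zeta}\leq\econst^{-\zeta^4/m}$; then combine $(\Expect X^2)^{1/2}-\Expect X\leq\Expect\big(X-(\Expect X^2)^{1/2}\big)_-$ with integration by parts, $\int_0^\infty\econst^{-\zeta^4/(\cnst{C}B^4m)}\idiff\zeta\leq\cnst{C}Bm^{1/4}$. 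The square root converts the Gaussian tail for $X^2$ into a quartic tail for $X$, and it is the quartic that produces the $m^{1/4}$ scale. Your attempt never accesses this mechanism, and without it the bound stalls at $m^{1/2}$.
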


\begin{proof}
Define the random variable
$$
X := \min_{\vct{t} \in T_J} \norm{ \mtx{\Phi}_J \vct{t}_J + \mtx{A}_{J^c} \vct{t}_{J^c} }.
$$
First, observe that
\begin{equation} \label{eqn:hybrid-poincare-pf1}
\sametprev{( \Expect X^2 )^{1/2} - \Expect X
	= \big((\Expect X^2)^{1/2} - \Expect X \big)_+}
	= \big(\Expect X - (\Expect X^2)^{1/2} \big)_-
	\leq \Expect \big(X - (\Expect X^2)^{1/2} \big)_-.
\end{equation}
The last inequality is Jensen's.
We can use concentration to bound this quantity.
\lang{Mutatis mutandis}, repeat the proof of equation~\eqref{eqn:rsv-lower-tail} from Proposition~\ref{prop:rsv-concentration} to see that, for all $\zeta \geq 0$,
$$
\Prob{ X^2 \leq \Expect X^2 - \cnst{C} B^2 \zeta } \leq \econst^{-\zeta^2 / m}.
$$
Invoke the subadditivity of the square root and change variables:
\begin{equation} \label{eqn:hybrid-poincare-pf2}
\Prob{ X \leq (\Expect X^2)^{1/2} - \cnst{C} B \zeta } \leq \econst^{-\zeta^4 / m}.
\end{equation}
Jensen's inequality and integration by parts deliver
$$
\Expect \big(X - (\Expect X^2)^{1/2} \big)_- =
\int_0^\infty \Prob{ X - ( \Expect X^2 )^{1/2} \leq - \zeta } \idiff{\zeta}
	\leq  \int_0^\infty \econst^{-\zeta^4/(\cnst{C} B^4 m)} \idiff{\zeta}
	\leq \cnst{C} B m^{1/4}.
$$
The third inequality follows from~\eqref{eqn:hybrid-poincare-pf2}.
Introduce the last display into~\eqref{eqn:hybrid-poincare-pf1} to reach
$$
(\Expect X^2)^{1/2} - \Expect X \leq \cnst{C} B m^{1/4}.
$$
Rearrange this relation to complete to proof of~\eqref{eqn:hybrid-poincare}.
\end{proof}

\subsection{Proposition~\ref{prop:excess-width}: The Role of the Gaussian Minimax Theorem}

To prove Proposition~\ref{prop:excess-width}, 
\joelprev{we must replace} the Gaussian matrix in the quantity
of interest with a pair of Gaussian vectors.  The key
to this argument is the following technical result.

\begin{lemma}[Proposition~\ref{prop:excess-width}: Application of Gaussian Minimax Theorem] \label{lem:my-comparison}
Adopt the notation and hypotheses of Proposition~\ref{prop:excess-width}.
Let $\mtx{A}$ be a fixed $m \times n$ matrix,
and let $\vct{g} \in \R^n$ and $\vct{h} \in \R^m$ be independent
standard normal random vectors.
Then, for all $\zeta \in \R$,
\begin{equation} \label{eqn:my-comparison-lower}
\Prob{ \min_{\vct{t} \in T_J} \norm{ \mtx{A}_J \vct{t}_J + \mtx{\Gamma}_{J^c} \vct{t}_{J^c} } \leq \zeta }
	\leq 2\, \Prob{ \min_{\vct{t} \in T_J} \left( \norm{ \mtx{A}_J \vct{t}_J +  \vct{h} \norm{\smash{\vct{t}_{J^c}}}  }
	+ \vct{g}_{J^c} \cdot \vct{t}_{J^c}\right) \leq \zeta }.
\end{equation}
Furthermore, if $T_J$ is convex and $\zeta\geq 0$,
\begin{equation} \label{eqn:my-comparison-upper}
\Prob{ \min_{\vct{t} \in T_J} \norm{ \mtx{A}_J \vct{t}_J + \mtx{\Gamma}_{J^c} \vct{t}_{J^c} } \geq \zeta } \leq 2\, \Prob{ \min_{\vct{t} \in T_J} \left( \norm{ \mtx{A}_J \vct{t}_J +  \vct{h} \norm{\smash{\vct{t}_{J^c}}}  }
	+ \vct{g}_{J^c} \cdot \vct{t}_{J^c}\right) \geq \zeta }.
\end{equation}
\joelprev{If $\zeta\leq 0$, the left-hand side is trivially equal to one.}
\end{lemma}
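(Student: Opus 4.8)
The plan is to recognize the left-hand quantities as Gaussian min--max objectives and to apply the Gaussian Minimax Theorem (Fact~\ref{fact:gauss-minmax}). Start from the variational formula for the Euclidean norm: for every $\vct{t}$,
$$
\norm{ \mtx{A}_J \vct{t}_J + \mtx{\Gamma}_{J^c} \vct{t}_{J^c} }
	= \max_{\vct{u} \in \mathsf{S}^{m-1}} \vct{u} \cdot \big( \mtx{\Gamma}_{J^c} \vct{t}_{J^c} + \mtx{A}_J \vct{t}_J \big)
	= \max_{\vct{u} \in \mathsf{B}^{m}} \vct{u} \cdot \big( \mtx{\Gamma}_{J^c} \vct{t}_{J^c} + \mtx{A}_J \vct{t}_J \big),
$$
since the maximum of a linear functional over the ball is attained on the sphere and equals the norm. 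Minimizing over the compact set $T_J$ displays $\min_{\vct{t} \in T_J} \norm{ \mtx{A}_J \vct{t}_J + \mtx{\Gamma}_{J^c} \vct{t}_{J^c} }$ as a primary Gaussian min--max objective of the form $\min_{\vct{t} \in T_J} \max_{\vct{u}} \big( \vct{u} \cdot \mtx{\Gamma}_{J^c} \vct{t}_{J^c} + \psi(\vct{u}, \vct{t}) \big)$, where $\mtx{\Gamma}_{J^c}$ is an $m \times (n-k)$ matrix of independent standard normals and the deterministic term $\psi(\vct{u}, \vct{t}) := \vct{u} \cdot \mtx{A}_J \vct{t}_J$ is bilinear, hence continuous and convex--concave. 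In the auxiliary objective supplied by Fact~\ref{fact:gauss-minmax}, the bilinear form $\vct{u} \cdot \mtx{\Gamma}_{J^c} \vct{t}_{J^c}$ is replaced by $\norm{\smash{\vct{t}_{J^c}}}\, \vct{h} \cdot \vct{u} + \norm{\vct{u}}\, \vct{g}_{J^c} \cdot \vct{t}_{J^c}$, with independent standard normal vectors $\vct{h} \in \R^m$ and $\vct{g}_{J^c} \in \R^{n-k}$.

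For the lower bound~\eqref{eqn:my-comparison-lower}, let $\vct{u}$ range over the sphere $\mathsf{S}^{m-1}$. The one-sided form of the Gaussian Minimax Theorem needs no convexity of the constraint sets, so for every $\zeta \in \R$,
$$
\Prob{ \min_{\vct{t} \in T_J} \norm{ \mtx{A}_J \vct{t}_J + \mtx{\Gamma}_{J^c} \vct{t}_{J^c} } \leq \zeta }
	\;\leq\; 2\, \Prob{ \min_{\vct{t} \in T_J} \max_{\vct{u} \in \mathsf{S}^{m-1}} \big( \norm{\smash{\vct{t}_{J^c}}}\, \vct{h} \cdot \vct{u} + \norm{\vct{u}}\, \vct{g}_{J^c} \cdot \vct{t}_{J^c} + \psi(\vct{u}, \vct{t}) \big) \leq \zeta }.
$$
Since $\norm{\vct{u}} = 1$ on the sphere, the inner maximum equals $\norm{ \mtx{A}_J \vct{t}_J + \vct{h} \norm{\smash{\vct{t}_{J^c}}} } + \vct{g}_{J^c} \cdot \vct{t}_{J^c}$, so the right-hand event is $\big\{ \min_{\vct{t} \in T_J} \big( \norm{ \mtx{A}_J \vct{t}_J + \vct{h} \norm{\smash{\vct{t}_{J^c}}} } + \vct{g}_{J^c} \cdot \vct{t}_{J^c} \big) \leq \zeta \big\}$, which is exactly~\eqref{eqn:my-comparison-lower}.

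For the upper bound~\eqref{eqn:my-comparison-upper}, assume $T_J$ is convex. Then $T_J$ and $\mathsf{B}^m$ are convex and $\psi$ is convex--concave, so the two-sided form of the Gaussian Minimax Theorem gives, for every $\zeta \in \R$, $\Prob{ \min_{\vct{t} \in T_J} \norm{ \mtx{A}_J \vct{t}_J + \mtx{\Gamma}_{J^c} \vct{t}_{J^c} } \geq \zeta } \leq 2\, \Prob{ \min_{\vct{t} \in T_J} \max_{\vct{u} \in \mathsf{B}^m} \big( \norm{\smash{\vct{t}_{J^c}}}\, \vct{h} \cdot \vct{u} + \norm{\vct{u}}\, \vct{g}_{J^c} \cdot \vct{t}_{J^c} + \psi(\vct{u}, \vct{t}) \big) \geq \zeta }$. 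Keeping $\vct{u}$ in the ball is forced by convexity; maximizing first over the radius $\norm{\vct{u}} = \rho \in [0,1]$ and then over direction introduces a positive part,
$$
\max_{\vct{u} \in \mathsf{B}^m} \big( \norm{\smash{\vct{t}_{J^c}}}\, \vct{h} \cdot \vct{u} + \norm{\vct{u}}\, \vct{g}_{J^c} \cdot \vct{t}_{J^c} + \psi(\vct{u}, \vct{t}) \big)
	= \Big( \norm{ \mtx{A}_J \vct{t}_J + \vct{h} \norm{\smash{\vct{t}_{J^c}}} } + \vct{g}_{J^c} \cdot \vct{t}_{J^c} \Big)_+.
$$
For $\zeta \geq 0$ one has $(a)_+ \geq \zeta$ if and only if $a \geq \zeta$, so the right-hand event agrees with $\big\{ \min_{\vct{t} \in T_J} \big( \norm{ \mtx{A}_J \vct{t}_J + \vct{h} \norm{\smash{\vct{t}_{J^c}}} } + \vct{g}_{J^c} \cdot \vct{t}_{J^c} \big) \geq \zeta \big\}$, and~\eqref{eqn:my-comparison-upper} follows. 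When $\zeta \leq 0$ there is nothing to prove, since $\norm{ \mtx{A}_J \vct{t}_J + \mtx{\Gamma}_{J^c} \vct{t}_{J^c} } \geq 0 \geq \zeta$ forces the left-hand side to equal one.

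The argument is essentially bookkeeping; the one place to be careful is aligning the auxiliary objective from Fact~\ref{fact:gauss-minmax} with the precise right-hand sides in the statement --- tracking which standard normal vector lives in $\R^m$ and which in $\R^{n-k}$ (the sign in front of $\vct{g}_{J^c} \cdot \vct{t}_{J^c}$ is immaterial by symmetry of the Gaussian), and managing the positive part, which appears only when the inner variable is confined to the ball rather than the sphere. That restriction to the ball is unavoidable in the convex two-sided comparison, and it is exactly the reason~\eqref{eqn:my-comparison-upper} is limited to $\zeta \geq 0$. One should also confirm that the orientations of Fact~\ref{fact:gauss-minmax} are used correctly: the ``$\leq \zeta$'' comparison is unconditional, whereas the ``$\geq \zeta$'' comparison requires the convex--concave structure, which the bilinear $\psi$ supplies.
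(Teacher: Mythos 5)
Your high-level route is the same as the paper's — write the norm as an inner maximum, apply a Gordon-type comparison to trade the Gaussian matrix for a pair of Gaussian vectors, and for the upper bound swap min and max using convexity — but there is a concrete gap in how you invoke Fact~\ref{fact:gauss-minmax}, and it hides precisely where the factor of $2$ comes from.

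The Fact as stated in the paper compares two centered Gaussian processes that are required to satisfy $\Expect X_{tu}^2 = \Expect Y_{tu}^2$, and it yields a comparison of probabilities \emph{with no factor of $2$}. If you set $X(\vct{t},\vct{u}) := \vct{u}\cdot(\mtx{\Gamma}_{J^c}\vct{t}_{J^c})$ and $Y(\vct{t},\vct{u}) := (\vct{u}\cdot\vct{h})\,\norm{\smash{\vct{t}_{J^c}}} + \norm{\vct{u}}\,(\vct{g}_{J^c}\cdot\vct{t}_{J^c})$ as your proposal implicitly does, then $\Expect X^2 = \normsq{\vct{u}}\normsq{\smash{\vct{t}_{J^c}}}$ while $\Expect Y^2 = 2\,\normsq{\vct{u}}\normsq{\smash{\vct{t}_{J^c}}}$, so the variance-matching hypothesis fails and Fact~\ref{fact:gauss-minmax} does not apply. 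The paper repairs this by augmenting $X$ with a scalar term $\norm{\vct{u}}\norm{\smash{\vct{t}_{J^c}}}\gamma$ for an independent standard normal $\gamma$, which equalizes the variances; it then removes the $\gamma$-term at the end by conditioning on $\{\gamma\le0\}$ (respectively $\{\gamma\ge0\}$), paying exactly a factor of $2$. Your proposal asserts the factor of $2$ without this mechanism, so as written the step ``Fact~\ref{fact:gauss-minmax} delivers the bound with factor $2$'' is not justified; you would either need to carry out the $\gamma$-augmentation and conditioning yourself, or appeal to a different, already-packaged form of Gordon's comparison (for example the CGMT statement in~\cite{TOH15:Gaussian-Min-Max}), which is not what the paper's Fact provides. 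Similarly, the ``two-sided form of the Gaussian Minimax Theorem'' you invoke for the convex case is not part of Fact~\ref{fact:gauss-minmax}; what the paper actually does there is re-apply the one-sided comparison to the negated processes with the roles of $T_J$ and $\mathsf{B}^m$ reversed, and then use the Sion minimax theorem to swap min and max, which requires the bilinearity of the deterministic kernel and convexity of both constraint sets. Your computation of the inner maximum over the ball (extracting the positive part) and your reduction of $(a)_+\ge\zeta$ to $a\ge\zeta$ for $\zeta>0$ are both fine; the only genuine missing piece is the variance-matching and sign-conditioning that produce the factor $2$.
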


This result depends on the Gaussian Minimax Theorem~\cite{Gor85:Some-Inequalities};
see Fact~\ref{fact:gauss-minmax} for a statement.
Lemma~\ref{lem:my-comparison} is similar with early results of Gordon~\cite[Cor.~1.2]{Gor88:Milmans-Inequality}.
The detailed argument here is adapted from~\cite[Thm.~2.1]{TOH15:Gaussian-Min-Max};
see also Stojnic~\cite{Sto13:Regularly-Random}.

\begin{proof}
The basic idea is to express the quantity of interest as the value of a saddle-point problem:
$$
\min_{\vct{t} \in T_J} \norm{ \mtx{A}_J \vct{t}_J + \mtx{\Gamma}_{J^c} \vct{t}_{J^c} }
	= \min_{\vct{t} \in T_J} \max_{\vct{u} \in \mathsf{B}^m}
	\vct{u} \cdot \big( \mtx{A}_J \vct{t}_J + \mtx{\Gamma}_{J^c} \vct{t}_{J^c} \big).
$$
Then we apply the Gaussian Minimax Theorem
to obtain probabilistic lower bounds.  When $T_J$ is convex, we can also invoke convex duality
to interchange the minimum and maximum, which leads to complementary bounds.  To proceed with
this approach, however, it is convenient to work with a slightly different minimax problem.

Define the deterministic function
$$
\lambda(\vct{t}, \vct{u}) = \vct{u} \cdot (\mtx{A}_J \vct{t}_J)
\quad\text{for $\vct{t} \in T_J$ and $\vct{u} \in \mathsf{B}^{m}$.}
$$
Let $\gamma$ be a standard normal random variable, independent from everything else.
Introduce two centered Gaussian processes:
$$
X(\vct{t}, \vct{u}) := \vct{u} \cdot (\mtx{\Gamma}_{J^c} \vct{t}_{J^c})
	+ \norm{\vct{u}} \norm{\smash{\vct{t}_{J^c}}} \gamma
\quad\text{and}\quad
Y(\vct{t}, \vct{u}) := (\vct{u} \cdot \vct{h}) \norm{ \smash{\vct{t}_{J^c}} }
	+  \norm{ \vct{u} } (\vct{g} \cdot \vct{t}_{J^c})
$$
indexed by $\vct{t} \in T_J$ and $\vct{u} \in \mathsf{B}^{m}$.
Let us verify that these processes satisfy the conditions required
by the Gaussian Minimax Theorem, Fact~\ref{fact:gauss-minmax}.
First, for all parameters $\vct{t} \in T_J$ and $\vct{u} \in \mathsf{B}^{m}$,
\begin{equation} \label{eqn:minmax-cond1}
\Expect X(\vct{t}, \vct{u})^2 = 2 \normsq{\vct{u}} \normsq{\smash{\vct{t}_{J^c}}}
	= \Expect Y(\vct{t}, \vct{u})^2.
\end{equation}
Second, for all parameters $\vct{t}, \vct{t}' \in T_J$ and $\vct{u}, \vct{u}' \in \mathsf{B}^{m}$,
$$
\Expect\big[ X(\vct{t}, \vct{u}) X(\vct{t}', \vct{u}') \big]
	- \Expect\big[ Y(\vct{t}, \vct{u}) Y(\vct{t}', \vct{u}') \big]
	= \big( \norm{ \vct{u} } \norm{\smash{\vct{u}'}} - \vct{u} \cdot \vct{u}' \big)
	\big( \norm{ \smash{\vct{t}_{J^c}} } \norm{\smash{\vct{t}_{J^c}'} }
	- \vct{t}_{J^c} \cdot \vct{t}'_{J^c} \big).
$$
By the Cauchy--Schwarz inequality,
\begin{align}
\Expect\big[ X(\vct{t}, \vct{u}) X(\vct{t}, \vct{u}') \big]
	&= \Expect \big[ Y(\vct{t}, \vct{u}) Y(\vct{t}, \vct{u}') \big]; \label{eqn:minmax-cond2a} \\
\Expect\big[ X(\vct{t}, \vct{u}) X(\vct{t}', \vct{u}) \big]
	&= \Expect \big[ Y(\vct{t}, \vct{u}) Y(\vct{t}', \vct{u}) \big]; \label{eqn:minmax-cond2b} \\
\Expect\big[ X(\vct{t}, \vct{u}) X(\vct{t}', \vct{u}') \big]
	&\geq \Expect \big[ Y(\vct{t}, \vct{u}) Y(\vct{t}', \vct{u}') \big]. \label{eqn:minmax-cond3}
\end{align}
The formulas~\eqref{eqn:minmax-cond1},~\eqref{eqn:minmax-cond2a}, and~\eqref{eqn:minmax-cond3}
verify the conditions of the Gaussian Minimax Theorem.  Fact~\ref{fact:gauss-minmax} delivers
the bound
\begin{equation} \label{eqn:minmax-outcome-lower}
\Prob{ \min_{\vct{t} \in T_J} \max_{\vct{u} \in \mathsf{B}^m}
	\big( \lambda(\vct{t}, \vct{u}) + X(\vct{t}, \vct{u})  \big) > \zeta }
	\geq \Prob{ \min_{\vct{t} \in T_J} \max_{\vct{u} \in \mathsf{B}^m}
	\big( \lambda(\vct{t}, \vct{u}) + Y(\vct{t}, \vct{u}) \big) > \zeta }.
\end{equation}
This estimate does involve a small technicality.  We can only apply the Gaussian Minimax Theorem
to a finite subset of $T_J \times \mathsf{B}^m$, so we must make an approximation argument
to pass to the entire set.  We omit the details.

Now, let us determine the values of the saddle-point problems in~\eqref{eqn:minmax-outcome-lower}.  First,
\begin{equation}\label{eqn:max_over_ball}
\begin{aligned}
\min_{\vct{t} \in T_J} \max_{\vct{u} \in \mathsf{B}^m} \big( \lambda(\vct{t}, \vct{u}) + X(\vct{t}, \vct{u}) \big)
	&= \min_{\vct{t} \in T_J} \max_{\vct{u} \in \mathsf{B}^{m}} \big( \vct{u} \cdot ( \mtx{A}_J \vct{t}_J + \mtx{\Gamma}_{J^c} \vct{t}_{J^c}) + \norm{\vct{u}} \norm{ \smash{\vct{t}_{J^c}} } \gamma \big)\\
	&= \min_{\vct{t} \in T_J} \left( \norm{ \mtx{A}_J \vct{t}_J + \mtx{\Gamma}_{J^c} \vct{t}_{J^c} } + \norm{\smash{\vct{t}_{J^c}}} \gamma \right)_{\sametprev{+}}.
\end{aligned}
\end{equation}
\sametprev{Similarly,
$$
\min_{\vct{t} \in T_J} \max_{\vct{u} \in \mathsf{B}^m} \big( \lambda(\vct{t}, \vct{u}) + Y(\vct{t}, \vct{u}) \big)
	\geq \min_{\vct{t} \in T_J} \left( \norm{ \mtx{A}_J \vct{t}_J + \vct{h} \norm{\smash{\vct{t}_{J^c}}} }
	+ \vct{g}_{J^c} \cdot \vct{t}_{J^c} \right).
$$}

\sametprev{Next, we remove the term involving $\gamma$ from the right-hand side of~\eqref{eqn:max_over_ball}.
To do so, we condition on $\gamma > 0$ and $\gamma \leq 0$ and calculate that
$$
\begin{aligned}
\Prob{ \min_{\vct{t} \in T_J} \max_{\vct{u} \in \mathsf{B}^m}
	\big( \lambda(\vct{t}, \vct{u}) + X(\vct{t}, \vct{u})  \big) > \zeta }
	&= \Prob{ \min_{\vct{t} \in T_J} \max_{\vct{u} \in \mathsf{B}^{m}} \big( \norm{ \mtx{A}_J \vct{t}_J + \mtx{\Gamma}_{J^c} \vct{t}_{J^c} } + \norm{ \smash{\vct{t}_{J^c}} } \gamma \big)_+ > \zeta } \\
	&\leq \frac{1}{2} + \frac{1}{2} \Prob{ \min_{\vct{t} \in T_J} \left( \norm{ \mtx{A}_J \vct{t}_J + \mtx{\Gamma}_{J^c} \vct{t}_{J^c} } + \norm{\smash{\vct{t}_{J^c}}} \gamma \right)_+  > \zeta \, \Big\vert \, \gamma \leq 0 } \\
	&\leq \frac{1}{2} + \frac{1}{2} \Prob{\min_{\vct{t} \in T_J} \left( \norm{ \mtx{A}_J \vct{t}_J + \mtx{\Gamma}_{J^c} \vct{t}_{J^c} }  \right)_+  > \zeta }\\
	&= \frac{1}{2} + \frac{1}{2} \Prob{ \min_{\vct{t} \in T_J} \norm{ \mtx{A}_J \vct{t}_J + \mtx{\Gamma}_{J^c} \vct{t}_{J^c} } > \zeta }.
\end{aligned}
$$
On the other hand,
$$
\Prob{\min_{\vct{t} \in T_J} \max_{\vct{u} \in \mathsf{B}^m} \big( \lambda(\vct{t}, \vct{u}) + Y(\vct{t}, \vct{u}) \big)>\zeta}
	\geq \Prob{\min_{\vct{t} \in T_J} \left( \norm{ \mtx{A}_J \vct{t}_J + \vct{h} \norm{\smash{\vct{t}_{J^c}}} }
	+ \vct{g}_{J^c} \cdot \vct{t}_{J^c} \right)> \zeta}.
$$
Introduce the last two displays into~\eqref{eqn:minmax-outcome-lower} to reach
$$
\frac{1}{2} + \frac{1}{2} \Prob{ \min_{\vct{t} \in T_J} \norm{ \mtx{A}_J \vct{t}_J + \mtx{\Gamma}_{J^c} \vct{t}_{J^c} } > \zeta }
	\geq \Prob{ \min_{\vct{t} \in T_J} \big( \norm{ \mtx{A}_J \vct{t}_J + \vct{h} \norm{\smash{\vct{t}_{J^c}}} }
	+\vct{g}_{J^c} \cdot \vct{t}_{J^c} \big) > \zeta }.
$$}

\noindent
Take the complements of both probabilities and rearrange to conclude that~\eqref{eqn:my-comparison-lower} is correct.

The second result~\eqref{eqn:my-comparison-upper} requires an additional duality argument.
If we replace the function $\lambda$
and the random processes $X$ and $Y$ with their negations, all of the variance calculations above remain valid.
In particular, the relations~\eqref{eqn:minmax-cond1},~\eqref{eqn:minmax-cond2b}, and~\eqref{eqn:minmax-cond3}
permit us to apply Fact~\ref{fact:gauss-minmax} with the roles of $T_J$ and $\mathsf{B}^m$
reversed.  This step yields
$$
\Prob{ \min_{\vct{u} \in \mathsf{B}^m} \max_{\vct{t} \in T_J} \big(- \lambda(\vct{t}, \vct{u}) - X(\vct{t}, \vct{u})  \big) > -\zeta }
	\geq \Prob{ \min_{\vct{u} \in \mathsf{B}^m} \max_{\vct{t} \in T_J} \big( - \lambda(\vct{t}, \vct{u}) - Y(\vct{t}, \vct{u}) \big) > -\zeta }.
$$
Let us examine the saddle-point problems.  Since $- \lambda - X$ is bilinear and the sets $\mathsf{B}^m$ and $T_J$ are compact and convex, the Sion Minimax Theorem~\cite{Sio58:General-Minimax} allows us to interchange the minimum and maximum.  Thus
$$
\begin{aligned}
\min_{\vct{u} \in \mathsf{B}^m} \max_{\vct{t} \in T_J} \big( - \lambda(\vct{t}, \vct{u}) - X(\vct{t}, \vct{u})  \big)
	&= \max_{\vct{t} \in T_J} \min_{\vct{u} \in \mathsf{B}^m} \big( - \lambda(\vct{t}, \vct{u}) - X(\vct{t}, \vct{u}) \big) \\
	&= - \min_{\vct{t} \in T_J} \max_{\vct{u} \in \mathsf{B}^m} \big( \lambda(\vct{t}, \vct{u}) + X(\vct{t}, \vct{u})   \big) \\
	&= - \min_{\vct{t} \in T_J} \big( \norm{ \mtx{A}_J \vct{t}_J + \mtx{\Gamma}_{J^c} \vct{t}_{J^c} } + \norm{\smash{\vct{t}_{J^c}}} \gamma \big)_{\sametprev{+}}. 
\end{aligned}
$$
Similarly,
$$
\min_{\vct{u} \in \mathsf{B}^m} \max_{\vct{t} \in T_J} \big(  - \lambda(\vct{t}, \vct{u}) - Y(\vct{t}, \vct{u}) \big)
	=  - \min_{\vct{t} \in T_J} \big( \norm{ \mtx{A}_J \vct{t}_J + \vct{h} \norm{\smash{\vct{t}_{J^c}}} }
	+ \vct{g}_{J^c} \cdot \vct{t}_{J^c} \big)_{\sametprev{+}}.
$$
Combining the last three displays, we reach
$$
\Prob{ \min_{\vct{t} \in T_J} \big( \norm{ \mtx{A}_J \vct{t}_J + \mtx{\Gamma}_{J^c} \vct{t}_{J^c} } + \norm{\smash{\vct{t}_{J^c}}} \gamma \big)_+ < \zeta }
	\geq \Prob{ \min_{\vct{t} \in T_J} \big( \norm{ \mtx{A}_J \vct{t}_J + \vct{h} \norm{\smash{\vct{t}_{J^c}}} }
	+ \vct{g}_{J^c} \cdot \vct{t}_{J^c} \big)_+ < \zeta }.
$$
Proceeding as before, \sametprev{conditioning on the sign of $\gamma$}, we may remove the dependence on $\gamma$ from the left-hand side to obtain
$$
\Prob{ \min_{\vct{t} \in T_J} \norm{ \mtx{A}_J \vct{t}_J + \mtx{\Gamma}_{J^c} \vct{t}_{J^c} } \geq \zeta }
	\geq 2\Prob{ \min_{\vct{t} \in T_J} \big( \norm{ \mtx{A}_J \vct{t}_J + \vct{h} \norm{\smash{\vct{t}_{J^c}}} }
	+ \vct{g}_{J^c} \cdot \vct{t}_{J^c} \big)_+ \geq \zeta }.
$$
\sametprev{Clearly, this inequality is useful only for $\zeta\geq 0$ in which case the right hand side is equal to
$$
2\Prob{ \min_{\vct{t} \in T_J} \norm{ \mtx{A}_J \vct{t}_J + \vct{h} \norm{\smash{\vct{t}_{J^c}}} }
	+ \vct{g}_{J^c} \cdot \vct{t}_{J^c}  \geq \zeta }.
$$
We confirm that~\eqref{eqn:my-comparison-upper} is true.}
\end{proof}

\subsection{Proposition~\ref{prop:excess-width}: Reducing the Gaussian Matrix to Some Gaussian Vectors}

Our next goal is to convert the probability bounds from Lemma~\ref{lem:my-comparison}
into expectation bounds.  Lemma~\ref{lem:apply-minmax-lower} gives a lower bound that
is valid for every subset $T_J$ of the unit ball, and Lemma~\ref{lem:apply-minmax-upper}
gives an upper bound that is valid when $T_J$ is also convex.

\begin{lemma}[Proposition~\ref{prop:excess-width}: Reducing the Gaussian Matrix---Lower Bound] \label{lem:apply-minmax-lower}
Adopt the notation and hypotheses of Proposition~\ref{prop:excess-width}.
Let $\mtx{A}$ be a fixed $m \times n$ matrix,
and let $\vct{g} \in \R^n$ and $\vct{h} \in \R^m$
be independent standard normal vectors.
Then
\begin{equation} \label{eqn:remove-gauss-lower}
\Expect \min_{\vct{t} \in T_J} \norm{ \mtx{A}_J \vct{t}_J + \mtx{\Gamma}_{J^c} \vct{t}_{J^c} }
	\geq \Expect \min_{\vct{t} \in T_J} \left( \norm{ \mtx{A}_J \vct{t}_J +  \vct{h} \norm{\smash{\vct{t}_{J^c}}}  }
	+ \vct{g}_{J^c} \cdot \vct{t}_{J^c}\right)
	- 2.
\end{equation}
In particular,
$$
\Expect \min_{\vct{t} \in T_J} \norm{ \mtx{\Phi}_J \vct{t}_J + \mtx{\Gamma}_{J^c} \vct{t}_{J^c} }
	\geq \Expect \min_{\vct{t} \in T_J} \left( \norm{ \mtx{\Phi}_J \vct{t}_J +  \vct{h} \norm{\smash{\vct{t}_{J^c}}}  }
	+ \vct{g}_{J^c} \cdot \vct{t}_{J^c}\right)
	- 2.
$$
\end{lemma}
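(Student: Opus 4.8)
The plan is to upgrade the probability comparison of Lemma~\ref{lem:my-comparison} to an expectation comparison via a layer-cake computation, using a variance bound for the right-hand quantity to control the additive loss. Write $U := \min_{\vct{t} \in T_J} \norm{ \mtx{A}_J \vct{t}_J + \mtx{\Gamma}_{J^c} \vct{t}_{J^c} }$ and $V := \min_{\vct{t} \in T_J} \big( \norm{ \mtx{A}_J \vct{t}_J + \vct{h}\norm{\smash{\vct{t}_{J^c}}} } + \vct{g}_{J^c} \cdot \vct{t}_{J^c} \big)$, and note that $U \geq 0$ while $V$ may be negative. Both random variables have Gaussian-type tails, so all expectations below are finite. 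It suffices to prove the displayed bound for a fixed matrix $\mtx{A}$: the ``in particular'' statement then follows by conditioning on $\mtx{\Phi}$ (which is independent of $\mtx{\Gamma}, \vct{g}, \vct{h}$) and averaging, since the additive constant does not depend on $\mtx{A}$.

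First I would invoke Lemma~\ref{lem:my-comparison}: for every $\zeta \in \R$, $\Prob{U \leq \zeta} \leq 2\,\Prob{V \leq \zeta}$, hence $\Prob{U > \zeta} \geq 1 - 2\,\Prob{V \leq \zeta}$. Since $U \geq 0$, for any $c \geq 0$ we get $\Expect U \geq \int_0^c \Prob{U > \zeta}\idiff{\zeta} \geq \int_0^c \big(1 - 2\,\Prob{V \leq \zeta}\big)\idiff{\zeta}$. Rewriting $\int_0^c \Prob{V \leq \zeta}\idiff{\zeta} = c - \int_0^c \Prob{V > \zeta}\idiff{\zeta}$ and identifying $\int_0^c \Prob{V > \zeta}\idiff{\zeta} = \Expect \min\{(V)_+, c\}$, this becomes $\Expect U \geq 2\,\Expect \min\{(V)_+, c\} - c$. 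Using $\min\{(V)_+, c\} \geq \min\{V, c\} = V - (V - c)_+$ yields $\Expect U \geq 2\,\Expect V - 2\,\Expect (V - c)_+ - c$ for every $c \geq 0$.

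Next I would bound the fluctuations of $V$. Regard $V$ as a function of the Gaussian vector $(\vct{h}, \vct{g}_{J^c})$. Because $T_J \subset \mathsf{B}^n$, every $\vct{t} \in T_J$ has $\norm{\smash{\vct{t}_{J^c}}} \leq 1$, so for each fixed $\vct{t}$ the maps $\vct{h} \mapsto \norm{\mtx{A}_J \vct{t}_J + \vct{h}\norm{\smash{\vct{t}_{J^c}}}}$ and $\vct{g}_{J^c} \mapsto \vct{g}_{J^c} \cdot \vct{t}_{J^c}$ are each $1$-Lipschitz; taking the infimum over $\vct{t}$ preserves this, so $V$ is $\sqrt{2}$-Lipschitz in $(\vct{h}, \vct{g}_{J^c})$. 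The Gaussian variance inequality, Fact~\ref{fact:gauss-variance}, then gives $\Var V \leq 2$, and consequently $\Expect (V - \Expect V)_+ = \tfrac{1}{2}\,\Expect \abs{V - \Expect V} \leq \tfrac{1}{2}\sqrt{\Var V} \leq \tfrac{1}{\sqrt{2}}$.

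Finally I would pick $c$. If $\Expect V \leq 0$ the claim is trivial, since $\Expect U \geq 0 \geq \Expect V \geq \Expect V - 2$. Otherwise set $c := \Expect V$ in the bound from the second paragraph to obtain $\Expect U \geq 2\,\Expect V - 2\,\Expect(V - \Expect V)_+ - \Expect V = \Expect V - 2\,\Expect(V - \Expect V)_+ \geq \Expect V - \sqrt{2} \geq \Expect V - 2$, which is the assertion. The only delicate point is that $V$ is not bounded below by a universal constant and $\Expect(V)_+$ is not universal either, so one cannot integrate $\Prob{U > \zeta} \geq 1 - 2\,\Prob{V \leq \zeta}$ over all of $[0,\infty)$; truncating at $c = \Expect V$ together with the bound $\Var V \leq 2$ is precisely what keeps the error at a universal constant. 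I do not expect a serious obstacle here --- this lemma is a routine bridge from the probabilistic comparison already furnished by Lemma~\ref{lem:my-comparison} to the excess-width estimates that follow.
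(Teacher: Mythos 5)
Your argument is correct and follows essentially the same route as the paper's: both convert the probability comparison from Lemma~\ref{lem:my-comparison} into an expectation bound by integrating tail probabilities, then control the fluctuations of the right-hand quantity via Gaussian Lipschitz concentration. (Your observation that the joint Lipschitz constant in $(\vct{h},\vct{g}_{J^c})$ is $\sqrt{2}$ rather than $1$, giving the slightly sharper $\Expect V - \sqrt{2}$, is a correct refinement of what the paper writes; the published constant~$2$ still covers it.)
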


\begin{proof}
We make the abbreviations
$$
X := \min_{\vct{t} \in T_J} \norm{ \mtx{A}_J \vct{t}_J + \mtx{\Gamma}_{J^c} \vct{t}_{J^c} }
\quad\text{and}\quad
Y := \min_{\vct{t} \in T_J} \left( \norm{ \mtx{A}_J \vct{t}_J +  \vct{h} \norm{\smash{\vct{t}_{J^c}}}  }
	+ \vct{g}_{J^c} \cdot \vct{t}_{J^c}\right).
$$
First, note that
$$
\Expect Y - \Expect X \leq (\Expect Y - \Expect X)_+ = (\Expect X - \Expect Y)_-
	\leq \Expect (X - \Expect Y)_- .
$$
The last inequality is Jensen's.
To bound the right-hand side, we use integration by parts
and formula~\eqref{eqn:my-comparison-lower} from Lemma~\ref{lem:my-comparison}:
$$
\Expect (X - \Expect Y)_{-}
	= \int_{0}^\infty \Prob{ X - \Expect Y \leq - \zeta } \idiff{\zeta}
	\leq \int_{0}^\infty 2 \, \Prob{ Y - \Expect Y \leq - \zeta } \idiff{\zeta}
	= 2 \Expect (Y - \Expect Y)_-.
$$
Finally, we make the estimates
$$
\Expect[ (Y - \Expect Y)_- ]
	\leq \Var[ Y ]^{1/2}
	\leq 1.
$$
The last inequality follows from the Gaussian variance inequality, Fact~\ref{fact:gauss-variance}.
Indeed, the function $(\vct{g}, \vct{h}) \mapsto Y(\vct{g}, \vct{h})$ is 1-Lipschitz
because the set $T_J$ is contained in the unit ball.
In summary,
$$
\Expect Y - \Expect X \leq \Expect (X - \Expect Y)_-
	\leq 2 \Expect (Y - \Expect Y)_-
	\leq 2.
$$
We arrive the advertised bound~\eqref{eqn:remove-gauss-lower}.
\end{proof}

\begin{lemma}[Proposition~\ref{prop:excess-width}: Reducing the Gaussian Matrix---Upper Bound] \label{lem:apply-minmax-upper}
Adopt the notation and hypotheses of Proposition~\ref{prop:excess-width}.
Let $\mtx{A}$ be a fixed $m \times n$ matrix,
and let $\vct{g} \in \R^n$ and $\vct{h} \in \R^m$
be independent standard normal vectors.
Then
\begin{equation} \label{eqn:remove-gauss-upper}
\Expect \min_{\vct{t} \in T_J} \norm{ \mtx{A}_J \vct{t}_J + \mtx{\Gamma}_{J^c} \vct{t}_{J^c} }
	\leq \Expect \min_{\vct{t} \in T_J} \left( \norm{ \mtx{A}_J \vct{t}_J +  \vct{h} \norm{\smash{\vct{t}_{J^c}}}  }
	+ \vct{g}_{J^c} \cdot \vct{t}_{J^c}\right)
	+ 2.
\end{equation}
In particular,
$$
\Expect \min_{\vct{t} \in T_J} \norm{ \mtx{\Phi}_J \vct{t}_J + \mtx{\Gamma}_{J^c} \vct{t}_{J^c} }
	\leq \Expect \min_{\vct{t} \in T_J} \left( \norm{ \mtx{\Phi}_J \vct{t}_J +  \vct{h} \norm{\smash{\vct{t}_{J^c}}}  }
	+ \vct{g}_{J^c} \cdot \vct{t}_{J^c}\right)
	+ 2.
$$
\end{lemma}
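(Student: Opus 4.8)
The plan is to mirror the proof of Lemma~\ref{lem:apply-minmax-lower}, replacing the lower-tail comparison~\eqref{eqn:my-comparison-lower} with its upper-tail counterpart~\eqref{eqn:my-comparison-upper}. This substitution is exactly where the hypothesis that $T_J$ is convex becomes indispensable, since~\eqref{eqn:my-comparison-upper} is only available for convex index sets (it rests on the duality/Sion-minimax step inside Lemma~\ref{lem:my-comparison}). The whole argument is a short chain: a Jensen step, an integration-by-parts representation of a positive part, one application of the comparison inequality, and the Gaussian variance inequality.

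First I would abbreviate
$$
X := \min_{\vct{t} \in T_J} \norm{ \mtx{A}_J \vct{t}_J + \mtx{\Gamma}_{J^c} \vct{t}_{J^c} }
\qquad\text{and}\qquad
Y := \min_{\vct{t} \in T_J} \big( \norm{ \mtx{A}_J \vct{t}_J + \vct{h} \norm{\smash{\vct{t}_{J^c}}} } + \vct{g}_{J^c} \cdot \vct{t}_{J^c} \big),
$$
noting that $X \geq 0$ because it is a minimum of Euclidean norms. Since $(\cdot)_+$ is convex, Jensen's inequality gives $\Expect X - \Expect Y \leq (\Expect X - \Expect Y)_+ \leq \Expect(X - \Expect Y)_+$, and integration by parts yields
$$
\Expect (X - \Expect Y)_+ = \int_0^\infty \Prob{ X \geq \Expect Y + \zeta } \idiff{\zeta}.
$$
I would then split this integral at the value of $\zeta$ where the threshold $\Expect Y + \zeta$ changes sign. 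On the range where $\Expect Y + \zeta \geq 0$, the upper-tail comparison~\eqref{eqn:my-comparison-upper} applies (using convexity of $T_J$) and gives $\Prob{ X \geq \Expect Y + \zeta } \leq 2\,\Prob{ Y \geq \Expect Y + \zeta }$; integrating over all $\zeta \geq 0$ produces $2\,\Expect(Y - \Expect Y)_+ \leq 2\,\Var[Y]^{1/2} \leq 2$, the last step being the Gaussian variance inequality, Fact~\ref{fact:gauss-variance}, applied to the Lipschitz map $(\vct{g}, \vct{h}) \mapsto Y$ (its Lipschitz constant is absolute because $T_J \subseteq \mathsf{B}^n$, so $\norm{\smash{\vct{t}_{J^c}}} \leq 1$). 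When $\Expect Y \geq 0$ the second range is empty and this already delivers~\eqref{eqn:remove-gauss-upper}. For the residual regime $\Expect Y < 0$, the range $\Expect Y + \zeta < 0$ contributes exactly $(-\Expect Y)$ since there $X \geq 0$ forces the probability to be one; this tail is harmless because a negative $\Expect Y$ forces $\coll{E}_m(T_J)$ to be negative as well, in which case the estimate in Proposition~\ref{prop:excess-width} that invokes this lemma becomes trivial, so one may simply dispose of that case up front. Finally, the ``in particular'' statement follows by conditioning on $\mtx{\Phi}$, applying the bound with the fixed matrix $\mtx{A}_J = \mtx{\Phi}_J$, and averaging.

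The main obstacle is precisely the sign restriction in~\eqref{eqn:my-comparison-upper}: unlike the lower-tail comparison, which holds for every threshold, the upper-tail comparison is vacuous for negative thresholds, so the proof cannot be a verbatim transcription of the lower-bound argument and the case $\Expect Y < 0$ requires a separate (soft) observation. A secondary technical point is simply keeping track of the constant in the Gaussian variance inequality so that the additive error stays $O(1)$, which is all that the downstream chain in Proposition~\ref{prop:excess-width} needs.
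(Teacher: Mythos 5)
Your proof mirrors the paper's own chain---Jensen, integration by parts, the comparison inequality, the Gaussian variance bound---but you are more careful than the paper at one crucial point: you observe that the upper-tail comparison~\eqref{eqn:my-comparison-upper} is available only at nonnegative thresholds $\zeta \geq 0$, while the integrand $\Prob{X \geq \Expect Y + \zeta}$ invokes the threshold $\Expect Y + \zeta$, which is negative on the range $\zeta \in [0, -\Expect Y)$ whenever $\Expect Y < 0$. The paper's one-line proof elides this, and the lemma as literally stated can in fact fail for such $T_J$: take $T_J = \mathsf{B}^n$ (convex, contains the origin) with $m$ much smaller than $n$; then $\Expect X = 0$ while $\Expect Y \approx \sqrt{m} - \sqrt{n-k}$ is strongly negative, so $\Expect X \leq \Expect Y + 2$ fails.

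Your split of the integral at $\zeta = -\Expect Y$ is exactly the right move, and it shows that the argument actually yields $\Expect X \leq (\Expect Y)_+ + 2$, since the range $[0, -\Expect Y)$ contributes precisely $(\Expect Y)_-$ to $\Expect(X - \Expect Y)_+$. That positive-part form is what should be stated, and it is all the downstream chain in Proposition~\ref{prop:excess-width} needs once the $(\cdot)_+^2$ operations and the hypothesis $k \geq m^{1/2}$ are taken into account: when $\Expect Y < 0$, the corrected bound gives $\Expect X \leq 2$ and hence $\Expect \smin^2 = O(m^{1/2}) = O(B^4 k)$, which is absorbed by the final error term $\cnst{C} B^4(m+n)^{0.94}$. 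Your ``disposal'' heuristic is pointing at the right idea but is a bit loose---$\Expect Y < 0$ only forces $\coll{E}_m(T_J) \lesssim B^2\sqrt{k}$ through Lemmas~\ref{lem:remove-PhiK} and~\ref{lem:missing-coords-upper}, not strict negativity of $\coll{E}_m(T_J)$---so the clean fix is to restate the lemma with $(\Expect Y)_+$ on the right-hand side rather than discard a case. (A minor side point that applies equally to the paper's Lemma~\ref{lem:apply-minmax-lower}: the Lipschitz constant of $(\vct{g},\vct{h}) \mapsto Y$ is $\sqrt{2}$, not $1$, since both the $\vct{h}$ and $\vct{g}_{J^c}$ gradients have magnitude $\norm{\vct{t}_{J^c}}$; this changes the constant in~\eqref{eqn:remove-gauss-upper} but nothing else.)
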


\begin{proof}
We follow the same pattern as in Lemma~\ref{lem:apply-minmax-lower}.  Using the same notation, we calculate that
$$
\Expect X - \Expect Y \leq (\Expect X - \Expect Y)_+ \leq \Expect (X - \Expect Y)_+
	\leq 2 \Expect( Y - \Expect Y)_+
	\leq 2.
$$
In this case, we invoke~\eqref{eqn:my-comparison-upper}
from Lemma~\ref{lem:my-comparison} to obtain the penultimate inequality.
\end{proof}

\subsection{Proposition~\ref{prop:excess-width}: Removing the Remaining Part of the Original Random Matrix}

The next step in the proof of Proposition~\ref{prop:excess-width}
is to remove the remaining section of the random matrix $\mtx{\Phi}$
from the bounds in Lemmas~\ref{lem:apply-minmax-lower} and~\ref{lem:apply-minmax-upper}.

\begin{lemma}[Proposition~\ref{prop:excess-width}: Removing the Original Matrix] \label{lem:remove-PhiK}
Adopt the notation and hypotheses of Proposition~\ref{prop:excess-width}.
Let $\mtx{X}$ be an $m \times n$ random matrix with independent, standardized entries
that satisfy Assumption~\ref{eqn:subgauss-hyp}.
Then
$$
\abs{ \Expect \min_{\vct{t} \in T_J} \left( \norm{ \mtx{X}_J \vct{t}_J +  \vct{h} \norm{\smash{\vct{t}_{J^c}}} }
	+ \vct{g}_{J^c} \cdot \vct{t}_{J^c} \right)
	- \Expect \min_{\vct{t} \in T_J} \left( \sqrt{m} \norm{\vct{t}} + \vct{g}_{J^c} \cdot \vct{t}_{J^c} \right) }
	\leq \cnst{C} B^2 \sqrt{k}.
$$
In particular, this conclusion is valid when $\mtx{X} = \mtx{\Phi}$. \end{lemma}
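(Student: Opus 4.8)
The plan is to reduce the claimed bound to an estimate of how close the rescaled random matrix $m^{-1/2}[\mtx{X}_J \mid \vct{h}]$ is to a partial isometry, and then to quote the standard spectral-norm and smallest-singular-value bounds from nonasymptotic random matrix theory.

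First I would strip away the common summand. The term $\vct{g}_{J^c}\cdot\vct{t}_{J^c}$ occurs inside both infima, and for any fixed realization of the randomness one has $\abs{\min_{\vct{t}}(p(\vct{t})+r(\vct{t})) - \min_{\vct{t}}(q(\vct{t})+r(\vct{t}))} \le \sup_{\vct{t}}\abs{p(\vct{t})-q(\vct{t})}$. Taking expectations (over $\mtx{X}_J$, $\vct{h}$, and $\vct{g}_{J^c}$, the last of which now drops out), the quantity on the left of the lemma is at most
$$
\Expect \sup_{\vct{t}\in T_J} \abs{ \norm{ \mtx{X}_J\vct{t}_J + \vct{h}\norm{\smash{\vct{t}_{J^c}}} } - \sqrt{m}\,\norm{\vct{t}} }.
$$
Next I would set $\mtx{M} := [\mtx{X}_J \mid \vct{h}]$, an $m\times(k+1)$ random matrix whose entries are independent, standardized, and enjoy the same regularity as those of $\mtx{X}$ (its final column being standard normal). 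For $\vct{t}\in T_J$ put $\vct{w}(\vct{t}) := (\vct{t}_J,\ \norm{\smash{\vct{t}_{J^c}}})\in\R^{k+1}$, so that $\mtx{X}_J\vct{t}_J + \vct{h}\norm{\smash{\vct{t}_{J^c}}} = \mtx{M}\vct{w}(\vct{t})$ and $\norm{\vct{w}(\vct{t})} = \norm{\vct{t}}\le 1$. Since $\norm{\mtx{M}\vct{w}}$ lies in $[\smin(\mtx{M})\norm{\vct{w}},\ \smax(\mtx{M})\norm{\vct{w}}]$ and $\norm{\vct{w}(\vct{t})}\le 1$, the integrand above is bounded pointwise by $(\smax(\mtx{M}) - \sqrt{m})_+ + (\sqrt{m} - \smin(\mtx{M}))_+$.

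It then remains to show $\Expect(\smax(\mtx{M})-\sqrt{m})_+ + \Expect(\sqrt{m}-\smin(\mtx{M}))_+ \le \cnst{C}B^2\sqrt{k}$. The cited operator-norm estimates give $\Expect\smax(\mtx{M}) \le \sqrt{m} + \cnst{C}B\sqrt{k}$, and convex/Gaussian concentration pins the fluctuation of $\smax(\mtx{M})$ at scale $\cnst{C}B$, so the first expectation is $\le\cnst{C}B^2\sqrt{k}$. For the second, split on the size of $k$: if $k<m$, the smallest-singular-value bound for matrices with independent standardized entries yields $\Expect\smin(\mtx{M}) \ge \sqrt{m} - \cnst{C}B^2\sqrt{k}$; if $k\ge m$ the matrix is wide, so $\smin(\mtx{M})\ge 0$ trivially and $(\sqrt{m}-\smin(\mtx{M}))_+ \le \sqrt{m} \le \sqrt{k} \le B^2\sqrt{k}$. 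Combining the two cases proves the bound, and the specialization $\mtx{X}=\mtx{\Phi}$ is immediate since a matrix from Model~\ref{mod:bdd-mtx} satisfies the hypothesis on $\mtx{X}$.

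The one point requiring care is the last display: one must measure the deviation from isometry additively in the singular values of $\mtx{M}$, rather than passing through $\norm{\mtx{M}^\adj\mtx{M} - m\,\Id}$ and dividing by $\sqrt{m}$. The latter route produces an error of order $B^2 k/\sqrt{m}$, which blows up when $k\gg m$; working with $\smax(\mtx{M})-\sqrt{m}$ and $\sqrt{m}-\smin(\mtx{M})$ directly keeps the error at the advertised $O(B^2\sqrt{k})$ for every relationship between $k$ and $m$. Everything else is routine, because the spectral bounds invoked are quoted as standard results from nonasymptotic random matrix theory.
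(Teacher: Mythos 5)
Your proposal is correct and follows essentially the same route as the paper: both arguments hinge on writing $\mtx{X}_J\vct{t}_J + \vct{h}\norm{\smash{\vct{t}_{J^c}}} = \mtx{M}\vct{w}(\vct{t})$ for the bordered matrix $\mtx{M} = \begin{bmatrix}\mtx{X}_J & \vct{h}\end{bmatrix}$ with $\norm{\vct{w}(\vct{t})} = \norm{\vct{t}}$, and then quote Fact~\ref{fact:subgauss-mtx-tails} to control $(\smax(\mtx{M})-\sqrt{m})_+$ and $(\sqrt{m}-\smin(\mtx{M}))_+$; your preliminary ``strip the common summand'' step via $\abs{\min(p+r)-\min(q+r)}\le\sup\abs{p-q}$ is just a symmetric packaging of the paper's add-and-subtract inside the minimum. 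One small slip: the subgaussian singular-value estimates in Fact~\ref{fact:subgauss-mtx-tails} carry the scale $B^2\sqrt{k}$ and concentration scale $B^2$ (not $B\sqrt{k}$ and $B$ as you wrote in the intermediate steps), though your final $\cnst{C}B^2\sqrt{k}$ is the right answer; also, the case split on $k\lessgtr m$ is unnecessary, since the $\smin$ tail bound in the fact is vacuously true once $\sqrt{m} - \cnst{C}_0 B^2\sqrt{k}$ is negative, which is how the paper handles it implicitly.
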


\begin{proof}
Abbreviate $\mtx{\Psi} := \begin{bmatrix} \mtx{X}_J & \vct{h} \end{bmatrix}$.  
Observe that
$$
\mtx{X}_J \vct{t}_J + \vct{h} \norm{ \smash{\vct{t}_{J^c}} } = \mtx{\Psi} \begin{bmatrix}\vct{t}_J \\ \norm{\smash{\vct{t}_{J^c}}}
\end{bmatrix}
\quad\text{and}\quad
\norm{ \begin{bmatrix}\vct{t}_J \\ \norm{\smash{\vct{t}_{J^c}}} \end{bmatrix} } = \norm{\vct{t}}.
$$
Therefore, we have the deterministic bounds
\begin{equation} \label{eqn:bordered-sval-est}
\sigma_{\min}( \mtx{\Psi} ) \norm{\vct{t} }
	\leq \norm{ \mtx{X}_J \vct{t}_J + \vct{h} \norm{ \smash{\vct{t}_{J^c}} } }
	\leq \sigma_{\max}(  \mtx{\Psi} ) \norm{ \vct{t} }.
\end{equation}
The $m \times (k+1)$ random matrix $\mtx{\Psi}$ has independent, standardized entries
that satisfy~\eqref{eqn:subgauss-hyp}.
For all $\zeta \geq 0$,
Fact~\ref{fact:subgauss-mtx-tails} shows that its extreme singular values satisfy the probability bounds
\begin{equation} \label{eqn:subgauss-sval-prob}
\begin{aligned}
\Prob{ \sigma_{\max}(\mtx{\Psi}) \geq \sqrt{m} + \cnst{C}_0 B^2 \sqrt{k} + \zeta } &\leq \econst^{-\cnst{c}_0 \zeta^2 / B^4},
	\quad\text{and} \\
\Prob{ \sigma_{\min}(\mtx{\Psi}) \leq \sqrt{m} - \cnst{C}_0 B^2 \sqrt{k} - \zeta } &\leq \econst^{-\cnst{c}_0 \zeta^2 / B^4}.
\end{aligned}
\end{equation}
These inequalities allow us to treat the singular values of $\mtx{\Psi}$
as if they were equal to $\sqrt{m}$.

We may now perform the following estimates:
$$
\begin{aligned}
\Expect \min_{\vct{t} \in T_J} \big( \norm{\mtx{X}_J \vct{t}_J + \vct{h} \norm{\smash{\vct{t}_{J^c}}}}
	+ \vct{g}_{J^c} \cdot \vct{t}_{J^c} \big)
	&\leq \Expect \min_{\vct{t} \in T_J} \big( \sigma_{\max}(\mtx{\Psi}) \norm{\vct{t}}
	+ \vct{g}_{J^c} \cdot \vct{t}_{J^c} \big) \\
	&\leq \Expect \left[ \min_{\vct{t} \in T_J} \big( \sqrt{m} \norm{\vct{t}}
	+ \vct{g}_{J^c} \cdot \vct{t}_{J^c} \big)
	+ \big( \sigma_{\max}(\mtx{\Psi}) - \sqrt{m} \big)_+ \max_{\vct{t} \in T_J} \norm{\vct{t}} \right] \\
	&\leq \Expect \min_{\vct{t} \in T_J} \big( \sqrt{m} \norm{\vct{t}}
	+ \vct{g}_{J^c} \cdot \vct{t}_{J^c} \big) + \Expect \big( \sigma_{\max}(\mtx{\Psi}) - \sqrt{m} \big)_+.
\end{aligned}
$$
The first inequality is~\eqref{eqn:bordered-sval-est}.  Then we add and subtract $\sqrt{m}$
from the maximum singular value.  Last, we recall that $T_J$ is a subset of the unit ball.
Set $\alpha := \cnst{C}_0 B^2 \sqrt{k}$, and calculate that
\begin{equation} \label{eqn:expect-smax-sqrt}
\begin{aligned}
\Expect \big( \sigma_{\max}(\mtx{\Psi}) - \sqrt{m} \big)_+
	&= \int_0^\infty \Prob{ \sigma_{\max}(\mtx{\Psi}) - \sqrt{m} \geq \zeta }\idiff{\zeta} \\
	&\leq \int_0^\alpha \idiff{\zeta} + \int_0^\infty \Prob{ \sigma_{\max}(\mtx{\Psi}) - \sqrt{m} \geq \alpha + \zeta } \idiff{\zeta} \\
	&\leq \alpha + \int_0^\infty \econst^{-\cnst{c}_0 \zeta^2/B^4} \idiff{\zeta}
	= \alpha + \cnst{C} B^2.
\end{aligned}
\end{equation}
We split the integral at $\alpha$, and we change variables in the second integral.
For the first integral, we use the trivial bound of one on the probability.  Then
we invoke the probability inequality~\eqref{eqn:subgauss-sval-prob}.

Combining the last two displays and collecting constants, we arrive at
$$
\Expect \min_{\vct{t} \in T_J} \big( \norm{\mtx{X}_J \vct{t}_J + \vct{h} \norm{\smash{\vct{t}_{J^c}}}}
	+ \vct{g}_{J^c} \cdot \vct{t}_{J^c} \big)
	\leq \Expect \min_{\vct{t} \in T_J} \big( \sqrt{m} \norm{\vct{t}}
	+ \vct{g}_{J^c} \cdot \vct{t}_{J^c} \big) + \cnst{C} B^2 \sqrt{k}.
$$
An entirely similar argument delivers a matching lower bound.
Together, these estimates complete the proof.
\end{proof}

\subsection{Proposition~\ref{prop:excess-width}: Replacing the Coordinates Missing from the Excess Width}

The last step in the proof of Proposition~\ref{prop:excess-width}
is to examine the excess-width-like functional from Lemma~\ref{lem:remove-PhiK}
that involves the term $\vct{g}_{J^c} \cdot \vct{t}_{J^c}$.  We must show that
this term does not change very much if we reintroduce the coordinates listed in $J$.
Lemma~\ref{lem:missing-coords-lower} gives the easy proof of the lower bound.
Lemma~\ref{lem:missing-coords-upper} contains the upper bound. 
\begin{lemma}[Proposition~\ref{prop:excess-width}: Missing Coordinates---Lower Bound] \label{lem:missing-coords-lower}
Adopt the notation and hypotheses of Proposition~\ref{prop:excess-width}.  Then
$$
\Expect \min_{\vct{t} \in T_J} \big( \sqrt{m} \norm{\vct{t}} + \vct{g}_{J^c} \cdot \vct{t}_{J^c} \big)
\geq \Expect \min_{\vct{t} \in T_J} \big( \sqrt{m} \norm{\vct{t}} + \vct{g} \cdot \vct{t} \big).
$$
\end{lemma}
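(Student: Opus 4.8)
The plan is to exploit the elementary fact that introducing extra coordinates into the linear term $\vct{g}\cdot\vct{t}$ can only decrease a minimum of the form $\sqrt{m}\norm{\vct{t}}+\vct{g}\cdot\vct{t}$, once we average over the Gaussian randomness. Write $\vct{g}\cdot\vct{t}=\vct{g}_{J^c}\cdot\vct{t}_{J^c}+\vct{g}_{J}\cdot\vct{t}_{J}$, where $\vct{g}_J$ and $\vct{g}_{J^c}$ are independent standard normal vectors. For every fixed realization of $\vct{g}_{J^c}$, the map $\vct{g}_J\mapsto \min_{\vct{t}\in T_J}\big(\sqrt{m}\norm{\vct{t}}+\vct{g}_{J^c}\cdot\vct{t}_{J^c}+\vct{g}_J\cdot\vct{t}_J\big)$ is a pointwise infimum of affine functions of $\vct{g}_J$, hence concave in $\vct{g}_J$. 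Therefore Jensen's inequality applied conditionally on $\vct{g}_{J^c}$ gives
$$
\Expect_{\vct{g}_J}\Big[\min_{\vct{t}\in T_J}\big(\sqrt{m}\norm{\vct{t}}+\vct{g}_{J^c}\cdot\vct{t}_{J^c}+\vct{g}_J\cdot\vct{t}_J\big)\Big]
\leq \min_{\vct{t}\in T_J}\big(\sqrt{m}\norm{\vct{t}}+\vct{g}_{J^c}\cdot\vct{t}_{J^c}+\Expect_{\vct{g}_J}[\vct{g}_J]\cdot\vct{t}_J\big)
= \min_{\vct{t}\in T_J}\big(\sqrt{m}\norm{\vct{t}}+\vct{g}_{J^c}\cdot\vct{t}_{J^c}\big).
$$
Here I use that $\Expect \vct{g}_J=\vct{0}$, so the $J$-coordinates of the linear term vanish in expectation.

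Next I would take the expectation over $\vct{g}_{J^c}$ on both sides. By independence of $\vct{g}_J$ and $\vct{g}_{J^c}$ and the tower property, the left-hand side becomes $\Expect\min_{\vct{t}\in T_J}\big(\sqrt{m}\norm{\vct{t}}+\vct{g}\cdot\vct{t}\big)$, while the right-hand side is exactly $\Expect\min_{\vct{t}\in T_J}\big(\sqrt{m}\norm{\vct{t}}+\vct{g}_{J^c}\cdot\vct{t}_{J^c}\big)$. Rearranging yields the claimed inequality
$$
\Expect \min_{\vct{t} \in T_J} \big( \sqrt{m} \norm{\vct{t}} + \vct{g}_{J^c} \cdot \vct{t}_{J^c} \big)
\geq \Expect \min_{\vct{t} \in T_J} \big( \sqrt{m} \norm{\vct{t}} + \vct{g} \cdot \vct{t} \big).
$$

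There is essentially no obstacle here: the only mild point of care is to confirm that the minimum over the closed, bounded set $T_J\subset\mathsf{B}^n$ is attained so the infimum is genuinely a minimum and the measurability/integrability needed for Jensen and Fubini is in order; this follows since $T_J$ is closed and contained in the unit ball, hence compact, and the objective is continuous. (Alternatively, one can argue directly without Jensen: for any $\vct{t}_0$ achieving the inner minimum on the left-hand side for the realization $\vct{g}$, we have $\sqrt{m}\norm{\vct{t}_0}+\vct{g}_{J^c}\cdot(\vct{t}_0)_{J^c}\ge \min_{\vct{t}\in T_J}(\sqrt m\norm{\vct t}+\vct g_{J^c}\cdot\vct t_{J^c})$, and averaging the deterministic inequality $\sqrt m\norm{\vct t_0}+\vct g\cdot\vct t_0\ge \min_{\vct t\in T_J}(\dots)$ over the Gaussians with the $\vct g_J$-term averaging to zero recovers the same bound — but the conditional-Jensen phrasing above is cleanest.) This completes the proof.
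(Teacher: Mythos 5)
Your argument is correct and matches the paper's proof: both use the decomposition $\vct{g}\cdot\vct{t}=\vct{g}_J\cdot\vct{t}_J+\vct{g}_{J^c}\cdot\vct{t}_{J^c}$, the fact that $\Expect\vct{g}_J=\vct{0}$, and Jensen's inequality applied to the concave minimum (conditionally on $\vct{g}_{J^c}$). The alternative direct argument you sketch in parentheses is a harmless rephrasing of the same idea.
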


\begin{proof}
This result is an immediate consequence of Jensen's inequality:
$$
\begin{aligned}
\Expect \min_{\vct{t} \in T_J} \left( \sqrt{m} \norm{\vct{t}} + \vct{g}_{J^c} \cdot \vct{t}_{J^c} \right)
	&= \Expect  \min_{\vct{t} \in T_J} \left( \sqrt{m} \norm{\vct{t}} + \vct{g}_{J^c} \cdot \vct{t}_{J^c}
	+ \Expect_{\vct{g}_J} (\vct{g}_J \cdot \vct{t}_J) \right) \\
	&\geq \Expect \min_{\vct{t} \in T_J} \left( \sqrt{m} \norm{\vct{t}} + \vct{g}_{J^c} \cdot \vct{t}_{J^c}
	+ \vct{g}_J \cdot \vct{t}_J \right) \\
	&= \Expect \min_{\vct{t} \in T_J} \big( \sqrt{m} \norm{\vct{t}} + \vct{g} \cdot \vct{t} \big)
\end{aligned}
$$
We rely on the fact that $\vct{g}_J$ and $\vct{g}_{J^c}$ are independent
standard normal vectors.
\end{proof}

\joelprev{
\begin{lemma}[Proposition~\ref{prop:excess-width}: Missing Coordinates---Upper Bound] \label{lem:missing-coords-upper}
Adopt the notation and hypotheses of Proposition~\ref{prop:excess-width}.  Then
$$
\Expect \min_{\vct{t} \in T_J} \big( \sqrt{m} \norm{\vct{t}} + \vct{g}_{J^c} \cdot \vct{t}_{J^c} \big)
\leq \Expect \min_{\vct{t} \in T_J} \big( \sqrt{m} \norm{\vct{t}} + \vct{g} \cdot \vct{t} \big) + \sqrt{k}.
$$
\end{lemma}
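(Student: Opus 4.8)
The plan is to derive the inequality pointwise in the Gaussian vector and then integrate; there is essentially no obstacle, since this lemma is the elementary companion of the lower bound in Lemma~\ref{lem:missing-coords-lower}. Fix a realization of $\vct{g}$, and recall that $\vct{g}$ splits as $(\vct{g}_J, \vct{g}_{J^c})$, where $\vct{g}_J$ is a standard normal vector in $\R^{\#J} = \R^k$. For each $\vct{t} \in T_J$, write $\vct{g} \cdot \vct{t} = \vct{g}_{J^c} \cdot \vct{t}_{J^c} + \vct{g}_J \cdot \vct{t}_J$ and bound the second term below by Cauchy--Schwarz. Since $T_J \subset \mathsf{B}^n$, we have $\norm{\vct{t}_J} \leq \norm{\vct{t}} \leq 1$, so $\vct{g}_J \cdot \vct{t}_J \geq -\norm{\vct{g}_J} \norm{\vct{t}_J} \geq -\norm{\vct{g}_J}$. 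Therefore
$$
\sqrt{m} \norm{\vct{t}} + \vct{g} \cdot \vct{t}
	\geq \big( \sqrt{m} \norm{\vct{t}} + \vct{g}_{J^c} \cdot \vct{t}_{J^c} \big) - \norm{\vct{g}_J}
	\quad\text{for every } \vct{t} \in T_J.
$$

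Next, minimize both sides over $\vct{t} \in T_J$. Because $\norm{\vct{g}_J}$ does not depend on the minimization variable, it factors out of the minimum, which gives
$$
\min_{\vct{t} \in T_J} \big( \sqrt{m} \norm{\vct{t}} + \vct{g} \cdot \vct{t} \big)
	\geq \min_{\vct{t} \in T_J} \big( \sqrt{m} \norm{\vct{t}} + \vct{g}_{J^c} \cdot \vct{t}_{J^c} \big)
	- \norm{\vct{g}_J}.
$$
Take the expectation over $\vct{g}$, and use Jensen's inequality (concavity of the square root) together with the fact that $\vct{g}_J$ is standard normal in $\R^k$, so $\Expect \norm{\vct{g}_J} \leq ( \Expect \norm{\vct{g}_J}^2 )^{1/2} = \sqrt{k}$. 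Rearranging the resulting inequality produces exactly the stated bound. The only point worth tracking is that the coordinates missing from $\vct{g}_{J^c}$ contribute a fluctuation of size at most $\norm{\vct{g}_J}$, whose mean is controlled by $\sqrt{k}$ because $\#J = k$; no convexity of $T_J$ and no use of the smallness condition $\abs{t_j} \leq k^{-1/2}$ is needed.
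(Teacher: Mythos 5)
Your argument is correct and is essentially the paper's proof, merely rearranged: both decompose $\vct{g}\cdot\vct{t} = \vct{g}_{J^c}\cdot\vct{t}_{J^c} + \vct{g}_J\cdot\vct{t}_J$, bound the second term by Cauchy--Schwarz using $\norm{\vct{t}_J}\leq 1$, pass through the minimum, and invoke $\Expect\norm{\smash{\vct{g}_J}}\leq\sqrt{k}$. Your closing observation that neither convexity of $T_J$ nor the coordinate-smallness condition is used is accurate.
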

\begin{proof}
This calculation is also easy:
$$
\begin{aligned}
\Expect \min_{\vct{t} \in T_J} \big( \sqrt{m} \norm{\vct{t}} + \vct{g}_{J^c} \cdot \vct{t}_{J^c} \big)
	&= \Expect \min_{\vct{t} \in T_J} \big( \sqrt{m} \norm{\vct{t}} + \vct{g} \cdot \vct{t} - \vct{g}_{J} \cdot \vct{t}_{J} \big) \\
	&\leq \Expect \min_{\vct{t} \in T_J} \big( \sqrt{m} \norm{\vct{t}} + \vct{g} \cdot \vct{t} \big) + \Expect \norm{ \smash{\vct{g}_{J}} } \\
	&\leq \Expect \min_{\vct{t} \in T_J} \big( \sqrt{m} \norm{\vct{t}} + \vct{g} \cdot \vct{t} \big) + \sqrt{k}.
\end{aligned}
$$
In the last step, we have used the fact that $\# J = k$.
\end{proof}}

\section{Proof of Corollary~\ref{cor:rsv-four} from Theorem~\ref{thm:rsv-bdd} by Truncation}
\label{sec:rsv-truncation}

In this section, we show how to establish Corollary~\ref{cor:rsv-four}
as a consequence of Theorem~\ref{thm:rsv-bdd}.  The proof depends on
a truncation argument.

\subsection{Proof of Corollary~\ref{cor:rsv-four}}

Fix parameters $p > 4$ and $\nu \geq 1$. Assume that $\mtx{\Phi}$
is an $m \times n$ matrix that follows Model~\ref{mod:p-mom-mtx}
with parameters $p$ and $\nu$.  Let $T$ be a compact subset of
the unit ball $\mathsf{B}^n$.  We will prove the lower bound for the
minimum singular value of $\mtx{\Phi}$ restricted to $T$.
When $T$ is convex, an entirely similar approach yields
the corresponding upper bound.

Fix a truncation parameter $R$ that satisfies $R^{p/2-1} \geq 2 \nu^{p/2}$.
Decompose the random matrix $\mtx{\Phi}$
as
$$
\mtx{\Phi} = \mtx{\Phi}_{\rm trunc} + \mtx{\Phi}_{\rm tail}
$$
by applying the truncation described below in Lemma~\ref{lem:trunc-rv}
separately to each entry of $\mtx{\Phi}$.  This procedure ensures that
$\mtx{\Phi}_{\rm trunc}$ contains independent, symmetric, standardized entries,
each bounded by $2R$.  In other words, $\mtx{\Phi}_{\rm trunc}$ follows
Model~\ref{mod:bdd-mtx} with $B = 2R$.
The tail $\mtx{\Phi}_{\rm tail}$ contains
independent, centered entries, each with variance bounded by $\cnst{C} \nu^p R^{2 - p}$
and whose $p$th moment is bounded by $(2 \nu)^p$.

We can control the restricted singular values of $\mtx{\Phi}$ using the
triangle inequality:
\begin{equation} \label{eqn:four-mom-pf1}
\smin(\mtx{\Phi}; T)
	= \min_{\vct{t} \in T} \norm{\mtx{\Phi} \vct{t}}
	\geq \min_{\vct{t} \in T} \norm{\mtx{\Phi}_{\rm trunc} \vct{t}}
	- \norm{ \mtx{\Phi}_{\rm tail} }
	= \smin(\mtx{\Phi}_{\rm trunc}; T) - \norm{ \mtx{\Phi}_{\rm tail} }.
\end{equation}
We bound the restricted singular value of the bounded matrix $\mtx{\Phi}_{\rm trunc}$
using Theorem~\ref{thm:rsv-bdd}.  To bound $\norm{\mtx{\Phi}_{\rm tail}}$,
we apply a simple norm estimate, Fact~\ref{fact:heavy-tail-norm}, based on the
matrix Rosenthal inequality~\cite[Thm.~I]{Tro15:Expected-Norm}.

Since $\mtx{\Phi}_{\rm trunc}$ follows Model~\ref{mod:bdd-mtx} with $B = 2R$,
Theorem~\ref{thm:rsv-bdd}\eqref{it:rsv-bdd-conc} and~\eqref{it:rsv-bdd-expect-lower}
give the probability bound
$$
\Prob{ \smin^2(\mtx{\Phi}_{\rm trunc}; T) \leq \big( \coll{E}_m(T) \big)_+^2 - \cnst{C} R^2 (m+n)^{0.92} 
	- \cnst{C} R^2 \zeta } \leq \econst^{-\zeta^2/(m+n)}.
$$
Select $\zeta = (m+n)^{0.92}$ to make the tail probability negligible:
$$
\Prob{ \smin^2(\mtx{\Phi}_{\rm trunc}; T) \leq \big( \coll{E}_m(T) \big)_+^2 - \cnst{C} R^2 (m+n)^{0.92} }
	\leq \econst^{-(m+n)^{0.84}}.
$$
Taking square roots inside the event, we reach
\begin{equation} \label{eqn:four-mom-pf2}
\Prob{ \smin(\mtx{\Phi}_{\rm trunc}; T) \leq \big( \coll{E}_m(T) \big)_+ - \cnst{C} R (m+n)^{0.46} }
	\leq \econst^{-(m+n)^{0.84}}.
\end{equation}
This step depends on the subadditivity of the square root.

Meanwhile, the entries of $\mtx{\Phi}_{\rm tail}$ are centered, have variances at most
$\cnst{C} \nu^p R^{2 - p}$, and have $p$th moments bounded by $(2\nu)^p$.
Therefore, we can apply the norm bound for heavy-tailed
random matrices, Fact~\ref{fact:heavy-tail-norm}, to see that
$$
\Prob{ \norm{ \mtx{\Phi}_{\rm tail} } \geq \cnst{C} \sqrt{ \nu^p R^{2-p} (m+n) \log(m+n) }
	+ \big( \cnst{C} \nu (m + n)^{2/p} \log(m+n)  \big) \, \zeta } \leq \zeta^{-p}.
$$
Define the positive quantity $\eps$ via the relation $4(1 + \eps) := p$.
Select $\zeta = (m+n)^{\eps/p}$ to obtain
\begin{equation} \label{eqn:four-mom-pf3}
\Prob{ \norm{ \mtx{\Phi}_{\rm tail} } \geq \cnst{C} \nu^{p/2} R^{1 - p/2} \sqrt{(m+n) \log(m+n)}
	+ \cnst{C} \nu (m+n)^{(2+\eps)/p} \log(m+n)} \leq (m+n)^{- \eps}.
\end{equation}
The key point here is that we can arrange for $\norm{\mtx{\Phi}_{\rm tail}}$
to have order $o(\sqrt{m+n})$ with high probability.

Combine~\eqref{eqn:four-mom-pf1},~\eqref{eqn:four-mom-pf2}, and~\eqref{eqn:four-mom-pf3}
to reach
\begin{multline*}
\mathbb{P}\bigg\{ \smin(\mtx{\Phi}; T) \leq \big(\coll{E}_m(T)\big)_+ - \cnst{C} R (m+n)^{0.46} \\
	- \cnst{C} \nu^{p/2} R^{1 - p/2} \sqrt{(m+n)\log(m+n)} - \cnst{C} \nu (m+n)^{(2+\eps)/p} \log(m+n) \bigg\} \\
	\leq \econst^{-(m+n)^{0.84}} + (m+n)^{-\eps}.
\end{multline*}
Set the truncation parameter $R = \nu (m+n)^{0.02/(1+\eps)}$
to equate the exponents on $m + n$ in the two terms that
depend on $R$.  Then simplify using $p = 4(1+\eps)$ to obtain
\begin{multline*}
\Prob{ \smin(\mtx{\Psi}; T) \leq \big(\coll{E}_m(T)\big)_+
	- \cnst{C} \nu \big( (m+n)^{0.5 - 0.02(1+2\eps)/(1+\eps)} + (m+n)^{0.5 - (\eps/4)/(1+\eps)} \big) \log(m+n) } \\
	\leq \econst^{-(m+n)^{0.84}} + (m+n)^{-\eps}. 
\end{multline*}
Note that both powers in the event are bounded away from $1/2$,
so we can absorb the logarithm by increasing the power slightly.
Furthermore, we can introduce a function $\kappa(p)$ that is
strictly positive for $p > 4$ to reach the inequality
$$
\Prob{ \smin(\mtx{\Psi}; T) \leq \big(\coll{E}_m(T)\big)_+
	- \cnst{C}_p \nu (m+n)^{0.5 - \kappa(p)} }
	\leq \econst^{-(m+n)^{0.84}} + (m+n)^{1 - p/4}. 
$$
The constant $\cnst{C}_p$ depends only on the parameter $p$.
The exponential vanishes faster than any polynomial,
so we can combine the terms on the right-hand side
to complete the proof of~\eqref{eqn:smin-four-lower}.

For the upper bound, we use Theorem~\ref{thm:rsv-bdd}\eqref{it:rsv-bdd-expect-cvx}
to control the expectation of the restricted singular value.
\joel{In this case, the error term in the expectation bound becomes
$R^4 (m+n)^{0.94}$.  This change presents no new difficulties, and we arrive at the
result~\eqref{eqn:smin-four-upper} by a slight modification of the argument.}

\subsection{Corollary~\ref{cor:rsv-four}: Truncation of Individual Random Variables}

In this section, we describe a truncation procedure for scalar random variables.
The arguments here are entirely standard, but we include details for completeness.

\begin{lemma}[Corollary~\ref{cor:rsv-four}: Truncation] \label{lem:trunc-rv}
Let $\phi$ be a random variable that satisfies the properties
listed in Model~\ref{mod:p-mom-mtx}.
Let $R$ be a parameter that satisfies $R^{p/2 - 1} \geq 2 \nu^{p/2}$.
Then we have the decomposition
\begin{equation} \label{eqn:psi-trunc-tail}
\phi = \phi_{\rm trunc} + \phi_{\rm tail}
\end{equation}
where
$$
\Expect \phi_{\rm trunc} = 0, \quad
\Expect \phi_{\rm trunc}^2 = 1, \quad\text{and}\quad
\abs{ \smash{\phi_{\rm trunc}} } \leq 2R
$$
and
$$
\Expect \phi_{\rm tail} = 0, \quad
\Expect \phi_{\rm tail}^2 \leq \frac{6\nu^p}{R^{p-2}}, \quad\text{and}\quad
\Expect \abs{\smash{\phi_{\rm tail}}}^p \leq (2\nu)^p.
$$
\end{lemma}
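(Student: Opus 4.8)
The plan is to form $\phi_{\rm trunc}$ by a hard truncation of $\phi$ at level $R$, followed by a mild rescaling that restores unit variance, and then to put $\phi_{\rm tail} := \phi - \phi_{\rm trunc}$. Concretely, split $\phi = \phi' + \phi''$ where $\phi' := \phi \, \mathbb{1}\{\abs{\phi} \leq R\}$ and $\phi'' := \phi \, \mathbb{1}\{\abs{\phi} > R\}$, and abbreviate $\tau^2 := \Expect (\phi'')^2 = \Expect[\phi^2 \, \mathbb{1}\{\abs{\phi} > R\}]$. I would then define $\phi_{\rm trunc} := \phi' / \sqrt{1 - \tau^2}$ and $\phi_{\rm tail} := \phi - \phi_{\rm trunc}$, and verify the six stated properties directly.

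The first step is to control the tail variance $\tau^2$. The elementary pointwise bound $\phi^2 \, \mathbb{1}\{\abs{\phi} > R\} \leq R^{2-p} \abs{\phi}^p$ together with the moment hypothesis $\Expect \abs{\phi}^p \leq \nu^p$ gives $\tau^2 \leq \nu^p / R^{p-2}$, and the assumption $R^{p/2 - 1} \geq 2 \nu^{p/2}$ (squared, so $R^{p-2} \geq 4\nu^p$) then forces $\tau^2 \leq \tfrac{1}{4}$. Hence $1 - \tau^2 \geq \tfrac{3}{4}$, so $1 \leq (1 - \tau^2)^{-1/2} \leq 2/\sqrt{3} < 2$. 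This immediately yields $\abs{\phi_{\rm trunc}} \leq R / \sqrt{1 - \tau^2} \leq 2R$. Since $\phi$ is symmetric, so is $\phi'$, whence $\Expect \phi_{\rm trunc} = 0$; and $\Expect \phi_{\rm trunc}^2 = \Expect (\phi')^2 / (1 - \tau^2) = (1-\tau^2)/(1-\tau^2) = 1$ by the definition of $\tau^2$.

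For the tail piece, write $\phi_{\rm tail} = \phi'' - c \phi'$ with $c := (1 - \tau^2)^{-1/2} - 1 \in [0, 1)$, and note that $\phi'$ and $\phi''$ have disjoint supports, so $\Expect \phi' \phi'' = 0$. Centering is clear. Using $\Expect (\phi'')^2 = \tau^2$, $\Expect (\phi')^2 = 1 - \tau^2$, and the elementary inequality $1 - \sqrt{1-x} \leq x$ on $[0,1]$, we get $\Expect \phi_{\rm tail}^2 = \tau^2 + c^2(1 - \tau^2) = \tau^2 + (1 - \sqrt{1 - \tau^2})^2 \leq \tau^2 + \tau^4 \leq 2\tau^2 \leq 2\nu^p / R^{p-2}$, comfortably below the claimed bound $6\nu^p / R^{p-2}$. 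Likewise, the disjoint-support structure gives $\Expect \abs{\phi_{\rm tail}}^p = \Expect[\abs{\phi}^p \, \mathbb{1}\{\abs{\phi} > R\}] + c^p \, \Expect[\abs{\phi}^p \, \mathbb{1}\{\abs{\phi} \leq R\}] \leq \Expect \abs{\phi}^p \leq \nu^p \leq (2\nu)^p$, where the first inequality uses $c < 1$.

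There is no genuine obstacle here: the whole content is the bookkeeping that the rescaling factor $(1-\tau^2)^{-1/2}$ stays below $2$, which is exactly what the hypothesis $R^{p/2-1} \geq 2\nu^{p/2}$ buys. The three generous constants in the conclusion --- $2R$, $6\nu^p/R^{p-2}$, and $(2\nu)^p$ --- all leave ample slack, so no delicacy is required beyond the estimates above.
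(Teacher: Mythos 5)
Your proof is correct and takes essentially the same approach as the paper: your decomposition $\phi_{\rm trunc} = \phi\,\mathbb{1}\{\abs{\phi}\leq R\}/\sqrt{1-\tau^2}$ is identical to the paper's $\phi_{\rm trunc} = \phi\,\mathbb{1}\{\abs{\phi}\leq R\}/\alpha$ with $\alpha^2 = \Expect[\phi^2\,\mathbb{1}\{\abs{\phi}\leq R\}] = 1-\tau^2$. The only real divergence is in bounding $\tau^2$, where you use the one-line pointwise estimate $\phi^2\,\mathbb{1}\{\abs{\phi}>R\}\leq R^{2-p}\abs{\phi}^p$ to get $\tau^2\leq\nu^p/R^{p-2}$, instead of the paper's tail-integral argument giving the slightly weaker $2\nu^p/R^{p-2}$; either way the constants land comfortably within the stated bounds.
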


\begin{proof}
Define the random variable
$$
\phi_{\rm trunc} := \frac{\phi \mathbb{1}\{ \abs{\smash{\phi}} \leq R \}}{\alpha}
\quad\text{where}\quad
\alpha^2 := \Expect\big[ \phi^2 \mathbb{1}\{ \abs{\smash{\phi}} \leq R \} \big].
$$
Since $\phi$ is standardized and symmetric, $\phi_{\rm trunc}$ is also standardized and symmetric.  To ensure that the decomposition~\eqref{eqn:psi-trunc-tail} holds, we must set
\begin{equation} \label{eqn:phi-tail-defn}
\phi_{\rm tail} := \phi \mathbb{1}\{ \abs{\smash{\phi}} > R \} - \frac{1 - \alpha}{\alpha} \phi \mathbb{1}\{\abs{\smash{\phi}} \leq R\}.
\end{equation}
The random variable $\phi_{\rm tail}$ is also centered because of the symmetry of $\phi$.

To establish the other properties of $\phi_{\rm trunc}$ and $\phi_{\rm tail}$,
we need to calculate some expectations.  First, using integration by parts
and Markov's inequality,
\begin{equation} \label{eqn:phi2-tail}
\begin{aligned}
\Expect\big[ \phi^2 \mathbb{1}\{ \abs{\smash{\phi}} > R \} \big]
	&= \int_0^R 2 \zeta \Prob{ \abs{\smash{\phi}} > R } \idiff{\zeta}
	+ \int_R^\infty 2 \zeta \Prob{ \abs{\smash{\phi}} > \zeta } \idiff{\zeta} \\
	&\leq \int_0^R 2 \zeta \frac{\Expect \abs{\smash{\phi}}^p }{R^p} \idiff{\zeta}
	+ \int_R^\infty 2 \zeta \frac{\Expect \abs{\smash{\phi}}^p }{\zeta^p} \idiff{\zeta}
	\leq \frac{2 \nu^p}{R^{p-2}}.
\end{aligned}
\end{equation}
In the last step, we used the assumption that $p \geq 4$.
A similar calculation shows that
$$
\begin{aligned}
\alpha^2 = \Expect\big[ \phi^2 \mathbb{1}\{\abs{\smash{\phi}} \leq R \} \big]
	&= \int_0^R 2 \zeta \Prob{ \abs{\smash{\phi}} > \zeta } \idiff{\zeta}  \\
	&= \int_0^\infty 2 \zeta \Prob{ \abs{\smash{\phi}} > \zeta } \idiff{\zeta}
	- \int_R^\infty 2 \zeta \Prob{ \abs{\smash{\phi}} > \zeta } \idiff{\zeta} \\
	&\geq \Expect\big[ \phi^2 \big] - \frac{2\nu^p}{(p-2) R^{p-2}}
	\geq 1 - \frac{\nu^p}{R^{p-2}}. 
\end{aligned}
$$
The last relation holds because $\phi$ is standardized.  It follows that
\begin{equation} \label{eqn:trunc-var-bound}
\alpha \geq 1 - \frac{\nu^{p/2}}{R^{p/2-1}} \geq \frac{1}{2}.
\end{equation}
The last estimate holds because $p \geq 4$
and of the assumption $R^{p/2-1} \geq 2 \nu^{p/2}$.

We are now prepared to verify the uniform bound on $\phi_{\rm trunc}$:
$$
\abs{\smash{\phi_{\rm trunc}}}
	= \abs{ \frac{\phi \mathbb{1}\{\abs{\smash{\phi}} \leq R\}}{\alpha} }
	\leq \frac{R}{\alpha}
	\leq 2R.
$$
The last inequality follows from~\eqref{eqn:trunc-var-bound}.

Next, we need to bound the variance of $\phi_{\rm tail}$.  We have
$$
\begin{aligned}
\Expect\big[ \phi_{\rm tail}^2 \big]
	= \Expect\big[\phi^2 \mathbb{1}\{ \abs{\smash{\phi}} > R \} \big]
	+ \left(\frac{1-\alpha}{\alpha}\right)^2 \Expect\big[ \phi^2 \mathbb{1}\{ \abs{\smash{\phi}} \leq R \} \big]
	\leq \frac{2 \nu^p}{R^{p-2}} + \left( \frac{\nu^{p/2}/R^{p/2-1}}{1/2} \right)^2 \Expect\big[ \psi^2 \big]
	\leq \frac{6 \nu^p}{R^{p-2}}.
\end{aligned}
$$
The first identity holds because the two indicators are orthogonal random variables.
The second relation uses the expectation calculation~\eqref{eqn:phi2-tail} and
the estimate~\eqref{eqn:trunc-var-bound};
we have dropped the indicator in the second expectation.
The last estimate holds because $\phi$ is standardized.

Last, we need to check the moment inequality for $\phi_{\rm tail}$.
This estimate follows by applying the triangle inequality
to the definition~\eqref{eqn:phi-tail-defn}:
$$
\left( \Expect \abs{\smash{\phi_{\rm tail}}}^p \right)^{1/p}
	\leq \left( \Expect \abs{ \smash{\phi} }^p \right)^{1/p}
	+ \frac{1 - \alpha}{\alpha} \left( \Expect \abs{ \smash{\phi}}^p \right)^{1/p}
	= \frac{1}{\alpha} \left( \Expect \abs{ \smash{\phi}}^p \right)^{1/p}
	\leq 2 \nu.
$$
We have dropped the indicators after invoking the triangle inequality.
Finally, we introduced the estimate~\eqref{eqn:trunc-var-bound}.
\end{proof}

\newpage

\part{Universality of the Embedding Dimension: Proof of Theorem~\ref{thm:univ-embed}(b)}
\label{part:rap}

This part of the paper develops a condition under which the random projection
of a set fails with high probability.  This argument establishes
the second part of the universality law for the embedding dimension,
Theorem~\ref{thm:univ-embed}(b).

Section~\ref{sec:rap-bdd} contains the main technical result,
a condition under which a bounded random matrix maps a point
in a set to the origin.  Section~\ref{sec:rap-four} extends
this argument to the heavy-tailed random matrix model,
Model~\ref{mod:p-mom-mtx}.  In Section~\ref{sec:car-to-embed},
we use the latter result to derive Theorem~\ref{thm:univ-embed}(b).
The remaining parts of the section lay out the supporting argument.

\section{When Embedding Fails for a Bounded Random Matrix}
\label{sec:rap-bdd}

In this section, we introduce a \joelprev{functional} whose
value determines whether a linear transformation maps a
point in a set to the origin.  Then we present
the main technical result, which gives an estimate for this
functional evaluated on a random \joelprev{linear map} from Model~\ref{mod:bdd-mtx}.
The rest of the section outlines the main steps in the proof
of the result.

\subsection{The RAP Functional: Dual Condition for Failure}

To study when a \joelprev{linear map} maps a point a set to the origin,
we use an approach based on polarity.
Let us make the following definition.

\begin{definition}[Range Avoids Polar (RAP)]\label{rap def}
Let $K \subset \R^m$ be a closed, convex cone.
Let $\mtx{A}$ be an $m \times n$ matrix.
Define the quantity
$$
\tau_{\min}(\mtx{A}; K) := \min_{\norm{\vct{t}} = 1} \min_{\vct{s} \in K^\polar} \norm{ \vct{s} - \mtx{A}\vct{t} }.
$$
Note that the range of the inner minimum involves the polar $K^\polar$ of the cone.
We refer to $\tau_{\min}$ as the \term{RAP functional}.
\end{definition}

To see why the RAP functional is important,
consider a closed, spherically convex subset $\Omega$ of $\mathsf{S}^{m-1}$
\joelprev{for which $\cone(\Omega)$ is not a subspace.}
The second conclusion of Proposition~\ref{prop:annihilate} states that
$$
\tau_{\min}(\mtx{A}; \cone(\Omega)) > 0
\quad\text{implies}\quad
\vct{0} \in \mtx{A}(\Omega).
$$
\joelprev{The third conclusion of Proposition~\ref{prop:annihilate} 
gives a similar result in the case of a subspace.}
Therefore, we can obtain a sufficient condition that $\mtx{A}$
maps a point in $\Omega$ to zero by providing a lower bound
for the RAP functional.

\subsection{Theorem~\ref{thm:car-bdd}: Main Result for the Bounded Random Matrix Model}

The main technical result in this part of the paper is a theorem
on the behavior of the RAP functional of a bounded
random matrix.

\begin{theorem}[RAP: Bounded Random Matrix Model] \label{thm:car-bdd}
Place the following assumptions:

\begin{itemize}
\item	Let $m$ and $n$ be natural numbers with $m+n \leq \min\{m,n\}^{9/8}$.

\item	Let $K$ be a closed, convex cone in $\R^m$, and define $\Omega := K \cap \mathsf{S}^{m-1}$.

\item	Draw an $m \times n$ random matrix $\mtx{\Phi}$ from Model~\ref{mod:bdd-mtx}
with bound $B$.
\end{itemize}

\noindent
Then the squared RAP functional $\tau_{\min}^2(\mtx{\Phi}; K)$
has the following properties.

\begin{enumerate}
\item	\label{it:car-bdd-concentration}
\joel{The squared RAP functional deviates below its mean on
a scale of $B^2 \sqrt{m + n}$:}
$$
\begin{aligned}
\Prob{ \tau_{\min}^2(\mtx{\Phi}; K) \leq \Expect \tau_{\min}^2(\mtx{\Phi}; K)
	- \cnst{C}B^2 \zeta }
	&\leq \econst^{-\zeta^2/m} \end{aligned}	
$$

\item	\label{it:car-bdd-width}
The expected square of the RAP functional is bounded below:
$$
\begin{aligned}
\Expect \tau_{\min}^2(\mtx{\Phi}; K)
	&\geq \frac{\big( \coll{E}_n(\Omega) \big)_-^2}{\cnst{C} B^2 \log(m+n)}
	- \cnst{C} B^3 (m+n)^{0.95}.
\end{aligned}
$$
\end{enumerate}

\noindent
This result \emph{does} use the symmetry assumption in Model~\ref{mod:bdd-mtx}.
\end{theorem}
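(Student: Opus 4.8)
The plan is to transpose the entire apparatus behind Theorem~\ref{thm:rsv-bdd} to the RAP functional. Two representations are the key. By Moreau decomposition and conic bipolarity, $\dist(\vct{x},K^\polar)=\enorm{\Pi_K(\vct{x})}=\max_{\vct{u}\in K\cap\mathsf{B}^m}\vct{u}\cdot\vct{x}$, so that $\tau_{\min}(\mtx{A};K)=\min_{\enorm{\vct{t}}=1}\max_{\vct{u}\in K\cap\mathsf{B}^m}\vct{u}^\transp\mtx{A}\vct{t}$, and equivalently $\tau_{\min}^2(\mtx{A};K)=\min_{\enorm{\vct{t}}=1}\min_{\vct{s}\in K^\polar}\enorm{\mtx{A}\vct{t}-\vct{s}}^2$. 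The second (a pure double minimum, with the $\vct{s}$-block playing the role of an auxiliary block that is never exchanged) makes the functional look like a restricted singular value, and the first (a saddle point over $\mathsf{B}^m$-type sets) is what the Gaussian Minimax Theorem consumes; both will be used.

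For the concentration claim~\eqref{it:car-bdd-concentration} I would repeat the entropy-method argument of Proposition~\ref{prop:rsv-concentration} verbatim. Put $Y:=\tau_{\min}^2(\mtx{\Phi};K)$, pick a minimizer $\vct{t}^\star$, and set $\vct{u}^\star:=\Pi_K(\mtx{\Phi}\vct{t}^\star)\in K$, so $Y=\enorm{\vct{u}^\star}^2=\vct{u}^\star\cdot(\mtx{\Phi}\vct{t}^\star)$. Using that $\vct{u}\mapsto 2\vct{u}\cdot\vct{y}-\enorm{\vct{u}}^2$ is maximized over $\vct{u}\in K$ at $\Pi_K(\vct{y})$ with value $\enorm{\Pi_K(\vct{y})}^2$, and that $\Pi_K$ is $1$-Lipschitz, one checks that changing one entry $\phi_{ij}$ by at most $2B$ increases $Y$ by at most $4B\abs{t^\star_j}\abs{u^\star_i}+8B^2(t^\star_j)^2$; squaring and summing over $(i,j)$ the cross term collapses to $32B^2\enorm{\vct{t}^\star}^2\enorm{\vct{u}^\star}^2=32B^2 Y$, so the lower variance proxy obeys $\Delta_-\le 32B^2 Y+\cnst{C}B^4 m$. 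With $\Expect Y\le m$ and Fact~\ref{fact:self-bdd-lower} this yields the stated lower‑tail bound on scale $B^2\sqrt{m+n}$; only the lower tail is asserted, which is why no upper‑tail analogue is needed here.

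For the lower bound~\eqref{it:car-bdd-width} I would mimic Section~\ref{sec:rsv-pf-lower} in four moves. \textbf{(a)} Dissect $\mathsf{S}^{n-1}=\bigcup_{\#J=k}T_J$, $T_J:=\{\vct{t}\in\mathsf{S}^{n-1}:\abs{t_j}\le k^{-1/2}\text{ for }j\in J^c\}$, and write $\tau_{\min}(\mtx{\Phi};K;T_J):=\min_{\vct{t}\in T_J}\dist(\mtx{\Phi}\vct{t},K^\polar)$. Since $\tau_{\min}^2(\mtx{\Phi};K)=\min_{\#J=k}\tau_{\min}^2(\mtx{\Phi};K;T_J)$, the concentration from~\eqref{it:car-bdd-concentration} applied to each $T_J$, a union bound over the $\binom{n}{k}$ subsets, and an integration step (as in Proposition~\ref{prop:rsv-dissection}) give $\Expect\tau_{\min}^2(\mtx{\Phi};K)\ge\min_{\#J=k}\Expect\tau_{\min}^2(\mtx{\Phi};K;T_J)-\cnst{C}B^2\sqrt{mk\log(en/k)}$. \textbf{(b)} For fixed $J$, run the partial Lindeberg exchange of Proposition~\ref{prop:partial-replacement} to replace $\mtx{\Phi}_{J^c}$ with an independent Gaussian block, forming the hybrid $\mtx{\Psi}$ with $\mtx{\Psi}_J=\mtx{\Phi}_J$, $\mtx{\Psi}_{J^c}=\mtx{\Gamma}_{J^c}$; using the double‑minimum form one discretizes $T_J$ together with $K^\polar$ truncated to a ball of radius $R=O(B\sqrt{m+n})$ (legitimate since the optimal $\vct{s}$ has norm at most $2\enorm{\mtx{\Phi}}$), passes to a soft‑min, and exchanges columns one at a time, the bound $\abs{t_j}\le k^{-1/2}$ on $J^c$ controlling the third‑order errors; the price is $O(B^2m^{1/3}(m+n)\log(mn)/k^{1/2})$, with the larger scale $m+n$ coming from the $\vct{s}$-block. \textbf{(c)} Bound $\Expect\tau_{\min}^2(\mtx{\Psi};K;T_J)$ below by the Gaussian Minimax Theorem: condition on $\mtx{\Phi}_J$, apply Gordon's comparison to $\mtx{\Gamma}_{J^c}$ exactly as in Lemma~\ref{lem:my-comparison} to reduce to a pair of Gaussian vectors, then absorb the retained block $\mtx{\Phi}_J$ together with Gordon's $\R^m$-vector $\vct{h}$ into a single $m\times(k{+}1)$ matrix $\mtx{\Xi}=[\,\mtx{\Phi}_J\ \vct{h}\,]$, whose extreme singular values are $\sqrt{m}\pm O(B^2\sqrt{k})$ by Fact~\ref{fact:subgauss-mtx-tails}; writing $\mtx{\Xi}(\vct{t}_J,\enorm{\vct{t}_{J^c}})=\sqrt{m}\,\vct{\xi}_0+\vct{\epsilon}$ with $\vct{\xi}_0$ a unit vector in the random $(k{+}1)$-dimensional range $L$ and $\enorm{\vct{\epsilon}}\le O(B^2\sqrt k)$, and using $1$-homogeneity and $1$-Lipschitzness of $\Pi_K$, the quantity reduces to $\sqrt{m}\cdot\tau_{\min}(\text{isometry onto }L;K)$ up to additive $O(B^2\sqrt k)$, whose expectation is $(\coll{W}(\Omega)-\sqrt{k+1})_+$ by the Gaussian computation underlying Gordon's work (cf.\ the variance identities in Lemma~\ref{lem:my-comparison}); discarding the residual Gordon term of order $\enorm{\vct{g}_{J^c}}\le\sqrt n$ gives $\Expect\tau_{\min}^2(\mtx{\Psi};K;T_J)\gtrsim (\coll{W}(\Omega)-\sqrt n)_+^2/(\cnst{C}B^2\log(m+n))=(\coll{E}_n(\Omega))_-^2/(\cnst{C}B^2\log(m+n))$, using $\coll{E}_n(\Omega)=\sqrt n-\coll{W}(\Omega)$ and Definition~\ref{def:excess-width}. \textbf{(d)} Finally choose $k$ (the hypothesis $m+n\le\min\{m,n\}^{9/8}$ lets one take $k\ge m^{2/3}$ with all errors of order $(m+n)^{0.95}$) and collect terms.

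The hard part is step~(c). Unlike the restricted singular value, where the retained block $\mtx{\Phi}_J$ sits inside a Euclidean norm and is absorbed multiplicatively (Lemma~\ref{lem:remove-PhiK}), here $\mtx{\Phi}_J\vct{t}_J$ appears inside $\max_{\vct{u}\in K\cap\mathsf{B}^m}$, and a blunt bound would cost $\Expect\enorm{\mtx{\Phi}_J}\sim\sqrt m$, the same order as the target. The remedy is to package $\mtx{\Phi}_J$ with Gordon's Gaussian vector so that it contributes the desired $\sqrt{m}$-scaling rather than an additive error, which forces the nested comparison onto the random subspace $L$ and is also where the extra $\cnst{C}B^2\log(m+n)$ loss (a harmless slack, since only positivity of $\tau_{\min}$ is used in Theorem~\ref{thm:univ-embed}(b)) appears. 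It is precisely in this analysis of the absorbed block $\mtx{\Xi}$ — where one must reflect the problem onto $L$ and exploit that $\mtx{\Phi}_J\vct{t}_J$ has a symmetric distribution — that the symmetry hypothesis of Model~\ref{mod:bdd-mtx} is invoked.
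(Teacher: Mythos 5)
Your proposal for the concentration inequality~\eqref{it:car-bdd-concentration} is sound and follows the paper's route (which also omits details, citing the analogy with Proposition~\ref{prop:rsv-concentration}).

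The argument for~\eqref{it:car-bdd-width}, however, has a genuine gap in step~(b): you dissect only $\mathsf{S}^{n-1}$ by coordinate (index set $J$), while the paper dissects \emph{both} $\mathsf{S}^{n-1}$ (by $J$) \emph{and} the truncated polar cone $S := K^\polar \cap R\mathsf{B}^m$ (by a second index set $I \subset \{1,\dots,m\}$), and then constructs the hybrid $\mtx{\Psi}(I,J)$ that keeps all three blocks $\mtx{\Phi}_{IJ}$, $\mtx{\Phi}_{IJ^c}$, $\mtx{\Phi}_{I^cJ}$ and replaces only $\mtx{\Phi}_{I^cJ^c}$ with Gaussian entries. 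The second dissection is not optional. When one exchanges the entry $\phi_{ij}$ inside the soft-min $-\beta^{-1}\log\sum_{\vct{s},\vct{t}}\econst^{-\beta\normsq{\vct{s}-\mtx{A}\vct{t}}}$, the third derivative (Sublemma~\ref{slem:derivatives-redux}) is controlled by $\abs{t_j}^3$ \emph{and} by $\beta\abs{s_i - \vct{\xi}(\alpha)\cdot\vct{v}} + \beta^2\abs{s_i - \vct{\xi}(\alpha)\cdot\vct{v}}^3$. Your bound $\abs{t_j}\le k^{-1/2}$ handles the first factor, but without the restriction $\abs{s_i}\le Rk^{-1/2}$ for $i\in I^c$ you have no control over the second beyond $\abs{s_i}\le R$, where $R\sim B^2\sqrt{m+n}$. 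The cubic term then contributes an extra factor of roughly $R^3 \sim (m+n)^{3/2}$ per row, the optimal $\beta$ shifts, and the replacement error balloons to order $B^2(m+n)^{11/6}\log(mn)/k^{1/2}$ — which, even at the largest permissible $k\le\min\{m,n\}$, is $\Omega\big((m+n)^{25/18}\big)$ and swamps the main term. The paper's exchange is restricted to rows $i\in I^c$ precisely to make the $s_i$-dependence harmless; with both dissections the error is $\cnst{C}B^3(m+n)^{11/6}\log(mn)/k$ (note the full power of $k$), which is small at $k = \lfloor(m+n)^{8/9}\rfloor$. Your claimed price $O(B^2 m^{1/3}(m+n)\log(mn)/k^{1/2})$, inherited from the RSV calculation with $n$ swapped for $m+n$, is therefore not achievable by the single-dissection scheme you describe, and is in any case not what the paper obtains.

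A further structural consequence: once the $I$-dissection is in place, the downstream Gaussian-minimax step (Proposition~\ref{prop:car-width}) has to absorb \emph{three} non-Gaussian blocks, not one. The paper accomplishes this by a saddle-point/duality reformulation (Lemma~\ref{lem:car-duality}), by combining $\mtx{\Phi}_{IJ^c}$ with the Gordon vector $\vct{h}_{J^c}$ into one tall rectangular matrix whose singular values concentrate near $\sqrt{n}$, by combining $\mtx{\Phi}_J$ with $\vct{g}$ into another (Sublemmas~\ref{slem:car-minmax-simple}--\ref{slem:car-minmax-simpler}), and by eventually solving a scalar minimax over $\alpha := \norm{\smash{\vct{t}_{J^c}}}$; your sketch of step~(c) contains the germ of the correct $\mtx{\Xi}=[\mtx{\Phi}_J\ \vct{h}]$-absorption idea, but it only accounts for one retained block and so will need substantial rewiring after the $I$-dissection is introduced. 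Your identification of where symmetry enters is approximately right: it is used in the empirical-width lower bound (Sublemma~\ref{slem:empirical-width}), where Khintchine's inequality and the contraction principle compare $\Expect\max_{\vct{u}\in U}\vct{u}\cdot\mtx{X}\vct{s}$ with $\coll{W}(U)$ for a non-Gaussian $\mtx{X}$; this is also the source of the $\sqrt{\log}$ factor in the denominator of the final bound.
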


The proof of Theorem~\ref{thm:car-bdd} is \joelprev{long},
even though we can borrow a lot from the proof of Theorem~\ref{thm:rsv-bdd}.
This section contains an overview of the calculations that are required
with forward references to the detailed arguments.

\subsection{Proof of Theorem~\ref{thm:car-bdd}: \joel{Lower Tail Bound}}

\joel{Theorem~\ref{thm:car-bdd}\eqref{it:car-bdd-concentration} states
that the quantity $\tau_{\min}^2(\mtx{\Phi}; K)$ does not deviate
substantially below its mean.  The proof is similar to the proof
of the bound~\eqref{eqn:rsv-lower-tail} in Proposition~\ref{prop:rsv-concentration},
which shows that the squared restricted singular value $\sigma_{\min}^2(\mtx{\Phi}; K)$
does not deviate substantially below its mean.  We omit the repetitive details.}

\subsection{Proof of Theorem~\ref{thm:car-bdd}: Truncation and Dissection}

Let us proceed with the proof of Theorem~\ref{thm:car-bdd}\eqref{it:car-bdd-width}.
Define the sets
$$
S := \sametprev{K^\circ} \cap R \mathsf{B}^m
\quad\text{and}\quad
T := \mathsf{S}^{n-1}
$$
where the radius $R := \cnst{C}_{\rm rad} B^2 \sqrt{m + n}$ for some universal constant $\cnst{C}_{\rm rad}$.
Next, we construct a family of closed, convex subsets of $S$ and $T$.
For each $I \subset \{1, \dots, m\}$ and each $J \subset \{1, \dots, n \}$, define
$$
\begin{aligned}
S_I &:= \big\{ \vct{s} \in S : \abs{ s_i } \leq R (\#I)^{-1/2} \text{ for all $i \in I^c$} \big\}; \\
T_J &:= \big\{ \vct{t} \in T : \abs{\smash{t_j} } \leq (\#J)^{-1/2} \text{ for all $j \in J^c$} \big\}.
\end{aligned}
$$
Fix the cardinality parameter $k \in \{ 1, \dots, \min\{m,n\} \}$.
As in the proof of Theorem~\ref{thm:rsv-bdd}, we have the decompositions
$$
S = \bigcup_{\#I = k} S_I
\quad\text{and}\quad
T = \bigcup_{\#J = k} T_J.
$$
Furthermore,
\begin{equation} \label{eqn:IJ-count}
\# \{ S_I : \#I = k\} \times \# \{ T_J : \#J = k \} \leq \left( \frac{ \econst(m + n) }{ k } \right)^k. 
\end{equation}
We maintain the heuristic that the cardinality $k$ is much smaller than either
ambient dimension $m$ or $n$.

\subsection{Proof of Theorem~\ref{thm:car-bdd}: Lower Bound for RAP Functional}

To bound the quantity $\tau_{\min}(\mtx{\Phi}; K)$,
we must \sametprev{combine estimates from several technical results.}
We give an outline of the calculation here,
with the details postponed to a series of
propositions.

First, we must account for the error we incur when we truncate the cone $K^\polar$
to the wedge $S$.  Proposition~\ref{prop:car-cone-truncation} demonstrates that
\begin{equation} \label{eqn:car-bdd-pf1}
\Expect \tau_{\min}^2(\mtx{\Phi}; K)
	= \Expect \min_{\norm{\vct{t}} = 1} \min_{\vct{s} \in K^\polar} \normsq{ \vct{s} - \mtx{\Phi} \vct{t} }
	\geq \Expect \min_{\vct{t} \in T} \min_{\vct{s} \in S} \normsq{ \mtx{s} - \mtx{\Phi} \vct{t} }
	- \cnst{C}B^4.
\end{equation}
This inequality is based on an estimate for the norm of the point $\vct{s} \in K^\polar$ where the inner minimum is achieved, as well as a probability bound for the norm of the random matrix $\mtx{\Phi}$.

Next, we pass from the minimum over the full sets $S$ and $T$ to minima over their subsets
$S_I$ and $T_J$:
\begin{equation} \label{eqn:car-bdd-pf2}
\Expect \min_{\vct{t} \in T} \min_{\vct{s} \in S} \normsq{\vct{s} - \mtx{\Phi}\vct{t}}
	\geq \min_{\#I = \#J = k} \Expect  \min_{\vct{t} \in T_J} \min_{\vct{s} \in S_I}
	\normsq{ \vct{s} - \mtx{\Phi}\vct{t} }
	- \cnst{C} B^2 \sqrt{km \log((m+n)/k)}.
\end{equation}
The proof of this inequality hews to the argument in Proposition~\ref{prop:rsv-dissection}.
We just need to invoke the concentration inequality from Theorem~\ref{thm:car-bdd}\eqref{it:car-bdd-concentration}
in lieu of the concentration inequality from Proposition~\ref{prop:rsv-concentration},
and we take into account the bound~\eqref{eqn:IJ-count} on the number of subsets in the
decomposition.  Further details are omitted.

We are now prepared to perform the exchange argument to pass from the matrix $\mtx{\Phi}$
to a matrix $\mtx{\Psi}$ that contains many standard normal entries.  Fix subsets $I \subset \{1, \dots, m\}$
and $J \subset \{1, \dots, n\}$, each with cardinality $k$.  Introduce the random matrix
$$
\mtx{\Psi} := \mtx{\Psi}(I, J)
	:= \begin{bmatrix} \mtx{\Phi}_{IJ} & \mtx{\Phi}_{IJ^c} \\ \mtx{\Phi}_{I^c J} & \mtx{\Gamma}_{I^c J^c}
	\end{bmatrix}
$$
where $\mtx{\Gamma}$ is an $m \times n$ standard normal matrix.  Proposition~\ref{prop:partial-replacement-redux}
gives the bound  \begin{equation} \label{eqn:car-bdd-pf3}
\begin{aligned}
\Expect \min_{\vct{s} \in S_I} \min_{\vct{t} \in T_J} \normsq{ \vct{s} - \mtx{\Phi} \vct{t} }
	&\geq \Expect \min_{\vct{t} \in T_J} \min_{\vct{s} \in S_I}
	\normsq{ \vct{s} - \mtx{\Psi}(I, J) \vct{t} }
	- \frac{\cnst{C}B^3 (m+n)^{11/6} \log(mn)}{k} \\
	&\geq \Expect \min_{\norm{\vct{t}} = 1} \min_{\vct{s} \in K^\polar}
	\normsq{ \vct{s} - \mtx{\Psi}(I, J) \vct{t} }
	- \frac{\cnst{C}B^3 (m+n)^{11/6} \log(mn)}{k}
\end{aligned}
\end{equation}
The proof is similar with the proof of Proposition~\ref{prop:partial-replacement}.
We discretize both sets; we smooth the minima using the soft-min function;
and then we apply the Lindeberg principle.  The main distinction is that we
can replace even less of the matrix $\mtx{\Phi}$ than before.  The second line in~\eqref{eqn:car-bdd-pf3}
is an immediate consequence of the facts $S_I \subset S \subset K^\polar$ and $T_J \subset T = \mathsf{S}^{n-1}$.

To continue, we must identify a geometric functional that is hiding within the expression~\eqref{eqn:car-bdd-pf3}.
Write $\Omega := K \cap \mathsf{S}^{m-1}$.
Proposition~\ref{prop:car-width} demonstrates that
\begin{equation} \label{eqn:car-bdd-pf4}
\begin{aligned}
\Expect \min_{\norm{\vct{t}} = 1} \min_{\vct{s} \in K^\polar}
	\normsq{ \vct{s} - \mtx{\Psi}(I, J) \vct{t} }
	&\geq \left( \frac{\big( \coll{E}_n(\Omega) \big)_-}{\cnst{C} B \sqrt{\log m}}
	- \cnst{C}B^2\sqrt{k \log m} \right)_+^2. \\
\end{aligned}
\end{equation}
As in Proposition~\ref{prop:excess-width}, the main tool is the Gaussian Minimax Theorem,
Fact~\ref{fact:gauss-minmax}, which allows us to break down the standard normal matrix
$\mtx{\Gamma}$ into simpler quantities.  The proof requires some convex duality arguments,
as well as some delicate considerations that did not arise before.

Last, we linearize the function $(\cdot)_+^2$ in~\eqref{eqn:car-bdd-pf4}:
\begin{equation} \label{eqn:car-bdd-pf5}
\left( \frac{\big(\coll{E}_n(\Omega)\big)_-}{\cnst{C} B \sqrt{\log m}}
	- \cnst{C}B^2\sqrt{k \log m} \right)_+^2
	\geq \frac{\big( \coll{E}_n(\Omega) \big)_-^2}{\cnst{C} B^2 \log m}
	- \cnst{C}B\sqrt{k m}.
\end{equation}
We have employed the observation that
$$
\big( \coll{E}_n(\Omega) \big)_-
\leq \Expect \max_{\vct{s} \in \Omega} \vct{g} \cdot \vct{s}
\leq \Expect \norm{\smash{\vct{g}}}
\leq \sqrt{m}.
$$
Here, $\vct{g} \in \R^m$ is a standard normal vector.

Now, sequence the estimates~\eqref{eqn:car-bdd-pf1}, \eqref{eqn:car-bdd-pf2},
\eqref{eqn:car-bdd-pf3}, \eqref{eqn:car-bdd-pf4}, and~\eqref{eqn:car-bdd-pf5}
to arrive at
$$
\Expect \tau_{\min}^2(\mtx{\Phi}; K)
	\geq \frac{\big( \coll{E}_n(\Omega) \big)_-^2}{\cnst{C} B^2 \log m}
	- \cnst{C}B^3 \left( \frac{(m+n)^{11/6} \log(mn)}{k} 
	+ \sqrt{km \log((m+n)/k)} \right).
$$
We select $k = \lfloor (m+n)^{8/9} \rfloor$, which results in the bound
$$
\Expect \tau_{\min}^2(\mtx{\Phi}; K)
\geq \frac{\big( \coll{E}_n(\Omega) \big)_-^2}{\cnst{C} B^2 \log(m+n)}
	- \cnst{C} B^3 (m+n)^{17/18} \log(m+n).
$$
Note that $k \leq \min\{m,n\}$, as required, because we have assumed that
$m + n \leq \min\{m,n\}^{9/8}$.  We obtain the result quoted in Theorem~\ref{thm:car-bdd}\eqref{it:car-bdd-width}.

\section{When Embedding Fails for a Heavy-Tailed Random Matrix}
\label{sec:rap-four}

In this section, we extend Theorem~\ref{thm:car-bdd}
to the heavy-tailed matrix model, Model~\ref{mod:p-mom-mtx}.
In Section~\ref{sec:rsv-four-to-embed-univ}, we show how to
derive the second half of the universality result for the embedding
dimension, Theorem~\ref{thm:univ-embed}\eqref{eqn:univ-embed-succ}, as a consequence.

\subsection{Corollary~\ref{cor:car-four}: Main Result for the $p$-Moment Random Matrix Model}

The following corollary extends Theorem~\ref{thm:car-bdd}
to include random matrices drawn from Model~\ref{mod:p-mom-mtx}.

\begin{corollary}[RAP: $p$-Moment Random Matrix Model] \label{cor:car-four}
Fix parameters $p > 4$ and $\nu \geq 1$.  Place the following assumptions:

\begin{itemize}
\item	Let $m$ and $n$ be natural numbers with $m+n \leq \min\{m,n\}^{9/8}$.

\item	Let $K$ be a closed, convex cone in $\R^m$, and define $\Omega := K \cap \mathsf{S}^{m-1}$.

\item	Draw an $m \times n$ random matrix $\mtx{\Phi}$ that satisfies Model~\ref{mod:p-mom-mtx} with given $p$ and $\nu$.
\end{itemize}

\noindent
Then the RAP functional satisfies the probability bound
$$
\Prob{ \tau_{\min}(\mtx{\Phi}; K) \leq
	\frac{\big( \coll{E}_n(\Omega) \big)_-}{\cnst{C}_p \nu (m+n)^{(1 - \cnst{c}_p) \kappa(p)}}
	- \cnst{C}_p \nu^{3/2} (m+n)^{1/2 - \kappa(p)} }
	\leq \cnst{C}_{p} (m+n)^{1 - p/4}.
$$
The function $\kappa(p)$ is strictly positive for $p > 4$.
The strictly positive constants $\cnst{c}_p$ and $\cnst{C}_{p}$
depend only on $p$.
\end{corollary}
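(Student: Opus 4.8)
The plan is to derive Corollary~\ref{cor:car-four} from Theorem~\ref{thm:car-bdd} by the same truncation scheme used to pass from Theorem~\ref{thm:rsv-bdd} to Corollary~\ref{cor:rsv-four} in Section~\ref{sec:rsv-truncation}. First I would fix a truncation parameter $R$ with $R^{p/2-1} \geq 2\nu^{p/2}$ and split $\mtx{\Phi} = \mtx{\Phi}_{\rm trunc} + \mtx{\Phi}_{\rm tail}$ by applying Lemma~\ref{lem:trunc-rv} entrywise, so that $\mtx{\Phi}_{\rm trunc}$ obeys Model~\ref{mod:bdd-mtx} with bound $B = 2R$ (the truncation preserves symmetry, which matters here because Theorem~\ref{thm:car-bdd} genuinely uses symmetry), while $\mtx{\Phi}_{\rm tail}$ has centered entries with variance at most $\cnst{C}\nu^p R^{2-p}$ and $p$th moment at most $(2\nu)^p$.

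Next I would establish a Lipschitz-type bound showing that $\tau_{\min}$ is stable under additive perturbation of the matrix: for any fixed $\mtx{E}$,
$$
\tau_{\min}(\mtx{\Phi}; K)
	= \min_{\norm{\vct{t}}=1} \min_{\vct{s} \in K^\polar} \norm{ \vct{s} - \mtx{\Phi}\vct{t} }
	\geq \min_{\norm{\vct{t}}=1} \min_{\vct{s} \in K^\polar} \norm{ \vct{s} - \mtx{\Phi}_{\rm trunc}\vct{t} } - \norm{\mtx{\Phi}_{\rm tail}}
	= \tau_{\min}(\mtx{\Phi}_{\rm trunc}; K) - \norm{\mtx{\Phi}_{\rm tail}},
$$
since replacing $\mtx{\Phi}\vct{t}$ by $\mtx{\Phi}_{\rm trunc}\vct{t}$ moves each target point by at most $\norm{\mtx{\Phi}_{\rm tail}}$. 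Then I would invoke Theorem~\ref{thm:car-bdd}\eqref{it:car-bdd-concentration} together with \eqref{it:car-bdd-width} to get, after choosing the deviation parameter $\zeta$ a small power of $m+n$ and taking square roots (using subadditivity of the square root, exactly as in \eqref{eqn:four-mom-pf2}), a high-probability lower bound of the form $\tau_{\min}(\mtx{\Phi}_{\rm trunc}; K) \geq \big(\coll{E}_n(\Omega)\big)_- / (\cnst{C} R \sqrt{\log(m+n)}) - \cnst{C} R^{3/2}(m+n)^{c}$ for an exponent $c$ bounded away from $1/2$; here I must be careful that the $B^2\log(m+n)$ in the denominator of \eqref{it:car-bdd-width} becomes $R^2 \log(m+n)$, so the leading term carries a factor $R^{-1}$ after the square root. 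For the tail I would apply the heavy-tailed norm estimate, Fact~\ref{fact:heavy-tail-norm}, to get $\norm{\mtx{\Phi}_{\rm tail}} \leq \cnst{C}\nu^{p/2} R^{1-p/2}\sqrt{(m+n)\log(m+n)} + \cnst{C}\nu(m+n)^{2/p}\log(m+n) \cdot \zeta$ with failure probability $\zeta^{-p}$, then select $\zeta$ a small power of $m+n$ to make the failure probability $(m+n)^{1-p/4}$ while keeping $\norm{\mtx{\Phi}_{\rm tail}} = o(\sqrt{m+n})$.

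The last step is to balance $R$: choose $R = \nu (m+n)^{\theta}$ for a small exponent $\theta = \theta(p)$ so that the two $R$-dependent error terms (the one from Theorem~\ref{thm:car-bdd}\eqref{it:car-bdd-width}, which grows like $R^{3/2}$, and the tail term, which decays like $R^{1-p/2}$) have comparable order; this produces a net error $\cnst{C}_p\nu^{3/2}(m+n)^{1/2-\kappa(p)}$ with $\kappa(p) > 0$, after absorbing logarithms into a slightly larger power. Simultaneously the leading term picks up a factor $R^{-1} = \nu^{-1}(m+n)^{-\theta}$, which is why the denominator in the statement reads $\cnst{C}_p\nu(m+n)^{(1-\cnst{c}_p)\kappa(p)}$ rather than just $\cnst{C}_p\nu$ — the exponent $\theta$ must be expressed as $(1-\cnst{c}_p)\kappa(p)$ for a suitable $\cnst{c}_p \in (0,1)$, which I would pin down by bookkeeping once $\kappa(p)$ is fixed. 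Combining the exponential failure probability from $\mtx{\Phi}_{\rm trunc}$ (which is superpolynomially small) with the polynomial $(m+n)^{1-p/4}$ from $\mtx{\Phi}_{\rm tail}$ gives the stated bound. I expect the main obstacle to be the exponent bookkeeping: unlike Corollary~\ref{cor:rsv-four}, here the $R$-dependence of the \emph{main} term (not just the error) forces a joint optimization, and I will need to verify that the constraint $k \leq \min\{m,n\}$ from Theorem~\ref{thm:car-bdd} (which relied on $m+n \leq \min\{m,n\}^{9/8}$) survives this choice and that $\kappa(p) \to 0$ slowly enough as $p \to 4^+$ that all error exponents stay strictly below $1/2$.
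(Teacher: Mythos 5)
Your proposal is correct and is precisely the truncation argument the paper points to (Section~\ref{sec:rsv-truncation}), merely remarking that ``We omit further details.'' You also correctly flag the two points where the analogy with Corollary~\ref{cor:rsv-four} is not literal: first, Theorem~\ref{thm:car-bdd} genuinely uses the symmetry hypothesis (through Sublemma~\ref{slem:empirical-width}), so it matters that Lemma~\ref{lem:trunc-rv} produces a symmetric truncated variable; second, the $B^{-2}$ sitting in the denominator of the main term in Theorem~\ref{thm:car-bdd}\eqref{it:car-bdd-width} means the truncation level $R$ appears in the \emph{leading} term after taking square roots, not just in the error, which is why the corollary's main term carries a factor $(m+n)^{-(1-\cnst{c}_p)\kappa(p)}$ and why the choice of $R$ requires a joint balancing rather than the one-sided optimization that sufficed in Section~\ref{sec:rsv-truncation}. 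The exponent bookkeeping you defer does close: for $R = \nu(m+n)^{\theta}$ the three constraints $\theta < \kappa(p)$, $\kappa(p) \le (p/2-1)\theta$, and $\kappa(p) \le \min\{1/2 - 0.95/2 - 3\theta/2, 1/4 - 1/p\}$ are simultaneously satisfiable for every $p>4$ provided $\cnst{c}_p \leq (p-4)/(p-2)$, and all error exponents stay strictly below $1/2$.
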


\noindent
The proof of Corollary~\ref{cor:car-four} follows from
Theorem~\ref{thm:car-bdd} and the same kind of truncation
argument that appears in Section~\ref{sec:rsv-truncation}.
We omit further details. 

\subsection{Proof of Theorem~\ref{thm:univ-embed}(\ref{eqn:univ-embed-fail}) from Corollary~\ref{cor:car-four}}
\label{sec:car-to-embed}

Theorem~\ref{thm:univ-embed}\eqref{eqn:univ-embed-fail} is an easy consequence of Corollary~\ref{cor:car-four}.  Let us restate the assumptions of the theorem:

\begin{itemize}
\item	$E$ is a compact subset of $\R^D$ that does not contain the origin.

\item	The statistical dimension of $E$ satisfies $\delta(E) \geq \varrho D$.

\item	The $d \times D$ random \joelprev{linear map} $\mtx{\Pi}$ follows Model~\ref{mod:p-mom-mtx}
with parameters $p > 4$ and $\nu \geq 1$.
\end{itemize}

\noindent
We must now consider the regime where the embedding dimension
$d \leq (1 - \eps) \, \delta(E)$.  We need to demonstrate that
\begin{equation} \label{eqn:embed-b-pf1}
\Prob{ \vct{0} \in \mtx{\Pi}(E) }
	= \Prob{ E \cap \nullsp(\mtx{\Pi}) \neq \emptyset }
	\geq 1 - \cnst{C}_p D^{1 - p/4}.
\end{equation}
As in Section~\ref{sec:rsv-four-to-embed-univ}, the probability
is a decreasing function of the embedding dimension,
so we may as well consider the case where $d = \lfloor (1 - \eps) \, \delta(E) \rfloor$.
It is easy to see that \joelprev{$d \leq D \leq \varrho^{-1} d$} because $\delta(E) \leq d$.
\joelprev{In particular, $d + D \leq \min\{d, D\}^{9/8}$.}

Introduce the spherical retraction $\Omega := \vct{\theta}(E)$.
\joelprev{If $\cone(\Omega)$ is a subspace, we replace $\Omega$ by
a subset $\Omega_0$ with the property that $\cone(\Omega_0)$
is a subspace of one dimension fewer than $\cone(\Omega)$.
Then we proceed with the argument.}

Proposition~\ref{prop:annihilate} and relation~\eqref{eqn:univ-embed-spherical}
show that \begin{equation}\label{useful rap converse}
\tau_{\min}(\mtx{\Pi}^\adj; \cone(\Omega)) > 0
\quad\text{implies}\quad
\vct{0} \in \mtx{\Pi}(\Omega)
\quad\text{implies}\quad
\vct{0} \in \mtx{\Pi}(E).
\end{equation}
Therefore, to verify~\eqref{eqn:embed-b-pf1},
it is enough to produce a high-probability lower bound
on the RAP functional $\tau_{\min}(\mtx{\Pi}^\adj; \cone(\Omega))$.
With the choices $\mtx{\Phi} = \mtx{\Pi}^\adj$ and $K = \cone(\Omega)$,
Corollary~\ref{cor:car-four} yields
$$
\Prob{ \tau_{\min}(\mtx{\Pi}^\adj; \cone(\Omega) )
	\geq \frac{\big( \coll{E}_d(\Omega) \big)_-}{\cnst{C}_p \nu (d+D)^{(1-\cnst{c}_p)\kappa(p)}}
	- \cnst{C}_p \nu^{3/2} (d + D)^{1/2 - \kappa(p)} }
	\geq 1 - \cnst{C}_p D^{1 - p/4}.
$$
We need to check that the lower bound
on the RAP functional is positive.  That is,
\begin{equation} \label{eqn:embed-b-pf1.5}
\big( \coll{E}_d(\Omega) \big)_-
	> \cnst{C}_p \nu^{5/2} (d + D)^{1/2 - \cnst{c}_p \kappa(p)}.
\end{equation}
Once again, this point follows from two relatively short calculations.

Since $\Omega$ is a subset of the unit sphere,
we quickly compute the excess width using~\eqref{eqn:excess-vs-sdim}:
\begin{equation} \label{eqn:embed-b-pf2}
\begin{aligned}
\big(\coll{E}_d(\Omega)\big)_-
	\geq \sqrt{\delta(\Omega) - 1} - \sqrt{d} 
	= \sqrt{\delta(E) - 1} - \sqrt{d} 
	\geq \big(1 - \sqrt{1 - \eps}\big) \sqrt{\delta(E) - 1}.
\end{aligned}
\end{equation}
The justifications are the same as in Section~\ref{sec:rsv-four-to-embed-univ}.

We can easily bound the dimensional term in~\eqref{eqn:embed-b-pf1.5}:
$$
(d + D)^{1/2 - \cnst{c}_p \kappa(p)} \leq \cnst{C} D^{1/2 - \cnst{c}_p \kappa(p)}
	\leq \cnst{C} \varrho^{-1/2} D^{-\cnst{c}_p \kappa(p)} \sqrt{\delta(E)}
	\leq \cnst{C} \varrho^{-1/2} D^{-\cnst{c}_p \kappa(p)} \sqrt{\delta(E) - 1}
$$
The first inequality holds because $d \leq D$,
and the last two relations both rely on the assumption $1 \ll D \leq \varrho^{-1} \delta(E)$.
Since $\cnst{c}_p \kappa(p)$ is positive,
we can find a number $N := N(p, \nu, \varrho, \eps)$ for which
$D \geq N$ implies that
$$
\cnst{C}_p \nu^{5/2} (d + D)^{1/2 -\cnst{c}_p\kappa(p)}
	< \big( \sqrt{1 + \eps} - 1 \big) \sqrt{\delta(E) - 1}.
$$
Combine the last result with~\eqref{eqn:embed-b-pf2} to
see that the claim~\eqref{eqn:embed-b-pf1.5} holds true.

\subsection{Proof of Proposition \ref{prop:univ-decode}: Application of RAP functional to decoding with structured errors}
\label{sec:decoding-success}

\joelprev{
This section establishes the success condition in Proposition~\ref{prop:univ-decode}.
More precisely, we show that
\begin{equation} \label{eqn:univ-decode-goal}
s/n < \psi_{\ell_1}^{-1}(1 - m/n) - o(1)
\quad\text{implies}\quad
\Omega \cap \range(\mtx{\Phi}) = \emptyset
\quad\text{with probability $1 - o(1)$.}
\end{equation}
The set $\Omega$ is derived from the descent cone of the $\ell_1$ norm,
and the calculation~\eqref{eqn:sdim-l1-polar} shows that
$$
\delta(\Omega^\polar) = n(1 - \psi_{\ell_1}(s/n) - o(1)).
$$
To begin, we assume that there are parameters $\varrho, \eps > 0$ for which
\begin{itemize}
\item	The statistical dimension $\delta(\Omega^{\circ}) \geq \varrho n$.
\item	The message length $m \leq (1 - \eps) \,\delta(\Omega^\circ)$.
\end{itemize}
As in Section~\ref{sec:car-to-embed}, these conditions imply that
$$
\Omega^\circ \cap \nullsp(\mtx{\Phi}^*) \neq \emptyset
\quad\text{with probability at least $1 - o(1)$.}
$$
Indeed, this statement corresponds to~\eqref{eqn:embed-b-pf1}
under the change of variables $d = m$ and $D = n$
and $E = \Omega^{\circ}$ and $\mtx{\Pi} = \mtx{\Phi}^*$.
By polarity, the conclusion of~\eqref{eqn:univ-decode-goal} holds.
}

\joelprev{
It remains to show that the parameter choice in~\eqref{eqn:univ-decode-goal}
implies the two assumptions we have made.  As in the proof of Proposition~\ref{prop:univ-decode},
the condition $s \leq (1 - \xi) n$ ensures that $\delta(\Omega^{\polar}) \geq \varrho n$
for some $\varrho > 0$.
Similarly, the condition $m \leq (1 - \eps) \delta(\Omega^{\circ})$ holds when
$$
m \leq (1 - \eps) \cdot n (1 - \psi_{\ell_1}(s/n) - o(1)).
$$
Equivalently, $$
s/n \leq \psi_{\ell_1}^{-1}(1 - m/((1-\eps)n) - o(1)).
$$
Since we can choose $\eps$ to be an arbitrarily small constant as $m \to \infty$, it suffices that
$$
s/n	\leq \psi_{\ell_1}^{-1}(1 - m/n - o(1)).
$$
This observation completes the argument.
}

\section{Theorem~\ref{thm:car-bdd}: Truncation of the Cone}

In this section, we argue that functional $\tau_{\min}(\mtx{\Phi}; K)$
does not change very much if we truncate the cone $K$.  Replacing the unbounded
set with a compact set allows us to develop discretization arguments.

\begin{proposition}[RAP: Truncation of Cone] \label{prop:car-cone-truncation}
Adopt the notation and hypotheses of Theorem~\ref{thm:car-bdd}.
Let $S := K^\polar \cap R \mathsf{B}^m$, where $R := \cnst{C}_{\rm rad}B^2 \sqrt{m + n}$.
Then
$$
\Expect  \min_{\norm{\vct{t}} = 1} \min_{\vct{s} \in K^\polar}
	\normsq{ \vct{s} - \mtx{\Phi} \vct{t}}
	\geq \Expect \min_{\norm{\vct{t}} = 1} \min_{\vct{s} \in S}
	\normsq{ \vct{s} - \mtx{\Phi} \vct{t}}
	- \cnst{C} B^4.
$$
\end{proposition}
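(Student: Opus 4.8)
The plan is to show that replacing the unbounded cone $K^\polar$ with its truncation $S := K^\polar \cap R \mathsf{B}^m$ changes the squared RAP functional by at most a constant. Write $\tau_1^2 := \min_{\norm{\vct{t}}=1} \min_{\vct{s} \in K^\polar} \normsq{\vct{s} - \mtx{\Phi}\vct{t}}$ and $\tau_2^2 := \min_{\norm{\vct{t}}=1} \min_{\vct{s} \in S} \normsq{\vct{s} - \mtx{\Phi}\vct{t}}$. Since $S \subset K^\polar$, the minimum over $S$ is taken over a smaller set, so $\tau_2^2 \geq \tau_1^2$ deterministically; the content is the reverse inequality up to a constant. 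First I would fix an arbitrary unit vector $\vct{t}$ and look at the point $\vct{s}_\star = \vct{s}_\star(\vct{t}) \in K^\polar$ achieving the inner minimum, i.e. the metric projection of $\mtx{\Phi}\vct{t}$ onto the closed convex cone $K^\polar$. A standard projection estimate gives $\norm{\vct{s}_\star} \leq \norm{\vct{s}_\star - \mtx{\Phi}\vct{t}} + \norm{\mtx{\Phi}\vct{t}} \leq 2\norm{\mtx{\Phi}\vct{t}} \leq 2\norm{\mtx{\Phi}}$, using that $\vct{0} \in K^\polar$ so $\norm{\vct{s}_\star - \mtx{\Phi}\vct{t}} \leq \norm{\mtx{\Phi}\vct{t}}$. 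Hence whenever $\norm{\mtx{\Phi}} \leq R/2$, the projection $\vct{s}_\star$ lies in $R\mathsf{B}^m$, so it is a legal competitor for the truncated problem, and on this event $\tau_2^2 = \tau_1^2$ (for every $\vct{t}$ simultaneously).

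The remaining work is to control the contribution of the bad event $\{\norm{\mtx{\Phi}} > R/2\}$ to the expectation. On this event I would simply bound $0 \leq \tau_2^2 - \tau_1^2 \leq \tau_2^2 \leq R^2$, since taking $\vct{s} = \vct{0} \in S$ gives $\tau_2^2 \leq \min_{\norm{\vct{t}}=1}\normsq{\mtx{\Phi}\vct{t}} = \sigma_{\min}^2(\mtx{\Phi}) \leq \norm{\mtx{\Phi}}^2$; actually a cleaner uniform bound is $\tau_2 \leq \max_{\vct{s}\in S, \norm{\vct{t}}=1}\norm{\vct{s}-\mtx{\Phi}\vct{t}}$, but the point is only that $\tau_2^2 - \tau_1^2$ is controlled by a polynomial in $\norm{\mtx{\Phi}}$ and $R$ on the bad event. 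Then
$$
\Expect \tau_1^2 - \Expect \tau_2^2 \geq - \Expect\big[ (\tau_2^2 - \tau_1^2) \, \mathbb{1}\{\norm{\mtx{\Phi}} > R/2\} \big] \geq - \big( \Expect (\tau_2^2-\tau_1^2)^2 \big)^{1/2} \big( \Prob{\norm{\mtx{\Phi}} > R/2} \big)^{1/2},
$$
by Cauchy--Schwarz, and it remains to show this product is $O(B^4)$. For the tail probability I would invoke a nonasymptotic norm bound for random matrices with independent, bounded, standardized entries: $\Expect\norm{\mtx{\Phi}} \leq \cnst{C}(\sqrt{m}+\sqrt{n}) \leq \cnst{C}\sqrt{m+n}$, together with the scalar concentration inequality for $\norm{\mtx{\Phi}}$ (it is a $B$-Lipschitz function... more precisely $\norm{\cdot}$ composed with the map to entries has bounded differences, so a bounded-differences or Gaussian-type concentration gives subgaussian tails on a scale $B\sqrt{m+n}$). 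Choosing the universal constant $\cnst{C}_{\rm rad}$ large enough, $R/2 = \tfrac12 \cnst{C}_{\rm rad} B^2\sqrt{m+n}$ exceeds $\Expect\norm{\mtx{\Phi}} + t$ for $t$ of order $B^2\sqrt{m+n}$, so $\Prob{\norm{\mtx{\Phi}} > R/2}$ decays like $\econst^{-\cnst{c}(m+n)}$ — faster than any polynomial, in fact super-exponentially small in the error scale we need. Paired with the polynomial-in-$(m+n)$ bound on $(\Expect(\tau_2^2-\tau_1^2)^2)^{1/2}$ (which is at most a fixed power of $R$, hence polynomial in $B\sqrt{m+n}$), the whole correction term is $o(1)$, and in particular bounded by $\cnst{C}B^4$.

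I expect the only mild obstacle is bookkeeping: making sure the truncation radius $R$ is large enough (in terms of the universal constant $\cnst{C}_{\rm rad}$) that $\Prob{\norm{\mtx{\Phi}} > R/2}$ is negligible against whatever polynomial factor arises from $\Expect(\tau_2^2 - \tau_1^2)^2$, while keeping the final correction as a clean $\cnst{C}B^4$ rather than a worse power of $B$ or of $m+n$. The projection-norm estimate and the second-moment bound on $\tau_2^2$ are both routine (and the needed random-matrix norm bound is already quoted elsewhere in the paper, e.g. a matrix Rosenthal-type estimate / Fact on norms of bounded random matrices), so there is no deep difficulty here — the lemma is essentially a soft argument that an unbounded feasible region may be truncated at a scale well above the operator norm of $\mtx{\Phi}$.
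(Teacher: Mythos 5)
Your argument is essentially the paper's: bound the norm of the inner minimizer $\vct{s}_\star$ by $2\norm{\mtx{\Phi}}$ using $\vct{0} \in K^\polar$, observe that on $\{\norm{\mtx{\Phi}} \leq R/2\}$ the truncation is lossless, and then control the bad event by Cauchy--Schwarz against $\big(\Expect\norm{\mtx{\Phi}}^4\big)^{1/2}$ and $\Prob{\norm{\mtx{\Phi}} > R/2}^{1/2}$, with the radius $R = \cnst{C}_{\rm rad}B^2\sqrt{m+n}$ chosen so the product comes out $\cnst{C}B^4$.

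One step in your sketch is stated incorrectly, though the conclusion is rescued by the tool you reference. Bounded differences for $\norm{\mtx{\Phi}}$ as a function of the $mn$ entries gives a variance proxy $\sum_{ij} c_{ij}^2 \approx B^2 mn$, so the resulting concentration scale is $B\sqrt{mn}$, \emph{not} $B\sqrt{m+n}$; for $m,n$ comparable this deviation window is as large as $R$ itself, and yields no useful decay of $\Prob{\norm{\mtx{\Phi}} > R/2}$ in $m+n$. What actually delivers the sharp tail at scale $B^2\sqrt{m+n}$ is the net-based subgaussian matrix estimate (Fact~\ref{fact:subgauss-mtx-tails}), which gives $\Prob{\norm{\mtx{\Phi}} > \sqrt{m} + \cnst{C}B^2\sqrt{n} + \cnst{C}B^2\zeta} \leq \econst^{-\zeta^2}$, and with $\cnst{C}_{\rm rad}$ large this makes the tail probability at most $(m+n)^{-2}$ (in fact $\econst^{-\cnst{c}(m+n)}$). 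Since $\big(\Expect\norm{\mtx{\Phi}}^4\big)^{1/2} \leq \cnst{C}B^4(m+n)$ from the same fact, the product is $\cnst{C}B^4$ as required; you correctly flagged that this bound is available in the paper, but the attribution to bounded differences should be replaced by the net argument.
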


\begin{proof}
Since $\vct{0} \in K^\polar$, it is easy to see that
$$
\min_{\vct{s} \in K^\polar} \norm{\vct{s} - \mtx{\Phi} \vct{t}}
	\leq \norm{\mtx{\Phi} \vct{t}}
	\leq \norm{\mtx{\Phi}}
	\quad\text{when $\norm{\vct{t}} = 1$.}
$$
Meanwhile, the triangle inequality gives the bound
$$
\norm{\vct{s} - \mtx{\Phi}\vct{t}}
	\geq \norm{\vct{s}} - \norm{\mtx{\Phi}\vct{t}}
	\geq \norm{\vct{s}} - \norm{\mtx{\Phi}}
	\quad\text{when $\norm{\vct{t}} = 1$.}
$$
It follows that
\begin{equation} \label{eqn:sstar-bdd}
\vct{s}_{\star} \in \argmin_{\vct{s} \in K^\polar} \norm{\vct{s} - \mtx{\Phi}\vct{t}}
	\quad\text{implies}\quad
	\norm{\vct{s}_{\star}} \leq 2 \norm{\mtx{\Phi}}
	\quad\text{when $\norm{\vct{t}} = 1$.}
\end{equation}
Since the norm of the random matrix $\mtx{\Phi}$
concentrates, the bound~\eqref{eqn:sstar-bdd} shows that the norm
of the minimizer $\vct{s}_{\star}$ is unlikely to be large.

For any positive parameter $R$, the observation~\eqref{eqn:sstar-bdd} allows us to calculate that
\begin{equation} \label{eqn:car-cone-trunc-bd}
\begin{aligned}
\Expect \tau_{\min}^2(\mtx{\Phi}; K) 
	&= \Expect  \min_{\norm{\vct{t}} = 1}\min_{\vct{s} \in K^\polar} \normsq{\vct{s} - \mtx{\Phi} \vct{t}} \\
	&\geq \Expect \left[ \min_{\norm{\vct{t}} = 1}\min_{\vct{s} \in K^\polar} \normsq{\vct{s} - \mtx{\Phi} \vct{t}}
	\mathbb{1}\{ \norm{\mtx{\Phi}} \leq R/2 \} \right] \\
	&= \Expect\left[ \min_{\norm{\vct{t}} = 1} \min_{\vct{s} \in K^\polar \cap R \mathsf{B}^m}
	\normsq{\vct{s} - \mtx{\Phi}\vct{t}} \mathbb{1}\{ \norm{\mtx{\Phi}} \leq R/2 \}\right] \\
	&= \Expect \min_{\norm{\vct{t}} = 1} \min_{\vct{s} \in K^\polar \cap R \mathsf{B}^m}
	\normsq{\vct{s} - \mtx{\Phi} \vct{t}}
	- \Expect \left[ \max_{\norm{\vct{t}} = 1} \max_{\vct{s} \in K^\polar \cap R \mathsf{B}^m}
	\normsq{\vct{s} - \mtx{\Phi} \vct{t}}
	\mathbb{1}\{ \norm{\mtx{\Phi}} > R/2 \} \right].
\end{aligned}
\end{equation}
To reach the last line, we write the indicator function in terms of its the complement.

To bound the second term on the right-hand side of~\eqref{eqn:car-cone-trunc-bd},
crude estimates suffice.
$$
\begin{aligned}
\Expect \left[ \max_{\norm{\vct{t}} = 1} \max_{\vct{s} \in K^\polar \cap R \mathsf{B}^m} \normsq{\vct{s} - \mtx{\Phi} \vct{t}}
	\mathbb{1}\{ \norm{\mtx{\Phi}} > R/2 \} \right]
	&\leq \Expect\left[ (R + \norm{\mtx{\Phi}})^2 \mathbb{1}\{ \norm{\mtx{\Phi}} > R/2 \} \right] \\ 
	&\leq 9 \Expect\left[ \normsq{\mtx{\Phi}} \mathbb{1}\{ \norm{\mtx{\Phi}} > R/2 \} \right] \\
	&\leq 9 \left( \Expect \norm{\mtx{\Phi}}^4 \right)^{1/2} \big( \Prob{ \norm{\mtx{\Phi}} > R/2 } \big)^{1/2}.
\end{aligned}
$$
The last inequality is Cauchy--Schwarz.
Since $\mtx{\Phi}$ satisfies the condition~\eqref{eqn:subgauss-hyp},
Fact~\ref{fact:subgauss-mtx-tails} implies that
$$
\Prob{ \norm{\mtx{\Phi}} > \cnst{C}_0 B^2 \sqrt{m+n} + \cnst{C}_0 B^2 \zeta } \leq \econst^{-\zeta^2}.
$$
In particular, using integration by parts,
$$
\left(\Expect \norm{\mtx{\Phi}}^4 \right)^{1/4} \leq \cnst{C} B^2 \sqrt{m+n}.
$$
Furthermore, there is a constant $\cnst{C}_{\rm rad}$ for which
$$
\Prob{ \norm{\mtx{\Phi}} > \half \cnst{C}_{\rm rad} B^2 \sqrt{m + n} }
	\leq (m+n)^{-2}.
$$
If we set $R := \cnst{C}_{\rm rad} B^2 \sqrt{m +n}$, then
\begin{equation} \label{eqn:car-cone-trunc-error}
\Expect \left[ \max_{\norm{\vct{t}} = 1} \max_{\vct{s} \in K^\polar \cap R \mathsf{B}^m}
	\normsq{\vct{s} - \mtx{\Phi} \vct{t}}
	\mathbb{1}\{ \norm{\mtx{\Phi}} > R/2 \} \right]
	\leq \cnst{C} B^4.
\end{equation}
Introduce the estimate~\eqref{eqn:car-cone-trunc-error} into~\eqref{eqn:car-cone-trunc-bd}
to complete the argument.
\end{proof}

\section{Theorem~\ref{thm:car-bdd}: Replacing Most Entries of the Random Matrix}

In this section, we show that we can replace most of the entries of a random matrix
$\mtx{\Phi}$ with standard normal variables without changing the value of the
functional $\Expect \tau_{\min}^2(\mtx{\Phi}; K)$ substantially

\begin{proposition}[RAP: Partial Replacement] \label{prop:partial-replacement-redux}
Let $\mtx{\Phi}$ be an $m \times n$ random matrix that satisfies Model~\ref{mod:bdd-mtx}
with magnitude bound $B$.  Fix the parameter $R :=\cnst{C}_{\rm rad} B^2\sqrt{ m + n }$.
Let $I$ be a subset of $\{1, \dots, m\}$ with cardinality $k$, and
let $S_I$ be a closed subset of $R \mathsf{B}^m$ for which
\begin{equation*} \label{eqn:SI-hyp-replace-redux}
\vct{s} \in S_I
\quad\text{implies}\quad
\abs{s_i} \leq R k^{-1/2}
\quad\text{for each index $i \in I^c$}.
\end{equation*}
Let $J$ be a subset of $\{1, \dots, n\}$ with cardinality $k$,
and let $T_J$ be a closed subset of $\mathsf{B}^n$ for which
\begin{equation*} \label{eqn:TJ-hyp-replace-redux}
\vct{t} \in T_J
\quad\text{implies}\quad
\abs{\smash{t_j}} \leq k^{-1/2}
\quad\text{for each index $j \in J^c$}.
\end{equation*}
Suppose that $\mtx{\Psi}$ is an $m \times n$ matrix with block form
\begin{equation} \label{eqn:car-hybrid}
\mtx{\Psi} := \mtx{\Psi}(I, J) := \begin{bmatrix} \mtx{\Phi}_{IJ} & \mtx{\Phi}_{IJ^c} \\
	\mtx{\Phi}_{I^c J} & \mtx{\Gamma}_{I^c J^c} \end{bmatrix}.
\end{equation}
Then
\begin{equation} \label{eqn:replacement-error-redux}
\abs{ \Expect \min_{\vct{t} \in T_J} \min_{\vct{s} \in S_I} \normsq{ \vct{s} - \mtx{\Phi} \vct{t} }
	- \Expect \min_{\vct{t} \in T_J} \min_{\vct{s} \in S_I} \normsq{ \vct{s} - \mtx{\Psi} \vct{t} } }
	\leq \frac{\cnst{C}B^3 (m+n)^{11/6} \log(mn)}{k}.
\end{equation}
As usual, $\mtx{\Gamma}$ is an $m \times n$ standard normal matrix.
\end{proposition}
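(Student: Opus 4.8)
\textbf{Proof strategy for Proposition~\ref{prop:partial-replacement-redux}.}
The plan is to follow the template of Proposition~\ref{prop:partial-replacement}: discretize both sets, pass to a soft-min, and then run the Lindeberg exchange principle one entry at a time over the block $I^c \times J^c$ that we are allowed to replace. First I would replace $S_I$ by an $\eps$-net $S_I^\eps$ of $S_I$ and $T_J$ by an $\eps$-net $T_J^\eps$ of $T_J$, following Lemma~\ref{lem:discretization}. Since $S_I \subset R\mathsf{B}^m$ and $T_J \subset \mathsf{B}^n$, the same volumetric argument gives $\log(\#S_I^\eps) \leq m\log(3R/\eps)$ and $\log(\#T_J^\eps)\leq n \log(3/\eps)$, and the discretization error in $\Expect \min_{\vct t}\min_{\vct s}\normsq{\vct s - \mtx{A}\vct t}$ is controlled by $\Expect(\norm{\vct s - \mtx{A}\vct t}$-Lipschitz constant$)\cdot \eps$, which is at most $\cnst{C}(R + \norm{\mtx{A}})(R+1)\eps \le \cnst C B^2(m+n)\eps$ in expectation for both $\mtx{A}=\mtx\Phi$ and $\mtx{A}=\mtx\Psi$. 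Choosing $\eps = (mn)^{-1}$ makes this negligible.

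Next I would introduce the smoothing parameter $\beta>0$ and define the soft-min over the \emph{product} index set,
$$
F(\mtx{A}) := -\frac{1}{\beta}\log \sum_{\vct s \in S_I^\eps}\sum_{\vct t \in T_J^\eps}
	\econst^{-\beta \normsq{\vct s - \mtx{A}\vct t}},
$$
and invoke Lemma~\ref{lem:smoothing} (verbatim, since the proof only uses that the summands are positive) to get $\abs{\Expect \min_{\vct s}\min_{\vct t}\normsq{\vct s - \mtx A\vct t} - \Expect F(\mtx A)} \leq \beta^{-1}\log(\#S_I^\eps \cdot \#T_J^\eps) \leq \beta^{-1}(m+n)\log(3R/\eps)$. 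The core of the argument is then the exchange estimate $\abs{\Expect F(\mtx\Phi) - \Expect F(\mtx\Psi)}$: replace one entry of the $I^c\times J^c$ block of $\mtx\Phi$ at a time by the corresponding Gaussian entry of $\mtx\Gamma$. Fixing a row index $i \in I^c$ and suppressing the other rows exactly as in Lemma~\ref{lem:exchange}, one reduces to the one-row comparison in Sublemma~\ref{slem:compare-one}, where now the argument vector $\vct a$ ranges over $\R^n$ and the linear forms inside the soft-min are $(\vct a\cdot\vct t - (\text{fixed scalar } s_i))^2$ summed over $\vct s \in S_I^\eps$ too. The derivative bound of Sublemma~\ref{slem:derivatives} still applies: the third derivative of $r(\alpha) = F(\vct\xi(\alpha))$ in the scalar $\alpha$ is governed by $\max \abs{\vct\xi'\cdot\vct t}^3 = \max_{\vct t\in T_J^\eps}\abs{t_j}^3 \leq k^{-3/2}$ for $j\in J^c$, exactly as before, times $(\beta\muavg{\abs{\cdot}} + \beta^2 \muavg{\abs{\cdot}^3})$ where $\muavg{\cdot}$ averages over the joint Gibbs measure on $S_I^\eps\times T_J^\eps$. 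The expectation factors $\muavg{\abs{\vct\xi\cdot\vct t - s_i}}$ and its cube are now bounded using $\abs{s_i}\leq Rk^{-1/2}$ for $i\in I^c$ (not $i \in I$!) together with $\norm{\vct t}\leq 1$, giving a bound of order $\cnst{C}(1 + Rk^{-1/2})\leq \cnst{C}R$; the extra factor $R$ relative to Proposition~\ref{prop:partial-replacement} is why the final bound carries an extra $B^2\sqrt{m+n}$ and loses a power of $k$. Summing over the $\leq mn$ entries of the $I^c\times J^c$ block yields $\abs{\Expect F(\mtx\Phi) - \Expect F(\mtx\Psi)} \leq \cnst{C}(\beta R^2 B^4 + \beta^2 R^3 B^6) mn\, k^{-3/2}$.

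Finally I would optimize the parameters. Combining the three error terms gives a total of the form $\cnst{C}B^2(m+n)\eps + \cnst{C}\beta^{-1}(m+n)\log(mn) + \cnst{C}(\beta R^2 B^4 + \beta^2 R^3 B^6)mn\,k^{-3/2}$ with $\eps = (mn)^{-1}$ and $R = \cnst{C}_{\rm rad}B^2\sqrt{m+n}$. Choosing $\beta$ to balance the smoothing term against the $\beta$-linear part of the exchange term—roughly $\beta^3 \sim k^{3/2}/((m+n)R^2 B^6)$, using $k \le \min\{m,n\}$ so the $\beta^2$-part is dominated—one arrives, after substituting $R^2 \sim B^4(m+n)$ and collecting the logarithm into the power, at a bound of the stated shape $\cnst{C}B^3(m+n)^{11/6}\log(mn)/k$. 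The main obstacle I anticipate is bookkeeping: keeping careful track of the powers of $R = \cnst{C}_{\rm rad}B^2\sqrt{m+n}$ through the soft-min and exchange steps, since the presence of the (bounded but growing) set $S_I$ inflates every moment estimate by factors of $R$, and verifying that the final $\beta$-optimization genuinely produces the exponent $11/6$ rather than something larger; the probabilistic content beyond that is routine, being a direct transcription of Lemmas~\ref{lem:discretization}, \ref{lem:smoothing}, \ref{lem:exchange} and Sublemmas~\ref{slem:compare-one}, \ref{slem:derivatives}.
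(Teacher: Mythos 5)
Your overall template—discretize both sets, pass to a soft-min, run the Lindeberg exchange one entry at a time over the $I^c \times J^c$ block—is the same as the paper's, and you correctly recognize that the derivative factor $\max_{\vct t \in T_J^\eps}\abs{t_j}^3 \le k^{-3/2}$ for $j \in J^c$ carries over unchanged. The genuine gap appears in your moment bound at the exchange step: you bound $\Expect_{\vct v}\abs{s_i - \vct\xi(\alpha)\cdot\vct v}$ by $\cnst{C}(1 + Rk^{-1/2}) \le \cnst{C}R$, discarding the $k^{-1/2}$ that comes from the hypothesis $\abs{s_i} \le Rk^{-1/2}$ for $i \in I^c$. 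The paper keeps that factor. Concretely, it bounds the first moment by $Rk^{-1/2} + \cnst{C}B$ and the third by $\cnst{C}(R^3 k^{-3/2} + B^3)$, which combine with the outer $k^{-3/2}$ to give an exchange error of $\cnst{C}B^3 mn(\beta R/k^{2} + \beta^2 R^{3}/k^{3})$; after dropping the $k^{-1/2}$ you only get $\cnst{C}B^3 mn k^{-3/2}(\beta R + \beta^2 R^3)$. (Your stated intermediate form $\cnst{C}(\beta R^2 B^4 + \beta^2 R^3 B^6)mn k^{-3/2}$ also carries some spurious extra powers of $R$ and $B$; even the cleaner $\cnst{C}B^3 mn k^{-3/2}(\beta R + \beta^2 R^3)$ reproduces the problem below.)

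This lost power of $k$ is not a bookkeeping nuisance—it sinks the conclusion. Balancing the smoothing error $\beta^{-1}(m+n)\log(mn)$ against your $\beta$-linear exchange term of order $B^3 R mn k^{-3/2}\beta$ gives a minimized total of order $R^{1/2}B^{3/2}\sqrt{(m+n)mn}\,k^{-3/4}\sqrt{\log(mn)} \asymp B^{5/2}(m+n)^{7/4}k^{-3/4}\sqrt{\log(mn)}$, not $B^3(m+n)^{11/6}k^{-1}\log(mn)$. The power of $k$ is $3/4$ rather than $1$, and since $k \le \min\{m,n\} \le (m+n)/2$, no feasible choice of $k$ makes $(m+n)^{7/4}/k^{3/4}$ into $o(m+n)$; at best you get $\Omega((m+n))$, which is useless downstream in Theorem~\ref{thm:car-bdd} (there one balances this against the dissection error $\cnst{C}B^2\sqrt{km\log((m+n)/k)}$ by taking $k \asymp (m+n)^{8/9}$, yielding $(m+n)^{17/18}$—but only because the exchange error decays as $k^{-1}$). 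The fix is to carry the $k^{-1/2}$ through to the end exactly as the paper does: bound $\Expect\abs{s_i - \vct\xi(\alpha)\cdot\vct v} \le Rk^{-1/2} + \cnst{C}B$ rather than $\lesssim R$, and then observe that the $R$-dependent terms dominate the $B$-only terms because $R = \cnst{C}_{\rm rad}B^2\sqrt{m+n} \ge B\sqrt{k}$.
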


\subsection{Proof of Proposition~\ref{prop:partial-replacement-redux}}

Fix the sets $I$ and $J$.
As in the proof of Proposition~\ref{prop:partial-replacement},
the error has three components: $$
\begin{aligned}
\abs{ \Expect \min_{\vct{t} \in T_J} \min_{\vct{s} \in S_I}
	\normsq{ \vct{s} - \mtx{\Phi} \vct{t} }
	\ -\  \Expect  \min_{\vct{t} \in T_J} \min_{\vct{s} \in S_I}
	\normsq{ \vct{s} - \mtx{\Psi} \vct{t} } }
	\quad&\leq\quad \cnst{C}mn \eps
	&&&& \text{(Lemma~\ref{lem:discretization-redux})} \\
	&+\quad \cnst{C} \beta^{-1} (m+n) \log(1/\eps)
	&&&& \text{(Lemma~\ref{lem:smoothing-redux})} \\
	&+\quad \cnst{C} B^3 mn \left(\frac{\beta R}{k^2} + \frac{\beta^2 R^3}{k^3}\right).
	&&&& \text{(Lemma~\ref{lem:exchange-redux})}
\end{aligned}
$$
The first error comes from discretizing the sets $S$ and $T$
at a level $\eps \in (0, 1]$.  The second error appears when
we replace the minima with a soft-min function with parameter $\beta > 0$.
The last error emerges from the Lindeberg exchange argument.

To complete the proof, we set $\eps = (mn)^{-1}$
to make the discretization error negligible.
Select the smoothing parameter so that $\beta^3 = k^3(m+n)/(B^3 R^3 mn)$.
We arrive at
\begin{multline*}
\abs{ \Expect \min_{\vct{t} \in T_J} \min_{\vct{s} \in S_I}
	\normsq{ \vct{s} - \mtx{\Phi} \vct{t} }
	\ -\  \Expect  \min_{\vct{t} \in T_J} \min_{\vct{s} \in S_I}
	\normsq{ \vct{s} - \mtx{\Psi} \vct{t} } } \\
	\leq \frac{\cnst{C} B^2 (m+n)^{1/3} (mn)^{2/3}}{k^2}
	+ \frac{\cnst{C} BR (m+n)^{2/3} (mn)^{1/3} \log(mn)}{k}.
\end{multline*}
Since $2\sqrt{mn} \leq m + n$ and $R = \cnst{C}_{\rm rad} B^2 \sqrt{m+n}$,
the second term dominates.  We reach the stated result.

\subsection{Proposition~\ref{prop:partial-replacement-redux}: Discretizing the Index Sets}

The first step in the proof of Proposition~\ref{prop:partial-replacement-redux}
is to replace the index sets by finite subsets.

\begin{lemma}[Proposition~\ref{prop:partial-replacement-redux}: Discretization] \label{lem:discretization-redux}
Adopt the notation and hypotheses of Proposition~\ref{prop:partial-replacement-redux}.
Fix a parameter $\eps \in (0,1]$.  Then $S_I$ contains a finite subset $S_I^\eps$
and $T_J$ contains a finite subset $T_J^\eps$ whose cardinalities satisfy
$$
\log(\# S_I^\eps) + \log(\# T_J^\eps) \leq (m + n) \log (3/\eps).
$$
Furthermore, these subsets have the property that
\begin{equation} \label{eqn:covering-error-redux}
\abs{ \Expect \min_{\vct{t} \in T_J} \min_{\vct{s} \in S_I} \normsq{ \vct{s} - \mtx{\Phi} \vct{t} }
	- \Expect \min_{\vct{t} \in T_J^\eps} \min_{\vct{s} \in S_I^\eps} \normsq{ \vct{s} - \mtx{\Phi} \vct{t} } }
	\leq \cnst{C} mn \eps.
\end{equation}
The bound~\eqref{eqn:covering-error-redux} also holds if we replace $\mtx{\Phi}$
by $\mtx{\Psi}$.
\end{lemma}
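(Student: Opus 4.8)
The plan is to adapt the discretization argument from Lemma~\ref{lem:discretization} almost verbatim, handling the two index sets $S_I$ and $T_J$ simultaneously. First I would fix the parameter $\eps \in (0,1]$ and choose $S_I^\eps$ to be a minimal $\eps$-covering of $S_I$ inside $S_I$, and $T_J^\eps$ to be a minimal $\eps$-covering of $T_J$ inside $T_J$. Since $S_I \subset R\mathsf{B}^m$ and $T_J \subset \mathsf{B}^n$, the standard volumetric bound (as in~\cite[Lem.~5.2]{Ver12:Introduction-Nonasymptotic}) gives $\#S_I^\eps \leq (3R/\eps)^m$ and $\#T_J^\eps \leq (3/\eps)^n$. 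Here one must be a little careful because of the radius $R$: taking logs yields $\log(\#S_I^\eps) + \log(\#T_J^\eps) \leq m\log(3R/\eps) + n \log(3/\eps)$, so to get the clean bound $(m+n)\log(3/\eps)$ quoted in the statement we should instead normalize: cover $R^{-1} S_I \subset \mathsf{B}^m$ at scale $\eps/R$... but that reintroduces $R$. The cleanest fix is to observe that the statement is really used downstream only through $\log\#(\cdot) \leq \cnst{C}(m+n)\log(mn)$ after the choice $\eps = (mn)^{-1}$ and $R = \cnst{C}_{\rm rad}B^2\sqrt{m+n}$; so I would either absorb the $\log R$ term into the constant in the smoothing step, or state the covering bound with an extra additive $m\log R$ that is harmless. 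For the write-up I will note this and keep the stated form, pointing out that $\log R = O(\log(mn))$ under the hypotheses of Theorem~\ref{thm:car-bdd}.

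Next I would establish the error bound~\eqref{eqn:covering-error-redux}. Since $S_I^\eps \subset S_I$ and $T_J^\eps \subset T_J$, passing to the subsets can only increase the double minimum, so
$$
\Expect \min_{\vct{t} \in T_J} \min_{\vct{s} \in S_I} \normsq{\vct{s} - \mtx{\Phi}\vct{t}}
	\leq \Expect \min_{\vct{t} \in T_J^\eps} \min_{\vct{s} \in S_I^\eps} \normsq{\vct{s} - \mtx{\Phi}\vct{t}}.
$$
For the reverse inequality I would mimic the Lipschitz-type estimate from Lemma~\ref{lem:discretization}: for $\vct{s}, \vct{s}' \in S_I$ and $\vct{t}, \vct{t}' \in T_J$, write $\normsq{\vct{s} - \mtx{\Phi}\vct{t}} - \normsq{\vct{s}' - \mtx{\Phi}\vct{t}'}$ as a product of a sum and a difference of norms, bound the sum by $\norm{\vct{s}} + \norm{\vct{s}'} + \norm{\mtx{\Phi}}(\norm{\vct{t}} + \norm{\vct{t}'}) \leq \cnst{C}(R + \norm{\mtx{\Phi}})$, and bound the difference by $\norm{\vct{s}-\vct{s}'} + \norm{\mtx{\Phi}}\norm{\vct{t} - \vct{t}'} \leq (1 + \norm{\mtx{\Phi}})\eps$ whenever $\vct{s}', \vct{t}'$ are the covering points closest to a minimizing $\vct{s}, \vct{t}$. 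Taking expectations and using $\Expect \normsq{\mtx{\Phi}} \leq \Expect \fnormsq{\mtx{\Phi}} = mn$ together with $\Expect \norm{\mtx{\Phi}} \leq (mn)^{1/2}$ (and $R = \cnst{C}_{\rm rad}B^2\sqrt{m+n} \leq \cnst{C}(mn)^{1/2}$ under the dimensional hypothesis $m+n \leq \min\{m,n\}^{9/8}$, since $B \geq 1$ can be absorbed or the bound stated with a $B^2$ prefactor) yields a discretization error of order $\cnst{C}\, mn\, \eps$. I would then note that the identical computation with $\mtx{\Psi}$ in place of $\mtx{\Phi}$ goes through, because $\mtx{\Psi}$ is a hybrid whose entries are still independent and standardized, so $\Expect \normsq{\mtx{\Psi}} \leq mn$ as well.

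The main obstacle is really just bookkeeping rather than mathematics: keeping the radius $R$ from polluting the covering-number bound, and making sure the error term is genuinely $\cnst{C}\,mn\,\eps$ rather than $\cnst{C}\, R^2\, mn\, \eps$ or similar. Since $R$ is polynomial in $m+n$, an error of order $R^2 mn\,\eps$ is still absorbed after the choice $\eps = (mn)^{-1}$ up to an extra polynomial factor that the downstream argument can tolerate; but to match the stated clean bound I would pull the minimizing point $\vct{s}_\star$ inside the ball of radius $\cnst{C}\norm{\mtx{\Phi}}$ (exactly as in~\eqref{eqn:sstar-bdd} from Proposition~\ref{prop:car-cone-truncation}), which replaces the $R$-dependence by $\norm{\mtx{\Phi}}$-dependence and lets Cauchy--Schwarz close the estimate at scale $mn\,\eps$. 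I expect the final proof to be essentially a one-page transcription of Lemma~\ref{lem:discretization} with these two sets handled in tandem, and I would explicitly flag the places where the presence of two index sets and the radius $R$ force minor modifications.
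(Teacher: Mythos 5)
Your overall approach is the same as the paper's — adapt Lemma~\ref{lem:discretization} to the two-set setting — and the error estimate you propose works. But you talked yourself out of the correct covering construction with a scaling slip. You write ``cover $R^{-1} S_I \subset \mathsf{B}^m$ at scale $\eps/R$\ldots but that reintroduces $R$,'' which is backwards: after normalizing by $R$ you should cover at scale $\eps$, which is the same as covering $S_I$ itself at scale $R\eps$. This is exactly what the paper does: take $S_I^\eps$ to be an $(R\eps)$-covering of $S_I \subset R\mathsf{B}^m$ and $T_J^\eps$ an $\eps$-covering of $T_J \subset \mathsf{B}^n$. The volumetric bound then gives $\# S_I^\eps \leq (3/\eps)^m$ and $\# T_J^\eps \leq (3/\eps)^n$ directly, so the stated cardinality inequality $\log(\# S_I^\eps) + \log(\# T_J^\eps) \leq (m+n)\log(3/\eps)$ falls out with no extra $m\log R$ term and no need to absorb anything downstream.

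Once the $(R\eps)$-mesh is in place, the error analysis you sketch closes as intended: the sum of norms is at most $2(R + \norm{\mtx{\Phi}})$, the difference of norms is at most $R\eps + \norm{\mtx{\Phi}}\,\eps$, so the pointwise discretization error is bounded by $\cnst{C}(R + \norm{\mtx{\Phi}})^2\eps$, and taking expectations using $\Expect\normsq{\mtx{\Phi}} \leq mn$ together with $R^2 = \cnst{C}_{\rm rad}^2 B^4(m+n) = O(mn)$ gives $\cnst{C}mn\eps$. The same goes through verbatim for $\mtx{\Psi}$. In short, your argument for the error bound~\eqref{eqn:covering-error-redux} is right, and the bookkeeping obstacle you flagged at length disappears once you pick the mesh on $S_I$ proportional to $R$ rather than unit-sized; the fallback you propose (proving a weaker covering bound with an extra $m\log R$ and appealing to downstream slack) is sound but proves less than the lemma states and introduces complication the paper avoids.
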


\begin{proof}
We choose $S_I^\eps$ to be an $(R \eps)$-covering of $S_I$,
and $T_J^\eps$ to be an $\eps$-covering of $T_J$.  Since $S_I$
is a subset of $R \mathsf{B}^m$ and $T_J$ is a subset of $\mathsf{B}^n$,
we can be sure that the coverings have cardinality
$\# S_I^\eps \leq (3/\eps)^m$ and $\# T_J^\eps \leq (3/\eps)^n$.
See~\cite[Lem.~5.2]{Ver12:Introduction-Nonasymptotic}.
The rest of the proof is essentially the same
as that of Lemma~\ref{lem:discretization}, so we omit the details.
\end{proof}

\subsection{Proposition~\ref{prop:partial-replacement-redux}: Smoothing the Minimum}

The next step in the proof of Proposition~\ref{prop:partial-replacement-redux}
is to pass from the minimum to the soft-min function.

\begin{lemma}[Proposition~\ref{prop:partial-replacement-redux}: Smoothing]
\label{lem:smoothing-redux}
Adopt the notation and hypotheses of Proposition~\ref{prop:partial-replacement-redux},
and let $S_I^\eps$ and $T_J^\eps$ be the sets introduced in Lemma~\ref{lem:discretization-redux}.
Fix a parameter $\beta > 0$, and introduce the function
\begin{equation} \label{eqn:soft-max-car}
F : \R^{m \times n} \to \R
\quad\text{where}\quad
F(\mtx{A}) := -\frac{1}{\beta} \log \sum_{\vct{s} \in S_I^\eps} \sum_{\vct{t} \in T_J^\eps}
	\econst^{ - \beta \normsq{\vct{s} - \mtx{A} \vct{t}} }.
\end{equation}
Then
\begin{equation} \label{eqn:smoothing-error-car}
\abs{ \Expect \min_{\vct{t} \in T_J^\eps} \min_{\vct{s} \in S_I^\eps} \normsq{ \vct{s} - \mtx{\Phi} \vct{t} }
	- \Expect F(\mtx{\Phi}) }
	\leq \frac{1}{\beta} \left( \log( \# S_I^\eps ) + \log( \# T_J^\eps ) \right).
\end{equation}
The estimate~\eqref{eqn:smoothing-error-car} also holds if we replace $\mtx{\Phi}$
by $\mtx{\Psi}$.
\end{lemma}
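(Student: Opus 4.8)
\textbf{Proof plan for Lemma~\ref{lem:smoothing-redux}.}

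The plan is to repeat, essentially verbatim, the argument used for Lemma~\ref{lem:smoothing}, with the only change being that the soft-min function now runs over the product index set $S_I^\eps \times T_J^\eps$ instead of the single set $T_J^\eps$. The key observation is that the two-fold sum in~\eqref{eqn:soft-max-car} is itself a sum over a finite index set of cardinality $(\# S_I^\eps)(\# T_J^\eps)$, so the elementary sandwich estimate for $-\tfrac1\beta\log\sum$ applies with no modification.

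First I would record the two trivial bounds on the sum. Since every summand $\econst^{-\beta\normsq{\vct{s}-\mtx{A}\vct{t}}}$ is nonnegative, the sum is at least its largest term; and it is at most its number of terms times the largest term. Hence
$$
-\beta \min_{\vct{t} \in T_J^\eps} \min_{\vct{s} \in S_I^\eps} \normsq{\vct{s} - \mtx{A}\vct{t}}
	\ \leq\ \log \sum_{\vct{s} \in S_I^\eps} \sum_{\vct{t} \in T_J^\eps} \econst^{-\beta\normsq{\vct{s}-\mtx{A}\vct{t}}}
	\ \leq\ \log\big( (\# S_I^\eps)(\# T_J^\eps) \big) - \beta \min_{\vct{t} \in T_J^\eps} \min_{\vct{s} \in S_I^\eps} \normsq{\vct{s} - \mtx{A}\vct{t}}.
$$
Next I would multiply through by $-1/\beta$, which reverses the inequalities, to obtain
$$
\min_{\vct{t} \in T_J^\eps} \min_{\vct{s} \in S_I^\eps} \normsq{\vct{s} - \mtx{A}\vct{t}} - \frac{1}{\beta}\big( \log(\# S_I^\eps) + \log(\# T_J^\eps) \big)
	\ \leq\ F(\mtx{A})
	\ \leq\ \min_{\vct{t} \in T_J^\eps} \min_{\vct{s} \in S_I^\eps} \normsq{\vct{s} - \mtx{A}\vct{t}},
$$
using $\log\big((\# S_I^\eps)(\# T_J^\eps)\big) = \log(\# S_I^\eps) + \log(\# T_J^\eps)$. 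Then I would set $\mtx{A} = \mtx{\Phi}$, take expectations, and rearrange to get~\eqref{eqn:smoothing-error-car}; taking $\mtx{A} = \mtx{\Psi}$ instead yields the same bound for $\mtx{\Psi}$.

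There is no real obstacle here: this is a purely deterministic, pointwise estimate followed by taking expectations, and it does not use any probabilistic structure, convexity, or the boundedness of the entries. The only thing worth flagging is the sign bookkeeping when dividing by the negative number $-1/\beta$, exactly as in the proof of Lemma~\ref{lem:smoothing}. Accordingly, I would keep the writeup to a few lines and, if desired, simply say that the argument is identical to that of Lemma~\ref{lem:smoothing} with the index set $T_J^\eps$ replaced by $S_I^\eps \times T_J^\eps$.
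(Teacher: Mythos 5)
Your proposal is correct and matches the paper's approach exactly: the paper simply notes that the proof is "almost identical" to that of Lemma~\ref{lem:smoothing}, and your argument is precisely that elementary sandwich estimate applied to the product index set $S_I^\eps \times T_J^\eps$. The sign bookkeeping when multiplying by $-1/\beta$ and the factorization $\log\big((\# S_I^\eps)(\# T_J^\eps)\big) = \log(\# S_I^\eps) + \log(\# T_J^\eps)$ are handled correctly.
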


\begin{proof}
The proof is almost identical with that of Lemma~\ref{lem:smoothing}.
\end{proof}

\subsection{Proposition~\ref{prop:partial-replacement-redux}: Exchanging the Entries of the Random Matrix}

The main challenge in the proof of Proposition~\ref{prop:partial-replacement-redux}
is to exchange most of the entries of the random matrix $\mtx{\Phi}$
for the entries of $\mtx{\Psi}$.

\begin{lemma}[Proposition~\ref{prop:partial-replacement-redux}: Exchange] \label{lem:exchange-redux}
Adopt the notation and hypotheses of Proposition~\ref{prop:partial-replacement-redux},
and let $F$ be the function defined in Lemma~\ref{lem:smoothing-redux}.
Then
\begin{equation*} \label{eqn:total-exchange-error-redux}
\abs{ \Expect F(\mtx{\Phi}) - \Expect F( \mtx{\Psi} ) }
	\leq \cnst{C}B^3 mn \left( \frac{\beta R}{k^2}  + \frac{\beta^2 R^3}{k^3} \right). \end{equation*}
\end{lemma}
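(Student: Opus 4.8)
\textbf{Plan for the proof of Lemma~\ref{lem:exchange-redux}.}

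The plan is to mirror the structure of Lemma~\ref{lem:exchange} and its Sublemma~\ref{slem:compare-one}, adapting the bookkeeping to the present setting where the soft-min function $F$ in~\eqref{eqn:soft-max-car} depends on \emph{two} index sets $S_I^\eps$ and $T_J^\eps$ and where only the block $\mtx{\Gamma}_{I^c J^c}$ of the hybrid matrix $\mtx{\Psi}$ has been Gaussianized. First I would telescope: for $i = 1, \dots, m+1$, let $\mtx{\Xi}(i)$ be the matrix whose rows in $I^c$ numbered below $i$ are drawn from $\mtx{\Psi}$ (i.e., their $J^c$-entries are Gaussian) while the remaining rows agree with $\mtx{\Phi}$. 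Since $\mtx{\Xi}(1) = \mtx{\Phi}$ and $\mtx{\Xi}(m+1) = \mtx{\Psi}$, and since the rows indexed by $I$ are never touched, the triangle inequality reduces the task to bounding $\abs{\Expect F(\mtx{\Xi}(i)) - \Expect F(\mtx{\Xi}(i+1))}$ for each $i \in I^c$ by $\cnst{C} B^3 n (\beta R/k^2 + \beta^2 R^3/k^3)/m$ — wait, more precisely, by a per-row quantity that sums to the stated bound. For a fixed such row $i$, only the entries in columns $J^c$ change, so a second telescoping over $j \in J^c$ and an application of the Lindeberg principle (Fact~\ref{fact:lindeberg}) reduces everything to a third-derivative estimate for the scalar function $\alpha \mapsto F$ restricted to varying the $(i,j)$ entry.

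The heart of the argument is the analog of Sublemma~\ref{slem:derivatives}. Here the relevant soft-min has the form $f(\vct{a}) = -\beta^{-1} \log \sum_{\vct{s},\vct{t}} \exp(-\beta \normsq{\vct{s} - \mtx{A}\vct{t}} + \dots)$, and when we freeze all rows but the $i$th and view it as a function of the $i$th row $\vct{a} \in \R^n$, the dependence enters through the terms $(\vct{s}_i - \vct{a} \cdot \vct{t})^2$ summed appropriately. Differentiating three times in the scalar parameter $\alpha$ that controls the $(i,j)$ entry produces a Gibbs-type expression in a parameterized measure $\mu_\alpha$ on $S_I^\eps \times T_J^\eps$, exactly as in~\eqref{eqn:d3h-exact}, with $\vct{\xi}'$ now equal to $t_j \mathbf{e}_i$ in the row-$i$ slot — so the factor $\max \abs{\vct{\xi}' \cdot (\cdot)}^3$ becomes $\max_{\vct{t} \in T_J^\eps} \abs{t_j}^3 \le k^{-3/2}$ by the hypothesis $\abs{t_j} \le k^{-1/2}$ for $j \in J^c$, just as in~\eqref{eqn:compare-bd2}. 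The remaining averages $\muavg{\abs{\vct{\xi}\cdot\vct{t}}}$ and $\muavg{\abs{\vct{\xi}\cdot\vct{t}}^3}$ must now be controlled by quantities of order $R$ rather than order $1$, because the residual $\vct{s} - \mtx{A}\vct{t}$ can have norm up to roughly $R + \norm{\mtx{A}}$; the vectors $\vct{s} \in S_I^\eps \subset R\mathsf{B}^m$ contribute the extra factor of $R$. Tracking these $R$'s through the linear-in-$\beta$ and quadratic-in-$\beta$ terms, together with the correlation (Chebyshev association) inequality used to pass from $\mu_\alpha$ to an $\alpha$-independent measure $\mu_*$, should yield a bound of the shape $\abs{r_j'''(\alpha)} \lesssim k^{-3/2}(\beta R + \beta^2 R^3)$ uniformly in $\alpha$ on the relevant range. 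Plugging this into Fact~\ref{fact:lindeberg} with the moment bounds $\Expect\abs{\phi_j}^3 \le B^3$ (and explicit moments on the Gaussian side) and summing over $j \in J^c$ (at most $n$ terms) gives the per-row bound $\cnst{C}B^3 n(\beta R/k^2 + \beta^2 R^3/k^3)$ — actually $k^{-3/2} \cdot k^{-1/2}$ combinations appear when one carries through the $\alpha$-dependence analogous to~\eqref{eqn:compare-bd3}–\eqref{eqn:compare-bd4}, producing the $k^{-2}$ and $k^{-3}$ denominators stated. Summing over the (at most $m$) rows in $I^c$ finishes the lemma.

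The main obstacle I anticipate is the careful propagation of the radius $R$ and the truncation level $Rk^{-1/2}$ through the derivative bounds: unlike in Sublemma~\ref{slem:derivatives}, the argument $\vct{\xi}(\alpha) \cdot \vct{v}$ of the Gibbs averages is replaced by something like $\vct{s} \cdot \vct{w} - (\vct{\xi}(\alpha)\cdot\vct{t})$ where $\vct{s}$ ranges over $R\mathsf{B}^m$, so the analog of the estimate $\Expect\abs{\vct{\xi}_j(0)\cdot\vct{v}} \le 1$ in~\eqref{eqn:compare-bd3} must be replaced by a bound of order $R$, and one must verify that the interpolating row entries (some $\phi$'s, some Gaussians, each bounded or with good moments) combine to keep the $\muavg{\cdot}_*$-averages at order $R$ rather than order $R^{3/2}$ or worse. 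I would also need to double-check that freezing the $I$-block of $\mtx{\Phi}$ inside $f$ (it enters $q$, the ``arbitrary function'' slot) is legitimate — it is, since Sublemma~\ref{slem:derivatives} allows $q$ to be arbitrary — and that the soft-min over the product set $S_I^\eps \times T_J^\eps$ presents no new structural difficulty, which it does not because the Gibbs-measure calculus is identical. Once the derivative estimate is in hand, assembling the two telescopes and choosing constants is routine, exactly as in the proof of Proposition~\ref{prop:partial-replacement}.
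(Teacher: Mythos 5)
Your overall strategy --- telescope over rows in $I^c$, telescope within each row over columns in $J^c$, invoke the Lindeberg exchange and the Gibbs-measure third-derivative calculus, extract the $k^{-3/2}$ from $\max_{\vct{t}}\abs{t_j}^3$, and track the factors of $R$ that enter through $S_I^\eps$ --- is the paper's approach, carried out in Sublemmas~\ref{slem:compare-one-redux} and~\ref{slem:derivatives-redux}.

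The genuine gap is in your account of where the $k^{-2}$ and $k^{-3}$ denominators come from. You attribute the extra $k^{-1/2}$ and $k^{-3/2}$ to ``$\alpha$-dependence analogous to~\eqref{eqn:compare-bd3}--\eqref{eqn:compare-bd4},'' and in your obstacle paragraph you claim the Gibbs averages are ``of order $R$'' because $\vct{s}\in R\mathsf{B}^m$. If that were all you could use, then $\abs{s_i}\le R$, the analogs of~\eqref{eqn:compare-bd3}--\eqref{eqn:compare-bd4} produce $\beta R + \beta^2 R^3$ with no further $k$-savings, and the per-entry estimate becomes $k^{-3/2}(\beta R + \beta^2 R^3)B^3$; summing over the $\le mn$ exchanged entries would give $\cnst{C}B^3 mn\big(\beta R k^{-3/2} + \beta^2 R^3 k^{-3/2}\big)$, which is weaker than the claimed bound by $k^{1/2}$ and $k^{3/2}$ respectively and does not deliver the rate in Proposition~\ref{prop:partial-replacement-redux} after the $\beta$-optimization. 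The point you are missing is that when you differentiate $F$ in the $(i,j)$ entry, the scalar feeding into the Gibbs measure is $s_i - \vct{\xi}(\alpha)\cdot\vct{t}$, the single $i$th coordinate of $\vct{s}$ (not an inner product with all of $\vct{s}$); and since $i\in I^c$, the hypothesis on $S_I$ stated in Proposition~\ref{prop:partial-replacement-redux} supplies the coordinate-wise bound $\abs{s_i}\le Rk^{-1/2}$, not merely $\abs{s_i}\le R$. It is $Rk^{-1/2}\cdot k^{-3/2}=Rk^{-2}$ and $(Rk^{-1/2})^3\cdot k^{-3/2}=R^3 k^{-3}$ that produce the stated powers. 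The $\alpha$-dependence contributes only terms like $B^4 k^{-1/2}$ and $B^4$, which are subdominant because $R = \cnst{C}_{\rm rad}B^2\sqrt{m+n}\ge Bk^{1/2}$; they are not the source of the extra $k$-powers. Once you replace ``order $R$'' with ``$\abs{s_i}\le Rk^{-1/2}$ from the $S_I$-hypothesis'' and drop the $\alpha$-dependence explanation, the rest of your outline is sound.
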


The proof is similar with Lemma~\ref{lem:exchange}.
This time, we replace only the rows of $\mtx{\Phi}$ listed in $I^c$.
We incur the same error for each of these $m - k$ rows, so it suffices to 
control the error in exchanging a single row.  The following sublemma achieves this goal.

\begin{sublemma}[Lemma~\ref{lem:exchange-redux}: Comparison for One Row] \label{slem:compare-one-redux}
Adopt the notation and hypotheses of Proposition~\ref{prop:partial-replacement-redux},
and let $S_I^\eps$ and $T_J^\eps$ be the sets defined in Lemma~\ref{lem:smoothing-redux}.
For $i \in I^c$, introduce the function
$$
f : \R^n \to \R
\quad\text{given by}\quad
f(\vct{a}) := - \frac{1}{\beta} \log \sum_{\vct{s} \in S_I^\eps} \sum_{\vct{t} \in T_J^\eps}
	\econst^{ - \beta (s_i - \vct{a} \cdot \vct{t})^2 + q(\vct{s}, \vct{t}) },
$$
where $q : S_I^\eps \times T_J^\eps \to \R$ is an arbitrary function.
Suppose that $\vct{\phi} \in \R^n$ is a random vector with independent, standardized entries
that are bounded in magnitude by $B$.
Suppose that $\vct{\psi} \in \R^n$ is a random vector with
$$
\vct{\psi}_J = \vct{\phi}_J
\quad\text{and}\quad
\vct{\psi}_{J^c} = \vct{\gamma}_{J^c},
$$
where $\vct{\gamma} \in \R^n$ is a standard normal vector.  Then
\begin{equation} \label{eqn:compare-one-redux}
\abs{ \Expect f(\vct{\phi}) - \Expect f( \vct{\psi}) }
	\leq 	\cnst{C}B^3 n \left(\frac{\beta R}{k^2} + \frac{\beta^2 R^3}{k^3} \right). \end{equation}
\end{sublemma}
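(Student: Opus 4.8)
The plan is to mimic the proof of Sublemma~\ref{slem:compare-one} almost verbatim, adjusting only for the presence of the extra coordinate $s_i$ and the sign structure of the quantity $(s_i - \vct{a}\cdot\vct{t})^2$. First I would set up the telescoping sum over the indices $j \in J^c$ exactly as before: define interpolating vectors $\vct{\xi}_j(\alpha) := (\phi_1,\dots,\phi_{j-1},\alpha,\psi_{j+1},\dots,\psi_n)$ and functions $r_j(\alpha) := f\big(\vct{\xi}_j(\alpha)\big)$, and invoke Fact~\ref{fact:lindeberg} to reduce to bounding $\Expect[\abs{\phi_j}^3 \max_{\abs{\alpha}\le\abs{\phi_j}} \abs{r_j'''(\alpha)}]$ and its Gaussian analogue, summed over the $n - k$ indices in $J^c$. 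As in the original argument, $J$ may be taken to be $\{1,\dots,k\}$ without loss of generality.

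The heart of the matter is a new derivative estimate to replace Sublemma~\ref{slem:derivatives}. Here the relevant soft-min has the form $r(\alpha) = -\tfrac1\beta \log \sum_{\vct{s},\vct{t}} \econst^{-\beta(s_i - \vct{\xi}(\alpha)\cdot\vct{t})^2 + q(\vct{s},\vct{t})}$, and the role of the scalar $\vct{\xi}(\alpha)\cdot\vct{t}$ in the old calculation is played by $s_i - \vct{\xi}(\alpha)\cdot\vct{t}$. The logarithmic-derivative computation for $Z_\alpha'/Z_\alpha$, $Z_\alpha''/Z_\alpha$, $Z_\alpha'''/Z_\alpha$ goes through identically, since $\tfrac{d}{d\alpha}(s_i - \vct{\xi}(\alpha)\cdot\vct{t}) = -\vct{\xi}'\cdot\vct{t}$ is still a constant in $\alpha$; the sign flip is absorbed into even powers and does not change the bounds. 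One then arrives at an analogue of~\eqref{eqn:d3h-bound},
$$
\abs{r'''(\alpha)} \leq \Big( \max_{\vct{s}\in S_I^\eps,\, \vct{t} \in T_J^\eps} \abs{\vct{\xi}' \cdot \vct{t}}^3 \Big)
	\Big(24\beta \muavg{ \abs{s_i - \vct{\xi}\cdot\vct{t}} } + 48\beta^2 \muavg{ \abs{s_i - \vct{\xi}\cdot\vct{t}}^3 } \Big),
$$
and the Chebyshev association argument still lets us pass to the measure $\mu_*$ that strips out the $\alpha$-dependence. The crucial quantitative inputs change: $\max \abs{\vct{\xi}'\cdot\vct{t}}^3 = \max_{\vct{t}\in T_J^\eps} \abs{t_j}^3 \le k^{-3/2}$ as before, but now the ``inner'' factor must be controlled using $\abs{s_i} \le R k^{-1/2}$ for $i \in I^c$ (the hypothesis on $S_I$) together with $\norm{\vct{v}} \le 1$. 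Concretely $\muavg{\abs{s_i - \vct{\xi}\cdot\vct{t}}}_* \le R k^{-1/2} + \muavg{\abs{\vct{\xi}(0)\cdot\vct{v}}}_* + B k^{-1/2}\Expect\abs{\phi_j}$ and similarly for the cube, which after the moment bookkeeping yields a factor of order $R$ (since $R \ge 1$ dominates $B$ and the $k^{-1/2}$ terms) rather than order $B$.

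Carrying these estimates through the same chain as in~\eqref{eqn:compare-bd3} and~\eqref{eqn:compare-bd4} gives
$$
\Expect\Big[ \abs{\phi_j}^3 \max_{\abs{\alpha}\le\abs{\phi_j}} \abs{r_j'''(\alpha)}\Big] \leq \cnst{C} k^{-3/2}\big( \beta B^3 R k^{-1/2} + \beta^2 B^3 R^3 k^{-3/2} + \text{lower order}\big) \leq \cnst{C} B^3 \big( \beta R k^{-2} + \beta^2 R^3 k^{-3} \big),
$$
where I have kept the dominant term in each power of $\beta$ after using $R, B \ge 1$ and $k \le \min\{m,n\}$; the Gaussian branch is identical with explicit normal moments in place of $B$. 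Summing the telescope over the at most $n$ indices $j \in J^c$ produces the claimed bound~\eqref{eqn:compare-one-redux}. The main obstacle is purely bookkeeping: keeping careful track of which terms involve the large radius $R$ versus the bound $B$, and verifying that after taking the dominant contributions one genuinely lands on $\cnst{C}B^3 n(\beta R k^{-2} + \beta^2 R^3 k^{-3})$ rather than something with a worse $k$-dependence. No conceptually new ingredient beyond the original Lindeberg/statistical-physics machinery is needed; the entries indexed by $I$ (both rows and columns) are simply never exchanged, so their potentially large magnitude causes no difficulty.
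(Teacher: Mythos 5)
Your proposal is correct and follows essentially the same route as the paper: telescope over the indices in $J^c$, invoke the Lindeberg principle, establish a shifted version of the derivative bound where $\vct{\xi}(\alpha)\cdot\vct{t}$ is replaced by $s_i - \vct{\xi}(\alpha)\cdot\vct{t}$ (which is exactly Sublemma~\ref{slem:derivatives-redux}), and then control the extra $s_i$ term via the hypothesis $\abs{s_i} \leq R k^{-1/2}$ for $i \in I^c$. The arithmetic — $k^{-3/2}\cdot\big(\beta B^3 R k^{-1/2} + \beta^2 B^3 R^3 k^{-3/2} + \text{lower order}\big) = \cnst{C}B^3\big(\beta R k^{-2} + \beta^2 R^3 k^{-3}\big)$ — matches, with the dominance of the $R$-terms following because $R = \cnst{C}_{\rm rad}B^2\sqrt{m+n} \geq B\sqrt{k}$ (not simply because $R\geq 1$; the paper states this dominance criterion explicitly, and your justification is slightly loose but the conclusion is correct). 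One tiny slip: in your analogue of~\eqref{eqn:d3h-bound}, the outer maximum should range over $\vct{t}\in T_J^\eps$ only, not $S_I^\eps\times T_J^\eps$, since $\vct{\xi}'\cdot\vct{t}$ does not involve $\vct{s}$.
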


The proof of this result is much the same as the proof of Sublemma~\ref{slem:compare-one}.
There are only two points that require care.  First, we use a slightly different result to compute
the derivatives.

\begin{sublemma}[Lemma~\ref{lem:exchange-redux}: Derivatives] \label{slem:derivatives-redux}
Adopt the notation and hypotheses of Proposition~\ref{prop:partial-replacement-redux}
and Sublemma~\ref{slem:compare-one-redux}.
Let $\vct{\xi} : \R \to \R^n$ be a linear function, so its derivative $\vct{\xi}' \in \R^n$ is a constant vector.
For $i \in I^c$, define the function
$$
r(\alpha) := f\big(\vct{\xi}(\alpha)\big)
	= - \frac{1}{\beta} \log \sum_{\vct{s} \in S_I^\eps} \sum_{\vct{t} \in T_J^\eps} \econst^{-\beta (s_i - \vct{\xi}(\alpha) \cdot \vct{t})^2 + q(\vct{s}, \vct{t})}
$$
where $q : S_I^\eps \times T_J^\eps \to \R$ is arbitrary.
The third derivative of this function satisfies
$$
\abs{r'''(\alpha)} \leq 48 \left( \max_{\vct{t} \in T_J^\eps} \abs{\vct{\xi}' \cdot \vct{t}}^3 \right)
	\left( \beta \Expect_{\vct{v}} \abs{ s_i - \vct{\xi}(\alpha) \cdot \vct{v} }
	+ \beta^2 \Expect_{\vct{v}} \abs{s_i - \vct{\xi}(\alpha) \cdot \vct{v}}^3 \right),
$$
where $\vct{v} \in T_J^\eps$ is a random vector that does not depend on $\vct{\xi}(\alpha)$.
\end{sublemma}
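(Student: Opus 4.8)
\textbf{Proof plan for Sublemma~\ref{slem:derivatives-redux}.}
The plan is to imitate the proof of Sublemma~\ref{slem:derivatives} essentially verbatim, since the only structural change is that the quadratic form $(\vct{\xi}(\alpha)\cdot\vct{t})^2$ appearing there is replaced by $(s_i - \vct{\xi}(\alpha)\cdot\vct{t})^2$, and the summation now runs over the product index set $S_I^\eps \times T_J^\eps$ rather than $T_J^\eps$ alone. First I would introduce the parameterized Gibbs measure
$$
\mu_\alpha(\vct{s},\vct{t}) := \frac{1}{Z_\alpha}\,\econst^{-\beta(s_i - \vct{\xi}(\alpha)\cdot\vct{t})^2 + q(\vct{s},\vct{t})},
\qquad
Z_\alpha := \sum_{\vct{s}\in S_I^\eps}\sum_{\vct{t}\in T_J^\eps}\econst^{-\beta(s_i - \vct{\xi}(\alpha)\cdot\vct{t})^2 + q(\vct{s},\vct{t})},
$$
and write $\muavg{\cdot}$ for expectation against $\mu_\alpha$. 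As in the earlier sublemma, $r(\alpha) = -\beta^{-1}\log Z_\alpha$, so $r'''(\alpha)$ expands in terms of $Z_\alpha'/Z_\alpha$, $Z_\alpha''/Z_\alpha$, $Z_\alpha'''/Z_\alpha$ exactly by the same three-term cumulant formula. The key observation that keeps the calculation identical in shape is that the $\alpha$-derivative of the exponent $-\beta(s_i - \vct{\xi}(\alpha)\cdot\vct{t})^2$ is $2\beta (s_i - \vct{\xi}(\alpha)\cdot\vct{t})(\vct{\xi}'\cdot\vct{t})$: the factor $(\vct{\xi}'\cdot\vct{t})$ pulls out exactly as before (it is what produces the $\max_{\vct{t}}\abs{\vct{\xi}'\cdot\vct{t}}^3$ prefactor), while the role formerly played by $(\vct{\xi}(\alpha)\cdot\vct{t})$ is now played by $(s_i - \vct{\xi}(\alpha)\cdot\vct{t})$. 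Thus the exact formula analogous to~\eqref{eqn:d3h-exact} holds with every occurrence of $(\vct{\xi}\cdot\vct{t})$ replaced by $(s_i - \vct{\xi}\cdot\vct{t})$.

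Next I would run the same sequence of elementary inequalities --- Jensen to pull the absolute value inside $\muavg{\cdot}$, H\"older to extract $\max_{\vct{t}\in T_J^\eps}\abs{\vct{\xi}'\cdot\vct{t}}^3$, and Lyapunov to promote first and second moments of $\abs{s_i - \vct{\xi}\cdot\vct{t}}$ to third moments --- to obtain the intermediate bound
$$
\abs{r'''(\alpha)} \leq \bigg(\max_{\vct{t}\in T_J^\eps}\abs{\vct{\xi}'\cdot\vct{t}}^3\bigg)\Big(24\beta\,\muavg{\abs{s_i - \vct{\xi}\cdot\vct{t}}} + 48\beta^2\,\muavg{\abs{s_i - \vct{\xi}\cdot\vct{t}}^3}\Big).
$$
Finally I would replace $\mu_\alpha$ by the reference measure $\mu_*(\vct{s},\vct{t}) \propto \econst^{q(\vct{s},\vct{t})}$ using Chebyshev's association inequality exactly as before: the map $(\vct{s},\vct{t})\mapsto \abs{s_i-\vct{\xi}(\alpha)\cdot\vct{t}}$ (resp.\ its cube) is monotone in the same "direction" as $(\vct{s},\vct{t})\mapsto(s_i - \vct{\xi}(\alpha)\cdot\vct{t})^2$ is monotone decreasing in the relevant pushed-forward scalar $X := \abs{s_i - \vct{\xi}(\alpha)\cdot\vct{t}}$, so $\muavg{\abs{s_i-\vct{\xi}\cdot\vct{t}}}\leq\muavg{\abs{s_i-\vct{\xi}\cdot\vct{t}}}_*$ and likewise for the cube. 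Reinterpreting $\muavg{\cdot}_*$ as expectation over a random $\vct{v}\in T_J^\eps$ (the $\vct{s}$-marginal is irrelevant since only $s_i - \vct{\xi}(\alpha)\cdot\vct{v}$ appears) yields the stated bound with constant $48$.

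The one point that genuinely needs care --- and which I expect to be the only real obstacle --- is the application of the correlation (Chebyshev association) inequality. In Sublemma~\ref{slem:derivatives} the scalar random variable $X=\abs{\vct{\xi}\cdot\vct{t}}$ was $\geq 0$ and $x\mapsto\exp(-\beta x^2)$ was unambiguously decreasing on $[0,\infty)$. Here the natural scalar is $s_i - \vct{\xi}(\alpha)\cdot\vct{t}$, which is \emph{signed}; the factor $\exp(-\beta(s_i-\vct{\xi}\cdot\vct{t})^2)$ is not monotone in this signed quantity, only in its absolute value. So I would push forward $\mu_*$ to the nonnegative scalar $X := \abs{s_i - \vct{\xi}(\alpha)\cdot\vct{t}}$, note that $x\mapsto x$ (and $x\mapsto x^3$) is increasing while $x\mapsto\exp(-\beta x^2)$ is decreasing on $[0,\infty)$, and apply Chebyshev's association inequality to these functions of $X$; this is legitimate because the weight $\exp(-\beta(s_i-\vct{\xi}\cdot\vct{t})^2) = \exp(-\beta X^2)$ is itself a function of $X$, so the change-of-measure identity $\muavg{h} = (Z_*/Z_\alpha)\muavg{h\,\econst^{-\beta X^2}}_*$ goes through with $h$ any function of $X$. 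After that, the bound~\eqref{eqn:compare-one-redux} in Sublemma~\ref{slem:compare-one-redux} follows from Sublemma~\ref{slem:derivatives-redux} by the same moment estimates as in the proof of Sublemma~\ref{slem:compare-one}, using $\abs{s_i}\leq R k^{-1/2}$ for $i\in I^c$ (the hypothesis on $S_I$) together with $\max_{\vct{t}\in T_J^\eps}\abs{t_j}\leq k^{-1/2}$ for $j\in J^c$ to produce the $k^{-3/2}\cdot k^{-1/2}=k^{-2}$ and $k^{-3/2}\cdot k^{-3/2}=k^{-3}$ gains, and the extra powers of $R$ in~\eqref{eqn:compare-one-redux} track the size of $\abs{s_i - \vct{\xi}\cdot\vct{v}}\leq R k^{-1/2} + 1$.
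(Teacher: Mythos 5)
Your proposal is correct and matches the paper's intended approach: the paper itself does not spell out a proof of Sublemma~\ref{slem:derivatives-redux} but explicitly presents it as the analogue of Sublemma~\ref{slem:derivatives}, and your formal substitution $(\vct{\xi}\cdot\vct{t}) \mapsto (s_i - \vct{\xi}(\alpha)\cdot\vct{t})$ (with the Gibbs measure now on the product index set $S_I^\eps\times T_J^\eps$) is exactly the right way to carry out that adaptation, including the careful handling of Chebyshev's association inequality via the pushed-forward nonnegative scalar $X = \abs{s_i - \vct{\xi}(\alpha)\cdot\vct{t}}$. Your closing paragraph slightly overstates the pointwise bound $\abs{\vct{\xi}(\alpha)\cdot\vct{v}}\leq 1$, but that concerns the downstream application in Sublemma~\ref{slem:compare-one-redux}, not the sublemma at hand, and the paper handles it through the Khintchine-type moment estimates rather than a uniform bound.
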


Second, when making further bounds on $\abs{r'''(\alpha)}$,
we need to exploit our control on the magnitude of $\vct{s}$
on the coordinates in $I^c$.  Note that
$$
\abs{ s_i - \vct{\xi}(\alpha) \cdot \vct{v} }
	\leq \abs{s_i} + \abs{\vct{\xi}(\alpha) \cdot \vct{v}}
	\leq Rk^{-1/2} + \abs{\vct{\xi}(\alpha) \cdot \vct{v}}
$$
The second inequality holds because $\abs{s_i} \leq R k^{-1/2}$ for each $i \in I^c$.
Similarly,
$$
\abs{ s_i - \vct{\xi}(\alpha) \cdot \vct{v} }^3
	\leq \cnst{C} \abs{s_i}^3 + \cnst{C} \abs{\vct{\xi}(\alpha) \cdot \vct{v}}^3
	\leq \cnst{C} Rk^{-3/2} + \cnst{C} \abs{\vct{\xi}(\alpha) \cdot \vct{v}}^3.
$$
Repeating the arguments from Sublemma~\ref{slem:compare-one}, we obtain bounds of the form
$$
\Expect\left[  \abs{\smash{\phi_j}}^3 \max_{\abs{\alpha} \leq \abs{\smash{\phi_j}}} \abs{r_j'''(\alpha)} \right]
	\leq \frac{\beta B^3 R}{k^{2}} + \frac{\cnst{C} \beta^2 B^3 R^3}{k^{3}}
	+ \frac{ \cnst{C} \beta B^4 + \cnst{C} \beta^2 B^6 }{ k^{3/2} } .
$$
The first two terms dominate the third because our choice
$R = \cnst{C}_{\rm rad} B^2 \sqrt{m+n}$ implies that $R \geq B k^{1/2}$.
We arrive at the statement of Sublemma~\ref{slem:compare-one-redux}.

\section{Theorem~\ref{thm:car-bdd}: Bounding the RAP Functional by the Excess Width}

The most difficult part of proving Theorem~\ref{thm:car-bdd} is to identify the
excess width $\coll{E}_n(\Omega)$ after we replace the original matrix
$\mtx{\Phi}$ by the hybrid matrix $\mtx{\Psi}$ defined in~\eqref{eqn:car-hybrid}.  The following result
does the job.

\begin{proposition}[Theorem~\ref{thm:car-bdd}: Excess Width Bound] \label{prop:car-width}
Adopt the notation and hypotheses of Proposition~\ref{prop:partial-replacement-redux}.
Let $\Omega := K \cap \mathsf{S}^{m-1}$.
Then
\begin{equation} \label{eqn:car-width-lower}
\Expect \min_{\norm{\vct{t}} = 1}  \min_{\vct{s} \in K^\polar} \normsq{ \vct{s} - \mtx{\Psi}(I, J) \vct{t} }
	\geq \left( \frac{\big( \coll{E}_n(\Omega) \big)_{-}}{\cnst{C}B \sqrt{\log m}} - \cnst{C} B^2 \sqrt{k \log m}
	\right)_+^2.
\end{equation}
The random matrix $\mtx{\Psi}$ is defined in~\eqref{eqn:car-hybrid}.
\end{proposition}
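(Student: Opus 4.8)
\textbf{Proof plan for Proposition~\ref{prop:car-width}.}

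The goal is to lower bound the expected squared RAP-type functional for the hybrid matrix $\mtx{\Psi}(I,J)$ by (essentially) the negative part of the excess width $\coll{E}_n(\Omega)$ of the spherical set $\Omega = K \cap \mathsf{S}^{m-1}$. The plan is to mirror the structure of the proof of Proposition~\ref{prop:excess-width}: pass from the expected square to the square of an expectation by Jensen, invoke the Gaussian Minimax Theorem to replace the standard normal block $\mtx{\Gamma}_{I^c J^c}$ by a pair of Gaussian vectors, remove the surviving non-Gaussian blocks of $\mtx{\Psi}$ using nonasymptotic operator-norm bounds, and finally reintroduce the missing coordinates to recognize the excess width. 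The essential new feature compared with Proposition~\ref{prop:excess-width} is that the quantity involves an \emph{inner minimum over $\vct{s} \in K^\polar$ as well as an outer minimum over the sphere}; this double minimum must be turned into a saddle-point problem so that convex duality and the Gaussian Minimax Theorem apply.

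First I would express the functional as a saddle point. Writing $\norm{\vct{w}} = \max_{\vct{u}\in\mathsf{B}^m} \vct{u}\cdot\vct{w}$ and using that $K^\polar$ is a closed convex cone, the quantity $\min_{\vct{s}\in K^\polar}\norm{\vct{s}-\mtx{\Psi}\vct{t}}$ equals the distance from $\mtx{\Psi}\vct{t}$ to $K^\polar$, which by the projection theorem equals $\max_{\vct{u}\in K\cap\mathsf{B}^m}\vct{u}\cdot(\mtx{\Psi}\vct{t})$ (the support-function/polarity identity: the distance to the polar cone is the supremum of the inner product against the original cone intersected with the ball). Hence
$$
\min_{\norm{\vct{t}}=1}\min_{\vct{s}\in K^\polar}\norm{\vct{s}-\mtx{\Psi}\vct{t}}
= \min_{\norm{\vct{t}}=1}\max_{\vct{u}\in\Omega\cap\mathsf{B}^m}\vct{u}\cdot(\mtx{\Psi}\vct{t}),
$$
possibly after inserting a positive-part because of the cone constraint. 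Now the block structure $\mtx{\Psi}(I,J)$ matters: split $\vct{u}\cdot(\mtx{\Psi}\vct{t})$ into the contribution of the Gaussian block $\mtx{\Gamma}_{I^cJ^c}$ and the deterministic-in-distribution remainder involving $\mtx{\Phi}_{IJ}$, $\mtx{\Phi}_{IJ^c}$, $\mtx{\Phi}_{I^cJ}$. Conditioning on $\mtx{\Phi}$, I would apply Gordon's Gaussian Minimax Theorem (Fact~\ref{fact:gauss-minmax}) to the Gaussian bilinear form $\vct{u}_{I^c}\cdot\mtx{\Gamma}_{I^cJ^c}\vct{t}_{J^c}$, introducing independent Gaussian vectors $\vct{h}\in\R^m$ and $\vct{g}\in\R^n$ just as in Lemma~\ref{lem:my-comparison}; the verification of the variance inequalities is the same Cauchy--Schwarz computation as there. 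This reduces the matrix problem to one with only the vectors $\vct{h},\vct{g}$ and the lower-dimensional blocks of $\mtx{\Phi}$.

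After that, I would use crude operator-norm control (Fact~\ref{fact:subgauss-mtx-tails}, integration by parts) to strip the residual $\mtx{\Phi}$-blocks of size at most $m\times k$, $k\times n$, $k\times k$; since each has at least one dimension equal to $k$, its norm is $O(B\sqrt{k})$ up to logarithms, and as in Lemma~\ref{lem:remove-PhiK} this produces an additive error $\cnst{C}B^2\sqrt{k\log m}$ after the positive-part is taken. Reintroducing the coordinates in $I$ (for $\vct{u}$) and $J$ (for $\vct{t}$) costs another $O(\sqrt{k})$ by Jensen, exactly as in Lemma~\ref{lem:missing-coords-lower}. What remains is $\Expect\min_{\norm{\vct t}=1}\max_{\vct u\in\Omega}\big(\vct u\cdot\vct h\,\|\vct t\|+\|\vct u\|\,\vct g\cdot\vct t\big)$ or a close variant; simplifying the maximum over $\vct u$ yields a term like $\sqrt n - \coll W(\Omega)$ up to the positive-part, and by the identity $\coll E_n(\Omega)=\sqrt n - \coll W(\Omega)$ (Definition~\ref{def:excess-width}, relation~\eqref{eqn:excess-vs-width}) this is precisely $\coll E_n(\Omega)$; taking the negative part accounts for the factor $(\,\cdot\,)_-$ in the statement. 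The $\sqrt{\log m}$ denominator arises because, after the minimax reduction, the relevant Gaussian quantity is a minimum over $\vct t$ on the sphere rather than an expectation, and controlling its lower tail against $\coll E_n(\Omega)$ forces a $\log m$ loss; I would make this precise via a union bound over a net of the sphere, exactly the kind of estimate that underlies the $\log(m+n)$ factor in Theorem~\ref{thm:car-bdd}\eqref{it:car-bdd-width}.

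The main obstacle I anticipate is the convex-duality step. Unlike Proposition~\ref{prop:excess-width}, where the outer set $T_J$ is convex and one interchanges $\min$ and $\max$ over a product of convex compact sets via Sion's theorem, here the outer minimum is over the \emph{sphere} $\mathsf{S}^{n-1}$, which is not convex, and the inner "maximum" comes from a distance to a cone. Handling this requires care: one must first apply the Gaussian Minimax Theorem in the form that only gives a one-sided (lower) bound --- which is all that is needed for~\eqref{eqn:car-width-lower} --- and must correctly track the positive-part/negative-part bookkeeping through the conditioning on the sign of an auxiliary Gaussian $\gamma$, precisely as in the delicate computation at the end of the proof of Lemma~\ref{lem:my-comparison}. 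A secondary technical nuisance is justifying the approximation from finite index sets to the full sets $\Omega$ and $\mathsf S^{n-1}$ before invoking Fact~\ref{fact:gauss-minmax}; this is routine but must be stated. Once these points are navigated, sequencing the displayed inequalities and linearizing $(\cdot)_+^2$ at $(\coll E_n(\Omega))_-$ --- the step already recorded as~\eqref{eqn:car-bdd-pf5} in the overview --- completes the argument.
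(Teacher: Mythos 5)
Your broad plan — recast the RAP functional as a saddle point via cone polarity, apply Gordon's theorem conditionally on $\mtx{\Phi}$ to replace the Gaussian block $\mtx{\Gamma}_{I^cJ^c}$ by two Gaussian vectors, then simplify to an excess-width expression — does match the skeleton of the paper's argument (Lemmas~\ref{lem:car-duality}, \ref{lem:apply-minmax-lower-redux}, and \ref{lem:car-width-id}). But there is a genuine gap at the heart of the plan.

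You propose to ``strip the residual $\mtx{\Phi}$-blocks of size at most $m\times k$, $k\times n$, $k\times k$; since each has at least one dimension equal to $k$, its norm is $O(B\sqrt{k})$ up to logarithms.'' This is false. A random matrix with standardized subgaussian entries of shape $a\times b$ has operator norm of order $\sqrt{a}+\sqrt{b}$, so the full column block $\mtx{\Phi}_J\in\R^{m\times k}$ has norm $\Theta(\sqrt{m})$ and the row block $\mtx{\Phi}_{IJ^c}\in\R^{k\times(n-k)}$ has norm $\Theta(\sqrt{n})$ — both far larger than $\sqrt{k}$. A crude strip of these blocks would swamp the target excess-width quantity, which itself lives at scale $\sqrt{n}$. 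In fact, handling these blocks is precisely the hard part: for the $k\times(n-k)$ row block the paper appends the Gaussian row $\vct{h}_{J^c}$ to form a $(k+1)\times(n-k)$ matrix and only charges the \emph{deviation} of its norm from $\sqrt{n}$, which is $O(B^2\sqrt{k})$ (Sublemma~\ref{slem:car-minmax-simple}, Eqns.~\eqref{eqn:Phi-h-combo1}--\eqref{eqn:Phi-h-combo2}). For the $m\times k$ column block $\mtx{\Phi}_J$ no norm bound suffices at all; the paper instead develops two different couplings of the dual variable $\vct{u}$ (once to the external Gaussian $\vct{g}$, once to the full minimax) and combines them through a scalar minimax over $\alpha = \norm{\vct{t}_{J^c}}$ (Sublemmas~\ref{slem:car-minmax-simpler}, \ref{slem:prob-bounds}, \ref{slem:car-width-minmax}).

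Relatedly, you attribute the $1/(\cnst{C}B\sqrt{\log m})$ factor entirely to ``a union bound over a net of the sphere.'' The net argument does appear (Sublemma~\ref{slem:matrix-width}) and contributes the \emph{additive} term $\cnst{C}B^2\sqrt{k\log m}$, but the \emph{multiplicative} $\sqrt{\log m}$ loss in the denominator comes from a different and essential ingredient: Sublemma~\ref{slem:empirical-width}, which shows that the ``empirical width'' $\Expect\max_{\vct{u}\in U}\vct{u}\cdot\mtx{X}\vct{s}$ computed with a bounded, symmetric random vector $\mtx{X}\vct{s}$ is at least $\coll{W}(U)/(\cnst{C}B\sqrt{\log m})$. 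This rests on the Khintchine inequality and the contraction principle (passing from $\vct{X}\vct{s}$ to Rademacher variables to Gaussians), and it is the unique place in the whole Part~\ref{part:rap} argument that actually uses the symmetry hypothesis of Model~\ref{mod:bdd-mtx}. Your plan has no analogue of this step, so even if the block-stripping problem were repaired, the $\sqrt{\log m}$ divisor in~\eqref{eqn:car-width-lower} would remain unexplained. Finally, the linearization $(\cdot)_+^2\ \geq\ $ (linear bound), which you cite as the last step, is actually performed \emph{after} the proposition, in the overview display~\eqref{eqn:car-bdd-pf5}; the proposition itself is stated with the squared positive part, so that step does not belong in this proof.
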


The proof of Proposition~\ref{prop:car-width} occupies the rest of this section.
At the highest level, the proof is similar with the argument underlying Proposition~\ref{prop:excess-width}.
We write the quantity of interest as a minimax, and then we apply the Gaussian Minimax Theorem to
replace the Gaussian matrix with a pair of Gaussian vectors.  Afterward, we analyze the resulting
expression to identify the Gaussian width; the new challenges appear in this step.

\subsection{Proof of Proposition~\ref{prop:car-width}}

Here is an overview of the calculations that we will perform;
the detailed justifications appear in the upcoming subsections.
Let us abbreviate $U := K \cap \mathsf{B}^{m}$.
We have the chain of inequalities
$$
\begin{aligned}
\Expect \min_{\norm{\vct{t}} = 1} \min_{\vct{s} \in K^\polar}  & \norm{ \vct{s} - \mtx{\Psi} \vct{t} } \\
	&\geq \Expect \min_{\norm{\vct{t}}=1} \max_{\vct{u} \in U} \vct{u} \cdot \mtx{\Psi} \vct{t}
		&& \text{(Lemma~\ref{lem:car-duality})} \\
	&= \Expect \min_{\norm{\vct{t}}=1} \max_{\vct{u} \in U}
	\begin{bmatrix} \vct{u}_I \\ \vct{u}_{I^c} \end{bmatrix} \cdot
	\begin{bmatrix} \mtx{\Phi}_{IJ} & \mtx{\Phi}_{I J^c} \\
	\mtx{\Phi}_{I^c J} & \mtx{\Gamma}_{I^c J^c} \end{bmatrix}
	\begin{bmatrix} \vct{t}_J \\ \vct{t}_{J^c} \end{bmatrix} \\
	&\geq \Expect \min_{\norm{\vct{t}}=1} \max_{\vct{u} \in U} \left(
	\vct{u} \cdot \begin{bmatrix} \mtx{\Phi}_{IJ} & \mtx{\Phi}_{IJ^c} \\
	\mtx{\Phi}_{I^cJ} & \mtx{0} \end{bmatrix} \vct{t}
	+ (\vct{u}_{I^c} \cdot \vct{g}_{I^c}) \norm{\smash{\vct{t}_{J^c}}}
	+ \norm{\vct{u}_{I^c}} (\vct{h}_{J^c} \cdot \vct{t}_{J^c}) \right)
	- 2 \quad
		&& \text{(Lemma~\ref{lem:apply-minmax-lower-redux})} \\
	&\geq \frac{\big( \coll{W}(U) - \sqrt{n} \big)_+}{\cnst{C}B\sqrt{\log m}}
	- \cnst{C} B^2 \sqrt{k \log m}
	&& \text{(Lemma~\ref{lem:car-width-id})}.
\end{aligned}
$$
Lemma~\ref{lem:car-duality} is a standard convex duality argument,
and the next line follows when we write out the quantity of interest more explicitly.
To reach the fourth line, we apply the Gaussian Minimax Theorem in the usual way
to replace the random
matrix $\mtx{\Gamma}$ with two standard normal vectors $\vct{g} \in \R^m$ and
$\vct{h} \in \R^n$.
In a rough sense, the remaining part of the random matrix $\mtx{\Phi}$ is negligible.
The term $(\vct{u}_{I^c} \cdot \vct{g}_{I^c})$
generates the Gaussian width $\coll{W}(U)$, defined in~\eqref{eqn:gauss-width},
while the term $(\vct{h}_{J^c} \cdot \vct{t}_{J^c})$
contributes a dimensional factor $- \sqrt{n}$.

Apply the increasing convex function $(\cdot)_+^2$ to the inequality 
in the last display, and invoke Jensen's inequality to
draw out the expectation.  Notice that
$$
\coll{W}(U) = \Expect \max_{\vct{u} \in K \cap \mathsf{B}^m} \vct{u} \cdot \vct{g}
	\geq \Expect \max_{\vct{u} \in K \cap \mathsf{S}^{m-1}} \vct{u} \cdot \vct{g}
	= \coll{W}(\Omega).
$$
Finally, $\big( \coll{E}_n(\Omega) \big)_- = \big(\coll{W}(\Omega) - \sqrt{n}\big)_+$
because of~\eqref{eqn:excess-vs-width}.  This point completes the proof.

\subsection{Proposition~\ref{prop:car-width}: Duality for the RAP Functional}

The first step in the argument is to apply the minimax inequality to pass to a saddle-point formulation
that is amenable to analysis with the Gaussian Minimax Theorem.

\begin{lemma}[Proposition~\ref{prop:car-width}: Duality] \label{lem:car-duality}
Adopt the notation and hypotheses of Proposition~\ref{prop:car-width}.
For any point $\vct{t} \in \R^n$,
$$
\min_{\vct{s} \in K^\polar} \norm{\vct{s} - \mtx{\Psi} \vct{t}}
	\geq \max_{\vct{u} \in U} \vct{u} \cdot \mtx{\Psi} \vct{t}
$$
where $U := K \cap \mathsf{B}^{m}$.
\end{lemma}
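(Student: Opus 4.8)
The plan is to use the standard duality between the distance to a convex cone and the support-function characterization of the polar cone. Recall that for a closed convex cone $K \subset \R^m$ with polar $K^\polar$, every vector $\vct{w} \in \R^m$ admits the Moreau decomposition $\vct{w} = \Pi_K(\vct{w}) + \Pi_{K^\polar}(\vct{w})$ into orthogonal projections onto $K$ and $K^\polar$, and these two pieces are orthogonal. Applying this with $\vct{w} = \mtx{\Psi}\vct{t}$, I would write
$$
\min_{\vct{s} \in K^\polar} \norm{\vct{s} - \mtx{\Psi}\vct{t}} = \norm{\Pi_K(\mtx{\Psi}\vct{t})} = \dist(\mtx{\Psi}\vct{t}, K^\polar).
$$
So the quantity on the left is exactly the norm of the projection of $\mtx{\Psi}\vct{t}$ onto $K$.

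Next I would produce the variational lower bound. Since $\Pi_K(\mtx{\Psi}\vct{t}) \in K$ and has norm $\norm{\Pi_K(\mtx{\Psi}\vct{t})}$, the rescaled vector
$$
\vct{u}_\star := \frac{\Pi_K(\mtx{\Psi}\vct{t})}{\max\{1, \norm{\Pi_K(\mtx{\Psi}\vct{t})}\}}
$$
lies in $U = K \cap \mathsf{B}^m$. Using the Moreau decomposition and the orthogonality $\Pi_K(\mtx{\Psi}\vct{t}) \perp \Pi_{K^\polar}(\mtx{\Psi}\vct{t})$ together with the fact that $\vct{u}_\star \in K$ has a nonnegative inner product with any element of $K$ and a nonpositive inner product with any element of $K^\polar$, I compute
$$
\vct{u}_\star \cdot (\mtx{\Psi}\vct{t}) = \vct{u}_\star \cdot \Pi_K(\mtx{\Psi}\vct{t}) + \vct{u}_\star \cdot \Pi_{K^\polar}(\mtx{\Psi}\vct{t})
\leq \vct{u}_\star \cdot \Pi_K(\mtx{\Psi}\vct{t}) \leq \norm{\Pi_K(\mtx{\Psi}\vct{t})},
$$
where the last step is Cauchy--Schwarz combined with $\norm{\vct{u}_\star} \leq 1$. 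Hence
$$
\max_{\vct{u} \in U} \vct{u} \cdot \mtx{\Psi}\vct{t} \;\geq\; \vct{u}_\star \cdot \mtx{\Psi}\vct{t}
$$
is \emph{not} quite what I want in that direction; instead I want the reverse inequality showing the max is \emph{at most} the distance, so let me reorganize: for \emph{any} $\vct{u} \in U = K \cap \mathsf{B}^m$ and any $\vct{s} \in K^\polar$, the definition of the polar gives $\vct{u} \cdot \vct{s} \leq 0$, so
$$
\vct{u} \cdot \mtx{\Psi}\vct{t} = \vct{u} \cdot (\mtx{\Psi}\vct{t} - \vct{s}) + \vct{u} \cdot \vct{s} \leq \vct{u} \cdot (\mtx{\Psi}\vct{t} - \vct{s}) \leq \norm{\vct{u}} \norm{\mtx{\Psi}\vct{t} - \vct{s}} \leq \norm{\mtx{\Psi}\vct{t} - \vct{s}}.
$$
Taking the maximum over $\vct{u} \in U$ on the left and then the minimum over $\vct{s} \in K^\polar$ on the right yields
$$
\max_{\vct{u} \in U} \vct{u} \cdot \mtx{\Psi}\vct{t} \;\leq\; \min_{\vct{s} \in K^\polar} \norm{\vct{s} - \mtx{\Psi}\vct{t}},
$$
which is exactly the claimed inequality. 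This is clean and requires no Moreau decomposition at all—just the two defining inequalities $\norm{\vct{u}} \leq 1$ and $\vct{u} \cdot \vct{s} \leq 0$, together with Cauchy--Schwarz.

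The only point needing a word of care is the elementary manipulation of swapping $\max$ and $\min$: here we do not need a true minimax equality, only the trivial ``max-min $\leq$ min-max'' direction applied after we have fixed $\vct{t}$, so no compactness or Sion-type hypothesis is invoked—the bound holds pointwise in $\vct{t}$ and in each $(\vct{u},\vct{s})$ pair before any extremization. I do not anticipate a genuine obstacle; the main thing to get right is simply to present the chain of inequalities in the correct order so that the maximization and minimization are applied to the side where they help. I would write this up in three or four lines.
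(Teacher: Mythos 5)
Your final argument is correct and is essentially the paper's proof written out pointwise: the paper expresses the norm as a maximum over the unit ball, applies the minimax inequality, and evaluates the support function of $K^\polar$ (which is the indicator of $K$), whereas you bound $\vct{u}\cdot\mtx{\Psi}\vct{t}\leq\norm{\mtx{\Psi}\vct{t}-\vct{s}}$ directly for each fixed pair $(\vct{u},\vct{s})$ using $\vct{u}\cdot\vct{s}\leq 0$ and Cauchy--Schwarz before extremizing, which is the same content unrolled. The Moreau-decomposition opening is an unneeded detour that you correctly abandon; the clean three-line chain at the end is all that is required.
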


\begin{proof}
Write the norm as maximum:
$$
\min_{\vct{s} \in K^\polar} \norm{\vct{s} - \mtx{\Psi} \vct{t} }
	= \min_{\vct{s} \in K^\polar} \max_{\vct{u} \in \mathsf{B}^{m}} \vct{u} \cdot (\mtx{\Psi} \vct{t} - \vct{s}).
$$
The minimax inequality allows us to interchange the maximum and minimum:
$$
\min_{\vct{s} \in K^\polar} \norm{\vct{s} - \mtx{\Psi} \vct{t}}
	\geq \max_{\vct{u} \in \mathsf{B}^{m}} \min_{\vct{s} \in K^\polar} \vct{u} \cdot (\mtx{\Psi} \vct{t} - \vct{s})
	= \max_{\vct{u} \in \mathsf{B}^m} \Big( \vct{u} \cdot \mtx{\Psi} \vct{t}
	- \max_{\vct{s} \in K^\polar} \vct{u} \cdot \vct{s} \Big)
$$
The value of $\max_{\vct{s} \in K^\polar} \vct{u} \cdot \vct{s}$ equals zero when $\vct{u} \in (K^\polar)^{\polar} = K$; otherwise, it takes the value $+\infty$.  This step uses the assumption that $K$ is closed and convex.  We conclude that
$$
\min_{\vct{s} \in K^\polar} \norm{\vct{s} - \mtx{\Psi} \vct{t}}
	\geq \max_{\vct{u} \in K \cap \mathsf{B}^{m}} \vct{u} \cdot \mtx{\Psi} \vct{t}.
$$
This is the stated result.
\end{proof}

\subsection{Proposition~\ref{prop:car-width}: Reducing the Gaussian Matrix to Some Gaussian Vectors}

By an argument similar with the proof of Lemma~\ref{lem:apply-minmax-lower}, we can replace the Gaussian block of $\mtx{\Psi}$
with two Gaussian vectors.

\begin{lemma}[Proposition~\ref{prop:car-width}: Reducing the Gaussian Matrix]
\label{lem:apply-minmax-lower-redux}
Adopt the notation and hypotheses of Proposition~\ref{prop:car-width}.
Then
\begin{multline*}
\Expect \min_{\norm{\vct{t}}=1} \max_{\vct{u} \in U}
	\vct{u} \cdot \begin{bmatrix} \mtx{\Phi}_{IJ} & \mtx{\Phi}_{IJ^c} \\
	\mtx{\Phi}_{I^cJ} & \mtx{\Gamma}_{I^c J^c} \end{bmatrix} \vct{t}
	\geq \Expect \min_{\norm{\vct{t}}=1} \max_{\vct{u} \in U} \left(
	\vct{u} \cdot \begin{bmatrix} \mtx{\Phi}_{IJ} & \mtx{\Phi}_{IJ^c} \\
	\mtx{\Phi}_{I^cJ} & \mtx{0} \end{bmatrix} \vct{t}
	+ (\vct{u}_{I^c} \cdot \vct{g}_{I^c}) \norm{\smash{\vct{t}_{J^c}}}
	+ \norm{\vct{u}_{I^c}} (\vct{h}_{J^c} \cdot \vct{t}_{J^c}) \right)
	- 2,
\end{multline*}
where $\vct{g} \in \R^m$ and $\vct{h} \in \R^n$
be independent standard normal vectors.
\end{lemma}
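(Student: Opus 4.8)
The plan is to follow the same template as the proof of Lemma~\ref{lem:apply-minmax-lower}, but applied to the saddle-point quantity
$$
\min_{\norm{\vct{t}}=1} \max_{\vct{u} \in U} \vct{u} \cdot \mtx{\Psi}(I,J) \vct{t},
$$
and with the roles of the various blocks handled carefully. First I would condition on the non-Gaussian part $\mtx{\Phi}$, so that the only randomness that remains to be replaced is the Gaussian block $\mtx{\Gamma}_{I^c J^c}$. The key observation is that the bilinear form splits as
$$
\vct{u} \cdot \mtx{\Psi}(I,J)\vct{t}
	= \underbrace{\vct{u} \cdot \begin{bmatrix}\mtx{\Phi}_{IJ} & \mtx{\Phi}_{IJ^c} \\ \mtx{\Phi}_{I^c J} & \mtx{0}\end{bmatrix}\vct{t}}_{\text{deterministic given }\mtx{\Phi}}
	\ +\ \vct{u}_{I^c} \cdot \mtx{\Gamma}_{I^c J^c}\vct{t}_{J^c}.
$$
So, conditionally on $\mtx{\Phi}$, we are in exactly the situation of Lemma~\ref{lem:my-comparison}/Lemma~\ref{lem:apply-minmax-lower}: a deterministic affine term plus an inner product against a Gaussian matrix, indexed by $\vct{u}$ ranging over a compact set $U$ and $\vct{t}$ over the sphere. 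The Gaussian Minimax Theorem (Fact~\ref{fact:gauss-minmax}) lets us compare this with the surrogate process built from two independent standard normal vectors $\vct{g}\in\R^m$ and $\vct{h}\in\R^n$, where the $\vct{\Gamma}_{I^c J^c}$ block is replaced by $(\vct{u}_{I^c}\cdot \vct{g}_{I^c})\norm{\smash{\vct{t}_{J^c}}} + \norm{\vct{u}_{I^c}}(\vct{h}_{J^c}\cdot \vct{t}_{J^c})$.

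The steps, in order: (1) Set up the two centered Gaussian processes exactly as in the proof of Lemma~\ref{lem:my-comparison}, indexed by $(\vct{t},\vct{u})$ with $\norm{\vct{t}}=1$ and $\vct{u}\in U$, adding the deterministic part of $\mtx{\Phi}$ as the ``$\lambda$'' term; check the three variance/covariance conditions \eqref{eqn:minmax-cond1}, \eqref{eqn:minmax-cond2a} (or~\eqref{eqn:minmax-cond2b}), and~\eqref{eqn:minmax-cond3}, which hold because the index set for $\vct{t}$ is the unit sphere and that for $\vct{u}$ is a subset of the ball, so Cauchy--Schwarz applies to both $\vct{t}$- and $\vct{u}$-increments. (2) Invoke Fact~\ref{fact:gauss-minmax} to get the probabilistic one-sided comparison $\Prob{X\text{-saddle} > \zeta}\geq \Prob{Y\text{-saddle} > \zeta}$, with the usual approximation argument to pass from a finite net of $U\times\mathsf{S}^{n-1}$ to the whole set. (3) Evaluate the $X$-side saddle value and observe that the extra $\gamma$-term (a scalar standard normal, needed so that the surrogate has a true minimax form) can be stripped off at the cost of a factor $1/2$ in the probability, exactly as in the $\sametprev{\ldots}$ portion of the proof of Lemma~\ref{lem:my-comparison}; here we do \emph{not} need a Sion-type interchange since we only want the lower bound. (4) Convert the probability comparison into an expectation comparison by integration by parts against $(X - \Expect Y)_-$ and $2(Y-\Expect Y)_-$, just as in Lemma~\ref{lem:apply-minmax-lower}; the variance of the $Y$-saddle is at most $1$ by the Gaussian variance inequality (Fact~\ref{fact:gauss-variance}), since the map $(\vct{g},\vct{h})\mapsto Y\text{-saddle}$ is $1$-Lipschitz because $U\subset\mathsf{B}^m$ and $\norm{\vct{t}}=1$. (5) Take the expectation over $\mtx{\Phi}$ and use the tower property to conclude.

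The step I expect to be the main obstacle is step (3), the removal of the auxiliary scalar Gaussian $\gamma$ in the presence of the \emph{maximum} over $\vct{u}\in U$. In Lemma~\ref{lem:my-comparison} the inner maximum was over $\vct{u}\in\mathsf{B}^m$, which produced a clean norm $\norm{\cdot}_+$; here the inner maximum is over the smaller set $U=K\cap\mathsf{B}^m$, so the surrogate $X$-side saddle is $\min_{\norm{\vct{t}}=1}\max_{\vct{u}\in U}(\vct{u}\cdot(\ldots) + \norm{\vct{u}_{I^c}}\norm{\smash{\vct{t}_{J^c}}}\gamma)$, which is not simply a norm. One must check that conditioning on $\gamma\leq 0$ still yields the bound $\max_{\vct{u}\in U}(\vct{u}\cdot(\ldots)+\norm{\vct{u}_{I^c}}\norm{\smash{\vct{t}_{J^c}}}\gamma)\leq \max_{\vct{u}\in U}\vct{u}\cdot(\ldots)$ pointwise in $\vct{t}$ (true because the added term is nonpositive, as $\norm{\vct{u}_{I^c}},\norm{\smash{\vct{t}_{J^c}}}\geq 0$ and $\gamma\leq 0$), and that after dropping it the resulting quantity is the one we want, namely $\min_{\norm{\vct{t}}=1}\max_{\vct{u}\in U}\vct{u}\cdot\begin{bmatrix}\mtx{\Phi}_{IJ} & \mtx{\Phi}_{IJ^c}\\ \mtx{\Phi}_{I^cJ} & \mtx{0}\end{bmatrix}\vct{t} + (\vct{u}_{I^c}\cdot\vct{g}_{I^c})\norm{\smash{\vct{t}_{J^c}}} + \norm{\vct{u}_{I^c}}(\vct{h}_{J^c}\cdot\vct{t}_{J^c})$. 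A minor additional point is that the ``$\lambda$'' term $\vct{u}\cdot\begin{bmatrix}\mtx{\Phi}_{IJ}&\mtx{\Phi}_{IJ^c}\\\mtx{\Phi}_{I^cJ}&\mtx{0}\end{bmatrix}\vct{t}$ depends on $\mtx{\Phi}$, so one keeps it frozen throughout the Gaussian comparison and only averages it at the very end; this is harmless but should be stated explicitly. Everything else is a verbatim adaptation of the machinery already developed for Proposition~\ref{prop:excess-width}.
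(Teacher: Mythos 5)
Your proposal is correct and matches exactly what the paper intends: the paper's own proof of this lemma simply refers to Lemmas~\ref{lem:my-comparison} and~\ref{lem:apply-minmax-lower} for the pattern of argument, and your write-up fills in precisely that pattern, with the right adaptations (replacing $\norm{\vct{u}}$ by $\norm{\vct{u}_{I^c}}$, $\vct{g}$ by $\vct{g}_{I^c}$, $\vct{h}$ by $\vct{h}_{J^c}$, and treating the $\mtx{\Phi}$-dependent part as the frozen $\lambda$-term). You also correctly flagged the one point that genuinely requires checking—removal of the auxiliary $\gamma$ when the inner maximum runs over $U$ rather than $\mathsf{B}^m$—and your resolution (conditioning on $\gamma\le 0$ makes each summand $\norm{\vct{u}_{I^c}}\,\norm{\vct{t}_{J^c}}\,\gamma$ nonpositive, so the bound holds pointwise in $(\vct{t},\vct{u})$ even without a clean norm identity) is exactly the right way to handle it.
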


\begin{proof}
There are no new ideas in this bound, so we refer the reader to Lemmas~\ref{lem:my-comparison}
and~\ref{lem:apply-minmax-lower} for the pattern of argument.
\end{proof}

\subsection{Proposition~\ref{prop:car-width}: Finding the Gaussian Width}

To prove Proposition~\ref{prop:car-width}, most of the difficulty arises
when we seek a good lower bound for the minimax problem that appears in
Lemma~\ref{lem:apply-minmax-lower-redux}.  We have the following result.

\begin{lemma}[Proposition~\ref{prop:car-width}: Finding the Gaussian Width] \label{lem:car-width-id}
Adopt the notation and hypotheses of Proposition~\ref{prop:car-width}.
Define the set $U := K \cap \mathsf{B}^m$.  Then
$$
\Expect \min_{\norm{\vct{t}}=1} \max_{\vct{u} \in U} \left(
	\vct{u} \cdot \begin{bmatrix} \mtx{\Phi}_{IJ} & \mtx{\Phi}_{IJ^c} \\
	\mtx{\Phi}_{I^c J} & \mtx{0} \end{bmatrix} \vct{t}
	+ (\vct{u}_{I^c} \cdot \vct{g}_{I^c}) \norm{\smash{\vct{t}_{J^c}}}
	+ \norm{\vct{u}_{I^c}} (\vct{h}_{J^c} \cdot \vct{t}_{J^c}) \right) \\
	\geq \frac{\coll{W}(U) - \sqrt{n}}{\cnst{C} B\sqrt{\log m}}
	- \cnst{C} B^2 \sqrt{k \log m}.
$$
\end{lemma}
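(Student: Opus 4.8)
The plan is to lower-bound the inner maximum by restricting $\vct{u}$ to a ray $\{\rho\,\vct w:\rho\in[0,1]\}$ for a well-chosen $\vct w\in U$, so that optimizing over $\rho$ replaces the part of the objective that is affine in $\vct u$ by its positive part, while the only non-affine term, $\norm{\vct u_{I^c}}(\vct h_{J^c}\cdot\vct t_{J^c})$, is bounded below by $-\norm{\vct h_{J^c}}\norm{\vct t_{J^c}}\ge -\norm{\vct h_{J^c}}$; after taking expectations, $\Expect\norm{\vct h_{J^c}}\le\sqrt n$ is exactly what produces the $-\sqrt n$ in the conclusion. It then suffices to exhibit, for each unit vector $\vct t$, a $\vct w=\vct w(\vct t)\in U$ for which the surviving piece $(\vct w_{I^c}\cdot\vct g_{I^c})\norm{\vct t_{J^c}}+\vct w\cdot\mtx{\Phi}_J\vct t_J$ is at least of order $\coll{W}(U)/(\cnst{C}B\sqrt{\log m})$, while the leftover blocks of $\mtx{\Phi}$ contribute at most $\cnst{C}B^2\sqrt{k\log m}$.

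The choice of $\vct w$ splits into two regimes according to whether the mass of $\vct t$ sits in the coordinates $J$ or in $J^c$ (the precise $m,n,k$-dependent threshold being part of the bookkeeping). When $\norm{\vct t_J}$ carries most of the norm, the Gaussian corrections are negligible and one must extract the width of $U$ from $\vct w\cdot\mtx{\Phi}_J\vct t_J$; via Moreau's identity $\max_{\vct u\in U}\vct u\cdot\vct y=\norm{\Pi_K\vct y}$ for the metric projection onto $K$, so the task is a high-probability, uniform-in-$\vct t_J$ lower bound $\norm{\Pi_K(\mtx{\Phi}_J\vct t_J)}\gtrsim\coll{W}(U)/(\cnst{C}B\sqrt{\log m})$ for the $m\times k$ bounded random matrix $\mtx{\Phi}_J$. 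Because $\mtx{\Phi}_J\vct t_J$ is an isotropic vector in $\R^m$ with independent subgaussian coordinates of parameter $\lesssim B$ rather than a Gaussian, one cannot recover all of $\coll{W}(U)\asymp\sqrt{\delta(\Omega)}$: the extremal case is the cross-polytope, whose Rademacher width is smaller than its Gaussian width by precisely a factor $\sqrt{\log m}$, and this is the source of the $\cnst{C}B\sqrt{\log m}$ in the statement. When instead $\norm{\vct t_{J^c}}$ carries most of the norm, one takes $\vct w$ to be a near-maximizer of $\vct u\mapsto\vct u_{I^c}\cdot\vct g_{I^c}$ over $U$, and reads off the width from the term $(\vct w_{I^c}\cdot\vct g_{I^c})\norm{\vct t_{J^c}}$, using $\Expect[\vct w_{I^c}\cdot\vct g_{I^c}]\ge\coll{W}(U)-\Expect\norm{\vct g_I}\ge\coll{W}(U)-\sqrt k$ (split $\vct u\cdot\vct g=\vct u_I\cdot\vct g_I+\vct u_{I^c}\cdot\vct g_{I^c}$ and use $\norm{\vct u_I}\le1$).

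The delicate point, present in both regimes, is controlling the leftover blocks of $\mtx{\Phi}$. When $\vct w$ is chosen independently of $\mtx{\Phi}$, the contractions $(\vct w_I)^{\transp}\mtx{\Phi}_{I^cJ}$ and the thin blocks contribute only at the scale $B\sqrt k$ (lengths of subgaussian vectors, not operator norms); the genuinely dangerous term is the \emph{wide} block $\mtx{\Phi}_{IJ^c}$ paired against $\vct w_I$ and the adversarial $\vct t_{J^c}$, since $\norm{\mtx{\Phi}_{IJ^c}}=\Theta(B^2\sqrt n)$ is too large to absorb, so neutralizing it is where the symmetry hypothesis on $\mtx{\Phi}$ must be used, and where a convex-duality/minimax interchange in the spirit of Lemma~\ref{lem:my-comparison} enters to pass the estimate through uniformly over $\vct t$. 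This, together with the uniform subgaussian-to-Gaussian width comparison above, is the main obstacle and the heart of the proof. Granting these, one obtains on a high-probability event (controlled by Fact~\ref{fact:subgauss-mtx-tails} and the Gaussian concentration inequality) a bound of the form $\min_{\norm{\vct t}=1}\max_{\vct u\in U}(\cdots)\ge\coll{W}(U)/(\cnst{C}B\sqrt{\log m})-\norm{\vct h_{J^c}}-\cnst{C}B^2\sqrt{k\log m}$, where the $\log m$ in the last term absorbs a union bound over an $O(k)$-dimensional net in $\vct t_J$. Finally one takes expectations: the $\vct g,\vct h$-dependent quantities concentrate by Gaussian concentration, the $\mtx{\Phi}$-dependent ones by bounded differences and Fact~\ref{fact:subgauss-mtx-tails}, and since $x\mapsto x_+$ is convex and nondecreasing Jensen lets one pull the expectation inside; then $\Expect[\vct w_{I^c}\cdot\vct g_{I^c}]\ge\coll{W}(U)-\sqrt k$ and $\Expect\norm{\vct h_{J^c}}\le\sqrt n$, with the stray $\sqrt k$-terms absorbed into $\cnst{C}B^2\sqrt{k\log m}$, give exactly the asserted inequality. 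Apart from the wide-block argument and the width comparison, every ingredient is a variation on tools already deployed for Proposition~\ref{prop:excess-width}.
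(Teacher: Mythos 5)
Your proposal correctly identifies the main ingredients at the outcome level: the empirical-width lower bound via Khintchine and symmetry (Sublemma~\ref{slem:empirical-width}), which is the source of the $B\sqrt{\log m}$ factor; the scalar bookkeeping that trades off the regime where $\norm{\vct{t}_{J^c}}$ carries the mass against the regime where $\norm{\vct{t}_J}$ does (this is the three-branch scalar minimax of Sublemma~\ref{slem:car-minmax-simpler}); and the closing Jensen/concentration steps. However, there is a genuine gap at exactly the spot you flag as ``the main obstacle and the heart of the proof'': you never actually neutralize the wide block $\mtx{\Phi}_{IJ^c}$, and the mechanism you gesture at for doing so is wrong.

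Specifically, you assert that handling $\vct{u}_I^{\transp}\mtx{\Phi}_{IJ^c}\vct{t}_{J^c}$ ``is where the symmetry hypothesis on $\mtx{\Phi}$ must be used, and where a convex-duality/minimax interchange in the spirit of Lemma~\ref{lem:my-comparison} enters,'' and then you grant the estimate. In fact this step is handled by an elementary operator-norm bound that uses neither symmetry nor any minimax interchange. The key observation is that $\mtx{\Phi}_{IJ^c}$ has only $\#I = k$ rows, so stacking it on the Gaussian row vector gives a tall, thin $(k+1)\times(n-k)$ matrix, and
$$
\vct{u}_I\cdot\mtx{\Phi}_{IJ^c}\vct{t}_{J^c} + \norm{\vct{u}_{I^c}}(\vct{h}_{J^c}\cdot\vct{t}_{J^c})
= \begin{bmatrix}\vct{u}_I \\ \norm{\vct{u}_{I^c}}\end{bmatrix}\cdot\begin{bmatrix}\mtx{\Phi}_{IJ^c}\\ \vct{h}_{J^c}\end{bmatrix}\vct{t}_{J^c}
\geq -\norm{\begin{bmatrix}\mtx{\Phi}_{IJ^c}\\ \vct{h}_{J^c}\end{bmatrix}}\norm{\smash{\vct{t}_{J^c}}},
$$
with $\Expect(\norm{\cdot}-\sqrt{n})_+ \leq \cnst{C}B^2\sqrt{k}$ by Fact~\ref{fact:subgauss-mtx-tails}. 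This single bound produces the $-\sqrt{n}\norm{\vct{t}_{J^c}}$ term and the $\cnst{C}B^2\sqrt{k}$ error \emph{simultaneously}. Your scheme, which attributes the $-\sqrt{n}$ to $\Expect\norm{\vct{h}_{J^c}}$ alone and then controls $\mtx{\Phi}_{IJ^c}$ separately, would double-count the $\sqrt{n}$ (since $\norm{\mtx{\Phi}_{IJ^c}}$ is itself of order $\sqrt{n}$ for $k\ll n$, not $\Theta(B^2\sqrt{n})$), and yield a weaker $\coll{W}(U)-2\sqrt{n}$ rather than the stated $\coll{W}(U)-\sqrt{n}$. The symmetry hypothesis is consumed entirely in Sublemma~\ref{slem:empirical-width}, and the Lemma~\ref{lem:my-comparison}-type interchange appears one level up (in Lemma~\ref{lem:apply-minmax-lower-redux}, to replace $\mtx{\Gamma}_{I^cJ^c}$ by $\vct{g},\vct{h}$); neither is available nor needed at this stage of the argument.
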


\begin{proof}
The proof of this bound is lengthy, so we break the argument into several steps.
The overall result follows when we sequence the inequalities
in Sublemmas~\ref{slem:car-minmax-simple}, \ref{slem:car-minmax-simpler},
\ref{slem:prob-bounds}, and~\ref{slem:car-width-minmax} and consolidate
the error terms.
\end{proof}

\subsubsection{Lemma~\ref{lem:car-width-id}: Simplifying the Minimax I}

The first step in the proof of Lemma~\ref{lem:car-width-id}
is to simplify the minimax so we can identify the key terms.

\begin{sublemma}[Lemma~\ref{lem:car-width-id}: Simplifying the Minimax I] \label{slem:car-minmax-simple}
Adopt the notation and hypotheses of Lemma~\ref{lem:car-width-id}.  Then
\begin{multline*}
\Expect \min_{\norm{\vct{t}}=1} \max_{\vct{u} \in U} \left(
	\vct{u} \cdot \begin{bmatrix} \mtx{\Phi}_{IJ} & \mtx{\Phi}_{IJ^c} \\
	\mtx{\Phi}_{I^c J} & \mtx{0} \end{bmatrix} \vct{t}
	+ (\vct{u}_{I^c} \cdot \vct{g}_{I^c}) \norm{\smash{\vct{t}_{J^c}}}
	+ \norm{\vct{u}_{I^c}} (\vct{h}_{J^c} \cdot \vct{t}_{J^c}) \right) \\
	\geq \Expect \min_{\norm{\vct{t}} = 1} \max_{\vct{u} \in U} \left(
	\vct{u} \cdot \begin{bmatrix} \mtx{\Phi}_J & \vct{g} \end{bmatrix}
	\begin{bmatrix} \vct{t}_j \\ \norm{\smash{\vct{t}_{J^c}}} \end{bmatrix}
	- \sqrt{n} \norm{\smash{\vct{t}_{J^c}}} \right)_+ - \cnst{C} B^2\sqrt{k}.
\end{multline*}
\end{sublemma}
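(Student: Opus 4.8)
The plan is to transform the maximand on the left into the one on the right by three elementary moves, tracking the cost of each: (i) delete the block $\mtx{\Phi}_{IJ^c}$, i.e.\ drop the term $\vct{u}_I \cdot (\mtx{\Phi}_{IJ^c} \vct{t}_{J^c})$ from the inner expression; (ii) restore the missing coordinates of the Gaussian vector, passing from $(\vct{u}_{I^c} \cdot \vct{g}_{I^c}) \norm{\vct{t}_{J^c}}$ to $(\vct{u} \cdot \vct{g}) \norm{\vct{t}_{J^c}}$, which combines with $\vct{u} \cdot (\mtx{\Phi}_J \vct{t}_J)$ into $\vct{u} \cdot (\mtx{\Phi}_J \vct{t}_J + \vct{g}\,\norm{\vct{t}_{J^c}})$; and (iii) replace $\norm{\vct{u}_{I^c}} (\vct{h}_{J^c} \cdot \vct{t}_{J^c})$ by the deterministic term $-\sqrt{n}\,\norm{\vct{t}_{J^c}}$, which also produces the outer positive part (since $\vct{0} \in U$ forces $\max_{\vct{u} \in U}$ of the left expression to be at least $0$, while $\max_{\vct{u} \in U}(\,\cdot\,)_+ = (\max_{\vct{u} \in U}\,\cdot\,)_+$). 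The workhorse at each move is the observation that if two inner expressions differ, for each fixed $\vct{u} \in U$, by at most a random quantity $E$ free of $\vct{u}$, then their maxima over $\vct{u}$ differ by at most $E$; if $E$ is moreover free of $\vct{t}$, the corresponding $\Expect \min_{\vct{t}} \max_{\vct{u}}$ differ by at most $\Expect E$.

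First I would dispatch the two routine moves. For (iii): since $\norm{\vct{u}_{I^c}} \leq 1$ and $\norm{\vct{t}_{J^c}} \leq 1$, we have $\norm{\vct{u}_{I^c}}(\vct{h}_{J^c} \cdot \vct{t}_{J^c}) \geq -\sqrt{n}\,\norm{\vct{t}_{J^c}} - (\norm{\vct{h}_{J^c}} - \sqrt{n})_+$, with the correction free of $\vct{u}$ and $\vct{t}$; since $\Expect \norm{\vct{h}_{J^c}} \leq \sqrt{n - k} < \sqrt{n}$, the Gaussian Poincar\'e inequality (Fact~\ref{fact:gauss-variance}) gives $\Expect (\norm{\vct{h}_{J^c}} - \sqrt{n})_+ \leq \cnst{C}$. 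For (ii): adding $(\vct{u}_I \cdot \vct{g}_I) \norm{\vct{t}_{J^c}}$ perturbs the inner expression by at most $\norm{\vct{u}_I}\,\norm{\vct{g}_I}\,\norm{\vct{t}_{J^c}} \leq \norm{\vct{g}_I}$ for each $\vct{u}$, and $\Expect \norm{\vct{g}_I} \leq \sqrt{k}$. Both costs are $O(B^2\sqrt{k})$ since $B \geq 1$.

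The delicate move is (i), deleting $\mtx{\Phi}_{IJ^c}$, and this is where the main obstacle lies. For a fixed $\vct{u}$ the deleted term is bounded by $\norm{\mtx{\Phi}_{IJ^c}\vct{t}_{J^c}}$, but the only uniform bound is the operator norm $\sigma_{\max}(\mtx{\Phi}_{IJ^c})$, which is of order $\sqrt{n}$ --- hopelessly larger than the target error $\cnst{C}B^2\sqrt{k}$ --- so one cannot simply pull this slack through the minimum over the sphere. The point to exploit is that $\mtx{\Phi}_{IJ^c}$ shares no entries with $\mtx{\Phi}_J$ or with the Gaussian block $\mtx{\Gamma}$, hence is independent of everything appearing in the right-hand objective: after conditioning on $(\mtx{\Phi}_J, \vct{g}, \vct{h})$ the right-hand maximand and its maximizer $\vct{u}^{\star}(\vct{t})$ are frozen, and the contribution $\vct{u}^{\star}(\vct{t})_I \cdot (\mtx{\Phi}_{IJ^c}\vct{t}_{J^c})$ becomes a mean-zero scalar of conditional second moment $\norm{\vct{u}^{\star}(\vct{t})_I}^2 \norm{\vct{t}_{J^c}}^2 \leq 1$, so that for any \emph{fixed} direction $\vct{v}$ one has $\Expect \norm{\mtx{\Phi}_{IJ^c}\vct{v}} \leq \sqrt{k}$ --- the correct $\sqrt{k}$ scale rather than $\sqrt{n}$. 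The technical heart is to make this survive the minimum over $\vct{t} \in \mathsf{S}^{n-1}$, since the left-hand minimizer does depend on $\mtx{\Phi}_{IJ^c}$; I would handle this by discretizing the sphere at a coarse scale, union-bounding the fixed-direction tail of $\norm{\mtx{\Phi}_{IJ^c}\vct{v}}$ (Fact~\ref{fact:subgauss-mtx-tails}), and using that the maximand is Lipschitz in $\vct{t}$, absorbing the harmless logarithmic factor.

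Finally I would splice the three estimates together, convert each per-$\vct{t}$ slack of the form $\cnst{C}(\cdot)\,\norm{\vct{t}_{J^c}}$ into the constant $\cnst{C}(\cdot)$ via $\norm{\vct{t}_{J^c}} \leq 1$, and consolidate the resulting terms $\cnst{C}$, $\sqrt{k}$, and $\cnst{C}B^2\sqrt{k}$ (using $B \geq 1$) into the single error $\cnst{C}B^2\sqrt{k}$ claimed by the sublemma.
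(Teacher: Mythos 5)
Your moves~(ii) and~(iii) are fine and match the paper, but move~(i) — deleting $\mtx{\Phi}_{IJ^c}$ in isolation — has a genuine gap, and your proposed repair does not close it. The observation that, for a \emph{fixed} $\vct{t}$, the correction $\vct{u}^\star_I(\vct{t})\cdot\mtx{\Phi}_{IJ^c}\vct{t}_{J^c}$ is mean-zero with conditional second moment at most one is correct, but this does not survive the infimum over $\vct{t}\in\mathsf{S}^{n-1}$, and the discretization you sketch gives the wrong scale. An $\eps$-net of the sphere in $\R^{n-k}$ has cardinality $\exp\big(\Theta((n-k)\log(1/\eps))\big)$, and the subgaussian tail of $\norm{\mtx{\Phi}_{IJ^c}\vct{v}}-\sqrt{k}\,\norm{\vct{v}_{J^c}}$ at level $\zeta$ decays like $\econst^{-c\zeta^2/B^2}$; balancing these in a union bound forces $\zeta\gtrsim B\sqrt{(n-k)\log(1/\eps)}$, and the Lipschitz extension from the net contributes an additional $\eps\cdot\sigma_{\max}(\mtx{\Phi}_{IJ^c})\approx\eps\sqrt{n}$. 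No choice of $\eps$ brings the combined error below roughly $B\sqrt{n}$, which is hopelessly larger than the claimed $\cnst{C}B^2\sqrt{k}$. Equivalently: once you have separated $\mtx{\Phi}_{IJ^c}$ from $\vct{h}_{J^c}$, the only uniform-over-$(\vct{u},\vct{t})$ control you have on the deleted bilinear form is $\sigma_{\max}(\mtx{\Phi}_{IJ^c})\approx\sqrt{n-k}$, and there is no structural property of the argmax map $\vct{t}\mapsto\vct{u}^\star(\vct{t})$ that lets you escape this.

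The idea you are missing is that $\mtx{\Phi}_{IJ^c}$ and $\vct{h}_{J^c}$ must be treated \emph{jointly}, not in separate steps, precisely so that the $\vct{h}_{J^c}$ row can absorb the leading-order $\sqrt{n}$ growth of the $\mtx{\Phi}_{IJ^c}$ block. Concretely, for every $\vct{u}\in U$ and every $\vct{t}$,
\begin{equation*}
\vct{u}_I\cdot\mtx{\Phi}_{IJ^c}\vct{t}_{J^c}+\norm{\vct{u}_{I^c}}\,(\vct{h}_{J^c}\cdot\vct{t}_{J^c})
=\begin{bmatrix}\vct{u}_I\\\norm{\vct{u}_{I^c}}\end{bmatrix}\cdot\begin{bmatrix}\mtx{\Phi}_{IJ^c}\\\vct{h}_{J^c}\end{bmatrix}\vct{t}_{J^c}
\geq-\norm{\begin{bmatrix}\mtx{\Phi}_{IJ^c}\\\vct{h}_{J^c}\end{bmatrix}}\norm{\smash{\vct{t}_{J^c}}},
\end{equation*}
where the Cauchy--Schwarz step uses that the bordered coefficient vector has Euclidean norm $\norm{\vct{u}}\leq1$. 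The stacked matrix is $(k+1)\times(n-k)$ with independent standardized subgaussian entries, so its operator norm is $\sqrt{n-k}+O(B^2\sqrt{k})$ with Gaussian-type tails (Fact~\ref{fact:subgauss-mtx-tails}); hence $\Expect\big(\norm{\cdot}-\sqrt{n}\big)_+\leq\cnst{C}B^2\sqrt{k}$, and this bound is deterministic in $(\vct{u},\vct{t})$ and so passes freely through the minimax. That single grouping replaces your moves~(i) and~(iii), produces the $-\sqrt{n}\,\norm{\vct{t}_{J^c}}$ term as the leading part, and gives the correct error. Splitting it into two moves is exactly what destroys the scale.
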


\begin{proof}
Let us introduce notation for the quantity of interest:
\begin{equation} \label{eqn:width-Q1}
Q_1 := \Expect \min_{\norm{\vct{t}}=1} \max_{\vct{u} \in U} \left(
	\vct{u} \cdot \begin{bmatrix} \mtx{\Phi}_{IJ} & \mtx{\Phi}_{IJ^c} \\
	\mtx{\Phi}_{I^c J} & \mtx{0} \end{bmatrix} \vct{t}
	+ (\vct{u}_{I^c} \cdot \vct{g}_{I^c}) \norm{\smash{\vct{t}_{J^c}}}
	+ \norm{\vct{u}_{I^c}} (\vct{h}_{J^c} \cdot \vct{t}_{J^c}) \right)_+. \\
\end{equation}
We can introduce the positive-part operator because the fact that $\vct{0} \in U$
ensures that the minimax is nonnegative.

The first step in the argument is to reintroduce the missing piece
of the random vector $\vct{g}$.
Adding and subtracting the quantity $(\vct{u}_I \cdot \vct{g}_I) \norm{\smash{\vct{t}_{J^c}}}$
inside the positive-part operator in~\eqref{eqn:width-Q1}, we obtain the bound
\begin{equation} \label{eqn:Eminmax-with-g}
\begin{aligned}
Q_1 &\geq \Expect \min_{\norm{\vct{t}} = 1} \max_{\vct{u} \in U} \left(
	\vct{u} \cdot \begin{bmatrix} \mtx{\Phi}_{IJ} & \mtx{\Phi}_{IJ^c} \\
	\mtx{\Phi}_{I^c J} & \mtx{0} \end{bmatrix} \vct{t}
	+ (\vct{u} \cdot \vct{g}) \norm{\smash{\vct{t}_{J^c}}}
	+ \norm{\vct{u}_{I^c}} (\vct{h}_{J^c} \cdot \vct{t}_{J^c}) \right)_+
	- \Expect \max_{\vct{u} \in U} \vct{u}_I \cdot \vct{g}_I \\
	&\geq \Expect \min_{\norm{\vct{t}} = 1} \max_{\vct{u} \in U} \left(
	\vct{u} \cdot \begin{bmatrix} \mtx{\Phi}_{IJ} & \mtx{\Phi}_{IJ^c} \\
	\mtx{\Phi}_{I^c J} & \mtx{0} \end{bmatrix} \vct{t}
	+ (\vct{u} \cdot \vct{g}) \norm{\smash{\vct{t}_{J^c}}}
	+ \norm{\vct{u}_{I^c}} (\vct{h}_{J^c} \cdot \vct{t}_{J^c}) \right)_+
	- \sqrt{k}.
\end{aligned}
\end{equation}
The second inequality holds because $\#I = k$ and $U$ is a subset of the unit ball.
This step is similar with the proof of Lemma~\ref{lem:missing-coords-lower}.

Next, we combine the terms in~\eqref{eqn:Eminmax-with-g}
involving $\mtx{\Phi}_{IJ^c}$ and the \emph{row} vector $\vct{h}_{J^c}$.
Since $\norm{\vct{u}} \leq 1$,
\begin{equation} \label{eqn:Phi-h-combo1}
\begin{aligned}
\vct{u}_I \cdot \mtx{\Phi}_{IJ^c} \vct{t}_{J^c}
	+ \norm{\vct{u}_{I^c}} (\vct{h}_{J^c} \cdot \vct{t}_{J^c})
	&= \begin{bmatrix} \vct{u}_I \\ \norm{\vct{u}_{I^c}} \end{bmatrix} \cdot \begin{bmatrix} \mtx{\Phi}_{IJ^c} \\ \vct{h}_{J^c} \end{bmatrix} \vct{t}_{J^c} \\
	&\geq - \norm{ \begin{bmatrix} \mtx{\Phi}_{IJ^c} \\ \vct{h}_{J^c} \end{bmatrix} } \norm{\smash{\vct{t}_{J^c}}} \\
	&\geq - \sqrt{n} \norm{\smash{\vct{t}_{J^c}}} -
	\left( \norm{ \begin{bmatrix} \mtx{\Phi}_{IJ^c} \\ \vct{h}_{J^c} \end{bmatrix} } - \sqrt{n}\right)_+.
\end{aligned}
\end{equation}
The $(k+1) \times (n-k)$ random matrix on the right-hand side has independent, standardized entries
that satisfy the subgaussian estimate~\eqref{eqn:subgauss-hyp} with bound $B$.
Repeating the calculations in \eqref{eqn:expect-smax-sqrt}, we see that
\begin{equation} \label{eqn:Phi-h-combo2}
\Expect \left( \norm{ \begin{bmatrix} \mtx{\Phi}_{IJ^c} \\ \vct{h}_{J^c} \end{bmatrix} } - \sqrt{n} \right)_+
	\leq \cnst{C} B^2 \sqrt{k}.
\end{equation}
Apply the estimate~\eqref{eqn:Phi-h-combo1} inside the minimax in~\eqref{eqn:Eminmax-with-g}
and then use~\eqref{eqn:Phi-h-combo2} to arrive at the lower bound
\begin{equation} \label{eqn:Eminmax-with-sqrt}
\begin{aligned}
Q_1 &\geq \Expect \min_{\norm{\vct{t}} = 1} \max_{\vct{u} \in U} \left(
	\vct{u} \cdot \begin{bmatrix} \mtx{\Phi}_{IJ} \\ \mtx{\Phi}_{I^cJ} \end{bmatrix} \vct{t}_J
	+ (\vct{u} \cdot \vct{g}) \norm{\smash{\vct{t}_{J^c}}} - \sqrt{n} \norm{\smash{\vct{t}_{J^c}}} \right)_+
	- \Expect \left( \norm{ \begin{bmatrix} \mtx{\Phi}_{IJ^c} \\ \vct{h}_{J^c} \end{bmatrix} } - \sqrt{n} \right)_+ \\
	&\geq \Expect \min_{\norm{\vct{t}} = 1} \max_{\vct{u} \in U} \left(
	\vct{u} \cdot \begin{bmatrix} \mtx{\Phi}_{J} & \vct{g} \end{bmatrix}
	\begin{bmatrix} \vct{t}_J \\ \norm{\smash{\vct{t}_{J^c}}} \end{bmatrix} - \sqrt{n} \norm{\smash{\vct{t}_{J^c}}} \right)_+
	- \cnst{C} B^2 \sqrt{k}.
\end{aligned}
\end{equation}
In the second line, we have simply consolidated terms.
\end{proof}

\subsubsection{Lemma~\ref{lem:car-width-id}: Simplifying the Minimax II}

The next step in the proof of Lemma~\ref{lem:car-width-id}
is to reduce the minimax problem in Sublemma~\ref{slem:car-minmax-simple}
to a scalar optimization problem.

\begin{sublemma}[Lemma~\ref{lem:car-width-id}: Simplifying the Minimax II] \label{slem:car-minmax-simpler}
Adopt the notation and hypotheses of Lemma~\ref{lem:car-width-id}.  Then
\begin{multline*}
\Expect \min_{\norm{\vct{t}} = 1} \max_{\vct{u} \in U} \left(
	\vct{u} \cdot \begin{bmatrix} \mtx{\Phi}_J & \vct{g} \end{bmatrix}
	\begin{bmatrix} \vct{t}_j \\ \norm{\smash{\vct{t}_{J^c}}} \end{bmatrix}
	- \sqrt{n} \norm{\smash{\vct{t}_{J^c}}} \right)_+ \\
	\geq \Expect\min_{\alpha \in [0, 1]} \max \Big\{ 0, 
	\big( \max_{\vct{u} \in U} \vct{u} \cdot \vct{g} - \sqrt{n} \big) \alpha,
	\min_{\norm{\vct{s}} = 1} \max_{\vct{u} \in U} \vct{u} \cdot
	\begin{bmatrix} \mtx{\Phi}_{J} & \vct{g} \end{bmatrix} \vct{s}
	- \sqrt{n} \alpha \Big\}
	- \sqrt{k}.
\end{multline*}
\end{sublemma}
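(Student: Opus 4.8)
\textbf{Proof plan for Sublemma~\ref{slem:car-minmax-simpler}.}

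The plan is to analyze the inner maximax/minimin structure by splitting the minimization over $\vct{t}$ on the unit sphere according to the value of the scalar parameter $\alpha := \norm{\smash{\vct{t}_{J^c}}} \in [0,1]$. Write any unit vector $\vct{t}$ as a combination of a ``$J$-part'' of norm $\sqrt{1-\alpha^2}$ and a ``$J^c$-part'' of norm $\alpha$; the function inside the expectation depends on $\vct{t}_{J^c}$ only through the scalar $\norm{\smash{\vct{t}_{J^c}}}=\alpha$ once we have formed the bordered matrix $\begin{bmatrix} \mtx{\Phi}_J & \vct{g}\end{bmatrix}$. So I would first rewrite
$$
\min_{\norm{\vct{t}}=1}\max_{\vct{u}\in U}\left(\vct{u}\cdot\begin{bmatrix}\mtx{\Phi}_J & \vct{g}\end{bmatrix}\begin{bmatrix}\vct{t}_J \\ \norm{\smash{\vct{t}_{J^c}}}\end{bmatrix}-\sqrt{n}\,\norm{\smash{\vct{t}_{J^c}}}\right)_+
= \min_{\alpha\in[0,1]}\ \min_{\substack{\norm{\vct{w}}^2 = 1-\alpha^2}}\max_{\vct{u}\in U}\Big(\vct{u}\cdot\begin{bmatrix}\mtx{\Phi}_J\vct{w} + \alpha\vct{g}\end{bmatrix}-\sqrt{n}\,\alpha\Big)_+,
$$
where $\vct{w}$ ranges over vectors in $\R^{k}$ of the prescribed norm. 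Here I have used that the $J^c$-block of the bordered matrix is exactly $\vct{g}$, that the coefficient of $\vct{g}$ is $\norm{\smash{\vct{t}_{J^c}}}$, and that nothing else in the objective sees the direction of $\vct{t}_{J^c}$ — only its norm.

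Next I would lower bound the inner $\min_{\vct{w}}\max_{\vct{u}}$ for each fixed $\alpha$ by three easy-to-analyze quantities, whose maximum gives the bracketed expression. The positive-part of the whole thing is at least $0$ (take $\alpha$ and $\vct{w}$ to drive it down, using $\vct{0}\in U$); it is at least $(\max_{\vct{u}\in U}\vct{u}\cdot\vct{g}-\sqrt{n})\,\alpha$ by choosing $\vct{w}=\vct{0}$ inside the min over $\vct{w}$ of norm $\sqrt{1-\alpha^2}$ — wait, this needs $\alpha=1$, so more carefully: restrict attention to $\alpha$ and observe that for the specific choice of weight placed entirely on the $\vct{g}$-coordinate (i.e.\ $\vct{w}$ small) the value is close to $(\max_{\vct u}\vct u\cdot\vct g -\sqrt n)\alpha$, up to the norm bound $\norm{\mtx{\Phi}_J\vct{w}}\le\sigma_{\max}(\mtx{\Phi}_J)\sqrt{1-\alpha^2}$; and it is at least $\min_{\norm{\vct{s}}=1}\max_{\vct{u}\in U}\vct{u}\cdot\begin{bmatrix}\mtx{\Phi}_J & \vct{g}\end{bmatrix}\vct{s} - \sqrt{n}\alpha$ because relaxing the constraint $\norm{\smash{\vct{t}_{J^c}}}=\alpha$ to allow any unit $\vct s$ only decreases the min. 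Taking the max of these three lower bounds and then the min over $\alpha\in[0,1]$ produces the right-hand side, modulo the error term.

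The source of the $-\sqrt{k}$ error term is the gap between $(\,\cdot\,)_+$ of the \emph{exact} inner value and the max of the three crude lower bounds just listed: the crude bounds drop the coupling term $\vct{u}\cdot\mtx{\Phi}_J\vct{w}$ (the ``$J$-part'' of the matrix acting on the ``$J$-part'' of $\vct{t}$), and the cost of dropping it is controlled by $\Expect\,\sigma_{\max}(\mtx{\Phi}_J)\cdot\sqrt{1-\alpha^2}\le\Expect\,\sigma_{\max}(\mtx{\Phi}_J)$, together with the fact (cf.\ the calculation in~\eqref{eqn:expect-smax-sqrt} and Fact~\ref{fact:subgauss-mtx-tails}) that the $m\times k$ matrix $\mtx{\Phi}_J$ has $\Expect\,\sigma_{\max}(\mtx{\Phi}_J)\le\sqrt{m}+\cnst{C}B^2\sqrt{k}$. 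I expect the main obstacle to be bookkeeping the order of the $\min_\alpha$ and $\max\{\cdot,\cdot,\cdot\}$ operations correctly — in particular, making sure that the ``case analysis'' used to obtain the three lower bounds is valid uniformly in $\alpha$ so that one may exchange the $\min_\alpha$ with the $\max$ of the bounds. Concretely, I would argue: for any $\alpha$, the true inner value is at least each of the three candidates minus the same error $\cnst CB^2\sqrt{k}$ (note $\sqrt{m}$ is absorbed into the $\sqrt{n}$ term since it merely shifts which $\sqrt{n}$-scaled term dominates, and $\sqrt m \le \sqrt n$ is not available in general — I would instead keep the $\sqrt{m}$ piece attached to $\sigma_{\max}$ and note $\alpha\sqrt{m}\le\sqrt{m}$, folding it into a revised constant, or better, track it as part of the $\sqrt n\alpha$ penalty which only improves the bound). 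Then take the pointwise maximum over the three candidates, and finally minimize over $\alpha$; since the same additive error $-\cnst CB^2\sqrt k$ appears in every candidate for every $\alpha$, it pulls out of both the $\max$ and the $\min_\alpha$, yielding the stated inequality after renaming the constant. Taking expectations throughout (Jensen is not even needed, the error is deterministic after conditioning) and collecting constants completes the argument.
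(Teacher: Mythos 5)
Your proposal captures the high-level idea correctly (parametrize by $\alpha = \norm{\smash{\vct{t}_{J^c}}}$, obtain three lower bounds on the inner maximum, fold the error into a single $-\sqrt{k}$ term) but there is a genuine gap in how you control the coupling term $\vct{u}\cdot\mtx{\Phi}_J\vct{t}_J$, and it is precisely the step that makes the error come out as $\sqrt{k}$ rather than $\sqrt{m}$.

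You propose to bound the dropped coupling by the operator norm: $\abs{\vct{u}\cdot\mtx{\Phi}_J\vct{w}} \leq \sigma_{\max}(\mtx{\Phi}_J)\norm{\vct{w}}$, with $\Expect\sigma_{\max}(\mtx{\Phi}_J) \leq \sqrt{m} + \cnst{C}B^2\sqrt{k}$. The dominant piece there is $\sqrt{m}$, not $\sqrt{k}$, and in the subsequent linearization step~\eqref{eqn:car-bdd-pf5} an additive error of $\sqrt{m}$ inflates to order $m$ (since $(\coll{E}_n(\Omega))_- \lesssim \sqrt{m}$), which overwhelms the leading term $(\coll{E}_n(\Omega))_-^2/(\cnst{C}B^2\log m)$. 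The paper's hypothesis $m+n \leq \min\{m,n\}^{9/8}$ does not let you hide a $\sqrt{m}$ in $\sqrt{n}\alpha$; your attempted salvage (``track it as part of the $\sqrt{n}\alpha$ penalty'') cannot work because the error $\sigma_{\max}(\mtx{\Phi}_J)\sqrt{1-\alpha^2}$ is maximized as $\alpha\to 0$, exactly where the $\sqrt{n}\alpha$ term vanishes. A further symptom of the same confusion: you suggest ``choosing $\vct{w}=\vct{0}$ inside the min over $\vct{w}$,'' but that minimum is adversarial — you may not pick $\vct{w}$; you must control the coupling for \emph{every} $\vct{w}$.

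The idea you are missing is a decoupling-by-conditioning argument. For the second branch of the max, do not leave $\vct{u}$ free and then bound the coupling by the matrix norm. Instead, \emph{first} fix $\vct{u}(\vct{g}) \in \argmax_{\vct{u}\in U} \vct{u}\cdot\vct{g}$, which is measurable with respect to $\vct{g}$ alone and hence independent of $\mtx{\Phi}_J$. Using this particular $\vct{u}$ inside the maximum gives the lower bound
$$
\max_{\vct{u}\in U}(\cdots) \;\geq\; \big(\vct{u}(\vct{g})\cdot\vct{g}-\sqrt{n}\big)\norm{\smash{\vct{t}_{J^c}}} + \vct{u}(\vct{g})\cdot\mtx{\Phi}_J\vct{t}_J
\;\geq\; \big(\max_{\vct{u}\in U}\vct{u}\cdot\vct{g}-\sqrt{n}\big)\norm{\smash{\vct{t}_{J^c}}} - \norm{\vct{u}(\vct{g})^\adj\mtx{\Phi}_J},
$$
and since $\vct{u}(\vct{g})$ is a fixed unit vector conditional on $\vct{g}$ and $\mtx{\Phi}_J$ has standardized, independent columns, $\Expect\normsq{\vct{u}(\vct{g})^\adj\mtx{\Phi}_J} = k\normsq{\vct{u}(\vct{g})} \leq k$, hence $\Expect\norm{\vct{u}(\vct{g})^\adj\mtx{\Phi}_J} \leq \sqrt{k}$. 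This is a row vector in $\R^k$, not the whole matrix; the $\sqrt{m}$ never appears. With this estimate in hand the rest of your plan is sound: the error term $\norm{\vct{u}(\vct{g})^\adj\mtx{\Phi}_J}$ is now \emph{independent of $\alpha$}, so it legitimately pulls out of the $\min_\alpha\max\{\cdots\}$, and taking expectations yields the stated $-\sqrt{k}$.
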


\begin{proof}
Introduce the notation
\begin{equation} \label{eqn:car-width-Q2}
Q_2 := \max_{\vct{u} \in U} \left( \vct{u} \cdot \begin{bmatrix} \mtx{\Phi}_{J} & \vct{g} \end{bmatrix}
	\begin{bmatrix} \vct{t}_J \\ \norm{\smash{\vct{t}_{J^c}}} \end{bmatrix} - \sqrt{n} \norm{\smash{\vct{t}_{J^c}}} \right)_+.
\end{equation}
We will develop two lower bounds on the maximum by coupling
$\vct{u}$ to the random matrix in different ways.  Afterward,
we combine these results into a single bound.

In the first place, we can choose the \emph{row} vector $\vct{u}$ so that it depends only
on the remaining Gaussian vector:
$$
\vct{u}(\vct{g}) \in \argmax_{\vct{u} \in U} \vct{u} \cdot \vct{g}.
$$
Since $\norm{\vct{t}} = 1$, we obtain the bound
\begin{equation} \label{eqn:maxU-bound1}
\begin{aligned}
\max_{\vct{u} \in U} \left( \vct{u} \cdot \begin{bmatrix} \mtx{\Phi}_{J} & \vct{g} \end{bmatrix}
	\begin{bmatrix} \vct{t}_J \\ \norm{\smash{\vct{t}_{J^c}}} \end{bmatrix} - \sqrt{n} \norm{\smash{\vct{t}_{J^c}}} \right)
	&\geq \big( \vct{u}(\vct{g}) \cdot \vct{g} - \sqrt{n} \big) \norm{\smash{\vct{t}_{J^c}}}
	+ \vct{u}(\vct{g}) \cdot \mtx{\Phi}_{J} \vct{t}_J \\
	&\geq \big(\max_{\vct{u} \in U} \vct{u} \cdot \vct{g} - \sqrt{n} \big) \norm{\smash{\vct{t}_{J^c}}}
	- \norm{ \vct{u}(\vct{g}) \mtx{\Phi}_{J} }.
\end{aligned}
\end{equation}
The second term on the right-hand side of~\eqref{eqn:maxU-bound1} satisfies
\begin{equation} \label{eqn:maxU-bound1-expect}
\Expect \norm{ \vct{u}(\vct{g}) \mtx{\Phi}_{J} }
	\leq \left( \Expect \normsq{ \vct{u}(\vct{g}) \mtx{\Phi}_{J} } \right)^{1/2}
	\leq \sqrt{k}.
\end{equation}
Indeed, $\vct{u}(\vct{g})$ is a random vector with $\norm{\smash{\vct{u}(\vct{g})}} \leq 1$
that is stochastically independent from $\mtx{\Phi}_{J}$,
and the $m \times k$ random matrix $\mtx{\Phi}_{J}$
has independent, standardized entries.

The second bound is even simpler.  Since $\norm{\vct{t}} = 1$,
\begin{equation} \label{eqn:maxU-bound2}
\max_{\vct{u} \in U} \left( \vct{u} \cdot \begin{bmatrix} \mtx{\Phi}_{J} & \vct{g} \end{bmatrix}
	\begin{bmatrix} \vct{t}_J \\ \norm{\smash{\vct{t}_{J^c}}} \end{bmatrix} - \sqrt{n} \norm{\smash{\vct{t}_{J^c}}} \right)
	\geq \min_{\norm{\vct{s}} = 1} \max_{\vct{u} \in U} \vct{u} \cdot
	\begin{bmatrix} \mtx{\Phi}_{J} & \vct{g} \end{bmatrix} \vct{s}
	- \sqrt{n} \norm{\smash{\vct{t}_{J^c}}}.
\end{equation}
In this expression, the variable $\vct{s} \in \R^{k+1}$.

Introducing~\eqref{eqn:maxU-bound1} and~\eqref{eqn:maxU-bound2} into~\eqref{eqn:car-width-Q2},
we arrive at
\begin{equation} \label{eqn:Eminmax-three-branches}
\begin{aligned}
Q_2 &\geq \Expect \min_{\norm{\vct{t}} = 1} \max \Big\{ 0, 
	\big(\max_{\vct{u} \in U} \vct{u} \cdot \vct{g} - \sqrt{n} \big) \norm{\smash{\vct{t}_{J^c}}}
	- \norm{ \vct{u}(\vct{g}) \mtx{\Phi}_{J} },
	\min_{\norm{\vct{s}} = 1} \max_{\vct{u} \in U} \vct{u} \cdot
	\begin{bmatrix} \mtx{\Phi}_{J} & \vct{g} \end{bmatrix} \vct{s}
	- \sqrt{n} \norm{\smash{\vct{t}_{J^c}}} \Big\} \\
	&\geq \Expect\min_{\alpha \in [0, 1]} \max \Big\{ 0, 
	\big( \max_{\vct{u} \in U} \vct{u} \cdot \vct{g} - \sqrt{n} \big) \alpha,
	\min_{\norm{\vct{s}} = 1} \max_{\vct{u} \in U} \vct{u} \cdot
	\begin{bmatrix} \mtx{\Phi}_{J} & \vct{g} \end{bmatrix} \vct{s}
	- \sqrt{n} \alpha \Big\}
	- \sqrt{k}.
\end{aligned}
\end{equation}
The zero branch in the maximum accounts for the positive-part operator
in~\eqref{eqn:car-width-Q2}.
The second line follows from~\eqref{eqn:maxU-bound1-expect}.
We have also introduced a new parameter $\alpha$ to stand in for $\norm{\smash{\vct{t}_{J^c}}}$.
\end{proof}

\subsubsection{Lemma~\ref{lem:car-width-id}: Probabilistic Bounds}

The last major step in Lemma~\ref{lem:car-width-id}
is to develop probabilistic bounds for the terms
that arise in Sublemma~\ref{slem:car-minmax-simpler}.

\begin{sublemma}[Lemma~\ref{lem:car-width-id}: Probabilistic Bounds] \label{slem:prob-bounds}
Adopt the notation and hypotheses of Lemma~\ref{lem:car-width-id}.  Then
\begin{multline*}
\Expect\min_{\alpha \in [0, 1]} \max \Big\{ 0, 
	\big( \max_{\vct{u} \in U} \vct{u} \cdot \vct{g} - \sqrt{n} \big) \alpha,
	\min_{\norm{\vct{s}} = 1} \max_{\vct{u} \in U} \vct{u} \cdot
	\begin{bmatrix} \mtx{\Phi}_{J} & \vct{g} \end{bmatrix} \vct{s}
	- \sqrt{n} \alpha \Big\} \\
	\geq \frac{1}{2} \min_{\alpha \in [0,1]} \max\left\{ (\coll{W}(U) - \sqrt{n} - 2) \alpha,
	\frac{\coll{W}(U) - 2}{2B\sqrt{\log m}}
	- \sqrt{n} \alpha \right\}
	- \cnst{C} B^2 \sqrt{k \log m}.
\end{multline*}
\end{sublemma}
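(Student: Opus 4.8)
The plan is to reduce the claimed inequality to two probabilistic statements about the random quantities
$$
W := \max_{\vct{u} \in U} \vct{u} \cdot \vct{g}
\qquad\text{and}\qquad
C := \min_{\norm{\vct{s}} = 1} \max_{\vct{u} \in U} \vct{u} \cdot \begin{bmatrix} \mtx{\Phi}_J & \vct{g} \end{bmatrix} \vct{s},
$$
and then to assemble them. Write $G := \min_{\alpha \in [0,1]} \max\{ 0,\ (W - \sqrt{n})\alpha,\ C - \sqrt{n}\alpha \}$ for the random quantity inside the expectation on the left-hand side. Because $\vct{0} \in U$ we always have $W \geq 0$ and $G \geq 0$; moreover $G$ is nondecreasing and $1$-Lipschitz in $W$ on $\{W \geq 0\}$ and nondecreasing in $C$. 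If $\coll{W}(U) \leq \sqrt{n} + 2$ the right-hand main term is nonpositive and the claim follows from $G \geq 0$, so I assume $\coll{W}(U) > \sqrt{n} + 2$.

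The easy input concerns $W$. Since $U \subset \mathsf{B}^m$, the map $\vct{g} \mapsto W$ is $1$-Lipschitz with mean $\coll{W}(U)$, so the Gaussian variance inequality (Fact~\ref{fact:gauss-variance}) gives $\Var W \leq 1$, whence $\Expect \abs{ W - \coll{W}(U) } \leq 1$, and Gaussian concentration (Fact~\ref{fact:gauss-concentration}) gives the exponential tail $\Prob{ W \leq \coll{W}(U) - t } \leq \econst^{-t^2/2}$, used only to control a low-probability event below. The hard input is a lower bound
$$
C \geq \frac{\big( \coll{W}(U) - 2 \big)_+}{2 B \sqrt{\log m}} - \cnst{C} B^2 \sqrt{k \log m}
\quad\text{with probability } 1 - \cnst{C} \econst^{-\cnst{c} k \log m}.
$$
To prove this I would first record, from Fact~\ref{fact:subgauss-mtx-tails} applied to the $m \times (k+1)$ matrix $\begin{bmatrix} \mtx{\Phi}_J & \vct{g} \end{bmatrix}$ (independent, standardized entries obeying~\eqref{eqn:subgauss-hyp}, with $k \ll m$), the crude bounds $\smin(\begin{bmatrix} \mtx{\Phi}_J & \vct{g} \end{bmatrix}) \geq \tfrac12 \sqrt{m}$ and $\norm{\begin{bmatrix} \mtx{\Phi}_J & \vct{g} \end{bmatrix}} \leq \cnst{C} B^2 \sqrt{m}$ with overwhelming probability; the second controls the Lipschitz constant of $\vct{s} \mapsto \max_{\vct{u}\in U} \vct{u} \cdot \begin{bmatrix} \mtx{\Phi}_J & \vct{g} \end{bmatrix}\vct{s}$, so it suffices to bound this map below on a net of $\mathsf{S}^{k}$ of cardinality $(3/\eps)^{k+1}$ with $\eps$ a negligible inverse power of $mn$. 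Fix a net direction $\vct{s} = (\vct{s}', s_{k+1})$. When $\abs{s_{k+1}}$ is bounded away from zero, the Gaussian column carries the load: take $\vct{u}$ to be the (unit, hence admissible) maximizer of $\pm\vct{u}\cdot\vct{g}$ according to the sign of $s_{k+1}$, and control $\vct{u} \cdot \mtx{\Phi}_J \vct{s}'$ by a Hoeffding bound (a sum of independent mean-zero summands bounded by $B$ with total variance at most $1$), giving $\max_{\vct{u}\in U}\vct{u}\cdot\begin{bmatrix}\mtx{\Phi}_J & \vct{g}\end{bmatrix}\vct{s} \geq \abs{s_{k+1}}\big(W - \cnst{C}B\sqrt{k}\big)$ on the good event. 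When $\abs{s_{k+1}}$ is small, so $\norm{\vct{s}'} \approx 1$, the quantity equals $\dist(\mtx{\Phi}_J \vct{s}' + s_{k+1}\vct{g}, K^\polar)$, and the content is a quantitative lower bound on the distance from the $k$-dimensional random range of $\mtx{\Phi}_J$ to the polar cone $K^\polar$; since we are in the regime $\coll{W}(\Omega) > \sqrt{n}$ (so $\delta(\Omega)$ dominates $k$) such a range avoids $K^\polar$, and comparing the bounded vector $\mtx{\Phi}_J\vct{s}'$ (sub-Gaussian with constant of order $B$) against the Gaussian width of the index set $U \subset \mathsf{B}^m$ incurs the $\sqrt{\log m}$ loss and gives a bound of order $\coll{W}(U)/(B\sqrt{\log m})$. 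A union bound over the net (source of the $\sqrt{k\log m}$ in the error) together with transfer from the net to all of $\mathsf{S}^k$ via the norm bound completes the hard input.

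To assemble, work on the event $\mathcal{E}$ on which the hard input holds. There, monotonicity of $G$ in $C$ gives $G \geq G_0(W) := \min_{\alpha\in[0,1]} \max\{ 0,\ (W-\sqrt{n})\alpha,\ C_0 - \sqrt{n}\alpha \}$ with $C_0 := \frac{(\coll{W}(U)-2)_+}{2B\sqrt{\log m}}$, absorbing the additive error of the hard input into the ambient error term. Since $G_0$ is $1$-Lipschitz and nondecreasing in $W$ on $\{W \geq 0\}$, and $G_0(W) \leq W + C_0$, Cauchy--Schwarz with the crude norm bounds yields $\Expect[G_0(W)\pInd{\mathcal{E}}] \geq \Expect G_0(W) - \cnst{C}\sqrt{m}\,\econst^{-\cnst{c}k\log m} \geq \big( G_0(\coll{W}(U)) - \Expect\abs{W - \coll{W}(U)} \big) - \cnst{C}\sqrt{m}\,\econst^{-\cnst{c}k\log m}$, the last term negligible. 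Finally $\Expect\abs{W - \coll{W}(U)} \leq 1$ and, dropping the (only-decreasing) zero branch and using $\coll{W}(U) - \sqrt{n} \geq \coll{W}(U) - \sqrt{n} - 2$,
$$
G_0(\coll{W}(U)) \geq \min_{\alpha\in[0,1]}\max\Big\{ (\coll{W}(U)-\sqrt{n}-2)\alpha,\ \frac{\coll{W}(U)-2}{2B\sqrt{\log m}} - \sqrt{n}\alpha \Big\} \geq 0 ;
$$
combining the displays (the residual $-1$ being absorbed by the stated $-2$ and the error term, and the factor $\tfrac12$ being slack) gives the sublemma with room to spare. The main obstacle is the hard input, and inside it the regime $s_{k+1} \approx 0$: there the Gaussian column is useless, and one genuinely needs a quantitative estimate of how far the range of the short bounded random matrix $\mtx{\Phi}_J$ stays from $K^\polar$ — this is the source of both the $B\sqrt{\log m}$ denominator and the $\cnst{C}B^2\sqrt{k\log m}$ error.
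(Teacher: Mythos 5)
Your proof is essentially correct but reaches the statement through a different assembly than the paper's, so it is worth comparing the two. The paper works with the constant-probability tactic $\Expect Z \geq L\,\Prob{Z > L}$: it establishes $\Prob{X \geq \coll{W}(U)-\sqrt{n}-2} \geq 3/4$ by Gaussian concentration and, via Sublemma~\ref{slem:matrix-width}, $\Prob{Y \geq \coll{W}(U)/(\cnst{C}B\sqrt{\log m}) - \cnst{C}B\sqrt{k\log m}} \geq 3/4$, takes a union bound to get simultaneous probability $\geq 1/2$, and then lower-bounds $\Expect Q_3$ by half the deterministic saddle value at these thresholds. This is where the advertised factor $\tfrac12$ actually comes from. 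You instead observe that the scalar minimax, as a function of $W$ alone (once $C$ is frozen at a deterministic lower bound $C_0$), is nondecreasing and $1$-Lipschitz, so $\Expect G_0(W) \geq G_0(\coll{W}(U)) - \Expect\abs{W - \coll{W}(U)} \geq G_0(\coll{W}(U)) - 1$ by the Gaussian variance inequality; the random bound for $C$ enters only through a high-probability event whose complement you kill with a crude second-moment Cauchy--Schwarz. This is cleaner, and it would in fact prove the statement \emph{without} the factor $\tfrac12$ — as you note, that factor is slack in your route, and you only need it to reproduce the stated form. The one genuine requirement your route imposes that the paper's does not is a high-probability (not merely constant-probability) version of the $C$-bound; as you observe, this is available because the loss of a constant factor in Sublemma~\ref{slem:matrix-width} comes only from a Markov-inequality bound on $\norm{[\mtx{\Phi}_J\ \vct{g}]}$, which Fact~\ref{fact:subgauss-mtx-tails} upgrades for free.

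The ``hard input'' you isolate is exactly Sublemma~\ref{slem:matrix-width}, which the paper proves by a net argument with a coupling $\vct{u}(\mtx{\Phi}_J) \in \argmax_{\vct{u}\in U}\vct{u}\cdot\mtx{\Phi}_J\vct{s}'$ that decouples $\mtx{\Phi}_J$ from $\vct{g}$, then invokes Sublemmas~\ref{slem:empirical-width}--\ref{slem:empirical-width-conc}. The paper does \emph{not} split cases on the magnitude of $s_{k+1}$; it treats the Gaussian column as a small perturbation uniformly, applying the empirical-width estimate to $\mtx{\Phi}_J \vct{s}'$ regardless of $\norm{\vct{s}'}$. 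Your case-split (large $\abs{s_{k+1}}$ handled by the Gaussian column, small $\abs{s_{k+1}}$ by the empirical width) is a legitimate and arguably more careful alternative, since the empirical-width bound of Sublemma~\ref{slem:empirical-width} is stated for a \emph{unit} $\vct{s}$ and degrades homogeneously as $\norm{\vct{s}'} \to 0$. There is, however, one imprecision in your Case~1: when $s_{k+1} < 0$ and you pick $\vct{u} \in \argmax_{\vct{u}\in U}(-\vct{u}\cdot\vct{g})$, the resulting contribution is $\abs{s_{k+1}}\,\max_{\vct{u}\in U}(-\vct{u}\cdot\vct{g})$, which is a different random variable from $\abs{s_{k+1}}\,W$; $U$ is a cone, so these can differ substantially on a fixed $\vct{g}$. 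Since the Gaussian $\vct{g}$ is symmetric, the two have the same distribution and both concentrate around $\coll{W}(U)$, so after a Gaussian concentration bound (and the union bound over the net you already invoke) the conclusion of Case~1 survives as stated — but the intermediate display ``$\geq \abs{s_{k+1}}(W - \cnst{C}B\sqrt{k})$'' should really read $\abs{s_{k+1}}\,(\coll{W}(U) - \cnst{C}B\sqrt{k\log m})$ on the good event. With that adjustment, your proposal is sound.
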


\begin{proof}
Introduce the notation
$$
Q_3 := \Expect\min_{\alpha \in [0, 1]} \max \Big\{ 0, 
	\big( \max_{\vct{u} \in U} \vct{u} \cdot \vct{g} - \sqrt{n} \big) \alpha,
	\min_{\norm{\vct{s}} = 1} \max_{\vct{u} \in U} \vct{u} \cdot
	\begin{bmatrix} \mtx{\Phi}_{J} & \vct{g} \end{bmatrix} \vct{s}
	- \sqrt{n} \alpha \Big\}.
$$
We assume that $\coll{W}(U) \geq 2 + \sqrt{n}$, which is permitted
because the final result would otherwise become vacuous.

The next stage in the argument is to introduce probabilistic bounds for the
branches of the maximum and use these to control the expectation.
It is convenient to abbreviate
$$
X := \max_{\vct{u} \in U} \vct{u} \cdot \vct{g} - \sqrt{n}
\quad\text{and}\quad
Y := \min_{\norm{\vct{s}} = 1} \max_{\vct{u} \in U} \vct{u} \cdot \begin{bmatrix} \mtx{\Phi}_{J} & \vct{g} \end{bmatrix} \vct{s}.
$$
Note that $\Expect X = \coll{W}(U) - \sqrt{n}$.  Since $\vct{g} \mapsto X(\vct{g})$ is 1-Lipschitz,
the Gaussian concentration inequality, Fact~\ref{fact:gauss-concentration}, implies that
$$
\Prob{ X \geq \coll{W}(U) - \sqrt{n} - 2 } \geq 3/4.
$$
On the other hand, Sublemma~\ref{slem:empirical-width} will demonstrate that
$$
\Prob{ Y \geq \frac{\coll{W}(U)}{\cnst{C} B \sqrt{\log m}} - \cnst{C}B \sqrt{k\log m} } \geq 3/4.
$$
Therefore, taking complements and a union bound,
\begin{equation} \label{eqn:prob-minXY}
\Prob{ X > \coll{W}(U) - \sqrt{n} - 2 \quad\text{and}\quad Y > \frac{\coll{W}(U)}{\cnst{C} B \sqrt{\log m}} - \cnst{C}B\sqrt{k\log m} }
	\geq 1/2.
\end{equation}
For each nonnegative random variable $Z$ and each number $L > 0$,
it holds that $\Expect Z \geq L \Prob{ Z > L }$.  Using the
estimate~\eqref{eqn:Eminmax-three-branches} and the probability bound~\eqref{eqn:prob-minXY},
we find that
\begin{equation} \label{eqn:Eminmax-pen}
\begin{aligned}
Q_3 &\geq \frac{1}{2} \min_{\alpha \in [0,1]} \max\left\{ 0, (\coll{W}(U) - \sqrt{n} - 2) \alpha,
	\left( \frac{\coll{W}(U)}{\cnst{C} B\sqrt{\log m}} - \cnst{C}B\sqrt{k \log m} \right)
	- \sqrt{n} \alpha \right\}
	- \sqrt{k} \\
	&\geq \frac{1}{2} \min_{\alpha \in [0,1]} \max\left\{ (\coll{W}(U) - \sqrt{n} - 2) \alpha,
	\frac{\coll{W}(U) - 2}{\cnst{C} B\sqrt{\log m}}
	- \sqrt{n} \alpha \right\}
	- \cnst{C} B^2 \sqrt{k \log m}.
\end{aligned}
\end{equation}
Once again, we have used shift-invariance of the maximum to combine the error terms.
For convenience, we have also dropped the zero branch
of the maximum and introduced the number two into the numerator in the second branch.
\end{proof}

\subsubsection{Lemma~\ref{lem:car-width-id}: Solving the Scalar Minimax Problem}

The final step in the proof of Lemma~\ref{lem:car-width-id}
is to solve the scalar minimax problem that emerges
in Sublemma~\ref{slem:prob-bounds}.

\begin{sublemma}[Lemma~\ref{lem:car-width-id}: Solving the Minimax Problem]
\label{slem:car-width-minmax}
Adopt the notation and hypotheses of Lemma~\ref{lem:car-width-id}.  Then
$$
\min_{\alpha \in [0,1]} \max\left\{ (\coll{W}(U) - \sqrt{n} - 2) \alpha,
	\frac{\coll{W}(U) - 2}{\cnst{C} B\sqrt{\log m}}
	- \sqrt{n} \alpha \right\}
	\geq \frac{\coll{W}(U) - \sqrt{n} - 2}{\cnst{C} B \sqrt{\log m}}.
$$
\end{sublemma}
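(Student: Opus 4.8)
The plan is to recognize the bracketed quantity as the minimum over $\alpha$ of a pointwise maximum of an increasing affine function of $\alpha$ and a decreasing affine function of $\alpha$, so that the minimizing $\alpha$ sits at the crossing point of the two lines. Abbreviate $a := \coll{W}(U) - \sqrt{n} - 2$ and $b := (\coll{W}(U) - 2)/(\cnst{C} B \sqrt{\log m})$, so the quantity to bound is $\min_{\alpha \in [0,1]} \max\{\, a\alpha,\ b - \sqrt{n}\,\alpha \,\}$. As in Sublemma~\ref{slem:prob-bounds}, I will assume at the outset that $\coll{W}(U) \geq 2 + \sqrt{n}$; otherwise the advertised lower bound is nonpositive and there is nothing to prove. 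Under this assumption $a \geq 0$, $b > 0$, and $a + \sqrt{n} = \coll{W}(U) - 2 > 0$, so the crossing point $\alpha_\star := b/(a + \sqrt{n})$ is well defined and positive.

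The function $\alpha \mapsto \max\{a\alpha,\ b - \sqrt{n}\,\alpha\}$ is convex and piecewise linear on $\R$ with unique minimizer at $\alpha_\star$, where the common value of the two branches is $a \alpha_\star = ab/(a + \sqrt{n})$. The key algebraic point is that $a + \sqrt{n}$ equals $\coll{W}(U) - 2$, which is exactly the numerator of $b$; therefore $ab/(a+\sqrt{n}) = a/(\cnst{C} B \sqrt{\log m})$, precisely the claimed bound. So if $\alpha_\star \leq 1$, the constrained minimum over $[0,1]$ coincides with the unconstrained minimum over $\R$, and the inequality follows with equality.

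It remains to dispose of the boundary case $\alpha_\star > 1$, which will be the only place needing a little care (and there the inequality is comfortably slack). When $\alpha_\star > 1$ we have $b > a + \sqrt{n}$, and on $[0,1]$ the convex function lies to the left of its minimizer, hence is decreasing there, so its minimum over $[0,1]$ is attained at $\alpha = 1$ with value $\max\{a,\ b - \sqrt{n}\}$. Since $b - \sqrt{n} > a \geq 0$ in this case, that value equals $b - \sqrt{n} > a$, which in turn exceeds $a/(\cnst{C} B \sqrt{\log m})$ because the universal constant may be taken so that $\cnst{C} B \sqrt{\log m} \geq 1$ (recall $B \geq 1$ and $m$ is large). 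In both cases the stated lower bound holds, which completes the argument and, after sequencing it with Sublemmas~\ref{slem:car-minmax-simple},~\ref{slem:car-minmax-simpler}, and~\ref{slem:prob-bounds}, finishes the proof of Lemma~\ref{lem:car-width-id}.
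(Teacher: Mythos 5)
Your argument is correct and is essentially the same as the paper's: identify the crossing point $\alpha_\star = b/(a+\sqrt{n})$ of the two affine branches, observe that the crossing value simplifies to $a/(\cnst{C}B\sqrt{\log m})$ because $a+\sqrt{n}$ is exactly the numerator of $b$, and conclude. The only difference is that you also treat the case $\alpha_\star > 1$ separately, whereas the paper simply asserts $\alpha_\star \in [0,1]$; your extra case is harmless but can be skipped entirely once you notice that $\alpha_\star = b/(a+\sqrt{n}) = 1/(\cnst{C}B\sqrt{\log m})$, which is automatically $\leq 1$ since $B \geq 1$ and the universal constant $\cnst{C}$ may be taken $\geq 1$.
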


\begin{proof}
The first branch of the maximum is increasing in $\alpha$ while the second branch is decreasing in $\alpha$,
so the minimum occurs when the two branches are equal, provided that this situation occurs when $\alpha \in [0,1]$.
Setting the branches equal, we identify the point $\alpha_{\star}$ where the saddle value is achieved.
$$
\alpha_{\star} := \frac{b}{a+c}
\quad\text{where}\quad
a := \coll{W}(U) - \sqrt{n} - 2 \quad\text{and}\quad 
b := \frac{\coll{W}(U) - 2}{\cnst{C}B\sqrt{\log m}} \quad\text{and}\quad
c := \sqrt{n}.
$$
We quickly verify that $\alpha_\star \in [0,1]$, so the minimax takes the value
$$
\frac{ab}{a+c}
	= \frac{ \coll{W}(U) - \sqrt{n} - 2}{\coll{W}(U) - 2} \times
	\frac{\coll{W}(U) - 2}{2B\sqrt{\log m}}
	= \frac{\coll{W}(U) - \sqrt{n} - 2}{\cnst{C}B \sqrt{\log m}}.
$$
This is the required estimate.
\end{proof}

\subsubsection{Sublemma~\ref{slem:prob-bounds}: Probabilistic Lower Bound for Bilinear Minimax}

In this section, we explain how to obtain a lower bound for the
minimax problem in~\eqref{eqn:maxU-bound2} in terms of the
Gaussian width of the set $U$.

\begin{sublemma}[Sublemma~\ref{slem:prob-bounds}: Probability Bound for Bilinear Minimax] \label{slem:matrix-width}
Assume that $k < m$, and let $\mtx{X}$ be an $m \times k$ random matrix
that satisfies Model~\ref{mod:bdd-mtx}
with bound $B$.  Let $\vct{g} \in \R^m$ be standard normal.
Let $U$ be a subset of the unit ball in $\R^m$.  Then
$$
\Prob{ \min_{\norm{\vct{s}} = 1} \max_{\vct{u} \in U} \vct{u} \cdot \begin{bmatrix} \mtx{X} & \vct{g} \end{bmatrix} \vct{s}
	> \frac{\coll{W}(U)}{\cnst{C} B \sqrt{\log m}} - \cnst{C}B \sqrt{k\log m}  } \geq 3/4.
$$
\end{sublemma}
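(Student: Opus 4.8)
The plan is to bound the bilinear minimax from below by first exposing a Gaussian width, then absorbing the contribution of the discrete matrix $\mtx{X}$ as an error. Observe that
$$
\min_{\norm{\vct{s}} = 1} \max_{\vct{u} \in U} \vct{u} \cdot \begin{bmatrix} \mtx{X} & \vct{g} \end{bmatrix} \vct{s}
$$
is $1$-Lipschitz as a function of the pair $(\mtx{X}, \vct{g})$, since $U$ and the unit sphere lie in unit balls. So the first step is to reduce the probability bound to a lower bound on the \emph{expectation} of this quantity: by the Gaussian concentration inequality, Fact~\ref{fact:gauss-concentration} (applied to the Gaussian block $\vct{g}$, conditionally on $\mtx{X}$) together with the concentration inequality for bounded random matrices---derived via the entropy method, as in Proposition~\ref{prop:rsv-concentration}---the minimax concentrates around its mean on a scale $O(B\sqrt{m})$. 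A union over the two sources of randomness gives that the minimax exceeds $\Expect[\,\cdot\,] - \cnst{C} B \sqrt{m}$ with probability at least $3/4$. (One must be slightly careful: the natural concentration scale here is $B\sqrt{m}$, not $B\sqrt{k}$; this is why the stated error term carries a $\sqrt{k\log m}$ rather than a cleaner $\sqrt{k}$---the $\log m$ slack will be generated in the next step.)

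The second step estimates $\Expect \min_{\norm{\vct{s}}=1} \max_{\vct{u}\in U} \vct{u} \cdot \begin{bmatrix}\mtx{X} & \vct{g}\end{bmatrix}\vct{s}$ from below. The idea is to drop $\mtx{X}$ entirely by choosing $\vct{s}$ supported on the last coordinate: taking $\vct{s} = \pm \mathbf{e}_{k+1}$ does not work directly because the minimum over $\vct{s}$ is adversarial, so instead I would lower bound the minimax by coupling $\vct{u}$ to $\vct{g}$---set $\vct{u}(\vct{g}) \in \argmax_{\vct{u}\in U}\vct{u}\cdot\vct{g}$---and then write
$$
\max_{\vct{u}\in U} \vct{u} \cdot \begin{bmatrix}\mtx{X} & \vct{g}\end{bmatrix}\vct{s}
	\geq (\vct{u}(\vct{g})\cdot\vct{g})\, s_{k+1} + \vct{u}(\vct{g})\cdot \mtx{X}\vct{s}_{[1:k]}
	\geq (\coll{W}\text{-like term})\, s_{k+1} - \norm{\vct{u}(\vct{g})\mtx{X}}\,\norm{\vct{s}_{[1:k]}}.
$$
Since $\norm{\vct{u}(\vct{g})\mtx{X}} \leq (\Expect\normsq{\vct{u}(\vct{g})\mtx{X}})^{1/2} \leq \sqrt{k}$ in expectation (independence of $\vct{u}(\vct{g})$ and $\mtx{X}$, plus standardized entries), the worst $\vct{s}$ concentrates its mass on the first $k$ coordinates only if $\sqrt{k}$ beats $\max_{\vct{u}}\vct{u}\cdot\vct{g}$; a short scalar argument balancing the two possibilities shows the minimax is at least $c\,\coll{W}(U)/(B\sqrt{\log m})$ up to the $\sqrt{k\log m}$ error. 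The $\log m$ factor enters when one replaces the random $\max_{\vct{u}}\vct{u}\cdot\vct{g}$ by $\coll{W}(U)$: for a bounded set $U$ in the unit ball of $\R^m$, the supremum $\max_{\vct{u}\in U}\vct{u}\cdot\vct{g}$ need not be comparable to its mean $\coll{W}(U)$ pointwise, but one can truncate $\vct{g}$ at level $O(\sqrt{\log m})$ and use $\max_{\vct{u}}\vct{u}\cdot\vct{g} \geq \coll{W}(U)/(\cnst{C}\sqrt{\log m})$ on a large-probability event via a standard Gaussian-truncation / contraction argument. Assembling the two steps and consolidating error terms yields the claimed bound.

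The main obstacle will be the second step: controlling $\min_{\norm{\vct{s}}=1}$ against the structure of $U$ without losing more than a $\sqrt{\log m}$ factor. The difficulty is that the inner $\max_{\vct{u}\in U}$ only sees the projection of $\begin{bmatrix}\mtx{X}&\vct{g}\end{bmatrix}\vct{s}$ onto $\cone(U)$, and the adversarial $\vct{s}$ will try to steer $\begin{bmatrix}\mtx{X}&\vct{g}\end{bmatrix}\vct{s}$ into a direction where $U$ is small; the coupling $\vct{u}(\vct{g})$ prevents this only for the $\vct{g}$-component, so the matrix-norm bound $\norm{\vct{u}(\vct{g})\mtx{X}}\leq\sqrt{k}$ must be good enough to neutralize the first $k$ coordinates. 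This is why the hypothesis $k < m$ and the eventual choice $k \approx (m+n)^{8/9}$ are essential---they guarantee $\sqrt{k}$ is genuinely smaller than the typical scale $\coll{W}(U)/\sqrt{\log m}$ of the problem whenever $\coll{W}(U)$ is of order $\sqrt{m}$. Everything else (the two concentration inequalities, the moment bound for $\norm{\vct{u}(\vct{g})\mtx{X}}$, the scalar optimization) is routine given the tools already assembled in this part of the paper.
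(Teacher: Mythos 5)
Your proposal contains two genuine gaps that prevent it from working, and the key mechanism of the paper's proof is absent.

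First, the concentration-to-expectation reduction in your first step does not close. You acknowledge that the natural concentration scale for the full minimax, viewed as a function of the bounded matrix $\mtx{X}$, is $O(B\sqrt{m})$. But the target bound is at most $\coll{W}(U)/(\cnst{C}B\sqrt{\log m}) \leq \sqrt{m}/(\cnst{C}B\sqrt{\log m})$, which is smaller than the concentration error. So the ``exceeds $\Expect[\,\cdot\,] - \cnst{C}B\sqrt{m}$ with probability $3/4$'' conclusion is vacuous for the purposes of this sublemma; there is no way for a hand-waved ``$\log m$ slack will be generated in the next step'' to rescue a deficit at the level of $\sqrt{m}$ versus $\sqrt{k\log m}$. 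The paper avoids this entirely by never concentrating the full minimax: it takes an $\eps$-net $\coll{N}$ of the unit sphere in $\R^{k+1}$ (with $\eps = m^{-1}$, so $\log(\#\coll{N}) \leq (k+1)\log(3m)$), proves a fixed-$\vct{s}$ probability estimate with deviation scale $O(B)$ (via Sublemma~\ref{slem:empirical-width-conc}, a Talagrand-type bound for a $1$-Lipschitz-in-each-entry functional), and then takes a union bound; the $\sqrt{k\log m}$ error is exactly the price of the net, not of any global concentration.

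Second, your coupling $\vct{u}(\vct{g}) \in \argmax_{\vct{u}\in U}\vct{u}\cdot\vct{g}$ cannot neutralize the adversarial $\vct{s}$. Your lower bound is $(\vct{u}(\vct{g})\cdot\vct{g})\, s_{k+1} - \|\vct{u}(\vct{g})\mtx{X}\|\,\|\vct{s}_{[1:k]}\|$; the adversary simply sets $s_{k+1}=0$ and $\|\vct{s}_{[1:k]}\|=1$, driving the bound to $-\sqrt{k}$. The ``short scalar argument balancing the two possibilities'' you appeal to is the content of a \emph{different} sublemma (Sublemma~\ref{slem:car-minmax-simpler}), which lives one level up in the proof of Proposition~\ref{prop:car-width} and already isolates the present quantity $Y := \min_{\|\vct{s}\|=1}\max_{\vct{u}\in U}\vct{u}\cdot[\mtx{X}\;\vct{g}]\vct{s}$ as the branch that must be bounded \emph{without} any help from $s_{k+1}$. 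Inside Sublemma~\ref{slem:matrix-width} itself, the correct coupling is $\vct{u}(\mtx{X})\in\argmax_{\vct{u}\in U}\vct{u}\cdot\mtx{X}\vct{s}_1$, which makes the main term the \emph{empirical width} $\max_{\vct{u}\in U}\vct{u}\cdot\mtx{X}\vct{s}_1$ and relegates $|\vct{u}(\mtx{X})\cdot\vct{g}|$ to an error handled by ordinary Gaussian concentration. The $\sqrt{\log m}$ loss then comes from comparing the empirical width to the Gaussian width $\coll{W}(U)$ via Khintchine plus the Rademacher-to-Gaussian contraction principle (Sublemma~\ref{slem:empirical-width})---not from any truncation of $\vct{g}$, which incidentally is unnecessary: $\max_{\vct{u}\in U}\vct{u}\cdot\vct{g}$ is already $1$-Lipschitz in $\vct{g}$ and concentrates around $\coll{W}(U)$ with no logarithmic loss. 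So the $\log m$ enters your proposal for the wrong reason, and the step where you actually need it---bounding the $\mtx{X}$-contribution uniformly over $\vct{s}$---is the one you have not addressed.
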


\begin{proof}
Fix $\eps = m^{-1}$.  Let $\coll{N}$ be an $\eps$-net for the unit sphere in $\R^{k+1}$.
The cardinality of the net satisfies $\log( \coll{N} ) \leq (k+1) \log(3m)$ by the
standard volumetric argument~\cite[Lem.~5.2]{Ver12:Introduction-Nonasymptotic}

We can estimate the quantity of interest below by discretizing the parameter $\vct{s}$.
Since $\coll{N}$ and $U$ are subsets of the unit ball, we have the bound
\begin{equation} \label{eqn:matrix-width-pf1}
\min_{\norm{\vct{s}} = 1} \max_{\vct{u} \in U} \vct{u} \cdot \begin{bmatrix} \mtx{X} & \vct{g} \end{bmatrix} \vct{s}
	\geq \min_{\vct{s} \in \coll{N}} \max_{\vct{u}\in U} \vct{u} \cdot \begin{bmatrix} \mtx{X} & \vct{g} \end{bmatrix} \vct{s}
	- \norm{ \begin{bmatrix} \mtx{X} & \vct{g} \end{bmatrix} } \eps
\end{equation}
We will establish a probabilistic lower bound for the right-hand side
of~\eqref{eqn:matrix-width-pf1}.

First, we develop a probability bound for the second term on
the right-hand side of~\eqref{eqn:matrix-width-pf1}.
A simple spectral norm estimate suffices.
The $m \times k$ random matrix $\mtx{X}$ has standardized entries
and $\vct{g} \in \R^m$ is standard normal, so
$$
\Expect \norm{ \begin{bmatrix} \mtx{X} & \vct{g} \end{bmatrix} }
	\leq \Expect \fnorm{ \begin{bmatrix} \mtx{X} & \vct{g} \end{bmatrix} } \leq \sqrt{(k+1)m}.
$$
As usual, $\fnorm{\cdot}$ denotes the Frobenius norm.
Markov's inequality now implies that
$$
\Prob{ \norm{ \begin{bmatrix} \mtx{X} & \vct{g} \end{bmatrix} } \geq 6 \sqrt{(k+1) m} } \geq 5/6.
$$
It follows that
\begin{equation} \label{eqn:matrix-width-pf1.5}
\Prob{ \norm{ \begin{bmatrix} \mtx{X} & \vct{g} \end{bmatrix} } \eps \geq 6 } \geq 5/6.
\end{equation}
We have used the facts that $\eps = m^{-1}$ and $k < m$.

Let us turn to the second quantity on the right-hand side of~\eqref{eqn:matrix-width-pf1}.
We develop a strong probability bound for each fixed point $\vct{s} \in \coll{N}$, and
we extend it to the entire net using the union bound.  For technical reasons,
it is easier to treat the random matrix $\mtx{X}$ and the random vector $\vct{g}$ separately.

Fix a point $\vct{s} \in \coll{N}$, and decompose it as
$\vct{s} = \begin{bmatrix} \vct{s}_1 & s_{k+1} \end{bmatrix}$
where $\vct{s}_1 \in \R^k$.  Construct a random vector
$\vct{u} \in U$ that satisfies
$$
\vct{u}(\mtx{X}) \in \argmax_{\vct{u} \in U} \vct{u} \cdot \mtx{X}\vct{s}_1.
$$
We may calculate that
\begin{equation} \label{eqn:matrix-width-pf2}
\begin{aligned}
\max_{\vct{u} \in U} \vct{u} \cdot \begin{bmatrix} \mtx{X} & \vct{g} \end{bmatrix} \vct{s}
	&\geq \vct{u}(\mtx{X}) \cdot \begin{bmatrix} \mtx{X} & \vct{g} \end{bmatrix}
	\begin{bmatrix} \vct{s}_1 \\ s_{k+1} \end{bmatrix} \\
	&= \vct{u}(\mtx{X}) \cdot \mtx{X} \vct{s}_1 + (\vct{u}(\mtx{X}) \cdot \vct{g}) s_{k+1} \\
	&\geq \max_{\vct{u} \in S} \vct{u} \cdot \mtx{X} \vct{s}_1 - \abs{ \vct{u}(\mtx{X}) \cdot \vct{g} }.
\end{aligned}
\end{equation}
The last estimate relies on the fact that $\abs{s_{k+1}} \leq 1$ because $\norm{\vct{s}} = 1$.

The second term on the right-hand side of~\eqref{eqn:matrix-width-pf2} is easy to handle
using the Gaussian concentration inequality, Fact~\ref{fact:gauss-concentration}:
\begin{equation} \label{eqn:matrix-width-pf3}
\Prob{ \abs{\vct{u}(\mtx{X}) \cdot \vct{g}} \geq \zeta } \leq 2 \econst^{-\zeta^2/2}
\end{equation}
Indeed, $\vct{g} \mapsto \vct{u}(\mtx{X}) \cdot \vct{g}$ is a 1-Lipschitz function
with mean zero because the random vector $\vct{u}(\mtx{X})$ is stochastically independent
from $\vct{g}$ and has norm bounded by one.

We can interpret the first term on the right-hand side of~\eqref{eqn:matrix-width-pf2}
as an ``empirical width.''  It takes \joelprev{some work} to compare this
quantity with the Gaussian width.  Sublemma~\ref{slem:empirical-width} contains
a bound for the expectation, and Sublemma~\ref{slem:empirical-width-conc}
contains a tail bound.  Together, they deliver the probability inequality
\begin{equation} \label{eqn:matrix-width-pf4}
\Prob{ \max_{\vct{u}\in U} \vct{u} \cdot \mtx{X}\vct{s}_1
	\leq \frac{\coll{W}(U)}{\cnst{C} B \sqrt{\log m}} - \zeta }
	\leq \econst^{-\zeta^2/(8B^2)}
	\quad\text{for each $\vct{s} \in \coll{N}$.}
\end{equation}
In other words, the empirical width of $U$ is comparable with the Gaussian width,
modulo a logarithmic factor.

Introduce the two probability bounds~\eqref{eqn:matrix-width-pf3} and~\eqref{eqn:matrix-width-pf4}
into the deterministic estimate~\eqref{eqn:matrix-width-pf2}.  We arrive at
$$
\Prob{ \max_{\vct{u} \in U} \vct{u} \cdot \begin{bmatrix} \mtx{X} & \vct{g} \end{bmatrix} \vct{s} 
	\leq \frac{\coll{W}(U)}{\cnst{C} B \sqrt{\log m}} - 2 \zeta }
	\leq 3 \econst^{-\zeta^2/(8B^2)}
	\quad\text{for each $\vct{s} \in \coll{N}$.}
$$
Finally, we take a union bound over $\vct{s} \in \coll{N}$ to obtain an estimate that is uniform over the net.
Recall that $\log(\#\coll{N}) \leq (k+1) \log(3m)$ and select $\zeta = 4 B \sqrt{(k+1)\log(3m)}$
to reach
\begin{multline} \label{eqn:matrix-width-pf5}
\Prob{ \min_{\vct{s} \in \coll{N}} \max_{\vct{u} \in U} \vct{u} \cdot \mtx{X}\vct{s}_1
	\leq \frac{\coll{W}(U)}{\cnst{C} B \sqrt{\log m}} - \cnst{C}B \sqrt{(k+1) \log(3m)} } \\
	\leq 3 (\# \coll{N}) \econst^{-\zeta^2/(8B^2)}
	\leq 3 \econst^{-(k+1) \log (3m)}
	= 3 (3m)^{-(k+1)}
	\leq 1/18.
\end{multline}
The numerical estimate holds because $1 \leq k < m$.

The two probability bounds~\eqref{eqn:matrix-width-pf1.5} and~\eqref{eqn:matrix-width-pf5}
hold simultaneously with probability at least $3/4$.  Therefore, we can substitute these
results into~\eqref{eqn:matrix-width-pf1} and adjust constants to obtain the stated bound.
\end{proof}

\subsubsection{Sublemma~\ref{slem:prob-bounds}: Lower Estimate for the Empirical Width}

The next sublemma demonstrates that the Gaussian width
of a set is not more than a logarithmic factor larger
than the empirical width of the set as computed with
bounded random variables.  This is the only step in
the argument for bounded random matrices that requires
the symmetry assumption.

\begin{sublemma}[Sublemma~\ref{slem:prob-bounds}: Empirical Width Bound] \label{slem:empirical-width}
Adopt the notation and hypotheses of Sublemma~\ref{slem:matrix-width},
and let $\vct{s}$ be a fixed unit-norm vector.
Then
$$
\Expect \max_{\vct{u} \in U} \vct{u} \cdot \mtx{X} \vct{s}
	\geq \frac{\coll{W}(U)}{\cnst{C} B \sqrt{\log m}}.
$$
\end{sublemma}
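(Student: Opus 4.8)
The plan is to sandwich the ``empirical width'' $\Expect\sup_{\vct u\in U}\vct u\cdot\mtx X\vct s$ between the Gaussian width $\coll{W}(U)$ and an intermediate quantity, the Rademacher complexity $\mathrm{Rad}(U):=\Expect_{\vct\varepsilon}\sup_{\vct u\in U}\vct u\cdot\vct\varepsilon$, where $\vct\varepsilon\in\R^m$ has i.i.d.\ Rademacher coordinates. Concretely, I will establish the two links $\mathrm{Rad}(U)\leq 2B\cdot\Expect\sup_{\vct u\in U}\vct u\cdot\mtx X\vct s$ (which uses the symmetry hypothesis of Model~\ref{mod:bdd-mtx}) and $\coll{W}(U)\leq\cnst{C}\sqrt{\log m}\cdot\mathrm{Rad}(U)$ (a standard comparison of Gaussian and Rademacher complexities), and then compose them. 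The factor $B$ to the first power forces the use of a fourth-moment estimate rather than a cruder bound, and the $\sqrt{\log m}$ comes entirely from the Gaussian-to-Rademacher step.

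\textbf{Step 1: from $\mtx X\vct s$ down to Rademacher.} Set $\vct w:=\mtx X\vct s$; since each entry of $\mtx X$ is symmetric, each coordinate $w_i=\sum_j X_{ij}s_j$ is symmetric, so $\vct w$ has the same law as $\vct\varepsilon\odot|\vct w|$ with $\vct\varepsilon$ Rademacher and independent of $|\vct w|$. Hence $\Expect\sup_{\vct u\in U}\vct u\cdot\vct w=\Expect_{\vct\varepsilon}\Expect_{|\vct w|}\sup_{\vct u\in U}\sum_i u_i\varepsilon_i|w_i|$, and applying Jensen's inequality ($\Expect\sup\geq\sup\Expect$) to the inner expectation over $|\vct w|$ gives $\Expect\sup_{\vct u\in U}\vct u\cdot\vct w\geq\Expect_{\vct\varepsilon}\sup_{\vct u\in U}\sum_i a_i u_i\varepsilon_i$ with $a_i:=\Expect|w_i|$. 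A direct moment computation yields $\Expect w_i^4=\sum_j s_j^4\Expect X_{ij}^4+3\sum_{j\neq\ell}s_j^2s_\ell^2\leq B^2+3\leq 4B^2$, so Hölder's inequality applied to $1=\Expect w_i^2\leq(\Expect|w_i|)^{2/3}(\Expect w_i^4)^{1/3}$ gives $\tfrac1{2B}\leq a_i\leq 1$. Finally, the (elementary, linear) contraction principle for Rademacher sums, applied to the set $\{(a_iu_i)_i:\vct u\in U\}$ with the contractions $t\mapsto(a/a_i)t$ where $a:=\min_i a_i$, shows $\Expect_{\vct\varepsilon}\sup_{\vct u\in U}\sum_i a_i u_i\varepsilon_i\geq a\cdot\mathrm{Rad}(U)\geq\tfrac1{2B}\mathrm{Rad}(U)$.

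\textbf{Step 2: from Rademacher down to Gaussian, and conclusion.} Write a standard Gaussian vector $\vct g$ as $\bar{\vct g}+\tilde{\vct g}$, where $\bar g_i:=g_i\,\mathbb{1}\{|g_i|\leq t\}$ for a truncation level $t:=\cnst{c}\sqrt{\log m}$. The truncated part has coordinates bounded by $t$, so the linear contraction principle gives $\Expect\sup_{\vct u\in U}\vct u\cdot\bar{\vct g}\leq t\cdot\mathrm{Rad}(U)$, while the tail is negligible: $\Expect\sup_{\vct u\in U}\vct u\cdot\tilde{\vct g}\leq\Expect\norm{\tilde{\vct g}}\leq\bigl(m\,\Expect[g_1^2\mathbb{1}\{|g_1|>t\}]\bigr)^{1/2}$, which is $o(1)$ for a suitable constant $\cnst{c}$. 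Thus $\coll{W}(U)\leq t\cdot\mathrm{Rad}(U)+o(1)$; since in the applications $U=K\cap\mathsf{B}^m$ contains a unit vector whenever $K\neq\{\vct 0\}$, one has $\coll{W}(U)\geq\tfrac1{\sqrt{2\pi}}$ outside the trivial case $U=\{\vct 0\}$, so the $o(1)$ term is absorbed and $\coll{W}(U)\leq\cnst{C}\sqrt{\log m}\cdot\mathrm{Rad}(U)$. Combining with Step~1 yields $\Expect\sup_{\vct u\in U}\vct u\cdot\mtx X\vct s\geq\tfrac1{2B}\mathrm{Rad}(U)\geq\tfrac{\coll{W}(U)}{\cnst{C}B\sqrt{\log m}}$, as claimed. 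The main obstacle is the Gaussian–Rademacher comparison of Step~2: one must calibrate the truncation level so that the tail is swallowed without degrading the $\sqrt{\log m}$ factor, and handle the normalization/edge case; keeping the $B$-dependence linear in Step~1 (via the fourth-moment bound on $w_i$) is the other point requiring care.
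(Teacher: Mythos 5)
Your proof is correct, and it reaches the same conclusion by a genuinely different pair of arguments; here is how it compares to the paper's proof.

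\textbf{Step 1.} The paper lower-bounds $\Expect\abs{w_i}$ by symmetrizing $w_i = \sum_j X_{ij}s_j$ with an independent Rademacher sequence, applying the sharp Khintchine inequality $\Expect_\eta \abs{\sum_j \eta_j X_{ij} s_j} \geq \tfrac{1}{\sqrt 2}\bigl(\sum_j X_{ij}^2 s_j^2\bigr)^{1/2}$, then Jensen (with weights $s_j^2$) to pull the square root inside, and finally $1 = \Expect X_{ij}^2 \leq B\,\Expect\abs{X_{ij}}$; the result is $\Expect\abs{w_i}\geq 1/(B\sqrt2)$. Your route via the fourth moment ($\Expect w_i^4 \leq 4B^2$) and H\"older ($1 = \Expect w_i^2 \leq (\Expect\abs{w_i})^{2/3}(\Expect w_i^4)^{1/3}$) gives $\Expect\abs{w_i}\geq 1/(2B)$, which is the same order and is arguably the more self-contained calculation. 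Both are fine. Your passage from $\sum_i a_i u_i \varepsilon_i$ to $a\cdot\mathrm{Rad}(U)$ via the scalar contractions $t\mapsto (a/a_i)t$ is the same idea the paper invokes via~\cite[Lem.~4.5]{LT91:Probability-Banach}.

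\textbf{Step 2.} Here the routes diverge more substantially. The paper cites the standard Gaussian-to-Rademacher comparison directly, namely the argument that writes $g_i = \varepsilon_i\abs{g_i}$, conditions on $\abs{\vct g}$, applies the contraction principle with the maps $t\mapsto(\abs{g_i}/\max_j\abs{g_j})\,t$, and then uses $\Expect\max_j\abs{g_j}\leq\cnst{C}\sqrt{\log m}$; this yields the purely multiplicative bound $\coll{W}(U)\leq\cnst{C}\sqrt{\log m}\cdot\mathrm{Rad}(U)$ with no additive error, for any $U\subset\mathsf{B}^m$ with $\vct 0\in U$. Your truncation argument reproves the same comparison, but it produces the weaker form $\coll{W}(U)\leq t\cdot\mathrm{Rad}(U) + o(1)$, and you must absorb the additive $o(1)$ by invoking the lower bound $\coll{W}(U)\geq 1/\sqrt{2\pi}$. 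That lower bound is not a consequence of the sublemma's stated hypothesis that $U$ is merely a subset of the unit ball; it requires the additional structural fact, valid in the application (where $U = K\cap\mathsf{B}^m$ for a nonzero closed convex cone $K$), that $U$ contains a segment $\{\alpha\vct u_0 : 0\leq\alpha\leq1\}$ for some unit $\vct u_0$. (Your phrase ``contains a unit vector'' is slightly off: $\coll{W}(\{\vct u_0\}) = 0$, and it is the segment from the origin that yields $\Expect(\vct u_0\cdot\vct g)_+ = 1/\sqrt{2\pi}$.) So your Step 2 is correct in context but is both heavier and less general than the argument the paper uses; swapping in the conditional-contraction-plus-$\Expect\max\abs{g_j}$ estimate would remove the additive error and the side hypothesis at once.

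(Incidentally, the paper's own proof has a minor typo: the last display writes $\sqrt{\log n}$ and $\sum_{i=1}^n$, but since $U\subset\R^m$ and the sum is over $i$, those should be $\sqrt{\log m}$ and $\sum_{i=1}^m$, as in the statement of the sublemma.)
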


\begin{proof}[Proof of Sublemma~\ref{slem:empirical-width}]
Define the random vector $\vct{v} := (V_1, \dots, V_n) := \mtx{X}\vct{s}$.
Our goal is to compare the empirical width of the set $U$ computed using the vector
$\vct{v}$ with the Gaussian width of the set.

First, we develop a lower bound on the first moment of the entries of $\vct{v}$.
Fix an index $i$.  Since the entries of $\mtx{X}$ are independent and symmetric
$$
\Expect \abs{V_i} = \Expect \abs{ \sum_{j=1}^k X_{ij} s_j }
	= \Expect \abs{ \sum_{j=1}^k \eta_{j} X_{ij} s_j }
$$
where $\{\eta_j\}$ is an independent family of Rademacher random variables, independent from $\mtx{X}$.
The Khintchine inequality~\cite{LO94:Best-Constant} allows us to compare the first moment
with the second moment:
$$
\Expect \abs{V_i} \geq \frac{1}{\sqrt{2}} \Expect_{\mtx{X}} \left( \Expect_{\vct{\eta}}
	\abssq{\sum_{j=1}^k \eta_j X_{ij} s_j} \right)^{1/2}
	= \frac{1}{\sqrt{2}} \Expect \left( \sum_{j=1}^k \abssq{\smash{X_{ij}}} \abssq{\smash{s_j}} \right)^{1/2}.
$$
Since $\vct{s}$ has unit norm, we can regard the sum as a weighted average, and we can invoke Jensen's inequality
to draw the average out of the square root:
$$
\Expect \abs{V_i} \geq \frac{1}{\sqrt{2}} \sum_{j=1}^k \big(\Expect \abs{\smash{X_{ij}}}\big) \abssq{\smash{s_j}}.
$$
Last, note that $1 = \Expect \abssq{\smash{X_{ij}}} \leq B \Expect \abs{\smash{X_{ij}}}$
because the entries of $\mtx{X}$ are standardized and bounded by $B$.  Thus,
$$
\Expect \abs{V_i} \geq \frac{1}{\sqrt{2}} \sum_{j=1}^k \frac{1}{B} \abssq{\smash{s_j}}
	= \frac{1}{B\sqrt{2}}.
$$

Let us bound the width-like functional below.
Since $\vct{v}$ has independent, symmetric coordinates,
$$
\Expect \max_{\vct{u} \in U} \vct{u} \cdot \mtx{X}\vct{s}
	= \Expect \max_{\vct{u} \in U} \vct{u} \cdot \vct{v}
	= \Expect \max_{\vct{u} \in U} \sum_{i=1}^n  V_i u_i
	= \Expect \max_{\vct{u} \in U} \sum_{i=1}^n \eta_i\abs{V_i} u_i 
$$
where, again, $\{\eta_i\}$ is an independent family of Rademacher random variables,
independent from $\vct{u}$.  Using a corollary of the contraction principle~\cite[Lem.~4.5]{LT91:Probability-Banach},
$$
\Expect \max_{\vct{u} \in U} \vct{u} \cdot \mtx{X}\vct{s}
	\geq \min\nolimits_i (\Expect \abs{V_i}) \, \Expect_{\vct{\eta}} \max_{\vct{u} \in U} \sum_{i=1}^n \eta_i u_i
	\geq \frac{1}{B\sqrt{2}} \Expect \max_{\vct{u} \in U} \sum_{i=1}^n \eta_i u_i.
$$
Applying the contraction principle again~\cite[Eqn.~(4.9)]{LT91:Probability-Banach},
we can randomize the sum with independent Gaussian variables:
$$
\Expect \max_{\vct{u} \in U} \vct{u} \cdot \mtx{X}\vct{s}
	\geq \frac{1}{2B \sqrt{\log n}} \Expect \max_{\vct{u}\in U} \sum_{i=1}^n g_i u_i 
$$
Here, $\vct{g} := (g_1, \dots, g_n)$ is a standard normal vector.
Identify the Gaussian width to complete the proof.
\end{proof}

\subsubsection{Sublemma~\ref{slem:prob-bounds}: \joel{Lower Tail of the Empirical Width}}

Last, we present a concentration inequality for the empirical width.

\begin{sublemma}[Sublemma~\ref{slem:prob-bounds}: \joel{Lower Tail of Empirical Width}] \label{slem:empirical-width-conc}
Adopt the notation and hypotheses of Sublemma~\ref{slem:matrix-width}.
Let $\vct{s}$ be a fixed unit-norm vector.
Then
$$
\Prob{ \max_{\vct{u} \in U} \vct{u} \cdot \mtx{X}\vct{s}
	\leq \Expect \max_{\vct{u} \in U} \vct{u} \cdot \mtx{X}\vct{s} - \zeta }
	\leq \econst^{-\zeta^2/8B^2}.
$$
\end{sublemma}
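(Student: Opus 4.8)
The plan is to recognize the empirical width
\[
	f(\mtx{X}) := \max_{\vct{u} \in U} \vct{u} \cdot \mtx{X} \vct{s}
\]
as a convex, $1$-Lipschitz function of the $mk$ independent, bounded entries of the $m \times k$ random matrix $\mtx{X}$, and then to invoke the standard concentration inequality for convex Lipschitz functions of bounded independent random variables.

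First I would verify the two structural properties.  Convexity is immediate: for each fixed $\vct{u}$ the map $\mtx{X} \mapsto \vct{u} \cdot \mtx{X}\vct{s}$ is linear, so $f$ is a pointwise supremum of affine functions of $\mtx{X}$.  For the Lipschitz bound, fix matrices $\mtx{X}$ and $\mtx{X}'$ and let $\vct{u}_\star \in U$ attain the maximum defining $f(\mtx{X})$.  Since $\vct{u}_\star \in U$, we have $f(\mtx{X}') \geq \vct{u}_\star \cdot \mtx{X}' \vct{s}$, and therefore
\[
	f(\mtx{X}) - f(\mtx{X}')
	\leq \vct{u}_\star \cdot (\mtx{X} - \mtx{X}') \vct{s}
	\leq \norm{ (\mtx{X} - \mtx{X}') \vct{s} }
	\leq \norm{ \mtx{X} - \mtx{X}' }
	\leq \fnorm{ \mtx{X} - \mtx{X}' },
\]
where we used $\norm{\vct{u}_\star} \leq 1$ and $\norm{\vct{s}} = 1$, and $\norm{\cdot}$ denotes the operator norm.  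By symmetry $\abs{f(\mtx{X}) - f(\mtx{X}')} \leq \fnorm{\mtx{X} - \mtx{X}'}$, so $f$ is $1$-Lipschitz with respect to the Euclidean norm on $\R^{m \times k}$.  Because the entries of $\mtx{X}$ are independent and each is supported on the interval $[-B, B]$ of length $2B$, Talagrand's concentration inequality for convex Lipschitz functions of bounded independent random variables yields a sub-Gaussian lower tail with variance proxy of order $B^2$; tracking the interval length $2B$ through the inequality produces precisely the advertised bound $\Prob{ f(\mtx{X}) \leq \Expect f(\mtx{X}) - \zeta } \leq \econst^{-\zeta^2/8B^2}$.

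The point that deserves care — and which I expect to be the only real obstacle — is that one genuinely needs the convexity here, not just a bounded-differences or self-bounding argument.  Perturbing the entry $(i, j)$ changes $f$ by at most $2B \abs{s_j}$, so the self-bounding machinery of Fact~\ref{fact:self-bdd-lower} only delivers $\Delta_- \leq \sum_{i,j} 4 B^2 s_j^2 = 4B^2 m$, which costs a spurious factor of $m$ in the exponent; a bounded-differences estimate fails for the same reason.  Convexity repairs this: the effective sensitivity in coordinate $(i,j)$ is $\abs{(\vct{u}_\star)_i}\cdot 2B\abs{s_j}$, and since $\sum_i (\vct{u}_\star)_i^2 \leq 1$ we get the dimension-free bound $\sum_{i,j}(\vct{u}_\star)_i^2 (2B \abs{s_j})^2 \leq 4B^2$.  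This is exactly what Talagrand's convex distance inequality encodes, and the remaining median-to-mean adjustment is routine.  In the write-up I would simply cite the convex Lipschitz concentration inequality from the collection of background facts.
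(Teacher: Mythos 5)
Your argument is correct and is essentially the paper's: both exploit the convexity (supremum-of-affine) structure of $f(\mtx{X}) = \max_{\vct{u} \in U} \vct{u} \cdot \mtx{X}\vct{s}$, take the maximizer $\vct{u}_\star$ as a witness when bounding $f(\mtx{X}) - f(\mtx{X}')$, and compute the dimension-free sensitivity $\sum_{i,j}\bigl(2B\abs{(\vct{u}_\star)_i s_j}\bigr)^2 \leq 4B^2$ before invoking Talagrand's inequality. One small correction on the citation: the precise tool the paper uses is Fact~\ref{fact:talagrand}, a transportation-cost form of Talagrand's inequality (BLM Thm.~8.6) stated inline just before this sublemma rather than in the background appendix, which bounds deviations directly around the \emph{mean} and yields the constant $8B^2$ in a single step; the classical convex-distance version you gesture at is centered on the median and carries a worse constant, so the advertised $\econst^{-\zeta^2/8B^2}$ would not fall out of that route without explicitly tracking the median-to-mean adjustment.
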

\joel{
The most direct proof of this result relies on a version of
Talagrand's inequality~\cite[Thm.~8.6]{BLM13:Concentration-Inequalities}
obtained from the transportation cost method.}
\joel{
\begin{fact}[Talagrand's Inequality] \label{fact:talagrand}
Let $\mathcal{X}$ be a metric space.
Suppose that $f : \mathcal{X}^p \to \R$ fulfills the one-sided Lipschitz bound
$$
f(\vct{a}) - f(\vct{z}) \leq \sum_{i=1}^p c_i(\vct{a}) \mathbb{1}_{a_i \neq z_i}
\quad\text{for all $\vct{a}, \vct{z} \in \mathcal{X}^p$,} 
$$
where $c_i : \R^p \to \R$ are auxiliary functions that satisfy
$$
\sum_{i=1}^p c_i^2(\vct{a}) \leq v
\quad\text{for all $\vct{a} \in \mathcal{X}^p$.}
$$
Let $(X_1, \dots, X_p)$ be an independent sequence of random variables
taking values in $\mathcal{X}$, and define
$$
Y := f( X_1, \dots, X_p ).
$$
Then, for all $\zeta \geq 0$,
$$
\begin{aligned}
\Prob{ Y - \Expect Y \geq \zeta } &\leq \econst^{-\zeta^2/(2v)}, \quad\text{and} \\
\Prob{ Y - \Expect Y \leq - \zeta } &\leq \econst^{-\zeta^2/(2v)}.
\end{aligned}
$$
\end{fact}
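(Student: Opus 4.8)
The plan is to prove Fact~\ref{fact:talagrand} by the transport--entropy (``transportation cost'') method, as in~\cite[Chap.~8]{BLM13:Concentration-Inequalities}.  Write $P := \mu_1 \otimes \cdots \otimes \mu_p$ for the law of $(X_1, \dots, X_p)$ on $\mathcal{X}^p$, so that $Y = f(X_1, \dots, X_p)$.  The first move is to reduce the claim to a single \emph{transportation--entropy inequality}: it suffices to show that, for every probability measure $Q \ll P$,
$$
\Expect_Q[f] - \Expect_P[f] \leq \sqrt{2v\, D(Q \| P)},
$$
where $D(Q\|P)$ is the relative entropy.  Granting this, the Bobkov--G\"otze duality turns it into the stated sub-Gaussian tail: apply the inequality to the exponentially tilted measure $Q_\lambda$ with density $\econst^{\lambda f}/(\Expect_P \econst^{\lambda f})$ relative to $P$, and set $\psi(\lambda) := \log \Expect_P \econst^{\lambda(f - \Expect_P f)}$; a short computation (using $D(Q_\lambda\|P) = \lambda \psi'(\lambda) - \psi(\lambda)$ and $\Expect_{Q_\lambda}[f] - \Expect_P[f] = \psi'(\lambda)$) gives the differential inequality $\lambda\psi'(\lambda) - \psi(\lambda) \leq v\lambda^2/2$, which integrates (the Herbst argument) to $\psi(\lambda) \leq v\lambda^2/2$; the Chernoff bound then yields $\Prob{ Y - \Expect Y \geq \zeta } \leq \econst^{-\zeta^2/(2v)}$.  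The lower tail comes from running the same scheme for $-f$, which needs the companion inequality $\Expect_P[f] - \Expect_Q[f] \leq \sqrt{2v\,D(Q\|P)}$.

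To establish the transportation--entropy inequality I would invoke Marton's coupling lemma for product measures: for every $Q \ll P$ there is a coupling $\pi$ of $Q$ and $P$, say $(W, Z) \sim \pi$ with $W \sim Q$ and $Z \sim P$, such that
$$
\Expect_{W}\left[ \sum_{i=1}^p \mathbb{P}_\pi\big( W_i \neq Z_i \,\big|\, W \big)^2 \right] \leq 2\, D(Q \| P).
$$
This is the chain-rule-plus-Pinsker argument: factor $Q$ into its successive conditionals, couple them coordinate by coordinate so that each conditional coupling is optimal for total variation, bound each mismatch probability by $\sqrt{\tfrac12 D}$ of the corresponding conditional entropy via Pinsker, and recombine using the chain rule for relative entropy.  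With this coupling fixed, the one-sided Lipschitz hypothesis gives
$$
\Expect_Q[f] - \Expect_P[f] = \Expect_\pi\big[\, f(W) - f(Z) \,\big]
	\leq \Expect_{W}\left[ \sum_{i=1}^p c_i(W)\, \mathbb{P}_\pi\big(W_i \neq Z_i \,\big|\, W\big) \right].
$$
Applying the Cauchy--Schwarz inequality in the index $i$ with $\sum_i c_i(W)^2 \leq v$, and then Jensen's inequality to move the square root outside the expectation over $W$, the right-hand side is at most $\sqrt{v}\,\big( \Expect_W [ \sum_{i} \mathbb{P}_\pi(W_i \neq Z_i \mid W)^2 ] \big)^{1/2} \leq \sqrt{v}\cdot\sqrt{2\,D(Q\|P)}$, which is exactly the bound we wanted.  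The companion inequality for $-f$ is obtained symmetrically, using a coupling for which the summands are evaluated at the $P$-argument.

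The hard part is the Marton coupling lemma, and within it the bookkeeping needed to make the final variance proxy come out as exactly $v$ (rather than $2v$ or $4v$): one has to be careful which marginal the conditional mismatch probabilities are taken against and how the two invocations of Cauchy--Schwarz combine with the constant in Pinsker's inequality.  Everything downstream---the Bobkov--G\"otze duality and the Herbst integration---is entirely standard.  Since the result is classical, it is equally legitimate simply to cite~\cite[Thm.~8.6]{BLM13:Concentration-Inequalities}; the sketch above is meant to record why it holds and why the variance proxy is $v$.
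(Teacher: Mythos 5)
The paper offers no proof of this fact beyond citing~\cite[Thm.~8.6]{BLM13:Concentration-Inequalities} and remarking that it is obtained from the transportation cost method. Your sketch correctly reconstructs exactly that argument---Marton's conditional transportation inequality combined with Bobkov--G\"otze duality and the Herbst integration, with the right awareness of which marginal the conditional mismatch probabilities must be taken against for each tail---so it is the same approach the paper relies on.
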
}

\joel{
\begin{proof}[Proof of Sublemma~\ref{slem:empirical-width-conc}]
Let $\mathcal{X}$ be the interval $[-B, B]$ equipped with the Euclidean
distance.  For a matrix $\mtx{A} \in \mathcal{X}^{m \times k}$,
introduce the function
$$
f(\mtx{A}) := \max_{\vct{u} \in U} \vct{u} \cdot \mtx{A} \vct{s}.
$$
Select a point $\vct{t} \in \argmax_{\vct{u} \in U} \vct{u} \cdot \mtx{A}\vct{s}$.
Then
$$
\begin{aligned}
f(\mtx{A}) - f(\mtx{Z})
	&\leq \vct{t} \cdot \mtx{A} \vct{s} - \vct{t} \cdot \mtx{Z} \vct{s} \\
	&= \sum_{i=1}^m \sum_{j=1}^k t_i s_j (a_{ij} - z_{ij}) \\
	&\leq \sum_{i=1}^m \sum_{j=1}^k 2B \abs{t_i s_j} \mathbb{1}_{a_{ij} \neq z_{ij}}.
\end{aligned}
$$
We used the fact that the entries of $\mtx{A}$ and $\mtx{Z}$ are bounded in magnitude by $B$.
With the choice $c_{ij}(\mtx{A}) = 2B \abs{t_i s_j}$, we see that
$$
\sum_{i=1}^m \sum_{j=1}^{k} c_{ij}^2(\mtx{A}) \leq 4B^2 \sum_{i,j} \abssq{t_i s_j} \leq 4B^2
$$
because $\vct{t}, \vct{s}$ both belong to the Euclidean unit ball.
Invoke Fact~\ref{fact:talagrand} to complete the argument.
\end{proof}}

\newpage

\part{Back Matter}
\label{part:back-matter}

Two appendices contain statements of some results that we use throughout
the paper.  Appendix~\ref{app:gauss} presents some facts about Gaussian
analysis, while Appendix~\ref{app:norm} describes some spectral bounds for
random matrices with independent entries.  We conclude with acknowledgments
and a list of works cited.

\appendix

\section{Tools from Gaussian Analysis}
\label{app:gauss}

We make extensive use of methods from Gaussian analysis to provide
precise information about the behavior of functions of Gaussian random variables.
These results come up in many places in the paper, so we have collected
them here.

\subsection{Concentration for Gaussian Lipschitz Functions}

We begin with two concentration results that apply to a Lipschitz function
of independent Gaussian variables.
Recall that a function $f : \R^n \to \R$ has Lipschitz constant $L$ when
$$
\abs{ f( \vct{a} )  - f( \vct{b} ) } \leq L \norm{ \vct{a} - \vct{b} }
\quad\text{for all $\vct{a}, \vct{b} \in \R^n$.}
$$
We also say, more briefly, that $f$ is $L$-Lipschitz.
The first result~\cite[Thm.~1.6.4]{Bog98:Gaussian-Measures}
gives a bound on the variance of a Lipschitz function.
The second result~\cite[Thm.~1.7.6]{Bog98:Gaussian-Measures}
provides a normal concentration inequality for Lipschitz functions.

\begin{fact}[Gaussian Variance Inequality] \label{fact:gauss-variance}
Suppose that $f : \R^n \to \R$ has Lipschitz constant $L$.
Let $\vct{\gamma} \in \R^n$ be a standard normal random vector.  Then
$$
\Var[ f( \vct{\gamma} ) ] \leq L.
$$
Equivalently,
$$
\Expect f(\vct{\gamma})^2 \leq \big( \Expect f(\vct{\gamma}) \big)^2 + L.
$$
\end{fact}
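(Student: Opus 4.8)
The plan is to obtain both displayed inequalities from the \emph{Gaussian Poincar\'e inequality}: for every weakly differentiable $g:\R^n\to\R$ with $\Expect\norm{\nabla g(\vct{\gamma})}^2<\infty$,
$$\Var[g(\vct{\gamma})]\ \le\ \Expect\norm{\nabla g(\vct{\gamma})}^2 .$$
This is exactly \cite[Thm.~1.6.4]{Bog98:Gaussian-Measures}. Granting it, I would finish as follows. Since $f$ is $L$-Lipschitz, Rademacher's theorem shows $f$ is differentiable almost everywhere with $\norm{\nabla f(\vct{x})}\le L$ for a.e.\ $\vct{x}$; hence $\Var[f(\vct{\gamma})]\le\Expect\norm{\nabla f(\vct{\gamma})}^2\le L^2$, which gives the asserted bound $\Var[f(\vct{\gamma})]\le L$ in the range $L\le 1$ — the only regime in which the Fact is used in this paper, where $f$ is always taken $1$-Lipschitz. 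The second displayed inequality is not a separate statement: it is just $\Var[f(\vct{\gamma})]\le L$ rewritten via $\Var[f(\vct{\gamma})]=\Expect f(\vct{\gamma})^2-(\Expect f(\vct{\gamma}))^2$, so it requires no extra argument.

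It remains to prove the Gaussian Poincar\'e inequality, and I would do this by Ornstein--Uhlenbeck semigroup interpolation. First reduce to a smooth function: replace $f$ by its Gaussian mollification $f_{\eps}:=f*\varphi_{\eps}$, which is $C^\infty$, still $L$-Lipschitz, and converges to $f$ with $f_{\eps}(\vct{\gamma})\to f(\vct{\gamma})$ dominated by an $L^2$ envelope of linear growth; prove the inequality for $f_{\eps}$ and let $\eps\to0$. For smooth $f$, introduce $P_t g(\vct{x}):=\Expect\, g\bigl(\econst^{-t}\vct{x}+\sqrt{1-\econst^{-2t}}\,\vct{\gamma}\bigr)$, which satisfies $P_0g=g$, $P_tg\to\Expect g(\vct{\gamma})$ as $t\to\infty$, the commutation relation $\nabla P_tg=\econst^{-t}P_t(\nabla g)$, and the integration-by-parts identity $\Expect[g\,\mathcal{L}h]=-\Expect[\nabla g\cdot\nabla h]$, where $\mathcal{L}:=\tfrac{d}{dt}P_t\big|_{t}$ is the OU generator. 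Then
$$\Var[f(\vct{\gamma})]=\Expect[(P_0f)^2]-\lim_{t\to\infty}\Expect[(P_tf)^2]=-\int_0^\infty\frac{d}{dt}\Expect[(P_tf)^2]\,dt=\int_0^\infty 2\,\Expect\norm{\nabla P_tf}^2\,dt ,$$
and, using the commutation relation together with Jensen's inequality, $\norm{\nabla P_tf(\vct{x})}\le\econst^{-t}\Expect\norm{\nabla f(\cdots)}\le\econst^{-t}L$, so the integral is at most $2L^2\int_0^\infty\econst^{-2t}\,dt=L^2$. (An equivalent route, avoiding semigroups, is tensorization over coordinates plus the one-dimensional Gaussian Poincar\'e inequality $\Var[h(\gamma)]\le\Expect[h'(\gamma)^2]$, which follows from the Hermite expansion: writing $h=\sum_k c_kH_k$ gives $\Var[h(\gamma)]=\sum_{k\ge1}c_k^2\le\sum_{k\ge1}kc_k^2=\Expect[h'(\gamma)^2]$.)

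The main obstacle is purely technical bookkeeping: justifying differentiation under the integral in $\nabla P_tf=\econst^{-t}P_t(\nabla f)$ and the identity $\tfrac{d}{dt}\Expect[(P_tf)^2]=-2\Expect\norm{\nabla P_tf}^2$, together with the mollification limit. All of this is routine once $f$ has been reduced to a smooth $L$-Lipschitz function, since every quantity in sight grows at most linearly in $\vct{x}$ and is therefore square-integrable against the Gaussian with uniform-in-$t$ bounds. The single substantive point is the semigroup contraction $\norm{\nabla P_tf}\le\econst^{-t}\,\Lip(f)$, which manufactures the decay factor $\econst^{-2t}$ whose time-integral produces the clean constant.
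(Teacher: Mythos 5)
The paper does not prove this Fact at all; it simply cites \cite[Thm.~1.6.4]{Bog98:Gaussian-Measures}, so there is no internal argument to compare against. Your Ornstein--Uhlenbeck interpolation is the standard proof of the cited Gaussian Poincar\'e inequality, and the chain $\Var[f(\vct{\gamma})]=\int_0^\infty 2\,\Expect\norm{\nabla P_tf}^2\,dt$ together with the commutation bound $\norm{\nabla P_tf}\le \econst^{-t}L$ is correct, as is the mollification reduction and the Hermite/tensorization alternative. As a proof of $\Var[f(\vct{\gamma})]\le L^2$, the argument is complete.

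The point you flag in passing deserves to be stated as the main observation: the Fact as printed, $\Var[f(\vct{\gamma})]\le L$, is actually false for $L>1$ --- take $f(\vct{x})=Lx_1$, which is $L$-Lipschitz with $\Var[f(\vct{\gamma})]=L^2$. The correct general bound is $\Var[f(\vct{\gamma})]\le L^2$ (which is what Bogachev's theorem gives), and the version with $L$ on the right-hand side holds only when $L\le 1$. You are also right that this is harmless for the paper: every invocation of Fact~\ref{fact:gauss-variance} (in Lemma~\ref{lem:hybrid-rsv-poincare}, Lemma~\ref{lem:apply-minmax-lower}, and the proof of Proposition~\ref{prop:rand-ls}) applies it to a $1$-Lipschitz function, where $L=L^2=1$. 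So your proof establishes everything the paper actually uses, and the residual discrepancy is a misstatement in the Fact rather than a gap in your argument.
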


\begin{fact}[Gaussian Concentration Inequality] \label{fact:gauss-concentration}
Suppose that $f : \R^n \to \R$ has Lipschitz constant $L$.
Let $\vct{\gamma} \in \R^n$ be a standard normal random vector.  Then, for all $\zeta \geq 0$,
$$
\begin{aligned}
\Prob{ f(\vct{\gamma}) \geq \Expect f(\vct{\gamma}) + \zeta } &\leq \econst^{-\zeta^2/2}, \quad\text{and} \\
\Prob{ f(\vct{\gamma}) \leq \Expect f(\vct{\gamma}) - \zeta } &\leq \econst^{-\zeta^2/2}.
\end{aligned}
$$
\end{fact}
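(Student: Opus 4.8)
The plan is to establish the sub-Gaussian moment generating function bound
$$\Expect \econst^{\lambda(f(\vct{\gamma}) - \Expect f(\vct{\gamma}))} \;\leq\; \econst^{\lambda^2 L^2/2} \qquad\text{for every } \lambda \in \R,$$
and then to extract both tail inequalities by the Chernoff method. For the upper tail, the bound $\Prob{ f(\vct{\gamma}) - \Expect f(\vct{\gamma}) \geq \zeta } \leq \econst^{\lambda^2 L^2/2 - \lambda\zeta}$ is minimized at $\lambda = \zeta/L^2$, which yields $\econst^{-\zeta^2/(2L^2)}$; this is the stated inequality after recalling that we apply the result to $1$-Lipschitz functions (and in general the Lipschitz constant can be absorbed by rescaling $f$). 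The lower tail follows by running the same argument on $-f$, which has the same Lipschitz constant.

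To obtain the moment generating function bound I would first reduce to the case where $f$ is smooth. Convolving $f$ with a narrow Gaussian mollifier produces a $C^\infty$ function with Lipschitz constant at most $L$ that converges to $f$ uniformly on compact sets; since a Lipschitz function of $\vct{\gamma}$ has finite exponential moments of every order (a crude bound suffices), dominated convergence transfers the estimate to the limit. For smooth $f$ with $\grad f$ uniformly bounded, I would invoke the Gaussian logarithmic Sobolev inequality: for any smooth $g$,
$$\operatorname{Ent}\big(g^2\big) \;\leq\; 2\, \Expect \normsq{\grad g(\vct{\gamma})}, \qquad \operatorname{Ent}(h) := \Expect\big[\, h \log h\, \big] - \big(\Expect h\big)\log\big(\Expect h\big).$$
Applying this with $g := \econst^{\lambda f/2}$, so that $\normsq{\grad g} = \tfrac14 \lambda^2 \normsq{\grad f}\, \econst^{\lambda f} \leq \tfrac14 \lambda^2 L^2 \econst^{\lambda f}$, and writing $H(\lambda) := \Expect \econst^{\lambda f(\vct{\gamma})}$, the inequality becomes the differential inequality
$$\lambda H'(\lambda) - H(\lambda)\log H(\lambda) \;\leq\; \tfrac12 \lambda^2 L^2 H(\lambda).$$

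The Herbst argument then closes the loop: the function $K(\lambda) := \lambda^{-1}\log H(\lambda)$ satisfies $K'(\lambda) \leq L^2/2$ for $\lambda > 0$, while $K(\lambda) \to \Expect f(\vct{\gamma})$ as $\lambda \downarrow 0$ because $\log H(\lambda) = \lambda\, \Expect f(\vct{\gamma}) + o(\lambda)$. Integrating gives $\log H(\lambda) \leq \lambda\, \Expect f(\vct{\gamma}) + \tfrac12\lambda^2 L^2$ for $\lambda > 0$, which is the claimed bound; the case $\lambda < 0$ follows by replacing $f$ with $-f$. I expect the main obstacle to be not the algebra but the justification of the Gaussian log-Sobolev inequality itself, or — if one prefers a self-contained route to the sharp constant $1/2$ — the Ornstein--Uhlenbeck semigroup interpolation, where one differentiates $\log \Expect \econst^{\lambda P_t f(\vct{\gamma})}$ in $t$ and uses the gradient contraction $\norm{\grad P_t f} \leq \econst^{-t}\, P_t \norm{\grad f}$; together with the measure-theoretic reduction to smooth $f$, this is the delicate part. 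The simpler interpolation-against-an-independent-copy proof avoids log-Sobolev entirely but only delivers the nonoptimal constant $\pi^2/8$ in place of $1/2$.
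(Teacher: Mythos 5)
Your argument is correct: the Gaussian log-Sobolev inequality applied to $g = \econst^{\lambda f/2}$, the resulting differential inequality for $H(\lambda) = \Expect\econst^{\lambda f(\vct{\gamma})}$, the Herbst integration of $K(\lambda) = \lambda^{-1}\log H(\lambda)$ with the boundary condition $K(0^+) = \Expect f(\vct{\gamma})$, and the final Chernoff optimization at $\lambda = \zeta/L^2$ all check out, and the mollification/dominated-convergence reduction to smooth $f$ is the standard way to make the gradient computations rigorous (a crude majorant such as $|f(\vct{x})|\le |f(\vct{0})| + L\|\vct{x}\|$ indeed suffices for the domination). The paper does not prove Fact~\ref{fact:gauss-concentration}; it cites it to Bogachev, Theorem~1.7.6, so there is no in-paper argument to compare against. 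What you reconstructed is the standard Herbst/log-Sobolev proof, one of the two classical routes, the other being the Maurey--Pisier interpolation against an independent copy, which, as you correctly note, yields only the nonoptimal constant. You were also right to flag the normalization: the fact as stated allows an arbitrary Lipschitz constant $L$ yet asserts the $L$-free tail $\econst^{-\zeta^2/2}$, which for general $L$ should read $\econst^{-\zeta^2/(2L^2)}$; every application in the paper is to a $1$-Lipschitz map, so the looser statement is harmless, but your derivation correctly produces the $L$-dependent version. (The companion Fact~\ref{fact:gauss-variance} has the same cosmetic issue: the Gaussian Poincar\'e inequality gives $\Var[f(\vct{\gamma})]\le L^2$, not $L$.)
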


\subsection{The Gaussian Minimax Theorem}

To compute the expectations of certain functions of Gaussian random variables,
we depend on a comparison principle due to Gordon~\cite[Thm.~1.1]{Gor85:Some-Inequalities}.

Let $S$ be an abstract set.  A family $\{ Z_{s} : s \in S \}$ of real random variables
is called a \term{centered Gaussian process} when each element $Z_s$ has mean zero
and each finite subcollection $\{ Z_{s_1}, \dots, Z_{s_n} \}$ has a jointly Gaussian
distribution.

\begin{fact}[Gaussian Minimax Theorem] \label{fact:gauss-minmax}
Let $T$ and $U$ be finite sets.  Consider two centered Gaussian
processes $\{ X_{tu} \}$ and $\{ Y_{tu} \}$, indexed over $T \times U$.
For all choices of indices, suppose that
$$
\begin{cases}
\Expect X_{tu}^2 = \Expect Y_{tu}^2 \\
\Expect X_{tu} X_{tu'} \leq \Expect Y_{tu} Y_{tu'} \\
\Expect X_{tu} X_{t'u'} \geq \Expect Y_{tu} Y_{t'u'} & \text{when $t \neq t'$}.
\end{cases}
$$
Then, for all real numbers $\lambda_{tu}$ and $\zeta$,
$$
\Prob{ \min_{t \in T} \max_{u \in U} \big( \lambda_{tu} + X_{tu} \big) \geq \zeta }
	\geq \Prob{ \min_{t \in T} \max_{u \in U} \big( \lambda_{tu} + Y_{tu} \big) \geq \zeta }.
$$
\end{fact}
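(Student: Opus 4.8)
The plan is to obtain the Gaussian Minimax Theorem from Gordon's Gaussian comparison inequality for ``conjunctions of disjunctions'' of half-spaces, and to prove that inequality by the Gaussian interpolation method. First I would rewrite the min-max event: setting $c_{tu} := \zeta - \lambda_{tu}$, one has the identity
\[
\Big\{ \min_{t \in T} \max_{u \in U} \big( \lambda_{tu} + X_{tu} \big) \geq \zeta \Big\}
	= \bigcap_{t \in T} \bigcup_{u \in U} \big\{ X_{tu} \geq c_{tu} \big\},
\]
and likewise for the process $\{ Y_{tu} \}$. Hence it suffices to prove, under the three covariance hypotheses, the comparison
\[
\Prob{ \bigcap_{t \in T} \bigcup_{u \in U} \{ X_{tu} \geq c_{tu} \} }
	\ \geq\ \Prob{ \bigcap_{t \in T} \bigcup_{u \in U} \{ Y_{tu} \geq c_{tu} \} }.
\]

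To prove this, I would realize the two Gaussian families on the same probability space, independently, and interpolate: for $\theta \in [0,1]$ set $Z_{tu}(\theta) := \sqrt{\theta}\, X_{tu} + \sqrt{1 - \theta}\, Y_{tu}$. Then $\{ Z_{tu}(\theta) \}$ is a centered Gaussian family with covariance $\theta \Cov(X) + (1 - \theta) \Cov(Y)$, and --- this is where the first hypothesis $\Expect X_{tu}^2 = \Expect Y_{tu}^2$ enters --- the variances $\Expect Z_{tu}(\theta)^2$ do not depend on $\theta$. Next I would smooth the event indicators: choose a smooth nondecreasing $h_\delta : \R \to [0,1]$ with bounded derivatives that increases to $\mathbb{1}\{ \,\cdot\, \geq 0 \}$ as $\delta \to 0$, and set
\[
G_\delta( \vct{z} ) := \prod_{t \in T} \Big( 1 - \prod_{u \in U} \big( 1 - h_\delta( z_{tu} - c_{tu} ) \big) \Big),
\qquad F(\theta) := \Expect\big[ G_\delta\big( \vct{Z}(\theta) \big) \big].
\]
The outer product over $t$ (an ``AND'') and the inner complemented product over $u$ (an ``OR'') endow $G_\delta$ with exactly the sign pattern the argument needs: $\partial^2 G_\delta / \partial z_{tu} \partial z_{tu'} \leq 0$ for $u \neq u'$ (same $t$), while $\partial^2 G_\delta / \partial z_{tu} \partial z_{t'u'} \geq 0$ for $t \neq t'$.

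Now I would differentiate $F$ in $\theta$ and apply Gaussian integration by parts (Stein's identity) to each term carrying a factor $\partial_\theta Z_{tu}(\theta)$. The factors $\sqrt{\theta}$ and $\sqrt{1 - \theta}$ cancel after integration by parts, the diagonal contributions vanish because $\Expect Z_{tu}(\theta)^2$ is constant in $\theta$, and one is left with
\[
F'(\theta) = \tfrac{1}{2} \sum_{(t,u) \neq (t',u')}
	\big( \Expect X_{tu} X_{t'u'} - \Expect Y_{tu} Y_{t'u'} \big)\,
	\Expect\Big[ \tfrac{\partial^2 G_\delta}{\partial z_{tu}\, \partial z_{t'u'}}\big( \vct{Z}(\theta) \big) \Big].
\]
For $t = t'$, $u \neq u'$ the scalar factor is $\leq 0$ and the second partial is $\leq 0$; for $t \neq t'$ the scalar factor is $\geq 0$ and the second partial is $\geq 0$. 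Thus every summand is nonnegative, so $F' \geq 0$ on $(0,1)$, $F$ is continuous on $[0,1]$, and $F(1) \geq F(0)$, i.e.\ $\Expect G_\delta(\vct{X}) \geq \Expect G_\delta(\vct{Y})$. Letting $\delta \to 0$, with $G_\delta$ increasing to the indicator of the conjunction-of-disjunctions event and monotone convergence, gives the displayed probability comparison; unwinding the first identity then yields the theorem.

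The step I expect to be the main obstacle is the smoothing construction and its bookkeeping: one must verify the sign of every mixed second partial of the product $G_\delta$ (a short combinatorial computation on products and complements), justify differentiating under the expectation together with the Gaussian integration by parts over the full finite index set $T \times U$, and transfer the limit $\delta \to 0$ from the smoothed statement back to the indicator statement. Finiteness of $T$ and $U$ keeps all of this elementary; the argument is nothing other than Gordon's 1985 comparison lemma, so a write-up would either cite it directly or reproduce this interpolation proof.
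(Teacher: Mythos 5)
The paper does not prove this statement: Fact~\ref{fact:gauss-minmax} is stated as a black box and attributed directly to Gordon~\cite[Thm.~1.1]{Gor85:Some-Inequalities}, with the clarifying remark that the finite-index version extends to the infinite case by approximation. There is therefore no ``paper's own proof'' to compare against --- what you have done is supply the proof that the paper deliberately deferred, and the argument you give is essentially Gordon's original one. Your outline is correct on every substantive point. The rewriting of the min-max event as $\bigcap_t \bigcup_u \{X_{tu} \geq c_{tu}\}$ with $c_{tu} = \zeta - \lambda_{tu}$ is exact; the interpolated process $Z(\theta) = \sqrt{\theta}\,X + \sqrt{1-\theta}\,Y$ with $X \perp Y$ has the advertised covariance structure and $\theta$-independent variances; the nested product $G_\delta$ has $\partial^2 G_\delta/\partial z_{tu}\,\partial z_{tu'} \leq 0$ for $u \neq u'$ (one extra factor $-h_\delta'$ appears inside $A_t$) and $\partial^2 G_\delta/\partial z_{tu}\,\partial z_{t'u'} \geq 0$ for $t \neq t'$ (a product of two nonnegative first-order terms); Gaussian integration by parts exactly cancels the $\sqrt{\theta}$, $\sqrt{1-\theta}$ denominators and kills the diagonal because $\Expect X_{tu}^2 = \Expect Y_{tu}^2$; and both families of off-diagonal terms contribute with the favorable sign, so $F' \geq 0$ on $(0,1)$ and $\Expect G_\delta(X) \geq \Expect G_\delta(Y)$. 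Since $T$ and $U$ are finite, all interchanges of differentiation and expectation are routine. One small bookkeeping point: the limit $\delta \to 0$ is best justified by bounded convergence together with the fact that $\Prob\{X_{tu} = c_{tu}\} = \Prob\{Y_{tu} = c_{tu}\} = 0$ whenever the variance is nonzero, and that a zero-variance coordinate of $X$ and $Y$ is the same almost-sure constant by the first hypothesis; this handles the only place where the pointwise limit of $h_\delta$ could disagree with the indicator.
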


\noindent
Fact~\ref{fact:gauss-minmax} extends to infinite index sets $T, U$ by approximation.

\section{Spectral Bounds for Random Matrices}
\label{app:norm}

Our argument also depends heavily on some non-asymptotic bounds
for the spectrum of a random matrix with independent entries.
These results only give rough estimates, but they are adequate
for our purposes.

The first result gives tail bounds for the extreme singular values
of a rectangular matrix with independent, subgaussian entries.

\begin{fact}[Subgaussian Matrix: Tail Bounds] \label{fact:subgauss-mtx-tails}
Let $\mtx{X}$ be an $d_1 \times d_2$ random matrix with
independent, standardized entries that are uniformly subgaussian:
\begin{equation} \label{eqn:subgauss-hyp}
\sup_{p \geq 1} \frac{1}{\sqrt{p}}\left( \Expect \abs{\smash{X_{ij}}}^p \right)^{1/p} \leq B. 
\end{equation}
Then the largest singular value $\smax(\mtx{X})$ and the $d_2$-th largest singular
value $\smin(\mtx{X})$ satisfy the bounds
$$
\begin{aligned}
\Prob{ \smax(\mtx{X}) > \sqrt{d_1} + \cnst{C} B^2 \sqrt{d_2} + \cnst{C} B^2 \zeta }
	&\leq \econst^{- \zeta^2} \\
\Prob{ \smin(\mtx{X}) < \sqrt{d_1} - \cnst{C} B^2 \sqrt{d_2} - \cnst{C} B^2 \zeta }
	&\leq \econst^{- \zeta^2}.
\end{aligned}
$$
\end{fact}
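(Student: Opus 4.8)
The plan is to reduce both tail bounds to a single estimate on how far the scaled Gram matrix $\tfrac{1}{d_1}\mtx{X}^{\transp}\mtx{X}$ lies from the identity, and then to control that deviation by an $\varepsilon$-net argument combined with a Bernstein-type inequality for sums of independent subexponential random variables. We may assume $d_2\le d_1$, since otherwise $\smin(\mtx{X})=0$ and the lower bound is vacuous (the right-hand side $\sqrt{d_1}-\cnst{C}B^2(\sqrt{d_2}+\zeta)$ is then non-positive provided $\cnst{C}\ge 1$). Write $E:=\norm{\tfrac{1}{d_1}\mtx{X}^{\transp}\mtx{X}-\Id}$. The eigenvalues of $\tfrac{1}{d_1}\mtx{X}^{\transp}\mtx{X}$ are the squared singular values of $\tfrac{1}{\sqrt{d_1}}\mtx{X}$, and they lie within $E$ of $1$; hence every singular value $\sigma$ of $\mtx{X}$ obeys $\sigma/\sqrt{d_1}\in\bigl[\sqrt{(1-E)_+},\ \sqrt{1+E}\bigr]$. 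Using the elementary inequalities $\sqrt{1+x}\le 1+\delta$ and $\sqrt{(1-x)_+}\ge 1-\delta$ valid whenever $x\le\max\{\delta,\delta^2\}$ (the first because $(1+\delta)^2\ge 1+\delta^2\ge 1+x$, the second because $(1-\delta)^2\le 1-\delta$ for $\delta\le1$ and the claim is trivial for $\delta>1$), a bound $E\le\max\{\delta,\delta^2\}$ yields $\sqrt{d_1}(1-\delta)\le\smin(\mtx{X})\le\smax(\mtx{X})\le\sqrt{d_1}(1+\delta)$. So it suffices to show that, with probability at least $1-\econst^{-\zeta^2}$,
\[
E \le \max\{\delta,\delta^2\}
\qquad\text{where}\qquad
\delta := \cnst{C}B^2\Bigl(\sqrt{d_2/d_1}+\zeta/\sqrt{d_1}\Bigr),
\]
because then $\sqrt{d_1}\,\delta=\cnst{C}B^2(\sqrt{d_2}+\zeta)$, which is exactly the claimed form. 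This square-root linearization, not the discretization step, is what produces the coefficient exactly one in front of $\sqrt{d_1}$; it is also why the two-regime $\max\{\delta,\delta^2\}$ structure must be carried along, as it is precisely what makes the estimate valid for all $\zeta\ge0$, including the far tail where $\smax(\mtx{X})$ can greatly exceed $\sqrt{d_1}$.

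To prove the Gram-deviation bound, I would fix a $\tfrac14$-net $\coll{N}$ of the sphere $\mathsf{S}^{d_2-1}$ with $\#\coll{N}\le 9^{d_2}$ (see \cite[Lem.~5.2]{Ver12:Introduction-Nonasymptotic}). Because $\tfrac{1}{d_1}\mtx{X}^{\transp}\mtx{X}-\Id$ is symmetric, the standard net comparison gives $E\le 2\max_{\vct{u}\in\coll{N}}\bigl|\tfrac{1}{d_1}\normsq{\mtx{X}\vct{u}}-1\bigr|$; note that working with the Gram matrix sidesteps the usual awkwardness of the infimum defining $\smin$. For a fixed unit vector $\vct{u}$, expand $\normsq{\mtx{X}\vct{u}}=\sum_{i=1}^{d_1}(\vct{x}_i\cdot\vct{u})^2$ over the independent rows $\vct{x}_i$ of $\mtx{X}$. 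Each $\vct{x}_i\cdot\vct{u}$ is a sum of independent, mean-zero, subgaussian entries, hence subgaussian with parameter $\cnst{C}B$ and variance one, so $(\vct{x}_i\cdot\vct{u})^2-1$ is mean-zero subexponential with parameter $\cnst{C}B^2$. Bernstein's inequality for independent subexponential variables \cite[Prop.~5.16]{Ver12:Introduction-Nonasymptotic} then gives, for every $s\ge0$,
\[
\Prob{\ \Bigl|\tfrac{1}{d_1}\sum_{i=1}^{d_1}\bigl((\vct{x}_i\cdot\vct{u})^2-1\bigr)\Bigr| > \cnst{C}B^2\max\bigl\{\sqrt{s/d_1},\ s/d_1\bigr\}\ } \le 2\econst^{-s}.
\]
Choosing $s=\zeta^2+4d_2$ and taking a union bound over $\coll{N}$, the factor $9^{d_2}$ is comfortably absorbed by $\econst^{-4d_2}$ (which also eats the factor $2$), and, after absorbing constants and using $\sqrt{a}+\sqrt{b}\ge\sqrt{a+b}$ together with $B\ge1$ to recognise $\max\{\sqrt{s/d_1},s/d_1\}\le\max\{\delta,\delta^2\}$ for the stated $\delta$, one obtains $E\le\max\{\delta,\delta^2\}$ with probability at least $1-\econst^{-\zeta^2}$. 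This closes both halves of the statement simultaneously.

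The only real obstacle I anticipate is the constant- and regime-bookkeeping: keeping the $\max\{\delta,\delta^2\}$ form intact through the net comparison and the linearization of the square root, so that the far tail ($\zeta\gtrsim\sqrt{d_1}$) is handled by the same estimate rather than requiring a separate, and genuinely lossy, argument such as a Frobenius-norm bound. Everything else — the covering numbers, the subgaussian-to-subexponential passage, and the Bernstein step — is routine. In the special case of a standard normal matrix $\mtx{\Gamma}$, the same Gram-deviation estimate can be obtained more cleanly from the Gaussian concentration inequality, Fact~\ref{fact:gauss-concentration}, applied at each net point to the $1$-Lipschitz map $\mtx{\Gamma}\mapsto\norm{\mtx{\Gamma}\vct{u}}$ (whose mean is at most $\sqrt{d_1}$), or via the Gaussian Minimax Theorem, Fact~\ref{fact:gauss-minmax}, applied to the saddle formula $\smin(\mtx{\Gamma})=\min_{\norm{\vct{u}}=1}\max_{\norm{\vct{v}}=1}\vct{v}^{\transp}\mtx{\Gamma}\vct{u}$ and its analogue for $\smax$; I would mention this as a remark but base the proof on the self-contained net-plus-Bernstein route so that it covers Model~\ref{mod:bdd-mtx}.
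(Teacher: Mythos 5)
Your proposal is correct and is essentially the proof the paper has in mind: the paper simply cites~\cite[Thm.~5.39]{Ver12:Introduction-Nonasymptotic} and notes that one must ``track the role of the subgaussian constant through the proof,'' and that theorem is proved exactly by the Gram-matrix net-plus-Bernstein argument you reproduce, with the tracking of $B$ carried out explicitly. One very small nitpick: your parenthetical justification ``$(1+\delta)^2\ge 1+\delta^2\ge 1+x$'' only covers the regime $x\le\delta^2$; for $\delta<1$ and $\delta^2<x\le\delta$ you need $(1+\delta)^2\ge 1+\delta\ge 1+x$ instead, though the claimed inequality itself is of course true in both regimes.
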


\noindent
This result follows from~\cite[Thm.~5.39]{Ver12:Introduction-Nonasymptotic}
when we track the role of the subgaussian constant through the proof.

The second result gives a tail bound for the norm of a matrix
with independent entries that may only have two moments;
it is based on the matrix Rosenthal inequality~\cite[Thm.~1.1]{Tro15:Expected-Norm}
and a standard concentration inequality~\cite[Thm.~15.5]{BLM13:Concentration-Inequalities}.

\begin{fact}[Heavy-Tailed Matrix: Norm Bound] \label{fact:heavy-tail-norm}
Fix a parameter $p \in [2, \log(d_1 + d_2)]$.
Let $\mtx{X}$ be a $d_1 \times d_2$ random matrix with independent entries
that have the following properties.

\begin{itemize}
\item	The entries are centered: $\Expect X_{ij} = 0$.

\item	The variances of the entries are uniformly bounded: $\Var(X_{ij}) \leq \alpha$.

\item	The entries have uniformly bounded $p$th moments:
$\Expect \abs{\smash{X_{ij}}}^p \leq \nu^p$.
\end{itemize}

\noindent
Then
$$
\Prob{ \norm{\mtx{X}} \geq \cnst{C}\sqrt{\alpha (d_1 + d_2) \log(d_1+d_2)} + \big( \cnst{C} \nu (d_1 + d_2)^{2/p}\log(d_1 + d_2)\big)\, \zeta}
	\leq \zeta^{-p}.
$$
\end{fact}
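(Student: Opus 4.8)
The plan is to reduce the operator norm bound to a moment calculation via the matrix Rosenthal inequality, followed by Markov's inequality together with a scalar concentration step. Write $\mtx{X} = \sum_{i=1}^{d_1}\sum_{j=1}^{d_2} X_{ij}\, \onevct_i \onevct_j^\adj$ as a sum of independent, mean-zero random matrices. The matrix Rosenthal inequality~\cite[Thm.~1.1]{Tro15:Expected-Norm} then controls $\left(\Expect\norm{\mtx{X}}^p\right)^{1/p}$ in terms of two statistics: the matrix variance $v(\mtx{X}) := \max\left\{ \norm{\Expect[\mtx{X}\mtx{X}^\adj]},\ \norm{\Expect[\mtx{X}^\adj\mtx{X}]} \right\}$ and the large-entry moment $\left(\Expect\max_{ij}\abs{X_{ij}}^p\right)^{1/p}$.

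First I would compute these two statistics. Since the entries are independent and centered, $\Expect[\mtx{X}\mtx{X}^\adj]$ is a diagonal matrix whose $i$th diagonal entry is $\sum_j \Var(X_{ij}) \le d_2 \alpha$, and likewise $\norm{\Expect[\mtx{X}^\adj\mtx{X}]} \le d_1\alpha$; hence $v(\mtx{X}) \le \alpha(d_1+d_2)$. For the large-entry term, a union bound over coordinates gives $\Expect\max_{ij}\abs{X_{ij}}^p \le \sum_{ij}\Expect\abs{X_{ij}}^p \le d_1 d_2\, \nu^p \le (d_1+d_2)^{2}\nu^p$, so $\left(\Expect\max_{ij}\abs{X_{ij}}^p\right)^{1/p} \le \nu (d_1+d_2)^{2/p}$. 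Feeding these into the Rosenthal inequality produces a bound of the shape $\left(\Expect\norm{\mtx{X}}^p\right)^{1/p} \le \cnst{C}\sqrt{p\,\alpha(d_1+d_2)} + \cnst{C}\, p\,\nu(d_1+d_2)^{2/p}$, and the hypothesis $p \le \log(d_1+d_2)$ converts the $\sqrt{p}$ and $p$ factors into $\sqrt{\log(d_1+d_2)}$ and $\log(d_1+d_2)$.

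Next I would pass from the moment bound to the stated tail bound. The crude route is Markov's inequality applied to $\norm{\mtx{X}}^p$, which already yields a statement of the form $\Prob{\norm{\mtx{X}} \ge \lambda\,\cnst{C}\bigl(\sqrt{\alpha(d_1+d_2)\log(d_1+d_2)} + \nu(d_1+d_2)^{2/p}\log(d_1+d_2)\bigr)} \le \lambda^{-p}$. To sharpen this into the asymmetric form of the statement --- with the bulk term $\cnst{C}\sqrt{\alpha(d_1+d_2)\log(d_1+d_2)}$ appearing as a fixed offset rather than scaled by $\zeta$ --- I would split $\norm{\mtx{X}}$ into its expectation-order part and the contribution of the heavy tails and invoke a concentration inequality for convex $1$-Lipschitz functions~\cite[Thm.~15.5]{BLM13:Concentration-Inequalities}: the map $\mtx{A}\mapsto\norm{\mtx{A}}$ is convex and $1$-Lipschitz in the Frobenius norm, its mean is of order $\sqrt{\alpha(d_1+d_2)\log(d_1+d_2)} + \nu(d_1+d_2)^{2/p}\log(d_1+d_2)$ by the moment bound, and its fluctuations above the mean are governed by a variance proxy again bounded by $\alpha(d_1+d_2)$, producing a sub-Gaussian offset of order $\sqrt{\alpha(d_1+d_2)\log(d_1+d_2)}$ while the residual polynomial tail, carried by $\max_{ij}\abs{X_{ij}}$, supplies the $\zeta$-scaled term $\cnst{C}\nu(d_1+d_2)^{2/p}\log(d_1+d_2)\,\zeta$. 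A final union of the two events completes the argument.

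The main obstacle is the bookkeeping in this last step: the matrix Rosenthal inequality naturally delivers a single $p$th-moment estimate in which both error terms would be scaled together by $\zeta$ under a bare Markov bound, so producing the clean separation between the deterministic offset and the coefficient of $\zeta$ requires a genuine central-moment/concentration statement and care in tracking how the exponent $p$, the truncation level, and the $(d_1+d_2)^{2/p}$ factor interact. None of the individual pieces is difficult, but aligning the constants and exponents so that the exponent $-p$ in the tail is consistent with the exponent $2/p$ in the fluctuation term is the delicate part.
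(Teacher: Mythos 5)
Your overall route is the paper's route — matrix Rosenthal for the expected norm, then the moment concentration inequality \cite[Thm.~15.5]{BLM13:Concentration-Inequalities}, then Markov — and the first-moment computation (variance statistic $\alpha(d_1+d_2)$, large-entry statistic $\nu(d_1+d_2)^{2/p}$) is correct. The gap is in how you describe the concentration step, and it is a genuine conceptual one.

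You propose that $\norm{\mtx{X}}$ has a ``sub-Gaussian offset of order $\sqrt{\alpha(d_1+d_2)\log(d_1+d_2)}$'' coming from a variance proxy bounded by $\alpha(d_1+d_2)$, and that the polynomial tail is a separate event controlled by $\max_{ij}\abs{X_{ij}}$, with a final union bound. Neither half of this picture is correct for heavy-tailed entries. When the entries have only $p$ moments, $\norm{\mtx{X}}$ itself has only polynomial tails, so there is no sub-Gaussian concentration about the mean to exploit, and the $\sqrt{\alpha(d_1+d_2)\log(d_1+d_2)}$ term in the target is produced \emph{entirely} by $\Expect\norm{\mtx{X}}$ (via the Rosenthal-type expectation bound), not by any fluctuation estimate. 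Moreover, the relevant quantity in \cite[Thm.~15.5]{BLM13:Concentration-Inequalities} is not a deterministic number like $\alpha(d_1+d_2)$ but the random Efron--Stein proxy
$$
V_+ := \sum_{ij} \Expect\big[ \big( \norm{\mtx{X}} - \norm{\smash{\mtx{X}^{(ij)}}} \big)_+^2 \,\big\vert\, \mtx{X} \big],
$$
and the key observation — because the operator norm is a supremum of linear functionals in the entries — is that this sum collapses to a \emph{maximum} over per-entry variations, $V_+ \leq \cnst{C}\max_{ij}\Expect\big[(X_{ij}-X_{ij}')^2 \mid \mtx{X}\big]$, a quantity whose $p/2$-th moment is controlled by the entrywise $p$th moments and yields $\big(\Expect V_+^{p/2}\big)^{1/p} \leq \cnst{C}\nu(d_1+d_2)^{2/p}$. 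That, together with the $\sqrt{p}$ factor in the moment inequality and the hypothesis $p\le\log(d_1+d_2)$, is exactly the coefficient of $\zeta$ in the statement. A single Markov step applied to $\Expect\big(\norm{\mtx{X}} - \Expect\norm{\mtx{X}}\big)_+^p$ then finishes; no union of two events is needed. If you replace your middle step with this Efron--Stein/moment argument and drop the sub-Gaussian claim, your proof aligns with the paper's.
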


\begin{proof}[Proof Sketch]
Write the random matrix as a sum of independent random matrices:
$$
\mtx{X} = \sum_{i=1}^{d_1} \sum_{j=1}^{d_2} X_{ij} \mathbf{E}_{ij},
$$
where $\mathbf{E}_{ij}$ is the $d_1 \times d_2$ matrix with a one in the
$(i, j)$ position and zeros elsewhere.
A straightforward application of
the matrix Rosenthal inequality~\cite[Thm.~I]{Tro15:Expected-Norm}
yields
$$
\begin{aligned}
\Expect \norm{ \mtx{X} }
	&\leq \cnst{C} \sqrt{\alpha (d_1 + d_2) \log(d_1+d_2)}
	+ \cnst{C} \big(\Expect \max\nolimits_{ij} \abs{\smash{X_{ij}}}^p \big)^{1/p} \log(d_1 + d_2) \\
	&\leq \cnst{C} \sqrt{\alpha (d_1 + d_2) \log(d_1+d_2)}
	+ \cnst{C} \nu (d_1 d_2)^{1/p} \log(d_1 + d_2) \\
	&\leq \cnst{C} \sqrt{\alpha (d_1 + d_2) \log(d_1+d_2)}
	+ \cnst{C} \nu (d_1 + d_2)^{2/p} \log(d_1 + d_2).
\end{aligned}
$$
The second line follows when we replace the maximum by a sum
and exploit the uniform moment estimate.
The third line is just the inequality between the geometric and
arithmetic means.

A standard concentration inequality for moments~\cite[Thm.~15.5]{BLM13:Concentration-Inequalities} gives
$$
\left[ \Expect (\norm{\mtx{X}} - \Expect \norm{\mtx{X}})_+^p \right]^{1/p}
	\leq \cnst{C} \sqrt{p} \big( \Expect V_+^{p/2} \big)^{1/p}
$$
In this expression, the variance parameter
$$
V_+ := \sum_{ij} \Expect \big[ \big(\norm{\mtx{X}} - \norm{\smash{\mtx{X}^{(ij)}}} \big)_+^2 \, \big\vert\, \mtx{X} \big].
$$
The $(i, j)$ entry of $\mtx{X}^{(ij)}$ is an independent copy of the corresponding entry of $\mtx{X}$; the remaining entries of the two matrices are the same.
Applying the usual method~\cite[Ex.~3.14]{BLM13:Concentration-Inequalities},
we see that
$$
V_+ \leq \cnst{C} \max\nolimits_{ij} \Expect\big[ \big(X_{ij} - X_{ij}'\big)^2 \, \big\vert\, \mtx{X} \big].
$$
Applying the same considerations as in the last paragraph, we obtain
$$
\big( \Expect V_+^{p/2} \big)^{1/p} \leq \cnst{C} \nu (d_1 + d_2)^{2/p}.
$$
Combine these results and apply Markov's inequality to obtain
the tail bound
$$
\Prob{ \norm{ \mtx{X} } \geq \Expect \norm{\mtx{X}} + \cnst{C} \nu \sqrt{p} (d_1 + d_2)^{2/p} \zeta } \leq \zeta^{-p}.
$$
Introduce the estimate for the expected norm to complete the argument.
\end{proof}

\section*{Acknowledgments}

The authors would like to thank David Donoho, Surya Ganguli, Babak Hassibi,
Michael McCoy, Andreas Maurer, Andrea Montanari, Ivan Nourdin, Giovanni Peccati,
Adrian R{\"o}llin, Jared Tanner, Christos Thrampoulidis, and Madeleine Udell
for helpful conversations.
\joelprev{We also thank the anonymous reviewers and the editors for their careful reading and suggestions.}
SO was generously supported by the Simons Institute for the Theory of Computing and NSF award CCF-1217058.
JAT gratefully acknowledges support from ONR award N00014-11-1002 and the Gordon \& Betty Moore Foundation.

\bibliographystyle{myalpha}
\newcommand{\etalchar}[1]{$^{#1}$}

\end{document}